\newtheorem{thm}{Theorem}[subsection]
\newtheorem{lemma}[thm]{Lemma}
\newtheorem{corollary}[thm]{Corollary}
\theoremstyle{definition}
\newtheorem{defn}[thm]{Definition}
\newtheorem{algorithm}[thm]{Algorithm}
\newtheorem{remark}[thm]{Remark}
\newtheorem{example}[thm]{Example}
\newtheorem{conditions}[thm]{Conditions}
\tikzset{commutative diagrams/.cd,arrow style=tikz,diagrams={>=latex'}}
\newcommand{\sAB}{\mathscr{A}\!\!\mathscr{B}}
\newcommand{\cAB}{\mathcal{A}\mathcal{B}}
\newcommand{\Cyl}{\text{Cyl}}
\newcommand{\I}{\text{I}}
\newcommand{\II}{\text{I}\!\protect\rule{0.015in}{0in}\text{I}}
\newcommand{\III}{\text{I}\!\protect\rule{0.015in}{0in}\text{I}\!\protect\rule{0.015in}{0in}\text{I}}
\newcommand{\base}{\text{base}}
\DeclareMathOperator{\vspan}{span}
\DeclareMathOperator{\sgn}{sgn}
\begin{document}

\title{Double-dimer condensation and the PT-DT correspondence}

\author{Helen Jenne, Gautam Webb, \and Benjamin Young}


\email{helenkjenne@gmail.com$^*$, gwebb@uoregon.edu, bjy@uoregon.edu}

\thanks{*Helen Jenne has received funding from the European Research Council (ERC) through the European Union's Horizon 2020 research and innovation programme under the Grant Agreement No 759702.}

\date{\today}

\begin{abstract}
	We resolve an open conjecture from algebraic geometry, which states that two generating functions for plane partition-like objects (the ``box-counting'' formulae for the Calabi-Yau topological vertices in Donaldson-Thomas theory and Pandharipande-Thomas theory) are equal up to a factor of MacMahon's generating function for plane partitions. The main tools in our proof are a Desnanot-Jacobi-type condensation identity, and a novel application of the tripartite double-dimer model of Kenyon-Wilson.
\end{abstract}

\subjclass[2010]{05A15, 05E14}
\keywords{Plane partitions, double-dimer model, Desnanot-Jacobi identity, Donaldson-Thomas theory, Pandharipande-Thomas theory}

\maketitle

\tableofcontents

\section{Introduction}

Donaldson-Thomas (DT) theory and Pandharipande-Thomas (PT) theory are branches of enumerative geometry closely related to mirror symmetry and string theory (for an introduction to these theories, see~\cite[Sections $3\frac{1}{2}$, $4\frac{1}{2}$]{counting-curves}).  In both theories, generating functions arise known as the \emph{combinatorial Calabi-Yau topological vertices}.  These generating functions enumerate seemingly different plane partition-like objects. In this paper, we prove that these generating functions coincide up to a factor of $M(q)$, MacMahon's generating function for plane partitions \cite{macmahon}. Our result, taken together with a substantial body of geometric work, proves a geometric conjecture in the foundational work of Pandharipande-Thomas theory that has been open for over 20 years. 

The generating function from Donaldson-Thomas theory is known as the DT topological vertex. Denoted $V(\mu_1, \mu_2, \mu_3)$, where each $\mu_i$ is a partition, it counts {\em plane partitions asymptotic to $(\mu_1, \mu_2, \mu_3)$} (see Section~\ref{sec:DTboxconfigs}). The PT topological vertex, denoted by $W(\mu_1, \mu_2, \mu_3)$, is a generating function for a certain class of finitely generated $\mathbb{C}[x_1,x_2,x_3]$-modules (see Section~\ref{sec:PTboxconfigs}). 

We prove that 
\begin{thm}\cite[Calabi-Yau case of Conjecture 4]{PT2}
\label{thm:ptdt}
\begin{equation}
	\label{eqn:ptdt}
	V(\mu_1, \mu_2, \mu_3) = M(q) W(\mu_1, \mu_2, \mu_3), 
\end{equation}
where $M(q) = \prod\limits_{i\geq 1}\left(1-q^i\right)^{-i}$. 
\end{thm}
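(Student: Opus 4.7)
The plan is to translate both sides of \eqref{eqn:ptdt} into partition functions for the tripartite double-dimer model of Kenyon--Wilson and then prove the resulting identity by condensation. Plane partitions asymptotic to $(\mu_1,\mu_2,\mu_3)$ are in natural bijection with lozenge tilings of an infinite sector of the triangular lattice (equivalently, perfect matchings of a honeycomb region) whose boundary conditions along the three legs are determined by the $\mu_i$. After quotienting by an infinite-volume normalization, $V(\mu_1,\mu_2,\mu_3)$ becomes a $q$-weighted partition function on a finite dimer region with three ``fat'' boundaries cut according to the $\mu_i$. The far less obvious step is to express $W(\mu_1,\mu_2,\mu_3)$ in similar dimer language: PT boxes record certain $\mathbb{C}[x_1,x_2,x_3]$-submodule data rather than boxes of a plane partition, so this recasting must reveal an underlying double-dimer model in which the ``interior'' plane partition is absent, exactly accounting for the missing factor of $M(q)$.

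With both sides realized as tripartite double-dimer partition functions, the plan is to prove a Desnanot--Jacobi type condensation identity for these partition functions. In the single-dimer world such identities take the form of Kuo's graphical condensation and have repeatedly been used to give combinatorial proofs of plane-partition identities. The three-leg structure of the topological vertex matches the tripartite structure of the Kenyon--Wilson model, so one expects a cubic identity expressing a triple product of vertex-like partition functions as a signed sum of other such products, with the profiles $\mu_i$ perturbed in controlled ways.

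I would then prove \eqref{eqn:ptdt} by induction --- most naturally on $|\mu_1|+|\mu_2|+|\mu_3|$ together with an auxiliary parameter controlling how far the asymptotic legs are truncated --- checking that the same cubic recursion is satisfied by both sides. The base case $\mu_1=\mu_2=\mu_3=\emptyset$ is the classical MacMahon identity $V(\emptyset,\emptyset,\emptyset)=M(q)$ combined with $W(\emptyset,\emptyset,\emptyset)=1$, so the two sides agree there; induction then promotes this to all triples of partitions.

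The main obstacle I expect is the PT-to-dimer dictionary. The DT side has been known for decades to live in the dimer world, but PT theory counts modules, and finding an honest combinatorial model for these modules --- presumably via labelled loops, doubled edges, or path systems in a tripartite double-dimer configuration --- that removes exactly an $M(q)$'s worth of content relative to the DT side is the conceptual heart of the argument. A secondary challenge is selecting a condensation identity that is simultaneously provable by double-dimer surgery (so that loop weights and signs cooperate) and strong enough to drive the induction; matching signs and orientations near the ``corners'' where the three legs meet is where I expect the bookkeeping to be most delicate.
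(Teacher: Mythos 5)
Your high-level architecture matches the paper's: translate $V$ to single dimers, translate $W$ to tripartite double dimers, show that both $V/M(q)$ and $W$ satisfy a Desnanot--Jacobi type condensation recurrence (Kuo's graphical condensation on the $V$ side, Kenyon--Wilson tripartite double-dimer condensation on the $W$ side), and close by induction. However there are two genuine gaps, the first of which is fatal to your induction as stated.

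\textbf{Wrong base case.} The condensation recurrence that actually drives the induction (equation~\eqref{eqn:vertex_condensation}) modifies only $\mu_1$ and $\mu_2$: the vertices $a,b,c,d$ live in sectors 1 and 2, so $\mu_3$ is \emph{never reduced}. Consequently, induction on $|\mu_1|+|\mu_2|+|\mu_3|$ starting from the all-empty case cannot reach a generic triple, because nothing ever shrinks $\mu_3$. The correct base case is the ``two-leg'' case where $\mu_1=\emptyset$ or $\mu_2=\emptyset$ (with $\mu_3$ arbitrary), which is where the recursion becomes inapplicable. This is not the trivial MacMahon computation: $V(\emptyset,\mu_2,\mu_3)=M(q)W(\emptyset,\mu_2,\mu_3)$ is a substantial result of Pandharipande--Thomas~\cite{PT2}, and the present paper cites it as known input rather than re-deriving it. Your proposal replaces a hard, geometry-flavored base case by a triviality, and the induction would not terminate there.

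\textbf{Quadratic, not cubic.} The condensation identity is a \emph{quadratic} Dodgson/Desnanot--Jacobi relation: a product of two partition functions equals a sum of two products of two. There is no ``cubic identity expressing a triple product.'' The tripartite structure shows up in which nodes are deleted and how the planar pairing is recolored, not in the degree of the relation. This is a smaller issue than the base case, but it would have sent you looking for the wrong kind of lemma.

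Two smaller remarks. First, you are a little loose in the opening sentence (``translate both sides into... the tripartite double-dimer model''): in fact only $W$ lives in the double-dimer world, while $V$ is handled by ordinary dimers and Kuo condensation; your own later paragraphs are more accurate on this. Second, the dictionary from PT labelled box configurations to tripartite double-dimer configurations (passing through the intermediate notion of $AB$ configurations) is considerably more involved than ``removing an $M(q)$'s worth of content'' --- the correspondence is weight-preserving but genuinely one-to-many, with $2^k$ double-dimer configurations mapping to each labelled box configuration $\pi$ with $\chi_{\mathrm{top}}(\pi)=2^k$ --- but you did correctly flag this step as the conceptual heart and the main obstacle, and that judgment is accurate.
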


The geometric corollary of this theorem is a proof of Theorem/Conjecture 2 of \cite{PT2}, which, loosely speaking, states that $W(\mu_1, \mu_2, \mu_3)$ computes the local contribution to the geometric Calabi-Yau topological vertex in Pandharipande-Thomas theory. The proof of this corollary combines Theorem~\ref{thm:ptdt} with the analogous result in DT theory \cite{mnop1, mnop2, mpt}, along with \cite[Section 4.1.2]{moop}; it is a consequence of the fact that both DT and PT theory give the same invariants as a third enumerative theory, Gromov-Witten theory.\footnote{In~\cite{mnop1, mnop2, PT2} and in general elsewhere in the geometry literature, all of the formulas have $q$ replaced by $-q$.  The sign is there for geometric reasons which are immaterial for us.} 

To be specific, let $Z_{DT}(\mu_1, \mu_2, \mu_3)$ be the geometric Calabi-Yau topological vertex in Donaldson-Thomas theory, and let $Z_{PT}(\mu_1, \mu_2, \mu_3)$ be the geometric Calabi-Yau topological vertex in Pandharipande-Thomas theory.  We have the following system of equalities, which we have temporarily labelled $G$, $E_{DT}$, $E_{PT}$ and $C$ (G for geometry, E for enumeration, C for combinatorics): 

\begin{center}
\begin{tikzcd}[swap,bend angle=45]
	Z_{DT}(\mu_1, \mu_2, \mu_3) \arrow[r,equal, "G"] \arrow[d, equal,"E_{DT}"] 
	& M(q) Z_{PT}(\mu_1, \mu_2, \mu_3) \arrow[d,equal,"E_{PT}"] \\
	V(\mu_1, \mu_2, \mu_3) \arrow[r,equal,"C"] 
	& M(q)W(\mu_1, \mu_2, \mu_3) 
\end{tikzcd}
\end{center}

In the above, Equation $G$ is the geometric PT-DT correspondence; it says that the two enumerative theories are equivalent at the level of the topological vertex. The technique involves showing that both theories are in fact equivalent to Gromov-Witten theory.  On the DT side, this was done in~\cite{mnop1, mnop2}. For proofs that PT theory is equivalent to Gromov-Witten theory, we refer the reader to a series of papers of Pandharipande and Pixton, culminating in~\cite{pandharipande-pixton}. 

Equation $E_{DT}$ is proven in~\cite{mnop1, mnop2}; it says that in the Calabi-Yau case, one can compute Donaldson-Thomas invariants by enumerating plane partitions asymptotic to $(\mu_1, \mu_2, \mu_3)$.  Proving it, and various generalizations of it, has represented a massive amount of work by many geometers over several decades. 

Equation $E_{PT}$ was conjectured in~\cite[Theorem/Conjecture 2]{PT2}, and proven in the ``two-leg'' case where $\mu_3$ is the empty partition; it says (after cancelling the factor of $M(q)$) that one can compute Pandharipande-Thomas invariants by counting labelled box configurations of shape $(\mu_1, \mu_2, \mu_3)$. 

Equation $C$ is the titular combinatorial PT-DT correspondence; we prove it in this paper.  Taken together with Equations $E_{DT}$ and $G$, this establishes the general case of Equation $E_{PT}$~\cite[Theorem/Conjecture 2]{PT2}. 

We now turn to a discussion of the methods that we use to show that $V(\mu_1, \mu_2, \mu_3)=M(q)W(\mu_1,\mu_2,\mu_3).$  The combinatorics problems which we solve are stated in the geometry literature as ``box-counting'' problems; that is, the objects of interest are plane partition-like.  The following bijections are well-known: 
\[
	\text{\stackanchor{dimer configurations on}{the honeycomb graph}}
	\leftrightarrow
	\text{plane partitions}
	\leftrightarrow
	\text{\stackanchor{finite-length monomial}{ideals in $\mathbb{C}[x_1,x_2,x_3]$}}
\]
The first one is a 3D version of the correspondence between partitions and their Maya diagrams; it is stated explicitly in Section~\ref{sec:DTtheoryAndDimers}. We use essentially the same correspondence to give a dimer model description of the DT topological vertex $V(\mu_1, \mu_2, \mu_3)$. On the PT side, the correspondences are: 
\[
	\text{\stackanchor{tripartite double-dimer configs.}{on the honeycomb graph}}
	\stackrel{(1)}{\leftrightarrow}
	\text{\stackanchor{labelled box}{configurations}}
	\stackrel{(2)}{\leftrightarrow}
	\text{\stackanchor{$\mathbb{C}[x_1, x_2, x_3]$-modules}{$(M_1 \oplus M_2 \oplus M_3)/\left<(1,1,1)\right>$}}
\]
The correspondence (1) is new, as far as we are aware.  We describe labelled box configurations, and the generating functions for them which arise in PT theory, carefully in Section~\ref{sec:PT}.  Interestingly, though (1) is a purely combinatorial correspondence, it is not bijective---rather, it is a weight-preserving, 1-to-many correspondence. Here $M_1 \subseteq \mathbb{C}[x_1, x_1^{-1}, x_2, x_3]$ is spanned by all monomials $x_1^ix_2^jx_3^k$ where $i \in \mathbb{Z}$ and $(j,k)$ ranges over some fixed partition $\mu_1$, with $M_2, M_3$ defined similarly; the quotient is killing the diagonal of the direct sum. 

The correspondence (2) is incidental to this work and is described in~\cite{PT2}; nor will we need to discuss the structure of the modules in the codomain. We expect that our methods will be relevant in other similar situations (one such situation arises in rank 2 DT theory~\cite{GKY2017}) and we would be eager to learn of other instances in which our techniques may apply. 

We prove Theorem~\ref{thm:ptdt} by observing that both $V/M(q)$ and $W$ are 
solutions $X$ to the following functional equation: 
\begin{equation}\small
	\label{eqn:vertex_condensation}
	q^K
	X(\mu_1, \mu_2, \mu_3)
	X(\mu_1^{rc}, \mu_2^{rc}, \mu_3)
	=
	q^K
	X(\mu_1^{rc}, \mu_2, \mu_3)
	X(\mu_1, \mu_2^{rc}, \mu_3)
	+
	X(\mu_1^{r}, \mu_2^{c}, \mu_3)
	X(\mu_1^{c}, \mu_2^{r}, \mu_3).
\end{equation}
This recurrence is called the \emph{condensation recurrence}. We postpone the definitions of $\mu_i^{r}$, $\mu_i^{c}$, and $\mu_i^{rc}$ to Section~\ref{sec:definitions}. Here, $K:=1+(\mu_1)_{d(\mu_1)}-d(\mu_1)+(\mu_2')_{d(\mu_2)}-d(\mu_2)$, where $d(\lambda)$ is the diagonal of $\lambda$. This constant is discussed further in Section~\ref{sec:weights}. 

The partitions $\mu_i^r$, $\mu_i^c$, and $\mu_i^{rc}$ are all of smaller length than $\mu_i$, and none of the topological vertex terms are equal to zero, so we can divide both sides of the condensation recurrence by $q^K X(\mu_1^{rc}, \mu_2^{rc}, \mu_3)$. 
Viewed as a recurrence in $\mu_1$ and $\mu_2$, the resulting equation uniquely characterizes $V/M(q)$ and $W$. The base case is when one of the partitions $\mu_i$ is equal to $\emptyset$; equation~\eqref{eqn:ptdt} is known to hold in this situation~\cite{PT2}. 

When recast in terms of the dimer model, $V/M(q)$ is easily seen to satisfy equation~\eqref{eqn:vertex_condensation} by Kuo's \emph{graphical condensation}~\cite{kuo}; this is essentially the content of Section~\ref{sec:DT}. 

Showing that $W$ satisfies equation~\eqref{eqn:vertex_condensation} is considerably more intricate, but once we translate to the double-dimer model, the bulk of the work was done elsewhere, in work of Jenne~\cite{jenne}.  Essentially,~\cite{jenne} evaluates a certain determinant by the classical Desnanot-Jacobi identity, and then interprets all six terms in the identity in terms of $W$.

\section{Definitions}
\label{sec:definitions}

Fix three partitions $\mu=(\mu_1, \mu_2, \mu_3)$.  For this paper, we identify $\mu_i$ with the coordinates of the boxes of its Young diagram, with the corner of the diagram located at $(0,0)$ and the rows of the diagram extending in the horizontal direction.  Define the following subsets of $\mathbb{Z}^3$, thought of as sets of boxes: 
\begin{align*}
	\Cyl_1 &= \{(x,u,v) \in \mathbb{Z}^3 \;|\; (u,v) \in \mu_1\}, \\
	\Cyl_2 &= \{(v,y,u) \in \mathbb{Z}^3 \;|\; (u,v) \in \mu_2\}, \\
	\Cyl_3 &= \{(u,v,z) \in \mathbb{Z}^3 \;|\; (u,v) \in \mu_3\}. 
\end{align*}
Moreover, let $\mathbb{Z}^3_{\geq 0}$ denote the integer points in the first octant (including the coordinate planes and axes). Let $\Cyl_i^{+} = \Cyl_i \cap \mathbb{Z}^3_{\geq 0}$ and $\Cyl_i^{-} = \Cyl_i \setminus \mathbb{Z}^3_{\geq 0}$. 
Finally, let 
\begin{align*}
	&&\II_{\bar{1}} &= \Cyl_2 \cap \Cyl_3\setminus\Cyl_1, \\
	\I^- &= \Cyl_1^- \cup \Cyl_2^- \cup \Cyl_3^-, &
	\II_{\bar{2}} &= \Cyl_3 \cap \Cyl_1\setminus\Cyl_2, & \III &= \Cyl_1 \cap \Cyl_2 \cap \Cyl_3, \\
	&&\II_{\bar{3}} &= \Cyl_1 \cap \Cyl_2\setminus\Cyl_3, \\
	&&\II &= \II_{\bar{1}} \cup \II_{\bar{2}} \cup \II_{\bar{3}}, 
\end{align*}
and let \[\I^+ = \left(\Cyl_1^+ \cup \Cyl_2^+ \cup \Cyl_3^+\right)\setminus\left(\II\cup\III\right).\] 
When we wish to emphasize the dependence of $\Cyl_1$, $\Cyl_2$, $\Cyl_3$, $\I^-$, $\II$, $\III$, or $\I^+$ on $\mu$, we will write $\Cyl_1(\mu)$, $\Cyl_2(\mu)$, $\Cyl_3(\mu)$, $\I^-(\mu)$, $\II(\mu)$, $\III(\mu)$, or $\I^+(\mu)$, respectively. Throughout this paper, $M$ will denote the quantity $\max\{(\mu_1)_1, \ell(\mu_1), (\mu_2)_1, \ell(\mu_2), (\mu_3)_1, \ell(\mu_3)\}$. 

We will need the following standard notions of Maya diagrams. 

\begin{defn}
If $\lambda = (\lambda_1, \lambda_2, \ldots, \lambda_k)$ is a partition with $k$ parts, define $\lambda_t=0$ for $t>k$. The \emph{Maya diagram of $\lambda$} is the set $\{\lambda_t - t + \frac{1}{2}\} \subseteq \mathbb{Z}+\frac{1}{2}$.  
\end{defn}

We frequently associate a partition with its Maya diagram by drawing a Maya diagram as a doubly infinite sequence of beads and holes, indexed by $\mathbb{Z}+\frac{1}{2}$, with the beads representing elements of the above set.  For instance, the Maya diagrams of the empty partition and of the partition $\lambda = (4,2,1)$ are the sets $\{ -\frac{1}{2}, -\frac{3}{2}, \ldots \}$ and $\{ \frac{7}{2}, \frac{1}{2}, -\frac{3}{2},-\frac{7}{2}, -\frac{9}{2},  \ldots \}$, respectively, which are drawn as 
\[
\cdots  \circ \circ \circ | \bullet \bullet \bullet \cdots
\qquad
\text{and}
\qquad
\cdots \circ \circ \circ \bullet \circ \circ \bullet | \circ \bullet \circ \bullet \bullet \bullet \cdots. 
\]
When convenient, we simply mark the location of 0 with a vertical line, rather than labelling the beads with elements of $\mathbb{Z}+\frac{1}{2}$. 

\begin{defn}
Conversely, if $S$ is a subset of $\mathbb{Z}+\frac{1}{2}$,  define $S^+ = \left\{x \in S \mid x > 0\right\}$ and $S^- = \left\{x \in \mathbb{Z}+\frac{1}{2} \setminus S \mid x < 0\right\}$.  If both $S^+$ and $S^-$ are finite, then define the \emph{charge} of $S$, $c(S)$, to be $|S^+|-|S^-|$; then it is easy to check that the set $\{s - c(S) \mid s \in S\}$ is the Maya diagram of some partition $\lambda$; we say that $S$ itself is the \emph{charge $c(S)$ Maya diagram of $\lambda$}. 
\end{defn}

\begin{defn}
If $\lambda$ is a partition with Maya diagram $S$, let $\lambda^r$ (resp.~$\lambda^c$) be the partition associated to the charge $-1$ (resp.~$1$) Maya diagram $S \setminus \{\min S^+\}$ (resp.~$S \cup \{ \max S^-\}$). Let $\lambda^{rc}$ be the partition associated to the Maya diagram $(S\setminus\{ \min{S^+}\})\cup\{\max{S^-}\}$. 
\end{defn}

\begin{figure}[h]
\begin{center}
\begin{tikzpicture} [ hexa/.style= {shape=regular polygon,
                                   regular polygon sides=6,
                                   minimum size=1cm, draw,
                                   inner sep=0,anchor=south,
                                   fill=white}]
\node[hexa] (hex1) at (0, 0) {};
\foreach \x in {1, 3, 5}
  \fill[color = white, draw = black] (hex1.corner \x) circle[radius=2pt];
  \foreach \x in {2, 4, 6}
  \fill[color = black, draw = black] (hex1.corner \x) circle[radius=2pt];

\node[hexa] (hex2) at (0, {-sin(60)} ) {};
\foreach \x in {1, 3, 5}
  \fill[color =white, draw = black] (hex2.corner \x) circle[radius=2pt];
  \foreach \x in {2, 4, 6}
  \fill[color = black, draw = black] (hex2.corner \x) circle[radius=2pt];

\node[hexa] (hex3) at (0, {-2*sin(60)} ) {};
\foreach \x in {1, 3, 5}
  \fill[color = white, draw = black] (hex3.corner \x) circle[radius=2pt];
  \foreach \x in {2, 4, 6}
  \fill[color = black, draw = black] (hex3.corner \x) circle[radius=2pt];

\node[hexa] (hex3) at (0, {-3*sin(60)} ) {};
\foreach \x in {1, 3, 5}
  \fill[color = white, draw = black] (hex3.corner \x) circle[radius=2pt];
  \foreach \x in {2, 4, 6}
  \fill[color = black, draw = black] (hex3.corner \x) circle[radius=2pt];

\node[hexa] (hex4) at (-.76, {-sin(60)/2} ) {};
\foreach \x in {1, 3, 5}
  \fill[color = white, draw = black] (hex4.corner \x) circle[radius=2pt];
  \foreach \x in {2, 4, 6}
  \fill[color = black, draw = black] (hex4.corner \x) circle[radius=2pt];

\node[hexa] (hex5) at (-.76, { -sin(60) -sin(60)/2} ) {};
\foreach \x in {1, 3, 5}
  \fill[color = white, draw = black] (hex5.corner \x) circle[radius=2pt];
  \foreach \x in {2, 4, 6}
  \fill[color = black, draw = black] (hex5.corner \x) circle[radius=2pt];

\node[hexa] (hex5) at (-.76, { -2*sin(60) -sin(60)/2} ) {};
\foreach \x in {1, 3, 5}
  \fill[color = white, draw = black] (hex5.corner \x) circle[radius=2pt];
  \foreach \x in {2, 4, 6}
  \fill[color = black, draw = black] (hex5.corner \x) circle[radius=2pt];

\node[hexa] (hex6) at (.76, {-sin(60)/2}) {};
\foreach \x in {1, 3, 5}
  \fill[color = white, draw = black] (hex6.corner \x) circle[radius=2pt];
  \foreach \x in {2, 4, 6}
  \fill[color = black, draw = black] (hex6.corner \x) circle[radius=2pt];

\node[hexa] (hex7) at (.76, { -sin(60) -sin(60)/2} ) {};
\foreach \x in {1, 3, 5}
  \fill[color = white, draw = black] (hex7.corner \x) circle[radius=2pt];
  \foreach \x in {2, 4, 6}
  \fill[color = black, draw = black] (hex7.corner \x) circle[radius=2pt];

\node[hexa] (hex8) at (.76, {-sin(60)/2 + sin(60)}) {};
\foreach \x in {1, 3, 5}
  \fill[color = white, draw = black] (hex8.corner \x) circle[radius=2pt];
  \foreach \x in {2, 4, 6}
  \fill[color = black, draw = black] (hex8.corner \x) circle[radius=2pt];

\node[hexa] (hex9) at (.76, {-sin(60)/2 -2*sin(60)}) {};
\foreach \x in {1, 3, 5}
  \fill[color = white, draw = black] (hex9.corner \x) circle[radius=2pt];
  \foreach \x in {2, 4, 6}
  \fill[color = black, draw = black] (hex9.corner \x) circle[radius=2pt];

\node[hexa] (hex9) at (.76, {-sin(60)/2 -3*sin(60)}) {};
\foreach \x in {1, 3, 5}
  \fill[color = white, draw = black] (hex9.corner \x) circle[radius=2pt];
  \foreach \x in {2, 4, 6}
  \fill[color = black, draw = black] (hex9.corner \x) circle[radius=2pt];

\node[hexa] (hex10) at (2*.76, {-sin(60) + sin(60)}) {};
\foreach \x in {1, 3, 5}
  \fill[color = white, draw = black] (hex10.corner \x) circle[radius=2pt];
  \foreach \x in {2, 4, 6}
  \fill[color = black, draw = black] (hex10.corner \x) circle[radius=2pt];

\node[hexa] (hex12) at (2*.76, {-sin(60)}) {};
\foreach \x in {1, 3, 5}
  \fill[color = white, draw = black] (hex12.corner \x) circle[radius=2pt];
  \foreach \x in {2, 4, 6}
  \fill[color = black, draw = black] (hex12.corner \x) circle[radius=2pt];

\node[hexa] (hex13) at (2*.76, {-2*sin(60)}) {};
\foreach \x in {1, 3, 5}
  \fill[color = white, draw = black] (hex13.corner \x) circle[radius=2pt];
  \foreach \x in {2, 4, 6}
  \fill[color = black, draw = black] (hex13.corner \x) circle[radius=2pt];

\node[hexa] (hex13) at (2*.76, {-3*sin(60)}) {};
\foreach \x in {1, 3, 5}
  \fill[color = white, draw = black] (hex13.corner \x) circle[radius=2pt];
  \foreach \x in {2, 4, 6}
  \fill[color = black, draw = black] (hex13.corner \x) circle[radius=2pt];

\node[hexa] (hex16) at (3*.76, {-3*sin(60) + sin(60)/2}) {};
\foreach \x in {1, 3, 5}
  \fill[color = white, draw = black] (hex16.corner \x) circle[radius=2pt];
  \foreach \x in {2, 4, 6}
  \fill[color = black, draw = black] (hex16.corner \x) circle[radius=2pt];

\node[hexa] (hex14) at (3*.76, {-2*sin(60) + sin(60)/2}) {};
\foreach \x in {1, 3, 5}
  \fill[color = white, draw = black] (hex14.corner \x) circle[radius=2pt];
  \foreach \x in {2, 4, 6}
  \fill[color = black, draw = black] (hex14.corner \x) circle[radius=2pt];

\node[hexa] (hex15) at (3*.76, {-sin(60) + sin(60)/2}) {};
\foreach \x in {1, 3, 5}
  \fill[color = white, draw = black] (hex15.corner \x) circle[radius=2pt];
  \foreach \x in {2, 4, 6}
  \fill[color = black, draw = black] (hex15.corner \x) circle[radius=2pt];

	\node[left] at (-1.2, -2.85) {{\color{blue} \Large{$1$}}};
	\node[right] at (2.75, -2.85)  {{\color{blue} \Large{$2$}}};
	\node[left] at (1.05, 2)  {{\color{blue} \Large{$3$}}};
                
	
	\node[left] at (0.55,1.35) {$\frac{1}{2}$};
	\node[right] at (1,1.35) {$-\frac{1}{2}$};
     
	\node[left] at (-.25,0.9) {$\frac{3}{2}$};
	\node[right] at (1.75,0.9) {$-\frac{3}{2}$};
  
	\node[left] at (-1.3,-.2) {$\vdots$};
	\node[right] at (2.8,-.2) {$\vdots$};
	
	\node[left] at (-1.25,-.85) {$-\frac{3}{2}$};
	\node[right] at (2.75,-.85) {$\frac{3}{2}$};

         \node[left] at (-1.25,-1.7) {$-\frac{1}{2}$};
	\node[right] at (2.75,-1.7) {$\frac{1}{2}$};
	
	
	\node[left] at (-.95,-2.2) {$\frac{1}{2}$};
	\node[right] at (2.42,-2.27) {$-\frac{1}{2}$};
	
	\node[left] at (-.25,-2.65) {$\frac{3}{2}$};
	\node[right] at (1.67,-2.72) {$-\frac{3}{2}$};
     
\draw[dashed]  (0.76, -.86)-- (0.76, -3.75);
\draw[dashed]  (0.76, -.86)-- (-1.76, -.86 + 1.4);
\draw[dashed]  (0.76, -.86)-- (.76*2 + 1.76, -.86 + 1.4);
\end{tikzpicture} \hspace{1cm}
\begin{tikzpicture} [ hexa/.style= {shape=regular polygon,
                                   regular polygon sides=6,
                                   minimum size=1cm, draw,
                                   inner sep=0,anchor=south,
                                   fill=white}]
\node[hexa] (hex1) at (0, 0) {};
\foreach \x in {1, 3, 5}
  \fill[color = white, draw = black] (hex1.corner \x) circle[radius=2pt];
  \foreach \x in {2, 4, 6}
  \fill[color = black, draw = black] (hex1.corner \x) circle[radius=2pt];

\node[hexa] (hex2) at (0, {-sin(60)} ) {};
\foreach \x in {1, 3, 5}
  \fill[color =white, draw = black] (hex2.corner \x) circle[radius=2pt];
  \foreach \x in {2, 4, 6}
  \fill[color = black, draw = black] (hex2.corner \x) circle[radius=2pt];

\node[hexa] (hex3) at (0, {-2*sin(60)} ) {};
\foreach \x in {1, 3, 5}
  \fill[color = white, draw = black] (hex3.corner \x) circle[radius=2pt];
  \foreach \x in {2, 4, 6}
  \fill[color = black, draw = black] (hex3.corner \x) circle[radius=2pt];

\node[hexa] (hex3) at (0, {-3*sin(60)} ) {};
\foreach \x in {1, 3, 5}
  \fill[color = white, draw = black] (hex3.corner \x) circle[radius=2pt];
  \foreach \x in {2, 4, 6}
  \fill[color = black, draw = black] (hex3.corner \x) circle[radius=2pt];

\node[hexa] (hex4) at (-.76, {-sin(60)/2} ) {};
\foreach \x in {1, 3, 5}
  \fill[color = white, draw = black] (hex4.corner \x) circle[radius=2pt];
  \foreach \x in {2, 4, 6}
  \fill[color = black, draw = black] (hex4.corner \x) circle[radius=2pt];

\node[hexa] (hex5) at (-.76, { -sin(60) -sin(60)/2} ) {};
\foreach \x in {1, 3, 5}
  \fill[color = white, draw = black] (hex5.corner \x) circle[radius=2pt];
  \foreach \x in {2, 4, 6}
  \fill[color = black, draw = black] (hex5.corner \x) circle[radius=2pt];

\node[hexa] (hex5) at (-.76, { -2*sin(60) -sin(60)/2} ) {};
\foreach \x in {1, 3, 5}
  \fill[color = white, draw = black] (hex5.corner \x) circle[radius=2pt];
  \foreach \x in {2, 4, 6}
  \fill[color = black, draw = black] (hex5.corner \x) circle[radius=2pt];

\node[hexa] (hex6) at (.76, {-sin(60)/2}) {};
\foreach \x in {1, 3, 5}
  \fill[color = white, draw = black] (hex6.corner \x) circle[radius=2pt];
  \foreach \x in {2, 4, 6}
  \fill[color = black, draw = black] (hex6.corner \x) circle[radius=2pt];

\node[hexa] (hex7) at (.76, { -sin(60) -sin(60)/2} ) {};
\foreach \x in {1, 3, 5}
  \fill[color = white, draw = black] (hex7.corner \x) circle[radius=2pt];
  \foreach \x in {2, 4, 6}
  \fill[color = black, draw = black] (hex7.corner \x) circle[radius=2pt];

\node[hexa] (hex8) at (.76, {-sin(60)/2 + sin(60)}) {};
\foreach \x in {1, 3, 5}
  \fill[color = white, draw = black] (hex8.corner \x) circle[radius=2pt];
  \foreach \x in {2, 4, 6}
  \fill[color = black, draw = black] (hex8.corner \x) circle[radius=2pt];

\node[hexa] (hex9) at (.76, {-sin(60)/2 -2*sin(60)}) {};
\foreach \x in {1, 3, 5}
  \fill[color = white, draw = black] (hex9.corner \x) circle[radius=2pt];
  \foreach \x in {2, 4, 6}
  \fill[color = black, draw = black] (hex9.corner \x) circle[radius=2pt];

\node[hexa] (hex9) at (.76, {-sin(60)/2 -3*sin(60)}) {};
\foreach \x in {1, 3, 5}
  \fill[color = white, draw = black] (hex9.corner \x) circle[radius=2pt];
  \foreach \x in {2, 4, 6}
  \fill[color = black, draw = black] (hex9.corner \x) circle[radius=2pt];

\node[hexa] (hex10) at (2*.76, {-sin(60) + sin(60)}) {};
\foreach \x in {1, 3, 5}
  \fill[color = white, draw = black] (hex10.corner \x) circle[radius=2pt];
  \foreach \x in {2, 4, 6}
  \fill[color = black, draw = black] (hex10.corner \x) circle[radius=2pt];

\node[hexa] (hex12) at (2*.76, {-sin(60)}) {};
\foreach \x in {1, 3, 5}
  \fill[color = white, draw = black] (hex12.corner \x) circle[radius=2pt];
  \foreach \x in {2, 4, 6}
  \fill[color = black, draw = black] (hex12.corner \x) circle[radius=2pt];

\node[hexa] (hex13) at (2*.76, {-2*sin(60)}) {};
\foreach \x in {1, 3, 5}
  \fill[color = white, draw = black] (hex13.corner \x) circle[radius=2pt];
  \foreach \x in {2, 4, 6}
  \fill[color = black, draw = black] (hex13.corner \x) circle[radius=2pt];

\node[hexa] (hex13) at (2*.76, {-3*sin(60)}) {};
\foreach \x in {1, 3, 5}
  \fill[color = white, draw = black] (hex13.corner \x) circle[radius=2pt];
  \foreach \x in {2, 4, 6}
  \fill[color = black, draw = black] (hex13.corner \x) circle[radius=2pt];

\node[hexa] (hex16) at (3*.76, {-3*sin(60) + sin(60)/2}) {};
\foreach \x in {1, 3, 5}
  \fill[color = white, draw = black] (hex16.corner \x) circle[radius=2pt];
  \foreach \x in {2, 4, 6}
  \fill[color = black, draw = black] (hex16.corner \x) circle[radius=2pt];

\node[hexa] (hex14) at (3*.76, {-2*sin(60) + sin(60)/2}) {};
\foreach \x in {1, 3, 5}
  \fill[color = white, draw = black] (hex14.corner \x) circle[radius=2pt];
  \foreach \x in {2, 4, 6}
  \fill[color = black, draw = black] (hex14.corner \x) circle[radius=2pt];

\node[hexa] (hex15) at (3*.76, {-sin(60) + sin(60)/2}) {};
\foreach \x in {1, 3, 5}
  \fill[color = white, draw = black] (hex15.corner \x) circle[radius=2pt];
  \foreach \x in {2, 4, 6}
  \fill[color = black, draw = black] (hex15.corner \x) circle[radius=2pt];
  
	\node[left] at (-1, 1.25) {{\color{blue} \Large{$2$}}};
	\node[right] at (2.65, 1.25)  {{\color{blue} \Large{$1$}}};
	\node[left] at (1.10, -3.5)  {{\color{blue} \Large{$3$}}};
                

	\node[left] at (-.2,1) {$\frac{3}{2}$};
	\node[right] at (1.5,1.05) {$-\frac{3}{2}$};
	
	\node[left] at (-1,0.5) {$\frac{1}{2}$};
	\node[right] at (2.3,0.65) {$-\frac{1}{2}$};
	
	\node[left] at (-1.25,0) {$-\frac{1}{2}$};
	\node[right] at (2.8,0) {$\frac{1}{2}$};
	
	\node[left] at (-1.25,-.8) {$-\frac{3}{2}$};
	\node[right] at (2.8,-.8) {$\frac{3}{2}$};

	\node[left] at (-1.3,-1.3) {$\vdots$};
	\node[right] at (2.85,-1.3) {$\vdots$};
	
	\node at (-.5,-2.7) {$\frac{3}{2}$};
	\node at (1.9,-2.73) {$-\frac{3}{2}$};

	\node[left] at (0.5,-3.15) {$\frac{1}{2}$};
	\node[right] at (0.75,-3.15) {$-\frac{1}{2}$};

\draw[dashed]  (0.76, -.86)-- (0.76, 1.75);
\draw[dashed]  (0.76, -.86)-- (-1.76, -1.86 - .4);
\draw[dashed]  (0.76, -.86)-- (.76*2 + 1.76, -1.86 - .4);
\end{tikzpicture}
\end{center}
\caption{The graph $H(3)$. Left: The division into sectors for DT. Right: The division into sectors for PT.}
\label{fig:sectors}
\end{figure}
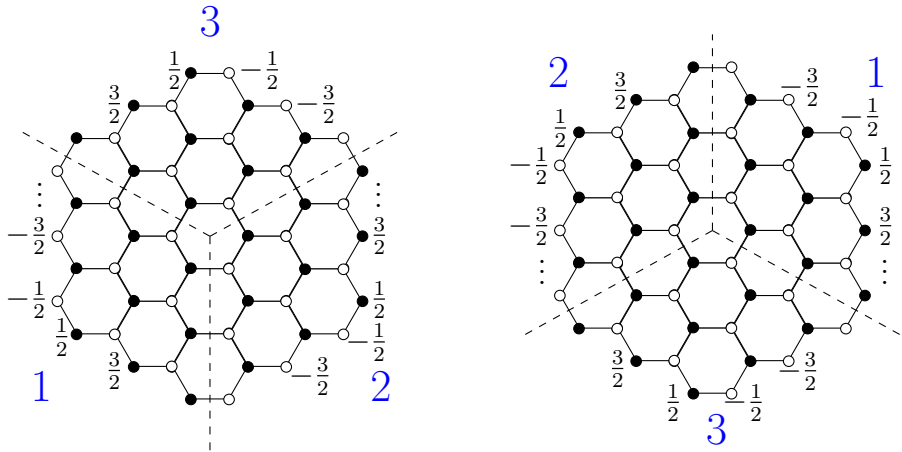

In both DT and PT, it will be convenient to divide the $N \times N \times N$ honeycomb graph $H(N)$ into three sectors and label some of the vertices on the outer face, as shown in Figure~\ref{fig:sectors} for $H(3)$. We remark that the divisions into sectors make sense as $N \to \infty$.  The reason for this choice of labels is that we will need to specify these particular vertices, both in DT and PT, based on the Maya diagrams of $\mu_1$, $\mu_2$, $\mu_3$, and various other partitions. Furthermore, if a vertex $u$ on the outer face in sector $i$ is labelled by a positive (resp.~negative) number, we will say that $u$ is in sector $i^+$ (resp.~sector $i^-$). 

We will weight the edges of $H(N)$ following Kuo \cite{kuo}. 
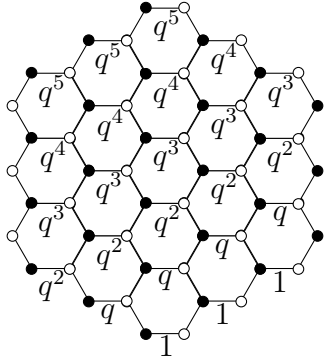
\begin{figure}[htb]
\centering
\begin{tikzpicture} [ hexa/.style= {shape=regular polygon,
                                   regular polygon sides=6,
                                   minimum size=1cm, draw,
                                   inner sep=0,anchor=south,
                                   fill=white}]
\node[hexa] (hex1) at (0, 0) {};
\foreach \x in {1, 3, 5}
  \fill[color = white, draw = black] (hex1.corner \x) circle[radius=2pt];
  \foreach \x in {2, 4, 6}
  \fill[color = black, draw = black] (hex1.corner \x) circle[radius=2pt];

\node[hexa] (hex2) at (0, {-sin(60)} ) {};
\foreach \x in {1, 3, 5}
  \fill[color =white, draw = black] (hex2.corner \x) circle[radius=2pt];
  \foreach \x in {2, 4, 6}
  \fill[color = black, draw = black] (hex2.corner \x) circle[radius=2pt];

\node[hexa] (hex3) at (0, {-2*sin(60)} ) {};
\foreach \x in {1, 3, 5}
  \fill[color = white, draw = black] (hex3.corner \x) circle[radius=2pt];
  \foreach \x in {2, 4, 6}
  \fill[color = black, draw = black] (hex3.corner \x) circle[radius=2pt];

\node[hexa] (hex3) at (0, {-3*sin(60)} ) {};
\foreach \x in {1, 3, 5}
  \fill[color = white, draw = black] (hex3.corner \x) circle[radius=2pt];
  \foreach \x in {2, 4, 6}
  \fill[color = black, draw = black] (hex3.corner \x) circle[radius=2pt];

\node[hexa] (hex4) at (-.76, {-sin(60)/2} ) {};
\foreach \x in {1, 3, 5}
  \fill[color = white, draw = black] (hex4.corner \x) circle[radius=2pt];
  \foreach \x in {2, 4, 6}
  \fill[color = black, draw = black] (hex4.corner \x) circle[radius=2pt];

\node[hexa] (hex5) at (-.76, { -sin(60) -sin(60)/2} ) {};
\foreach \x in {1, 3, 5}
  \fill[color = white, draw = black] (hex5.corner \x) circle[radius=2pt];
  \foreach \x in {2, 4, 6}
  \fill[color = black, draw = black] (hex5.corner \x) circle[radius=2pt];

\node[hexa] (hex5) at (-.76, { -2*sin(60) -sin(60)/2} ) {};
\foreach \x in {1, 3, 5}
  \fill[color = white, draw = black] (hex5.corner \x) circle[radius=2pt];
  \foreach \x in {2, 4, 6}
  \fill[color = black, draw = black] (hex5.corner \x) circle[radius=2pt];

\node[hexa] (hex6) at (.76, {-sin(60)/2}) {};
\foreach \x in {1, 3, 5}
  \fill[color = white, draw = black] (hex6.corner \x) circle[radius=2pt];
  \foreach \x in {2, 4, 6}
  \fill[color = black, draw = black] (hex6.corner \x) circle[radius=2pt];

\node[hexa] (hex7) at (.76, { -sin(60) -sin(60)/2} ) {};
\foreach \x in {1, 3, 5}
  \fill[color = white, draw = black] (hex7.corner \x) circle[radius=2pt];
  \foreach \x in {2, 4, 6}
  \fill[color = black, draw = black] (hex7.corner \x) circle[radius=2pt];

\node[hexa] (hex8) at (.76, {-sin(60)/2 + sin(60)}) {};
\foreach \x in {1, 3, 5}
  \fill[color = white, draw = black] (hex8.corner \x) circle[radius=2pt];
  \foreach \x in {2, 4, 6}
  \fill[color = black, draw = black] (hex8.corner \x) circle[radius=2pt];

\node[hexa] (hex9) at (.76, {-sin(60)/2 -2*sin(60)}) {};
\foreach \x in {1, 3, 5}
  \fill[color = white, draw = black] (hex9.corner \x) circle[radius=2pt];
  \foreach \x in {2, 4, 6}
  \fill[color = black, draw = black] (hex9.corner \x) circle[radius=2pt];

\node[hexa] (hex9) at (.76, {-sin(60)/2 -3*sin(60)}) {};
\foreach \x in {1, 3, 5}
  \fill[color = white, draw = black] (hex9.corner \x) circle[radius=2pt];
  \foreach \x in {2, 4, 6}
  \fill[color = black, draw = black] (hex9.corner \x) circle[radius=2pt];

\node[hexa] (hex10) at (2*.76, {-sin(60) + sin(60)}) {};
\foreach \x in {1, 3, 5}
  \fill[color = white, draw = black] (hex10.corner \x) circle[radius=2pt];
  \foreach \x in {2, 4, 6}
  \fill[color = black, draw = black] (hex10.corner \x) circle[radius=2pt];

\node[hexa] (hex12) at (2*.76, {-sin(60)}) {};
\foreach \x in {1, 3, 5}
  \fill[color = white, draw = black] (hex12.corner \x) circle[radius=2pt];
  \foreach \x in {2, 4, 6}
  \fill[color = black, draw = black] (hex12.corner \x) circle[radius=2pt];

\node[hexa] (hex13) at (2*.76, {-2*sin(60)}) {};
\foreach \x in {1, 3, 5}
  \fill[color = white, draw = black] (hex13.corner \x) circle[radius=2pt];
  \foreach \x in {2, 4, 6}
  \fill[color = black, draw = black] (hex13.corner \x) circle[radius=2pt];

\node[hexa] (hex13) at (2*.76, {-3*sin(60)}) {};
\foreach \x in {1, 3, 5}
  \fill[color = white, draw = black] (hex13.corner \x) circle[radius=2pt];
  \foreach \x in {2, 4, 6}
  \fill[color = black, draw = black] (hex13.corner \x) circle[radius=2pt];

\node[hexa] (hex16) at (3*.76, {-3*sin(60) + sin(60)/2}) {};
\foreach \x in {1, 3, 5}
  \fill[color = white, draw = black] (hex16.corner \x) circle[radius=2pt];
  \foreach \x in {2, 4, 6}
  \fill[color = black, draw = black] (hex16.corner \x) circle[radius=2pt];

\node[hexa] (hex14) at (3*.76, {-2*sin(60) + sin(60)/2}) {};
\foreach \x in {1, 3, 5}
  \fill[color = white, draw = black] (hex14.corner \x) circle[radius=2pt];
  \foreach \x in {2, 4, 6}
  \fill[color = black, draw = black] (hex14.corner \x) circle[radius=2pt];

\node[hexa] (hex15) at (3*.76, {-sin(60) + sin(60)/2}) {};
\foreach \x in {1, 3, 5}
  \fill[color = white, draw = black] (hex15.corner \x) circle[radius=2pt];
  \foreach \x in {2, 4, 6}
  \fill[color = black, draw = black] (hex15.corner \x) circle[radius=2pt];

	\node[right] at (-1.07,0.2) {$q^5$};
	\node[right] at (-.32,0.63) {$q^5$};
	\node[right] at (0.46,1.07) {$q^5$};
	
	\node[right] at (-1.05,-0.67) {$q^4$};
	\node[right] at (-.29,-.25) {$q^4$};
	\node[right] at (0.455,0.19) {$q^4$};
	\node[right] at (1.22,0.63) {$q^4$};

	\node[right] at (-1.07,-1.53) {$q^3$};
	\node[right] at (-.31,-1.09) {$q^3$};
	\node[right] at (0.455,-0.66) {$q^3$};
	\node[right] at (1.22,-.23) {$q^3$};
      	\node[right] at (1.98,0.2) {$q^3$};

	\node[right] at (-1.07,-2.39) {$q^2$};
	\node[right] at (-.3,-1.97) {$q^2$};
	\node[right] at (0.465,-1.53) {$q^2$};
	\node[right] at (1.22,-1.1) {$q^2$};
	\node[right] at (1.97,-0.67) {$q^2$};

	\node[right] at (-0.24,-2.76) {$q$};
	\node[right] at (0.52,-2.32) {$q$};
	\node[right] at (1.28,-1.9) {$q$};
	\node[right] at (2.05,-1.46) {$q$};

	\node[right] at (0.53,-3.19) {$1$};
	\node[right] at (1.28,-2.76) {$1$};
	\node[right] at (2.04,-2.33) {$1$};
\end{tikzpicture}
\caption{The graph $H(3)$ with edges weighted as specified in Definition~\ref{def:kuoweighting}.}
\label{fig:kuoweighting}
\end{figure}

\begin{defn}\cite[Section 6]{kuo}
\label{def:kuoweighting}
Weight the edges of $H(N)$ so that the non-horizontal edges have weight 1 and the horizontal edges are weighted by powers of $q$. Specifically, the $N$ horizontal edges along the bottom right diagonal have weight 1. On the next diagonal, the horizontal edges have weight $q$. In general, the weight of the edges on a diagonal is $q$ times the weight of the edges on the previous diagonal. This is illustrated in Figure~\ref{fig:kuoweighting}. 
\end{defn}

\section{DT}
\label{sec:DT}

\subsection{DT box configurations}
\label{sec:DTboxconfigs}

We say that a \emph{plane partition asymptotic to $(\mu_1, \mu_2, \mu_3)$} is an order ideal under the product order in $\mathbb{Z}^3_{\geq 0}$ which contains $\I^+ \cup \II \cup \III$, together with only finitely many other points in $\mathbb{Z}_{\geq 0}^3$.  We let $P(\mu_1,\mu_2,\mu_3)$ denote the set of plane partitions asymptotic to $(\mu_1, \mu_2, \mu_3)$.  

If any of $\mu_1, \mu_2, \mu_3$ are nonzero, then every $\pi \in P(\mu_1, \mu_2, \mu_3)$ is an infinite subset of $\mathbb{Z}^3_{\geq 0}$.  We define
$w(\pi) = |\pi \setminus (\I^+ \cup \II \cup \III)| - |\II| -2|\III|$, the customary measure of ``size'' of such a plane partition in the geometry literature (see, for instance,~\cite{mnop1}).  

Define 
	\[ V(\mu_1, \mu_2, \mu_3) = \sum\limits_{\pi \in P(\mu_1, \mu_2, \mu_3)} q^{w(\pi)}.\]
We call $V(\mu_1, \mu_2, \mu_3)$ the \emph{topological vertex in Donaldson-Thomas theory}.  Note that if $\pi \in P(\emptyset, \emptyset, \emptyset)$ with $|\pi|=n$, then $\pi$ is a plane partition of $n$ in the conventional sense, that is, a finite array of integers such that each row and column is a weakly decreasing sequence of nonnegative integers. Thus MacMahon's enumeration of plane partitions~\cite{macmahon} gives us $V(\emptyset, \emptyset, \emptyset) = \prod_{i = 1}^{\infty}\left(1-q^i\right)^{-i}$.

In~\cite{orv}, there is an expansion of $V(\mu_1,\mu_2,\mu_3)$ in terms of Schur functions.  However, since no similar expansion is known in PT theory, this expansion 
does not help prove Theorem~\ref{thm:ptdt}.

\subsection{DT theory and the dimer model}
\label{sec:DTtheoryAndDimers}

Before giving the dimer description of $V(\mu_1,\mu_2,\mu_3)$, we review the correspondence between plane partitions and dimer configurations of a honeycomb graph. By representing each integer $i$ in a plane partition as a stack of $i$ unit boxes, a plane partition can be visualized as a collection of boxes which is stacked stably in the positive octant, with gravity pulling them in the direction $(-1, -1, -1)$. This collection of boxes can be viewed as a lozenge tiling of a hexagonal region of triangles that are the faces of a finite planar graph $T$. This lozenge tiling is then equivalent to a dimer configuration (also called a perfect matching) of the dual graph of $T$, which is a honeycomb graph $H(N)$. 

Just as a plane partition can be visualized as a collection of boxes, a plane partition asymptotic to $(\mu_1, \mu_2, \mu_3)$ can be visualized as a collection of boxes, as shown in Figure~\ref{fig:DTdimers}, left picture. Moreover, a version of the above correspondence puts these box collections in bijection with dimer configurations on the honeycomb graph $H(N)$ with some outer vertices removed, which we call $H(N; \mu)$. Specifically, 
let $S_i$ be the Maya diagram of $\mu_i$. Construct the sets $S_i^+$, $S_i^-$ for $i=1,2,3$ and then remove the vertices with the labels in $S_i^+\cup S_i^-$ from sector $i$ of $H(N)$ to obtain $H(N; \mu)$ (here, we are referring to the labelling of the boundary vertices illustrated in Figure~\ref{fig:sectors}, left picture). 

\begin{figure}[htb]
\begin{center}
\includegraphics[width=0.3\textwidth]{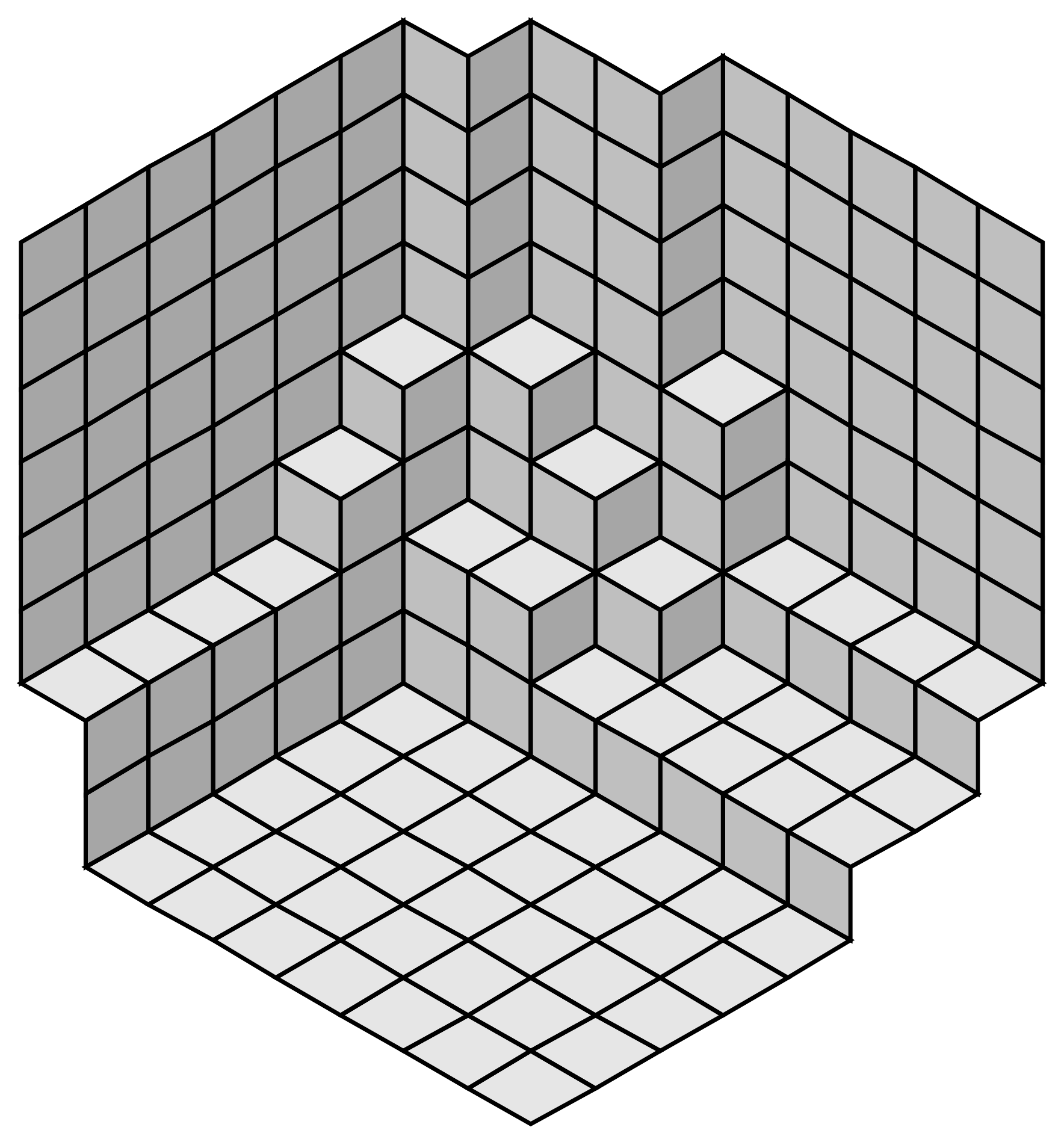}\hfill
\includegraphics[width=0.3\textwidth]{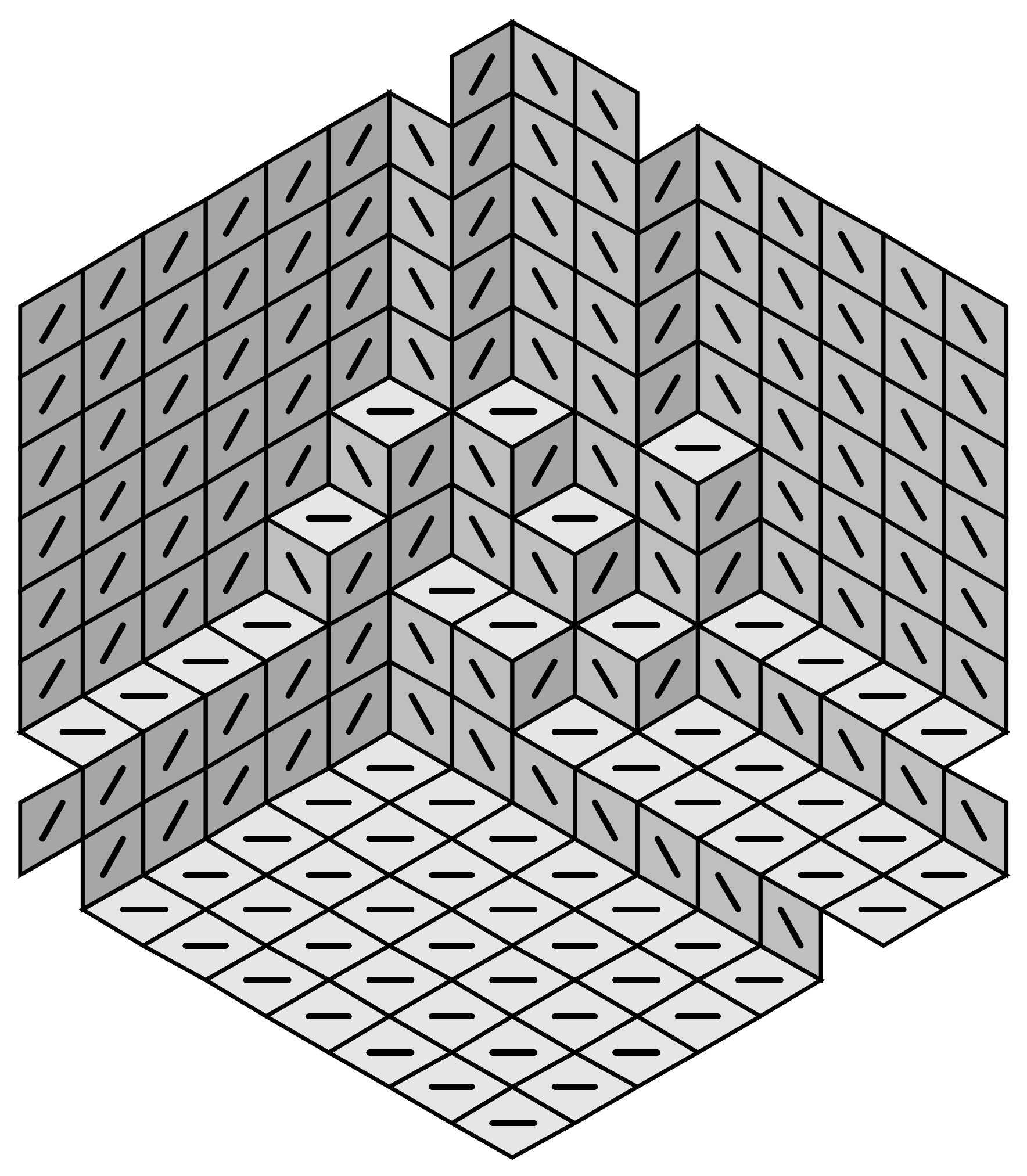}\hfill
\includegraphics[width=0.3\textwidth]{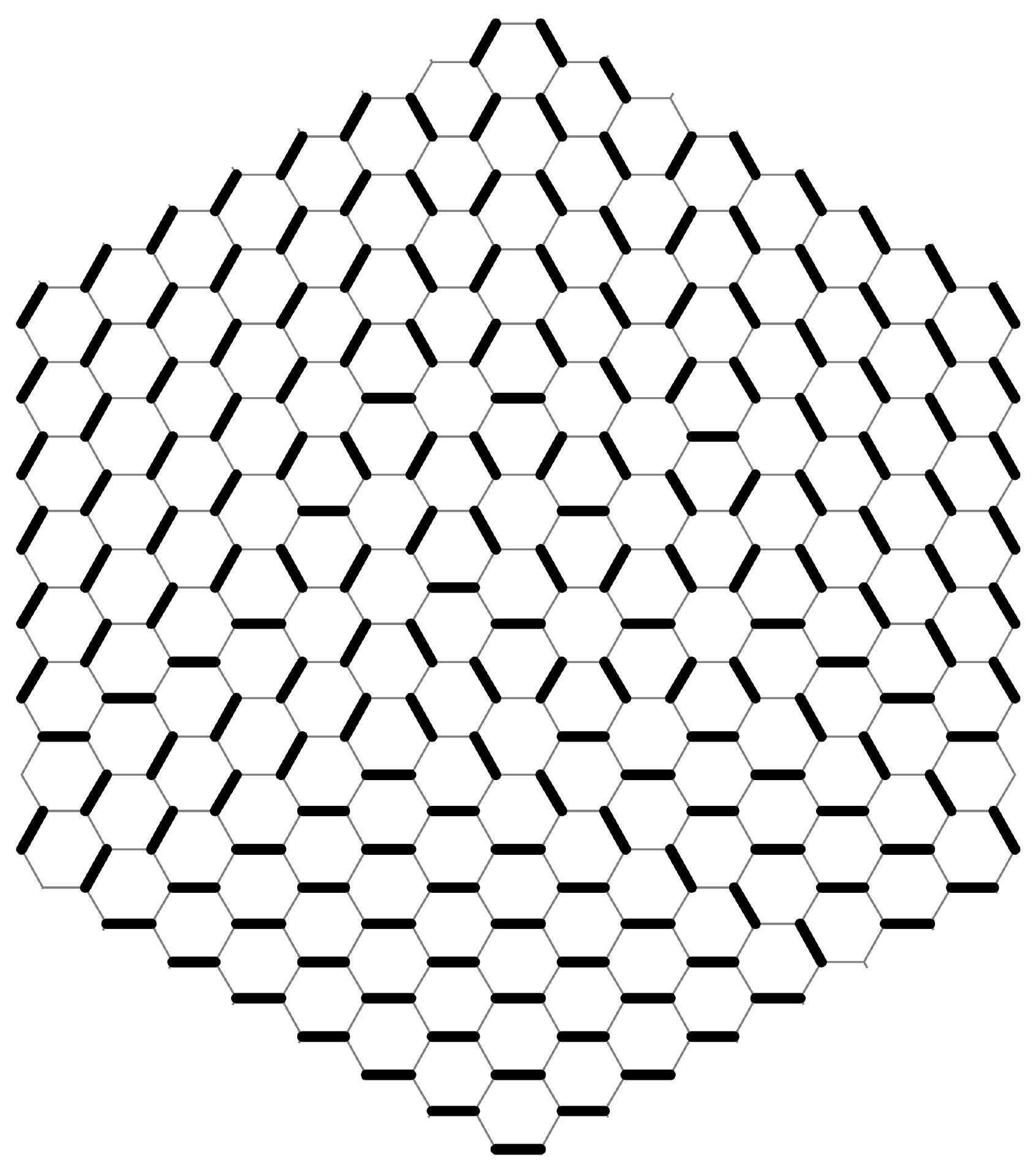}
\end{center}
\caption{Shown left is a plane partition $\pi$ asymptotic to $(\mu_1, \mu_2, \mu_3)$, where $\mu_1 =(1, 1)$, $\mu_2 = \mu_3 = (2, 1, 1)$, $|\II| = 9$, $|\III| = 3$, and $w(\pi) =13-|\II|-2|\III| = -2$. We see that $\pi$ is equivalent to a tiling, which is truncated in the center image so that it corresponds to a dimer configuration of $H(7)$ with a few vertices on the outer face deleted.}
\label{fig:DTdimers}
\end{figure}

Assume $N\geq M$. The bijection described above preserves weight up to an overall multiplicative constant, if we choose the edge weights in the dimer model correctly. The edge weights we use are shown in Figure~\ref{fig:kuoweighting}. Let $Z^D(G)$ denote the weighted sum of all dimer configurations on $G$. Let $M_{\min}(\mu)$ be the unique dimer configuration on $H(N; \mu)$ of minimal weight -- equivalently, the one whose height function is minimal. We call $M_{\min}(\mu)$ the \emph{minimal dimer configuration}; see Section~\ref{sec:minconfigs}. This dimer configuration corresponds to the unique plane partition $\pi_{\min}(\mu)$ asymptotic to $(\mu_1, \mu_2, \mu_3)$ that has no ``extra'' boxes, i.e., the one that contains only $\I^+\cup\II\cup\III$. Observe that $M_{\min}(\mu)$ contributes to the lowest-degree term of $Z^D(H(N; \mu))$, while $\pi_{\min}(\mu)$ contributes to the lowest-degree term of $V$. In fact, adding a box to a plane partition asymptotic to $(\mu_1, \mu_2, \mu_3)$ increases the weight of the corresponding dimer configuration by a factor of $q$, and removing a box decreases the weight by a factor of $q$ (this is a consequence of the particular choice of edge weights). So, if the weight of $M_{\min}(\mu)$ is $q^{w_{\min}(\mu)}$, then $q^{-w_{\min}(\mu)}Z^D(H(N; \mu))$ and $q^{|\II(\mu)|+2|\III(\mu)|}V(\mu_1, \mu_2, \mu_3)$ agree, at least up to degree $N-M$. In other words, if $\tilde{w}_{\min}(\mu):=w_{\min}(\mu)+|\II(\mu)|+2|\III(\mu)|$, 

\begin{thm}
\label{thm:ZD convergence}
As $N\to\infty$, $\widetilde{Z}^D(H(N; \mu)):=q^{-\tilde{w}_{\min}(\mu)}Z^D(H(N; \mu))$ converges to $V(\mu_1, \mu_2, \mu_3)$, where the limit is taken in the sense of formal Laurent series. 
\end{thm}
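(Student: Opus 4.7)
The plan is to prove this by exhibiting, for each $N \geq M$, a weight-preserving bijection between dimer configurations of $H(N;\mu)$ and an initial segment of $P(\mu_1,\mu_2,\mu_3)$ consisting of all $\pi$ that differ from $\pi_{\min}(\mu)$ by at most $N-M$ boxes, and then showing that the coefficient of each fixed power of $q$ in $\widetilde{Z}^D(H(N;\mu))$ stabilizes to the corresponding coefficient of $V(\mu_1,\mu_2,\mu_3)$ as $N \to \infty$.

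First I would formalize the classical 3D correspondence recalled in the text. A dimer configuration $D$ on $H(N;\mu)$ is dual to a lozenge tiling of a hexagonal region whose three boundary staircases are dictated by the Maya diagrams $S_1^\pm, S_2^\pm, S_3^\pm$; lifting the tiling into $\mathbb{Z}^3_{\geq 0}$ by the usual stacking rule produces an order ideal that contains $\I^+\cup \II \cup \III$ and differs from $\pi_{\min}(\mu)$ by a finite set of boxes contained in $[0,N]^3$. This map is a bijection onto the set of $\pi \in P(\mu_1,\mu_2,\mu_3)$ with $\pi \setminus \pi_{\min}(\mu) \subseteq [0,N]^3$, with $M_{\min}(\mu)\leftrightarrow \pi_{\min}(\mu)$.

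Next I would check that the Kuo weighting of Definition~\ref{def:kuoweighting} implements the asserted weight shift. The elementary move of adding a unit cube to a stack corresponds, on the dual honeycomb graph, to rotating the three dimers around a single hexagon; because the non-horizontal dimer weights are all $1$, this move multiplies the dimer weight by the ratio of the two horizontal-edge weights involved, which by the diagonal rule in Figure~\ref{fig:kuoweighting} is exactly $q$. Inductively, the dimer configuration corresponding to $\pi$ has weight $q^{w_{\min}(\mu)+|\pi\setminus \pi_{\min}(\mu)|}$. Since $|\pi \setminus \pi_{\min}(\mu)| = |\pi\setminus(\I^+\cup\II\cup\III)| = w(\pi)+|\II|+2|\III|$, this weight equals $q^{\tilde w_{\min}(\mu)+w(\pi)}$, so after dividing $Z^D(H(N;\mu))$ by $q^{\tilde w_{\min}(\mu)}$ we obtain exactly $\sum_\pi q^{w(\pi)}$, where $\pi$ ranges over the initial segment above.

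Finally, convergence in formal Laurent series: if a term $q^k$ appears in $V(\mu_1,\mu_2,\mu_3)$, it is contributed by partitions $\pi$ with $|\pi\setminus\pi_{\min}(\mu)| = k+|\II|+2|\III|$, and any such $\pi$ is contained in some cube $[0,C(k,\mu)]^3$ where $C(k,\mu)$ depends only on $k$ and $\mu$. Consequently, for every $k$ the coefficient of $q^k$ in $\widetilde Z^D(H(N;\mu))$ equals the coefficient of $q^k$ in $V(\mu_1,\mu_2,\mu_3)$ as soon as $N \geq \max(M,C(k,\mu))$, proving formal convergence. The main obstacle is the bookkeeping in the second paragraph: verifying that the Kuo weighting is set up precisely so that every box-flip contributes one factor of $q$ (this requires checking the diagonal pattern against the three different boundary asymptotics coming from $\mu_1,\mu_2,\mu_3$), and confirming the identity $|\pi\setminus\pi_{\min}(\mu)|=w(\pi)+|\II|+2|\III|$, which is precisely what motivates the shift $\tilde w_{\min}(\mu)=w_{\min}(\mu)+|\II(\mu)|+2|\III(\mu)|$.
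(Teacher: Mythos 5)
Your proof takes essentially the same approach the paper sketches before stating Theorem~\ref{thm:ZD convergence}: set up the plane-partition/dimer correspondence on $H(N;\mu)$, observe that the Kuo weighting turns each box-flip into a factor of $q$, and conclude that $q^{-\tilde w_{\min}(\mu)}Z^D(H(N;\mu))$ agrees with $V(\mu_1,\mu_2,\mu_3)$ up to a degree that grows with $N$. The only slips are cosmetic: you say ``any such $\pi$ is contained in some cube $[0,C(k,\mu)]^3$'' when of course $\pi$ itself is infinite (it is $\pi\setminus\pi_{\min}(\mu)$ that lies in a bounded region, which is the fact you actually use), and the image of the bijection for fixed $N$ is most cleanly described as the set of $\pi$ whose lozenge tiling agrees with that of $\pi_{\min}(\mu)$ outside the hexagonal window of $H(N)$ rather than by a blunt cube containment; but neither affects the stabilization argument, which is correct.
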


When $\mu_1 = \mu_2 = \mu_3 = \emptyset$, the weight $q^{w_{\min}(\mu)}$ of $M_{\min}(\mu)$ is computed, for instance in~\cite{kuo}. For general $\mu$, the computation is substantially messier, and is postponed to Section~\ref{sec:minconfigs}.

\subsection{The condensation recurrence in DT theory}
\label{sec:DTcond}

We now show that the DT partition function satisfies the condensation recurrence; this is a corollary of the well-known ``graphical condensation'' theorem of Kuo:

\begin{thm}\cite[Theorem 5.1]{kuo}
\label{thm:kuo}
Let $G = (V_1, V_2, E)$ be a weighted planar bipartite graph with a given planar embedding in which $|V_1| = |V_2|$. Let vertices $a, b, c,$ and $d$ appear in a cyclic order on a face of $G$. If $a, c \in V_1$ and $b, d \in V_2$, then 
\small
\begin{equation}
\label{eqn:DTcond}
Z^{D}(G)Z^{D}(G - \{a, b, c, d\}) = Z^{D}(G - \{a, b\})Z^{D}(G - \{c, d\}) + Z^{D}(G - \{a, d\})Z^{D}(G - \{b, c\}).
\end{equation}
\normalsize
\end{thm}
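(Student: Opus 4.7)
The plan is to prove the identity via a weight-preserving bijection between multisets of pairs of matchings. First I would expand each product $Z^D(G_1) Z^D(G_2)$ appearing in \eqref{eqn:DTcond} as a sum over pairs $(M_1, M_2) \in \mathcal{M}(G_1) \times \mathcal{M}(G_2)$, weighted by $w(M_1) w(M_2)$. The theorem will follow if I can exhibit a weight-preserving bijection between the pairs contributing to the left-hand side and the pairs contributing to the disjoint union of the two products on the right-hand side.

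Next, to any such pair $(M_1, M_2)$, I associate the superposition multigraph $M_1 \cup M_2$ on $V(G)$. Every vertex that lies in both $V(G_1)$ and $V(G_2)$ is covered by one edge from each matching, hence has degree $2$; each vertex of $\{a,b,c,d\}$ lies in exactly one of the two vertex sets, hence has degree $1$. Therefore the superposition decomposes into a disjoint union of cycles together with exactly two paths, whose endpoints partition $\{a, b, c, d\}$ into one of the three possible pairings $\{a,b\}\{c,d\}$, $\{a,c\}\{b,d\}$, or $\{a,d\}\{b,c\}$.

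The crux of the argument is to show that the admissible pairings are uniquely constrained for each of the three types of pair. This combines a parity argument with planarity: each path alternates edges of $M_1$ and $M_2$; knowing which matching contributes the endpoint edge at each of $a, b, c, d$, together with the bipartition (with $a, c \in V_1$ and $b, d \in V_2$), forces constraints on the path lengths modulo $2$ that exclude certain pairings. Then planarity, via the fact that $a, b, c, d$ lie in cyclic order on a face, excludes the crossing pairing $\{a,c\}\{b,d\}$. Running this analysis shows that LHS pairs split into those with pairing $\{a,b\}\{c,d\}$ and those with pairing $\{a,d\}\{b,c\}$; that pairs in $\mathcal{M}(G - \{a,b\}) \times \mathcal{M}(G - \{c,d\})$ necessarily have pairing $\{a,b\}\{c,d\}$; and that pairs in $\mathcal{M}(G - \{a,d\}) \times \mathcal{M}(G - \{b,c\})$ necessarily have pairing $\{a,d\}\{b,c\}$.

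The bijection is then a local swap along one of the two paths: for a pair whose superposition has pairing $\{a, b\} \{c, d\}$, let $P$ be the path from $a$ to $b$ and send $(M_1, M_2) \mapsto (M_1 \triangle E(P), M_2 \triangle E(P))$, where $\triangle$ denotes symmetric difference. This exchanges the vertices $a, b$ covered by the two matchings while preserving coverage of $c, d$ and every other vertex, moving the pair between the LHS and the first RHS term without altering the multiset of edges used, so the product weight is preserved. The analogous swap along the $a$-to-$d$ path handles the $\{a, d\}\{b, c\}$ case. The main obstacle is the parity/planarity case analysis, and in particular verifying for pairs in each RHS term that the non-crossing but chromatically-admissible "wrong" pairing is ruled out; once that is pinned down, the bijection and the weight accounting are essentially automatic.
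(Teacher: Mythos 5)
Your proposal is correct and is essentially the standard superposition/path-swap argument due to Kuo, which the paper cites (\cite[Theorem 5.1]{kuo}) rather than reproving. One small clarification worth making when you write it out: for the left-hand side, bipartite parity alone already excludes the crossing pairing $\{a,c\}\{b,d\}$ (both paths begin and end with $M_1$-edges, hence have odd length and join vertices of opposite color, and $a,c$ have the same color); planarity is the ingredient that becomes essential only for the two right-hand terms, where parity permits $\{a,c\}\{b,d\}$ (e.g.\ in $\mathcal{M}(G-\{a,b\})\times\mathcal{M}(G-\{c,d\})$ the $a$--$c$ path has even length, which is color-consistent) and the non-crossing condition on the face is what rules it out.
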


Take $G$ to be $H(N; \mu_1^{rc}, \mu_2^{rc}, \mu_3)$ for $N\geq M$. Let $a$ and $b$ be the vertices in sector 1 labelled by $\max S_1^-$ and $\min S_1^+$, respectively. Similarly, we let $c$ and $d$ be the vertices in sector 2 labelled by $\max S_2^-$ and $\min S_2^+$. The resulting six dimer model partition functions are all instances of the topological vertex, up to degree $N-M$. 

The graph $G-\{a, b, c, d\}$ is $H(N; \mu_1, \mu_2, \mu_3)$, 
\begin{align*}
G-\{a, b\} &= H(N; \mu_1, \mu_2^{rc}, \mu_3),\text{ and} & G-\{c, d\} &= H(N; \mu_1^{rc}, \mu_2, \mu_3). 
\end{align*}
On the other hand, the graphs $G-\{a, d\}$ and $G-\{b, c\}$ are not equal to $H(N; \lambda_1, \lambda_2, \lambda_3)$ for any partitions $\lambda_1, \lambda_2, \lambda_3$, since such partitions would have to satisfy $|S_i^+|=|S_i^-|\pm 1$ for $i=1, 2$, which is impossible (the Maya diagram $S$ of a partition $\lambda$ always satisfies $|S^+|=|S^-|$). Instead, these graphs are associated with Maya diagrams of nonzero charge: $G-\{a, d\}$ is constructed from the charge $-1$ Maya diagram associated to $\mu_1^r$ and the charge $1$ Maya diagram associated to $\mu_2^c$, and $G-\{b, c\}$ is constructed from the charge $1$ Maya diagram associated to $\mu_1^c$ and the charge $-1$ Maya diagram associated to $\mu_2^r$. However, the correspondence discussed in Section~\ref{sec:DTtheoryAndDimers} can still be applied in these cases, with minor modifications: plane partitions asymptotic to $(\mu_1^r, \mu_2^c, \mu_3)$ correspond to dimer configurations on $G-\{a, d\}$, with the origin in $\mathbb{Z}^3$ corresponding to the face directly above the central face of $H(N)$, and plane partitions asymptotic to $(\mu_1^c, \mu_2^r, \mu_3)$ correspond to dimer configurations on $G-\{b, c\}$, with the origin in $\mathbb{Z}^3$ corresponding to the face directly below the central face of $H(N)$. For this reason, we refer to the dimer configurations on $G-\{a, d\}$ and $G-\{b, c\}$ of minimal weight by $M_{\min}^{u}$ and $M_{\min}^{d}$, respectively. 

\begin{figure}[htb]
\begin{center}
\includegraphics[width=2in]{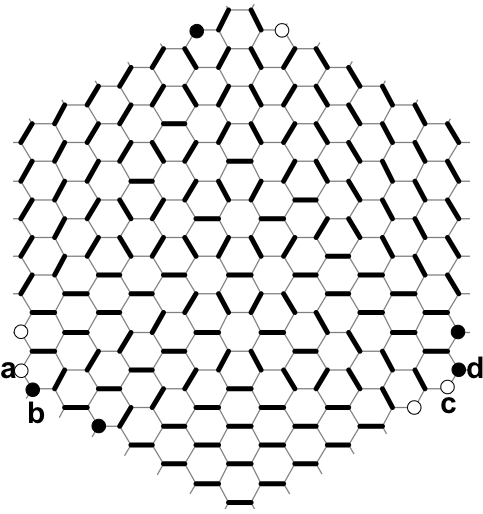}
\end{center}
\caption{A dimer configuration of $H(7; \mu_1, \mu_2, \mu_3)$, and the vertices $a$, $b$, $c$, and $d$, where $\mu_1 = (3, 2)$, $\mu_2 = (2, 2)$, and $\mu_3 = (2, 1)$. }
\label{fig:dtfloor}
\end{figure}

\begin{figure}[htb]
\centering
\includegraphics[width=1.5in]{./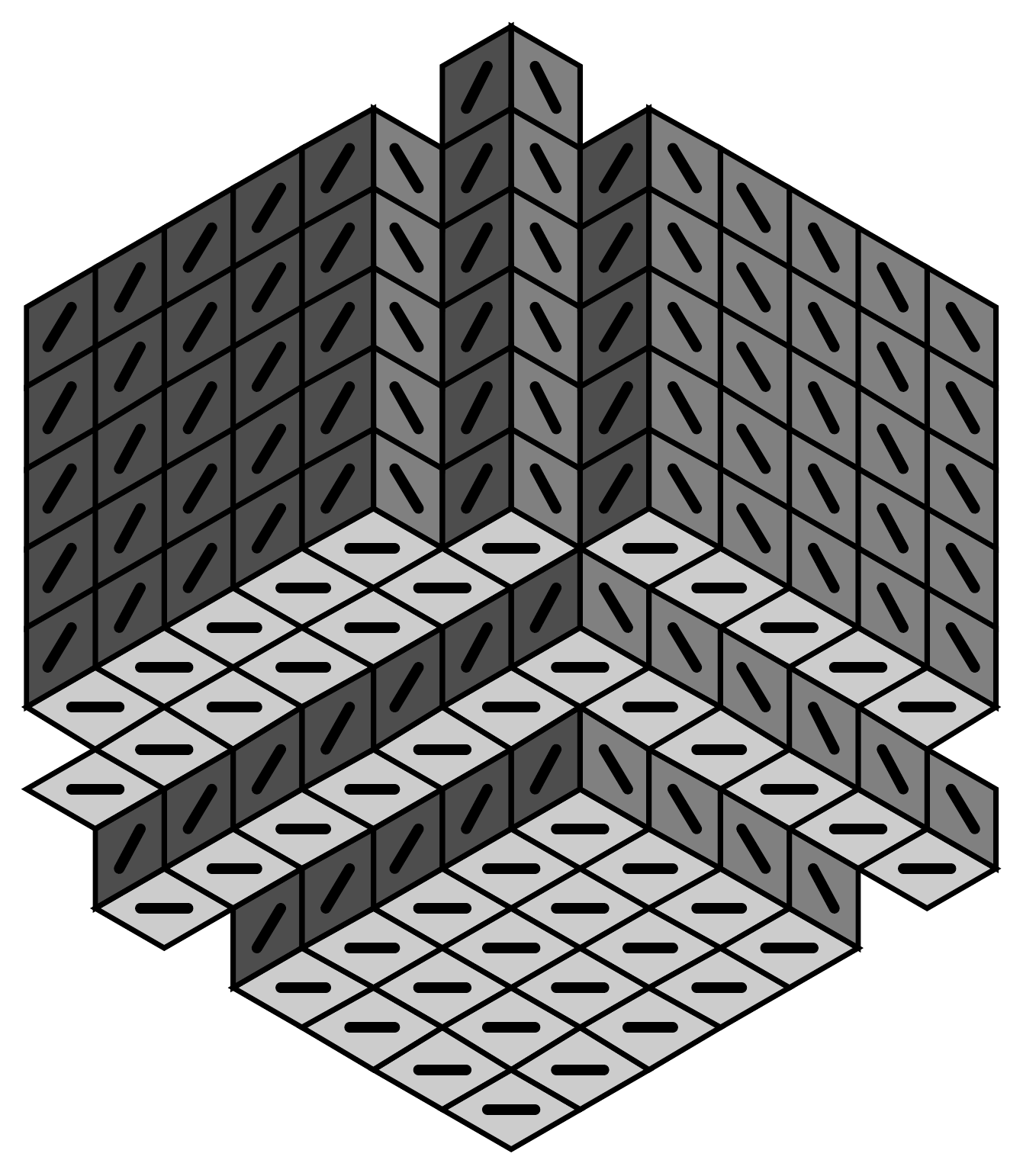} 
\includegraphics[width=1.5in]{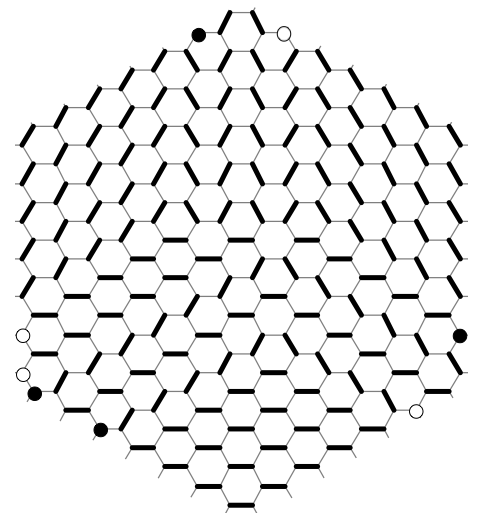}

\includegraphics[width=1.5in]{./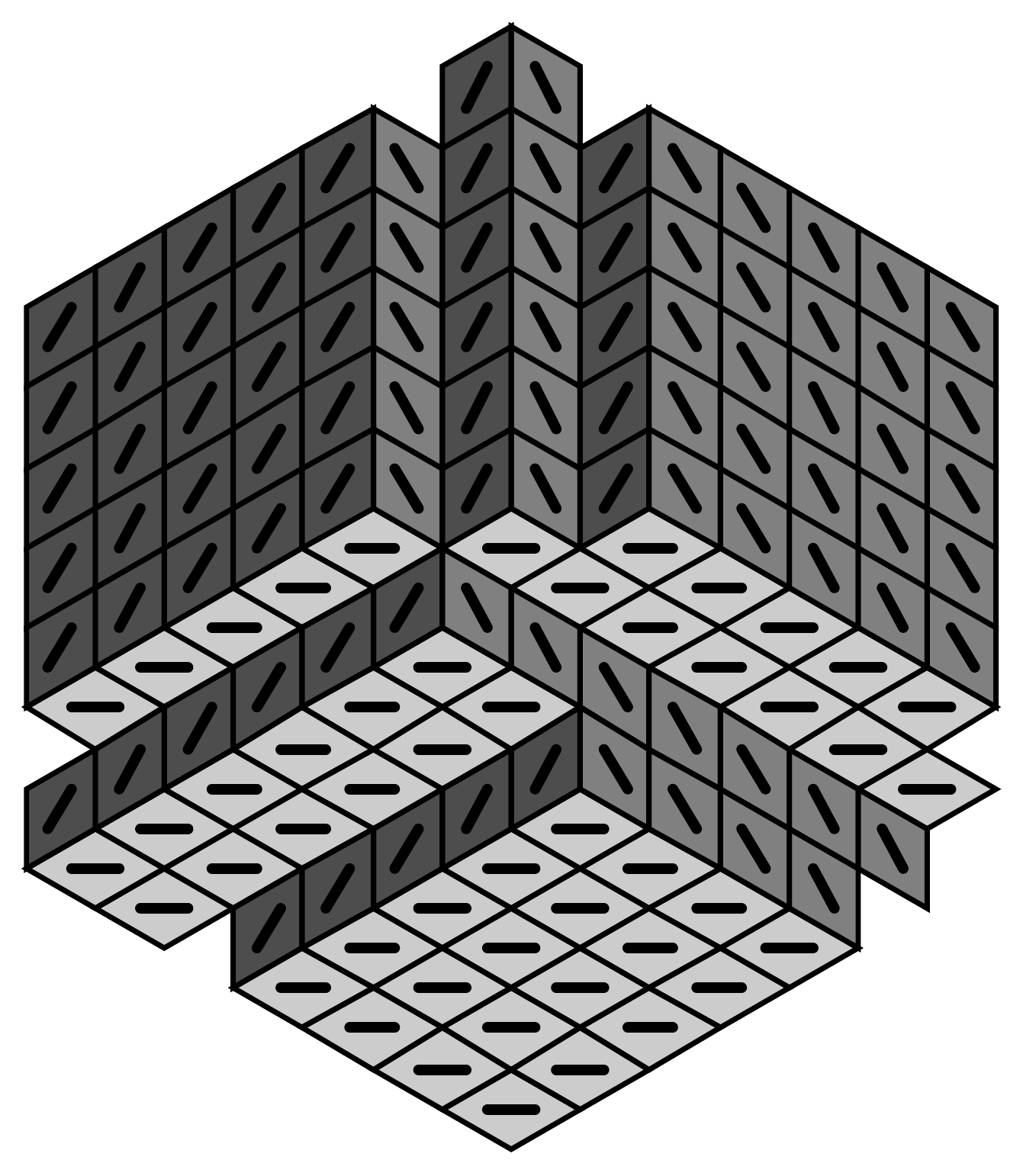}
\includegraphics[width=1.5in]{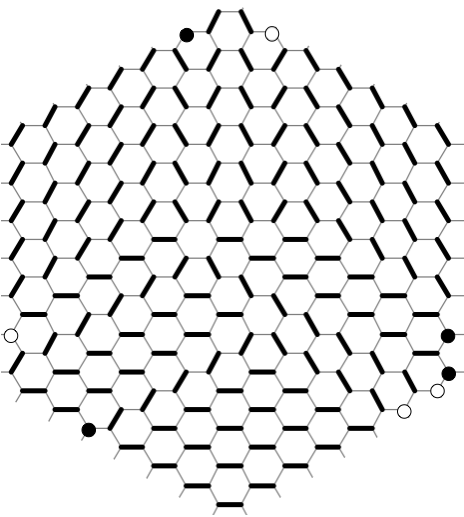}

\includegraphics[width=1.5in]{./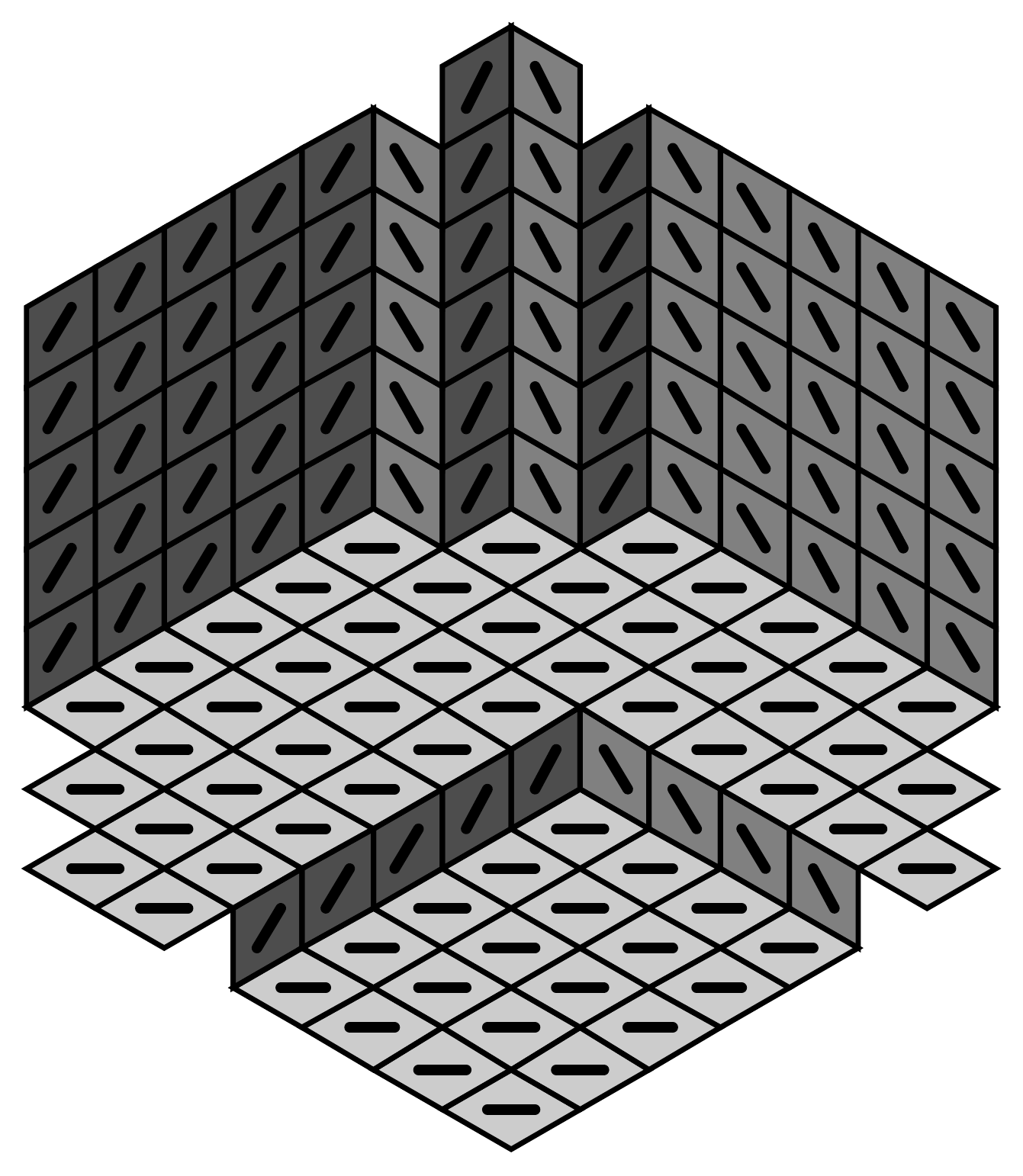} 
\includegraphics[width=1.5in]{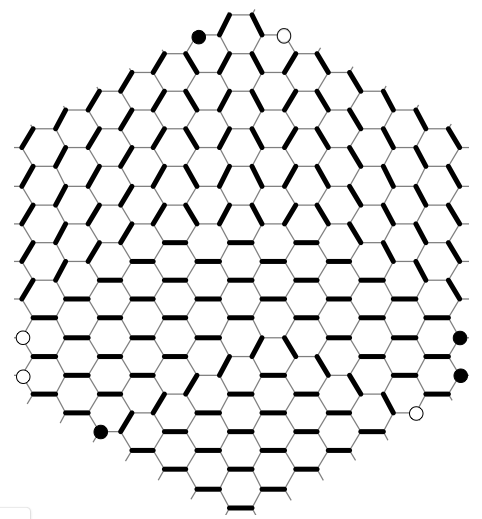}

\includegraphics[width=1.5in]{./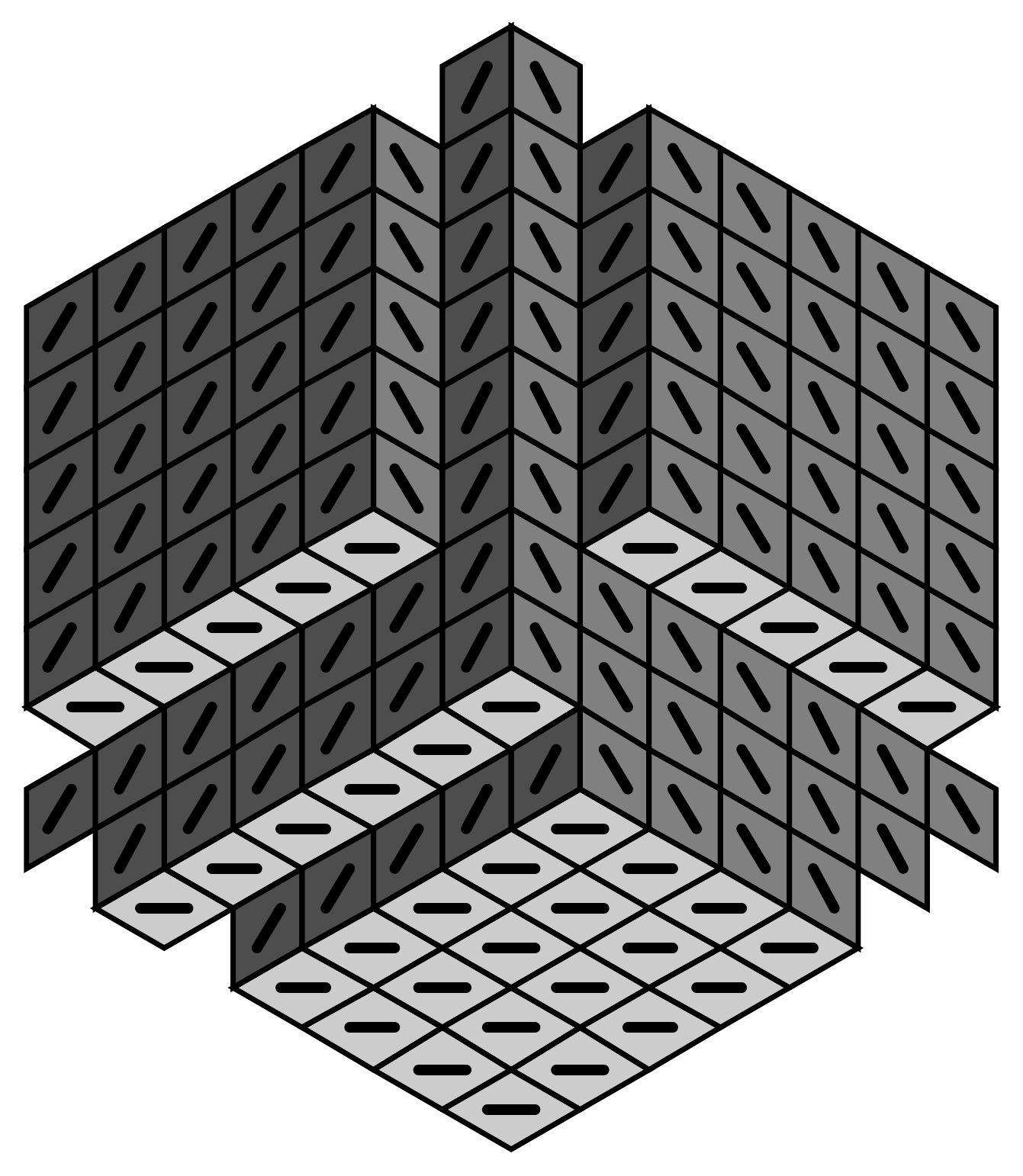}
\includegraphics[width=1.5in]{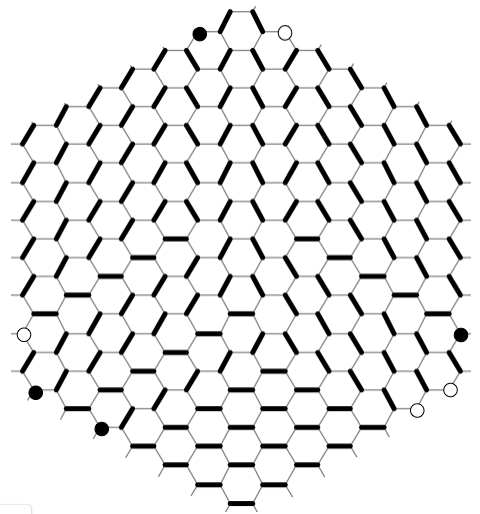}
\caption{Modifications of the graph $G$ from Example~\ref{ex:dtfloorex}, and their minimal dimer configurations. First row: The graph $G-\{a, b\}$ and its minimal dimer configuration. Second row: The graph $G-\{c, d\}$ and its minimal dimer configuration. Third row: The graph $G - \{a, d\}$ and its minimal dimer configuration. Fourth row: The graph $G - \{b, c\}$ and its minimal dimer configuration. }
\label{fig:dtfloor2}
\end{figure}

\begin{example}
\label{ex:dtfloorex}
Let $N = 7$, and let $\mu_1 = (3, 2)$, $\mu_2 = (2, 2)$, and $\mu_3=(2, 1)$. Figure~\ref{fig:dtfloor} shows a dimer configuration of $G - \{a, b, c, d\} = H(N; \mu_1, \mu_2, \mu_3)$ and the vertices $a$, $b$, $c$, and $d$. 

We note that $\mu_1^{rc} = (3, 1)$ and $\mu_2^{rc} = (2, 1)$. The graphs $G-\{a, b\}=H(N; \mu_1, \mu_2^{rc}, \mu_3)$ and $G-\{c, d\}=H(N; \mu_1^{rc}, \mu_2, \mu_3)$, along with their minimal dimer configurations, are shown in Figure~\ref{fig:dtfloor2}. 

We have $\mu_1^r = (4)$, $\mu_2^c = (1, 1, 1)$, $\mu_1^c = (2, 1, 1)$, and $\mu_2^r = (3)$. The graphs $G - \{a, d\}$, $G-\{b, c\}$ and their minimal dimer configurations are also shown in Figure~\ref{fig:dtfloor2}. This figure illustrates the fact that the correspondence between plane partitions asymptotic to $(\mu_1^r, \mu_2^c, \mu_3)$ (resp.~$(\mu_1^c, \mu_2^r, \mu_3)$) and dimer configurations on $G-\{a, d\}$ (resp.~$G-\{b, c\}$) requires a shift; the image shows that the ``floor'' of the plane partition is shifted up (resp.~down). 
\end{example}

Let $q^{w_{\min}^u}$ and $q^{w_{\min}^d}$ be the weights of $M_{\min}^{u}$ and $M_{\min}^{d}$, respectively. Then let $\tilde{w}_{\min}^u=w_{\min}^u+|\II(\mu_1^r, \mu_2^c, \mu_3)|+2|\III(\mu_1^r, \mu_2^c, \mu_3)|$, $\tilde{w}_{\min}^d=w_{\min}^d+|\II(\mu_1^c, \mu_2^r, \mu_3)|+2|\III(\mu_1^c, \mu_2^r, \mu_3)|$, \[\widetilde{Z}^D(H(N; \mu_1^{rc},\mu_2^{rc},\mu_3)-\{a, d\})=q^{-\tilde{w}_{\min}^u}Z^D(H(N; \mu_1^{rc},\mu_2^{rc},\mu_3)-\{a, d\}),\] and \[\widetilde{Z}^D(H(N; \mu_1^{rc},\mu_2^{rc},\mu_3)-\{b, c\})=q^{-\tilde{w}_{\min}^d}Z^D(H(N; \mu_1^{rc},\mu_2^{rc},\mu_3)-\{b, c\}).\] Also, let 
\begin{align*}
A &= \tilde{w}_{\min}(\mu_1, \mu_2, \mu_3)+\tilde{w}_{\min}(\mu_1^{rc}, \mu_2^{rc}, \mu_3), \\
B &= \tilde{w}_{\min}(\mu_1^{rc}, \mu_2, \mu_3)+\tilde{w}_{\min}(\mu_1, \mu_2^{rc}, \mu_3), \text{ and} \\
C &= \tilde{w}_{\min}^u+\tilde{w}_{\min}^d. 
\end{align*}

From (\ref{eqn:DTcond}) and the preceding remarks, we have 
\begin{eqnarray}
&&q^A\widetilde{Z}^D(H(N; \mu_1, \mu_2, \mu_3))\widetilde{Z}^D(H(N; \mu_1^{rc}, \mu_2^{rc}, \mu_3)) \label{eqn:intermediateDTcond} \\
&=&q^B\widetilde{Z}^D(H(N; \mu_1^{rc}, \mu_2, \mu_3))\widetilde{Z}^D(H(N; \mu_1, \mu_2^{rc}, \mu_3)) \nonumber \\
&&{}+{}q^C\widetilde{Z}^D(H(N; \mu_1^{rc},\mu_2^{rc},\mu_3)-\{a, d\})\widetilde{Z}^D(H(N; \mu_1^{rc},\mu_2^{rc},\mu_3)-\{b, c\}). \nonumber 
\end{eqnarray}
From Lemma~\ref{cor:DTweightGabcd}, we see that $A=B$, and we multiply equation (\ref{eqn:intermediateDTcond}) by $q^{-A}$. In Section~\ref{sec:DTalg}, we show that $C-A=-K$, which does not depend on the variable $N$. For this reason, we can take $N \rightarrow \infty$; in this limit, all six of the Laurent series $\widetilde{Z}^D$ converge to instances of $V$, with different partitions as parameters. By Theorem~\ref{thm:ZD convergence}, the first four Laurent series $\widetilde{Z}^D$ converge to $V(\mu_1, \mu_2, \mu_3)$, $V(\mu_1^{rc}, \mu_2^{rc}, \mu_3)$, $V(\mu_1^{rc}, \mu_2, \mu_3)$, and $V(\mu_1, \mu_2^{rc}, \mu_3)$, respectively. Similarly, $\widetilde{Z}^D(H(N; \mu_1^{rc},\mu_2^{rc},\mu_3)-\{a, d\})$ converges to $V(\mu_1^{r}, \mu_2^{c}, \mu_3)$, and $\widetilde{Z}^D(H(N; \mu_1^{rc},\mu_2^{rc},\mu_3)-\{b, c\})$ converges to $V(\mu_1^{c}, \mu_2^{r}, \mu_3)$. Thus, 
\begin{equation}\small
	\label{eqn:vertex_condensation_DT}
	V(\mu_1, \mu_2, \mu_3)
	V(\mu_1^{rc}, \mu_2^{rc}, \mu_3)
	=
	V(\mu_1^{rc}, \mu_2, \mu_3)
	V(\mu_1, \mu_2^{rc}, \mu_3)
	+
	q^{-K}
	V(\mu_1^{r}, \mu_2^{c}, \mu_3)
	V(\mu_1^{c}, \mu_2^{r}, \mu_3). 
\end{equation}
Multiplying by $\frac{q^K}{(M(q))^2}$, we conclude that $V/M(q)$ satisfies the condensation recurrence~\eqref{eqn:vertex_condensation}.

\section{PT}
\label{sec:PT}

This section is, in principle, parallel to the previous one, except our computations are done in PT theory~\cite{PT2}, rather than DT theory.  However, the computations in question are substantially more intricate.  

The overall plan is as follows.  In Section~\ref{sec:PTboxconfigs}, we describe the original index set for the generating function $W(\mu_1, \mu_2, \mu_3)$ that was introduced in~\cite{PT2}; it consists of certain novel plane-partition-like objects that we call \emph{PT box configurations}. These configurations come with a notion of \emph{labelling}, which is needed to describe the coefficients of the generating function $W$.  We introduce two alternate combinatorial models for the index set for $W$: namely \emph{$AB$ configurations} in Section~\ref{sec:ABconfigs}, and \emph{double-dimer configurations} in Section~\ref{sec:double_dimer_configs}. We demonstrate in Section~\ref{sec:labelling_algorithm_proofs} that these combinatorial objects are computing the same generating function $W(\mu_1, \mu_2, \mu_3)$ by describing and analyzing algorithms, called the \emph{labelling algorithms}, which are used in recovering PT box configurations from the other models. Finally, in Section~\ref{sec:pt_condensation_identity}, we review the facts we need from~\cite{jenne} about the condensation identity in the double-dimer model, and explain how this identity is applied to compute $W(\mu_1,\mu_2, \mu_3)$.

\subsection{Labelled PT box configurations}
\label{sec:PTboxconfigs}

We refer to elements of $\mathbb{Z}^3$ as \emph{cells}. 

\begin{defn}
If $w = (w_1, w_2, w_3)$ is a cell, the set of \emph{back neighbors} of $w$, denoted $BN(w)$, is \[\left\{(w_1 - 1, w_2, w_3), (w_1, w_2 - 1, w_3), (w_1, w_2, w_3 - 1)\right\}.\]
\end{defn}

We now introduce labelled box configurations. Their definition is taken from~\cite{PT2}. 

\begin{defn}
A \emph{set of labelled boxes} is a finite subset of $\I^-\cup\II\cup\III$, whose elements are referred to as \emph{boxes}, where each type $\III$ box $w$ may be labelled by an element of 
\[\mathbb{P}^1_w:=\mathbb{P}\left(\frac{
	\mathbb{C} \cdot \mathbf{1}_w \oplus
	\mathbb{C} \cdot \mathbf{2}_w \oplus
	\mathbb{C} \cdot \mathbf{3}_w
	}{
	\mathbb{C} \cdot (1,1,1)_w}\right).\] 
\end{defn}

\begin{defn}
A \emph{labelled box configuration} is a set of labelled boxes that satisfies the following \emph{box-stacking rules}. 
\end{defn}

\begin{conditions}
\begin{enumerate}[1.]
\item If $w \in \I^-$ and any cell in $BN(w)$ is a box, then $w$ must be a box.
\item If $w \in \II_{\bar i}$ and any cell $n\in BN(w)$ is a box that is not a type $\III$ box labelled $\vspan\{\mathbf{i}_n+\mathbb{C}\cdot(1, 1, 1)_n\}$, then $w$ must be a box.
\item If $w \in \III$ and the span of subspaces of \[
	\frac{
	\mathbb{C} \cdot \mathbf{1}_w \oplus
	\mathbb{C} \cdot \mathbf{2}_w \oplus
	\mathbb{C} \cdot \mathbf{3}_w
	}{\mathbb{C} \cdot (1,1,1)_w}\]
induced by boxes in $BN(w)$ is nonzero, then $w$ must be a box.  If the dimension of the span is 1, then $w$ may either be labelled by the span or be unlabelled.  If the dimension of the span is 2, then $w$ must be unlabelled.
\end{enumerate}
\label{conditions:labelled box stacking}
\end{conditions}

\begin{remark}
\label{rem:unlabelled in front of unlabelled}
By Conditions~\ref{conditions:labelled box stacking}.3, if $w \in \III$ and $n \in BN(w)$ is an unlabelled type $\III$ box, then $w$ must be an unlabelled box. This is because unlabelled type $\III$ boxes induce the whole $2$-dimensional space $\frac{\mathbb{C} \cdot \mathbf{1}_w \oplus\mathbb{C} \cdot \mathbf{2}_w \oplus\mathbb{C} \cdot \mathbf{3}_w}{\mathbb{C} \cdot (1,1,1)_w}$.
\end{remark}

We then define
\[
	W(\mu_1, \mu_2, \mu_3) = q^{-|\II|-2|\III|}\sum_{\text{labelled box configs.~}\pi} \chi_{\text{top}}(\pi) q^{|\pi|},
\]
where $|\pi|$ is the number of boxes in $\pi$ plus the number of unlabelled type $\III$ boxes in $\pi$, and $\chi_{\text{top}}(\pi)$ is the topological Euler characteristic of the moduli space of labellings of $\pi$. When we wish to emphasize the variable being used, we will write $W(\mu_1, \mu_2, \mu_3; q)$ instead of $W(\mu_1, \mu_2, \mu_3)$. 

We will also use the terminology introduced in the following definition. 

\begin{defn}
We say that a type $\III$ box $w$ of a labelled box configuration $\pi$ is \emph{freely labelled} if $w$ is labelled and for any $\ell\in\mathbb{P}^1_w$, there is a labelling of $\pi$ in which $w$ is labelled $\ell$. In this case, we also say that $w$ is labelled by a \emph{freely chosen} element of $\mathbb{P}^1$. 
\end{defn}

The following example appears in \cite[Section 5.4]{PT2}. 

\begin{example}
\label{ex:labelledboxconfigsfrompt}
Let $\mu_1 = (1), \mu_2 = (2),$ and $\mu_3 = (1)$. Then $\III = \{ (0, 0, 0)\}$ and $\II = \II_{\bar{1}} = \{ (0, 0, 1)\}$. We list labelled box configurations $\pi$ with $|\pi| \leq 3$. 

There is a unique empty labelled box configuration. There are two labelled box configurations $\pi$ with $|\pi| = 1$: 
\begin{enumerate}
\item a box at $(0, 0, 0)$ labelled with $\mathbb{C} \cdot {\bf 1}_{(0, 0, 0)}+\mathbb{C} \cdot (1, 1, 1)_{(0, 0, 0)}$, 
\item a box at $(0, 0, 1)$. 
\end{enumerate}
There are three labelled box configurations with $|\pi| = 2$: 
\begin{enumerate}
\item boxes at $(0, -1, 1)$ and $(0, 0, 1)$, 
\item a box at $(0, 0, 0)$ labelled with $\mathbb{C} \cdot {\bf 1}_{(0, 0, 0)}+\mathbb{C} \cdot (1, 1, 1)_{(0, 0, 0)}$ and a box at $(-1, 0, 0)$, 
\item a freely labelled box at $(0, 0, 0)$ and a box at $(0, 0, 1)$. 
\end{enumerate}
There are six labelled box configurations with $|\pi| = 3$: 
\begin{enumerate}
\item an unlabelled box at $(0, 0, 0)$ and a box at $(0, 0, 1)$, 
\item a freely labelled box at $(0, 0, 0)$, and boxes at $(0, -1, 1)$ and $(0, 0, 1)$, 
\item a box at $(-1, 0, 0)$, a box at $(0, 0, 0)$ labelled with $\mathbb{C} \cdot {\bf 1}_{(0, 0, 0)}+\mathbb{C} \cdot (1, 1, 1)_{(0, 0, 0)}$, and a box at $(0, 0, 1)$, 
\item a box at $(0, 0, -1)$, a box at $(0, 0, 0)$ labelled with $\mathbb{C} \cdot {\bf 3}_{(0, 0, 0)}+\mathbb{C} \cdot (1, 1, 1)_{(0, 0, 0)}$, and a box at $(0, 0, 1)$, 
\item a box at $(0, 0, 0)$ labelled with $\mathbb{C} \cdot {\bf 1}_{(0, 0, 0)}+\mathbb{C} \cdot (1, 1, 1)_{(0, 0, 0)}$, and boxes at $(-2, 0, 0)$ and $(-1, 0, 0)$, 
\item boxes at $(0, -2, 1)$, $(0, -1, 1)$, and $(0, 0, 1)$. 
\end{enumerate}
\end{example}

\subsection{Labelled \texorpdfstring{$AB$}{AB} configurations}
\label{sec:ABconfigs}

Given a labelled box configuration $\pi$ such that $\chi_{\text{top}}(\pi) = 2^k$, our objective is to associate to $\pi$ a certain collection of $2^k$ pairs $(A, B)$, called {\em labelled $AB$ configurations.}
We begin by defining $AB$ configurations, and then describe how to label these configurations. 

\begin{defn}
An \emph{$AB$ configuration} is a pair $(A, B)$ of finite sets $A \subseteq \I^- \cup \III$ and $B \subseteq \II \cup \III$, whose elements are referred to as \emph{boxes}, which satisfies the following conditions. 
\end{defn}

\begin{conditions}
\begin{enumerate}[1.]
	\item If $w \in \I^- \cup \III$ and $BN(w)\cap A\neq\varnothing$, then $w\in A$.
	\item If $w \in \II \cup \III$ and $BN(w)\cap B\neq\varnothing$, then $w\in B$.
\end{enumerate}
\label{conditions:ab box stacking}
\end{conditions}
We remark that these are the familiar conditions for plane partitions, except that gravity is pulling the boxes in the direction $(1,1,1)$. Also, we call an $AB$ configuration $(A, B)$ \emph{empty} (resp.~\emph{nonempty}) if $A\cup B$ is empty (resp.~nonempty). 

If there is a labelled box configuration $\pi$ so that the additional conditions below are satisfied, then we say that $(A, B)$ is an $AB$ configuration \emph{on $\pi$}.

\begin{conditions}
\begin{enumerate}[1.]
	\item $A \cup B$ is the set of boxes in $\pi$.
	\item $A \cap B$ is the set of unlabelled type $\III$ boxes in $\pi$.
\end{enumerate}
\label{conditions:ab on pi}
\end{conditions}

\begin{figure}[htb]
\begin{center}
\includegraphics[width=1.5in]{./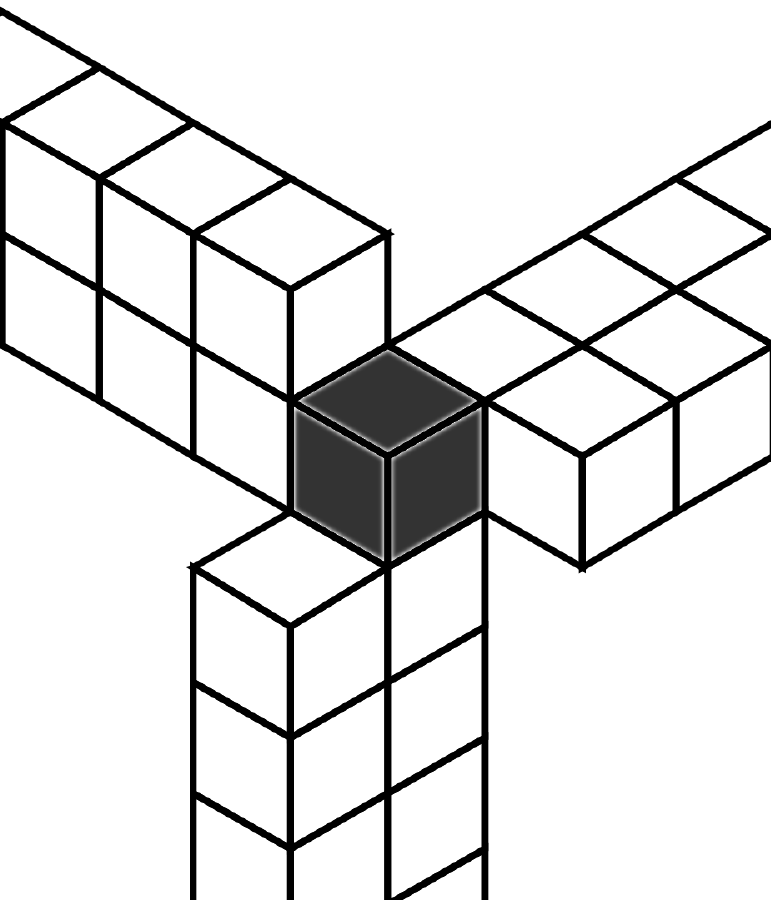}
\includegraphics[width=1.5in]{./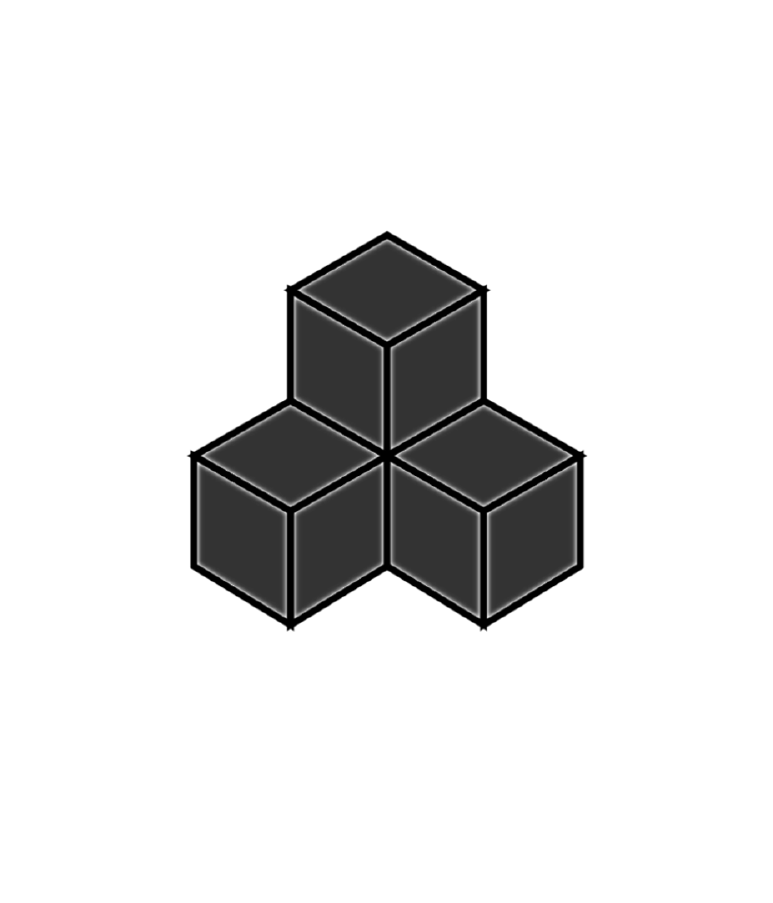}
\end{center}
	\caption{The $AB$ configuration $(\III, \II\cup\III)$, in the case where $\mu_1 = \mu_2 = \mu_3 = (2)$.}
\label{fig:AB}
\end{figure}

\subsubsection{The base \texorpdfstring{$AB$}{AB} configuration}
\label{sec:base_AB_config}

The set of all $AB$ configurations on $\pi$ will be denoted $\sAB(\pi)$. There is always at least one way to construct an $AB$ configuration on $\pi$. This will be called the \emph{base $AB$ configuration}, $AB_{\base}(\pi)$.

\begin{defn}
	Construct $A$ and $B$ from the boxes of $\pi$ as follows. Let $A$ consist of the type $\I^-$ boxes and the type $\III$ boxes. Let $B$ consist of the type $\II$ boxes and the unlabelled type $\III$ boxes. Define $AB_{\base}(\pi) = (A, B)$.
\end{defn}

\begin{example}
If $\mu_1=\mu_2=\mu_3=(2)$, then there is a labelled box configuration $\pi$ consisting of an unlabelled type $\III$ box $(0, 0, 0)$, and type $\II$ boxes $(1, 0, 0)$, $(0, 1, 0)$, and $(0, 0, 1)$. The base $AB$ configuration is $AB_{\base}(\pi)=(A, B)$, where $A=\{(0, 0, 0)\}$ and $B=\{(0, 0, 0), (1, 0, 0), (0, 1, 0), (0, 0, 1)\}$, and is illustrated in Figure~\ref{fig:AB}. In this case, $A=\III$ and $B=\II\cup\III$. 
\end{example}

We will now show that $AB_{\base}(\pi) \in \sAB(\pi)$. To establish this fact as well as subsequent statements, the following lemmas will be needed. 

\begin{lemma}
	\label{lemma:cylinder back neighbor}
Suppose $w\in\Cyl_j$ and $n(i)\in BN(w)$ is the back neighbor obtained by subtracting $1$ from the $i$th coordinate of $w$. Then, if $i=j$ or the $i$th coordinate of $w$ is positive, $n(i)\in\Cyl_j$.
\end{lemma}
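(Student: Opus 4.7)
The proof is a straightforward unpacking of definitions, and it splits naturally into two cases according to whether the decremented coordinate is the ``free'' axis of $\Cyl_j$ or one of the two axes indexing the Young diagram of $\mu_j$.

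First, I would handle the case $i=j$. By the definitions of $\Cyl_1, \Cyl_2, \Cyl_3$, the $j$th coordinate of a point of $\Cyl_j$ can be any integer, while membership in $\Cyl_j$ is determined entirely by the other two coordinates (possibly after a swap). Subtracting $1$ from the $j$th coordinate of $w$ leaves the remaining two coordinates unchanged, so $n(j)$ automatically lies in $\Cyl_j$.

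Next I would handle the case $i \neq j$ with the $i$th coordinate of $w$ positive. Here the two coordinates that determine membership in $\Cyl_j$ are precisely the ones indexed by $\{1,2,3\}\setminus\{j\}$, which includes $i$. Thus going from $w$ to $n(i)$ amounts to replacing a cell $(u,v)\in\mu_j$ (in the appropriate coordinate order) by the cell obtained by decrementing one of its two entries by $1$. Since the decremented entry was positive to begin with, the resulting pair still has nonnegative entries, and because the Young diagram of a partition is a down-set in $\mathbb{Z}_{\geq 0}^2$, this pair remains in $\mu_j$; hence $n(i)\in\Cyl_j$. The only care needed is to track, for each $j$, which of the three coordinates of $w$ corresponds to which coordinate of the underlying cell $(u,v)\in\mu_j$, and to check all three values of $j$ (equivalently, all six ordered pairs $(i,j)$ with $i\neq j$) — this is routine but slightly tedious.

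The main obstacle, such as it is, is simply notational bookkeeping: the three cylinders use different cyclic conventions for which coordinate is free and how $u,v$ are placed among the remaining coordinates, so one must be careful when translating ``the $i$th coordinate of $w$'' into a statement about entries of the cell of $\mu_j$. No nontrivial combinatorics beyond the down-set property of Young diagrams is required.
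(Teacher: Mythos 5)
Your proof is correct and follows essentially the same case split as the paper's: the $i=j$ case, where the free coordinate changes and the pair $(u,v)\in\mu_j$ is untouched, and the $i\neq j$, $w_i>0$ case, where one entry of the pair decreases by $1$ but stays nonnegative and hence stays in the Young diagram because it is an order ideal. The paper handles the bookkeeping uniformly by writing the defining pair as $(w_{j+1},w_{j+2})$ with indices mod $3$, which is a slightly cleaner way to track the ``which coordinate is which'' concern you raise, but the mathematical content is identical.
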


\begin{proof}
Let $w=(w_1, w_2, w_3)$ and $n(i)=(n_1, n_2, n_3)$, so that $n_i=w_i-1$ and $n_l = w_l$ for $l \neq i$. In what follows, all indices should be considered modulo $3$. Since $w\in\Cyl_j$, $(w_{j+1}, w_{j+2})\in\mu_j$. Suppose $i=j$. Then $(n_{j+1}, n_{j+2})=(w_{j+1}, w_{j+2})\in\mu_j$, so $n(i)\in\Cyl_j$. Suppose $w_i>0$. We may assume $i\neq j$, so $i=j+1$ or $i=j+2$. In the first case, $w_{j+1}-1\geq 0$, so $(n_{j+1}, n_{j+2})=(w_{j+1}-1, w_{j+2})\in\mu_j$, while in the second case, $w_{j+2}-1\geq 0$, so $(n_{j+1}, n_{j+2})=(w_{j+1}, w_{j+2}-1)\in\mu_j$. In both cases, $n(i)\in\Cyl_j$.
\end{proof}

\begin{lemma}
	\label{lemma:adjacent types I- and II}
Let $i\in\{1, 2, 3\}$. If $w\in\I^-$ is adjacent to $w'\in\II_{\bar{i}}$, then $w\in\Cyl_j^-$ for some $j\in\{1, 2, 3\}\setminus\{i\}$. 
\end{lemma}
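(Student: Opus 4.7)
The plan is to argue by contradiction. Write $w \in \Cyl_\ell^-$ for some $\ell \in \{1,2,3\}$; the goal is to rule out $\ell = i$.

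First I would record two basic facts that follow immediately from the definitions. For each $m \in \{1,2,3\}$, membership in $\Cyl_m$ is a condition on the two coordinates of a point \emph{other than} the $m$-th coordinate (for instance, $\Cyl_1 = \{(x,u,v) : (u,v) \in \mu_1\}$ places no constraint on $x$). Consequently, if $(a_1, a_2, a_3) \in \Cyl_m^-$, the two non-$m$ coordinates lie in $\mu_m$ and so are non-negative, which forces the $m$-th coordinate to be strictly negative. Also, intersecting definitions gives $\II \subseteq \mathbb{Z}^3_{\geq 0}$, so $w'$ has all coordinates non-negative.

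Next I would examine which coordinate of $w$ and $w'$ can differ. Since $w_\ell < 0 \leq w'_\ell$, and adjacency means $w, w'$ differ in exactly one coordinate (by $\pm 1$), that coordinate must be the $\ell$-th. Hence $w$ and $w'$ agree in the two remaining coordinates.

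Finally, suppose for contradiction that $\ell = i$. By the first observation, whether a point belongs to $\Cyl_i$ depends only on its two non-$i$ coordinates; since $w$ and $w'$ match there, we get $w \in \Cyl_i$ if and only if $w' \in \Cyl_i$. But $w \in \Cyl_i$ (as $\ell = i$) while $w' \notin \Cyl_i$ (as $w' \in \II_{\bar{i}} = \Cyl_j \cap \Cyl_k \setminus \Cyl_i$ with $\{j,k\} = \{1,2,3\} \setminus \{i\}$), a contradiction. Therefore $\ell \in \{1,2,3\} \setminus \{i\}$, as claimed.

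This argument is a direct unpacking of definitions, and I do not anticipate any substantive obstacle; the only conceptual point is the observation that the ``free'' coordinate in $\Cyl_m$ is exactly the coordinate along which the negative points of $\Cyl_m^-$ live, after which the contradiction is immediate. If desired, one could replace the contradiction step with a one-line appeal to Lemma~\ref{lemma:cylinder back neighbor} applied in reverse, but the direct argument seems cleanest.
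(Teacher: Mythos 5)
Your proof is correct and takes essentially the same route as the paper: both arguments reduce to observing that $w$ and $w'$ must differ only in the $j$-th coordinate (where $w \in \Cyl_j^-$) and that membership in $\Cyl_j$ is insensitive to that coordinate, forcing $w' \in \Cyl_j$ and hence $j \neq i$. The only cosmetic difference is that you phrase it as a contradiction and skip determining which of $w, w'$ is the back neighbor of the other, which the paper pins down but does not really need.
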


\begin{proof}
Either $w\in BN(w')$ or $w'\in BN(w)$. Since $w'\in\II\subseteq\mathbb{Z}^3_{\geq 0}$, if $w'\in BN(w)$, then $w\in\mathbb{Z}^3_{\geq 0}$. However, $w\in\I^-$, so $w\not\in\mathbb{Z}^3_{\geq 0}$. Thus, $w\in BN(w')$. Since $w\in\I^-$, $w\in\Cyl_j^-$ for some $j\in\{1, 2, 3\}$, so the $j$th coordinate of $w$ must be negative. Since $w'\in\mathbb{Z}^3_{\geq 0}$, $w$ must be the back neighbor obtained by subtracting $1$ from the $j$th coordinate of $w'$. Since $w\in\Cyl_j$, we find that $w'\in\Cyl_j$. On the other hand, since $w'\in\II_{\bar{i}}$, $w'\not\in\Cyl_i$, so $j\neq i$. 
\end{proof}

\begin{lemma}
	\label{lemma:adjacent type II}
Let $i\in\{1, 2, 3\}$. If $w\in\II$ is adjacent to $w'\in\II_{\bar{i}}$, then $w\in\II_{\bar{i}}$.
\end{lemma}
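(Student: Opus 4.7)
The plan is to leverage two structural facts: (i) the decomposition $\II = \II_{\bar{1}} \sqcup \II_{\bar{2}} \sqcup \II_{\bar{3}}$ is disjoint (because each $\II_{\bar{j}}$ consists precisely of the cells lying in exactly the two cylinders $\Cyl_{j+1}$ and $\Cyl_{j+2}$, indices mod $3$), and (ii) Lemma~\ref{lemma:cylinder back neighbor}, which transfers cylinder membership across back-neighbor pairs. Without loss of generality I would take $i = 1$, so $w' \in \Cyl_2 \cap \Cyl_3 \setminus \Cyl_1$, and reduce the claim $w \in \II_{\bar{1}}$ to the assertion $w \notin \Cyl_1$: combined with $w \in \II$ and disjointness, this forces $w$ into $\Cyl_2 \cap \Cyl_3 \setminus \Cyl_1 = \II_{\bar{1}}$.

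The next step is to split into three cases based on which coordinate $w$ and $w'$ differ in, i.e.\ $w - w' = \pm e_k$ for some $k \in \{1,2,3\}$. Case $k = 1$ is immediate: membership in $\Cyl_1$ depends only on the 2nd and 3rd coordinates, which agree for $w$ and $w'$, so $w \notin \Cyl_1$. Cases $k = 2$ and $k = 3$ are symmetric, and I would further split each by sign. In the ``$+$'' subcase, say $k = 2$ and $w = w' + e_2$, one has $w' \in BN(w)$ obtained by subtracting $1$ from the (positive) 2nd coordinate of $w$; if $w$ were in $\Cyl_1$, Lemma~\ref{lemma:cylinder back neighbor} (applied with $j = 1$, $i = 2$) would place $w' \in \Cyl_1$, contradicting $w' \in \II_{\bar{1}}$. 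In the ``$-$'' subcase $w = w' - e_2$, the inclusion $w \in \II \subseteq \mathbb{Z}^3_{\geq 0}$ gives $w_2 = w'_2 - 1 \geq 0$, so $w'_2 \geq 1$; now I would apply Lemma~\ref{lemma:cylinder back neighbor} in the other direction (to $w' \in \Cyl_3$, with $j = 3$, $i = 2$, and the 2nd coordinate of $w'$ positive) to transfer $\Cyl_3$ membership from $w'$ to $w$. Combined with the automatic $w \in \Cyl_2$ (since $\Cyl_2$ membership depends on the 1st and 3rd coordinates, which are preserved), this places $w$ in $\Cyl_2 \cap \Cyl_3$, and since $\II$ and $\III$ are disjoint this again forces $w \notin \Cyl_1$.

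The main subtlety I anticipate is the asymmetry between the two subcases of Case $k = 2$: the ``$+$'' side rules out $\Cyl_1$ directly via Lemma~\ref{lemma:cylinder back neighbor}, while the ``$-$'' side instead establishes $\Cyl_3$ membership and then appeals to the $\II/\III$ disjointness. Getting the direction of Lemma~\ref{lemma:cylinder back neighbor} right in each subcase, and keeping careful track of the nonnegativity hypothesis on the relevant coordinate, is the only real bookkeeping; the rest of the argument is essentially automatic from the definitions.
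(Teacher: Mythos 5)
Your proposal is correct, and the mechanism is the same (Lemma~\ref{lemma:cylinder back neighbor}), but the case structure differs from the paper's. The paper splits only on the direction of the back-neighbor relation: when $w\in BN(w')$, it applies Lemma~\ref{lemma:cylinder back neighbor} twice---once for each $j\neq i$---to place $w\in\Cyl_j$ for both $j\neq i$, which together with $w\in\II$ (so $w\notin\III$) gives $w\in\II_{\bar i}$; when $w'\in BN(w)$, it writes $w\in\II_{\bar j}$ for some $j$ and applies the first case to the reversed pair $(w', w)$, forcing $j=i$. Your argument instead fixes $i=1$ by symmetry, reduces to showing $w\notin\Cyl_1$, and case-splits on which coordinate differs and in which direction. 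This buys you a trivial $k=1$ case (both signs at once, since $\Cyl_1$ depends only on coordinates $2$ and $3$) but costs you separate ``$+$'' and ``$-$'' arguments for $k=2,3$, each invoking Lemma~\ref{lemma:cylinder back neighbor} in a different direction and with a different cylinder. The paper's proof is more uniform because it never needs to know which coordinate changed: in the $w\in BN(w')$ case it gets both cylinder memberships $\Cyl_j$, $j\neq i$, from the lemma directly (using $w\in\mathbb{Z}^3_{\geq 0}$ to guarantee the positivity hypothesis), and the reverse case is disposed of by symmetry rather than by a contradiction argument. Both routes are correct; yours is more explicit and elementary, while the paper's is shorter once you trust the symmetry step.
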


\begin{proof}
Either $w\in BN(w')$ or $w'\in BN(w)$. If $w\in BN(w')$, observe that $w'\in\Cyl_j$ for $j\neq i$, so by Lemma~\ref{lemma:cylinder back neighbor}, $BN(w')\cap\mathbb{Z}^3_{\geq 0}\subseteq\Cyl_j$. Since $w\in\II\subseteq\mathbb{Z}^3_{\geq 0}$, we have $w\in\Cyl_j$ for $j\neq i$, so $w\in\II_{\bar{i}}$. Otherwise, $w'\in BN(w)$. Then $w\in\II_{\bar{j}}$ for some $j\in\{1, 2, 3\}$, and by the same argument, $w'\in\II_{\bar{j}}$. We deduce that $j=i$, so $w\in\II_{\bar{i}}$.
\end{proof}

\begin{lemma}
	\label{lemma:type III back neighbors}
Suppose $w\in\III$ and $n\in BN(w)$.  Then $n\in\I^-\cup\III$.
\end{lemma}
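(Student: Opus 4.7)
The plan is to apply Lemma~\ref{lemma:cylinder back neighbor} to each back neighbor of $w$. Fix $w=(w_1,w_2,w_3)\in\III$ and let $n=n(i)\in BN(w)$ denote the back neighbor obtained by subtracting $1$ from the $i$th coordinate. Since $w\in\III=\Cyl_1\cap\Cyl_2\cap\Cyl_3$, we have $w\in\Cyl_j$ for every $j\in\{1,2,3\}$, so Lemma~\ref{lemma:cylinder back neighbor} applied with $j=i$ immediately yields $n\in\Cyl_i$.

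From here I would split into two cases according to whether $n\in\mathbb{Z}^3_{\geq 0}$. If $n\notin\mathbb{Z}^3_{\geq 0}$, then $n\in\Cyl_i\setminus\mathbb{Z}^3_{\geq 0}=\Cyl_i^-\subseteq\I^-$, and there is nothing more to prove. Otherwise $n\in\mathbb{Z}^3_{\geq 0}$, which forces $n_i=w_i-1\geq 0$, i.e.~$w_i>0$. In that situation the second hypothesis of Lemma~\ref{lemma:cylinder back neighbor} (``the $i$th coordinate of $w$ is positive'') holds regardless of which $j$ we pick, so for every $j\in\{1,2,3\}$ we conclude $n\in\Cyl_j$, giving $n\in\III$.

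Since Lemma~\ref{lemma:cylinder back neighbor} already handles the coordinate bookkeeping, I do not foresee any real obstacle. The only subtle point is to verify that the two cases are exhaustive and that they cannot overlap in a problematic way, which is immediate: $\mathbb{Z}^3_{\geq 0}$ and its complement partition $\mathbb{Z}^3$, and in the second case $n\in\mathbb{Z}^3_{\geq 0}$ rules out $n\in\I^-$, so the dichotomy $n\in\I^-$ versus $n\in\III$ matches the dichotomy on the sign of $n_i$. Thus in both cases $n\in\I^-\cup\III$, as required.
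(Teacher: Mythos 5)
Your proof is correct and uses the same key tool (Lemma~\ref{lemma:cylinder back neighbor}) as the paper's proof; the only cosmetic difference is that you case-split on whether $n\in\mathbb{Z}^3_{\geq 0}$, while the paper argues by contrapositive on whether $n\in\III$, and these amount to the same thing.
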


\begin{proof}
Let $n(i)\in BN(w)$ be the neighbor obtained by subtracting $1$ from the $i$th coordinate of $w$. If $n(i)\not\in\III$, then $n(i)\not\in\Cyl_j$ for some $j\in\{1, 2, 3\}$, so by Lemma~\ref{lemma:cylinder back neighbor}, the $i$th coordinate of $w$ is not positive. Since $w\in\III\subseteq\mathbb{Z}^3_{\geq 0}$, it follows that the $i$th coordinate of $w$ is $0$, so the $i$th coordinate of $n(i)$ is $-1$. Therefore, by the same lemma, $n(i)\in\Cyl_i\setminus\mathbb{Z}^3_{\geq 0}=\Cyl_i^-\subseteq\I^-$.
\end{proof}

\begin{lemma}
	If $\pi$ is a labelled box configuration, then $AB_{\base}(\pi)$ satisfies Conditions~\ref{conditions:ab box stacking} and Conditions~\ref{conditions:ab on pi}, i.e., $AB_{\base}(\pi)\in\sAB(\pi)$.
\end{lemma}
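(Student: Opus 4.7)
The plan is to verify each of the two sets of conditions in turn, leveraging the labelled box stacking rules from Conditions~\ref{conditions:labelled box stacking} together with Lemmas~\ref{lemma:adjacent type II} and~\ref{lemma:type III back neighbors} and Remark~\ref{rem:unlabelled in front of unlabelled}. Conditions~\ref{conditions:ab on pi} are immediate from construction: since $\I^-$, $\II$, $\III$ are pairwise disjoint, $A\cup B$ consists exactly of the type $\I^-$, type $\II$, and type $\III$ boxes of $\pi$, which is every box of $\pi$, and $A\cap B$ is the intersection of the type $\III$ boxes of $\pi$ (contributed by $A$) with the unlabelled type $\III$ boxes of $\pi$ (contributed by $B$), namely the unlabelled type $\III$ boxes.

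For Conditions~\ref{conditions:ab box stacking}.1, I would take $w\in\I^-\cup\III$ and some $n\in BN(w)\cap A$, so $n$ is a type $\I^-$ or type $\III$ box of $\pi$. If $w\in\I^-$, part 1 of Conditions~\ref{conditions:labelled box stacking} forces $w$ to be a box of $\pi$, hence $w\in A$. If $w\in\III$, then $n$ induces a nonzero subspace in the quotient at $w$, so part 3 of Conditions~\ref{conditions:labelled box stacking} forces $w$ to be a box of $\pi$, and again $w\in A$.

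For Conditions~\ref{conditions:ab box stacking}.2, I would take $w\in\II\cup\III$ and some $n\in BN(w)\cap B$, so $n$ is a type $\II$ or unlabelled type $\III$ box of $\pi$. If $w\in\II_{\bar i}$, then in the type $\II$ sub-case Lemma~\ref{lemma:adjacent type II} shows $n\in\II_{\bar i}$, while in the unlabelled type $\III$ sub-case $n$ is unlabelled by assumption; in either sub-case $n$ is not a type $\III$ box labelled $\vspan\{\mathbf{i}_n+\mathbb{C}\cdot(1,1,1)_n\}$, so part 2 of Conditions~\ref{conditions:labelled box stacking} forces $w$ to be a box of $\pi$, giving $w\in\II\subseteq B$. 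If instead $w\in\III$, Lemma~\ref{lemma:type III back neighbors} shows $BN(w)\subseteq\I^-\cup\III$, so $n$ cannot be of type $\II$ and must therefore be an unlabelled type $\III$ box of $\pi$; Remark~\ref{rem:unlabelled in front of unlabelled} then forces $w$ to be an unlabelled box of $\pi$, so $w\in B$.

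The main obstacle is the sub-case $w\in\III$ of condition 2: a priori one might fear that $n\in B$ could be a type $\II$ box, in which case it is not evident that the labelled box stacking rules would force $w$ to appear in $\pi$ as the specifically unlabelled type $\III$ box required for membership in $B$. Lemma~\ref{lemma:type III back neighbors} is precisely what eliminates this scenario by constraining the types of back neighbors of a type $\III$ cell. Once that lemma is in hand, the rest of the argument is a careful bookkeeping exercise matching each case of the $AB$ stacking rules to the appropriate part of the labelled box stacking rules.
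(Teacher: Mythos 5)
Your proof is correct and follows essentially the same route as the paper's: Conditions~\ref{conditions:ab on pi} from construction, Conditions~\ref{conditions:ab box stacking}.1 via parts 1 and 3 of the labelled box stacking rules, and Conditions~\ref{conditions:ab box stacking}.2 via part 2 for $w\in\II$ and via Lemma~\ref{lemma:type III back neighbors} together with Remark~\ref{rem:unlabelled in front of unlabelled} for $w\in\III$. The only small inefficiency is the invocation of Lemma~\ref{lemma:adjacent type II} in the $w\in\II$, $n$ of type $\II$ sub-case: it suffices to observe that $n$ of type $\II$ is in particular not a labelled type $\III$ box, so Conditions~\ref{conditions:labelled box stacking}.2 applies directly without needing to identify which $\II_{\bar j}$ contains $n$.
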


\begin{proof}
Let $(A, B)=AB_{\base}(\pi)$. Conditions ~\ref{conditions:ab on pi} are immediate. To check Conditions~\ref{conditions:ab box stacking}.1, suppose that $w \in \I^- \cup \III$ and $n\in BN(w)\cap A$. We must show that $w\in A$. Since $n\in A$, $n$ is a box of $\pi$ in $\I^-\cup\III$. If $w \in \I^-$, the claim follows from Conditions~\ref{conditions:labelled box stacking}.1. If $w \in \III$, the claim follows from Conditions~\ref{conditions:labelled box stacking}.3. 

Similarly, to check Conditions~\ref{conditions:ab box stacking}.2, suppose that $w \in \II \cup \III$ and $n\in BN(w)\cap B$. We must show that $w\in B$. Since $n\in B$, $n$ is a type $\II$ box of $\pi$ or an unlabelled type $\III$ box of $\pi$. If $w \in \II$, then the claim follows from Conditions~\ref{conditions:labelled box stacking}.2. If $w \in \III$, then $w$ is a box of $\pi$, by Conditions~\ref{conditions:labelled box stacking}.3, but we need to check that $w$ is unlabelled. Since $w\in\III$ and $n\in BN(w)$, Lemma~\ref{lemma:type III back neighbors} shows that $n$ cannot be in $\II$, so it must be an unlabelled type $\III$ box. Since $n$ is unlabelled, $w$ must be unlabelled as well by Remark~\ref{rem:unlabelled in front of unlabelled}. 
\end{proof}

We will also need the following definitions.

\begin{defn}
Let $\text{PT-box}$ be the set of all labelled box configurations, and let $\sAB_{\text{all}}$ be the set of all $AB$ configurations. 
\end{defn}

Let $\phi_{\base}:\text{PT-box} \rightarrow \sAB_{\text{all}}$ be the map that sends $\pi$ to $AB_{\base}(\pi)$, and let $\sAB_{\base} = \phi_{\base}(\text{PT-box})$. Observe that \[\sAB_{\base} = \bigcup_{\pi\in\text{PT-box}}\left\{AB_{\base}(\pi)\right\}.\]

\subsubsection{The labelling algorithm for \texorpdfstring{$AB$}{AB} configurations}
\label{sec:AB_labelling_alg}

Thus far, we have described a method for constructing an $AB$ configuration from a labelled box configuration. We now describe an algorithm that labels $AB$ configurations. When successful, its output can be used to construct a labelled box configuration from an $AB$ configuration. Note that the algorithm assigns labels to cells, not boxes.

\begin{defn}
Let $(A, B)\in\sAB_{\text{all}}$. We call the set \[\mathcal{L}(A, B):=(\I^-\cap A)\cup(\II\setminus B)\cup(\III\cap(A\triangle B))\] the \emph{labelling set} of $(A, B)$.
\end{defn}

We label cells by assigning labels to connected components of $\mathcal{L}(A, B)$ using the following algorithm. 

\begin{algorithm}
	\begin{enumerate}[1.]
		\item If a connected component of $\mathcal{L}(A, B)$ contains a cell in $\Cyl_i^-\cup\II_{\bar{i}}$ and a cell in $\Cyl_j^-\cup\II_{\bar{j}}$, where $i\neq j$, terminate with failure.
		\item For each connected component $C$ of $\mathcal{L}(A, B)$ that contains a cell in $\Cyl_i^-\cup\II_{\bar{i}}$, label each element of $C$ by $i$.
		\item For each remaining connected component $C$ of $\mathcal{L}(A, B)$, label each element of $C$ by the same freely chosen element of $\mathbb{P}\left(\frac{\mathbb{C}\cdot\mathbf{1}\oplus\mathbb{C}\cdot\mathbf{2}\oplus\mathbb{C}\cdot\mathbf{3}}{\mathbb{C}\cdot(1, 1, 1)}\right)$.
	\end{enumerate}
\label{algorithm:AB labelling algorithm}
\end{algorithm}

\begin{remark}
When the context is clear, we will denote $\mathbb{P}\left(\frac{\mathbb{C}\cdot\mathbf{1}\oplus\mathbb{C}\cdot\mathbf{2}\oplus\mathbb{C}\cdot\mathbf{3}}{\mathbb{C}\cdot(1, 1, 1)}\right)$ by $\mathbb{P}^1$. We will also use $\langle z_1, z_2, z_3\rangle_w$ to denote $\vspan\left\{z_1\mathbf{1}_w+z_2\mathbf{2}_w+z_3\mathbf{3}_w+\mathbb{C}\cdot(1, 1, 1)_w\right\}\in\mathbb{P}\left(\frac{\mathbb{C}\cdot\mathbf{1}_w\oplus\mathbb{C}\cdot\mathbf{2}_w\oplus\mathbb{C}\cdot\mathbf{3}_w}{\mathbb{C}\cdot(1, 1, 1)_w}\right)$ and $\langle z_1, z_2, z_3\rangle$ to denote $\vspan\left\{z_1\mathbf{1}+z_2\mathbf{2}+z_3\mathbf{3}+\mathbb{C}\cdot(1, 1, 1)\right\}\in\mathbb{P}\left(\frac{\mathbb{C}\cdot\mathbf{1}\oplus\mathbb{C}\cdot\mathbf{2}\oplus\mathbb{C}\cdot\mathbf{3}}{\mathbb{C}\cdot(1, 1, 1)}\right)$. 
\end{remark}

\begin{defn}
For $i\in\{1, 2, 3\}$, if $w\in\Cyl_i^-\cup\II_{\bar{i}}$, set $\ell(w):=i$.
\end{defn}

\begin{lemma}
	\label{remark:definition of ell}
If $w\in\I^-\cup\II$ is labelled at any point in Algorithm~\ref{algorithm:AB labelling algorithm}, then it is labelled by $\ell(w)$. 
\end{lemma}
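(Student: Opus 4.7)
The plan is to first verify that the function $\ell$ is well-defined on $\I^-\cup\II$, then to read off the conclusion by inspecting the three steps of Algorithm~\ref{algorithm:AB labelling algorithm}.

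For well-definedness of $\ell$, I would show that the six sets $\Cyl_1^-,\Cyl_2^-,\Cyl_3^-,\II_{\bar 1},\II_{\bar 2},\II_{\bar 3}$ are pairwise disjoint. The three $\Cyl_i^-$ are disjoint because the definition of $\Cyl_i$ forces two of the three coordinates to be nonnegative (namely, those indexed by the Young diagram of $\mu_i$), so the unique ``free'' coordinate that is allowed to be negative determines $i$. The three $\II_{\bar i}$ are disjoint by definition, since $\II_{\bar k}\subseteq\Cyl_i$ whenever $k\neq i$, while $\II_{\bar i}$ is disjoint from $\Cyl_i$. Finally, $\Cyl_i^-$ and $\II_{\bar k}$ never intersect, because $\II\subseteq\mathbb{Z}^3_{\geq 0}$ while $\Cyl_i^-$ lies outside the nonnegative octant. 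Consequently, $\I^-\cup\II$ is the disjoint union of the three sets $\Cyl_i^-\cup\II_{\bar i}$, and $\ell(w)$ is the unique $i\in\{1,2,3\}$ with $w\in\Cyl_i^-\cup\II_{\bar i}$.

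Now suppose $w\in\I^-\cup\II$ is labelled at some point in the algorithm, and set $j:=\ell(w)$, so $w\in\Cyl_j^-\cup\II_{\bar j}$. Labels are only ever assigned during step~2 or step~3. Step~3 applies only to those connected components of $\mathcal{L}(A,B)$ that meet no $\Cyl_i^-\cup\II_{\bar i}$, so it cannot be responsible for $w$'s label: the component containing $w$ certainly meets $\Cyl_j^-\cup\II_{\bar j}$, via $w$ itself. Therefore $w$ must be labelled in step~2, which assigns a single index $i$ to $w$'s entire component, where the component meets $\Cyl_i^-\cup\II_{\bar i}$. Because step~1 did not terminate the algorithm with failure, the component cannot simultaneously meet $\Cyl_i^-\cup\II_{\bar i}$ and $\Cyl_j^-\cup\II_{\bar j}$ for distinct $i,j$; since the component meets $\Cyl_j^-\cup\II_{\bar j}$, we must have $i=j=\ell(w)$.

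There is no real obstacle here, as this is essentially a definition-chase. The only mildly fiddly point is the disjointness verification underpinning $\ell$, which requires unpacking the asymmetric roles played by the three coordinates in the definitions of $\Cyl_1,\Cyl_2,\Cyl_3$; the rest of the argument follows directly from the structure of the three numbered steps.
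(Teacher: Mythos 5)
Your proof is correct and follows the same essential line as the paper's: $w\in\I^-\cup\II$ lies in some $\Cyl_i^-\cup\II_{\bar i}$, so its connected component meets $\Cyl_i^-\cup\II_{\bar i}$ and is therefore labelled $i=\ell(w)$ in step~2. The paper's version is terser and takes for granted the two details you spell out — that $\ell$ is well-defined (the six sets $\Cyl_i^-$, $\II_{\bar i}$ are pairwise disjoint) and that step~3 cannot be the source of $w$'s label — but making these explicit is a welcome tightening rather than a different argument.
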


\begin{proof}
Let $w\in\I^-\cup\II$. Suppose $w$ is labelled at some point in Algorithm~\ref{algorithm:AB labelling algorithm}. Then $w$ is an element of some connected component $C$ of $\mathcal{L}(A, B)$. If $w\in\I^-$, then $w\in\Cyl_i^-$ for some $i\in\{1, 2, 3\}$, so $w$ is labelled by $i$ in step 2 of Algorithm~\ref{algorithm:AB labelling algorithm}, and $\ell(w)=i$. Otherwise, $w\in\II$, so $w\in\II_{\bar{i}}$ for some $i\in\{1, 2, 3\}$. In this case, $w$ is labelled by $i$ in step 2 of Algorithm~\ref{algorithm:AB labelling algorithm} and $\ell(w)=i$. 
\end{proof}

\begin{defn}
Given $(A, B)\in\sAB_{\text{all}}$ and a connected component $C$ of $\mathcal{L}(A, B)$, let \[\mathcal{N}(C)=\left\lvert\left\{\ell\left(w\right)\mid w\in C\cap\left(\I^-\cup\II\right)\right\}\right\rvert.\]
\end{defn}

\begin{remark}
\label{remark:N(C)}
Let $(A, B)\in\sAB_{\text{all}}$. Algorithm~\ref{algorithm:AB labelling algorithm} terminates if and only if there is a connected component $C$ of $\mathcal{L}(A, B)$ such that $\mathcal{N}(C)>1$. Moreover, if Algorithm~\ref{algorithm:AB labelling algorithm} does not terminate, then a connected component $C$ of $\mathcal{L}(A, B)$ is labelled in step 2 if and only if $\mathcal{N}(C)=1$, and $C$ is labelled in step 3 if and only if $\mathcal{N}(C)=0$. Finally, if $w$ is labelled in step 3 of Algorithm~\ref{algorithm:AB labelling algorithm}, then $w\in C$, where $C$ is a connected component of $\mathcal{L}(A, B)$ that does not contain any cells in $\Cyl_i^-\cup\II_{\bar{i}}$ for any $i\in\{1, 2, 3\}$, so \[w\in C\subseteq\mathcal{L}(A, B)\setminus\left(\bigcup_{i=1}^3\Cyl_i^-\cup\II_{\bar{i}}\right)=\mathcal{L}(A, B)\setminus(\I^-\cup\II)\subseteq\III\cap(A\triangle B).\] 
\end{remark}

Because Algorithm~\ref{algorithm:AB labelling algorithm} may fail in step 1, there are $AB$ configurations that cannot be labelled. 

\begin{defn}
A \emph{labelled $AB$ configuration} is an $AB$ configuration for which Algorithm~\ref{algorithm:AB labelling algorithm} succeeds.
\end{defn}

\begin{figure}[htb]
\centering
\includegraphics[width=1in]{./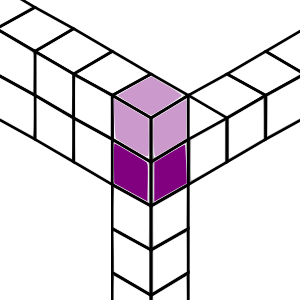}
	\hspace{0.25in}
\includegraphics[width=1in]{./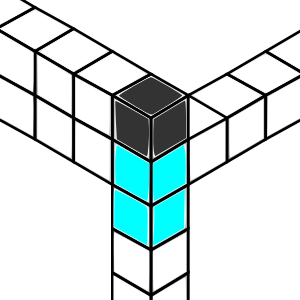}
	\hspace{0.25in}
\includegraphics[width=1in]{./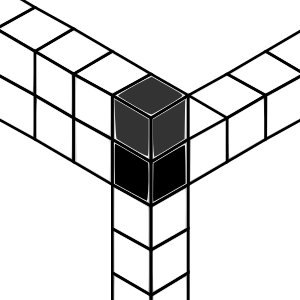}
	\hspace{0.25in}
\includegraphics[width=1in]{./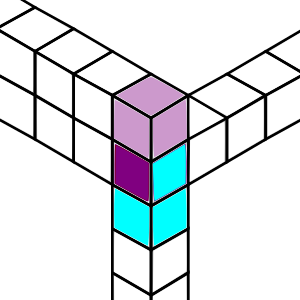}
\caption{The $AB$ configurations from Example~\ref{ex:ABlabellings}.}
\label{fig:ABlabellings}
\end{figure}

\begin{example}
\label{ex:ABlabellings}
As in Example~\ref{ex:labelledboxconfigsfrompt}, let $\mu_1 = (1), \mu_2 = (2),$ and $\mu_3 = (1)$, so $\III = \{ (0, 0, 0)\}$ and $\II = \II_{\bar{1}} = \{ (0, 0, 1)\}$. In Figure~\ref{fig:ABlabellings}, we illustrate four $AB$ configurations, three of which are labelled $AB$ configurations. The first three of these configurations appear in Example~\ref{ex:labelledboxconfigsfrompt} as the configuration (1) with $|\pi| = 1$, the configuration (4) with $|\pi| = 3$, and the configuration (3) with $|\pi| = 2$. 


\begin{enumerate}
\item 
$A$ consists of a single box at $(0, 0, 0)$ and $B = \varnothing$. Step 2 of Algorithm~\ref{algorithm:AB labelling algorithm} gives the connected component consisting of cells $(0, 0, 0)$ and $(0, 0, 1)$ the label 1, which is indicated by the color purple. The cell $(0, 0, 0)$ is opaque because it is a box; the cell $(0, 0, 1)$ is not. 
\item 
$A = \{ (0, 0, 0), (0, 0, -1)\}$ and $B = \{ (0, 0, 1)\}$. The box in $B$ is not in the labelling set. Step 2 labels the cells in $A$ by 3, which we illustrate by coloring the two boxes cyan. The box at $(0, 0, 1)$ is colored gray because it does not get a label.
\item 
$A = \varnothing$ and $B =\{ (0, 0, 0), (0, 0, 1)\}$. Again, the box at $(0, 0, 1)$ is not in the labelling set. The box at $(0, 0, 0)$ has a freely chosen label in $\mathbb{P}^1$.
\item 
$B = \varnothing$ and $A = \{ (0, 0, 0), (0, 0, -1)\}$. The algorithm terminates with failure in step 1 because $(0, 0, -1) \in\Cyl_{3}^{-}$ and $(0, 0, 1) \in \II_{\bar{1}}$, and these cells are in the same connected component. In the figure, $(0, 0, 0)$ is colored both cyan, required by the box at $(0, 0, -1)$, and purple, required by the cell at $(0, 0, 1)$. 
\end{enumerate}
\end{example}

\begin{figure}[htb]
\centering
\includegraphics[width=1.5in]{./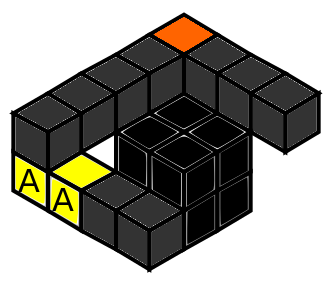}
\includegraphics[width=1.5in]{./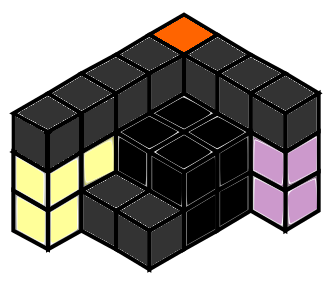}
\caption{The $AB$ configuration from Example~\ref{ex:spicy}.}
\label{fig:tripartiteex3}
\end{figure}

\begin{example}
\label{ex:spicy}
Figure~\ref{fig:tripartiteex3} shows a labelled $AB$ configuration with $\mu_1 = (3, 3, 1)$, $\mu_2 = (3, 2, 2, 1)$, and $\mu_3 = (5, 3, 3, 1)$. The left image shows the configuration. The boxes belonging to $A$ are marked; all other boxes are in $B$. The right image includes surrounding cells in $\II$. In both images, yellow cells are labelled 2 and purple cells are labelled 1. Opaque cells are boxes in the configuration and transparent cells are not. The two connected components of $\mathcal{L}(A, B)$ labelled by freely chosen elements of $\mathbb{P}^1$ are colored black and orange, respectively. 
\end{example}

\subsubsection{Projection to the base \texorpdfstring{$AB$}{AB} configuration}
\label{sec:box_config_maps}

Given a labelled $AB$ configuration $(A, B)$, we can define a set of labelled boxes $\pi(A, B)$ as follows. 

\begin{defn}
\label{defn:pi(A, B)}
Take $A\cup B$ to be the set of boxes of $\pi(A, B)$ and label type $\III$ boxes using the labels specified by Algorithm~\ref{algorithm:AB labelling algorithm}. More precisely, given a connected component $C$ of $\mathcal{L}(A, B)$, if Algorithm~\ref{algorithm:AB labelling algorithm} labels $C$ by $i\in\{1, 2, 3\}$, let the label of $w\in\III\cap C$ in $\pi(A, B)$ be $\vspan\left\{\mathbf{i}_w+\mathbb{C}\cdot(1, 1, 1)_w\right\}$, while if Algorithm~\ref{algorithm:AB labelling algorithm} labels $C$ by a freely chosen element $\langle z_1, z_2, z_3\rangle$ of $\mathbb{P}^1$, let the label of $w\in\III\cap C$ in $\pi(A, B)$ be the same freely chosen element $\langle z_1, z_2, z_3\rangle_w$ of $\mathbb{P}^1_w$. 
\end{defn}

Define a map $P:\sAB_{\text{all}} \rightarrow \sAB_{\text{all}}$ by letting $P(A,B)$ be the $AB$ configuration obtained by moving all multiplicity 1 type $\III$ boxes into $A$. That is, let \[P(A, B)=(A\cup(\III\cap(B\setminus A)), B\setminus(\III\cap(B\setminus A))).\]

\begin{lemma}
	\label{lemma:well-definedness of P}
This map is well-defined.  In fact, $P$ takes every element of $\sAB(\pi)$ to $AB_{\base}(\pi)$.
\end{lemma}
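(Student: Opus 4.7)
The plan is to split the assertion into two independent claims and dispatch each by unpacking definitions and splitting into the cell types $\I^-$, $\II$, $\III$. Writing $P(A,B)=(A',B')$ with $A' = A \cup (\III \cap (B \setminus A))$ and $B' = B \setminus (\III \cap (B \setminus A))$, note that the only cells $P$ moves are type $\III$ cells in $B\setminus A$, which are added to $A$ and deleted from $B$.

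For well-definedness I would verify the two parts of Conditions~\ref{conditions:ab box stacking} in turn. Checking the $A$-closure is straightforward: given $w\in\I^-\cup\III$ and $n\in BN(w)\cap A'$, either $n\in A$ (and Condition 1 for $(A,B)$ applies), or $n\in\III\cap(B\setminus A)$, in which case $n\in\mathbb{Z}^3_{\geq 0}$ forces $w\in\mathbb{Z}^3_{\geq 0}$, so $w\in\III$, and Condition 2 for $(A,B)$ places $w\in B$; whether $w$ lies in $A$ or in $B\setminus A$, it ends up in $A'$. The $B$-closure is where I expect the main obstacle, since $B'$ shrinks from $B$ and deletion could plausibly break closure. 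Given $w\in\II\cup\III$ and $n\in BN(w)\cap B'$, Condition 2 for $(A,B)$ gives $w\in B$; the case $w\in\II$ is immediate, while in the case $w\in\III$ the target reduces to showing $w\in A$. Here I would invoke Lemma~\ref{lemma:type III back neighbors}, which forces $n\in\I^-\cup\III$; combined with $n\in B\subseteq\II\cup\III$ this yields $n\in\III$, and survival of $n$ into $B'$ then forces $n\in A$. Condition 1 for $(A,B)$ finally gives $w\in A\cap B\subseteq B'$.

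For the identity $P(A,B)=AB_{\base}(\pi)$, I would partition $A\cup B$ by cell type. Because $A\subseteq\I^-\cup\III$, $B\subseteq\II\cup\III$, and $\I^-\cap\mathbb{Z}^3_{\geq 0}=\varnothing$, the type-$\I^-$ boxes of $\pi$ are $A\cap\I^-$, the type-$\II$ boxes are $B\cap\II$, the type-$\III$ boxes are $(A\cup B)\cap\III$, and by Conditions~\ref{conditions:ab on pi} the unlabelled ones are exactly $A\cap B$. A short rearrangement then gives
\[A'=(A\cap\I^-)\cup(A\cap\III)\cup(\III\cap(B\setminus A))=(A\cap\I^-)\cup(\III\cap(A\cup B))=A_{\base},\]
and similarly $B'=(B\cap\II)\cup((B\cap\III)\cap A)=(B\cap\II)\cup(A\cap B)=B_{\base}$, yielding the desired identity.
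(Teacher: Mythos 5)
Your proposal is correct and follows essentially the same route as the paper for well-definedness: both verify Conditions~\ref{conditions:ab box stacking} by splitting on whether $n\in A$ or $n\in\III\cap(B\setminus A)$, and both use Lemma~\ref{lemma:type III back neighbors} at the same place to handle $B'$-closure. (You silently skip the finiteness check, but this is trivial since $A'\subseteq A\cup B$ and $B'\subseteq B$.)

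For the second part your argument is a genuinely cleaner presentation. The paper first observes $A'\cup B'=A\cup B$ and $A'\cap B'=A\cap B$ to get $(A',B')\in\sAB(\pi)$, then argues by necessary conditions and contradiction that $A'$ and $B'$ have the right description. You instead decompose $A$ and $B$ by cell type once and for all, note that Conditions~\ref{conditions:ab on pi} identify the type-$\I^-$, type-$\II$, type-$\III$, and unlabelled type-$\III$ boxes with $A\cap\I^-$, $B\cap\II$, $\III\cap(A\cup B)$, and $A\cap B$ respectively, and then compute $A'$ and $B'$ directly from the definition of $P$. The identities $(A\cap\III)\cup(\III\cap(B\setminus A))=\III\cap(A\cup B)$ and $(B\cap\III)\setminus(B\setminus A)=A\cap B$ do all the work, with no case analysis or contradiction needed. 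Both approaches rest on the same facts (that $A\cap B\subseteq\III$, and that Conditions~\ref{conditions:ab on pi} pin down the cell-type decomposition); yours reads more as a single set-theoretic calculation, which I find preferable.
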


\begin{proof}
Given $(A, B)\in\sAB_{\text{all}}$, let $(A', B')=(A\cup(\III\cap(B\setminus A)), B\setminus(\III\cap(B\setminus A)))$. We need to see that $(A', B')\in\sAB_{\text{all}}$.

First, $A'\subseteq A\cup\III\subseteq\I^-\cup\III$ and $B'\subseteq B\subseteq\II\cup\III$ are finite, since $A'\subseteq A\cup B$ and since $A$ and $B$ are both finite. Also, note that $A\subseteq A'$. To check Conditions~\ref{conditions:ab box stacking}.1, suppose $w\in\I^-\cup\III$ and $n\in BN(w)\cap A'$.  If $n\in A$, then $w\in A\subseteq A'$, by Conditions~\ref{conditions:ab box stacking}.1 and the fact that $(A, B)$ is an $AB$ configuration. Otherwise, $n\in A'\setminus A$, i.e., $n\in\III\cap(B\setminus A)$. Then $n\in\III\subseteq\mathbb{Z}^3_{\geq 0}$, so $w\in\mathbb{Z}^3_{\geq 0}$. Since $w\in\I^-\cup\III$, it follows that $w\in\III$, so by Conditions~\ref{conditions:ab box stacking}.2 and the fact that $(A, B)$ is an $AB$ configuration, $w\in\III\cap B\subseteq\III\cap(A\cup B)\subseteq A'$.

Similarly, to check Conditions~\ref{conditions:ab box stacking}.2, suppose $w\in\II\cup\III$ and $n\in BN(w)\cap B'$. Since $B'\subseteq B$, $w\in B$, by Conditions~\ref{conditions:ab box stacking}.2 and the fact that $(A, B)$ is an $AB$ configuration. If $w\in\II$, then $w\in B'$. Otherwise, $w\in\III$. By Lemma~\ref{lemma:type III back neighbors}, $n\in\I^-\cup\III$. However, $n\in B'\subseteq\II\cup\III$.  Thus, $n\in\III$. Since $n\in B'$, $n\in\III\cap B'\subseteq\III\cap B\setminus(B\setminus A)\subseteq A\cap B$. In particular, $n\in A$, so by Conditions~\ref{conditions:ab box stacking}.1 and the fact that $(A, B)$ is an $AB$ configuration, $w\in A$, i.e., $w\in A\cap B\subseteq B'$.

Finally, suppose $(A, B)\in\sAB(\pi)$.  The fact that $(A', B')\in\sAB(\pi)$ is a consequence of the equalities $A'\cup B'=A\cup B$, and $A'\cap B'=A\cap B$, which are both clear.  We claim that $(A', B')=AB_{\base}(\pi)$.  To see this, we must show that $A'$ consists of the type $\I^-$ and type $\III$ boxes of $\pi$, while $B'$ consists of the type $\II$ and unlabelled type $\III$ boxes of $\pi$.  Since $(A', B')$ is an $AB$ configuration on $\pi$, we have $A'\subseteq\I^-\cup\III$ and $B'\subseteq\II\cup\III$, and by Conditions~\ref{conditions:ab on pi}.1, $A'$ must contain all type $\I^-$ boxes of $\pi$, while $B'$ must contain all type $\II$ boxes of $\pi$. Also, by Conditions~\ref{conditions:ab on pi}.2, we know that $A'$ and $B'$ contain all unlabelled type $\III$ boxes of $\pi$. So, by Conditions~\ref{conditions:ab on pi}.1 and since $A'\subseteq\I^-\cup\III$ and $B'\subseteq\II\cup\III$, $(A', B')=AB_{\base}(\pi)$ if $A'$ contains all labelled type $\III$ boxes of $\pi$ and any box $w\in B'\cap\III$ is unlabelled. For the first statement, if $w$ is a labelled type $\III$ box of $\pi$, then by Conditions~\ref{conditions:ab on pi}, $w\in\III\cap((A\cup B)\setminus(A\cap B))=\III\cap((A\setminus B)\cup(B\setminus A))\subseteq(A\setminus B)\cup(\III\cap(B\setminus A))\subseteq A'$. For the second statement, if $w\in B'\cap\III$ is a labelled box of $\pi$, then by Conditions~\ref{conditions:ab on pi}, $w\in B'\cap((A'\cup B')\setminus(A'\cap B'))=B'\cap((A'\setminus B')\cup(B'\setminus A'))\subseteq B'\setminus A'$, so $w\not\in A'\supseteq A$. This in turn implies that $w\in\III\cap(B'\setminus A)\subseteq\III\cap(B\setminus A)\subseteq A'$, a contradiction.
\end{proof}

Let $ \sAB = P^{-1}(\sAB_{\base})$. Clearly, \[\sAB_{\base} \subseteq \bigcup_{\pi\in\text{PT-box}}\sAB(\pi) \subseteq \sAB \subseteq \sAB_{\text{all}}.\]  In fact, the following lemma shows that $\bigcup\limits_{\pi\in\text{PT-box}}\sAB(\pi) = \sAB$.  Moreover, as defined, $P|_{\sAB}$ is a surjection from $\sAB$ onto $\sAB_{\base}$.

\begin{lemma}
	\label{lemma:sAB as union}
We have \[\sAB=\bigcup_{\pi\in\text{PT-box}}\sAB(\pi).\]  More precisely, $P^{-1}(AB_{\base}(\pi))=\sAB(\pi)$.  
\end{lemma}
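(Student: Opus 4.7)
The plan is to reduce the first assertion to the second and then prove the second by unwinding the definitions.

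For the reduction, note that by definition $\sAB = P^{-1}(\sAB_{\base})$ and $\sAB_{\base}$ is the image of $\phi_{\base}$, so
\[
\sAB = P^{-1}\!\left(\bigcup_{\pi\in\text{PT-box}}\{AB_{\base}(\pi)\}\right) = \bigcup_{\pi\in\text{PT-box}} P^{-1}(AB_{\base}(\pi)).
\]
So once we know $P^{-1}(AB_{\base}(\pi))=\sAB(\pi)$ for each labelled box configuration $\pi$, the first equality follows.

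For the more precise statement, one inclusion is already given by the previous lemma: $P$ maps every element of $\sAB(\pi)$ to $AB_{\base}(\pi)$, so $\sAB(\pi)\subseteq P^{-1}(AB_{\base}(\pi))$. For the reverse inclusion, suppose $(A,B)\in\sAB_{\text{all}}$ with $P(A,B)=AB_{\base}(\pi)$. I would verify Conditions~\ref{conditions:ab on pi} for $(A,B)$ with respect to $\pi$ by showing that $P$ preserves the union and intersection of its two components. More precisely, writing $(A',B')=P(A,B)=(A\cup(\III\cap(B\setminus A)),\,B\setminus(\III\cap(B\setminus A)))$, the cells of $\III\cap(B\setminus A)$ are simply moved from the $B$-side to the $A$-side, so a short set-chase gives
\[
A'\cup B' = A\cup B \qquad\text{and}\qquad A'\cap B'=A\cap B.
\]
By definition of $AB_{\base}(\pi)$, the set $A'\cup B'$ equals the set of boxes of $\pi$ and $A'\cap B'$ equals the set of unlabelled type $\III$ boxes of $\pi$. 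Transporting these equalities back through $P$ yields the two conditions in~\ref{conditions:ab on pi} for $(A,B)$, so $(A,B)\in\sAB(\pi)$.

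There is no real obstacle; the proof is a bookkeeping exercise that hinges on the elementary observation that the map $P$ only reclassifies cells in $\III\cap(B\setminus A)$ between the two sides and therefore leaves $A\cup B$ and $A\cap B$ unchanged. The only care required is to confirm that $(A,B)$ is already known to satisfy the $AB$ stacking conditions (Conditions~\ref{conditions:ab box stacking}) because it lies in $\sAB_{\text{all}}$ by hypothesis, so that no extra verification of the stacking rules is needed beyond what Conditions~\ref{conditions:ab on pi} require.
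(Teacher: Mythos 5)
Your proposal is correct and follows essentially the same route as the paper: you reduce to the pointwise statement via $P^{-1}$ commuting with unions, you get one inclusion from the preceding well-definedness lemma, and you get the other by observing that $P$ merely reclassifies cells in $\III\cap(B\setminus A)$, so $A'\cup B'=A\cup B$ and $A'\cap B'=A\cap B$, which is exactly what Conditions~\ref{conditions:ab on pi} need. The paper's proof invokes the same union/intersection equalities (already noted in the proof of Lemma~\ref{lemma:well-definedness of P}) rather than re-deriving them, but the argument is otherwise identical.
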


\begin{proof}
By Lemma~\ref{lemma:well-definedness of P}, $\sAB(\pi)\subseteq P^{-1}(AB_{\base}(\pi))$.  Conversely, suppose $(A, B)\in P^{-1}(AB_{\base}(\pi))$, that is, $(A, B)$ is an $AB$ configuration such that $(A', B'):=P(A, B)=AB_{\base}(\pi)$.  To show that $(A, B)\in\sAB(\pi)$, we just need to check that Conditions~\ref{conditions:ab on pi} hold.  Since $(A', B')=AB_{\base}(\pi)\in\sAB(\pi)$, we have that $A\cup B=A'\cup B'$ is the set of boxes in $\pi$ and $A\cap B=A'\cap B'$ is the set of unlabelled type $\III$ boxes in $\pi$, as desired.  Thus, $P^{-1}(AB_{\base}(\pi))\subseteq\sAB(\pi)$.  Finally, \begin{align*}\sAB=P^{-1}(\sAB_{\base})&=P^{-1}\left(\bigcup_{\pi\in\text{PT-box}}\left\{AB_{\base}(\pi)\right\}\right)\\&=\bigcup_{\pi\in\text{PT-box}}P^{-1}(AB_{\base}(\pi))=\bigcup_{\pi\in\text{PT-box}}\sAB(\pi).\end{align*}
\end{proof}

\begin{lemma}
\label{lemma:adjacent labels}
Suppose $\pi$ is a labelled box configuration, $(A, B)\in\sAB(\pi)$, and $w\in\III\cap(A\triangle B)$ is a box that is adjacent to a cell $n\in\mathcal{L}(A, B)$. If $n\in\Cyl_l^-\cup\II_{\bar{l}}$ for some $l\in\{1, 2, 3\}$, then the label of $w$ in $\pi$ is $\vspan\{\mathbf{l}_w+\mathbb{C}\cdot(1, 1, 1)_w\}$. If $n\in\III\cap(A\triangle B)\cap BN(w)$, then $n$ is a labelled type $\III$ box of $\pi$, and if the label of $n$ in $\pi$ is $\vspan\left\{z_1\mathbf{1}_n+z_2\mathbf{2}_n+z_3\mathbf{3}_n+\mathbb{C}\cdot(1, 1, 1)_n\right\}$, then the label of $w$ in $\pi$ is $\langle z_1, z_2, z_3\rangle_w$.
\end{lemma}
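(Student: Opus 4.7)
The plan is a direct case analysis driven by Conditions~\ref{conditions:labelled box stacking} and the characterization of $\sAB(\pi)$ in Conditions~\ref{conditions:ab on pi}. I would begin by noting the initial observation used in every case: since $w\in\III\cap(A\triangle B)$, Conditions~\ref{conditions:ab on pi} force $w$ to be a box of $\pi$ (because $A\cup B$ is the set of boxes) that is not an unlabelled type $\III$ box (because $A\cap B$ is the set of unlabelled type $\III$ boxes). Hence $w$ is a labelled type $\III$ box with some $1$-dimensional label $L\subseteq\mathbb{P}^1_w$, and by Conditions~\ref{conditions:labelled box stacking}.3, $L$ must contain every nonzero subspace induced on $w$ by a box in $BN(w)$.

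For the first statement, I would split on whether $n\in\Cyl_l^-\subseteq\I^-$ or $n\in\II_{\bar{l}}$. If $n\in\Cyl_l^-$, the fact that $n\in\mathcal{L}(A, B)\cap\I^-=\I^-\cap A$ gives $n\in A$, so $n$ is a box of $\pi$. A short coordinate argument rules out $w\in BN(n)$: $w$ would then inherit $n$'s negative $l$th coordinate, contradicting $w\in\III\subseteq\mathbb{Z}^3_{\geq 0}$. Hence $n\in BN(w)$, and since only the $l$th coordinate of $n$ is negative while the others are non-negative, the coordinate dropped from $w$ must be the $l$th (with $w_l=0$). The subspace induced by this $\I^-$ back neighbor on $w$ is $\vspan\{\mathbf{l}_w+\mathbb{C}\cdot(1, 1, 1)_w\}$, forcing $L$ to equal it. If instead $n\in\II_{\bar{l}}$, then $n\in\mathcal{L}(A, B)\cap\II=\II\setminus B$ gives $n\notin B$, so $n$ is not a box of $\pi$. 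Lemma~\ref{lemma:type III back neighbors} rules out $n\in BN(w)$, so $w\in BN(n)$, and Conditions~\ref{conditions:labelled box stacking}.2 applied to the non-box $n\in\II_{\bar{l}}$ forces $w$ to be a type $\III$ box labelled exactly by $\vspan\{\mathbf{l}_w+\mathbb{C}\cdot(1, 1, 1)_w\}$ (that being the only way $n$ can legally avoid being a box itself).

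For the second statement, $n\in\III\cap(A\triangle B)\cap BN(w)$; the same initial observation applied to $n$ in place of $w$ shows that $n$ is a labelled type $\III$ box, with some label $\vspan\{z_1\mathbf{1}_n+z_2\mathbf{2}_n+z_3\mathbf{3}_n+\mathbb{C}\cdot(1, 1, 1)_n\}$. Under the canonical identification of $\mathbb{P}^1_n$ and $\mathbb{P}^1_w$, the nonzero subspace induced on $w$ by the back neighbor $n$ is $\langle z_1, z_2, z_3\rangle_w$, and Conditions~\ref{conditions:labelled box stacking}.3 then forces $L=\langle z_1, z_2, z_3\rangle_w$.

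The main subtlety I anticipate is correctly determining the direction of the back-neighbor relation in each subcase, since the stacking condition that applies (Conditions~\ref{conditions:labelled box stacking}.2 versus .3) depends on whether $n\in BN(w)$ or $w\in BN(n)$. Octant considerations, the sign pattern of $\Cyl_l^-$, and Lemma~\ref{lemma:type III back neighbors} pin down the correct direction in each subcase; once that is settled, the stacking conditions immediately identify $L$.
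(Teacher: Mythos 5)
Your proof is correct and follows essentially the same route as the paper's: it first uses Conditions~\ref{conditions:ab on pi} to establish that $w$ (and, in the third case, $n$) is a labelled type~$\III$ box, then pins down the back-neighbor direction via octant/coordinate considerations and Lemma~\ref{lemma:type III back neighbors}, and finally invokes Conditions~\ref{conditions:labelled box stacking}.2 or~.3 to identify the label. The only cosmetic difference is that the paper's first subcase explicitly entertains the possibility that the induced span is $2$-dimensional before ruling it out, whereas you fold that observation into the remark that $w$ is already known to be labelled; the underlying logic is identical.
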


\begin{proof}
By Conditions~\ref{conditions:ab on pi}, $w$ is a labelled type $\III$ box of $\pi$. Suppose $n\in\Cyl_l^-$ for some $l\in\{1, 2, 3\}$. Then $n\in\I^-\cap A$ and $n\not\in\mathbb{Z}^3_{\geq 0}$. Since $w\in\III\subseteq\mathbb{Z}^3_{\geq 0}$, $n\in BN(w)$. Since $(A, B)\in\sAB(\pi)$, $n$ is a box of $\pi$, by Conditions~\ref{conditions:ab on pi}.1. Then, note that the span $S$ of subspaces of $\frac{\mathbb{C} \cdot \mathbf{1}_w \oplus\mathbb{C} \cdot \mathbf{2}_w \oplus\mathbb{C} \cdot \mathbf{3}_w}{\mathbb{C} \cdot (1,1,1)_w}$ induced by boxes in $BN(w)$ contains the subspace $\vspan\{\mathbf{l}_w+\mathbb{C}\cdot(1, 1, 1)_w\}$, so $S$ is that subspace or $S$ is $2$-dimensional. By Conditions~\ref{conditions:labelled box stacking}.3, it follows that the label of $w$ in $\pi$ is $\vspan\{\mathbf{l}_w+\mathbb{C}\cdot(1, 1, 1)_w\}$ or $w$ is an unlabelled box of $\pi$. In the latter case, by Conditions~\ref{conditions:ab on pi}.2, $w\in A\cap B$, contradicting the fact that $w\in A\triangle B$. So, the former statement must hold.

Suppose $n\in\II_{\bar{l}}$ for some $l\in\{1, 2, 3\}$. Then $n\in\II\setminus B$. By Lemma~\ref{lemma:type III back neighbors}, $n\not\in BN(w)$, so $w\in BN(n)$. Since $A\subseteq\I^-\cup\III$, $n\not\in A$, so $n\not\in A\cup B$. Since $(A, B)\in\sAB(\pi)$, $n$ is not a box of $\pi$, by Conditions~\ref{conditions:ab on pi}.1. By Conditions~\ref{conditions:labelled box stacking}.2, the label of $w$ in $\pi$ must be $\vspan\{\mathbf{l}_w+\mathbb{C}\cdot(1, 1, 1)_w\}$. 

Finally, suppose $n\in\III\cap(A\triangle B)\cap BN(w)$. Then, by Conditions~\ref{conditions:ab on pi}, $n$ is a labelled type $\III$ box of $\pi$. Let $\ell_w$ denote the label of $w$ in $\pi$ and $\vspan\left\{z_1\mathbf{1}_n+z_2\mathbf{2}_n+z_3\mathbf{3}_n+\mathbb{C}\cdot(1, 1, 1)_n\right\}$ be the label of $n$ in $\pi$. Since $n\in BN(w)$, the span $S$ of subspaces of $\frac{\mathbb{C} \cdot \mathbf{1}_{w} \oplus\mathbb{C} \cdot \mathbf{2}_{w} \oplus\mathbb{C} \cdot \mathbf{3}_{w}}{\mathbb{C} \cdot (1,1,1)_{w}}$ induced by boxes in $BN(w)$ contains the subspace $\langle z_1, z_2, z_3\rangle_w$ induced by $n$. By Conditions~\ref{conditions:labelled box stacking}.3, $S$ is $1$-dimensional and $\ell_w=S$. Thus, $\ell_w=S=\langle z_1, z_2, z_3\rangle_w$.
\end{proof}

\begin{thm}
	\label{thm:labellable iff in sAB}
Given an $AB$ configuration $(A, B)$, Algorithm~\ref{algorithm:AB labelling algorithm} succeeds if and only if $(A, B) \in \sAB$.
\end{thm}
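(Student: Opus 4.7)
My plan is to prove both implications, exploiting Lemma~\ref{lemma:sAB as union}, which identifies $\sAB$ with $\bigcup_\pi \sAB(\pi)$.

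For the reverse direction, I will fix a labelled box configuration $\pi$ with $(A, B) \in \sAB(\pi)$ and argue by contradiction. If Algorithm~\ref{algorithm:AB labelling algorithm} fails at step 1, then there is a path $u = u_0, u_1, \ldots, u_n = v$ of adjacent cells in $\mathcal{L}(A, B)$ with $u \in \Cyl_i^- \cup \II_{\bar{i}}$ and $v \in \Cyl_j^- \cup \II_{\bar{j}}$ for some $i \neq j$. I will attach a consistent ``induced label'' to each cell---the value $\ell(\cdot) \in \{1, 2, 3\}$ for cells in $\I^- \cup \II$, or its label in $\pi$ for labelled type $\III$ boxes---and propagate it along the path. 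Lemma~\ref{lemma:adjacent labels} handles every adjacency in which at least one endpoint lies in $\III \cap (A \triangle B)$. For an adjacency between two cells in $\I^- \cup \II$, Lemma~\ref{lemma:adjacent type II} handles two type $\II$ cells, a short coordinate check (using that the sets $\Cyl_k^-$ are pairwise disjoint) handles two type $\I^-$ cells, and Lemma~\ref{lemma:adjacent types I- and II} combined with Conditions~\ref{conditions:labelled box stacking}.2 rules out a type $\I^-$ cell adjacent to a type $\II$ cell in $\mathcal{L}(A, B)$, since such an adjacency would force the type $\II$ cell to be a box of $\pi$ despite its membership in $\II \setminus B$. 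Propagation then yields $\ell(u) = \ell(v)$, contradicting $i \neq j$.

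For the forward direction, I assume the algorithm succeeds and set $\pi := \pi(A, B)$ per Definition~\ref{defn:pi(A, B)}. Conditions~\ref{conditions:ab on pi} hold by construction, so by Lemma~\ref{lemma:sAB as union} it suffices to verify Conditions~\ref{conditions:labelled box stacking} for $\pi$. The first is immediate since a box $n \in BN(w) \cap (A \cup B)$ with $w \in \I^-$ cannot lie in $\II \cup \III \subseteq \mathbb{Z}^3_{\geq 0}$, so $n \in A$ and Conditions~\ref{conditions:ab box stacking}.1 gives $w \in A$. The second I verify by case-splitting on the type of $n \in BN(w) \cap (A \cup B)$ for $w \in \II_{\bar{i}}$: if $n \in \II$, I use Conditions~\ref{conditions:ab box stacking}.2; if $n \in \I^-$, success of the algorithm is essential, because $w \notin B$ would place $w$ in $\mathcal{L}(A, B)$, joining the same connected component as $n$ and producing a label conflict via Lemma~\ref{lemma:adjacent types I- and II}; if $n \in \III$ is labelled the same algorithmic argument applies, and if $n$ is unlabelled then $w \in B$ follows from Conditions~\ref{conditions:ab box stacking}.2.

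The third condition, the box-stacking rule for $w \in \III$, is the bookkeeping-heavy step and I expect it to be the main obstacle. By Lemma~\ref{lemma:type III back neighbors}, any box in $BN(w)$ is in $\I^- \cup \III$, and the appropriate $AB$ conditions place $w$ into $A$ or $B$. I then match the label assigned by Definition~\ref{defn:pi(A, B)} against the subspace induced by back-neighbor boxes: a type $\I^-$ box in coordinate direction $k$ induces $\langle \mathbf{k}_w\rangle$, a labelled type $\III$ box transports its $\mathbb{P}^1$ label to $w$, and an unlabelled type $\III$ box contributes the whole 2-dimensional quotient. When $w \in A \triangle B$, success of the algorithm puts the labelled back neighbors in the same connected component as $w$ and forces them to induce a single 1-dimensional subspace equal to $w$'s label from Definition~\ref{defn:pi(A, B)}, while any unlabelled $\III$ back neighbor would propagate via Conditions~\ref{conditions:ab box stacking} to place $w$ in $A \cap B$, a contradiction. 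When $w \in A \cap B$, $w$ is unlabelled and consistency is immediate from Remark~\ref{rem:unlabelled in front of unlabelled} together with the permissiveness of Conditions~\ref{conditions:labelled box stacking}.3 for unlabelled boxes.
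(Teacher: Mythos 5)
Your proof is correct and takes essentially the same approach as the paper: both directions hinge on Lemma~\ref{lemma:adjacent labels} for label propagation (reverse direction) and on verifying Conditions~\ref{conditions:labelled box stacking} for $\pi(A,B)$ together with Lemma~\ref{lemma:sAB as union} (forward direction). The only differences are cosmetic reorganizations---you propagate labels along the full connecting path instead of first extracting a subpath whose interior lies entirely in $\III \cap (A\triangle B)$, and for Conditions~\ref{conditions:labelled box stacking}.3 you case-split on $w\in A\triangle B$ versus $w\in A\cap B$ rather than on the dimension of the induced span $S$, but both choices lead to the same computations.
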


\begin{proof}
Let $(A, B)\in\sAB_{\text{all}}$. Suppose Algorithm~\ref{algorithm:AB labelling algorithm} succeeds. By Lemma~\ref{lemma:sAB as union}, to show that $(A, B)\in\sAB$, it suffices to find a labelled box configuration $\pi$ such that $(A, B)\in\sAB(\pi)$. To achieve this, we will show that $\pi(A, B)$ satisfies Conditions~\ref{conditions:labelled box stacking}, and then show that Conditions~\ref{conditions:ab on pi} hold. \\

\noindent \underline{Conditions~\ref{conditions:labelled box stacking}.}
Suppose $w\in\I^-$ and $n\in BN(w)\cap(A\cup B)$. Since $w\not\in\mathbb{Z}^3_{\geq 0}$, $n\not\in\mathbb{Z}^3_{\geq 0}$, so $n\not\in\II\cup\III$, implying that $n\in A$. Then, by Conditions~\ref{conditions:ab box stacking}.1, $w\in A\subseteq A\cup B$. 

Next, suppose $w\in\II_{\bar i}$ and $n\in BN(w)\cap(A\cup B)$ is not a type $\III$ box labelled $\vspan\{\mathbf{i}_n+\mathbb{C}\cdot(1, 1, 1)_n\}$. If $n\in B$, then by Conditions~\ref{conditions:ab box stacking}.2, $w\in B\subseteq A\cup B$. Otherwise, $n\not\in B$, so $n\in A\setminus B$. Then $n\in\I^-\cup\III$. If $n\in\I^-$, by Lemma~\ref{lemma:adjacent types I- and II}, $n\in\Cyl_j^-$ for some $j\in\{1, 2, 3\}\setminus\{i\}$. Since $n\in\I^-\cap A\subseteq\mathcal{L}(A, B)$ and Algorithm~\ref{algorithm:AB labelling algorithm} does not terminate at step 1, $w\in\II\setminus\mathcal{L}(A, B)\subseteq B\subseteq A\cup B$. Otherwise, $n\in\III$. In this case, suppose $w\not\in A\cup B$. Then $w\in\II\setminus B\subseteq\mathcal{L}(A, B)$ and $n\in\III\cap(A\setminus B)\subseteq\III\cap(A\triangle B)\subseteq\mathcal{L}(A, B)$, so Algorithm~\ref{algorithm:AB labelling algorithm} assigns a label of $i$ to $n$ at step 2. However, by Definition~\ref{defn:pi(A, B)}, the label of $n$ in $\pi(A, B)$ is $\vspan\{\mathbf{i}_n+\mathbb{C}\cdot(1, 1, 1)_n\}$, contradicting our assumption, so $w\in A\cup B$. 

Now, suppose $w\in\III$ and the span $S$ of subspaces of $\frac{\mathbb{C} \cdot \mathbf{1}_w \oplus\mathbb{C} \cdot \mathbf{2}_w \oplus\mathbb{C} \cdot \mathbf{3}_w}{\mathbb{C} \cdot (1,1,1)_w}$ induced by boxes in $BN(w)\cap(A\cup B)$ is nonzero. In this case, $BN(w)\cap(A\cup B)\neq\varnothing$, so by Conditions~\ref{conditions:ab box stacking}.2, $w\in A\cup B$. By Lemma~\ref{lemma:type III back neighbors}, $BN(w)\subseteq\I^-\cup\III$, so \[BN(w)\cap(A\cup B)\subseteq(\I^-\cup\III)\cap(A\cup B)=(\I^-\cap(A\cup B))\cup(\III\cap(A\cup B))\subseteq(\I^-\cap A)\cup(\III\cap(A\cup B)).\] 

Suppose the dimension of $S$ is $1$. Then no cell in $BN(w)\cap(A\cup B)$ is left unlabelled by Algorithm~\ref{algorithm:AB labelling algorithm}, for any such cell must be an unlabelled type $\III$ box in $\pi(A, B)$, and such boxes induce the whole $2$-dimensional space $\frac{\mathbb{C} \cdot \mathbf{1}_w \oplus\mathbb{C} \cdot \mathbf{2}_w \oplus\mathbb{C} \cdot \mathbf{3}_w}{\mathbb{C} \cdot (1,1,1)_w}$. As a result, $BN(w)\cap(A\cup B)\subseteq\mathcal{L}(A, B)$. We must show that the label of $w$ in $\pi(A, B)$ is $S$ or $w$ is unlabelled in $\pi(A, B)$. Suppose $w$ is not unlabelled in $\pi(A, B)$. Then Algorithm~\ref{algorithm:AB labelling algorithm} must assign a label to $w$, so $w\in\mathcal{L}(A, B)$. Thus, since $w$ is adjacent to each cell in $BN(w)\cap(A\cup B)$, $\{w\}\cup(BN(w)\cap(A\cup B))$ is contained in a single connected component $C$ of $\mathcal{L}(A, B)$, so Algorithm~\ref{algorithm:AB labelling algorithm} assigns the same label $\ell$ to each element of $\{w\}\cup(BN(w)\cap(A\cup B))$. 

Let $n\in BN(w)\cap(A\cup B)$. Since $BN(w)\cap(A\cup B)\subseteq(\I^-\cap A)\cup(\III\cap(A\cup B))$, either $n\in\I^-\cap A$, so $n\in\Cyl_i^-$ for some $i\in\{1, 2, 3\}$ and $\ell=i$, or $n\in\III\cap(A\cup B)$. In the first case, $n$ induces the subspace $\vspan\{\mathbf{i}_w+\mathbb{C}\cdot(1, 1, 1)_w\}$ of $\frac{\mathbb{C} \cdot \mathbf{1}_w \oplus\mathbb{C} \cdot \mathbf{2}_w \oplus\mathbb{C} \cdot \mathbf{3}_w}{\mathbb{C} \cdot (1,1,1)_w}$, so $\vspan\{\mathbf{i}_w+\mathbb{C}\cdot(1, 1, 1)_w\}\subseteq S$, but since $S$ is $1$-dimensional, $\vspan\{\mathbf{i}_w+\mathbb{C}\cdot(1, 1, 1)_w\}=S$. Then, since $w\in\III\cap(A\cup B)$ and Algorithm~\ref{algorithm:AB labelling algorithm} labels $w$ by $\ell=i\in\{1, 2, 3\}$, the label of $w$ in $\pi(A, B)$ is $\vspan\{\mathbf{i}_w+\mathbb{C}\cdot(1, 1, 1)_w\}=S$, according to Definition~\ref{defn:pi(A, B)}. In the second case, since $n, w\in\III\cap(A\cup B)$ and Algorithm~\ref{algorithm:AB labelling algorithm} labels $n, w\in\{w\}\cup(BN(w)\cap(A\cup B))$ by $\ell$, either $\ell\in\{1, 2, 3\}$ and the labels of $n$ and $w$ in $\pi(A, B)$ are $\vspan\{\boldsymbol{\ell}_n+\mathbb{C}\cdot(1, 1, 1)_n\}$ and $\vspan\{\boldsymbol{\ell}_w+\mathbb{C}\cdot(1, 1, 1)_w\}$, or $\ell$ is a freely chosen element $\langle z_1, z_2, z_3\rangle$ of $\mathbb{P}^1$ and the labels of $n$ and $w$ in $\pi(A, B)$ are the same freely chosen elements $\ell_n:=\vspan\left\{z_1\mathbf{1}_n+z_2\mathbf{2}_n+z_3\mathbf{3}_n+\mathbb{C}\cdot(1, 1, 1)_n\right\}$ and $\ell_w:=\langle z_1, z_2, z_3\rangle_w$. Then $n$ induces the subspace $\vspan\{\boldsymbol{\ell}_w+\mathbb{C}\cdot(1, 1, 1)_w\}$ or $\ell_w$, respectively, of $\frac{\mathbb{C} \cdot \mathbf{1}_w \oplus\mathbb{C} \cdot \mathbf{2}_w \oplus\mathbb{C} \cdot \mathbf{3}_w}{\mathbb{C} \cdot (1,1,1)_w}$, so $\vspan\{\boldsymbol{\ell}_w+\mathbb{C}\cdot(1, 1, 1)_w\}\subseteq S$ or $\ell_w\subseteq S$, respectively. Since $S$ is $1$-dimensional, $\vspan\{\boldsymbol{\ell}_w+\mathbb{C}\cdot(1, 1, 1)_w\}=S$ or $\ell_w=S$, respectively. That is, the label of $w$ in $\pi(A, B)$ is $S$. 

Suppose the dimension of $S$ is $2$. We must show that $w$ is an unlabelled box of $\pi(A, B)$. In other words, we must show that $w\not\in\mathcal{L}(A, B)$. If $BN(w)\cap A\cap B\neq\varnothing$, then by Conditions~\ref{conditions:ab box stacking}, $w\in\III\cap A\cap B$, so $w\not\in\mathcal{L}(A, B)$. Otherwise, $BN(w)\cap A\cap B=\varnothing$. In this case, since $BN(w)\cap(A\cup B)\subseteq(\I^-\cap A)\cup(\III\cap(A\cup B))$, we have $BN(w)\cap(A\cup B)\subseteq(\I^-\cap A)\cup(\III\cap(A\triangle B))\subseteq\mathcal{L}(A, B)$. Suppose $w\in\mathcal{L}(A, B)$. Then, since $w$ is adjacent to each cell in $BN(w)\cap(A\cup B)$, $\{w\}\cup(BN(w)\cap(A\cup B))$ is contained in a single connected component $C$ of $\mathcal{L}(A, B)$, so Algorithm~\ref{algorithm:AB labelling algorithm} assigns the same label $\ell$ to each element of $\{w\}\cup(BN(w)\cap(A\cup B))$. Either $\ell\in\{1, 2, 3\}$ or $\ell$ is a freely chosen element $\langle z_1, z_2, z_3\rangle$ of $\mathbb{P}^1$. By the arguments given in the previous paragraph, in the first case, each element of $BN(w)\cap(A\cup B)$ induces the subspace $\vspan\{\boldsymbol{\ell}_w+\mathbb{C}\cdot(1, 1, 1)_w\}$ of $\frac{\mathbb{C} \cdot \mathbf{1}_w \oplus\mathbb{C} \cdot \mathbf{2}_w \oplus\mathbb{C} \cdot \mathbf{3}_w}{\mathbb{C} \cdot (1,1,1)_w}$, and in the second case, each element of $BN(w)\cap(A\cup B)$ induces the same freely chosen element $\ell_w:=\langle z_1, z_2, z_3\rangle_w$ of $\mathbb{P}^1_w$. In the first case, $S=\vspan\{\boldsymbol{\ell}_w+\mathbb{C}\cdot(1, 1, 1)_w\}$, and in the second case, $S=\ell_w$. In either case, $S$ is $1$-dimensional. By contradiction, $w\not\in\mathcal{L}(A, B)$. \\

\noindent \underline{Conditions~\ref{conditions:ab on pi}.}
Conditions~\ref{conditions:ab on pi}.1 holds by construction.  For Conditions~\ref{conditions:ab on pi}.2, suppose $w\in A\cap B$.  Then, since $A\subseteq\I^-\cup\III$ and $B\subseteq\II\cup\III$, $w\in(\I^-\cup\III)\cap(\II\cup\III)\subseteq\III$, which means that $w\not\in\mathcal{L}(A, B)$. Therefore, $w$ is an unlabelled box of $\pi(A, B)$. Conversely, suppose $w$ is an unlabelled type $\III$ box of $\pi(A, B)$. Then $w\in\III\cap(A\cup B)\setminus\mathcal{L}(A, B)\subseteq A\cap B$. \\

For the converse, suppose $(A, B)\in\sAB$. Then, by Lemma~\ref{lemma:sAB as union}, $(A, B)\in\sAB(\pi)$ for some $\pi\in\text{PT-box}$. We must show that Algorithm~\ref{algorithm:AB labelling algorithm} succeeds, i.e., we must show that it passes step 1. Suppose not. Then a connected component $C$ of $\mathcal{L}(A, B)$ contains a cell $w_i\in\Cyl_i^-\cup\II_{\bar{i}}$ and a cell $w_j\in\Cyl_j^-\cup\II_{\bar{j}}$, where $i\neq j$. 

Suppose $w_i$ is adjacent to $w_j$. Without loss of generality, assume $w_i\in BN(w_j)$. Observe that $\Cyl_i^-$ is not adjacent to $\Cyl_j^-$, because $\Cyl_i^-$ and $\Cyl_j^-$ are subsets of non-adjacent octants of $\mathbb{Z}^3$, so at least one of $w_i$ and $w_j$ is a type $\II$ cell. In fact, if $w_i\in\II\subseteq\mathbb{Z}^3_{\geq 0}$, since $w_i\in BN(w_j)$, we have $w_j\in\mathbb{Z}^3_{\geq 0}$. Then $w_j\not\in\I^-$, in which case, $w_j\in\II$. In any case, we deduce that $w_j\in\II$, so $w_j\in\II_{\bar{j}}$. Suppose $w_i\in\II$. Then, by Lemma~\ref{lemma:adjacent type II}, $w_i\in\II_{\bar{j}}$. Since $w_i\in\Cyl_i^-\cup\II_{\bar{i}}$ and $i\neq j$, this is a contradiction. Consequently, $w_i\not\in\II$, so $w_i\in\Cyl_i^-\subseteq\I^-$. Furthermore, $w_i, w_j\in\mathcal{L}(A, B)$, so $w_i\in A\subseteq A\cup B$, while $w_j\not\in\I^-\cup\III\cup B$, implying that $w_j\not\in A\cup B$. By Conditions~\ref{conditions:ab on pi}.1, $w_i$ is a box of $\pi$, while $w_j$ is not. On the other hand, by Conditions~\ref{conditions:labelled box stacking}.2, $w_j$ is a box of $\pi$. By contradiction, $w_i$ is not adjacent to $w_j$. In fact, since $w_i$ and $w_j$ were arbitrary, this argument shows that $C$ cannot contain two adjacent cells $w, w'\in\I^-\cup\II$ such that $\ell(w)\neq\ell(w')$. 

Since $w_i, w_j\in C$ and $C$ is a connected subset of $\mathcal{L}(A, B)$, there is a sequence of adjacent cells $w_i:=p_0, p_1, \ldots, p_r:=w_j$, each of which is an element of $C\subseteq\mathcal{L}(A, B)$. Let $0\leq t\leq r$ be the index such that $p_t$ is the last cell in this sequence that is an element of $\Cyl_i^-\cup\II_{\bar{i}}$. Then $p_t, w_j\in C$ and $p_t, p_{t+1}, \ldots, p_r=w_j$ is a sequence of adjacent cells, each of which is an element of $C$. So, without loss of generality, assume that $w_i$ is the only cell in the sequence $w_i=p_0, p_1, \ldots, p_r=w_j$ that is an element of $\Cyl_i^-\cup\II_{\bar{i}}$. Then, let $0<t'\leq r$ be the index such that $p_{t'}$ is the first cell in the sequence $p_1, p_2, \ldots, p_r=w_j$ that is an element of $\I^-\cup\II$. Since $w_i$ is the only cell in the sequence $w_i=p_0, p_1, \ldots, p_r=w_j$ that is an element of $\Cyl_i^-\cup\II_{\bar{i}}$, $p_{t'}\in(\I^-\cup\II)\setminus(\Cyl_i^-\cup\II_{\bar{i}})$, so $p_{t'}\in\Cyl_l^-\cup\II_{\bar{l}}$ for some $l\in\{1, 2, 3\}\setminus\{i\}$. Also, $w_i, p_{t'}\in C$ and $w_i=p_0, p_1, \ldots, p_{t'}$ is a sequence of adjacent cells, each of which is an element of $C$. So, without loss of generality, assume that $p_s\not\in\I^-\cup\II$ for $0<s<r$. Then, for $0<s<r$, $p_s\in\mathcal{L}(A, B)\setminus(\I^-\cup\II)\subseteq\III\cap(A\triangle B)$. Moreover, since $w_i$ is not adjacent to $w_j$, $1<r$, so $1\leq r-1$. In particular, $p_1, \ldots, p_{r-1}\in\III\cap(A\triangle B)$. 

Since $p_1\in\III\cap(A\triangle B)$ is adjacent to $p_0=w_i\in\mathcal{L}(A, B)\cap(\Cyl_i^-\cup\II_{\bar{i}})$, Lemma~\ref{lemma:adjacent labels} shows that the label of $p_1$ in $\pi$ is $\vspan\{\mathbf{i}_{p_1}+\mathbb{C}\cdot(1, 1, 1)_{p_1}\}$. Similarly, $p_{r-1}\in\III\cap(A\triangle B)$ is adjacent to $p_r=w_j\in\mathcal{L}(A, B)\cap(\Cyl_j^-\cup\II_{\bar{j}})$, so the label of $p_{r-1}$ in $\pi$ is $\vspan\{\mathbf{j}_{p_{r-1}}+\mathbb{C}\cdot(1, 1, 1)_{p_{r-1}}\}$. Since $i\neq j$, $1<r-1$. However, by Lemma~\ref{lemma:adjacent labels}, we then find that the label of $p_2$ in $\pi$ is $\vspan\{\mathbf{i}_{p_2}+\mathbb{C}\cdot(1, 1, 1)_{p_2}\}$, since $p_1\in BN(p_2)$ or $p_2\in BN(p_1)$. Then, since $i\neq j$, $2<r-1$. By repeating this argument finitely many times, we eventually see that the label of $p_{r-1}$ in $\pi$ is $\vspan\{\mathbf{i}_{p_{r-1}}+\mathbb{C}\cdot(1, 1, 1)_{p_{r-1}}\}$, contradicting the fact that $i\neq j$. This completes the proof. 
\end{proof}

\begin{corollary}
Given $(A, B)\in\sAB$, $\pi(A, B)$ is a labelled box configuration.
\end{corollary}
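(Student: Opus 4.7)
The plan is to observe that this corollary is essentially an unpacking of what was already established inside the proof of Theorem~\ref{thm:labellable iff in sAB}, together with a well-definedness check. I would proceed in two short steps, both very light.

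First, I would argue that $\pi(A,B)$ is a well-defined set of labelled boxes. By hypothesis $(A,B) \in \sAB$, so Theorem~\ref{thm:labellable iff in sAB} tells us that Algorithm~\ref{algorithm:AB labelling algorithm} succeeds on $(A,B)$, i.e., it does not terminate in step~1. Consequently, the labels described in Definition~\ref{defn:pi(A, B)} are unambiguously assigned to each connected component of $\mathcal{L}(A,B)$, and in particular to each type~$\III$ box in $A\triangle B$. So the set $A\cup B$, equipped with the labels from Definition~\ref{defn:pi(A, B)}, is a bona fide set of labelled boxes in the sense of the definitions in Section~\ref{sec:PTboxconfigs}.

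Second, I would note that the verification of Conditions~\ref{conditions:labelled box stacking} for $\pi(A,B)$ was already carried out explicitly in the forward direction of the proof of Theorem~\ref{thm:labellable iff in sAB}. There, under the hypothesis that Algorithm~\ref{algorithm:AB labelling algorithm} succeeds on $(A,B)$, the three box-stacking rules (for $\I^-$ cells, for $\II_{\bar i}$ cells, and for $\III$ cells of each span dimension) are each checked using Conditions~\ref{conditions:ab box stacking}, Lemmas~\ref{lemma:adjacent types I- and II}--\ref{lemma:type III back neighbors}, and the labelling rules of Definition~\ref{defn:pi(A, B)}. Since by definition a labelled box configuration is precisely a set of labelled boxes satisfying Conditions~\ref{conditions:labelled box stacking}, combining the two steps yields the corollary.

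There is no genuinely hard step here; the main thing is to be clear that the content of the corollary was proved en route to Theorem~\ref{thm:labellable iff in sAB} and does not require any new argument. In particular, Conditions~\ref{conditions:ab on pi} were also verified there, so a small bonus observation is that $(A,B) \in \sAB(\pi(A,B))$, consistent with Lemma~\ref{lemma:sAB as union}.
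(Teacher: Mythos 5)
Your proposal is correct and takes essentially the same approach as the paper: both invoke Theorem~\ref{thm:labellable iff in sAB} to conclude that Algorithm~\ref{algorithm:AB labelling algorithm} succeeds, and then observe that the forward direction of that theorem's proof already verified Conditions~\ref{conditions:labelled box stacking} for $\pi(A,B)$. The extra well-definedness remark in your first step is a harmless elaboration of what the paper leaves implicit.
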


\begin{proof}
According to the theorem, Algorithm 1 succeeds. So, as established by the first half of the proof, $\pi(A, B)$ is a labelled box configuration.
\end{proof}

Define $\psi_{\base}:\sAB_{\base} \rightarrow \text{PT-box}$ by letting $\psi_{\base}(A, B)=\pi(A, B)$.

\begin{lemma}
	\label{lemma:AB PT correspondence}
	\begin{align*}
		\phi_{\base} \psi_{\base} &= 1_{\sAB_{\base}}; \\
		\psi_{\base} \phi_{\base} &= 1_{\text{PT-box}}.
	\end{align*}
\end{lemma}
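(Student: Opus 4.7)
The plan is to reduce both identities to Theorem~\ref{thm:labellable iff in sAB} and Lemma~\ref{lemma:well-definedness of P}, supplemented by a label-propagation argument based on Lemma~\ref{lemma:adjacent labels}. The crucial preliminary observation is that $P$ fixes $\sAB_{\base}$ pointwise: if $(A,B)=AB_{\base}(\pi')$, then by construction $A$ already contains every type~$\III$ box of $\pi'$, so $\III\cap(B\setminus A)=\varnothing$ and hence $P(A,B)=(A,B)$.

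For the first identity $\phi_{\base}\psi_{\base}=1_{\sAB_{\base}}$, fix $(A,B)\in\sAB_{\base}$. By Theorem~\ref{thm:labellable iff in sAB} (and its corollary), $\pi(A,B)$ is a labelled box configuration, and the first half of that theorem's proof exhibits $(A,B)$ as an element of $\sAB(\pi(A,B))$. Lemma~\ref{lemma:well-definedness of P} then yields $AB_{\base}(\pi(A,B))=P(A,B)$, which equals $(A,B)$ by the preliminary observation. This is precisely $\phi_{\base}(\psi_{\base}(A,B))=(A,B)$.

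For the second identity $\psi_{\base}\phi_{\base}=1_{\text{PT-box}}$, fix $\pi\in\text{PT-box}$ and let $(A,B)=AB_{\base}(\pi)$. By Conditions~\ref{conditions:ab on pi} and Definition~\ref{defn:pi(A, B)}, the sets of boxes of $\pi$ and $\pi(A,B)$ both equal $A\cup B$, and the unlabelled type~$\III$ boxes of each are precisely $A\cap B$. It remains to check that the labels on $\III\cap(A\setminus B)\subseteq\mathcal{L}(A,B)$ agree, up to matching the free choices in $\pi(A,B)$. Take a labelled type~$\III$ box $w$ of $\pi$ in a connected component $C$ of $\mathcal{L}(A,B)$. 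If $C$ meets some $\Cyl_\ell^-\cup\II_{\bar\ell}$ at a cell $v$, I traverse a path from $v$ to $w$ inside $C$ and iterate Lemma~\ref{lemma:adjacent labels} (interchanging the roles of $w$ and $n$ as needed, justified by Lemma~\ref{lemma:type III back neighbors}), forcing the label of $w$ in $\pi$ to be $\vspan\{\mathbf{l}_w+\mathbb{C}\cdot(1,1,1)_w\}$, which is exactly what Algorithm~\ref{algorithm:AB labelling algorithm} assigns to $w$ in $\pi(A,B)$. If $C$ meets no $\Cyl_\ell^-\cup\II_{\bar\ell}$, the same iteration of Lemma~\ref{lemma:adjacent labels} still forces all labelled type~$\III$ boxes in $C$ to carry a common label in $\pi$, and no box-stacking rule constrains this label; identifying it with the freely chosen element of $\mathbb{P}^1$ assigned to $C$ by Algorithm~\ref{algorithm:AB labelling algorithm} makes the labels agree.

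The main obstacle is this last inductive propagation: although Lemma~\ref{lemma:adjacent labels} handles each adjacency in isolation, a generic path in $C$ can alternate among $\I^-$, $\II$, and $\III$ cells, so the invocations must be carefully ordered. The key tool is a fact already established in the proof of Theorem~\ref{thm:labellable iff in sAB}, namely that two adjacent cells in $(\I^-\cup\II)\cap\mathcal{L}(A,B)$ must share a common $\ell$-value; this lets the forced label be carried across transitions between $\I^-$/$\II$ segments and $\III$ segments of the path, so that a single induction on the length of the path completes the argument.
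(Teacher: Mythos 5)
Your proof of the first identity is correct and takes a genuinely different route from the paper. The paper proves the second identity (that $\pi(AB_{\base}(\pi))=\pi$) and then \emph{derives} the first from it by applying $\phi_{\base}$ to both sides; you instead prove the first directly by combining the first half of the proof of Theorem~\ref{thm:labellable iff in sAB} (which exhibits $(A,B)\in\sAB(\pi(A,B))$), Lemma~\ref{lemma:well-definedness of P} (which collapses this to $P(A,B)=AB_{\base}(\pi(A,B))$), and your neat preliminary observation that $P$ fixes $\sAB_{\base}$ pointwise. This is a clean, self-contained argument for the first identity, though it makes the lemma's two parts independent, where the paper's argument reduces one to the other.

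Your proof of the second identity, however, has a genuine gap exactly at the words ``and no box-stacking rule constrains this label.'' This is not an incidental remark; it is the substantive half of the claim. For $\pi(A,B)=\pi$ to hold as labelled box configurations, the components $C_m$ must be \emph{freely labelled} in $\pi$ in the paper's technical sense — meaning that for \emph{every} choice $\ell_m\in\mathbb{P}^1$ of labels on those components, the resulting set of labelled boxes still satisfies Conditions~\ref{conditions:labelled box stacking}. This is not obvious: if one changes the label on $C_m$, one could a priori cause a type~$\II$ cell $w\not\in A\cup B$ to have a back neighbour in $C_m$ whose new label violates Conditions~\ref{conditions:labelled box stacking}.2, or a type~$\III$ cell to have its induced span change dimension in a forbidden way under Conditions~\ref{conditions:labelled box stacking}.3. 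The paper spends its final two paragraphs ruling out both possibilities, by showing that any putative violating cell $w$ would itself have to lie in the labelling set $\mathcal{L}(A,B)$ and hence in $C_m$, contradicting the fact that $C_m$ is a free component. You need an argument of this kind — asserting that no stacking rule constrains the label, without verification, leaves the core of the second identity unproved.
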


\begin{proof}
For the second equation, we must show for all $\pi\in\text{PT-box}$, that $\psi_{\base}(\phi_{\base}(\pi))=\pi$.  However, $\phi_{\base}(\pi)=AB_{\base}(\pi)$, so we just need to show that $\psi_{\base}(AB_{\base}(\pi))=\pi$.  And, given this equation, we have \[\phi_{\base}(\psi_{\base}(AB_{\base}(\pi)))=\phi_{\base}(\pi)=AB_{\base}(\pi)\] for all $\pi\in\text{PT-box}$, thereby also establishing the first equation.  In other words, it suffices to show for all $\pi\in\text{PT-box}$, that if $(A, B):=AB_{\base}(\pi)$, then $\pi(A, B)=\pi$.  So, let $\pi\in\text{PT-box}$ and $(A, B)=AB_{\base}(\pi)$. First, since $(A, B)=AB_{\base}(\pi)\in\sAB(\pi)$, $A\cup B$ is the set of boxes in $\pi$, and $A\cap B$ is the set of unlabelled type $\III$ boxes in $\pi$.  Furthermore, from Definition~\ref{defn:pi(A, B)}, $A\cup B$ is the set of boxes of $\pi(A, B)$. Since $A\cap B\subseteq(\I^-\cup\III)\cap(\II\cup\III)\subseteq\III$, we have $A\cap B\subseteq\III\setminus\mathcal{L}(A, B)\subseteq\III\setminus(A\triangle B)$, so by Definition~\ref{defn:pi(A, B)}, $A\cap B$ is the set of unlabelled type $\III$ boxes of $\pi(A, B)$. Therefore, the set of labelled type $\III$ boxes in $\pi$ coincides with the set of labelled type $\III$ boxes of $\pi(A, B)$, and both are equal to $\III\cap(A\cup B)\setminus(A\cap B)=\III\cap(A\triangle B)$. We need only show that $\pi$ and $\pi(A, B)$ associate the same labels to each of these boxes. More precisely, given $w\in\III\cap(A\triangle B)$, we must show that the label $\ell_w$ of $w$ in $\pi$ is equal to the label of $w$ in $\pi(A, B)$. 

Suppose $w\in\III\cap(A\triangle B)$, $C$ is the connected component of $\mathcal{L}(A, B)$ containing $w$, and Algorithm~\ref{algorithm:AB labelling algorithm} labels $C$ by $i\in\{1, 2, 3\}$. Then the label of $w$ in $\pi(A, B)$ is $\vspan\{\mathbf{i}_w+\mathbb{C}\cdot(1, 1, 1)_w\}$, and $C$ contains a cell $p_0\in\Cyl_i^-\cup\II_{\bar{i}}$. Since $C$ is connected, there is a sequence of adjacent cells $p_0, p_1, \ldots, p_r:=w$, each of which is an element of $C\subseteq\mathcal{L}(A, B)$. Without loss of generality, assume that $p_0$ is the only cell in the sequence in $\Cyl_i^-\cup\II_{\bar{i}}$. Since $(A, B)$ is a labelled $AB$ configuration, $C$ contains no cells in $\Cyl_j^-\cup\II_{\bar{j}}$, for $j\neq i$, so for $0<s\leq r$, $p_s\in\mathcal{L}(A, B)\setminus(\I^-\cup\II)\subseteq\III\cap(A\triangle B)$. Since $p_1\in\III\cap(A\triangle B)$ is adjacent to $p_0\in\mathcal{L}(A, B)\cap(\Cyl_i^-\cup\II_{\bar{i}})$, Lemma~\ref{lemma:adjacent labels} shows that the label of $p_1$ in $\pi$ is $\vspan\{\mathbf{i}_{p_1}+\mathbb{C}\cdot(1, 1, 1)_{p_1}\}$. Then, if $1<r$, by Lemma~\ref{lemma:adjacent labels}, we find that the label of $p_2$ in $\pi$ is $\vspan\{\mathbf{i}_{p_2}+\mathbb{C}\cdot(1, 1, 1)_{p_2}\}$, since $p_1\in BN(p_2)$ or $p_2\in BN(p_1)$. By repeating this argument finitely many times, we eventually see that the label of $p_r$ in $\pi$ is $\vspan\{\mathbf{i}_{p_r}+\mathbb{C}\cdot(1, 1, 1)_{p_r}\}$, i.e., $\ell_w=\vspan\{\mathbf{i}_w+\mathbb{C}\cdot(1, 1, 1)_w\}$. 

Now consider the connected components of $\mathcal{L}(A, B)$ that Algorithm~\ref{algorithm:AB labelling algorithm} labels by freely chosen elements of $\mathbb{P}^1$. Since $\mathcal{L}(A, B)\subseteq A\cup\II\cup\III\subseteq A\cup[0, M-1]^3$, $\mathcal{L}(A, B)$ is finite, so there are finitely many such components, which we will denote $C_1, C_2, \ldots, C_k$. Consider one such component $C_m$. Since $C_m$ does not contain any cells in $\Cyl_i^-\cup\II_{\bar{i}}$ for any $i\in\{1, 2, 3\}$, $C_m\subseteq\mathcal{L}(A, B)\setminus(\I^-\cup\II)\subseteq\III\cap(A\triangle B)$. Suppose $w$ and $w'$ are adjacent cells in $C_m$. Then $w, w'\in\III\cap(A\triangle B)$ are labelled type $\III$ boxes in $\pi$, and by Lemma~\ref{lemma:adjacent labels}, since $w\in BN(w')$ or $w'\in BN(w)$, the labels of $w$ and $w'$ in $\pi$ must match: if the label of $w$ in $\pi$ is $\ell_w=\langle z_1, z_2, z_3\rangle_w$, then the label of $w'$ in $\pi$ must be $\vspan\left\{z_1\mathbf{1}_{w'}+z_2\mathbf{2}_{w'}+z_3\mathbf{3}_{w'}+\mathbb{C}\cdot(1, 1, 1)_{w'}\right\}$. By the connectedness of $C_m$, this implies that $C_m$ consists of labelled type $\III$ boxes in $\pi$, all of whose labels in $\pi$ match. That is, there exists $\ell_m:=\langle z_1, z_2, z_3\rangle\in\mathbb{P}^1$ such that, for all $w\in C_m$, $w$ is a labelled type $\III$ box in $\pi$ and the label of $w$ in $\pi$ is $\ell_w=\langle z_1, z_2, z_3\rangle_w$. Since Algorithm~\ref{algorithm:AB labelling algorithm} labels $C_m$ by a freely chosen element of $\mathbb{P}^1$, the label of each $w\in C_m$ in $\pi(A, B)$ is the same freely chosen element. So, it just remains to show that $\ell_m$ can be freely chosen for $1\leq m\leq k$, i.e., regardless of the values of $\ell_1, \ell_2, \ldots, \ell_k$, $\pi$ satisfies Conditions~\ref{conditions:labelled box stacking}.

Suppose there is a choice $L_1, L_2, \ldots, L_k$ of the labels $\ell_1, \ell_2, \ldots, \ell_k$ for which $\pi$ does not satisfy Conditions~\ref{conditions:labelled box stacking}, i.e., for which the corresponding labelling $\pi'$ of $\pi$ is not a labelled box configuration. Since $\pi$ satisfies Conditions~\ref{conditions:labelled box stacking} and Conditions~\ref{conditions:labelled box stacking}.1 does not refer to labels, $\pi'$ also satisfies Conditions~\ref{conditions:labelled box stacking}.1. Suppose $\pi'$ does not satisfy Conditions~\ref{conditions:labelled box stacking}.2. Then there is a cell $w\in\II_{\bar i}\setminus(A\cup B)$ and a cell $n\in BN(w)\cap(A\cup B)$ that is not a type $\III$ box whose label in $\pi'$ is $\vspan\{\mathbf{i}_n+\mathbb{C}\cdot(1, 1, 1)_n\}$. In particular, Algorithm~\ref{algorithm:AB labelling algorithm} assigns $w\in\II\setminus B\subseteq\mathcal{L}(A, B)$ the label $i$ in step 2. Furthermore, since $\pi$ satisfies Conditions~\ref{conditions:labelled box stacking}.2, it must be the case that $n$ is a type $\III$ box whose label in $\pi$ is $\vspan\{\mathbf{i}_n+\mathbb{C}\cdot(1, 1, 1)_n\}$. However, labels in $\pi$ and $\pi'$ may only differ for boxes in $\bigcup_{j=1}^k C_j$, so from this it follows that $n\in C_m$ for some $1\leq m\leq k$. Then, since $n$ and $w$ are adjacent, $w\in C_m$, which is a contradiction. We conclude that $\pi'$ satisfies Conditions~\ref{conditions:labelled box stacking}.2, so $\pi'$ does not satisfy Conditions~\ref{conditions:labelled box stacking}.3. 

Thus, there exists a cell $w\in\III$ such that (i) the span $S'$ of subspaces of $\frac{\mathbb{C} \cdot \mathbf{1}_w \oplus\mathbb{C} \cdot \mathbf{2}_w \oplus\mathbb{C} \cdot \mathbf{3}_w}{\mathbb{C} \cdot (1,1,1)_w}$ induced by boxes of $\pi'$ in $BN(w)$ is $1$-dimensional, and $w$ is neither a box whose label in $\pi'$ is $S'$ nor an unlabelled box in $\pi'$, or (ii) $S'$ is $2$-dimensional, and $w$ is not an unlabelled box in $\pi'$. In either case, $BN(w)\cap(A\cup B)\neq\varnothing$, so by Conditions~\ref{conditions:ab box stacking}, $w\in A\cup B$. As a result, $w$ is a labelled box in $\pi'$. Let the label of $w$ in $\pi'$ be $\ell'$. In case (i), $\ell'\neq S'$. Since the set of labelled type $\III$ boxes in $\pi$ is equal to the set of labelled type $\III$ boxes in $\pi'$, $w$ is a labelled box in $\pi$. Let the label of $w$ in $\pi$ be $\ell$ and let $S$ be the span of subspaces of $\frac{\mathbb{C} \cdot \mathbf{1}_w \oplus\mathbb{C} \cdot \mathbf{2}_w \oplus\mathbb{C} \cdot \mathbf{3}_w}{\mathbb{C} \cdot (1,1,1)_w}$ induced by boxes of $\pi$ in $BN(w)$. Since $BN(w)\cap(A\cup B)\neq\varnothing$, $S$ is nonzero. Then, since $\pi$ satisfies Conditions~\ref{conditions:labelled box stacking}.3, $S$ is $1$-dimensional and $\ell=S$. So, in case (i), $\ell\neq\ell'$ or $S\neq S'$, and in case (ii), $S\neq S'$. In all cases, for some $1\leq m\leq k$, $\left(\{w\}\cup BN(w)\right)\cap C_m\neq\varnothing$, since labels in $\pi$ and $\pi'$ may only differ for boxes in $\bigcup_{j=1}^k C_j$. Suppose $w\not\in C_m$. Then there exists $n\in BN(w)\cap C_m$. Since $w$ is a labelled box in $\pi$, $w\in\III\cap(A\triangle B)\subseteq\mathcal{L}(A, B)$, so because $C_m$ is a connected component of $\mathcal{L}(A, B)$ and $w$ is adjacent to $n\in C_m$, $w\in C_m$. By contradiction, $w\in C_m$. 

Then, if $n$ is a box of $\pi'$ in $BN(w)$, i.e., $n\in BN(w)\cap(A\cup B)$, Lemma~\ref{lemma:type III back neighbors} implies that $n\in\I^-\cup\III$. Suppose $n\in\I^-$. Then $n\in\I^-\cap A\subseteq\mathcal{L}(A, B)$, since $B\subseteq\II\cup\III$, so because $C_m$ is a connected component of $\mathcal{L}(A, B)$ and $n$ is adjacent to $w\in C_m$, $n\in C_m\subseteq\III\cap(A\triangle B)$, a contradiction. It follows that $n\not\in\I^-$, so $n\in\III$. Additionally, suppose $n\in A\cap B$. Then, by Conditions~\ref{conditions:ab box stacking}, $w\in A\cap B$, contradicting the fact that $w\in C_m\subseteq\III\cap(A\triangle B)$, so $n\in\III\cap(A\triangle B)\subseteq\mathcal{L}(A, B)$. Therefore, since $C_m$ is a connected component of $\mathcal{L}(A, B)$ and $n$ is adjacent to $w\in C_m$, $n\in C_m$. We deduce that, if $L_m=\langle z_1, z_2, z_3\rangle$, then $\ell'=\langle z_1, z_2, z_3\rangle_w$, and all boxes $n$ of $\pi'$ in $BN(w)$ are labelled $\vspan\left\{z_1\mathbf{1}_n+z_2\mathbf{2}_n+z_3\mathbf{3}_n+\mathbb{C}\cdot(1, 1, 1)_n\right\}$, so $S'=\ell'$. Then $S'$ is $1$-dimensional, ruling out case (ii), and in case (i), we have $\ell'\neq S'=\ell'$. By contradiction, $\pi$ satisfies Conditions~\ref{conditions:labelled box stacking}, regardless of the values of $\ell_1, \ell_2, \ldots, \ell_k$. This completes the proof that $\pi(A, B)=\pi$.
\end{proof}

\begin{lemma}
\label{lemma:AB labelling algorithm for AB(pi)}
Let $\pi\in\text{PT-box}$. For any $(A, B), (A', B')\in\sAB(\pi)$, $\mathcal{L}(A, B)=\mathcal{L}(A', B')$. Thus, the output of Algorithm~\ref{algorithm:AB labelling algorithm} is the same for all elements of $\sAB(\pi)$. 
\end{lemma}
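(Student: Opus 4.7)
The plan is to show that each of the three pieces of $\mathcal{L}(A,B) = (\I^-\cap A)\cup(\II\setminus B)\cup(\III\cap(A\triangle B))$ depends only on $\pi$, not on the particular choice of $(A,B)\in\sAB(\pi)$. Once this is established, $\mathcal{L}(A,B) = \mathcal{L}(A',B')$ is immediate, and because Algorithm~\ref{algorithm:AB labelling algorithm} takes $\mathcal{L}(A,B)$ as its only input (beyond the fixed sector data $\Cyl_i^-$, $\II_{\bar i}$), the output is identical on all elements of $\sAB(\pi)$.

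First I would record the consequences of $(A,B), (A',B')\in\sAB(\pi)$ from Conditions~\ref{conditions:ab on pi}: both $A\cup B$ and $A'\cup B'$ equal the set of boxes of $\pi$, and both $A\cap B$ and $A'\cap B'$ equal the set of unlabelled type $\III$ boxes of $\pi$. Combining this with the containments $A, A'\subseteq\I^-\cup\III$ and $B, B'\subseteq\II\cup\III$ from the definition of an $AB$ configuration, I can analyze each of the three pieces of $\mathcal{L}$:
\begin{itemize}
\item For $w\in\I^-$: since $w\notin\II\cup\III$, $w\notin B$, hence $w\in A$ iff $w\in A\cup B$. Thus $\I^-\cap A$ equals the set of type $\I^-$ boxes of $\pi$.
\item For $w\in\II$: since $w\notin\I^-\cup\III$, $w\notin A$, hence $w\notin B$ iff $w\notin A\cup B$. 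Thus $\II\setminus B$ equals the set of type $\II$ cells that are not boxes of $\pi$.
\item For $w\in\III$: $w\in A\triangle B$ iff $w\in(A\cup B)\setminus(A\cap B)$, so $\III\cap(A\triangle B)$ equals the set of labelled type $\III$ boxes of $\pi$.
\end{itemize}
Each of these three descriptions refers only to $\pi$, so the same descriptions apply verbatim to $(A', B')$, giving $\mathcal{L}(A,B) = \mathcal{L}(A',B')$.

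For the second assertion, I would observe that Algorithm~\ref{algorithm:AB labelling algorithm} only inspects $\mathcal{L}(A,B)$ (its connected components and their intersections with the fixed sets $\Cyl_i^-\cup\II_{\bar i}$). Thus the same connected components are identified in both cases, the same components trigger failure in step 1, the same components receive a sector label $i$ in step 2, and the same components are designated in step 3 to receive a freely chosen element of $\mathbb{P}^1$. The output is therefore identical on $(A,B)$ and $(A',B')$.

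This proof is essentially bookkeeping; the main (minor) subtlety is remembering to use both containments $A\subseteq\I^-\cup\III$ and $B\subseteq\II\cup\III$ to convert $A$-membership and $B$-membership conditions into conditions about membership in $A\cup B$ (which is the set of boxes of $\pi$). There is no real obstacle, since the statement amounts to observing that $\mathcal{L}(A,B)$ is manifestly a function of the box set and the labelled/unlabelled partition of the type $\III$ boxes, both of which are determined by $\pi$ alone.
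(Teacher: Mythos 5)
Your proof is correct and is essentially the same argument as the paper's: both rest on the facts that $A\cup B$ and $A\cap B$ are determined by $\pi$ (Conditions~\ref{conditions:ab on pi}) together with the containments $A\subseteq\I^-\cup\III$ and $B\subseteq\II\cup\III$. The only cosmetic difference is that you characterize each of the three pieces of $\mathcal{L}(A,B)$ explicitly in terms of $\pi$, while the paper proves the three equalities $\I^-\cap A=\I^-\cap A'$, $\II\setminus B=\II\setminus B'$, and $\III\cap(A\triangle B)=\III\cap(A'\triangle B')$ directly by double inclusion.
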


\begin{proof}
Suppose $(A, B), (A', B')\in\sAB(\pi)$. Then $A\cup B=A'\cup B'$ is the set of boxes of $\pi$ and $A\cap B=A'\cap B'$ is the set of unlabelled type $\III$ boxes of $\pi$. Suppose $w\in\I^-\cap A$. Then, since $B'\subseteq\II\cup\III$, $w\in A\cup B=A'\cup B'$ and $w\not\in B'$, so $w\in\I^-\cap A'$. So $\I^-\cap A\subseteq\I^-\cap A'$, and by the analogous argument, $\I^-\cap A'\subseteq\I^-\cap A$, so $\I^-\cap A=\I^-\cap A'$. Suppose $w\in\II\setminus B$. Then, since $A\subseteq\I^-\cup\III$, $w\not\in A\cup B=A'\cup B'$, so $w\in\II\setminus B'$. So $\II\setminus B\subseteq\II\setminus B'$, and by the analogous argument, $\II\setminus B'\subseteq\II\setminus B$, so $\II\setminus B=\II\setminus B'$. Finally, \[\III\cap(A\triangle B)=\III\cap((A\cup B)\setminus(A\cap B))=\III\cap((A'\cup B')\setminus(A'\cap B'))=\III\cap(A'\triangle B'),\] so $\mathcal{L}(A, B)=\mathcal{L}(A', B')$. Since Algorithm~\ref{algorithm:AB labelling algorithm} only depends on the connected components of the labelling set, we conclude that the output of Algorithm~\ref{algorithm:AB labelling algorithm} is the same for all elements of $\sAB(\pi)$. 
\end{proof}

\begin{corollary}
	\label{corollary:labelling sAB(pi)}
Given $(A, B)\in\sAB(\pi)$, $\pi(A, B)=\pi$.
\end{corollary}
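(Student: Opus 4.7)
The plan is to reduce this corollary to the statement $\psi_{\base}\phi_{\base} = 1_{\text{PT-box}}$ already proved in Lemma~\ref{lemma:AB PT correspondence}, by showing that the construction $\pi(A,B)$ depends on $(A,B) \in \sAB(\pi)$ only through data that Lemma~\ref{lemma:AB labelling algorithm for AB(pi)} guarantees to be the same across all of $\sAB(\pi)$.

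First I would observe that by Definition~\ref{defn:pi(A, B)}, the set of labelled boxes $\pi(A,B)$ is determined entirely by two pieces of data: the underlying box set $A \cup B$, and the labels produced by Algorithm~\ref{algorithm:AB labelling algorithm} applied to $(A,B)$. Next, since $(A, B) \in \sAB(\pi)$, Conditions~\ref{conditions:ab on pi}.1 gives $A \cup B = A_{\base} \cup B_{\base}$, where $(A_{\base}, B_{\base}) := AB_{\base}(\pi)$, because both sides are exactly the set of boxes of $\pi$. Moreover, since $AB_{\base}(\pi) \in \sAB(\pi)$ (as already verified in the construction of the base $AB$ configuration), Lemma~\ref{lemma:AB labelling algorithm for AB(pi)} applies to the pair $(A,B)$ and $(A_{\base}, B_{\base})$, yielding that Algorithm~\ref{algorithm:AB labelling algorithm} produces identical output on both.

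Combining these observations, $\pi(A,B) = \pi(AB_{\base}(\pi))$ as labelled sets of boxes. The corollary now follows immediately from the definition $\psi_{\base}(A', B') = \pi(A', B')$ together with the second identity of Lemma~\ref{lemma:AB PT correspondence}: $\pi(AB_{\base}(\pi)) = \psi_{\base}(\phi_{\base}(\pi)) = \pi$.

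There is no real obstacle here; the content of the corollary is essentially repackaged from the two preceding lemmas. The only subtle point worth emphasizing explicitly is that $\pi(A,B)$ was a priori defined in terms of $(A,B)$, not in terms of $\pi$, so one must verify that varying $(A,B)$ within $\sAB(\pi)$ does not change the output, which is precisely the content of Lemma~\ref{lemma:AB labelling algorithm for AB(pi)}.
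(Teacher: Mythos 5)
Your proposal is correct and follows the same route as the paper: use Conditions~\ref{conditions:ab on pi}.1 to match the underlying box sets, Lemma~\ref{lemma:AB labelling algorithm for AB(pi)} to match the output of Algorithm~\ref{algorithm:AB labelling algorithm}, and then conclude via $\psi_{\base}\phi_{\base}=1_{\text{PT-box}}$ from Lemma~\ref{lemma:AB PT correspondence}. The paper's proof is a condensed version of exactly this argument.
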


\begin{proof}
Let $(A', B')=AB_{\base}(\pi)$. By Definition~\ref{defn:pi(A, B)}, Conditions~\ref{conditions:ab on pi}.1, and the lemma, $\pi(A, B)=\pi(A', B')$. Then, by Lemma~\ref{lemma:AB PT correspondence}, we have \[\pi(A, B)=\pi(A', B')=\psi_{\base}(A', B')=\psi_{\base}\left(AB_{\base}(\pi)\right)=\psi_{\base}\left(\phi_{\base}(\pi)\right)=\pi,\] as desired.
\end{proof}

\begin{corollary}
	\label{corollary:sAB(pi) disjoint}
The sets $\sAB(\pi)$, for $\pi\in\text{PT-box}$, are disjoint.
\end{corollary}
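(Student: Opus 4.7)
The plan is to deduce this immediately from Corollary~\ref{corollary:labelling sAB(pi)}. That corollary gives us a well-defined map $\sAB \to \text{PT-box}$ sending $(A,B) \mapsto \pi(A,B)$, with the property that if $(A,B) \in \sAB(\pi)$ then $\pi(A,B) = \pi$. In other words, each labelled $AB$ configuration $(A,B)$ determines a \emph{unique} labelled box configuration via the recipe of Definition~\ref{defn:pi(A, B)}, and this unique $\pi$ must coincide with any $\pi$ for which $(A,B) \in \sAB(\pi)$.

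So the argument is essentially one line: suppose $\pi, \pi' \in \text{PT-box}$ and $(A,B) \in \sAB(\pi) \cap \sAB(\pi')$. Applying Corollary~\ref{corollary:labelling sAB(pi)} to the pair $(A,B)$ viewed as an element of $\sAB(\pi)$ yields $\pi(A,B) = \pi$, and applying it to $(A,B)$ viewed as an element of $\sAB(\pi')$ yields $\pi(A,B) = \pi'$. Hence $\pi = \pi'$. Equivalently, by contrapositive, if $\pi \neq \pi'$ then $\sAB(\pi) \cap \sAB(\pi') = \varnothing$, which is exactly the claim.

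There is no real obstacle here — all of the work has already been done in Corollary~\ref{corollary:labelling sAB(pi)} (and, underlying it, in Lemma~\ref{lemma:AB labelling algorithm for AB(pi)} and Lemma~\ref{lemma:AB PT correspondence}). The only thing to keep in mind while writing is that Corollary~\ref{corollary:labelling sAB(pi)} applies with \emph{any} choice of $\pi$ such that $(A,B) \in \sAB(\pi)$, so the uniqueness is genuinely a consequence of that statement rather than something that needs to be reproved. Together with Lemma~\ref{lemma:sAB as union}, this shows that $\sAB = \bigsqcup_{\pi \in \text{PT-box}} \sAB(\pi)$ is a disjoint union, which is what will be needed in Section~\ref{sec:labelling_algorithm_proofs} to split the generating function for $\sAB$ into a sum indexed by labelled box configurations.
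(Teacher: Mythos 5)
Your proposal is correct and is essentially the same argument the paper gives: both simply apply Corollary~\ref{corollary:labelling sAB(pi)} twice to a common element $(A,B)\in\sAB(\pi)\cap\sAB(\pi')$ to conclude $\pi=\pi(A,B)=\pi'$. No issues.
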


\begin{proof}
Suppose $\pi_1$ and $\pi_2$ are labelled box configurations such that $(A, B)\in\sAB\left(\pi_1\right)\cap\sAB\left(\pi_2\right)$. Then, by Corollary~\ref{corollary:labelling sAB(pi)}, we have $\pi_1=\pi(A, B)=\pi_2$.
\end{proof}

\begin{lemma}
	\label{lemma:chitop}
Let $\pi\in\text{PT-box}$. If there are $k$ connected components of freely labelled type $\III$ boxes in $\pi$, then $\chi_{\text{top}}(\pi) = 2^k$.
\end{lemma}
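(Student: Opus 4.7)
The plan is to identify the moduli space of labellings of $\pi$ with a product of projective lines and then use multiplicativity of the topological Euler characteristic. Concretely, I would show that this moduli space is isomorphic to $(\mathbb{P}^1)^k$, so that $\chi_{\text{top}}(\pi) = \chi_{\text{top}}(\mathbb{P}^1)^k = 2^k$.

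First, I would unpack what the moduli space of labellings parametrizes. Given the underlying (unlabelled) box set of $\pi$, a labelling is a choice of label for each type $\III$ box subject to Conditions~\ref{conditions:labelled box stacking}.3. For many type $\III$ boxes, the label is either forced (the span $S$ of the subspaces coming from the back neighbors is $1$-dimensional and equals the label) or forced to be absent ($S$ is $2$-dimensional, so the box must be unlabelled). The remaining type $\III$ boxes are precisely the freely labelled ones in the sense of the definition preceding Example~\ref{ex:labelledboxconfigsfrompt}, i.e., those whose label is not constrained by Conditions~\ref{conditions:labelled box stacking}.

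Next, I would argue that within a single connected component of freely labelled type $\III$ boxes, all labels must agree under the natural identification $\mathbb{P}^1_w \cong \mathbb{P}^1_{w'}$ used throughout Section~\ref{sec:ABconfigs}. This is essentially the content of Lemma~\ref{lemma:adjacent labels}: if $w, w' \in \III$ are both freely labelled and adjacent (so $w' \in BN(w)$ or $w \in BN(w')$), and the label of $w$ is $\langle z_1, z_2, z_3\rangle_w$, then the label of $w'$ must be $\langle z_1, z_2, z_3\rangle_{w'}$. By inducting along paths in a connected component $C$, any labelling of $\pi$ assigns a single element $\ell_C \in \mathbb{P}^1$ to the entire component, giving a map from the moduli space to $(\mathbb{P}^1)^k$ that records the label of each of the $k$ components.

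Then I would show that this map is a bijection (in fact an isomorphism of varieties): every choice of $(\ell_1, \ldots, \ell_k) \in (\mathbb{P}^1)^k$ yields a valid labelling. This is the independence-across-components statement, and it is essentially the final two paragraphs in the proof of Lemma~\ref{lemma:AB PT correspondence}: there, one verifies that arbitrarily reassigning the freely chosen labels of the various components never breaks Conditions~\ref{conditions:labelled box stacking}, because any violation would force a freely labelled box to be adjacent to a box outside its own component in $\mathcal{L}(A,B)$, contradicting connectedness. Finally, since $\chi_{\text{top}}(\mathbb{P}^1) = 2$ and $\chi_{\text{top}}$ is multiplicative under products of complex varieties, we conclude $\chi_{\text{top}}(\pi) = \chi_{\text{top}}((\mathbb{P}^1)^k) = 2^k$. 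The main obstacle is not the Euler characteristic computation, which is formal, but pinning down the moduli space precisely; happily, almost all of that work has already been done inside Lemma~\ref{lemma:AB PT correspondence} and Lemma~\ref{lemma:adjacent labels}, so the proof is largely a matter of reassembling those pieces.
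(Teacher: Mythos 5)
Your proposal is correct and takes essentially the same route as the paper. The paper's proof passes to $(A,B)=AB_{\base}(\pi)$, invokes $\pi=\pi(A,B)$ (Corollary~\ref{corollary:labelling sAB(pi)}), identifies the $k$ connected components of freely labelled boxes with the components $C_1,\dots,C_k$ of $\mathcal{L}(A,B)$ labelled in step~3 of Algorithm~\ref{algorithm:AB labelling algorithm}, and then reads off from Definition~\ref{defn:pi(A, B)} that the moduli space is $(\mathbb{P}^1)^k$; you assemble the same pieces by hand, pointing to Lemma~\ref{lemma:adjacent labels} for the forcing within a component and the final paragraphs of Lemma~\ref{lemma:AB PT correspondence} for the independence across components, then finish with multiplicativity of $\chi_{\text{top}}$.
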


\begin{proof}
Let $(A, B)=AB_{\base}(\pi)$. By Corollary~\ref{corollary:labelling sAB(pi)}, $\pi=\pi(A, B)$. Suppose there are $k$ connected components of freely labelled type $\III$ boxes in $\pi$. Then there are $k$ connected components of freely labelled type $\III$ boxes in $\pi(A, B)$. By Definition~\ref{defn:pi(A, B)}, the set of freely labelled type $\III$ boxes in $\pi(A, B)$ is $\III\cap(C_1\cup C_2\cup\cdots\cup C_K)$, where $C_1, C_2, \ldots, C_K$ are the connected components of $\mathcal{L}(A, B)$ that Algorithm~\ref{algorithm:AB labelling algorithm} labels by freely chosen elements of $\mathbb{P}^1$. For $1\leq m\leq K$, since Algorithm~\ref{algorithm:AB labelling algorithm} labels $C_m$ by a freely chosen element of $\mathbb{P}^1$, $C_m$ must contain no cells in $\I^-\cup\II$, so $C_m\subseteq\mathcal{L}(A, B)\setminus(\I^-\cup\II)\subseteq\III\cap(A\triangle B)\subseteq\III$. Thus, $\III\cap(C_1\cup C_2\cup\cdots\cup C_K)=C_1\cup C_2\cup\cdots\cup C_K$, so the connected components of freely labelled type $\III$ boxes in $\pi(A, B)$ are the connected components of $C_1\cup C_2\cup\cdots\cup C_K$, which are precisely $C_1, C_2, \ldots, C_K$. In particular, by Definition~\ref{defn:pi(A, B)}, there are $K=k$ independent, freely chosen labels in $\pi(A, B)$, one for each component $C_1, C_2, \ldots, C_K=C_k$. In other words, the moduli space of labellings of $\pi(A, B)$ is $\underbrace{\mathbb{P}^1\times\mathbb{P}^1\times\cdots\times\mathbb{P}^1}_{k\text{ times}}$. The topological Euler characteristic of this space is $\chi(\mathbb{P}^1)^k=2^k$, i.e., $\chi_{\text{top}}(\pi)=\chi_{\text{top}}(\pi(A, B))=2^k$. 
\end{proof}

\begin{lemma}
	Let $\pi\in\text{PT-box}$ and $(A, B)=AB_{\base}(\pi)$.  Also, let the connected components of freely labelled type $\III$ boxes in $\pi$ be denoted $C_1, C_2, \ldots, C_k$, and let \[C(\pi)=\left\{C_{j_1}\cup\cdots\cup C_{j_m}\mid 1\leq j_1<\cdots<j_m\leq k\right\}.\]  Then \[\sAB(\pi)=\left\{(A', B')\in\sAB_{\text{all}}\mid A'=A\setminus S, B'=B\cup S\text{ for some }S\in C(\pi)\right\}.\]
\end{lemma}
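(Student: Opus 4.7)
The plan is to prove the two inclusions separately, exploiting the fact that, for any $(A', B') \in \sAB(\pi)$, Conditions~\ref{conditions:ab on pi} fix $A' \cup B'$ (the boxes of $\pi$) and $A' \cap B'$ (the unlabelled type $\III$ boxes); furthermore the ambient inclusions $A' \subseteq \I^- \cup \III$ and $B' \subseteq \II \cup \III$ force the type $\I^-$ boxes into $A'$ and the type $\II$ boxes into $B'$. Thus the only degree of freedom is the partition of the labelled type $\III$ boxes between $A'$ and $B'$, and for $(A, B) = AB_{\base}(\pi)$ all such boxes are placed in $A$. In particular, once I set $S := \III \cap (B' \setminus A')$, the equalities $A' = A \setminus S$ and $B' = B \cup S$ hold automatically; the entire content of the lemma is to identify which $S$ can arise, namely the unions of freely labelled components.

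For the forward inclusion, I would fix $S \in C(\pi)$ and set $A' := A \setminus S$, $B' := B \cup S$. Since every freely labelled box lies in $A \setminus B$, one has $S \subseteq A$ and $S \cap B = \varnothing$, from which the Conditions~\ref{conditions:ab on pi} equalities $A' \cup B' = A \cup B$ and $A' \cap B' = A \cap B$ are immediate. To verify Conditions~\ref{conditions:ab box stacking}.1 at some $w \in \I^- \cup \III$ with $n \in BN(w) \cap A'$, I would use Conditions~\ref{conditions:ab box stacking}.1 for $(A, B)$ to get $w \in A$, then rule out $w \in S$ by observing that any such $n \in A$ would either lie in the same $\mathcal{L}(A,B)$-component as $w$ (hence in $S$, contradicting $n \in A'$) or be an unlabelled type $\III$ box, which by Remark~\ref{rem:unlabelled in front of unlabelled} would force $w$ to be unlabelled. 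For Conditions~\ref{conditions:ab box stacking}.2 at some $w \in \II \cup \III$ with $n \in BN(w) \cap B'$, the case $n \in B$ reduces to Conditions~\ref{conditions:ab box stacking}.2 for $(A, B)$; in the case $n \in S$, I would use the freeness of $n$'s label in $\mathbb{P}^1_n$ to select a label violating the single exception in Conditions~\ref{conditions:labelled box stacking}.2 or~\ref{conditions:labelled box stacking}.3, concluding that $w$ must itself be a box of $\pi$. When $w \in \III$ is labelled, I would further track the $1$-dimensional span in Conditions~\ref{conditions:labelled box stacking}.3 as $n$'s label varies: since $w$'s label is pinned to that span, $w$ must vary in lockstep with $n$ and therefore lie in the same free component as $n$, hence in $S$.

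For the reverse inclusion, given $(A', B') \in \sAB(\pi)$ and $S := \III \cap (B' \setminus A')$, I would show that $S$ is a union of free components of $\mathcal{L}(A, B)$. By Lemma~\ref{lemma:AB labelling algorithm for AB(pi)}, $\mathcal{L}(A', B') = \mathcal{L}(A, B)$. Fixing $w \in S$, I would propagate along the $\mathcal{L}(A, B)$-component of $w$: a back-neighbor $n \in \mathcal{L}(A, B)$ of $w$ lies in $\I^- \cup \III$ by Lemma~\ref{lemma:type III back neighbors}, and the case $n \in A'$ is excluded by Conditions~\ref{conditions:ab box stacking}.1 (which would force $w \in A'$), so $n \in \III \cap (B' \setminus A') = S$. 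A forward-neighbor $n \in \II \cup \III$ satisfies $n \in B'$ by Conditions~\ref{conditions:ab box stacking}.2 applied with $w \in B'$; this excludes $n \in \II \setminus B \subseteq \mathcal{L}$ and also $n \in A' \cap B' = A \cap B$ (unlabelled, hence outside $\mathcal{L}$), again leaving $n \in S$. Iterating, the $\mathcal{L}$-component of $w$ is entirely contained in $S \subseteq \III$ and therefore meets neither $\I^-$ nor $\II$, identifying it as one of the free components $C_1, \ldots, C_k$. The principal obstacle I anticipate is the Conditions~\ref{conditions:ab box stacking}.2 analysis in the forward inclusion when both $w$ and $n$ are type $\III$ and $w$ is labelled, since there I must use the freeness of $n$'s label twice---once to conclude that $w$ is a box of $\pi$ at all, and once to force $w$ into the same free component as $n$. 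Every remaining case amounts to component-bookkeeping in $\mathcal{L}(A, B)$.
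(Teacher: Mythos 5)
Your proposal is correct and follows the same underlying strategy as the paper (analyze $\mathcal{L}(A,B)$-components of cells in $S := \III \cap (B' \setminus A')$), but the two arguments are organized somewhat differently, and a comparison is instructive.

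For the reverse inclusion, your single propagation argument is cleaner than the paper's. You show directly that for any $w \in S$, every $\mathcal{L}(A,B)$-neighbor of $w$ lands back in $S$, so the entire $\mathcal{L}$-component of $w$ lies in $S \subseteq \III$; being disjoint from $\I^- \cup \II$, this component has $\mathcal{N}(C) = 0$ and is therefore one of the free components $C_j$, and $S$ is a union of such components. The paper instead proceeds in two stages: it first shows each $w \in S$ is freely labelled by supposing $w$'s component is labelled by an integer $i$, extracting a path to a cell in $\Cyl_i^- \cup \II_{\bar i}$, and deriving a contradiction from Conditions~\ref{conditions:ab box stacking} on $(A', B')$ along the path; it then separately shows $C_w \subseteq S$ by pairwise propagation. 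Your argument establishes freeness and containment simultaneously, avoiding the explicit path bookkeeping.

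For the forward inclusion, you do more work than the lemma actually requires. The set on the right-hand side of the lemma is already restricted to pairs in $\sAB_{\text{all}}$, so the forward inclusion only needs the Conditions~\ref{conditions:ab on pi} check, namely that $S \subseteq A \setminus B$ implies $A' \cup B' = A \cup B$ and $A' \cap B' = A \cap B$, which you give. Your additional verification that $(A \setminus S, B \cup S)$ satisfies Conditions~\ref{conditions:ab box stacking}---i.e., that such a pair actually belongs to $\sAB_{\text{all}}$---is genuine and needed content, but in the paper it is deferred to the proof of Corollary~\ref{corollary:size of sAB(pi)}, where it is what makes the map $f : C(\pi) \to \sAB(\pi)$ well-defined and surjective. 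There the paper relies on Lemma~\ref{lemma:type III back neighbors} and $\mathcal{L}$-component membership rather than on the freeness of $n$'s label; your ``pick a label violating the exception'' device works (since the box-set underlying $\pi$ is invariant under relabelling within the moduli space), but the component-based argument is more direct, particularly in the $w \in \III$ subcase you flag as the main obstacle, where it sidesteps having to track how $w$'s label co-varies with $n$'s. In short: correct, same essential idea, with a cleaner reverse direction and an over-full forward direction that absorbs the corollary's content.
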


\begin{proof}
Let \[\cAB(\pi)=\left\{(A', B')\in\sAB_{\text{all}}\mid A'=A\setminus S, B'=B\cup S\text{ for some }S\in C(\pi)\right\}.\]  Suppose $(A', B')\in\cAB(\pi)$.  Then $A'=A\setminus S$ and $B'=B\cup S$ for some $S\in C(\pi)$.  Note that $S$ is a set of labelled type $\III$ boxes in $\pi$, so $S\subseteq A\setminus B$.  Then, to show that $(A', B')\in\sAB(\pi)$, we just observe that \[A'\cup B'=\left(A\setminus S\right)\cup\left(B\cup S\right)=A\cup B\] is the set of boxes in $\pi$, and \[A'\cap B'=\left(A\setminus S\right)\cap\left(B\cup S\right)=A\cap B\] is the set of unlabelled type $\III$ boxes in $\pi$.

Conversely, suppose $(A', B')\in\sAB(\pi)$.  To show that $(A', B')\in\cAB(\pi)$, we must find a set $S\in C(\pi)$ such that $A'=A\setminus S$ and $B'=B\cup S$.  By Lemma~\ref{lemma:sAB as union}, $\sAB(\pi)=P^{-1}(AB_{\base}(\pi))$, so $P(A', B')=(A, B)$, i.e., $(A, B)$ is obtained from $(A', B')$ by moving all multiplicity $1$ type $\III$ boxes into $A'$.  In other words, $A=A'\cup S$ and $B=B'\setminus S$, where $S=\III\cap\left(B'\setminus A'\right)$.  Then $A'=A\setminus S$ and $B'=B\cup S$, so it just remains to show that $S\in C(\pi)$.

Given $w\in S$, since $(A', B')\in\sAB(\pi)$ and $S\subseteq\III\cap(A'\triangle B')\subseteq\mathcal{L}(A', B')$, $w\in\mathcal{L}(A', B')$ is a labelled type $\III$ box in $\pi$. We claim that $w\in C_1\cup C_2\cup\cdots\cup C_k$. For this, we must show that $w$ is freely labelled. Let $\ell$ denote the label of $w$ in $\pi$, and let $C(w)$ be the connected component of $\mathcal{L}(A', B')$ containing $w$. By Lemma~\ref{lemma:AB labelling algorithm for AB(pi)}, $\mathcal{L}(A', B')=\mathcal{L}(AB_{\base}(\pi))=\mathcal{L}(A, B)$, so $C(w)$ is the connected component of $\mathcal{L}(A, B)$ containing $w$, and the output of Algorithm~\ref{algorithm:AB labelling algorithm} is the same for $(A', B')$ and $AB_{\base}(\pi)=(A, B)$. By Lemma~\ref{lemma:AB PT correspondence}, $\pi=\pi(AB_{\base}(\pi))=\pi(A, B)$. So, either Algorithm~\ref{algorithm:AB labelling algorithm} labels $C(w)$ by $i\in\{1, 2, 3\}$ and the label of $w$ in $\pi$ is $\ell=\vspan\{\mathbf{i}_w+\mathbb{C}\cdot(1, 1, 1)_w\}$, or Algorithm~\ref{algorithm:AB labelling algorithm} labels $C(w)$ by a freely chosen element $\langle z_1, z_2, z_3\rangle$ of $\mathbb{P}^1$ and the label of $w$ in $\pi$ is the same freely chosen element $\ell=\langle z_1, z_2, z_3\rangle_w$ of $\mathbb{P}^1_w$.

Suppose Algorithm~\ref{algorithm:AB labelling algorithm} labels $C(w)$ by $i\in\{1, 2, 3\}$. Then there is a cell $n\in C(w)\cap(\Cyl_i^-\cup\II_{\bar{i}})$. Since $C(w)$ is connected, there is a sequence of adjacent cells $w:=p_0, p_1, \ldots, p_r:=n$, each of which is an element of $C(w)\subseteq\mathcal{L}(A, B)$. Without loss of generality, assume that $n$ is the only cell in the sequence in $\Cyl_i^-\cup\II_{\bar{i}}$. Then, since $(A, B)$ is a labelled $AB$ configuration, $C(w)$ contains no cells in $\Cyl_j^-\cup\II_{\bar{j}}$, for $j\neq i$, so for $0\leq s<r$, $p_s\in\mathcal{L}(A, B)\setminus(\I^-\cup\II)\subseteq\III\cap(A\triangle B)$. Since $(A, B)=AB_{\base}(\pi)\in\sAB(\pi)$ and $(A', B')\in\sAB(\pi)$, \[\III\cap(A\triangle B)=\III\cap((A\cup B)\setminus(A\cap B))=\III\cap((A'\cup B')\setminus(A'\cap B'))=\III\cap(A'\triangle B'),\]  so for $0\leq s<r$, $p_s\in\III\cap(A'\triangle B')$. 

Suppose $n\in\Cyl_i^-$. Then $n\in\I^-\cap\mathcal{L}(A, B)\subseteq\I^-\cap A$, so $n\in A\cup B=A'\cup B'$. However, $B'\subseteq\II\cup\III$, so $p_r=n\in A'\setminus B'$. Since $w\in S$, $w\in B'\setminus A'$. Therefore, there exists $0\leq s<r$ such that $p_s\in B'\setminus A'$ and $p_{s+1}\in A'\setminus B'$. Then $p_s\in\III$ is adjacent to $p_{s+1}\in\I^-\cup\III$, so $p_s\in BN\left(p_{s+1}\right)$ or $p_{s+1}\in BN\left(p_s\right)$. In the first case, since $p_s\in\III\subseteq\mathbb{Z}^3_{\geq 0}$, $p_{s+1}\in\mathbb{Z}^3_{\geq 0}\cap\left(\I^-\cup\III\right)\subseteq\III\subseteq\II\cup\III$. It is easy to see that these statements contradict Conditions~\ref{conditions:ab box stacking} in both cases.

Otherwise, $n\in\II_{\bar{i}}$. Then $n\in\II\cap\mathcal{L}(A, B)\subseteq\II\setminus B$, and since $A\subseteq\I^-\cup\III$, $n\not\in A\cup B=A'\cup B'$. By Lemma~\ref{lemma:type III back neighbors}, $n\not\in BN\left(p_{r-1}\right)$, but $p_{r-1}$ and $n$ are adjacent, so we must have $p_{r-1}\in BN\left(n\right)$. Then, by Conditions~\ref{conditions:ab box stacking}, we deduce that $p_{r-1}\not\in B'$, so $p_{r-1}\in A'\setminus B'$. Since $w\in B'\setminus A'$, there exists $0\leq s<r-1$ such that $p_s\in B'\setminus A'$ and $p_{s+1}\in A'\setminus B'$. Then $p_s\in\III$ is adjacent to $p_{s+1}\in\III$, so $p_s\in BN\left(p_{s+1}\right)$ or $p_{s+1}\in BN\left(p_s\right)$. Again, it is easy to see that these statements contradict Conditions~\ref{conditions:ab box stacking} in both cases.

In all cases, we arrived at a contradiction. We conclude that $\ell$ is freely chosen and, as a result, $w\in C_1\cup C_2\cup\cdots\cup C_k$. So, $S\subseteq C_1\cup C_2\cup\cdots\cup C_k$. Moreover, $w$ is in exactly one of the connected components $C_w$ of freely labelled type $\III$ boxes in $\pi$. We claim that $C_w\subseteq S$. Since $C_w$ is connected and $w\in C_w\cap S$, it suffices to show that if $w', w''\in C_w$ are adjacent and $w'\in S$, then $w''\in S$. Suppose $w', w''\in C_w$ are adjacent and $w'\in S$. Then $w', w''$ are freely labelled type $\III$ boxes in $\pi$. Furthermore, since $w'\in S$, $w'\in B'\setminus A'$. Since $w'$ and $w''$ are adjacent, $w'\in BN\left(w''\right)$ or $w''\in BN\left(w'\right)$. Additionally, since $w''\in A'\cup B'$ is labelled, $w''\not\in A'\cap B'$, so $w''\in A'\triangle B'$. However, by Conditions~\ref{conditions:ab box stacking}, $w'\in BN\left(w''\right)$ implies that $w''\in B'$, while $w''\in BN\left(w'\right)$ implies that $w''\not\in A'$, so in either case, we must have $w''\not\in A'\setminus B'$. It follows that $w''\in B'\setminus A'$, so $w''\in\III\cap\left(B'\setminus A'\right)=S$, as desired. Consequently, $C_w\subseteq S$, so \[S=\bigcup_{w\in S}C_w\in C(\pi).\] This completes the proof.
\end{proof}

\begin{corollary}
	\label{corollary:size of sAB(pi)}
Let $N(\pi)$ be the number of connected components of freely labelled type $\III$ boxes in $\pi$. Then $|\sAB(\pi)| = 2^{N(\pi)} = \chi_{\text{top}}(\pi)$.
\end{corollary}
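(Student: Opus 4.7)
The plan is to read off the corollary directly from the preceding lemma together with Lemma~\ref{lemma:chitop}. The preceding lemma describes $\sAB(\pi)$ explicitly as
\[
\sAB(\pi) = \left\{(A\setminus S,\, B\cup S) \mid S \in C(\pi)\right\},
\]
where $(A,B) = AB_{\base}(\pi)$ and $C(\pi)$ is the collection of unions of subfamilies of the $N(\pi)$ connected components $C_1,\ldots,C_{N(\pi)}$ of freely labelled type $\III$ boxes in $\pi$. So my first step will be to observe that $|C(\pi)| = 2^{N(\pi)}$, since $C(\pi)$ is in obvious bijection with the power set of $\{C_1,\ldots,C_{N(\pi)}\}$.

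Next I would show that the surjection $C(\pi) \twoheadrightarrow \sAB(\pi)$ sending $S \mapsto (A\setminus S, B\cup S)$ is actually a bijection. This is the only nontrivial step, but it is short: each $C_j$ consists of labelled type $\III$ boxes of $\pi$, so by Conditions~\ref{conditions:ab on pi} every element of $\bigcup_j C_j$ lies in $A\setminus B$. Hence any $S \in C(\pi)$ satisfies $S\subseteq A$ and $S\cap B = \varnothing$, so $S$ can be recovered from $(A\setminus S, B\cup S)$ as, say, $A \setminus (A\setminus S)$. Injectivity follows, giving $|\sAB(\pi)| = |C(\pi)| = 2^{N(\pi)}$.

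Finally, Lemma~\ref{lemma:chitop} already yields $\chi_{\text{top}}(\pi) = 2^{N(\pi)}$, so combining these two equalities gives the chain $|\sAB(\pi)| = 2^{N(\pi)} = \chi_{\text{top}}(\pi)$, completing the corollary. The only potential obstacle is checking that the map $S \mapsto (A\setminus S, B\cup S)$ is injective, but as noted above this is immediate from the fact that freely labelled type $\III$ boxes sit in $A\setminus B$ of the base $AB$ configuration.
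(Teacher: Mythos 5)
Your proposal has the right skeleton — reduce to counting $C(\pi)$, use injectivity of $S\mapsto(A\setminus S,\,B\cup S)$, and invoke Lemma~\ref{lemma:chitop} — and your injectivity argument (recover $S$ as $A\setminus(A\setminus S)$ since $S\subseteq A\setminus B$) is essentially identical to the paper's. However, there is a real gap at the outset: you misquote the preceding lemma. It does \emph{not} say $\sAB(\pi)=\{(A\setminus S,\,B\cup S)\mid S\in C(\pi)\}$; it says
\[
\sAB(\pi)=\left\{(A',B')\in\sAB_{\text{all}}\mid A'=A\setminus S,\ B'=B\cup S\text{ for some }S\in C(\pi)\right\},
\]
with the restriction to $\sAB_{\text{all}}$ doing real work. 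A priori, for some $S\in C(\pi)$ the pair $(A\setminus S,\,B\cup S)$ might fail Conditions~\ref{conditions:ab box stacking} and hence not be an $AB$ configuration at all, in which case the ``surjection $C(\pi)\twoheadrightarrow\sAB(\pi)$'' you invoke would not be defined on all of $C(\pi)$, and you would only get $|\sAB(\pi)|\leq 2^{N(\pi)}$ from injectivity, not equality.

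Closing this gap is precisely what the paper spends the bulk of the corollary's proof on: it verifies, for each $S\in C(\pi)$, that $(A\setminus S,\,B\cup S)$ satisfies Conditions~\ref{conditions:ab box stacking} — checking Conditions~\ref{conditions:ab box stacking}.1 and~.2 against back-neighbor configurations, using Lemma~\ref{lemma:type III back neighbors} and the fact (from the proof of Lemma~\ref{lemma:chitop}) that $C_1,\dots,C_k$ are connected components of $\mathcal{L}(A,B)$ to show that box-stacking violations cannot occur. Only after establishing this well-definedness does the map become a genuine surjection $C(\pi)\to\sAB(\pi)$, at which point your bijection-and-count argument goes through exactly as you describe. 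So the proposal is correct in outline and in its injectivity step, but it silently assumes the substantive half of the argument.
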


\begin{proof}
Suppose $S\in C(\pi)$. Let $(A', B')=(A\setminus S, B\cup S)$. We claim that $(A', B')\in\sAB_{\text{all}}$. As we observed in the proof of the lemma, $S\subseteq\III$ and $S\subseteq A\setminus B$. Since $A$ is a finite subset of $\I^-\cup\III$, so is $A'$. Since $S\subseteq\III$ and $B\subseteq\II\cup\III$, $B'=B\cup S\subseteq\II\cup\III$. Also, $\II\cup\III\subseteq[0, M-1]^3$, so $\II\cup\III$ is finite and, thus, $B'$ is finite.

Next, suppose $w\in\I^-\cup\III$ and $n\in BN(w)\cap A'$. Since $A'\subseteq A$ and $(A, B)$ is an $AB$ configuration, $w\in A$. Suppose $w\in S$. Then $w\in C_j\subseteq S$ for some $1\leq j\leq k$, and $w\in\III\cap(A\setminus B)\subseteq\III\cap(A\triangle B)\subseteq\mathcal{L}(A, B)$. Then, by Conditions~\ref{conditions:ab box stacking}, $n\not\in B$. By Lemma~\ref{lemma:type III back neighbors}, $n\in\I^-\cup\III$, and since $n\in A'\subseteq A$, we have \[n\in(\I^-\cap A)\cup(\III\cap(A\setminus B))\subseteq(\I^-\cap A)\cup(\III\cap(A\triangle B))\subseteq\mathcal{L}(A, B).\] As shown in the proof of Lemma~\ref{lemma:chitop}, $C_1, C_2, \ldots, C_k$ are connected components of $\mathcal{L}(A, B)$. So, since $w$ and $n$ are adjacent, $n\in C_j\subseteq S$, contradicting the fact that $n\in A'=A\setminus S$. We deduce that $w\not\in S$, so $w\in A\setminus S=A'$.

Now, suppose $w\in\II\cup\III$ and $n\in BN(w)\cap B'=BN(w)\cap(B\cup S)$. If $n\in B$, then $w\in B\subseteq B'$, since $(A, B)$ is an $AB$ configuration. Otherwise, $n\in S$, so $n\in C_j\subseteq S$ for some $1\leq j\leq k$, and $n\in\III\cap(A\setminus B)\subseteq\III\cap(A\triangle B)\subseteq\mathcal{L}(A, B)$. If $w\in B$, then $w\in B'$. Otherwise, $w\not\in B$. Then $w\in\II\setminus B$ or $w\in\III$, in which case, by Conditions~\ref{conditions:ab box stacking}, since $n\in S\subseteq A\setminus B$, $w\in\III\cap(A\setminus B)$. That is, \[w\in(\II\setminus B)\cup(\III\cap(A\setminus B))\subseteq(\II\setminus B)\cup(\III\cap(A\triangle B))\subseteq\mathcal{L}(A, B).\] As discussed above, $C_1, C_2, \ldots, C_k$ are connected components of $\mathcal{L}(A, B)$. So, since $w$ and $n$ are adjacent, $w\in C_j\subseteq S\subseteq B'$. In all cases, $w\in B'$.

These arguments show that $(A', B')$ is an $AB$ configuration, or in other words, $(A', B')\in\sAB_{\text{all}}$. Then, by the lemma, $(A', B')\in\sAB(\pi)$, so there is a well-defined surjective map $f:C(\pi)\to\sAB(\pi)$ given by \[f(S)=(A\setminus S, B\cup S).\] We claim that $f$ is also injective. Suppose that $f(S_1)=f(S_2)$ for some $S_1, S_2\in C(\pi)$. Then $B\cup S_1=B\cup S_2$, and as discussed above, $S_1\subseteq A\setminus B$, $S_2\subseteq A\setminus B$, so \[S_1=\left(B\cup S_1\right)\setminus B=\left(B\cup S_2\right)\setminus B=S_2,\] as desired. Thus, $\left|C(\pi)\right|=\left|\sAB(\pi)\right|$. Since $C_1, C_2, \ldots, C_k$ are disjoint, $C(\pi)$ is in bijection with the power set of $\{1, 2, \ldots, k\}$, so \[\left|\sAB(\pi)\right|=\left|C(\pi)\right|=2^k=2^{N(\pi)}=\chi_{\text{top}}(\pi),\] the last equality holding by Lemma~\ref{lemma:chitop}.
\end{proof}

\begin{defn} Let
\label{defn:ZAB}
	\[
		Z_{\sAB} = Z_{\sAB}(q) = q^{-|\II|-2|\III|}\sum_{(A, B) \in \sAB} q^{|A|+|B|}.
	\]
\end{defn}

\begin{thm}
\label{thm:ZABW}
	\[
		Z_{\sAB} = W(\mu_1, \mu_2, \mu_3)
	\]
\end{thm}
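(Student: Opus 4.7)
The plan is to partition the sum defining $Z_{\sAB}$ according to the fiber structure of the projection map $P$, i.e., to group $AB$ configurations according to the labelled box configuration they lie over. Specifically, Lemma~\ref{lemma:sAB as union} gives $\sAB = \bigcup_{\pi \in \text{PT-box}} \sAB(\pi)$, and Corollary~\ref{corollary:sAB(pi) disjoint} asserts that this union is disjoint. Thus the sum in the definition of $Z_{\sAB}$ decomposes as
\[
\sum_{(A,B) \in \sAB} q^{|A|+|B|} \;=\; \sum_{\pi \in \text{PT-box}} \;\sum_{(A,B) \in \sAB(\pi)} q^{|A|+|B|}.
\]

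The first key step is to show that $|A|+|B|$ is a function of $\pi$ alone, equal to $|\pi|$. This is essentially immediate from Conditions~\ref{conditions:ab on pi}: if $(A,B) \in \sAB(\pi)$, then $A \cup B$ is the set of boxes of $\pi$ while $A \cap B$ is the set of unlabelled type $\III$ boxes of $\pi$. By inclusion-exclusion,
\[
|A| + |B| \;=\; |A \cup B| + |A \cap B| \;=\; (\text{\#boxes of }\pi) + (\text{\#unlabelled type $\III$ boxes of }\pi) \;=\; |\pi|,
\]
matching the definition of $|\pi|$ used in $W(\mu_1,\mu_2,\mu_3)$.

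The second key step is to count the fiber $\sAB(\pi)$. This is furnished directly by Corollary~\ref{corollary:size of sAB(pi)}, which gives $|\sAB(\pi)| = \chi_{\text{top}}(\pi)$. Substituting both observations yields
\[
\sum_{(A,B) \in \sAB} q^{|A|+|B|} \;=\; \sum_{\pi \in \text{PT-box}} \chi_{\text{top}}(\pi)\, q^{|\pi|},
\]
and multiplying by the common prefactor $q^{-|\II|-2|\III|}$ gives $Z_{\sAB} = W(\mu_1,\mu_2,\mu_3)$, as desired.

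In this argument there is no genuine obstacle: all the hard work has already been done, namely establishing (i) that every $(A,B) \in \sAB$ lies over a unique labelled box configuration $\pi = \pi(A,B)$ (Theorem~\ref{thm:labellable iff in sAB}, Lemma~\ref{lemma:AB PT correspondence}, Corollary~\ref{corollary:sAB(pi) disjoint}), and (ii) that the fiber size matches the topological Euler characteristic (Corollary~\ref{corollary:size of sAB(pi)}, which itself relied on identifying $\sAB(\pi)$ with subsets of the set of connected components of freely labelled type $\III$ boxes). The present theorem is simply a bookkeeping assembly of these results: swap the order of summation and apply the two identities. The only care required is to confirm that the prefactors $q^{-|\II|-2|\III|}$ on both sides really agree, which they do by inspection of Definition~\ref{defn:ZAB} and the definition of $W$.
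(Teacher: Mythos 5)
Your proof is correct and follows exactly the same chain of reasoning as the paper: decompose the sum over $\sAB$ into fibers $\sAB(\pi)$ via Lemma~\ref{lemma:sAB as union} and Corollary~\ref{corollary:sAB(pi) disjoint}, rewrite $|A|+|B|=|A\cup B|+|A\cap B|=|\pi|$ using Conditions~\ref{conditions:ab on pi}, and apply Corollary~\ref{corollary:size of sAB(pi)} to replace $|\sAB(\pi)|$ with $\chi_{\text{top}}(\pi)$. There is no substantive difference from the paper's proof.
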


\begin{proof}
By Lemma~\ref{lemma:sAB as union}, Corollary~\ref{corollary:sAB(pi) disjoint}, Conditions~\ref{conditions:ab on pi}, and Corollary~\ref{corollary:size of sAB(pi)}, we have 
\begin{align*}
Z_{\sAB}=q^{-|\II|-2|\III|}\sum_{(A, B) \in \sAB} q^{|A|+|B|}&=q^{-|\II|-2|\III|}\sum_{\pi\in\text{PT-box}}\sum_{(A, B) \in \sAB(\pi)} q^{|A|+|B|}\\
&=q^{-|\II|-2|\III|}\sum_{\pi\in\text{PT-box}}\sum_{(A, B) \in \sAB(\pi)} q^{|A\cup B|+|A\cap B|}\\
&=q^{-|\II|-2|\III|}\sum_{\pi\in\text{PT-box}}\sum_{(A, B) \in \sAB(\pi)} q^{|\pi|}\\
&=q^{-|\II|-2|\III|}\sum_{\pi\in\text{PT-box}}|\sAB(\pi)|q^{|\pi|}\\
&=q^{-|\II|-2|\III|}\sum_{\pi\in\text{PT-box}}\chi_{\text{top}}(\pi)q^{|\pi|}=W(\mu_1, \mu_2, \mu_3).
\end{align*}
\end{proof}

\subsection{PT theory and the labelled double-dimer model}
\label{sec:double_dimer_configs}



The advantage of working with $AB$ configurations is that they are unlabelled, plane partition-like objects. In addition, there is a relationship between $\sAB$ and the tripartite double-dimer model, which we will now explain. On an infinite graph, a \emph{double-dimer configuration} is the union of two dimer configurations. 

Let $(A, B)$ be an $AB$ configuration. We consider $A$ and $B$ separately. Let $R_1$ (resp.~$R_2$) denote the subset of $\mathbb{Z}^3$ consisting of the cells that have at least one negative coordinate (resp.~at least two negative coordinates). For $A$, we view the surface $\mathfrak{A}:=R_2\cup(\I^-\cup\III)\setminus A$ as a lozenge tiling of the plane. In other words, we take the surface $R_2\cup\I^-\cup\III$, remove the boxes in $A$, and view the resulting surface as a lozenge tiling. Similarly, for $B$, we view the surface $\mathfrak{B}:=R_1\cup(\II\cup\III)\setminus B$ as a lozenge tiling of the plane. The fact that these surfaces can be viewed as lozenge tilings of the plane follows from Lemma~\ref{lemma:AB surfaces} below. The resulting lozenge tilings are then equivalent to dimer configurations of the infinite honeycomb graph $H$. 

\begin{figure}[htb]
\centering
\includegraphics[width=1.5in]{./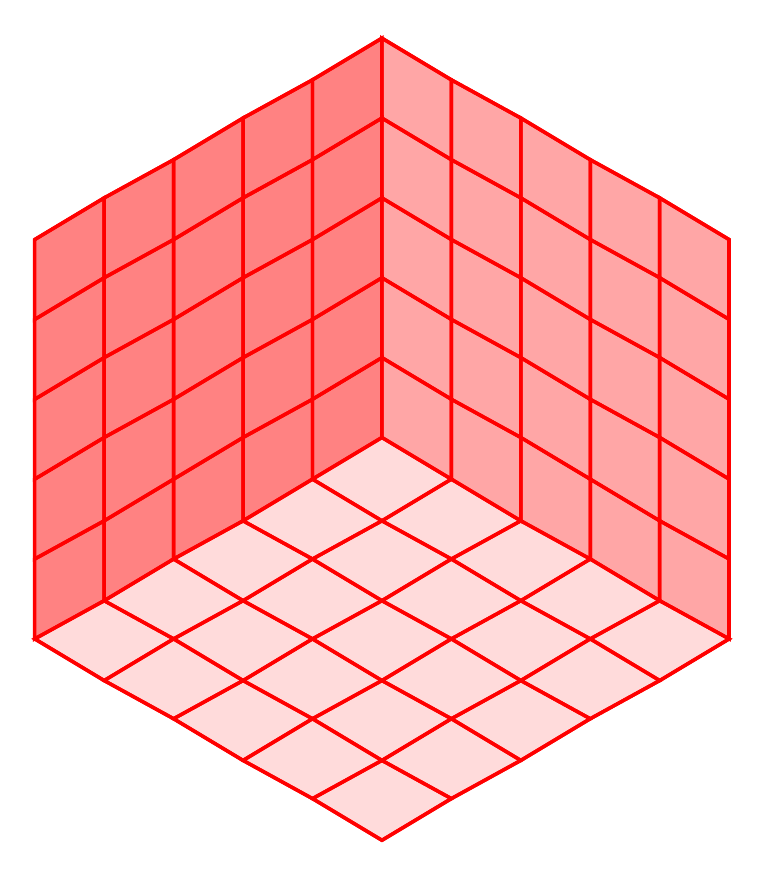}
\includegraphics[width=1.5in]{./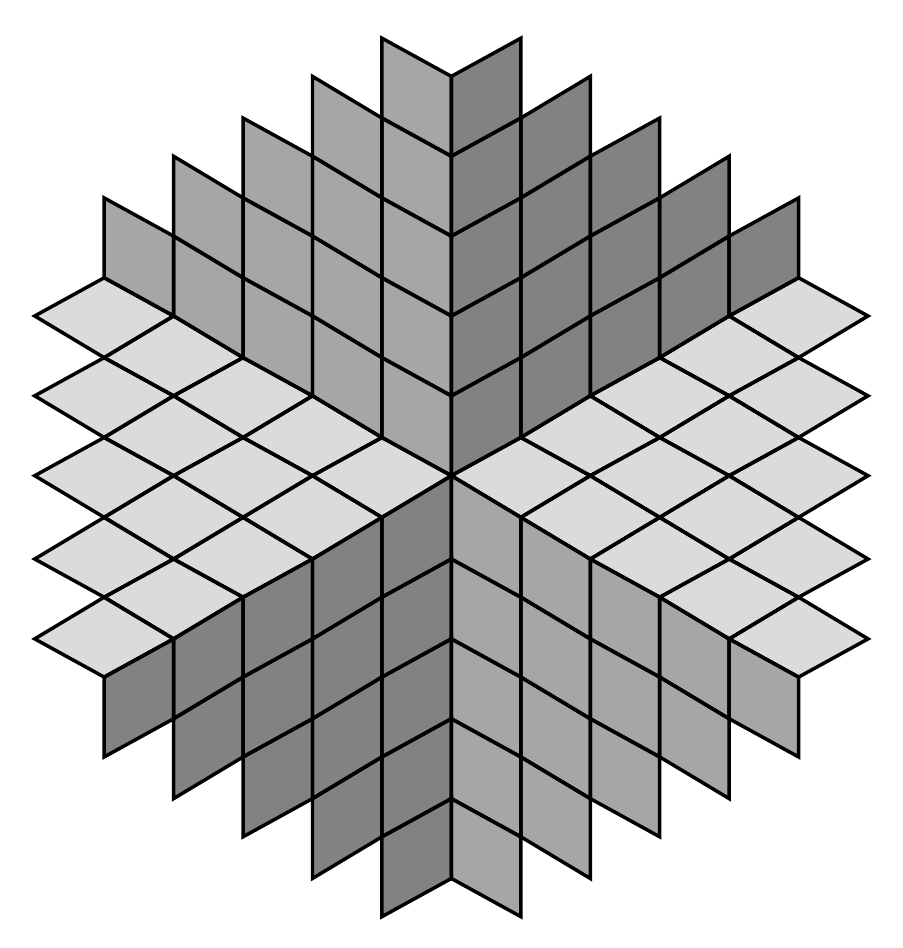}
\includegraphics[width=1.25in]{./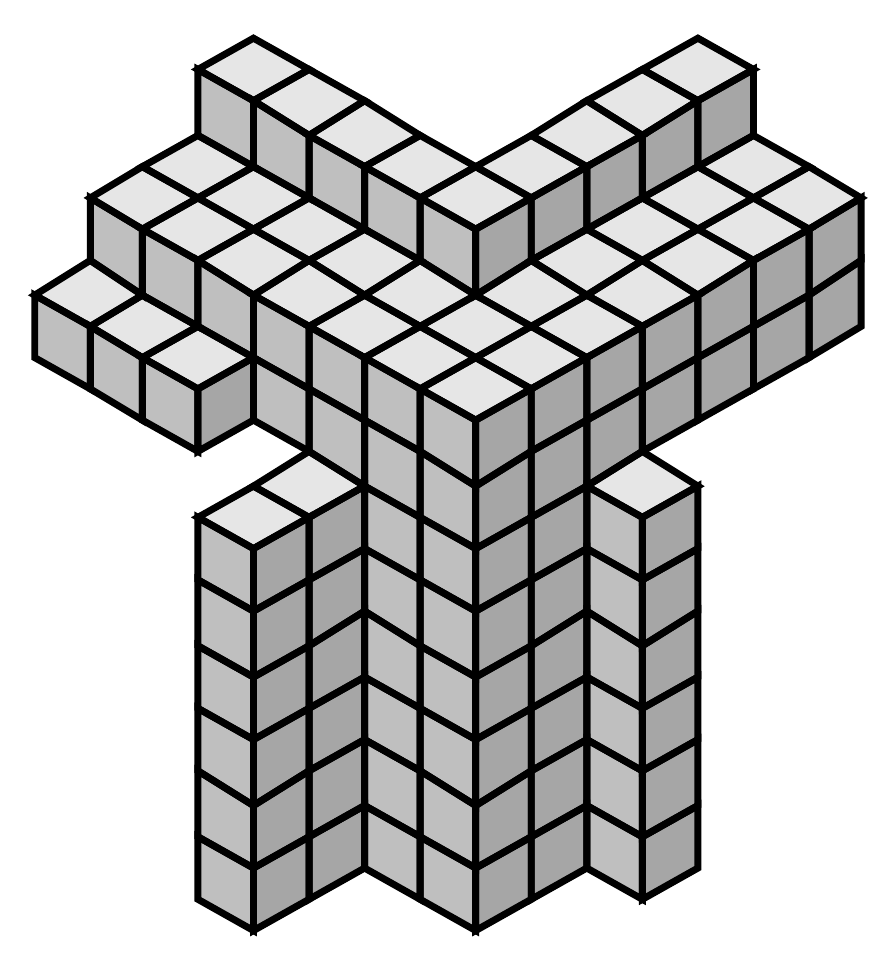}
\includegraphics[width=1.5in]{./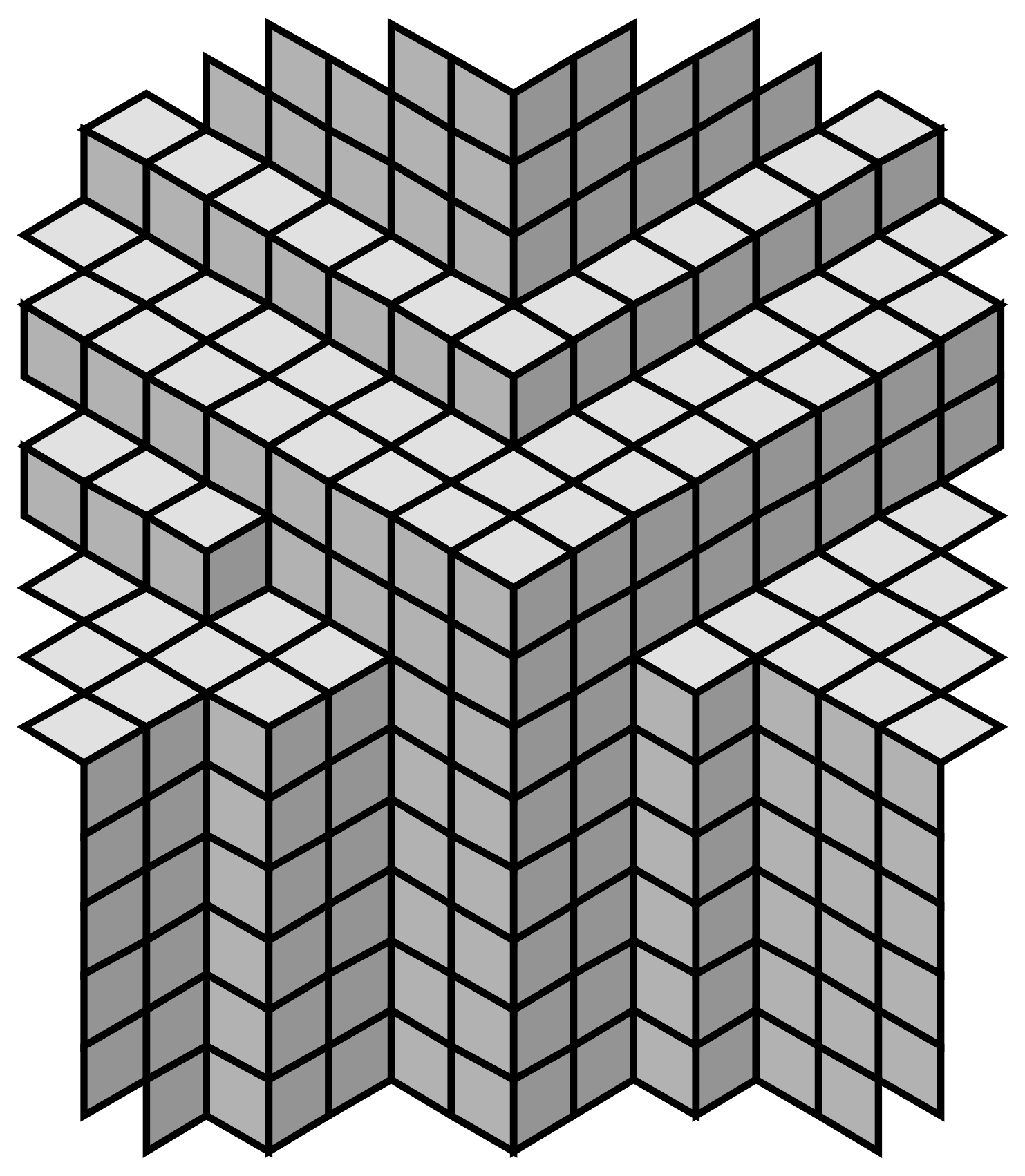}
\caption{Converting an $AB$ configuration to lozenge tilings of the plane. Left two pictures: tilings corresponding to $R_1$ and $R_2$, respectively. Right two pictures: an example of the surface $(\I^-\cup\III)\setminus A$ and the surface $R_2\cup(\I^-\cup\III)\setminus A$.}
\label{fig:empty tilings}
\end{figure}

\begin{example}
Recall the $AB$ configuration from Example~\ref{ex:spicy}. The rightmost image of Figure~\ref{fig:empty tilings} shows the lozenge tiling corresponding to $A=\{(3, -1, 0), (3, 0, 0)\}$, i.e., corresponding to the surface $R_2\cup(\I^-\cup\III)\setminus\{(3, -1, 0), (3, 0, 0)\}$. 
\end{example}

Let $M_A$ (resp.~$M_B$) denote the dimer configuration of $H$ corresponding to the tiling obtained from $A$ (resp. $B$). Superimposing $M_A$ and $M_B$ so that the origin in $\mathbb{Z}^3$ corresponds to the same face of $H$ produces a double-dimer configuration $D_{(A, B)}$ on $H$. 

\begin{figure}[htb]
\centering
\includegraphics[width=1.5in]{./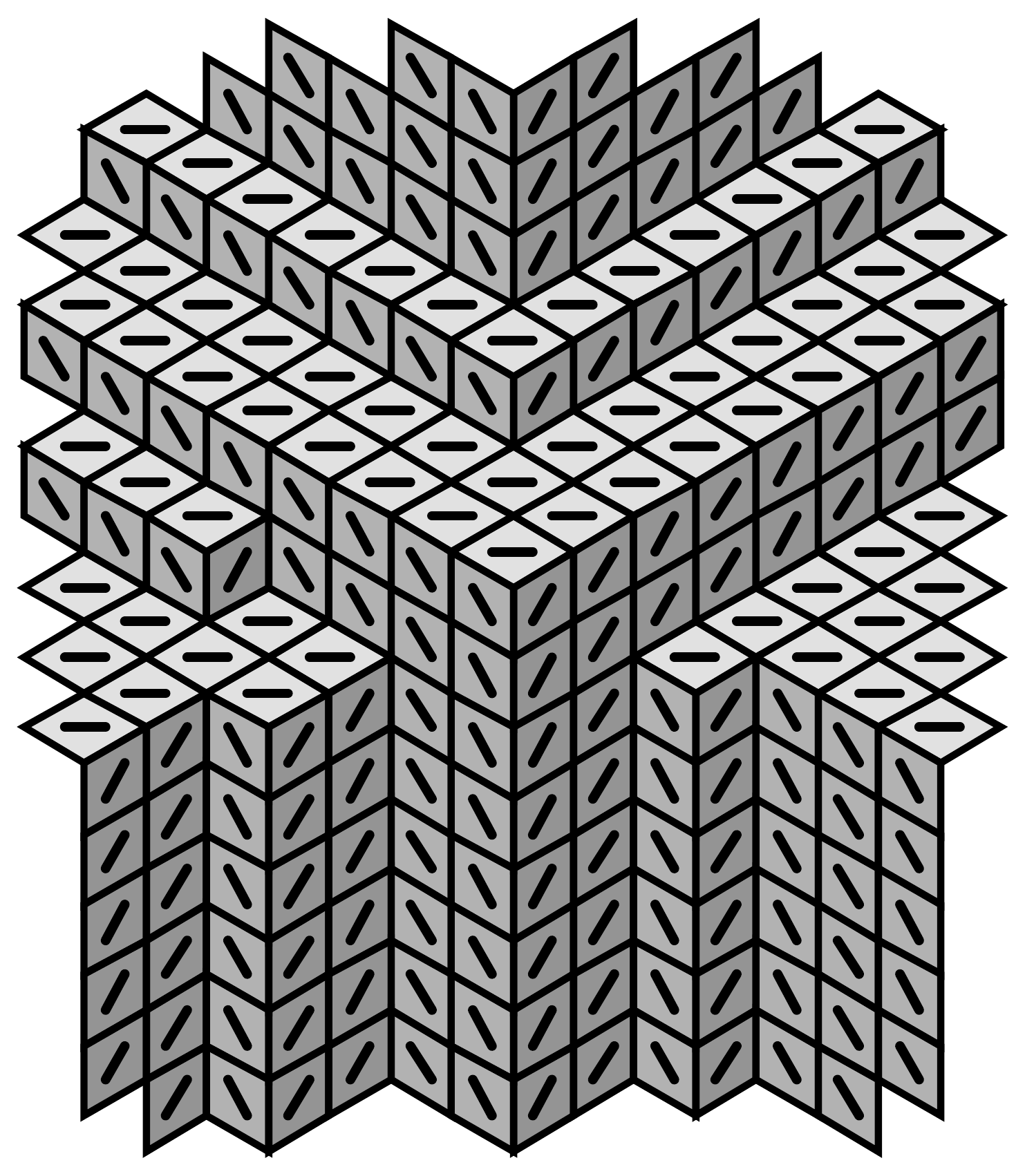}
\includegraphics[width=1.5in]{./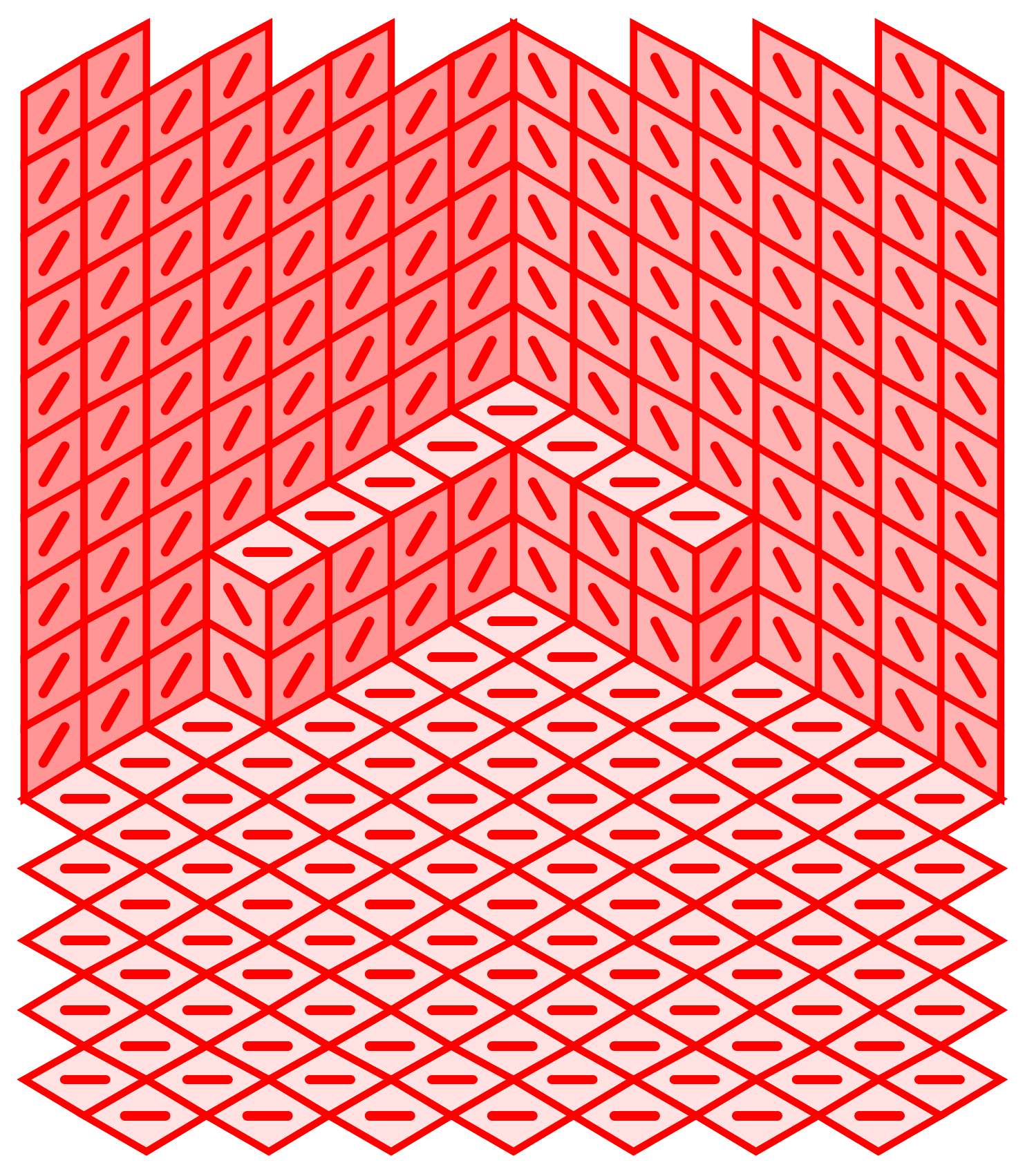}
\includegraphics[width=1.5in]{./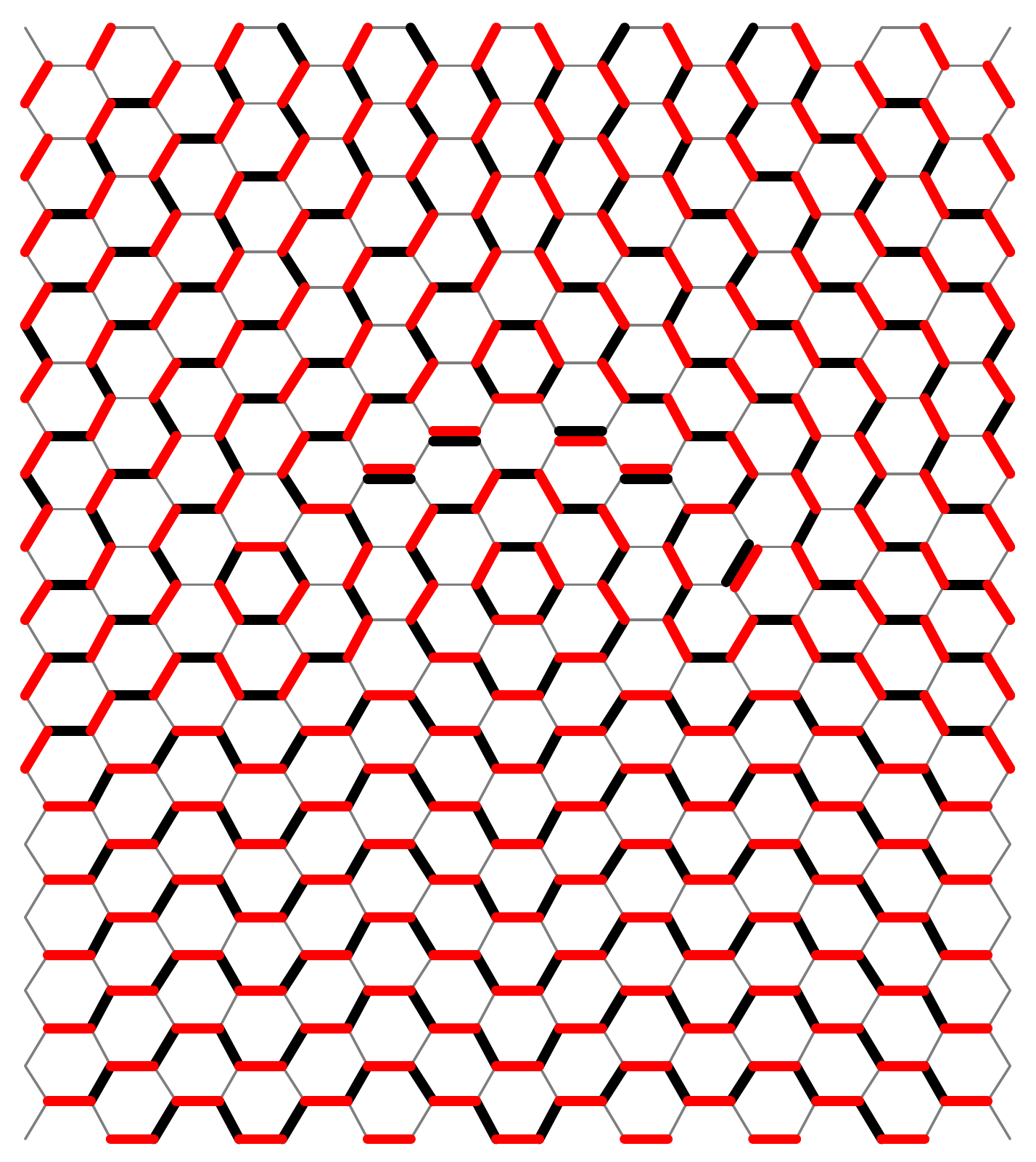}
\includegraphics[width=1.5in]{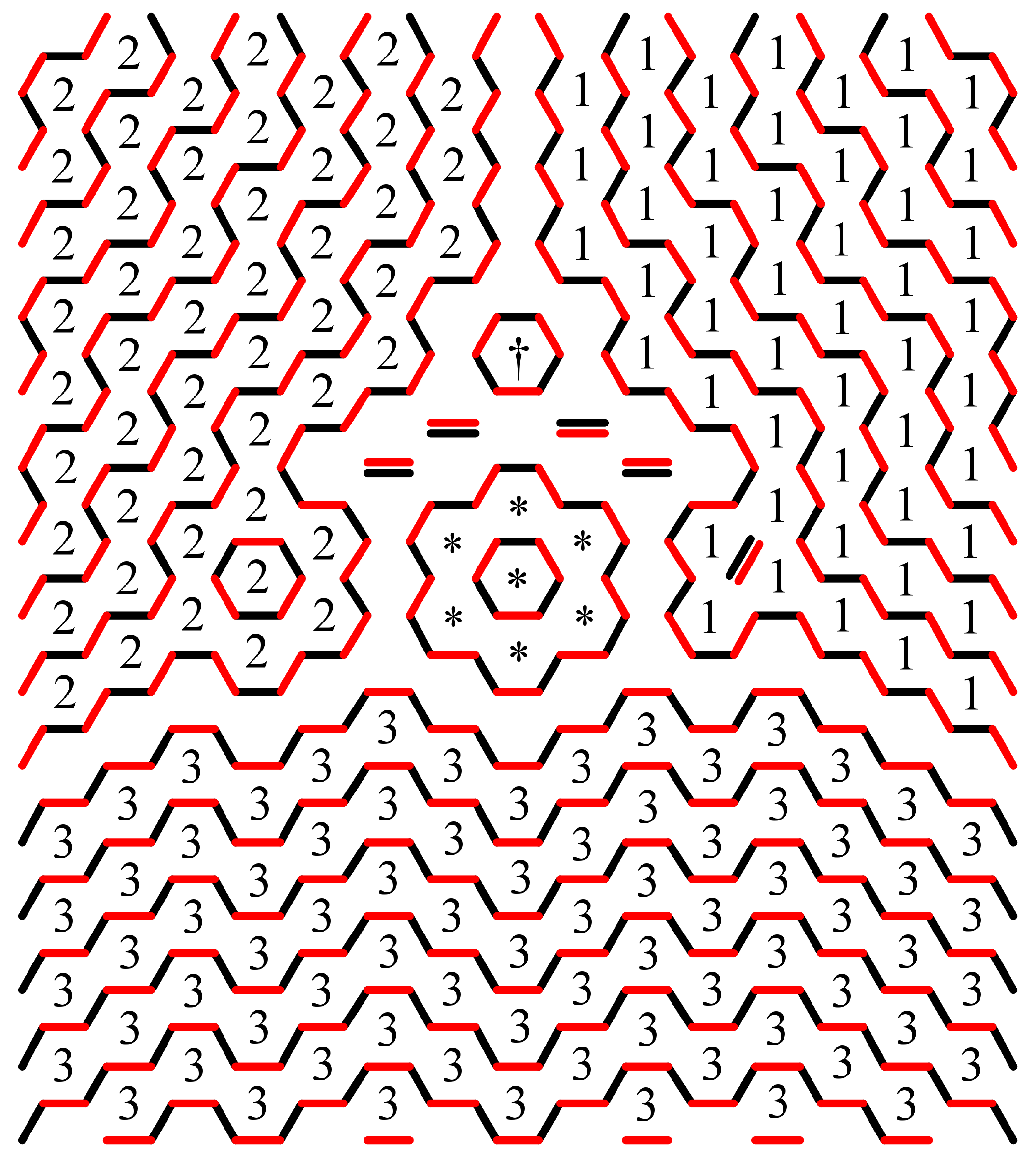}
\caption{First: The dimer configuration $M_A$. Second: The dimer configuration $M_B$. Third: The superposition of $M_A$ and $M_B$, a double-dimer configuration on $H$. Fourth: The labelled double-dimer configuration.}
\label{fig:superposition}
\end{figure}

\begin{example}
For the $AB$ configuration from Example~\ref{ex:spicy}, the dimer configurations $M_A$ and $M_B$ are shown in Figure~\ref{fig:superposition}. Their superposition, shown immediately to their right, is a double-dimer configuration $D_{(A, B)}$ on $H$. 
\end{example}


Just as we label certain $AB$ configurations, we label certain double-dimer configurations. Note that each double-dimer configuration on $H$ consists of doubled edges, loops, and infinite paths. Before describing a labelling algorithm for the double-dimer configurations $D_{(A, B)}$, we need the following lemmas.

Let $\mathbf{e}_i$ be the $i$th standard unit vector. 

\begin{lemma}
	\label{lemma:AB surfaces}
Let $\mathfrak{S}\in\{\mathfrak{A}, \mathfrak{B}\}$ and $q\in\mathbb{Z}^3_{\geq 0}$. If $p\not\in\mathfrak{S}$, then $p+q\not\in\mathfrak{S}$. Conversely, if $p\in\mathfrak{S}$, then $p-q\in\mathfrak{S}$. 
\end{lemma}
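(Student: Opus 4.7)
The plan is to prove the equivalent contrapositive form: if $p \in \mathfrak{S}$, then $p - q \in \mathfrak{S}$ for every $q \in \mathbb{Z}^3_{\geq 0}$; the two statements follow from one another by the substitution $p \mapsto p + q$. By induction on $|q|$ it suffices to take $q = \mathbf{e}_i$. I will treat $\mathfrak{S} = \mathfrak{A}$ in detail; the argument for $\mathfrak{S} = \mathfrak{B}$ is strictly parallel, with $(R_2, \I^-, A)$ replaced by $(R_1, \II, B)$ throughout. The only nontrivial ingredient I use is the following immediate consequence of each $\mu_j$ being a Young diagram, hence a down-set in $\mathbb{Z}_{\geq 0}^2$:
\[
(\star) \quad w \in \Cyl_j,\ w' \leq w \text{ coordinatewise, and } w'_{j+1}, w'_{j+2} \geq 0 \implies w' \in \Cyl_j.
\]
Note also that the union $\mathfrak{A} = R_2 \cup ((\I^- \cup \III) \setminus A)$ is disjoint, since $\I^- \subseteq R_1 \setminus R_2$ and $\III \subseteq \mathbb{Z}^3_{\geq 0}$.

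The case $p \in R_2$ is trivial, since $p - q \in R_2$: subtracting a nonnegative vector cannot reduce the number of negative coordinates. For the main case $p \in (\I^- \cup \III) \setminus A$, I proceed in two stages. \emph{Stage one} shows $p - q \in R_2 \cup \I^- \cup \III$ using only $(\star)$. If $p \in \III$, a case split on how many coordinates of $p - q$ are negative---combined with $(\star)$ applied to that coordinate's index (if exactly one)---lands $p - q$ in $\III$, $\Cyl_k^- \subseteq \I^-$, or $R_2$. If $p \in \Cyl_k^- \subseteq \I^-$, then $(p - q)_k < 0$ forces $p - q \notin \mathbb{Z}^3_{\geq 0}$, and either another coordinate of $p - q$ is also negative (placing $p - q \in R_2$) or not (in which case $(\star)$ lands $p - q \in \Cyl_k^-$).

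\emph{Stage two} handles the subcase $p - q \in \I^- \cup \III$ by showing $p - q \notin A$. Suppose for contradiction $p - q \in A$, and choose any monotone lattice path $p - q = w_0, w_1, \ldots, w_N = p$ with $w_{t+1} - w_t = \mathbf{e}_{i_t}$. I claim every intermediate cell $w_t$ lies in $\I^- \cup \III$; indeed, $p - q \leq w_t \leq p$, and running through the three subcases (both $p$ and $p - q$ in $\III$; $p \in \III$ with $p - q \in \Cyl_k^-$; both $p$ and $p - q$ in $\Cyl_k^-$) pins down which coordinates of $w_t$ may be negative, and $(\star)$ places $w_t$ in $\III$ or $\Cyl_k^- \subseteq \I^-$ in each scenario. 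Given the claim, Conditions~\ref{conditions:ab box stacking}.1 applied along the path (each $w_t \in BN(w_{t+1}) \cap A$ with $w_{t+1} \in \I^- \cup \III$ forces $w_{t+1} \in A$) yields $p \in A$ inductively, contradicting $p \notin A$. For the analogue with $\mathfrak{B}$, one uses that $\II \cup \III$ is exactly the set of cells in $\mathbb{Z}^3_{\geq 0}$ lying in at least two of $\Cyl_1, \Cyl_2, \Cyl_3$---a property also preserved by $(\star)$---together with Conditions~\ref{conditions:ab box stacking}.2. The main obstacle is the bookkeeping in stage two: verifying via $(\star)$ that a monotone path really stays inside $\I^- \cup \III$ requires careful tracking of which coordinate of $w_t$ may become negative, but once the subcases are laid out the rest is mechanical.
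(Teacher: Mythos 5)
Your argument is correct and rests on the same geometric input as the paper's proof: the down-set structure of each $\Cyl_j$ (your $(\star)$, which is essentially Lemma~\ref{lemma:cylinder back neighbor} restated, with Lemma~\ref{lemma:type III back neighbors} implicit) together with Conditions~\ref{conditions:ab box stacking}. The one thing worth flagging is a redundancy: you announce a reduction to $q=\mathbf{e}_i$ by induction on $|q|$, but then never use it---stage two builds a full monotone lattice path from $p-q$ to $p$ and walks along it. Had you carried the unit-step reduction through, stage two would collapse to a single application of Conditions~\ref{conditions:ab box stacking}.1, since $p-\mathbf{e}_i\in BN(p)$ and $p\in\I^-\cup\III$ immediately force $p-\mathbf{e}_i\in A\Rightarrow p\in A$; this is exactly what the paper does, proving $p+\mathbf{e}_i\in\mathfrak{S}\Rightarrow p\in\mathfrak{S}$ in one step and obtaining the $p-q$ statement by substitution and contrapositive.
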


\begin{proof}
It suffices to establish this result for $q=\mathbf{e}_i$. Suppose $p+\mathbf{e}_i\in\mathfrak{S}$. Then $p+\mathbf{e}_i\in(\I^-\cup\III)\setminus A$ or $p+\mathbf{e}_i$ has at least two negative coordinates if $\mathfrak{S}=\mathfrak{A}$, and $p+\mathbf{e}_i\in(\II\cup\III)\setminus B$ or $p+\mathbf{e}_i$ has at least one negative coordinate if $\mathfrak{S}=\mathfrak{B}$. We will show that $p\in\mathfrak{S}$. In the first case, if $p$ has at least two negative coordinates, $p\in R_2\subseteq\mathfrak{S}$. Otherwise, since $p+\mathbf{e}_i$ having at least two negative coordinates implies that $p$ has at least two negative coordinates, we deduce from Lemmas~\ref{lemma:cylinder back neighbor} and~\ref{lemma:type III back neighbors} that $p\in\I^-\cup\III$, and by Conditions~\ref{conditions:ab box stacking}.1, $p\not\in A$. Thus, $p\in(\I^-\cup\III)\setminus A$, so $p\in\mathfrak{S}$, as desired. In the second case, if $p$ has at least one negative coordinate, $p\in R_1\subseteq\mathfrak{S}$. Otherwise, $p\in\mathbb{Z}^3_{\geq 0}$, and since $p+\mathbf{e}_i$ having at least one negative coordinate implies that $p$ has at least one negative coordinate, we deduce from Lemma~\ref{lemma:cylinder back neighbor} that $p\in\II\cup\III$. Then, by Conditions~\ref{conditions:ab box stacking}.2, $p\not\in B$, so $p\in(\II\cup\III)\setminus B$, and $p\in\mathfrak{S}$. These arguments establish the first statement of the lemma. The second statement can be established from the first by replacing $p$ with $p-q$ and taking the contrapositive of the result.
\end{proof}

\begin{remark}
In what follows, we often consider $H(N)$ as a subgraph of $H$. When doing so and some face $f$ of $H$ corresponds to the origin in $\mathbb{Z}^3$, $H(N)$ always denotes the $N\times N\times N$ honeycomb graph \emph{centered at $f$}. 
\end{remark}

\begin{lemma}
	\label{lemma:dimers covering different sectors}
Let $(A, B)\in\sAB_{\text{all}}$. If a dimer in $D_{(A, B)}$ covers vertices in two different sectors,\footnotemark~then those vertices must lie in the subgraph $H(M)\subseteq H$.
\end{lemma}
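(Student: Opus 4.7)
My plan is to analyse $M_A$ and $M_B$ separately and show that, outside of $H(M)$, neither contains a dimer whose two endpoints lie in different sectors of $H$; since $D_{(A,B)} = M_A \cup M_B$ as a multiset of edges, the lemma will follow. I will handle $M_B$ first, as it is the easier case. The surface $\mathfrak{B} = R_1 \cup (\II \cup \III) \setminus B$ differs from the reference surface $R_1$ only on cells of $\II \cup \III$. Because each partition $\mu_i$ fits in $[0, M-1]^2$, the definitions directly yield $\II \cup \III \subseteq [0, M-1]^3$, and since $B \subseteq \II \cup \III$, every cell where $\mathfrak{B}$ differs from $R_1$ corresponds to a face of $H$ lying inside $H(M)$. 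Outside $H(M)$ the tiling of $\mathfrak{B}$ therefore coincides with the reference tiling of $R_1$, which in each of the three sectors of $H$ is made up of lozenges of a single fixed orientation, none of which crosses a sector boundary. Hence no dimer of $M_B$ outside $H(M)$ covers vertices in two different sectors.

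The case of $M_A$ is subtler because $A$, though finite, may contain cells of $\I^-$ lying arbitrarily far outside $[0, M-1]^3$. My first step here will be to check that the three cylinders $\Cyl_1^-, \Cyl_2^-, \Cyl_3^-$ are pairwise disjoint: a cell in $\Cyl_i^-$ must have $x_i < 0$, while the other two coordinates are forced non-negative by $\mu_i \subseteq \mathbb{Z}^2_{\geq 0}$, so no cell can lie in two cylinders simultaneously. This gives a clean decomposition $A \cap \I^- = \bigsqcup_i (A \cap \Cyl_i^-)$. Next I will use the standard projection from $\mathbb{Z}^3$ to $H$ along $(1,1,1)$ to show that every cell of $\Cyl_i^-$ maps either into $H(M)$ or into sector $i$ outside $H(M)$; the key input is that the two non-$i$ coordinates of such a cell are bounded above by $M-1$, which confines the image to sector $i$ once $x_i$ is sufficiently negative. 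Thus every cell of $A$ lying outside $H(M)$ modifies $\mathfrak{A}$ only within a single sector, and combined with a sector-respecting reference tiling of $R_2 \cup \I^- \cup \III$ this will prevent $M_A$ from having any sector-crossing dimer outside $H(M)$.

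The hard part will be justifying the two reference-tiling claims: that both $R_1$ and $R_2 \cup \I^- \cup \III$ tile sector-by-sector outside $H(M)$ with no lozenge straddling a sector boundary. I plan to prove them by carefully unpacking the correspondence between stable box stackings and lozenge tilings of the plane already invoked in Section~\ref{sec:DTtheoryAndDimers}, applied sector by sector, and using the bounds $\mu_i \subseteq [0, M-1]^2$ together with Lemma~\ref{lemma:AB surfaces} to pin down the precise radius beyond which each tiling locks into its ground state.
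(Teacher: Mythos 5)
Your plan is correct in spirit but has a genuine gap, and the gap is located exactly where you flag it as "the hard part." The claim that the reference surfaces $R_1$ and $R_2 \cup \I^- \cup \III$ have sector-respecting tilings outside $H(M)$ is precisely the statement of the lemma specialized to $(A,B) = (\varnothing, \varnothing)$, so reducing to that claim doesn't actually lessen the work; and the proposed method, "carefully unpacking the correspondence between stable box stackings and lozenge tilings," is not an argument. There is also a small soundness issue in the $M_B$ paragraph: knowing that $\mathfrak{B}$ and $R_1$ agree as sets outside $[0,M-1]^3$ gives agreement of the two lozenge tilings only at faces not adjacent to $H(M)$, so "outside $H(M)$ the tiling of $\mathfrak{B}$ coincides with that of $R_1$" needs a one-unit buffer (this does not break the conclusion, but it should be said). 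Finally, the perturbation step — removing cells of $A$ that project deep into a single sector cannot create a sector-crossing lozenge — is asserted without justification, and is not completely automatic since a removed box changes the tiling at several nearby faces, some of which may lie on a sector-boundary axis.

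The paper's proof sidesteps all of this with one geometric observation that your proposal never makes: a dimer in $D_{(A,B)}$ that straddles the boundary between two sectors must correspond to a facet perpendicular to $\mathbf{e}_i$ of a cell $w$ of the special form $w = (a,a,a) + h\mathbf{e}_i$, with $w \in \mathfrak{S}$ and $w + \mathbf{e}_i \notin \mathfrak{S}$ (here $i$ labels the coordinate half-axis along the sector boundary). Because the two non-$i$ coordinates of such a $w$ are both equal to $a$ and unchanged by adding $\mathbf{e}_i$, both $R_1$ and $R_2$ are closed under $w \mapsto w+\mathbf{e}_i$ for cells of this form, so $w$ cannot come from $R_1$ or $R_2$; hence $w \in (\I^- \cup \III)\setminus A$ or $w \in (\II\cup\III)\setminus B$, which forces $a \geq 0$, hence $w \in \mathbb{Z}^3_{\geq 0}$, hence $w \in \II\cup\III \subseteq [0,M]^3$, whose projection lies in $H(M)$. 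This uniform argument handles $M_A$, $M_B$, and the reference surfaces all at once, and it also dissolves your concern about $A \cap \I^-$ being unbounded: a cell of the form $(a,a,a)+h\mathbf{e}_i$ with $a<0$ has at least two negative coordinates and so lies in $R_2$, not in $\I^-$, so the far-out cells of $A$ simply cannot be responsible for a sector-crossing facet.
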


\footnotetext{When we refer to ``sectors'' in this section, we mean the sectors defined in the right-hand side of Figure~\ref{fig:sectors}.}

\begin{proof}
Suppose a dimer $e$ in $D_{(A, B)}$ covers vertices in two different sectors. Either $e\in M_A$ or $e\in M_B$. If $e\in M_A$, let $\mathfrak{S}=\mathfrak{A}$, and otherwise, let $\mathfrak{S}=\mathfrak{B}$. Then $e$ must correspond to a facet $f$ of a cell $w\in\mathfrak{S}$ having coordinates $(a, a, a)+h\mathbf{e}_i$ for some $a\in\mathbb{Z}$, $h\in\mathbb{Z}_{\geq 0}$, $i\in\{1, 2, 3\}$, such that $w+\mathbf{e}_i\not\in\mathfrak{S}$. From this, we see that if $w\in R_2$, then $w+\mathbf{e}_i\in R_2$, and if $w\in R_1$, then $w+\mathbf{e}_i\in R_1$, so considering the definitions of $\mathfrak{A}$ and $\mathfrak{B}$, we must have $w\in(\I^-\cup\III)\setminus A$ or $w\in(\II\cup\III)\setminus B$. In particular, $w\in\I^-\cup\II\cup\III$, so $a\geq 0$. Then $w\in\II\cup\III$, and $\II\cup\III$ is contained in the cube $[0, M]^3$. Projecting this cube onto the plane $x_1+x_2+x_3=0$ produces an $M\times M\times M$ hexagonal region that must contain $f$, so $e$ must be an edge of $H(M)$. The result follows.
\end{proof}

\begin{corollary}
	\label{corollary:paths in D_(A, B)}
Let $(A, B)\in\sAB_{\text{all}}$. Every path in $D_{(A, B)}$ moves between sectors finitely many times.
\end{corollary}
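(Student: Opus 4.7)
The plan is to derive the corollary directly from Lemma~\ref{lemma:dimers covering different sectors}. The key observation is that a path in $D_{(A, B)}$ can transition between sectors only by traversing an edge (a dimer of the configuration) whose two endpoints lie in different sectors. The lemma already tells us that every such cross-sector edge is an edge of the finite subgraph $H(M) \subseteq H$, which has only finitely many edges in total.

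Next, I would invoke the standard structural fact that a double-dimer configuration decomposes into doubled edges, loops, and (possibly infinite) simple paths, where by ``simple'' I mean that each edge of $H$ is traversed at most once along the path. Since the paths under discussion in this corollary are components of $D_{(A, B)}$, each one uses every particular edge of $H$ at most once. Therefore, each cross-sector edge contributes at most one sector transition to a given path.

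Combining these two ingredients, a fixed path contains at most $|E(H(M))|$ cross-sector edges, which is a finite number depending only on $M$ (equivalently, on the sizes of $\mu_1, \mu_2, \mu_3$). Hence every path in $D_{(A, B)}$ undergoes only finitely many sector changes.

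There is no real obstacle here, since the content of the corollary is essentially packaged inside Lemma~\ref{lemma:dimers covering different sectors}; the only thing to verify beyond that lemma is the (trivial) observation that a path cannot accumulate infinitely many sector transitions if all cross-sector edges are drawn from a finite set and each is used at most once.
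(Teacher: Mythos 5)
Your proof is correct and takes exactly the approach the paper implicitly intends: the paper states the corollary without proof because it is an immediate consequence of Lemma~\ref{lemma:dimers covering different sectors}, and your argument (cross-sector transitions require cross-sector dimers, all of which lie in the finite subgraph $H(M)$, combined with the fact that a path component of a double-dimer configuration is edge-simple) is the expected justification.
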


\begin{defn}
Given an end $\mathcal{E}$ of a path in $D_{(A, B)}$, we say that \emph{sector $i$ contains $\mathcal{E}$} if, when moving along the path toward $\mathcal{E}$, there is a point after which every dimer in the path is contained in sector $i$.
\end{defn}

\begin{remark}
\label{remark:end contained in sector}
Corollary~\ref{corollary:paths in D_(A, B)} implies that each end $\mathcal{E}$ of every path in $D_{(A, B)}$ is contained in sector $i$ for some $i$. 
\end{remark}

We also recall some facts about height functions. 

\begin{defn}
Given any dimer cover $M_0$ of $H$ and a face $f_0$ of $H$, we can associate to $M_0$ a height function $h_{M_0}$, called the \emph{absolute height function} of $M_0$, that assigns to each face of $H$ a real number as follows. Let $h_{M_0}(f_0)=0$. Then, for any other face $f$ of $H$, take a path $f_0, f_1, f_2, \ldots, f_r=f$ in the dual graph $H^{\vee}$ of $H$ from $f_0$ to $f$, and let $h_{M_0}(f)$ be the sum of the following contributions from each of the corresponding edges $e_1, e_2, \ldots, e_r$ of $H$: assuming the left vertex of $e_s$ is white (resp.~black), if $e_s\in M_0$, its contribution is $2/3$ (resp.~$-2/3$), and otherwise, its contribution is $-1/3$ (resp.~$1/3$). (Here, left and right should be interpreted from the perspective of one traversing the path from $f_0$ to $f$.) 
\end{defn}

The fact that $h_{M_0}$ is well-defined follows from the observation that such contributions sum to $0$ around any face of $H^{\vee}$. 

Given two dimer covers $M_1$ and $M_2$ of $H$, we call the difference $h_{M_1}-h_{M_2}$ the \emph{relative height function} of $M_1$ relative to $M_2$. Actually, when considering the lozenge tiling that corresponds to $M_0$ as a surface, $h_{M_0}$ gives the height above the plane $x_1+x_2+x_3=0$, divided by $\sqrt{3}$, up to a constant. Thus, $h_{M_1}-h_{M_2}$ gives the height difference, divided by $\sqrt{3}$, up to a constant, between the surfaces corresponding to $M_1$ and $M_2$. 

Given an $AB$ configuration $(A, B)$, let $h_A=h_{M_A}$ and $h_B=h_{M_B}$. In what follows, we consider the relative height function $h_{(A, B)}:=h_B-h_A$, where both absolute height functions are based on the face $f_0$ corresponding to the cell $(0, 0, M)$. Note that $\II\cup\III\subseteq[0, M-1]^3$, so $\mathfrak{A}$ and $\mathfrak{B}$ have the same height above the plane $x_1+x_2+x_3=0$ at $f_0$. Therefore, $h_{(A, B)}$ is precisely the height difference, divided by $\sqrt{3}$, between $\mathfrak{A}$ and $\mathfrak{B}$. This difference remains constant, except upon crossing an edge $e\in M_A\triangle M_B$, when it must increase or decrease by $2/3-(-1/3)=1/3-(-2/3)=1$. In other words, the loops and paths in $D_{(A, B)}$ are the contour lines for $h_{(A, B)}$. Moreover, orienting the edges in $M_B$ from white to black and those in $M_A$ from black to white produces orientations on the loops and paths so that crossing a loop or path oriented from left to right causes $h_{(A, B)}$ to increase by $1$, while crossing a loop or path oriented from right to left causes $h_{(A, B)}$ to decrease by $1$. 

\begin{lemma}
\label{lemma:labelling set implies nonzero height}
If $p\in\mathcal{L}(A, B)$, and $p$ corresponds to $f\in F$, then $p\in\mathfrak{A}\triangle\mathfrak{B}$ and $h_{(A, B)}(f)\neq 0$. 
\end{lemma}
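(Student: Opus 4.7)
The plan is to unpack the three pieces of $\mathcal{L}(A,B) = (\I^-\cap A)\cup(\II\setminus B)\cup(\III\cap(A\triangle B))$, verify the symmetric-difference claim by direct bookkeeping from the definitions of $\mathfrak{A}=R_2\cup(\I^-\cup\III)\setminus A$ and $\mathfrak{B}=R_1\cup(\II\cup\III)\setminus B$, and then convert this into the height statement using the down-set property furnished by Lemma~\ref{lemma:AB surfaces}.

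For the first half, I would first record the preliminary observations that $\I^-$ consists of cells with exactly one negative coordinate (so $\I^-\subseteq R_1$ and $\I^-\cap R_2=\varnothing$), that $\II\cup\III\subseteq\mathbb{Z}^3_{\geq 0}$ so $(\II\cup\III)\cap(R_1\cup R_2)=\varnothing$, and that $\II\cap(\I^-\cup\III)=\varnothing$. Then I would run the case analysis. When $p\in\I^-\cap A$, membership in $A$ rules out $(\I^-\cup\III)\setminus A$ and the fact that $p\not\in R_2$ rules out the $R_2$ part, so $p\not\in\mathfrak{A}$; meanwhile $p\in R_1\subseteq\mathfrak{B}$. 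When $p\in\II\setminus B$, the disjointness observations give $p\not\in\mathfrak{A}$, while $p\in(\II\cup\III)\setminus B\subseteq\mathfrak{B}$. When $p\in\III\cap(A\setminus B)$, $p\in A$ gives $p\not\in\mathfrak{A}$ and $p\not\in B$ gives $p\in(\II\cup\III)\setminus B\subseteq\mathfrak{B}$; the case $p\in\III\cap(B\setminus A)$ is symmetric and places $p$ in $\mathfrak{A}\setminus\mathfrak{B}$. In every case $p\in\mathfrak{A}\triangle\mathfrak{B}$.

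For the height half, assume without loss of generality $p\in\mathfrak{A}\setminus\mathfrak{B}$. Lemma~\ref{lemma:AB surfaces} applied to $\mathfrak{A}$ gives $p-q\in\mathfrak{A}$ for every $q\in\mathbb{Z}^3_{\geq 0}$, and applied to $\mathfrak{B}$ gives $p+q\not\in\mathfrak{B}$ for every $q\in\mathbb{Z}^3_{\geq 0}$. Specializing $q=t(1,1,1)$, the $(1,1,1)$-column over $f$ contains a cell of $\mathfrak{A}$ at the level of $p$ and everything weakly below, while $\mathfrak{B}$ is empty at the level of $p$ and everything weakly above. Hence the topmost $\mathfrak{A}$-cell in this column sits strictly higher than the topmost $\mathfrak{B}$-cell. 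Because $h_A$ and $h_B$ are normalized to vanish at $f_0$ (with $f_0$ lying above both surfaces, as argued in the excerpt), $h_{(A,B)}(f)=h_B(f)-h_A(f)$ is exactly this signed difference of column tops and is therefore nonzero. The symmetric case $p\in\mathfrak{B}\setminus\mathfrak{A}$ yields the opposite sign by the same argument.

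The only delicate point is the final identification of $h_{(A,B)}(f)$ with the signed difference of the two column heights; this is the standard relationship between relative height functions and the stacks whose boundaries produce $M_A, M_B$, and it relies essentially on Lemma~\ref{lemma:AB surfaces} to guarantee that $\mathfrak{A}$ and $\mathfrak{B}$ really are down-sets along $(1,1,1)$-columns so that "top cell" is well-defined. Once this is accepted, the proof reduces to the short case analysis above.
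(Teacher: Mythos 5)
Your proposal is correct and follows the same route as the paper's proof: a case analysis over the three pieces of $\mathcal{L}(A,B)$, checking membership in $\mathfrak{A}$ and $\mathfrak{B}$ directly against the definitions $\mathfrak{A}=R_2\cup(\I^-\cup\III)\setminus A$ and $\mathfrak{B}=R_1\cup(\II\cup\III)\setminus B$, then reading off the sign of $h_{(A,B)}$. The one place you add content beyond the paper is the last step: the paper simply asserts $h_{(A,B)}(f)\neq 0$ after establishing $p\in\mathfrak{A}\triangle\mathfrak{B}$, whereas you spell out why, by invoking Lemma~\ref{lemma:AB surfaces} to show the $(1,1,1)$-column tops of $\mathfrak{A}$ and $\mathfrak{B}$ over $f$ are strictly ordered; this is the right justification and makes the step self-contained. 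One small imprecision: $f_0$ is not ``above'' both surfaces, rather the surfaces coincide at $f_0$ (so $h_A(f_0)=h_B(f_0)=0$ refers to the same physical level), which is what the paper argues and what your normalization argument actually needs.
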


\begin{proof}
Suppose $p\in\mathcal{L}(A, B)$, and $p$ corresponds to $f\in F$. If $p\in\I^-\cap A$, then $p\not\in(\I^-\cup\III)\setminus A$ and $p$ does not have at least two negative coordinates (it has exactly one negative coordinate), so $p\not\in\mathfrak{A}$. Since $p$ has at least one negative coordinate, $p\in\mathfrak{B}$. It follows that $h_{(A, B)}(f)>0$. If $p\in\II\setminus B$, then $p\not\in(\I^-\cup\III)\setminus A$ and $p\in\mathbb{Z}^3_{\geq 0}$ does not have at least two negative coordinates, so $p\not\in\mathfrak{A}$. Since $p\in(\II\cup\III)\setminus B$, $p\in\mathfrak{B}$. It follows that $h_{(A, B)}(f)>0$. Otherwise, $p\in\III\cap(A\triangle B)$. If $p\in\III\cap(A\setminus B)$, then $p\not\in(\I^-\cup\III)\setminus A$ and $p\in\mathbb{Z}^3_{\geq 0}$ does not have at least two negative coordinates, so $p\not\in\mathfrak{A}$. Additionally, $p\in(\II\cup\III)\setminus B$, so $p\in\mathfrak{B}$, implying that $h_{(A, B)}(f)>0$. Finally, if $p\in\III\cap(B\setminus A)$, then $p\in(\I^-\cup\III)\setminus A$, so $p\in\mathfrak{A}$. On the other hand, $p\not\in(\II\cup\III)\setminus B$ and $p\in\mathbb{Z}^3_{\geq 0}$ does not have at least one negative coordinate, so $p\not\in\mathfrak{B}$, and we find that $h_{(A, B)}(f)<0$. This completes the proof. 
\end{proof}

Let $F$ be the set of faces of $H$, and let $U_{(A, B)}=h_{(A, B)}^{-1}(0)\subseteq F$. Consider the subgraph $H^{\vee}_{(A, B)}$ of $H^{\vee}$ induced by $F\setminus U_{(A, B)}$. Then, given $f\in F$ such that $h_{(A, B)}(f)\neq 0$, denote by $C_{(A, B)}(f)$ the connected component of $H^{\vee}_{(A, B)}$ containing $f$. Also, we say that a face $f\in F$ is \emph{contained in sector $i$} if the vertices of $H$ incident to $f$ are all in sector $i$. Finally, we say that a connected component of $H^{\vee}_{(A, B)}$ is \emph{almost contained in sector $i$} if it contains only finitely many faces that are not contained in sector $i$. Note that any infinite connected component of $H^{\vee}_{(A, B)}$ is almost contained in at most one sector.

We can now describe the labelling algorithm for the double-dimer configurations $D_{(A, B)}$. Fix an $AB$ configuration $(A, B)$.

\begin{algorithm}
\begin{enumerate}
\item If there is a connected component $C$ of $H^{\vee}_{(A, B)}$ so that, given any $i$, $C$ is not almost contained in sector $i$, terminate with failure.
\item For each infinite connected component of $H^{\vee}_{(A, B)}$, there must be exactly one sector $i$ almost containing it. Label the faces it contains by $i$.
\item Label each finite connected component of $H^{\vee}_{(A, B)}$ by a single freely chosen element of $\mathbb{P}^1$.
\end{enumerate}
\label{algorithm:double-dimer labelling}
\end{algorithm}

\begin{example}
If we label the double-dimer configuration from Figure~\ref{fig:superposition}, we obtain the labelled double-dimer configuration shown in Figure~\ref{fig:superposition}. Observe that the paths in the double-dimer configuration from Figure~\ref{fig:superposition} are ``rainbow-like.'' In other words, the paths are nested and start and end in the same sector. 
\end{example}

We will first prove that this algorithm is, in some sense, equivalent to Algorithm~\ref{algorithm:AB labelling algorithm}, and then we will describe the connection between this algorithm and the double-dimer configuration $D_{(A, B)}$.

\subsection{Proofs of the equivalence of the labelling algorithms}
\label{sec:labelling_algorithm_proofs}

\begin{lemma}
\label{lemma:avoiding doubled edges}
Suppose $f$ and $f'$ are faces that belong to the same connected component of $H^{\vee}_{(A, B)}$. Then there is a sequence of adjacent faces in $F\setminus U_{(A, B)}$, beginning at $f$ and ending at $f'$, such that no pair of consecutive faces are separated by an edge in $M_A\cap M_B$. 
\end{lemma}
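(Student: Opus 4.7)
The plan is to take an arbitrary walk $f = g_0, g_1, \ldots, g_r = f'$ in $H^{\vee}_{(A,B)}$ and perform local surgery to eliminate any step $g_i \to g_{i+1}$ that crosses an edge of $M_A \cap M_B$. The central input is a height-function computation: crossing an edge $e$ of $H$ changes $h_{(A,B)} = h_B - h_A$ by $\pm 1$ when $e \in M_A \triangle M_B$, and by $0$ when either $e \in M_A \cap M_B$ or $e \notin M_A \cup M_B$. In the first of the latter two cases both $h_A$ and $h_B$ pick up a contribution of $\pm 2/3$, and in the second both pick up $\mp 1/3$; in either case the contributions cancel in the difference.

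Now suppose $g_i$ and $g_{i+1}$ are separated by a doubled edge $e \in M_A \cap M_B$, and let $v$ be one endpoint of $e$. Because $M_A$ and $M_B$ are perfect matchings of $H$ and $v$ is already saturated by $e$ in each, the two remaining edges $e', e''$ of $H$ incident to $v$ both lie outside $M_A \cup M_B$. Let $g_*$ denote the third face of $H$ incident to $v$, so that $g_i, g_{i+1}, g_*$ are the three faces around $v$ and $e', e''$ are the edges separating $g_*$ from $g_i$ and from $g_{i+1}$. By the height observation above, all three of these faces share a common value of $h_{(A,B)}$, and since this value is nonzero at $g_i$ it is also nonzero at $g_*$. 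Hence $g_* \in F \setminus U_{(A,B)}$, and we may replace the single step $g_i \to g_{i+1}$ by the two steps $g_i \to g_* \to g_{i+1}$. The replacement walk still lies entirely in $H^{\vee}_{(A,B)}$, and the two new edges $e', e''$ are not in $M_A \cup M_B$, so in particular neither lies in $M_A \cap M_B$.

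Iterating this local surgery on each remaining doubled-edge crossing produces, after finitely many steps, a walk from $f$ to $f'$ in $H^{\vee}_{(A,B)}$ whose consecutive faces are never separated by an edge of $M_A \cap M_B$. The only nontrivial step is the height arithmetic that puts doubled edges and fully-unmatched edges on the same footing for $h_{(A,B)}$; once that is in hand the three-faces-around-$v$ picture makes the detour through $g_*$ automatic, and I do not expect further obstacles.
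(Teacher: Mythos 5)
Your argument is correct and is essentially the paper's proof: both take a walk in $H^{\vee}_{(A,B)}$, observe that a doubled edge at a degree-$3$ vertex forces the two remaining edges at that vertex out of $M_A \cup M_B$, note that crossing such edges preserves $h_{(A,B)}$, and insert the third face around the vertex as a detour, iterating until no doubled-edge crossings remain. The only cosmetic difference is that the paper mentions both candidate detour faces (one per endpoint of the doubled edge) while you fix a single endpoint $v$ and use the one detour face there; either choice works.
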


\begin{proof}
Since $f$ and $f'$ belong to the same connected component of $H^{\vee}_{(A, B)}$, there is a sequence of adjacent faces $f:=f_0, f_1, \ldots, f_r:=f'$ in $F\setminus U_{(A, B)}$. Suppose the edge separating $f_s$ and $f_{s+1}$ is in $M_A\cap M_B$. Then, since $M_A$ and $M_B$ are dimer configurations, the two faces adjacent to both $f_s$ and $f_{s+1}$ are separated from $f_s$ and $f_{s+1}$ by edges that are not in $M_A\cup M_B$. Therefore, for either such face $g$, we have $h_{(A, B)}(f_s)=h_{(A, B)}(g)=h_{(A, B)}(f_{s+1})$, and we may insert $g$ into the sequence $f_0, f_1, \ldots, f_r$ between $f_s$ and $f_{s+1}$ to produce a new sequence of adjacent faces in $F\setminus U_{(A, B)}$. We may continue in this way until we obtain a sequence with the desired properties. 
\end{proof}

\begin{lemma}
\label{lemma:face sequence implies cell sequence}
Suppose $f_0, f_1, \ldots, f_r$ is a sequence of adjacent faces in $F\setminus U_{(A, B)}$ such that no pair of consecutive faces are separated by an edge in $M_A\cap M_B$. Suppose $p_0\in\mathfrak{A}\triangle\mathfrak{B}$ is a cell that corresponds to $f_0$. Then there exist integers $k_s$ and cells $p_{s+1}$ for $0\leq s<r$ so that for any $i$, $j$, $k$ such that $\{i, j, k\}=\{1, 2, 3\}$, the following is a sequence of adjacent cells in $\mathfrak{A}\triangle\mathfrak{B}$, such that $p_s$ corresponds to $f_s$ for $0\leq s\leq r$: \begin{align*}p_0, p_0&+\sgn(k_0)\mathbf{e}_i, p_0+\sgn(k_0)(\mathbf{e}_i+\mathbf{e}_j), p_0+\sgn(k_0)(\mathbf{e}_i+\mathbf{e}_j+\mathbf{e}_k), \\p_0&+\sgn(k_0)(2\mathbf{e}_i+\mathbf{e}_j+\mathbf{e}_k), \ldots, p_0+(k_0\mathbf{e}_i+k_0\mathbf{e}_j+k_0\mathbf{e}_k), \\p_1, p_1&+\sgn(k_1)\mathbf{e}_i, p_1+\sgn(k_1)(\mathbf{e}_i+\mathbf{e}_j), p_1+\sgn(k_1)(\mathbf{e}_i+\mathbf{e}_j+\mathbf{e}_k), \ldots, p_r.\end{align*} Here, $\sgn(k_s)=1$ if $k_s>0$, $\sgn(k_s)=0$ if $k_s=0$, and $\sgn(k_s)=-1$ if $k_s<0$. 
\end{lemma}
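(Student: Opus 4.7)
The plan is to prove the lemma by induction on $s$, constructing $k_s$ and $p_{s+1}$ given that $p_s \in \mathfrak{A}\triangle\mathfrak{B}$ lies in the column (line parallel to $(1,1,1)$) of $f_s$. The inductive step rests on two preliminary observations.

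First, the \emph{cube-walking principle}: if both $p$ and $p+(1,1,1)$ lie in $\mathfrak{A}\triangle\mathfrak{B}$, then so does every cell $p+d$ with $d\in\{0,1\}^3$. This follows directly from the coordinate-wise downward-closure of $\mathfrak{A}$ and $\mathfrak{B}$ established in Lemma~\ref{lemma:AB surfaces}: when $p, p+(1,1,1) \in \mathfrak{A}\setminus\mathfrak{B}$, the membership $p+(1,1,1) \in \mathfrak{A}$ forces all eight corners into $\mathfrak{A}$, while $p \notin \mathfrak{B}$ forces all eight corners out of $\mathfrak{B}$; the case $\mathfrak{B}\setminus\mathfrak{A}$ is symmetric, and the two sign-opposite cases contradict downward-closure applied to $p+(1,1,1)$. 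Second, the \emph{sign-preservation} observation: since $h_{(A,B)}$ changes by at most $1$ across any edge of $H$, and is a nonzero integer at every $f_s$, its sign is constant along the sequence; assume positive throughout.

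Next I would parametrize cells on each column by their integer height along $(1,1,1)$. Downward-closure makes the cells of $\mathfrak{A}\triangle\mathfrak{B}$ on the column of $f_s$ into a contiguous height-interval $(t_A^s, t_B^s]$ of length $h_{(A,B)}(f_s)$, all lying in $\mathfrak{B}\setminus\mathfrak{A}$; use the compatible parametrization on the column of $f_{s+1}$, with reference point shifted by the lateral unit vector $\varepsilon_e \mathbf{e}_{l_e}$ determined by the separating edge $e$, so that cells of equal height index on the two columns differ by $\varepsilon_e\mathbf{e}_{l_e}$. Because $e \notin M_A \cap M_B$, at most one of $t_A, t_B$ shifts (by $\pm 1$) in passing from column $f_s$ to column $f_{s+1}$. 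A three-case check (edge in $M_A$ only, in $M_B$ only, or in neither) shows $(t_A^s, t_B^s] \cap (t_A^{s+1}, t_B^{s+1}] \neq \varnothing$; the only delicate subcase is an ``outward'' shift of one endpoint, where $h_{(A,B)}(f_{s+1}) \neq 0$ is invoked to force $h_{(A,B)}(f_s) \geq 2$ and preserve overlap.

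Picking any $t^*$ in the overlap, choose $k_s$ so that $p_s + k_s(1,1,1)$ sits at height $t^*$ on the column of $f_s$, and set $p_{s+1} = p_s + k_s(1,1,1) + \varepsilon_e\mathbf{e}_{l_e}$. Both endpoints lie in $\mathfrak{A}\triangle\mathfrak{B}$, and contiguity of the column-$f_s$ block ensures $p_s + t(1,1,1) \in \mathfrak{A}\triangle\mathfrak{B}$ for every integer $t$ between $0$ and $k_s$ inclusive. Applying the cube-walking principle to each consecutive pair $p_s + t(1,1,1),\, p_s + (t+\sgn(k_s))(1,1,1)$ along this spine promotes it to the full $8$-corner content of every unit cube in the $(1,1,1)$-staircase, which is precisely what the ``for any $i,j,k$'' requirement asks; the single unit step $p_s + k_s(1,1,1) \to p_{s+1}$ is immediate from the construction. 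I expect the main obstacle to be the interval-overlap analysis, which must simultaneously exploit the $\pm 1$ step bound on $h_{(A,B)}$, sign preservation, the hypothesis $e \notin M_A \cap M_B$, and the nonvanishing of $h_{(A,B)}$ at both endpoints of the step, with the ``outward shift'' subcase resting on upgrading $h_{(A,B)}(f_{s+1}) \neq 0$ into the strict inequality $h_{(A,B)}(f_s) \geq 2$.
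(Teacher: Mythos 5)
Your proposal is correct, and it takes a genuinely different route from the paper's proof. The paper's argument works inductively by committing immediately to the \emph{extremal} choice of $k_s$ (least when $\varepsilon=-1$, greatest when $\varepsilon=1$), and then proving by contradiction that $p_s+(k_s,k_s,k_s)\in\mathfrak{A}\triangle\mathfrak{B}$: the contradiction hypothesis forces the separating edge into $M_A\triangle M_B$, which together with the $\pm 1$ jump in $h_{(A,B)}$ and Lemma~\ref{lemma:AB surfaces} yields a position for the surfaces $\mathfrak{L},\mathfrak{U}$ incompatible with monotonicity. Your proof instead makes the geometric content explicit up front: cells of $\mathfrak{A}\triangle\mathfrak{B}$ over each face form a height-interval whose length is $h_{(A,B)}$, and you show consecutive intervals intersect. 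This makes the role of the hypothesis $e\notin M_A\cap M_B$ transparent (exactly one threshold can jump, so the intervals translate or taper by $1$, never separate), and it lets you pick \emph{any} height in the overlap rather than the extremal one. The one fact you use tacitly but do not pin down --- that, in the shifted column parametrization, the threshold for $\mathfrak{A}$ shifts by $-\varepsilon_e$ precisely when $e\in M_A$ and is unchanged otherwise (and likewise for $\mathfrak{B}$) --- does hold, and is forced by the congruence constraint that $\Delta h_A = \varepsilon_e/3 + \delta_A$ with $\delta_A\in\mathbb{Z}$ and $\Delta h_A\in\{\pm 2/3,\mp 1/3\}$; this is the piece of bookkeeping a full write-up would need to make explicit. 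Your ``cube-walking principle'' and the paper's final invocation of Lemma~\ref{lemma:AB surfaces} on the full $\{0,\ldots,|k_s|\}^3$ box are two packagings of the same monotonicity fact, and your front-loaded sign-preservation observation (clean: $h_{(A,B)}$ jumps by at most $1$ and is a nonzero integer at every $f_s$) replaces the sign reasoning that is woven into the paper's contradiction argument.
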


\begin{proof}
Assume that we have specified the desired sequence up to $p_s$ for some $0\leq s<r$. Each pair of consecutive faces $f_s$, $f_{s+1}$ determines a direction in $\mathbb{Z}^3$. More precisely, there exist unique $\varepsilon\in\{\pm 1\}$ and $i\in\{1, 2, 3\}$ such that $p_s+\varepsilon \mathbf{e}_i$ corresponds to $f_{s+1}$. Since $h_{(A, B)}(f_{s+1})\neq 0$, there exists an integer $k_s$ such that $p_s+\varepsilon \mathbf{e}_i+(k_s, k_s, k_s)\in\mathfrak{A}\triangle\mathfrak{B}$. If $\varepsilon=-1$, assume that $k_s$ is the least such integer, and if $\varepsilon=1$, assume that $k_s$ is the greatest such integer. Define $p_{s+1}:=p_s+\varepsilon \mathbf{e}_i+(k_s, k_s, k_s)$. 

We claim that $p_s+(k_s, k_s, k_s)\in\mathfrak{A}\triangle\mathfrak{B}$. Suppose not. Of $\mathfrak{A}$ and $\mathfrak{B}$, let $\mathfrak{L}$ be the one such that $p_s+\varepsilon \mathbf{e}_i+(k_s, k_s, k_s)\not\in\mathfrak{L}$ and let $\mathfrak{U}$ be the other (i.e., the one such that $p_s+\varepsilon \mathbf{e}_i+(k_s, k_s, k_s)\in\mathfrak{U}$). Let $M_L$ and $M_U$, respectively, be the corresponding dimer configurations. By Lemma~\ref{lemma:AB surfaces}, if $\varepsilon=-1$, then $p_s+(k_s, k_s, k_s)\not\in\mathfrak{L}$, so $p_s+(k_s, k_s, k_s)\not\in\mathfrak{U}$, and if $\varepsilon=1$, then $p_s+(k_s, k_s, k_s)\in\mathfrak{U}$, so $p_s+(k_s, k_s, k_s)\in\mathfrak{L}$. In the first case, $\mathfrak{U}$ separates $p_s+(k_s, k_s, k_s)$ from $p_s+\varepsilon \mathbf{e}_i+(k_s, k_s, k_s)$, and in the second case, $\mathfrak{L}$ separates those two cells. In the first case, the edge $e$ separating $f_s$ and $f_{s+1}$ must be in $M_U$, and in the second case, $e$ must be in $M_L$. The sequence $f_0, f_1, \ldots, f_r$ is such that $e\not\in M_A\cap M_B=M_L\cap M_U$, so in either case, $e\in M_L\triangle M_U=M_A\triangle M_B$. As a result, $h_{(A, B)}$ differs by $\pm 1$ at $f_s$ and $f_{s+1}$. If $\varepsilon=-1$, $\mathfrak{U}$ must lie at $p_s+\varepsilon \mathbf{e}_i+(k_s+1, k_s+1, k_s+1)$, while $k_s$ is the least integer such that $p_s+\varepsilon \mathbf{e}_i+(k_s, k_s, k_s)\in\mathfrak{L}\triangle\mathfrak{U}$, so $\mathfrak{L}$ lies at $p_s+\varepsilon \mathbf{e}_i+(k_s, k_s, k_s)$. It follows that $h_{(A, B)}(f_{s+1})=\pm 1$. Similarly, if $\varepsilon=1$, $\mathfrak{L}$ must lie at $p_s+\varepsilon \mathbf{e}_i+(k_s, k_s, k_s)$, while $k_s$ is the greatest integer such that $p_s+\varepsilon \mathbf{e}_i+(k_s, k_s, k_s)\in\mathfrak{L}\triangle\mathfrak{U}$, so $\mathfrak{U}$ lies at $p_s+\varepsilon \mathbf{e}_i+(k_s+1, k_s+1, k_s+1)$. So, in this case, too, $h_{(A, B)}(f_{s+1})=\pm 1$. Then $h_{(A, B)}(f_s)=h_{(A, B)}(f_{s+1})\pm 1=\pm 2$, since $h_{(A, B)}(f_s)\neq 0$. Additionally, this shows that $h_{(A, B)}$ has the same sign at $f_s$ and $f_{s+1}$, so $\mathfrak{L}$ lies below $\mathfrak{U}$ at $f_s$. Consequently, if $\varepsilon=-1$, $\mathfrak{U}$ must lie at $p_s+(k_s, k_s, k_s)$ and $\mathfrak{L}$ must lie at $p_s+(k_s-2, k_s-2, k_s-2)$. On the other hand, if $\varepsilon=1$, $\mathfrak{L}$ must lie at $p_s+(k_s+1, k_s+1, k_s+1)$ and $\mathfrak{U}$ must lie at $p_s+(k_s+3, k_s+3, k_s+3)$. Then, by Lemma~\ref{lemma:AB surfaces}, in the first case, \[p_s+\varepsilon \mathbf{e}_i+(k_s-1, k_s-1, k_s-1)=p_s+(k_s-2, k_s-2, k_s-2)+\varepsilon \mathbf{e}_i+(1, 1, 1)\not\in\mathfrak{L},\] contradicting the fact that $\mathfrak{L}$ lies at $p_s+\varepsilon \mathbf{e}_i+(k_s, k_s, k_s)$. In the second case, \[p_s+\varepsilon \mathbf{e}_i+(k_s+1, k_s+1, k_s+1)=p_s+(k_s+2, k_s+2, k_s+2)+\varepsilon \mathbf{e}_i-(1, 1, 1)\in\mathfrak{U},\] contradicting the fact that $\mathfrak{U}$ lies at $p_s+\varepsilon \mathbf{e}_i+(k_s+1, k_s+1, k_s+1)$. By contradiction, $p_s+(k_s, k_s, k_s)\in\mathfrak{A}\triangle\mathfrak{B}$. Since $p_s\in\mathfrak{A}\triangle\mathfrak{B}$, by Lemma~\ref{lemma:AB surfaces}, we conclude that $p_s+\sgn(k_s)(m_1, m_2, m_3)\in\mathfrak{A}\triangle\mathfrak{B}$ for any $m_1, m_2, m_3$ such that $0\leq m_1, m_2, m_3\leq\lvert k_s\rvert$. This completes the proof. 
\end{proof}

\begin{lemma}
\label{lemma:dual component cardinality}
Suppose a cell $w$ corresponds to $f_0\in F$. If $w\in(\Cyl_{\ell}^-\cap A)\cup(\II_{\bar{\ell}}\setminus B)$ for some integer $\ell$, or Algorithm~\ref{algorithm:AB labelling algorithm} labels $w$ by an integer $\ell$, then $C_{(A, B)}(f_0)$ contains infinitely many faces contained in sector $\ell$. If Algorithm~\ref{algorithm:AB labelling algorithm} labels $w$ by $\ell$, and $\ell$ is not an integer, then $C_{(A, B)}(f_0)$ is finite.
\end{lemma}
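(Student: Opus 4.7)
The plan is to establish the two statements of the lemma separately, reducing each to a base case and then performing a structural argument.

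For the first claim, using Lemma~\ref{lemma:labelling set implies nonzero height} I would reduce to the two base cases $w \in \Cyl_\ell^- \cap A$ and $w \in \II_{\bar \ell} \setminus B$: if instead Algorithm~\ref{algorithm:AB labelling algorithm} assigns $w$ the label $\ell$ through a chain of cells in its connected component of $\mathcal{L}(A, B)$, that chain maps to a path of faces in $H^\vee_{(A, B)}$ adjacent in $H^\vee$, linking $f_0$ to a base-case face. In each base case, I would consider the family of cells $w^{(k)} := w - k\mathbf{e}_\ell$ for $k \geq 0$ together with their faces $f^{(k)} := f(w^{(k)})$, which are pairwise consecutively adjacent in $H^\vee$ and lie in sector $\ell$ for large $k$. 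For $w \in \II_{\bar \ell} \setminus B$, I would show directly that $w^{(k)} \in \mathfrak{B} \setminus \mathfrak{A}$ for every $k \geq 0$: the cell remains in $\II_{\bar \ell} \setminus B$ while $k \leq w_\ell$ (invoking the forward-closure of $B$ from Conditions~\ref{conditions:ab box stacking}.2 to rule out $w^{(k)} \in B$), and for $k > w_\ell$ it crosses into the ``type (iv)'' region of cells in $R_1 \setminus R_2$ outside every cylinder, which lie in $\mathfrak{B} \setminus \mathfrak{A}$. For $w \in \Cyl_\ell^- \cap A$, the cell $w^{(k)}$ itself leaves $\mathfrak{A} \triangle \mathfrak{B}$ for $k \geq 1$, so I would instead compare the tops of $\mathfrak{A}$ and $\mathfrak{B}$ in the column over $f^{(k)}$: the top of $\mathfrak{B}$ grows linearly in $k$ via the $R_1$ contribution while the top of $\mathfrak{A}$ is capped by the extent of $\mu_\ell$, with the forward-closure of $A$ inside $\Cyl_\ell^-$ (from Conditions~\ref{conditions:ab box stacking}.1) preserving the strict inequality for every $k \geq 0$.

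For the second statement, the non-integer-label hypothesis forces the connected component $C$ of $\mathcal{L}(A, B)$ containing $w$ to miss every cell of $\I^- \cup \II$, so $C \subseteq \III \cap (A \triangle B) \subseteq [0, M-1]^3$, in particular finite. I would argue by contradiction: assume $C_{(A, B)}(f_0)$ is infinite and pick a face $f$ in it at graph distance greater than $|\III|$ from $f_0$, so that any simple path from $f_0$ to $f$ in $H^\vee_{(A, B)}$ has more than $|\III|$ distinct faces. By Lemma~\ref{lemma:avoiding doubled edges} and then Lemma~\ref{lemma:face sequence implies cell sequence} with $p_0 := w$, this simple face path lifts to a cell path in $\mathfrak{A} \triangle \mathfrak{B}$ whose distinguished cells $p_0, p_1, \ldots$ project onto the faces of the path. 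An induction on $s$ shows that each $p_s$ and each intermediate cell of the lifted path lies in $C \cap \III$: from a cell in $\III \cap C$, any adjacent cell in $\mathfrak{A} \triangle \mathfrak{B}$ falls in $(\II \setminus B) \cup (\III \cap (A \triangle B)) \cup (\Cyl_k^- \cap A) \subseteq \mathcal{L}(A, B)$ (the ``type (iv)'' escape from a $\III$ cell is impossible, since a unit step either stays in $\mathbb{Z}^3_{\geq 0}$ or drops one coordinate into some $\Cyl_k^-$), and connectivity together with the constraint $C \cap (\I^- \cup \II) = \varnothing$ places the neighbor back in $C \cap \III$. Since distinct faces in the simple face path correspond to distinct columns and hence distinct cells in $\III$, the path has length at most $|\III|$, contradicting the choice of $f$.

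The main obstacle is the column-level analysis for the case $w \in \Cyl_\ell^- \cap A$: although the tops of $\mathfrak{A}$ and $\mathfrak{B}$ separate asymptotically in $k$, establishing $h_{(A, B)}(f^{(k)}) \neq 0$ for every intermediate $k$ requires using the forward-closure of $A$ to ensure that any $A$-cell lifting the top of $\mathfrak{A}$ in the column of $f^{(k)}$ is paired with a matching $R_1$-cell lifting the top of $\mathfrak{B}$ by the same amount, so that the strict inequality is maintained throughout.
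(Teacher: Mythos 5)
Your overall plan tracks the paper's closely: reduce via Lemma~\ref{lemma:labelling set implies nonzero height} (and connectivity of the component of $\mathcal{L}(A,B)$) to the two base cases, then translate along $-x_\ell$ and show nonzero height to get infinitely many sector-$\ell$ faces, and for the non-integer case lift a face walk to a cell walk and exploit that the component of $\mathcal{L}(A,B)$ containing $w$ lies in $\III\cap(A\triangle B)$. Your handling of the $\II_{\bar\ell}\setminus B$ base case is essentially identical to the paper's (direct verification that $w-k\mathbf{e}_\ell\in\mathfrak{B}\setminus\mathfrak{A}$, using closure of $B$ and then the ``one negative coordinate, outside every cylinder'' regime). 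Your finiteness argument in the second part is a genuine but equivalent variant: you bound $\lvert C_{(A,B)}(f_0)\rvert$ by $\lvert\III\rvert$ through the induction that every cell the walk reaches is in $C\cap\III$, whereas the paper instead locates a face of $C_{(A,B)}(f_0)$ not over any $\mathcal{L}(A,B)$ cell and shows the first exit cell forces a predecessor in $\I^-\cup\II$, which would put an integer-labelled cell in $C$. Both hinge on the same observation (any $\mathfrak{A}\triangle\mathfrak{B}$-neighbor of a cell in $C\cap\III$ again lies in $\mathcal{L}(A,B)$, hence in $C$), and both work; your cardinality bound is arguably cleaner.

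The one place you should fix is your resolution of the flagged obstacle in the $\Cyl_\ell^-\cap A$ base case. The ``pairing'' picture (an $A$-cell ``lifting'' the top of $\mathfrak{A}$ matched with an $R_1$-cell lifting $\mathfrak{B}$) is not quite right: cells of $A$ are \emph{deleted} from $\mathfrak{A}$, so they lower, not lift, its top, and there is no pairing mechanism at work. The paper's argument is structural: assume $h_{(A,B)}(f(k))=0$; since $\mathfrak{A}$ and $\mathfrak{B}$ share a top cell $p'$ over that column, the definitions of $\mathfrak{A}$ and $\mathfrak{B}$ force either $f(k)$ to lie along a nonnegative coordinate axis (ruled out because the $\ell$th coordinate of $p_r-k\mathbf{e}_\ell$ is negative and the others are nonnegative), or $p'\in\I^-\setminus A$ with its single negative coordinate equal to $-1$, or $p'\in\III\setminus A$. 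In the latter two cases $p'$ lies in the same column above (or equal to) $p_r-k\mathbf{e}_\ell$, and a chain of back neighbors in $\I^-\cup\III$ through $\Cyl_\ell$ connects $p'$ down to $p_r\in A$; repeated application of Conditions~\ref{conditions:ab box stacking}.1 then forces $p'\in A$, a contradiction. So the forward-closure you identify is indeed the right tool, but you should use it to contradict the \emph{assumption} $h=0$ rather than to prove a monotone growth inequality.
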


\begin{proof}
We consider first case (i): $w\in(\Cyl_{\ell}^-\cap A)\cup(\II_{\bar{\ell}}\setminus B)$ for some integer $\ell$, or Algorithm~\ref{algorithm:AB labelling algorithm} labels $w$ by an integer $\ell$, and then case (ii): Algorithm~\ref{algorithm:AB labelling algorithm} labels $w$ by $\ell$, and $\ell$ is not an integer.\\

Case (i): Observe that $w$ must be an element of a connected component $C$ of $\mathcal{L}(A, B)$ containing a cell $n\in\Cyl_{\ell}^-\cup\II_{\bar{\ell}}$. Then there is a sequence of adjacent cells $w:=p_0, p_1, \ldots, p_r:=n$, each of which is an element of $C\subseteq\mathcal{L}(A, B)$. Furthermore, $p_r=n\in\mathcal{L}(A, B)\cap(\I^-\cup\II)\subseteq(\I^-\cap A)\cup(\II\setminus B)$. By Lemma~\ref{lemma:labelling set implies nonzero height}, assuming the cells $p_1, p_2, \ldots, p_r$ correspond to the faces $f_1, f_2, \ldots, f_r$ of $H$, we can deduce that $h_{(A, B)}(f_s)\neq 0$ for $0\leq s\leq r$. Since $p_s$ is adjacent to $p_{s+1}$, $f_s$ is adjacent to $f_{s+1}$ in $H^{\vee}$ for $0\leq s<r$. Moreover, since $h_{(A, B)}(f_s)\neq 0$ for $0\leq s\leq r$, $C_{(A, B)}(f_0)=C_{(A, B)}(f_r)$. 

Now, if $p_r\in\I^-\cap A$, let $p$ be any cell obtained by translating $p_r$ by $k>0$ units in the $x_i$-directions, for each $i\neq\ell$. Let $f(k)$ be the corresponding face of $H$. Note that $p_r\in\Cyl^-_{\ell}$, so the $\ell$th coordinate of $p$ is negative, and the other coordinates of $p$ are nonnegative. Suppose $h_{(A, B)}(f(k))=0$. Considering the definitions of $\mathfrak{A}$ and $\mathfrak{B}$, this implies that either $f(k)$ lies along one of the nonnegative coordinate axes, $f(k)$ corresponds to a cell $p'\in\I^-\setminus A$ whose single negative coordinate has the value $-1$, or $f(k)$ corresponds to a cell $p'\in\III\setminus A$. Since the $\ell$th coordinate of $p$ is negative, while the other coordinates of $p$ are nonnegative, $f(k)$ cannot lie along any of the nonnegative coordinate axes, so one of the latter cases must hold. Then, in either case, every cell above $p'$ is in $\mathbb{Z}^3_{\geq 0}$, so we conclude that $p=p'$ or $p$ is below $p'$. Thus, the only coordinate of $p'$ that may be negative is the $\ell$th coordinate, so if $p'\in\I^-$, then $p'\in\Cyl_{\ell}^-$. Additionally, if $p\not\in\Cyl_{\ell}^-$, then $p'\not\in\Cyl_{\ell}$. However, in this case, $p'\not\in\I^-\cup\III$, which is a contradiction, so we must have $p'\in\Cyl_{\ell}$ and $p\in\Cyl_{\ell}^-$. By Lemmas~\ref{lemma:cylinder back neighbor} and~\ref{lemma:type III back neighbors}, there is a sequence of back neighbors in $\I^-\cup\III$ leading from $p'$ to $p$ to $p_r$. By repeatedly applying Conditions~\ref{conditions:ab box stacking}.1, since $p_r\in A$, it follows that $p'\in A$. By contradiction, $h_{(A, B)}(f(k))\neq 0$. Finally, observe that $f(k)$ is also the face corresponding to the cell obtained by translating $p_r$ by $-k$ units in the $x_{\ell}$-direction. Therefore, since $k>0$ was arbitrary, $h_{(A, B)}$ must be nonzero at any face $f(k)$ obtained from $f_r$ by translating in the negative $x_{\ell}$-direction. This shows that $C_{(A, B)}(f_0)=C_{(A, B)}(f_r)$ contains infinitely many faces contained in sector $\ell$, since for large enough $k$, $f(k)$ is contained in sector $\ell$. 

On the other hand, if $p_r\in\II\setminus B$, let $p$ be any cell obtained by translating $p_r$ by $k<0$ units in the $x_{\ell}$-direction. Let $f(k)$ be the corresponding face of $H$. Note that $p_r\in\II_{\bar{\ell}}$, so $p_r\not\in\Cyl_{\ell}$ and $p\not\in\Cyl_{\ell}$. By Lemma~\ref{lemma:cylinder back neighbor}, though, if $p\in\mathbb{Z}^3_{\geq 0}$, then $p\in\II_{\bar{\ell}}$. In fact, in this case, there is a sequence of back neighbors in $\II_{\bar{\ell}}$ leading from $p_r$ to $p$, so by repeatedly applying Conditions~\ref{conditions:ab box stacking}.2, we find that $p\not\in B$. Then $p\in\II\setminus B\subseteq R_1\cup(\II\cup\III)\setminus B=\mathfrak{B}$ and $p\not\in R_2\cup(\I^-\cup\III)\setminus A=\mathfrak{A}$, so $h_{(A, B)}(f(k))>0$. Otherwise, the $\ell$th coordinate of $p$ is negative, while the other coordinates of $p$ are nonnegative. Since $p\not\in\Cyl_{\ell}$, $p\not\in\mathfrak{A}$. Furthermore, $p\in R_1$, so $p\in\mathfrak{B}$. Thus, in this case, too, $h_{(A, B)}(f(k))>0$. Consequently, since $k<0$ was arbitrary, $h_{(A, B)}$ must be nonzero at any face $f(k)$ obtained from $f_r$ by translating in the negative $x_{\ell}$-direction. Again, this shows that $C_{(A, B)}(f_0)=C_{(A, B)}(f_r)$ contains infinitely many faces contained in sector $\ell$. \\

Case (ii): Let $w:=p_0$. Since $\ell$ is not an integer, $w$ must be labelled in step 3 of Algorithm~\ref{algorithm:AB labelling algorithm}, so $w\in\III\cap(A\triangle B)$. If $w\in\III\cap A\setminus B$, then $w\not\in\mathfrak{A}$, while $w\in\mathfrak{B}$. Otherwise, $w\in\III\cap B\setminus A$, in which case, $w\not\in\mathfrak{B}$, while $w\in\mathfrak{A}$. In either case, $h_{(A, B)}(f_0)\neq 0$. 

So, consider $C_{(A, B)}(f_0)$. Suppose this connected component is infinite. Then, since $\mathcal{L}(A, B)$ is finite, there must be a face $f\in C_{(A, B)}(f_0)$ that doesn't correspond to any cell in $\mathcal{L}(A, B)$. By Lemma~\ref{lemma:avoiding doubled edges}, there is a sequence $f_0, f_1, \ldots, f_r:=f$ of adjacent faces in $F\setminus U_{(A, B)}$ such that no pair of consecutive faces are separated by an edge in $M_A\cap M_B$. The height function $h_{(A, B)}$ can only differ by $0$ or $\pm 1$ at adjacent faces, and $h_{(A, B)}$ is nonzero at each face in the sequence $f_0, f_1, \ldots, f_r$, so $h_{(A, B)}$ has the same sign at all of these faces. 

By Lemma~\ref{lemma:face sequence implies cell sequence}, there is a sequence of adjacent cells \begin{align*}p_0, p_0&+\sgn(k_0)(1, 0, 0), p_0+\sgn(k_0)(1, 1, 0), p_0+\sgn(k_0)(1, 1, 1), \\p_0&+\sgn(k_0)(2, 1, 1), \ldots, p_0+(k_0, k_0, k_0), p_1, p_1+\sgn(k_1)(1, 0, 0), \ldots, p_r,\end{align*} all of which are in $\mathfrak{A}\triangle\mathfrak{B}$, such that $p_s$ corresponds to $f_s$ for $0\leq s\leq r$. Recall that $f_r=f$ does not correspond to any cell in $\mathcal{L}(A, B)$, so $p_r\not\in\mathcal{L}(A, B)$. However, $p_0=w\in\III\cap(A\triangle B)\subseteq\mathcal{L}(A, B)$. So, consider the first cell $p'$ in the above sequence that is not an element of the labelling set, and let $p$ be the previous cell in the sequence. We claim that $p\in\I^-\cup\II$. Suppose not. Then $p\in\mathcal{L}(A, B)\setminus(\I^-\cup\II)=\III\cap(A\triangle B)$. Furthermore, $p$ is adjacent to $p'$, so $p\in BN(p')$ or $p'\in BN(p)$. If $p\in BN(p')$, then since $p\in\III$, we have $p'\in\Cyl_1\cup\Cyl_2\cup\Cyl_3$, and since $p\in\III\subseteq\mathbb{Z}^3_{\geq 0}$, $p'\in\mathbb{Z}^3_{\geq 0}$, implying that $p'\in\I^+\cup\II\cup\III$. But elements of $\I^+$ are not in $\I^-\cup\II\cup\III$, nor do they have any negative coordinates, so such elements are not in $\mathfrak{A}\cup\mathfrak{B}$. Since $p'\in\mathfrak{A}\triangle\mathfrak{B}$, it must be the case that $p'\in\II\cup\III$. If $p'\in BN(p)$, then by Lemma~\ref{lemma:type III back neighbors}, $p'\in\I^-\cup\III$. So, in either case, $p'\in\I^-\cup\II\cup\III$. If $p'\in\I^-$, then $p'\in\mathfrak{B}$, so $p'\not\in\mathfrak{A}$, in which case, $p'\in A$. But this means that $p'\in\I^-\cap A\subseteq\mathcal{L}(A, B)$. So, $p'\not\in\I^-$. Similarly, if $p'\in\II$, then $p'\not\in\mathfrak{A}$, so $p'\in\mathfrak{B}$, in which case, $p'\not\in B$. This means that $p'\in\II\setminus B\subseteq\mathcal{L}(A, B)$, so $p'\not\in\II$. Thus, $p'\in\III$. If $p'\not\in\mathfrak{A}$ and $p'\in\mathfrak{B}$, then $p'\in\III\cap A\setminus B\subseteq\III\cap(A\triangle B)\subseteq\mathcal{L}(A, B)$. Otherwise, if $p'\not\in\mathfrak{B}$ and $p'\in\mathfrak{A}$, then $p'\in\III\cap B\setminus A\subseteq\III\cap(A\triangle B)\subseteq\mathcal{L}(A, B)$. By contradiction, $p\in\I^-\cup\II$. Let $q$ be the first cell preceding $p'$ in the above sequence that is in $\I^-\cup\II$. Since $p'$ is the first cell in the sequence that's not in $\mathcal{L}(A, B)$, $q\in\mathcal{L}(A, B)$, so $q\in(\I^-\cap A)\cup(\II\setminus B)$. Therefore, $q$ is labelled by an integer $\ell(q)$ in step 2 of Algorithm~\ref{algorithm:AB labelling algorithm}. All of the cells $w=p_0:=q_0, q_1, \ldots, q_t$ preceding $q$ in the above sequence (written here in the same order as written in the above sequence) also precede $p'$, so they are elements of the labelling set and not in $\I^-\cup\II$, i.e., they are all elements of $\III\cap(A\triangle B)$. Since $q_0, q_1, \ldots, q_t, q$ is a sequence of adjacent cells, we see that $\{q_0, q_1, \ldots, q_t, q\}$ is contained in a single connected component of $\mathcal{L}(A, B)$, which is labelled in step 2 of Algorithm~\ref{algorithm:AB labelling algorithm} by $\ell(q)$. In particular, $w=q_0$ is labelled in step 2 of Algorithm~\ref{algorithm:AB labelling algorithm} by an integer $\ell(q)$, contradicting the fact that $\ell$ is not an integer. As a result, $C_{(A, B)}(f_0)$ is finite.
\end{proof}

\begin{lemma}
\label{lemma:nonzero height on axis implies labelling set}
If $f\in F\setminus U_{(A, B)}$ lies along one of the nonnegative coordinate axes, then $f$ corresponds to a cell $p\in\mathcal{L}(A, B)$ and all cells corresponding to $f$ that are in $\mathfrak{A}\triangle\mathfrak{B}$ must be in $\mathcal{L}(A, B)$. 
\end{lemma}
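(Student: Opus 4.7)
The plan is to split the claim into two sub-statements and prove the stronger one first: (i) some cell corresponding to $f$ lies in $\mathcal{L}(A,B)$, and (ii) every cell corresponding to $f$ that is in $\mathfrak{A}\triangle\mathfrak{B}$ already lies in $\mathcal{L}(A,B)$. The second statement immediately implies the first once we know, from $h_{(A,B)}(f)\neq 0$, that the column of cells over $f$ contains at least one cell in $\mathfrak{A}\triangle\mathfrak{B}$.

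First I would prove (ii). By symmetry, assume $f$ lies on the positive $x_1$-axis, so the cells corresponding to $f$ have the form $p=(a+k,k,k)$ for some fixed $a\ge 0$ and varying $k\in\mathbb{Z}$. The key observation is the dichotomy: if $k\le -1$ then $p$ has at least two negative coordinates, so $p\in R_2\subseteq\mathfrak{A}$ and $p\in R_1\subseteq\mathfrak{B}$, putting $p$ in both surfaces and hence not in $\mathfrak{A}\triangle\mathfrak{B}$; while if $k\ge 0$ then $p\in\mathbb{Z}^3_{\geq 0}$, so $p\notin R_1$ and $p\notin\I^-$. Thus in the only relevant case we have $p\in\mathfrak{A}$ iff $p\in\III\setminus A$, and $p\in\mathfrak{B}$ iff $p\in(\II\cup\III)\setminus B$. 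A routine case analysis then shows: if $p\in\mathfrak{A}\setminus\mathfrak{B}$, then $p\in\III$, $p\notin A$, and $p\in B$, so $p\in\III\cap(B\setminus A)\subseteq\mathcal{L}(A,B)$; if $p\in\mathfrak{B}\setminus\mathfrak{A}$ and $p\in\II$, then $p\in\II\setminus B\subseteq\mathcal{L}(A,B)$; and if $p\in\mathfrak{B}\setminus\mathfrak{A}$ and $p\in\III$, then the failure of $p\in(\I^-\cup\III)\setminus A$ forces $p\in A$, giving $p\in\III\cap(A\setminus B)\subseteq\mathcal{L}(A,B)$. In every case $p\in\mathcal{L}(A,B)$, which is (ii).

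To extract (i), I would apply Lemma~\ref{lemma:AB surfaces} with $q=(1,1,1)$: this says both $\mathfrak{A}$ and $\mathfrak{B}$ are downward-closed in the direction $(1,1,1)$, so the cells of $\mathfrak{A}$ (respectively $\mathfrak{B}$) in the column over $f$ form a down-set of the form $\{p_0+k(1,1,1):k\le h^{\text{col}}_A(f)\}$ (respectively with $B$ in place of $A$). Since the absolute heights $h_A$ and $h_B$ are based at a common face $f_0$ corresponding to $(0,0,M)$, where $\mathfrak{A}$ and $\mathfrak{B}$ agree, the relative height $h_{(A,B)}(f)=h_B(f)-h_A(f)$ equals, up to a sign, the number of cells in the column of $f$ that belong to $\mathfrak{A}\triangle\mathfrak{B}$. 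The hypothesis $f\in F\setminus U_{(A,B)}$ gives $h_{(A,B)}(f)\neq 0$, producing at least one such cell $p$; by (ii) this $p$ lies in $\mathcal{L}(A,B)$, establishing (i).

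The only mildly delicate step is the monotonicity/height calculation in the last paragraph, i.e., confirming that the relative height at $f$ indeed counts cells in $\mathfrak{A}\triangle\mathfrak{B}$ along the $(1,1,1)$-column over $f$; this amounts to combining Lemma~\ref{lemma:AB surfaces} with the standard interpretation of the dimer height function as the height of the lozenge surface, both of which are already in place. Apart from that, the argument is pure bookkeeping on the membership of $p$ in $\Cyl_i$, $\I^\pm$, $\II$, and $\III$, so I do not anticipate any real obstacle beyond being careful with the case split in (ii).
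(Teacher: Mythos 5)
Your proposal is correct and takes essentially the same approach as the paper: prove that every cell over $f$ in $\mathfrak{A}\triangle\mathfrak{B}$ lies in $\mathcal{L}(A,B)$ by showing $k_2\ge 0$ forces $p\in\mathbb{Z}^3_{\ge 0}$ and then unwinding the definitions of $\mathfrak{A}$, $\mathfrak{B}$, and $\mathcal{L}(A,B)$, and extract the existence of at least one such cell from $h_{(A,B)}(f)\neq 0$. Your case split ($\mathfrak{A}\setminus\mathfrak{B}$ versus $\mathfrak{B}\setminus\mathfrak{A}$, with the latter subdivided by $\II$ versus $\III$) is cosmetically different from the paper's split on $((\I^-\cup\III)\setminus A)\triangle((\II\cup\III)\setminus B)$, but the two are equivalent once one observes that for $p\in\mathbb{Z}^3_{\ge 0}$ membership in $\mathfrak{A}$ (resp.\ $\mathfrak{B}$) reduces to membership in $(\I^-\cup\III)\setminus A$ (resp.\ $(\II\cup\III)\setminus B$).
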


\begin{proof}
Since $f\in F\setminus U_{(A, B)}$, there exists a cell $p\in\mathfrak{A}\triangle\mathfrak{B}$ corresponding to $f$. The result will follow if we can show that any cell $q\in\mathfrak{A}\triangle\mathfrak{B}$ corresponding to $f$ is in $\mathcal{L}(A, B)$. Since $f$ lies along one of the nonnegative coordinate axes, $q=k_1\mathbf{e}_i+(k_2, k_2, k_2)$ for some $i\in\{1, 2, 3\}$, $k_1\in\mathbb{Z}_{\geq 0}$, and $k_2\in\mathbb{Z}$. If $k_2<0$, then $q$ has at least two negative coordinates, so $q\in\mathfrak{A}\cap\mathfrak{B}$, which is a contradiction. Thus, $k_2\geq 0$, so $q\in\mathbb{Z}^3_{\geq 0}$, and since $q$ is an element of exactly one of $\mathfrak{A}$ and $\mathfrak{B}$, we conclude that $q\in((\I^-\cup\III)\setminus A)\triangle((\II\cup\III)\setminus B)$. If $q\in((\I^-\cup\III)\setminus A)\setminus((\II\cup\III)\setminus B)$, then $q\not\in\I^-$, since $q\in\mathbb{Z}^3_{\geq 0}$, so we have $q\in\III\cap B\setminus A\subseteq\mathcal{L}(A, B)$. Otherwise, $q\in((\II\cup\III)\setminus B)\setminus((\I^-\cup\III)\setminus A)$, so $q\in(\II\setminus B)\cup(\III\cap A\setminus B)\subseteq\mathcal{L}(A, B)$. 
\end{proof}

\begin{lemma}
\label{lemma:exit from labelling set}
If a cell $p\in\mathcal{L}(A, B)$ is adjacent to a cell $p'\not\in\mathcal{L}(A, B)$, and $p'\in\mathfrak{A}\triangle\mathfrak{B}$, then $p'\not\in\mathbb{Z}^3_{\geq 0}\cup\I^-\cup\II\cup\III$ and $p\in(\I^-\cap A)\cup(\II\setminus B)$. 
\end{lemma}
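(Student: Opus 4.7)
The plan is to establish the two conclusions in sequence, namely (i) $p'\not\in\mathbb{Z}^3_{\geq 0}\cup\I^-\cup\II\cup\III$ and (ii) $p\in(\I^-\cap A)\cup(\II\setminus B)$. Both amount to case analyses using the definitions of $\mathfrak{A}=R_2\cup(\I^-\cup\III)\setminus A$, $\mathfrak{B}=R_1\cup(\II\cup\III)\setminus B$, and $\mathcal{L}(A,B)=(\I^-\cap A)\cup(\II\setminus B)\cup(\III\cap(A\triangle B))$, together with Lemmas~\ref{lemma:cylinder back neighbor} and~\ref{lemma:type III back neighbors} to control where back neighbors can land.

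For (i), I would first rule out $p'\in\mathbb{Z}^3_{\geq 0}$. If $p'$ had all coordinates nonnegative, then $p'\not\in R_1\cup R_2$, so $p'\in\mathfrak{A}\triangle\mathfrak{B}$ would force $p'\in((\I^-\cup\III)\setminus A)\triangle((\II\cup\III)\setminus B)$. Since $p'\in\mathbb{Z}^3_{\geq 0}$ rules out $\I^-$, one gets four sub-cases: $p'\in\II\setminus B$, $p'\in\III\cap(A\setminus B)$, $p'\in\III\cap(B\setminus A)$, or $p'\in\III\cap A\cap B$. The first three land in $\mathcal{L}(A,B)$ directly, and the fourth is impossible because $p'\in\III\cap A\cap B$ would lie in both $A$ and $B$, hence neither in $\mathfrak{A}$ nor $\mathfrak{B}$, contradicting $p'\in\mathfrak{A}\triangle\mathfrak{B}$. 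Consequently $p'$ has at least one negative coordinate, so $p'\not\in\II\cup\III$. To exclude $p'\in\I^-$, observe that $p'\in\I^-\subseteq R_1$ yields $p'\in\mathfrak{B}$ automatically, so $p'\in\mathfrak{A}\triangle\mathfrak{B}$ forces $p'\not\in\mathfrak{A}$; since $p'\in\I^-\cup\III$, this means $p'\in A$, hence $p'\in\I^-\cap A\subseteq\mathcal{L}(A,B)$, a contradiction.

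For (ii), since $p\in\mathcal{L}(A,B)$, the only alternative to the desired conclusion is $p\in\III\cap(A\triangle B)$. In that case $p\in\mathbb{Z}^3_{\geq 0}$, and the adjacency $p\sim p'$ splits into two sub-cases. If $p\in BN(p')$, then $p'$ is obtained from $p$ by adding a standard basis vector, so $p'\in\mathbb{Z}^3_{\geq 0}$, contradicting (i). If instead $p'\in BN(p)$, then Lemma~\ref{lemma:type III back neighbors} gives $p'\in\I^-\cup\III$, again contradicting (i). Therefore $p\not\in\III\cap(A\triangle B)$, which leaves precisely $p\in(\I^-\cap A)\cup(\II\setminus B)$.

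The main obstacle is purely bookkeeping: enumerating the possible memberships of $p'$ in the two symmetric differences while tracking the sign pattern of its coordinates. There is no new combinatorial idea beyond the observation---already used repeatedly in the preceding proofs---that a back neighbor of a cell in $\mathbb{Z}^3_{\geq 0}$ either stays in $\mathbb{Z}^3_{\geq 0}$ or drops into the appropriate $\Cyl_j^-$, which is exactly what Lemma~\ref{lemma:type III back neighbors} encodes.
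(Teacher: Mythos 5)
Your proof is correct and follows essentially the same route as the paper: show $p'\in\mathbb{Z}^3_{\geq 0}$ would force $p'\in\mathcal{L}(A,B)$, then show $p'\in\I^-$ would likewise force $p'\in\I^-\cap A\subseteq\mathcal{L}(A,B)$, and finally use the two adjacency sub-cases ($p\in BN(p')$ vs.\ $p'\in BN(p)$) together with Lemma~\ref{lemma:type III back neighbors} to rule out $p\in\III\cap(A\triangle B)$. The paper compresses the first step by citing the argument in Lemma~\ref{lemma:nonzero height on axis implies labelling set} rather than re-deriving it, but the content is the same.

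One small slip: your enumeration of ``four sub-cases'' for $p'\in\mathbb{Z}^3_{\geq 0}$ is not exhaustive --- you omit $p'\in\III\setminus(A\cup B)$ (and, for that matter, $p'\in\II\cap B$), both of which place $p'$ in $\mathfrak{A}\cap\mathfrak{B}$ and hence outside $\mathfrak{A}\triangle\mathfrak{B}$, by the same argument you give for $\III\cap A\cap B$. The cleaner route, which the paper takes, is to observe directly that the symmetric difference $((\I^-\cup\III)\setminus A)\triangle((\II\cup\III)\setminus B)$, intersected with $\mathbb{Z}^3_{\geq 0}$, decomposes into the three landing sets $\II\setminus B$, $\III\cap(A\setminus B)$, $\III\cap(B\setminus A)$, each contained in $\mathcal{L}(A,B)$, with no extraneous cases to rule out.
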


\begin{proof}
Suppose $p'\in\mathbb{Z}^3_{\geq 0}$. Then, by the argument given in the proof of Lemma~\ref{lemma:nonzero height on axis implies labelling set}, $p'\in\mathcal{L}(A, B)$. So, by contradiction, $p'$ has at least one negative coordinate, which means that $p'\in\mathfrak{B}$. Then we must have $p'\not\in\mathfrak{A}$, so the other coordinates of $p'$ must be nonnegative. Furthermore, suppose $p'\in\I^-$. Then, since $p'\not\in\mathcal{L}(A, B)$, $p'\not\in A$, so $p'\in(\I^-\cup\III)\setminus A$, contradicting the fact that $p'\not\in\mathfrak{A}$. By contradiction, $p'\not\in\I^-$. Since $p'\not\in\mathbb{Z}^3_{\geq 0}$, $p'\not\in\II\cup\III$. 

Either $p\in BN(p')$ or $p'\in BN(p)$. If $p\in BN(p')$, then since $p'\not\in\mathbb{Z}^3_{\geq 0}$, $p$ has a negative coordinate, so $p\in\I^-\cap A$. Otherwise, $p'\in BN(p)$, so by Lemma~\ref{lemma:type III back neighbors}, $p\not\in\III$, since $p'\not\in\I^-\cup\III$. Then $p\in(\I^-\cap A)\cup(\II\setminus B)$. In either case, $p\in(\I^-\cap A)\cup(\II\setminus B)$. 
\end{proof}

\begin{lemma}
\label{lemma:finitely many unlabelled faces in sectors}
Given any $i\in\{1, 2, 3\}$, there exists $N\in\mathbb{Z}_{\geq 0}$ such that each face contained in sector $i$ that isn't a face of the subgraph $H(N)\subseteq H$ is in $F\setminus U_{(A, B)}$. 
\end{lemma}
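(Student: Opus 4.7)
My plan is to establish the lemma by comparing the surfaces $\mathfrak{A}$ and $\mathfrak{B}$ to their ``base'' counterparts $\mathfrak{A}_0 := R_2 \cup \I^- \cup \III$ and $\mathfrak{B}_0 := R_1 \cup \II \cup \III$ (corresponding to $(A, B) = (\varnothing, \varnothing)$), and then computing $h_{(\varnothing, \varnothing)}$ directly in each sector. First, $A$ and $B$ are finite by assumption, and $\II \cup \III$ is finite because it is contained in $[0, M-1]^3$. Thus $\mathfrak{A}$ differs from $\mathfrak{A}_0$ on the finite set $A$, while $\mathfrak{B}$ differs from $\mathfrak{B}_0$ on the finite set $B$, so the dimer configurations $M_A, M_{A_0}$ and $M_B, M_{B_0}$ differ in only a bounded collection of edges. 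It follows that $h_{(A, B)}$ agrees with $h_{(\varnothing, \varnothing)}$ outside some bounded region $R_0 \subseteq H$.

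Next I will show that there is a bounded region $R_i$ of the honeycomb such that $h_{(\varnothing, \varnothing)}(f) \neq 0$ for every face $f$ contained in sector $i$ outside $R_i$. Without loss of generality take $i = 3$; the other sectors are handled by cyclically permuting coordinates. For $f$ contained in sector $3$, choose a representative $p_0 = (a, b, 0)$ with $a, b \geq 1$, so the cells of $\mathbb{Z}^3$ projecting to $f$ are $(a + l, b + l, l)$ for $l \in \mathbb{Z}$. Once $\max(a, b) \geq M$, no cell of $\II \cup \III$ projects to $f$, and so the largest $l$ with $(a + l, b + l, l) \in \mathfrak{B}_0$ equals $-1$ (from $R_1$). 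The largest $l$ with $(a + l, b + l, l) \in \mathfrak{A}_0$ is also at most $-1$: the contribution from $R_2$ gives $l \leq -\min(a, b) - 1$; the cylinders $\Cyl_1^-$ and $\Cyl_2^-$ contribute nothing at $f$ (they would require the first or second coordinate to be negative while still satisfying a cylinder condition, incompatible with $a, b \geq 1$); and the $\Cyl_3^-$ contribution consists of cells with $l \in [-\min(a, b), -1]$ satisfying $(a + l, b + l) \in \mu_3$. This maximum attains $-1$ if and only if $(a - 1, b - 1) \in \mu_3$---a bounded condition, since it forces $a \leq \ell(\mu_3)$ and $b \leq (\mu_3)_1$. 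For $(a, b)$ outside this bounded region, the maxima differ and $h_{(\varnothing, \varnothing)}(f) \neq 0$.

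Combining, I will choose $N$ large enough that $H(N)$ contains both $R_0$ and $R_i$; then every face contained in sector $i$ outside $H(N)$ satisfies $h_{(A, B)}(f) = h_{(\varnothing, \varnothing)}(f) \neq 0$. The main difficulty to anticipate is the case of faces contained in sector $i$ but close to the sector boundary, where $\Cyl_i^-$ cells can raise $\mathfrak{A}_0$'s height to match $\mathfrak{B}_0$'s; the boundedness of $\mu_i$ confines this ``exceptional'' region of $(a, b)$-space (and hence of the honeycomb) to a finite set, so enlarging $N$ past it suffices.
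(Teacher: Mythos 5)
Your proof is correct, and it takes a genuinely different route from the paper's. The paper proceeds by directly classifying $U_{(A,B)}$: it shows that any face $f$ with $h_{(A,B)}(f)=0$ either lies along a nonnegative coordinate axis, or corresponds to a cell $p\in\I^-\setminus A$ whose single negative coordinate is $-1$, or corresponds to a cell $p\in\III\setminus A$; the latter two cell-sets are finite (each has at most $|\mu_1|+|\mu_2|+|\mu_3|$ resp.\ $|\III|$ elements), and axis faces lie in no sector, which immediately gives the result. You instead reduce to the base configuration $(\varnothing,\varnothing)$ by observing that $A$ and $B$ are finite, so $h_{(A,B)}$ and $h_{(\varnothing,\varnothing)}$ agree outside a bounded region, and then compute $h_{(\varnothing,\varnothing)}$ explicitly in a sector. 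Both approaches work; the reduction step buys you a cleaner case analysis (no $A$, $B$ clutter), while the paper's direct classification is reused in Lemma~\ref{lemma:not almost contained components} and so pays for itself. One small point worth tightening in your first step: the fact that $M_A\triangle M_{A_0}$ is a finite set of edges implies that $h_A-h_{A_0}$ is locally constant outside a bounded region, but to conclude it is identically zero there (rather than a nonzero constant) you should note explicitly that the surfaces $\mathfrak{A}$ and $\mathfrak{A}_0$ agree at the base face $(0,0,M)$ and also agree far from the origin, so the constant on the unbounded complementary component is forced to be zero. With that sentence added, the argument is complete.
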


\begin{proof}
As noted in the proof of Lemma~\ref{lemma:dual component cardinality}, if $f\in U_{(A, B)}$, then either $f$ lies along one of the nonnegative coordinate axes, $f$ corresponds to a cell $p\in\I^-\setminus A$ whose single negative coordinate has the value $-1$, or $f$ corresponds to a cell $p\in\III\setminus A$. Since the set of cells in $\I^-$ whose single negative coordinate has the value $-1$ is finite, and $\III$ is finite, the corresponding faces form a finite set. In other words, $U_{(A, B)}$ is contained in the union of faces lying along one of the nonnegative coordinate axes with finitely many other faces. In particular, since faces lying along one of the nonnegative coordinate axes are not contained in any of the sectors, finitely many faces in $U_{(A, B)}$ are contained in sector $i$. This implies the result. 
\end{proof}

\begin{lemma}
\label{lemma:infinite dual component implies label}
Suppose $C$ is a connected component of $H^{\vee}_{(A, B)}$ that contains infinitely many faces contained in sector $i$. If $p\in\mathcal{L}(A, B)$ corresponds to $f\in C$, then there exists $p'\in(\Cyl_i^-\cap A)\cup(\II_{\bar{i}}\setminus B)$ corresponding to $f'\in C$. 
\end{lemma}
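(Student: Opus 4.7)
The plan is to travel from $p$ outward in $C$ to a sector-$i$ face $f^*$ lying far beyond the (bounded) set $\mathcal{L}(A,B)$, lift this face-path to a sequence of cells in $\mathfrak{A}\triangle\mathfrak{B}$, and identify $p'$ as the last cell of the lifted sequence that still lies in $\mathcal{L}(A,B)$.

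First I pick $f^*\in C$ contained in sector $i$ so far in the sector-$i$ direction that every cell of $\mathfrak{A}\triangle\mathfrak{B}$ corresponding to $f^*$ lies outside $\mathcal{L}(A,B)$ and has its unique negative coordinate at position $i$. Such an $f^*$ exists because $\mathcal{L}(A,B)$ is bounded while $C$ contains infinitely many sector-$i$ faces, and sector $i$ of $H$ corresponds geometrically to the asymptotic $-\mathbf{e}_i$ direction in $\mathbb{Z}^3$. By Lemma~\ref{lemma:avoiding doubled edges} I join $f$ to $f^*$ by a sequence $f_0=f,f_1,\ldots,f_r=f^*$ of adjacent faces in $F\setminus U_{(A,B)}$ with no consecutive pair separated by an edge of $M_A\cap M_B$. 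Since $p\in\mathfrak{A}\triangle\mathfrak{B}$ by Lemma~\ref{lemma:labelling set implies nonzero height}, Lemma~\ref{lemma:face sequence implies cell sequence} lifts this to a sequence $q_0=p,q_1,\ldots,q_T$ of adjacent cells in $\mathfrak{A}\triangle\mathfrak{B}$ with $q_T$ corresponding to $f^*$. Every $q_t$ sits above a face $g_t\in F\setminus U_{(A,B)}$; consecutive $q_t,q_{t+1}$ differ by $\pm\mathbf{e}_l$ and so lie in distinct adjacent columns, hence consecutive $g_t$ are adjacent in $H^\vee$, and the whole walk $g_0,\ldots,g_T$ lies entirely in $C$.

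Let $t^*\in\{1,\ldots,T\}$ be the smallest index with $q_t\notin\mathcal{L}(A,B)$ for all $t\geq t^*$; this is well-defined since $q_0\in\mathcal{L}(A,B)$ and $q_T\notin\mathcal{L}(A,B)$. Lemma~\ref{lemma:exit from labelling set} applied to the adjacent pair $q_{t^*-1}\in\mathcal{L}(A,B)$, $q_{t^*}\in\mathfrak{A}\triangle\mathfrak{B}\setminus\mathcal{L}(A,B)$ yields $q_{t^*-1}\in(\I^-\cap A)\cup(\II\setminus B)$ and $q_{t^*}\notin\mathbb{Z}^3_{\geq 0}\cup\I^-\cup\II\cup\III$. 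The crux is to show $q_{t^*-1}\in(\Cyl_i^-\cap A)\cup(\II_{\bar i}\setminus B)$. Any cell of $\mathfrak{A}\triangle\mathfrak{B}\setminus\mathcal{L}(A,B)$ has exactly one negative coordinate and does not lie in $\I^-$, and I claim adjacency preserves the position of this negative coordinate: otherwise one of the two adjacent cells would have acquired a second negative coordinate (landing in $R_2\subseteq\mathfrak{A}\cap\mathfrak{B}$) or lost its negative coordinate entirely (landing in $\mathbb{Z}^3_{\geq 0}\cap(\mathfrak{A}\triangle\mathfrak{B})\subseteq\mathcal{L}(A,B)$), both contradicting membership in $\mathfrak{A}\triangle\mathfrak{B}\setminus\mathcal{L}(A,B)$. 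Propagating this along $q_{t^*},\ldots,q_T$ forces each of them to have its negative coordinate at the same position as $q_T$, namely $i$.

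A short case analysis on $q_{t^*-1}=q_{t^*}\pm\mathbf{e}_l$ finishes the argument. If $q_{t^*-1,i}<0$, then $q_{t^*-1}$ has a negative coordinate, forcing $q_{t^*-1}\in\I^-\cap A$; since $\I^-$-cells have exactly one negative coordinate, that coordinate is at position $i$ and $q_{t^*-1}\in\Cyl_i^-\cap A$. The only other possibility is $q_{t^*-1}=q_{t^*}+\mathbf{e}_i$ with $q_{t^*,i}=-1$, which puts $q_{t^*-1}\in\mathbb{Z}^3_{\geq 0}$, so $q_{t^*-1}\in\II\setminus B$; and since $(q_{t^*-1,i+1},q_{t^*-1,i+2})=(q_{t^*,i+1},q_{t^*,i+2})\notin\mu_i$ inherited from $q_{t^*}\notin\Cyl_i^-$, we get $q_{t^*-1}\notin\Cyl_i$, forcing $q_{t^*-1}\in\II_{\bar i}\setminus B$. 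Setting $p':=q_{t^*-1}$ and $f':=g_{t^*-1}\in C$ completes the proof. The main obstacle is this sector-tracking step: translating the geometric fact that $f^*$ lies in sector $i$ into the algebraic statement that $q_T$'s unique negative coordinate sits at position $i$, and then propagating this back to the exit cell $q_{t^*-1}$ via the adjacency claim.
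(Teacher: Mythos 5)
Your proof is correct, and it takes a genuinely different route from the paper's. The paper constructs an explicit ray of faces heading into sector $i$ (translating repeatedly by $-\mathbf{e}_i$), verifies by a separate finiteness argument that eventually none of these are separated from their neighbors by $M_A$-edges, concatenates with a path from $f$, and then tracks geometric information (that the faces in the lifted segment stay in sector $i$ or on an axis) to conclude that the exit cell $p''$ has its $i$th coordinate strictly less than its other coordinates. Your argument instead goes directly to a far-off sector-$i$ face $f^*\in C$ not over any cell of the finite set $\mathcal{L}(A,B)$, lifts the whole path with Lemma~\ref{lemma:face sequence implies cell sequence}, and replaces the paper's geometric tracking with a clean local observation: among cells of $\mathfrak{A}\triangle\mathfrak{B}\setminus\mathcal{L}(A,B)$, each has exactly one negative coordinate, and adjacency preserves its position (since gaining a second negative coordinate lands in $R_2\subseteq\mathfrak{A}\cap\mathfrak{B}$ and losing it lands in $\mathbb{Z}^3_{\geq 0}\cap(\mathfrak{A}\triangle\mathfrak{B})\subseteq\mathcal{L}(A,B)$). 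Propagating this from the far endpoint $q_T$ — which has negative coordinate at $i$ because it sits over a sector-$i$ face and is outside $\mathcal{L}(A,B)$ — back to the exit cell $q_{t^*}$ is shorter and avoids the paper's case split between sector-$i$ paths and paths that cross a coordinate axis. Your final case analysis on $q_{t^*-1}$ then mirrors the paper's on $p'$. One small imprecision: you write that $q_{t^*-1}\notin\Cyl_i$ is "inherited from $q_{t^*}\notin\Cyl_i^-$"; the relevant fact is $q_{t^*}\notin\Cyl_i$ (not just $\Cyl_i^-$), which does follow from $q_{t^*}\notin\mathbb{Z}^3_{\geq 0}\cup\I^-$ together with the observation that $\Cyl_i$ depends only on the coordinates other than $i$, but you should state it that way.
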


\begin{proof}
Suppose $p\in\mathcal{L}(A, B)$ corresponds to $f\in C$. By Lemma~\ref{lemma:finitely many unlabelled faces in sectors}, there exists $N_1\in\mathbb{Z}_{\geq 0}$ such that each face contained in sector $i$ that isn't a face of the subgraph $H(N_1)\subseteq H$ is in $F\setminus U_{(A, B)}$. 

Consider a face $g$ contained in sector $i$ such that the face $g'$ obtained from $g$ by translating $1$ unit in the negative $x_i$-direction is separated from $g$ by an edge $e\in M_A$. Since $g$ is contained in sector $i$, if $q$ is a cell corresponding to $g$, then its $i$th coordinate $q_i$ is strictly less than each of its other coordinates. Since $g'$ is obtained from $g$ by translating $1$ unit in the negative $x_i$-direction, when crossing $e\in M_A$ from $g$ to $g'$, the left vertex of $e$ is white, so $h_A$ increases by $2/3$. That is, if $q$ is the cell corresponding to $g$ such that $\mathfrak{A}$ lies at $q$, then $\mathfrak{A}$ lies at the cell $q-\mathbf{e}_i+(1, 1, 1)$, which corresponds to $g'$. So, $q\not\in(\I^-\cup\III)\setminus A$ and $q$ has fewer than two negative coordinates, but $q-\mathbf{e}_i\in\mathfrak{A}$, so $q-\mathbf{e}_i\in(\I^-\cup\III)\setminus A$ or $q-\mathbf{e}_i$ has at least two negative coordinates. However, the $i$th coordinate of $q$ is less than each of its other coordinates, so if $q-\mathbf{e}_i$ has at least two negative coordinates, then so does $q$, which is a contradiction. Consequently, $q-\mathbf{e}_i\in(\I^-\cup\III)\setminus A$. Then, since the $i$th coordinate of $q$ is its least coordinate, the same is true of $q-\mathbf{e}_i$, so $q-\mathbf{e}_i\in\Cyl_i$. This means that $q\in\Cyl_i$. Additionally, if $q$ has one negative coordinate, it must be $q_i$, in which case $q\in\Cyl_i^-\subseteq\I^-$, implying that $q\in A$. Otherwise, each of the coordinates of $q$ is nonnegative and less than $M$, since $q\in\Cyl_i$. Therefore, since $A$ is finite, there are finitely many possibilities for $q$, so there are finitely many possibilities for $g$. So, there exists $N_2\in\mathbb{Z}_{\geq 0}$ such that each face $g$ contained in sector $i$ that isn't a face of the subgraph $H(N_2)\subseteq H$ is separated by an edge $e\not\in M_A$ from the face $g'$ obtained from $g$ by translating $1$ unit in the negative $x_i$-direction. 

Let $N=\max\{N_1, N_2\}$. Since $C$ contains infinitely many faces contained in sector $i$, it must contain a face $f_0$ contained in sector $i$ that isn't a face of $H(N)$. Consider the sequence of faces $f_0, f_1, f_2, \ldots$, where $f_{s+1}$ is obtained from $f_s$ by translating $1$ unit in the negative $x_i$-direction. Since $f_0$ is contained in sector $i$ and not a face of $H(N)$, so is $f_s$, for $0\leq s$. Then, from the above discussions, we know that $f_s\in F\setminus U_{(A, B)}$ and $f_s$ is separated by an edge $e\not\in M_A$ from $f_{s+1}$ for $0\leq s$. In addition, by Lemma~\ref{lemma:avoiding doubled edges}, there is a sequence of adjacent faces $f:=f_0', f_1', \ldots, f_r':=f_0$ in $F\setminus U_{(A, B)}$ such that no pair of consecutive faces are separated by an edge in $M_A\cap M_B$. So, we have a sequence of adjacent faces $f_0', f_1', \ldots, f_r', f_1, f_2, \ldots$ in $F\setminus U_{(A, B)}$ such that no pair of consecutive faces are separated by an edge in $M_A\cap M_B$. 

Since $f_s$ is contained in sector $i$ for $0\leq s$, either (i): every face in the sequence \\
 $f_0', f_1', \ldots, f_r', f_1, f_2, \ldots$ is contained in sector $i$ or (ii): there exists $0\leq t<r$ such that $f_t'$ is not contained in sector $i$ and $f_s'$ is contained in sector $i$ for all $t<s\leq r$. In case (i), let $t=0$, and let $p_0'=p$. In case (ii), since $f_t'$ is adjacent to $f_{t+1}'$, which is contained in sector $i$, $f_t'$ must lie along one of the nonnegative coordinate axes. Since $f_t'\in F\setminus U_{(A, B)}$, by Lemma~\ref{lemma:nonzero height on axis implies labelling set}, there is a cell $p_t'\in\mathcal{L}(A, B)$ corresponding to $f_t'$. 

In both case (i) and case (ii), $p_t'\in\mathcal{L}(A, B)$ corresponds to $f_t'$. Also, since $\mathcal{L}(A, B)\subseteq A\cup\II\cup\III$ is finite and the faces $f_1, f_2, \ldots$ are all distinct, there must be a face in the sequence $f_0', f_1', \ldots, f_r', f_1, f_2, \ldots$ that does not correspond to any cell in $\mathcal{L}(A, B)$ and that is preceded by $f_t'$. Let $g''$ be the first such face in the sequence and let $g'$ be the previous face. Either $g'$ corresponds to a cell $q'\in\mathcal{L}(A, B)$ or $g'$ is not preceded by $f_t'$, in which case, $g'=f_t'$ corresponds to $q':=p_t'\in\mathcal{L}(A, B)$. Then, by Lemmas~\ref{lemma:labelling set implies nonzero height} and~\ref{lemma:face sequence implies cell sequence}, there exist an integer $k'$ and a cell $q''$ so that for any $j$, $k$ such that $\{j, k\}=\{1, 2, 3\}\setminus\{i\}$, the following are sequences of adjacent cells in $\mathfrak{A}\triangle\mathfrak{B}$, such that $q''$ corresponds to $g''$: \begin{align*}q', q'&+\sgn(k')\mathbf{e}_i, q'+\sgn(k')(\mathbf{e}_i+\mathbf{e}_j), q'+\sgn(k')(\mathbf{e}_i+\mathbf{e}_j+\mathbf{e}_k), \\q'&+\sgn(k')(2\mathbf{e}_i+\mathbf{e}_j+\mathbf{e}_k), \ldots, q'+(k'\mathbf{e}_i+k'\mathbf{e}_j+k'\mathbf{e}_k), q'',\end{align*} \begin{align*}q', q'&+\sgn(k')\mathbf{e}_k, q'+\sgn(k')(\mathbf{e}_k+\mathbf{e}_j), q'+\sgn(k')(\mathbf{e}_k+\mathbf{e}_j+\mathbf{e}_i), \\q'&+\sgn(k')(2\mathbf{e}_k+\mathbf{e}_j+\mathbf{e}_i), \ldots, q'+(k'\mathbf{e}_k+k'\mathbf{e}_j+k'\mathbf{e}_i), q''.\end{align*} If $k'<0$, we will consider the first sequence, and if $k'\geq 0$, we will consider the second sequence. Since $g''$ does not correspond to any cell in $\mathcal{L}(A, B)$, $q''\not\in\mathcal{L}(A, B)$. On the other hand, $q'\in\mathcal{L}(A, B)$, so let $p''$ be the first cell in the sequence that is not in $\mathcal{L}(A, B)$ and let $p'$ be the previous cell. Then $p'\in\mathcal{L}(A, B)$. Let $f'$ be the face corresponding to $p'$ and let $f''$ be the face corresponding to $p''$. We must show that $p'\in(\Cyl_i^-\cap A)\cup(\II_{\bar{i}}\setminus B)$ and $f'\in C$. 

Since $p''\in(\mathfrak{A}\triangle\mathfrak{B})\setminus\mathcal{L}(A, B)$, we have $f''\in F\setminus U_{(A, B)}$ and by Lemma~\ref{lemma:nonzero height on axis implies labelling set}, $f''$ does not lie along one of the nonnegative coordinate axes. However, $g'$ is adjacent to $g''$, which is preceded by $f_t'$ in the sequence $f_0', f_1', \ldots, f_r', f_1, f_2, \ldots$ and, thus, is contained in sector $i$. As a result, $g'$ is contained in sector $i$ or $g'$ lies along one of the nonnegative coordinate axes. More precisely, $g'$ corresponds to a cell whose $i$th coordinate is zero and whose other coordinates are nonnegative, or to put it another way, the $i$th coordinate of $q'$ is less than or equal to its other coordinates. If $k'<0$, let $g_1$ be the face corresponding to $q'+\sgn(k')\mathbf{e}_i$ and let $g_2$ be the face corresponding to $q'+\sgn(k')(\mathbf{e}_i+\mathbf{e}_j)$. If $k'\geq 0$, let $g_1$ be the face corresponding to $q'+\sgn(k')\mathbf{e}_k$ and let $g_2$ be the face corresponding to $q'+\sgn(k')(\mathbf{e}_k+\mathbf{e}_j)$. Note that every cell in the sequence corresponds to one of the faces $g'$, $g_1$, $g_2$, or $g''$. We claim that $g_1$ is contained in sector $i$ or $g_1$ lies along one of the nonnegative coordinate axes, and the same holds for $g_2$. If $k'<0$, then $g_1$ corresponds to $q'-\mathbf{e}_i$ and $g_2$ corresponds to $q'-\mathbf{e}_i-\mathbf{e}_j$. Since the $i$th coordinate of $q'$ is less than or equal to its other coordinates, the same is true of $q'-\mathbf{e}_i$ and $q'-\mathbf{e}_i-\mathbf{e}_j$, so the claim holds for both $g_1$ and $g_2$. Otherwise, if $k'\geq 0$, then $g_1$ corresponds to $q'$ or $q'+\mathbf{e}_k$, while $g_2$ corresponds to $q'$ or $q'+\mathbf{e}_k+\mathbf{e}_j$. Again, since the $i$th coordinate of $q'$ is less than or equal to its other coordinates, the same is true of $q'+\mathbf{e}_k$ and $q'+\mathbf{e}_k+\mathbf{e}_j$, so the claim holds for both $g_1$ and $g_2$. In fact, we saw above that the claim also holds for both $g'$ and $g''$, and since every cell in the sequence, including $p''$, corresponds to one of the faces $g'$, $g_1$, $g_2$, or $g''$, the claim holds for $f''$. Since $f''$ does not lie along one of the nonnegative coordinate axes, we conclude that $f''$ is contained in sector $i$. It follows that the $i$th coordinate $p''_i$ of $p''$ is strictly less than its other coordinates. 

Recall that $p'\in\mathcal{L}(A, B)$ is adjacent to $p''\not\in\mathcal{L}(A, B)$, but $p''\in\mathfrak{A}\triangle\mathfrak{B}$. By Lemma~\ref{lemma:exit from labelling set}, $p''\not\in\mathbb{Z}^3_{\geq 0}\cup\I^-\cup\II\cup\III$ and $p'\in(\I^-\cap A)\cup(\II\setminus B)$. Since the $i$th coordinate of $p''$ is less than its other coordinates, $p''_i<0$. This implies that $p''\not\in\Cyl_i$. If $p'\in\I^-\cap A$, suppose the $i$th coordinate $p'_i$ of $p'$ is nonnegative. Then, since the $i$th coordinate of $p''$ is negative, while the others are nonnegative, we must have $p'=p''+\mathbf{e}_i$. Therefore, the other coordinates of $p'$ are the same as those of $p''$, so they are also nonnegative and $p'\in\mathbb{Z}^3_{\geq 0}$. By contradiction, $p'_i<0$, so $p'\in\Cyl_i^-\cap A$. On the other hand, if $p'\in\II\setminus B$, then $p'\in\mathbb{Z}^3_{\geq 0}$, so $p'=p''+\mathbf{e}_i$. Suppose $p'\in\Cyl_i$. Then $p''=p'-\mathbf{e}_i\in\Cyl_i$, so $p''\in\Cyl_i^-\subseteq\I^-$. By contradiction, $p'\not\in\Cyl_i$, so $p'\in\II_{\bar{i}}\setminus B$. Consequently, $p'\in(\Cyl_i^-\cap A)\cup(\II_{\bar{i}}\setminus B)$. 

It remains to show that $f'\in C$. Since $f'$ corresponds to $p'$, $f'$ is equal to $g'$, $g_1$, $g_2$, or $g''$, so it suffices to show that $g', g_1, g_2, g''\in C$. Since $g'$ and $g''$ are faces in the sequence $f_0', f_1', \ldots, f_r', f_1, f_2, \ldots$, which is a sequence of adjacent faces in $F\setminus U_{(A, B)}$, we have $g', g''\in C_{(A, B)}(f_0')=C_{(A, B)}(f)=C$. To see that $g_1, g_2\in C$, observe that $g_1$ and $g_2$ are adjacent to $g'$ or equal to $g'$, and according to their definitions, they correspond to cells in $\mathfrak{A}\triangle\mathfrak{B}$. So $g_1, g_2\in F\setminus U_{(A, B)}$, and we have $g_1, g_2\in C_{(A, B)}(g')=C$. This completes the proof. 
\end{proof}

\begin{lemma}
\label{lemma:dual component implies cell sequence}
If $p$ and $p'$ are cells in $\mathfrak{A}\triangle\mathfrak{B}$, corresponding to faces $f$ and $f'$, respectively, which belong to the same connected component of $H^{\vee}_{(A, B)}$, then there is a sequence of adjacent cells in $\mathfrak{A}\triangle\mathfrak{B}$, beginning at $p$ and ending at $p'$. 
\end{lemma}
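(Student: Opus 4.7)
The plan is to build the path in three stages. First, since $f$ and $f'$ lie in the same component of $H^{\vee}_{(A,B)}$, Lemma~\ref{lemma:avoiding doubled edges} supplies a sequence of adjacent faces $f = f_0, f_1, \ldots, f_r = f'$ in $F \setminus U_{(A,B)}$ along which no two consecutive faces are separated by an edge of $M_A \cap M_B$. Second, feeding this face sequence together with the starting cell $p_0 := p$ into Lemma~\ref{lemma:face sequence implies cell sequence} produces a sequence of adjacent cells of $\mathfrak{A}\triangle\mathfrak{B}$ beginning at $p$ and terminating at some $\tilde p$ corresponding to $f'$. Third, since in general $\tilde p \neq p'$, it remains to bridge $\tilde p$ to $p'$ through cells of $\mathfrak{A}\triangle\mathfrak{B}$.

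For this third stage, both $\tilde p$ and $p'$ lie over $f'$, so $\tilde p - p' = k(1,1,1)$ for some $k \in \mathbb{Z}$; by symmetry assume $k \ge 0$ (the case $k < 0$ proceeds analogously with the roles of descent and ascent reversed). Moreover, both cells lie in $\mathfrak{A}\triangle\mathfrak{B}$ over the single face $f'$, where $h_{(A,B)}$ has a definite sign, so they sit on the same side of the symmetric difference. Assume WLOG that both are in $\mathfrak{A}\setminus\mathfrak{B}$. The plan is then to connect $\tilde p$ to $p'$ by the explicit zigzag
\[
\tilde p,\; \tilde p - \mathbf{e}_1,\; \tilde p - \mathbf{e}_1 - \mathbf{e}_2,\; \tilde p - (1,1,1),\; \tilde p - (1,1,1) - \mathbf{e}_1,\; \ldots,\; p' + (1,1,1) - \mathbf{e}_1 - \mathbf{e}_2,\; p',
\]
whose cells all have the form $c_{j,q} = p' + j(1,1,1) - q$ with $0 \le j \le k$ and $q \in \{0, \mathbf{e}_1, \mathbf{e}_1 + \mathbf{e}_2\}$; note that $q \neq 0$ forces $j \ge 1$ on this zigzag, so every such $c_{j,q}$ satisfies $p' \le c_{j,q} \le \tilde p$ componentwise.

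Each intermediate cell $c_{j,q}$ can then be handled by two applications of Lemma~\ref{lemma:AB surfaces}: descending from $\tilde p \in \mathfrak{A}$ shows $c_{j,q} \in \mathfrak{A}$, and ascending from $p' \notin \mathfrak{B}$ shows $c_{j,q} \notin \mathfrak{B}$. Thus every cell on the zigzag lies in $\mathfrak{A}\setminus\mathfrak{B} \subseteq \mathfrak{A}\triangle\mathfrak{B}$, and concatenating the zigzag with the stage-two sequence produces the desired adjacency path from $p$ to $p'$. The main obstacle in this plan is the third stage, because Lemma~\ref{lemma:AB surfaces} controls $\mathfrak{A}$ and $\mathfrak{B}$ individually but not their symmetric difference: the key insight is that, with $\tilde p$ and $p'$ forced onto the same side of $\mathfrak{A}\triangle\mathfrak{B}$ by the height function, the intermediate cells can be squeezed into that common side by descending from the top and ascending from the bottom simultaneously. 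Identifying the correct zigzag and verifying that every one of its cells lies in the required componentwise range is the crux of the argument.
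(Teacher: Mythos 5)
Your proposal is correct and essentially mirrors the paper's proof. Both stage one (Lemma~\ref{lemma:avoiding doubled edges}) and stage two (Lemma~\ref{lemma:face sequence implies cell sequence}) are used identically, and your third-stage ``squeeze'' argument---descending from the top endpoint through $\mathfrak{A}$ and ascending from the bottom endpoint through the complement of $\mathfrak{B}$, via two applications of Lemma~\ref{lemma:AB surfaces}---is precisely the justification the paper uses (and that it also spells out at the end of the proof of Lemma~\ref{lemma:face sequence implies cell sequence}) for the final zigzag staircase of cells lying in $\mathfrak{A}\triangle\mathfrak{B}$.
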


\begin{proof}
By Lemmas~\ref{lemma:avoiding doubled edges} and~\ref{lemma:face sequence implies cell sequence}, there is a sequence of adjacent cells in $\mathfrak{A}\triangle\mathfrak{B}$, beginning at $p$ and ending at a cell $p''$ that corresponds to $f'$. Since $p'$ and $p''$ both correspond to $f'$, $p''=p'+(k', k', k')$ for some $k'\in\mathbb{Z}$. By Lemma~\ref{lemma:AB surfaces}, $p'+\sgn(k')(m_1, m_2, m_3)\in\mathfrak{A}\triangle\mathfrak{B}$ for any $m_1, m_2, m_3$ such that $0\leq m_1, m_2, m_3\leq\lvert k'\rvert$. That is, \begin{align*}p''=p'+(k', k', k'), p'&+\sgn(k')(\lvert k'\rvert-1, \lvert k'\rvert, \lvert k'\rvert), \\p'&+\sgn(k')(\lvert k'\rvert-1, \lvert k'\rvert-1, \lvert k'\rvert), \ldots, p'\end{align*} is a sequence of adjacent cells in $\mathfrak{A}\triangle\mathfrak{B}$. Therefore, by concatenating the aforementioned sequences, we get a sequence of adjacent cells in $\mathfrak{A}\triangle\mathfrak{B}$, beginning at $p$ and ending at $p'$. 
\end{proof}

\begin{lemma}
\label{lemma:not almost contained components}
Suppose $C$ is a connected component of $H^{\vee}_{(A, B)}$ so that, given any $i$, $C$ is not almost contained in sector $i$. Then there exist distinct $i$ and $j$ such that $C$ contains infinitely many faces contained in sector $i$ and $C$ contains infinitely many faces contained in sector $j$. 
\end{lemma}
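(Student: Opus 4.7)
The plan is to first establish that $C$ contains only finitely many faces lying along the nonnegative coordinate axes, after which a short counting argument against the hypothesis finishes the proof.

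For the first step, I would show that for any $AB$ configuration, only finitely many axis faces $f$ satisfy $h_{(A,B)}(f) \neq 0$. Fix, say, the positive $x_1$-axis; the axis face at position $k$ corresponds to the cell column $\{(k+t, t, t) : t \in \mathbb{Z}\}$. For $t < 0$, each such cell has at least two negative coordinates and so lies in $R_2 \subseteq R_1$, placing it in both $\mathfrak{A}$ and $\mathfrak{B}$. For $t \geq 0$, the coordinates are all nonnegative, so the cell is in neither $R_1$ nor $R_2$; once $k$ exceeds the widths of $\mu_2$ and $\mu_3$, the cell also avoids $\Cyl_2$ and $\Cyl_3$, hence $\II \cup \III$, and trivially $\I^-$, so it is in neither $(\I^- \cup \III) \setminus A$ nor $(\II \cup \III) \setminus B$. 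Thus for large $k$ the two surfaces agree on the entire column, forcing $h_A(f) = h_B(f)$ (these functions agree at the base face corresponding to $(0, 0, M)$), and hence $h_{(A, B)}(f) = 0$. The argument is symmetric across the three axes, so only finitely many axis faces lie in $F \setminus U_{(A, B)}$, and in particular only finitely many such faces lie in $C$.

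For the second step, let $T_0$ denote the axis faces and $T_i$ the faces contained in sector $i$ for $i = 1, 2, 3$, so $F = T_0 \sqcup T_1 \sqcup T_2 \sqcup T_3$. By the first step, $|C \cap T_0| < \infty$. The hypothesis that $C$ is not almost contained in sector $i$ translates to
\[
    |C \cap T_0| + |C \cap T_j| + |C \cap T_k| = \infty \quad \text{for } \{j, k\} = \{1, 2, 3\} \setminus \{i\},
\]
so the finiteness of $|C \cap T_0|$ forces $|C \cap T_j| + |C \cap T_k| = \infty$ for each choice of $i$. Suppose for contradiction that at most one of $|C \cap T_1|, |C \cap T_2|, |C \cap T_3|$ is infinite; then two of them, say $|C \cap T_2|$ and $|C \cap T_3|$, are finite, but taking $i = 1$ in the displayed sum now yields a finite total, a contradiction. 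Hence at least two of the three cardinalities $|C \cap T_1|, |C \cap T_2|, |C \cap T_3|$ are infinite, giving the desired conclusion.

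The main obstacle is the first step: the asymptotic vanishing of $h_{(A, B)}$ along each coordinate axis. Once that is in hand, the counting argument is essentially immediate. All of the required cell-level computations follow directly from the definitions in Section~\ref{sec:definitions} together with the definitions of $\mathfrak{A}$, $\mathfrak{B}$, $R_1$, and $R_2$ given at the start of Section~\ref{sec:double_dimer_configs}.
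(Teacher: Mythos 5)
Your proposal is correct and follows essentially the same approach as the paper's proof: you show that for large $N$ the cell $N\mathbf{e}_i$ is in neither $\mathfrak{A}$ nor $\mathfrak{B}$ while $N\mathbf{e}_i - (1,1,1)$ is in both, so $h_{(A,B)}$ vanishes at all but finitely many axis faces, and then a straightforward counting argument over the four-way partition of faces into axis faces and the three sectors yields the conclusion. The paper's write-up of the first step is phrased directly in terms of the cells $N\mathbf{e}_i$ for $N \geq M$ rather than a column parametrization, and its second step is stated more tersely, but the content is identical.
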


\begin{proof}
By assumption, given any $i$, $C$ contains infinitely many faces that are not contained in sector $i$. Observe that, for $N\geq M$, the cell $N\mathbf{e}_i\in\mathbb{Z}^3_{\geq 0}$ cannot be in $\Cyl_j$, for each $j\neq i$. Thus $N\mathbf{e}_i$ has no negative coordinates, $N\mathbf{e}_i\not\in(\I^-\cup\III)\setminus A$, and $N\mathbf{e}_i\not\in(\II\cup\III)\setminus B$, so $N\mathbf{e}_i\not\in\mathfrak{A}\cup\mathfrak{B}$. Moreover, $N\mathbf{e}_i-(1, 1, 1)$ has at least two negative coordinates, so $N\mathbf{e}_i-(1, 1, 1)\in\mathfrak{A}\cap\mathfrak{B}$, which shows that $\mathfrak{A}$ and $\mathfrak{B}$ both lie at $N\mathbf{e}_i$. So, if $f_i(N)\in F$ is the face corresponding to $N\mathbf{e}_i$, then $h_{(A, B)}(f_i(N))=0$. Since this holds for all $i$ and all $N\geq M$, there are finitely many faces in $F\setminus U_{(A, B)}$ that lie along one of the nonnegative coordinate axes. Since $C\subseteq F\setminus U_{(A, B)}$ and since any face either lies along one of the nonnegative coordinate axes or is contained in one of the sectors, we deduce that, for some distinct $i$ and $j$, $C$ contains infinitely many faces contained in sector $i$ and $C$ contains infinitely many faces contained in sector $j$. 
\end{proof}

\begin{thm}
\label{thm:double-dimer labelling success}
Algorithm~\ref{algorithm:double-dimer labelling} succeeds if and only if $(A, B)\in\sAB$. 
\end{thm}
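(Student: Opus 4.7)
The plan is to show that Algorithm~\ref{algorithm:double-dimer labelling} succeeds if and only if Algorithm~\ref{algorithm:AB labelling algorithm} succeeds; together with Theorem~\ref{thm:labellable iff in sAB} this gives the claim. I handle the two implications separately.

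For the direction ``Algorithm~\ref{algorithm:AB labelling algorithm} fails implies Algorithm~\ref{algorithm:double-dimer labelling} fails,'' suppose some connected component of $\mathcal{L}(A, B)$ contains cells $w_i \in \Cyl_i^- \cup \II_{\bar i}$ and $w_j \in \Cyl_j^- \cup \II_{\bar j}$ with $i \neq j$, witnessing failure at Step~1 of Algorithm~\ref{algorithm:AB labelling algorithm}. Since $w_i, w_j \in \mathcal{L}(A, B)$, in fact $w_i \in (\Cyl_i^- \cap A) \cup (\II_{\bar i} \setminus B)$ and similarly for $w_j$. Take an adjacent-cell path in $\mathcal{L}(A, B)$ from $w_i$ to $w_j$; by Lemma~\ref{lemma:labelling set implies nonzero height} each cell on this path corresponds to a face of nonzero height, so consecutive cells correspond to adjacent faces in $H^{\vee}_{(A, B)}$. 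Hence the faces $f_i$ and $f_j$ associated with $w_i$ and $w_j$ lie in a common connected component $C$ of $H^{\vee}_{(A, B)}$. Lemma~\ref{lemma:dual component cardinality} applied to each of $w_i, w_j$ then shows $C$ contains infinitely many faces in sector $i$ and infinitely many in sector $j$, so $C$ is not almost contained in any single sector and Algorithm~\ref{algorithm:double-dimer labelling} fails.

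For the converse, suppose Algorithm~\ref{algorithm:double-dimer labelling} fails, so by Lemma~\ref{lemma:not almost contained components} a component $C$ of $H^{\vee}_{(A, B)}$ contains infinitely many faces in distinct sectors $i$ and $j$. A direct calculation shows that $\mathfrak{A} \triangle \mathfrak{B} \setminus \mathcal{L}(A, B)$ consists exactly of the \emph{cup cells}: cells $p$ with exactly one negative coordinate $p_\ell$, the other coordinates nonnegative, and $(p_m, p_n) \notin \mu_\ell$ for $\{\ell, m, n\} = \{1, 2, 3\}$. Two geometric facts are central, both proved by short coordinate-case analyses in the spirit of Lemmas~\ref{lemma:adjacent types I- and II} and~\ref{lemma:adjacent type II}: (a) cup cells in distinct sectors are not adjacent in $\mathbb{Z}^3$, and (b) any cell of $\mathcal{L}(A, B)$ adjacent to a sector-$\ell$ cup cell lies in $\Cyl_\ell^- \cup \II_{\bar \ell}$. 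Fact (a) combined with Lemma~\ref{lemma:face sequence implies cell sequence} forces $C$ to contain a face corresponding to some $p \in \mathcal{L}(A, B)$, for otherwise $C$'s cup cells would lie in a single sector. Applying Lemma~\ref{lemma:infinite dual component implies label} to $p$ with each of $i$ and $j$ yields cells $p_i' \in (\Cyl_i^- \cap A) \cup (\II_{\bar i} \setminus B)$ and $p_j' \in (\Cyl_j^- \cap A) \cup (\II_{\bar j} \setminus B)$ corresponding to faces in $C$.

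To finish, I show that the $\mathcal{L}(A, B)$-components containing $p_i'$ and $p_j'$ force Algorithm~\ref{algorithm:AB labelling algorithm} to fail. Apply Lemma~\ref{lemma:dual component implies cell sequence} to obtain an adjacent-cell sequence in $\mathfrak{A} \triangle \mathfrak{B}$ from $p_i'$ to $p_j'$. If this sequence stays inside $\mathcal{L}(A, B)$, then $p_i'$ and $p_j'$ lie in a single $\mathcal{L}(A, B)$-component simultaneously containing cells in $\Cyl_i^- \cup \II_{\bar i}$ and $\Cyl_j^- \cup \II_{\bar j}$, so Step~1 fails. Otherwise the sequence alternates between maximal $\mathcal{L}(A, B)$-stretches and maximal cup-cell stretches; by (a) each cup stretch lies in a single sector $\ell$, and by (b) the last cell of the preceding $\mathcal{L}(A, B)$-stretch and the first cell of the following $\mathcal{L}(A, B)$-stretch both lie in $\Cyl_\ell^- \cup \II_{\bar \ell}$. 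Were Algorithm~\ref{algorithm:AB labelling algorithm} to succeed, Step~2 would assign the same integer label $\ell$ to both flanking $\mathcal{L}(A, B)$-components; iterating along the entire sequence, every $\mathcal{L}(A, B)$-component visited would carry the same label, contradicting the fact that $p_i'$ and $p_j'$ force labels $i$ and $j$ respectively. The main obstacle is establishing the two geometric facts (a) and (b) and correctly bookkeeping the label propagation along the cell sequence; everything else is a direct application of the lemmas developed in Section~\ref{sec:labelling_algorithm_proofs}.
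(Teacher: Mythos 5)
Your proof is correct and shares the overall architecture of the paper's argument: reduce via Theorem~\ref{thm:labellable iff in sAB} to the equivalence of the two labelling algorithms. The forward direction (Algorithm~\ref{algorithm:AB labelling algorithm} fails $\Rightarrow$ Algorithm~\ref{algorithm:double-dimer labelling} fails) is essentially identical to the paper's. The converse differs in mechanics. After locating, as in the paper, cells $p_i'\in(\Cyl_i^-\cap A)\cup(\II_{\bar i}\setminus B)$ and $p_j'\in(\Cyl_j^-\cap A)\cup(\II_{\bar j}\setminus B)$ via Lemma~\ref{lemma:infinite dual component implies label}, the paper fixes the subsequence between the \emph{last} sector-$i$-labelled cell and the \emph{first} sector-$k$-labelled cell with $k\neq i$, and then shows by contradiction (via Lemmas~\ref{lemma:exit from labelling set} and~\ref{lemma:nonzero height on axis implies labelling set}) that this whole subsequence lies in $\mathcal{L}(A, B)$, forcing $\mathcal{N}(C')\geq 2$. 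You instead classify $\mathfrak{A}\triangle\mathfrak{B}\setminus\mathcal{L}(A, B)$ as ``cup cells,'' establish the geometric facts (a) (distinct-sector cup cells are non-adjacent) and (b) (an $\mathcal{L}(A, B)$ cell adjacent to a sector-$\ell$ cup cell lies in $\Cyl_\ell^-\cup\II_{\bar\ell}$), and propagate integer labels across cup stretches. I checked (a), (b), and your characterization of $\mathfrak{A}\triangle\mathfrak{B}\setminus\mathcal{L}(A, B)$ by the coordinate analyses you gesture at, and they do hold; fact (b) is essentially a sharpened version of Lemma~\ref{lemma:exit from labelling set}, and (a) is a short check on the single-negative-coordinate structure. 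Your approach buys a cleaner conceptual picture (you never need to show the whole subsequence lies in $\mathcal{L}(A, B)$), at the price of proving two additional lemmas.

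Two places where you should spell more out. First, for the initial step of the converse, you need a face of $C$ corresponding to a cell of $\mathcal{L}(A, B)$ before Lemma~\ref{lemma:infinite dual component implies label} even applies; the paper gets this cheaply by noting a path in $C$ from sector $i$ to sector $j$ must cross a coordinate axis and then invoking Lemma~\ref{lemma:nonzero height on axis implies labelling set}, whereas your cup-cell route requires the observation that a sector-$\ell$ cup cell has $p_\ell$ \emph{strictly} less than both other coordinates and hence corresponds to a face strictly contained in sector $\ell$ (not on an axis), so that if every face of $C$ corresponded only to cup cells of one sector then $C$ would be almost contained in that sector, contradicting the hypothesis. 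Second, when building the cell sequence you should cite Lemma~\ref{lemma:avoiding doubled edges} alongside Lemma~\ref{lemma:face sequence implies cell sequence}, since the latter requires the face sequence to avoid $M_A\cap M_B$ edges. These are matters of exposition rather than genuine gaps; the argument is sound.
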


\begin{proof}
Suppose $(A, B)\not\in\sAB$. By Theorem~\ref{thm:labellable iff in sAB} and Remark~\ref{remark:N(C)}, there is a connected component $C$ of $\mathcal{L}(A, B)$ such that $\mathcal{N}(C)>1$. So, there exist $w, w'\in C\cap(\I^-\cup\II)$ such that $\ell(w)\neq\ell(w')$. Then $w\in\Cyl^-_{\ell(w)}\cup\II_{\overline{\ell(w)}}$ and $w'\in\Cyl^-_{\ell(w')}\cup\II_{\overline{\ell(w')}}$, and since $w, w'\in C\cap(\I^-\cup\II)\subseteq\mathcal{L}(A, B)\cap(\I^-\cup\II)=(\I^-\cap A)\cup(\II\setminus B)$, we have $w\in(\Cyl^-_{\ell(w)}\cap A)\cup(\II_{\overline{\ell(w)}}\setminus B)$ and $w'\in(\Cyl^-_{\ell(w')}\cap A)\cup(\II_{\overline{\ell(w')}}\setminus B)$. Let $f\in F$ and $f'\in F$ be the faces corresponding to $w$ and $w'$, respectively. By Lemma~\ref{lemma:dual component cardinality}, $C_{(A, B)}(f)$ contains infinitely many faces contained in sector $\ell(w)$, and $C_{(A, B)}(f')$ contains infinitely many faces contained in sector $\ell(w')$. Since $w, w'\in C$ and $C$ is a connected component of $\mathcal{L}(A, B)$, there is a sequence of adjacent cells $w:=p_0, p_1, \ldots, p_r:=w'$ in $\mathcal{L}(A, B)$. Then, assuming $p_s$ corresponds to the face $f_s\in F$, we obtain a sequence of adjacent faces $f=f_0, f_1, \ldots, f_r=f'$. By Lemma~\ref{lemma:labelling set implies nonzero height}, $h_{(A, B)}(f_s)\neq 0$, so $C_{(A, B)}(f)=C_{(A, B)}(f')$. Since $\ell(w)\neq\ell(w')$, this means that a connected component of $H^{\vee}_{(A, B)}$ contains infinitely many faces contained in distinct sectors. It is impossible for such a connected component to be almost contained in any sector, so Algorithm~\ref{algorithm:double-dimer labelling} fails. 

Conversely, suppose Algorithm~\ref{algorithm:double-dimer labelling} fails. Then there must be a connected component $C$ of $H^{\vee}_{(A, B)}$ so that, given any $i$, $C$ is not almost contained in sector $i$. By Lemma~\ref{lemma:not almost contained components}, for some distinct $i$ and $j$, $C$ contains infinitely many faces contained in sector $i$ and $C$ contains infinitely many faces contained in sector $j$. 

Let $f\in C$ be a face contained in sector $i$ and $f'\in C$ be a face contained in sector $j$. Since $C$ is a connected component of $H^{\vee}_{(A, B)}$, there is a sequence of adjacent faces $f:=f_0, f_1, \ldots, f_r:=f'$ in $F\setminus U_{(A, B)}$. Since $f$ is contained in sector $i$, while $f'$ is contained in sector $j$, there must exist $0<t<r$ such that $f_t$ lies along one of the nonnegative coordinate axes. Then, by Lemma~\ref{lemma:nonzero height on axis implies labelling set}, $f_t$ corresponds to a cell $p_t\in\mathcal{L}(A, B)$. Since $f_t\in C$, by Lemma~\ref{lemma:infinite dual component implies label}, there exist $p\in(\Cyl_i^-\cap A)\cup(\II_{\bar{i}}\setminus B)$ corresponding to $g\in C$ and $p'\in(\Cyl_j^-\cap A)\cup(\II_{\bar{j}}\setminus B)$ corresponding to $g'\in C$. Then, by Lemmas~\ref{lemma:labelling set implies nonzero height} and~\ref{lemma:dual component implies cell sequence}, there is a sequence of adjacent cells in $\mathfrak{A}\triangle\mathfrak{B}$, beginning at $p$ and ending at $p'$. Let $q$ be the last cell in this sequence that is in $(\Cyl_i^-\cap A)\cup(\II_{\bar{i}}\setminus B)$, and let $q'$ be the first cell in this sequence that is preceded by $q$ and in $(\Cyl_k^-\cap A)\cup(\II_{\bar{k}}\setminus B)$ for some $k\in\{1, 2, 3\}\setminus\{i\}$. Consider the part of the sequence beginning at $q$ and ending at $q'$, denoted $q:=q_0, q_1, \ldots, q_{r'}:=q'$. Each of these cells is an element of $\mathfrak{A}\triangle\mathfrak{B}$, and according to the definitions of $q$ and $q'$, $q_s\not\in\bigcup_{l\in\{1, 2, 3\}}(\Cyl_l^-\cap A)\cup(\II_{\bar{l}}\setminus B)=(\I^-\cap A)\cup(\II\setminus B)$ for $0<s<r'$. 

We claim that $q_s\in\mathcal{L}(A, B)$ for $0\leq s\leq r'$. Suppose $q_{t'}\not\in\mathcal{L}(A, B)$ for some $0\leq t'\leq r'$. Since $q, q'\in(\I^-\cap A)\cup(\II\setminus B)\subseteq\mathcal{L}(A, B)$, $0<t'<r'$. Then, by Lemma~\ref{lemma:exit from labelling set}, $t'-1=0$ or $q_{t'-1}\not\in\mathcal{L}(A, B)$, and $t'+1=r'$ or $q_{t'+1}\not\in\mathcal{L}(A, B)$. In fact, by repeating this argument, we see that $q_s\not\in\mathcal{L}(A, B)$ for $0<s<r'$. By the same lemma, $q_1, q_{r'-1}\not\in\mathbb{Z}^3_{\geq 0}\cup\I^-\cup\II\cup\III$, so $q_1, q_{r'-1}\not\in\Cyl_1\cup\Cyl_2\cup\Cyl_3$. Since $q_1, q_{r'-1}\in\mathfrak{A}\triangle\mathfrak{B}$, neither $q_1$ nor $q_{r'-1}$ has at least two negative coordinates, but $q_1, q_{r'-1}\not\in\mathbb{Z}^3_{\geq 0}$, so $q_1$ and $q_{r'-1}$ each have exactly one negative coordinate. Furthermore, $q\in\Cyl_i^-\cup\II_{\bar{i}}$ is adjacent to $q_1$, and $q'\in\Cyl_k^-\cup\II_{\bar{k}}$ is adjacent to $q_{r'-1}$. If $q\in\Cyl_i^-$, then since $q_1\not\in\Cyl_i$, $q_1\neq q\pm \mathbf{e}_i$, so the $i$th coordinate of $q_1$ is the same as that of $q$. In particular, the $i$th coordinate of $q_1$ is negative. Otherwise, $q\in\II_{\bar{i}}\subseteq\mathbb{Z}^3_{\geq 0}$. Since $q_1\not\in\mathbb{Z}^3_{\geq 0}$, this implies that $q\not\in BN(q_1)$, so $q_1\in BN(q)$. Since $q_1\not\in\Cyl_1\cup\Cyl_2\cup\Cyl_3$ and $q\in\Cyl_l$ for $l\in\{1, 2, 3\}\setminus\{i\}$, $q_1\neq q-\mathbf{e}_l\in\Cyl_l$ for $l\in\{1, 2, 3\}\setminus\{i\}$. It follows that $q_1=q-\mathbf{e}_i$, and since $q_1\not\in\mathbb{Z}^3_{\geq 0}$, while $q\in\mathbb{Z}^3_{\geq 0}$, the $i$th coordinate of $q_1$ must be negative. In both cases, the $i$th coordinate of $q_1$ is negative, and since $q_1$ has exactly one negative coordinate, the other coordinates of $q_1$ must be nonnegative. A similar argument shows that the $k$th coordinate of $q_{r'-1}$ is negative, while the other coordinates of $q_{r'-1}$ are nonnegative. Thus, denoting by $g_s$ the face corresponding to the cell $q_s$, for $0\leq s\leq r'$, we conclude that $g_1$ and $g_{r'-1}$ are contained in sector $i$ and contained in sector $k$, respectively. 

Since $q_s$ is adjacent to $q_{s+1}$, $g_s$ is adjacent to $g_{s+1}$, for $0\leq s<r'$. As a result, since $k\neq i$, there must exist $1<t''<r'-1$ such that $g_{t''}$ lies along one of the nonnegative coordinate axes. Consequently, since $q_{t''}\in\mathfrak{A}\triangle\mathfrak{B}$, we have $g_{t''}\in F\setminus U_{(A, B)}$, and by Lemma~\ref{lemma:nonzero height on axis implies labelling set}, $q_{t''}\in\mathcal{L}(A, B)$. This contradicts our previous conclusion that $q_s\not\in\mathcal{L}(A, B)$ for $0<s<r'$. By contradiction, $q_s\in\mathcal{L}(A, B)$ for $0\leq s\leq r'$. So, there is a connected component $C'$ of $\mathcal{L}(A, B)$ such that $q_s\in C'$ for $0\leq s\leq r'$, and we have $\mathcal{N}(C')\geq\lvert\{\ell(q), \ell(q')\}\rvert=\lvert\{i, k\}\rvert=2$. By Remark~\ref{remark:N(C)} and Theorem~\ref{thm:labellable iff in sAB}, $(A, B)\not\in\sAB$. 
\end{proof}

\begin{thm}
\label{thm:labelling agreement}
If $(A, B)\in\sAB$, and Algorithm~\ref{algorithm:AB labelling algorithm} labels some cell by $\ell$, then Algorithm~\ref{algorithm:double-dimer labelling} labels the corresponding face by $\ell$.
\end{thm}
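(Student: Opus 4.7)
Since $(A,B)\in\sAB$, Theorem~\ref{thm:double-dimer labelling success} guarantees that Algorithm~\ref{algorithm:double-dimer labelling} succeeds. Suppose Algorithm~\ref{algorithm:AB labelling algorithm} labels a cell $w$ by $\ell$; let $f\in F$ be the face corresponding to $w$ and let $D:=C_{(A,B)}(f)$. This is a well-defined connected component of $H^\vee_{(A,B)}$, because Lemma~\ref{lemma:labelling set implies nonzero height} gives $w\in\mathfrak{A}\triangle\mathfrak{B}$ and $h_{(A,B)}(f)\neq 0$, so $f\in F\setminus U_{(A,B)}$. The argument then splits into two cases, both driven by Lemma~\ref{lemma:dual component cardinality}.

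If $\ell\in\{1,2,3\}$, then either $w\in(\Cyl_\ell^-\cap A)\cup(\II_{\bar\ell}\setminus B)$ or $w$ is connected in $\mathcal{L}(A,B)$ to such a cell; in either situation, Lemma~\ref{lemma:dual component cardinality} (case (i)) tells us $D$ contains infinitely many faces contained in sector $\ell$. Because Algorithm~\ref{algorithm:double-dimer labelling} succeeded, $D$ is almost contained in exactly one sector, and the infinitude of sector-$\ell$ faces forces that sector to be $\ell$. Step 2 of Algorithm~\ref{algorithm:double-dimer labelling} therefore labels $f$ by $\ell$, agreeing with Algorithm~\ref{algorithm:AB labelling algorithm}.

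If instead $\ell$ is a freely chosen element of $\mathbb{P}^1$, then Lemma~\ref{lemma:dual component cardinality} (case (ii)) shows $D$ is finite, so Algorithm~\ref{algorithm:double-dimer labelling} labels $D$ by some freely chosen element of $\mathbb{P}^1$. The plan is to couple the two algorithms by decreeing that element to be $\ell$. For this coupling to give a well-defined face labelling, two facts must be checked: (a) cells of a common free-label component $C$ of $\mathcal{L}(A,B)$ produce a common dual component $D$; and (b) distinct free-label components $C_1\neq C_2$ of $\mathcal{L}(A,B)$ produce distinct $D$'s. Claim (a) follows easily: any two cells $w,w'\in C$ are connected by a path of adjacent cells in $\mathcal{L}(A,B)$, and by Lemma~\ref{lemma:labelling set implies nonzero height} the corresponding adjacent faces lie in $F\setminus U_{(A,B)}$, so $C_{(A,B)}(f_w)=C_{(A,B)}(f_{w'})$.

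I expect claim (b) to be the main obstacle. The plan for it is by contradiction: if $w_1\in C_1$ and $w_2\in C_2$ satisfied $C_{(A,B)}(f_{w_1})=C_{(A,B)}(f_{w_2})=D$, then Lemma~\ref{lemma:dual component implies cell sequence} would yield a sequence of adjacent cells in $\mathfrak{A}\triangle\mathfrak{B}$ from $w_1$ to $w_2$. Since $C_1\neq C_2$, this sequence must leave $\mathcal{L}(A,B)$; let $p$ be the last cell inside $\mathcal{L}(A,B)$ before it does. Lemma~\ref{lemma:exit from labelling set} then forces $p\in(\I^-\cap A)\cup(\II\setminus B)\subseteq\I^-\cup\II$. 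But $p$ is connected to $w_1$ by the initial portion of the sequence, which stays in $\mathcal{L}(A,B)$, so $p\in C_1$; this contradicts Remark~\ref{remark:N(C)}, which forces any free-label component to be contained in $\III\cap(A\triangle B)$ and hence to avoid $\I^-\cup\II$. This parallels, in miniature, the contradiction argument used in the second half of the proof of Theorem~\ref{thm:double-dimer labelling success}.
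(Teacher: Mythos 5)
Your proof is correct and follows essentially the same route as the paper's: split on whether $\ell$ is an integer, use Lemma~\ref{lemma:dual component cardinality} to identify $C_{(A,B)}(f)$ as infinite-in-sector-$\ell$ or finite, and in the free-label case establish the coupling by showing cells in a common component of $\mathcal{L}(A,B)$ give a common dual component (via Lemma~\ref{lemma:labelling set implies nonzero height}) while distinct free-label components give distinct dual components (via Lemmas~\ref{lemma:dual component implies cell sequence} and~\ref{lemma:exit from labelling set}). Your claim (b) is phrased as injectivity of the component-to-dual-component map rather than as the paper's statement (ii), but the contradiction you reach — a free-label component containing a cell of $\I^-\cup\II$ — is exactly the contradiction the paper reaches, using the same key lemma.
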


\begin{proof}
Suppose $(A, B)\in\sAB$ (so, by Theorem~\ref{thm:double-dimer labelling success}, Algorithm~\ref{algorithm:double-dimer labelling} succeeds), and Algorithm~\ref{algorithm:AB labelling algorithm} labels a cell $w$ by $\ell$. Let $f\in F$ be the corresponding face. By Lemma~\ref{lemma:dual component cardinality}, if $\ell$ is an integer, then $C_{(A, B)}(f)$ contains infinitely many faces contained in sector $\ell$, and otherwise, $C_{(A, B)}(f)$ is finite. In the first case, there must be exactly one sector $i$ almost containing $C_{(A, B)}(f)$, and Algorithm~\ref{algorithm:double-dimer labelling} labels the faces in $C_{(A, B)}(f)$ by $i$. Then $C_{(A, B)}(f)$ contains only finitely many faces that are not contained in sector $i$, and since faces contained in sector $k$ are not contained in sector $i$ if $k\neq i$, we must have $\ell=i$. So Algorithm~\ref{algorithm:double-dimer labelling} labels the faces in $C_{(A, B)}(f)$, including $f$, by $\ell$. In the second case, Algorithm~\ref{algorithm:double-dimer labelling} labels the faces in $C_{(A, B)}(f)$, including $f$, by a single freely chosen element of $\mathbb{P}^1$. In this case, we must establish two statements: (i) each cell given the label $\ell$ by Algorithm~\ref{algorithm:AB labelling algorithm} corresponds to a face in $C_{(A, B)}(f)$ and (ii) each cell given a freely chosen label $\ell'\neq\ell$ by Algorithm~\ref{algorithm:AB labelling algorithm} corresponds to a face not in $C_{(A, B)}(f)$. 

Suppose $w'$ is a cell given the label $\ell$ by Algorithm~\ref{algorithm:AB labelling algorithm} and $f'\in F$ is the corresponding face. Since $\ell$ is not an integer, $w$ and $w'$ must be in a single connected component of $\mathcal{L}(A, B)$ labelled in step 3 of Algorithm~\ref{algorithm:AB labelling algorithm}. So, there must be a sequence of adjacent cells in $\mathcal{L}(A, B)$, beginning at $w$ and ending at $w'$. Then, by Lemma~\ref{lemma:labelling set implies nonzero height}, the corresponding faces form a sequence of adjacent faces, each of which is in $F\setminus U_{(A, B)}$. This sequence begins at $f$ and ends at $f'$, so $f'\in C_{(A, B)}(f)$. 

Suppose $w'$ is a cell given a freely chosen label $\ell'$ by Algorithm~\ref{algorithm:AB labelling algorithm} and $f'\in F$ is the corresponding face. We will show that if $f'\in C_{(A, B)}(f)$, then $\ell'=\ell$. Suppose $f'\in C_{(A, B)}(f)$. Then, by Lemmas~\ref{lemma:labelling set implies nonzero height} and~\ref{lemma:dual component implies cell sequence}, $w, w'\in\mathcal{L}(A, B)$ and there is a sequence of adjacent cells $w:=w_0, w_1, \ldots, w_r:=w'$ in $\mathfrak{A}\triangle\mathfrak{B}$. We claim that $w_s\in\mathcal{L}(A, B)$ for $0\leq s\leq r$. Suppose not. Let $0\leq t\leq r$ be such that $w_t$ is the first cell in the sequence that is not in $\mathcal{L}(A, B)$. Then $w_s\in\mathcal{L}(A, B)$ for $0\leq s<t$ and, by Lemma~\ref{lemma:exit from labelling set}, $w_{t-1}\in(\I^-\cap A)\cup(\II\setminus B)$. Note that $w_{t-1}$ gets labelled by an integer $j$ in step 2 of Algorithm~\ref{algorithm:AB labelling algorithm}. Since $w_0, w_1, \ldots, w_{t-1}$ is a sequence of adjacent cells, we see that $\{w_0, w_1, \ldots, w_{t-1}\}$ is contained in a single connected component of $\mathcal{L}(A, B)$, which is labelled in step 2 of Algorithm~\ref{algorithm:AB labelling algorithm} by $j$. In particular, $w=w_0$ is labelled in step 2 of Algorithm~\ref{algorithm:AB labelling algorithm} by an integer $j$, contradicting the fact that $\ell$ is not an integer. By contradiction, $w_s\in\mathcal{L}(A, B)$ for $0\leq s\leq r$. It follows that $w$ and $w'$ belong to a single connected component of $\mathcal{L}(A, B)$, so $\ell=\ell'$, as desired. 
\end{proof}

As promised, we will now describe the connection between Algorithm~\ref{algorithm:double-dimer labelling} and the double-dimer configuration $D_{(A, B)}$. 

\begin{thm}
\label{thm:in sAB iff path ends in single sector}
$(A, B)\in\sAB$ if and only if for each path in $D_{(A, B)}$, there exists $i\in\{1, 2, 3\}$ such that both ends of the path are contained in sector $i$. 
\end{thm}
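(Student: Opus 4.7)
The strategy is to invoke Theorem~\ref{thm:double-dimer labelling success}, which reduces the problem to showing that Algorithm~\ref{algorithm:double-dimer labelling} succeeds if and only if every path of $D_{(A,B)}$ has both ends contained in the same sector. The central fact I will use throughout is that the paths and loops of $D_{(A,B)}$ are precisely the contour lines of $h_{(A,B)}$: an edge $e$ of $H$ lies in some path or loop of $D_{(A,B)}$ iff $h_{(A,B)}$ changes across $e$, in which case the change is exactly $\pm 1$.

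For the forward direction, I plan to argue by contradiction: given a path $P$ of $D_{(A,B)}$ with ends in distinct sectors $i \neq j$, I will produce a connected component of $H^\vee_{(A,B)}$ failing Algorithm~\ref{algorithm:double-dimer labelling}'s almost-containment condition. Since $h_{(A,B)}$ jumps by $\pm 1$ across each edge of $P$, the two sides of $P$ carry constant integer heights differing by $1$; at least one side has height $a \neq 0$. The sequence of faces of $H^\vee$ immediately on that side (one per edge of $P$) is connected in $H^\vee$: at a vertex $v$ shared by two consecutive edges of $P$, the two adjacent side-faces are either equal (if $P$ turns toward the chosen side at $v$) or share the third edge at $v$ (if $P$ turns the other way). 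All these faces have $h_{(A,B)} = a \neq 0$, so they lie in a single component $C$ of $H^\vee_{(A,B)}$. By Lemma~\ref{lemma:dimers covering different sectors}, far-out edges of $P$ toward its two ends are entirely in sectors $i$ and $j$ respectively; far enough out, all six vertices of the side-face adjacent to such an edge are in the same sector, so these side-faces are contained in sector $i$ or sector $j$. Hence $C$ contains infinitely many faces contained in both sectors, contradicting almost-containment.

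For the reverse direction, I proceed contrapositively. Assuming Algorithm~\ref{algorithm:double-dimer labelling} fails, Lemma~\ref{lemma:not almost contained components} yields a component $C$ of $H^\vee_{(A,B)}$ containing infinitely many faces in two distinct sectors $i, j$. I will first establish that the boundary of $C$---the set of edges of $H$ joining a face of $C$ to a face outside $C$---is a disjoint union of complete paths and loops of $D_{(A,B)}$: each boundary edge separates faces of distinct heights and so lies in some path or loop $Q$, and at any vertex $v$ of $Q$, the two edges of $M_A \triangle M_B$ incident to $v$ are already the two edges of $Q$ at $v$, so the third edge carries no height jump and a local case analysis on the three faces at $v$ shows the next edge of $Q$ again separates $C$ from a face of height $0$. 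Next, by Lemma~\ref{lemma:finitely many unlabelled faces in sectors} every far-out face of sector $i$ or $j$ lies in $C$, while the faces along the axis $\alpha$ separating these sectors have $h_{(A,B)} = 0$ far out (as in the proof of Lemma~\ref{lemma:not almost contained components}) and hence lie outside $C$. The boundary of $C$ therefore has two infinite tails running along $\alpha$ to infinity, one on each side, whose ends are contained in sector $i$ and in sector $j$ respectively. The conclusion is that these two tails belong to a single path of $D_{(A,B)}$: since $C$ is connected and $h_{(A,B)}$ cannot change sign within $C$ without passing through a face of $U_{(A,B)}$, the far sector-$i$ and far sector-$j$ portions of $C$ share a common $h$-sign, so both tails are contours at the same level; planar topology then forces the contour to close up by rounding the finite core of $U_{(A,B)}$ at the inner end of $\alpha$, yielding a single path with one end in sector $i$ and the other in sector $j$, the desired contradiction.

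The main obstacle is the topological closure argument at the end of the reverse direction: rigorously showing that the two infinite tails of the boundary of $C$ running along opposite sides of $\alpha$ belong to the same path of $D_{(A,B)}$, rather than to two separate paths each confined to one sector. This relies on the rigidity imposed by $h_{(A,B)}$---contours at distinct levels are disjoint, and the height never jumps by more than $1$---combined with the fact that loops of $D_{(A,B)}$ are bounded, which rules out arbitrarily long excursions interposing between the two tails within the central region.
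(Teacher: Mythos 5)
Your forward direction essentially matches the paper's: take a path whose ends lie in distinct sectors $i\neq j$, track the faces just to one nonzero-height side of the path, use Lemma~\ref{lemma:dimers covering different sectors} to place the far-out side-faces in the respective sectors, conclude the component $C_{(A,B)}(f)$ is not almost contained in any sector, and invoke Theorem~\ref{thm:double-dimer labelling success}.

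For the reverse direction you begin as the paper does, using Theorem~\ref{thm:double-dimer labelling success} with Lemmas~\ref{lemma:not almost contained components} and~\ref{lemma:finitely many unlabelled faces in sectors} to obtain a component $C$ of $H^\vee_{(A,B)}$ containing all far-out faces of two distinct sectors $i$ and $j$. Your observation that the set of edges separating a face of $C$ from a face of $U_{(A,B)}$ is a disjoint union of whole paths and loops of $D_{(A,B)}$ is correct (the local three-face analysis at a vertex of $M_A\triangle M_B$ works). But the step you yourself flag is a genuine gap, not merely an obstacle: asserting that ``planar topology forces the contour to close up by rounding the finite core'' does not yet rule out the competing scenario. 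All six axis tails of the boundary of $C$ are contours between the levels $0$ and $\pm 1$, so a sign or level argument alone does not forbid the sector-$i$ tail along $\alpha$ from belonging to a path whose other end escapes to infinity along one of the other two coordinate axes instead of wrapping around the core to the sector-$j$ tail. Some concrete separating argument is required.

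Here is how the paper supplies it. Using connectivity of $C$ it exhibits a doubly infinite ``wall'' of adjacent faces
\[
\ldots,\, f_i(N+2),\, f_i(N+1),\, g_1,\ldots, g_{r-1},\, f_j(N+1),\, f_j(N+2),\,\ldots,
\]
all in $C$, running from far out in sector $i$ alongside $\alpha$, through the core, to far out in sector $j$ alongside $\alpha$. Since the axis faces $f_k(N+s)$ on $\alpha$ satisfy $h_{(A,B)}(f_k(N+s))=0$ while $f_l(N+s)\in C$ for $l\in\{i,j\}$, each edge $e_l(s),e_l'(s)$ between an axis face and its sector-$l$ neighbor lies in $M_A\triangle M_B$, and the edges $e_i(1), e_i'(1), e_i(2),\ldots$ form a tail of a path $\gamma$ contained in sector $i$. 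Because $\gamma$ separates $U_{(A,B)}$ from $F\setminus U_{(A,B)}$, it cannot cross between two consecutive faces of the wall, all of which lie in $F\setminus U_{(A,B)}$; this confines $\gamma$ to $\{e_l(s), e_l'(s) : l\in\{i,j\},\, s>0\}$ together with a finite edge set $E_0$. Since the second end of $\gamma$ must escape to infinity, it necessarily uses infinitely many of the $e_j(s),e_j'(s)$ and is therefore contained in sector $j$. This explicit confinement of $\gamma$ by the wall is the piece your sketch is missing, and your boundary-of-$C$ observation, while true, does not by itself substitute for it.
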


\begin{proof}
Suppose there exists a path in $D_{(A, B)}$ whose ends are not contained in the same sector. Then, by Remark~\ref{remark:end contained in sector}, one end $\mathcal{E}_i$ is contained in sector $i$, the other end $\mathcal{E}_j$ is contained in sector $j$, and $i\neq j$. Let $e$ be any edge in the path, and consider a face $f\in F\setminus U_{(A, B)}$ incident to $e$, which exists because $h_{(A, B)}$ must increase or decrease upon crossing $e$ from one side of the path to the other. 

The following argument now holds for $k\in\{i, j\}$. Consider the sequence of edges $e:=e_0, e_1, e_2, \ldots$ obtained by beginning at $e$ and moving along the path toward $\mathcal{E}_k$. Each edge $e_s$ in this sequence is incident to a unique face $f_s$ on the same side of the path as $f$. In particular, $f_0=f$. In fact, since $e_s$ is adjacent to $e_{s+1}$, $f_s$ is equal to or adjacent to $f_{s+1}$ for $0\leq s$. Moreover, if $f_s$ and $f_{s+1}$ are adjacent, then since $e_s, e_{s+1}\in M_A\cup M_B$, $f_s$ and $f_{s+1}$ are separated by an edge that is in neither $M_A$ nor $M_B$. Thus, we have a sequence of equal or adjacent faces $f=f_0, f_1, f_2, \ldots$, and $h_{(A, B)}(f_s)=h_{(A, B)}(f)\neq 0$ for $0\leq s$. It follows that $f_s\in C_{(A, B)}(f)$ for $0\leq s$. Also, since $\mathcal{E}_k$ is contained in sector $k$, there exists $t\geq 0$ such that $e_s$ is contained in sector $k$ for $s>t$. Then, if $l\in\{1, 2, 3\}\setminus\{k\}$, $f_s$ is not contained in sector $l$ for $s>t$. Finally, since every face is incident to six edges and each edge appears in the sequence $e_0, e_1, e_2, \ldots$ at most once, any given face can appear in the sequence $f_0, f_1, f_2, \ldots$ at most six times. In other words, $\{f_{t+1}, f_{t+2}, f_{t+3}, \ldots\}\subseteq C_{(A, B)}(f)$ is an infinite set, so if $l\in\{1, 2, 3\}\setminus\{k\}$, $C_{(A, B)}(f)$ contains infinitely many faces that are not contained in sector $l$. As a result, if $l\in\{1, 2, 3\}\setminus\{k\}$, $C_{(A, B)}(f)$ is not almost contained in sector $l$. Since this argument holds for $k\in\{i, j\}$, and $(\{1, 2, 3\}\setminus\{i\})\cup(\{1, 2, 3\}\setminus\{j\})=\{1, 2, 3\}$, $C_{(A, B)}(f)$ is not almost contained in any sector. Consequently, Algorithm~\ref{algorithm:double-dimer labelling} fails, so by Theorem~\ref{thm:double-dimer labelling success}, $(A, B)\not\in\sAB$. 

Conversely, suppose $(A, B)\not\in\sAB$. By Theorem~\ref{thm:double-dimer labelling success}, Algorithm~\ref{algorithm:double-dimer labelling} fails, so there is a connected component $C$ of $H^{\vee}_{(A, B)}$ that is not almost contained in any sector. Then, by Lemmas~\ref{lemma:not almost contained components} and~\ref{lemma:finitely many unlabelled faces in sectors}, there exist distinct $i$ and $j$ such that $C$ contains infinitely many faces contained in sector $i$ and $C$ contains infinitely many faces contained in sector $j$, there exists $N_i\in\mathbb{Z}_{\geq 0}$ such that each face contained in sector $i$ that isn't a face of the subgraph $H(N_i)\subseteq H$ is in $F\setminus U_{(A, B)}$, and there exists $N_j\in\mathbb{Z}_{\geq 0}$ such that each face contained in sector $j$ that isn't a face of the subgraph $H(N_j)\subseteq H$ is in $F\setminus U_{(A, B)}$. So, $C$ contains a face $f_i$ contained in sector $i$ that isn't a face of $H(N_i)$, and $C$ contains a face $f_j$ contained in sector $j$ that isn't a face of $H(N_j)$. 

The following holds for $l\in\{i, j\}$. Let $k\in\{1, 2, 3\}$ satisfy $\{k\}=\{1, 2, 3\}\setminus\{i, j\}$. Observe that the set $F_l$ of faces contained in sector $l$ that are not faces of $H(N_l)$ induces a connected subgraph of $H^{\vee}$. In addition, $F_l\subseteq F\setminus U_{(A, B)}$, so $F_l$ actually induces a connected subgraph of $H^{\vee}_{(A, B)}$. Since $C$ is a connected component of $H^{\vee}_{(A, B)}$ and $f_l\in C\cap F_l$, we have $F_l\subseteq C$. For $0<s$, let $f_l(N_l+s)$ be the face corresponding to the cell $(N_l+s)\mathbf{e}_k+\mathbf{e}_m$, where $m\in\{1, 2, 3\}$ satisfies $\{m\}=\{i, j\}\setminus\{l\}$. Since the $l$th coordinate of $(N_l+s)\mathbf{e}_k+\mathbf{e}_m$ is strictly less than its other coordinates, $f_l(N_l+s)\in F_l\subseteq C$ for $0<s$. 

Since $C$ is a connected component of $H^{\vee}_{(A, B)}$, there is a sequence $f_i(N_i+1):=g_0, g_1, \ldots, g_r:=f_j(N_j+1)$ of adjacent faces in $C$. Let $N=\max\{N_i, N_j, M\}$. As discussed in the proof of Lemma~\ref{lemma:not almost contained components}, if $0\leq s$, then $h_{(A, B)}(f_k(N+s))=0$, where $f_k(N+s)$ is the face corresponding to the cell $(N+s)\mathbf{e}_k$. Also, for $0<s$, $f_k(N+s)$ is adjacent to $f_l(N+s)$ and to $f_l(N+s+1)$. Let $e_l(s)$ be the edge separating $f_k(N+s)$ and $f_l(N+s)$, and let $e_l'(s)$ be the edge separating $f_k(N+s)$ and $f_l(N+s+1)$. Since $f_l(N+s)\in C\subseteq F\setminus U_{(A, B)}$, $h_{(A, B)}(f_l(N+s))\neq 0$ for $0<s$, implying that $e_l(s), e_l'(s)\in M_A\triangle M_B$. So, the sequence of adjacent edges $e_i(1), e_i'(1), e_i(2), e_i'(2), \ldots$ constitutes one end $\mathcal{E}_i$ of a path $\gamma$ in $D_{(A, B)}$, and $\mathcal{E}_i$ is contained in sector $i$. 

Consider the other end $\mathcal{E}$ of $\gamma$. Note that $\gamma$ separates $U_{(A, B)}$ from $F\setminus U_{(A, B)}$, so $\gamma$ cannot separate two adjacent faces in the sequence \[\ldots, f_i(N_i+2), f_i(N_i+1), g_1, g_2, \ldots, g_{r-1}, f_j(N_j+1), f_j(N_j+2), \ldots,\] since each of them is in $C$ and, thus, in $F\setminus U_{(A, B)}$. On the other hand, this is a sequence of adjacent faces, so we conclude that $\gamma$ must be contained in $\left\{e_l(s), e_l'(s)\mid l\in\{i, j\}, 0<s\right\}\cup E_0$ for some finite set $E_0$. Therefore, since $e_j(s)$ and $e_j'(s)$ are contained in sector $j$, $\mathcal{E}$ must be contained in sector $j$. That is, $\gamma$ is a path in $D_{(A, B)}$ whose ends are contained in distinct sectors. This completes the proof. 
\end{proof}


Next, in order to apply the double-dimer analogue of Kuo's graphical condensation (see Theorem~\ref{thm:DDcond}), we must truncate double-dimer configurations on $H$ to obtain {\em double-dimer configurations with nodes} on $H(N)$. 

\begin{defn}
Let $G = (V_1, V_2, E)$ be a finite, edge-weighted, bipartite planar graph embedded in the plane with $|V_1| = |V_2|$. Let ${\bf N}$ denote a set of special vertices called {\em nodes} on the outer face of $G$. A {\em double-dimer configuration} on $(G, {\bf N})$ is a multiset of the edges of $G$ with the property that each internal vertex is the endpoint of exactly two edges, and each vertex in ${\bf N}$ is the endpoint of exactly one edge.

The edge-weight of a double-dimer configuration with nodes is the product of its edge-weights. The weight of such a configuration is its edge-weight times $2^k$, where $k$ is the number of loops in the configuration.
\end{defn}

\begin{lemma}
\label{lemma:folklore for B}
For any $N\geq M$, no edge in $M_B$ is incident to a vertex in $H(N)$ and a vertex not in $H(N)$. 
\end{lemma}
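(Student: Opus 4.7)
The plan is to compare $M_B$ with the dimer configuration $M_{R_1}$ of $H$ associated to the empty surface $R_1$ (obtained by setting $B=\varnothing$), exploiting that the two configurations differ only in a bounded region.

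First, observe that $\mathfrak{B}$ and $R_1$ coincide outside of $[0,M-1]^3$. Indeed, $\II\cup\III\subseteq[0,M-1]^3$ and $B\subseteq\II\cup\III$, so any cell $w\notin[0,M-1]^3$ either has a negative coordinate (whence $w\in R_1\subseteq\mathfrak{B}$) or has all nonnegative coordinates (whence $w\notin\II\cup\III$, so $w\notin\mathfrak{B}$ and $w\notin R_1$). Equivalently, the relative height function $h_{M_B}-h_{M_{R_1}}$ vanishes on every face of $H$ whose associated column of cells avoids $[0,M-1]^3$, so $M_B$ and $M_{R_1}$ contain exactly the same edges among those joining two such faces.

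Second, a geometric check: for $N\geq M$, every edge of $H$ with one endpoint in $H(N)$ and one outside joins two faces whose cell columns both avoid $[0,M-1]^3$. This is because the boundary of $H(N)$ sits $N$ layers out from the central face in each of the three sectors, and when $N\geq M$ every cell projecting to this boundary has some coordinate strictly greater than $M-1$ or strictly less than $0$. By the first step, the edges of $M_B$ crossing the boundary of $H(N)$ therefore coincide with those of $M_{R_1}$ crossing it. But $M_{R_1}$ is the ``empty'' dimer configuration shown in the leftmost picture of Figure~\ref{fig:empty tilings}: within each sector, all of its dimers are parallel to the pair of sides of the boundary hexagon of $H(N)$ lying in that sector. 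In particular, no dimer of $M_{R_1}$ is incident to both a vertex of $H(N)$ and a vertex outside $H(N)$, and combining with the previous step yields the lemma.

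The main obstacle is the geometric bookkeeping in the second step: one must track the three-sector coordinate conventions carefully enough to confirm that for $N\geq M$, every edge crossing the boundary of $H(N)$ lies in the ``pure sector'' part of the tiling where $\mathfrak{B}=R_1$. The remaining steps are direct consequences of the bounded support of $B$ and the explicit structure of the empty corner configuration, respectively.
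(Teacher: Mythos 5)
Your approach is correct and genuinely different from the paper's. The paper analyzes an arbitrary edge $e\in M_B$ directly: it determines the surface levels $p,p'$ of $\mathfrak{B}$ over the two faces of $e$, and branches on whether $p-(1,1,1)$ lies in $(\II\cup\III)\setminus B$ (leading to a contradiction via $H(M)\subseteq H(N)$) or in $R_1$ (leading to the observation that $e$ is then a type-$i$ edge contained in sector $i$, which cannot cross the boundary). Your argument instead compares $M_B$ to the ``ground state'' configuration $M_{R_1}$: because $B\subseteq\II\cup\III\subseteq[0,M-1]^3$, the two surfaces $\mathfrak{B}$ and $R_1$ coincide outside that cube, so $M_B$ and $M_{R_1}$ contain the same edges wherever both incident faces' columns miss $[0,M-1]^3$; for $N\geq M$ this covers all edges of $H$ not in $H(N)$, in particular all boundary-crossing edges; and $M_{R_1}$ has none of those. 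This cleanly separates the combinatorial content (bounded support of $B$) from the geometric content (the base case $B$ and $\II\cup\III$ empty), whereas the paper's two cases interleave them. One small inaccuracy in the write-up: the dimer configuration for the surface $R_1$ is obtained by taking $\mu_1=\mu_2=\mu_3=\emptyset$ (so $\II\cup\III=\emptyset$), not merely $B=\emptyset$ with $\mu$ fixed — with $\mu$ fixed, $B=\emptyset$ gives $\mathfrak{B}=R_1\cup\II\cup\III$. This does not affect the argument since either choice agrees with $\mathfrak{B}$ outside $[0,M-1]^3$.

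The one place you should be more careful is the base case: the assertion that $M_{R_1}$ has no boundary-crossing dimers is exactly the lemma for the empty partitions, and ``look at the figure'' is not a proof. Within the interior of a sector $i$ the dimers of $M_{R_1}$ are indeed all perpendicular to the $x_i$-axis, and such edges do not cross the boundary of $H(N)$ — this is the same geometric fact the paper invokes (``any edge contained in sector $i$ and perpendicular to the $x_i$-axis is incident to vertices that are both in $H(N)$ or both not in $H(N)$''). But near the coordinate axes where two sectors meet, $M_{R_1}$ contains dimers of two types along the fold of the surface $\partial R_1$, and your phrasing (``within each sector, all of its dimers are parallel to the pair of sides...'') does not address them. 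These axis dimers in fact also fail to cross the boundary, but this needs a short separate check. Since the paper likewise leaves its corresponding geometric fact as an assertion, your level of rigor is comparable; still, spelling out the axis case would close the gap.
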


\begin{proof}
Suppose $N\geq M$, an edge $e\in M_B$ is incident to vertices $u$ and $v$ of $H$, and $u$ is not in $H(N)$. We will show that $v$ is not in $H(N)$. Consider the two faces $f, f'\in F$ that are incident to $e$. Since $e\in M_B$, $h_B$ increases or decreases by $2/3$ between $f$ and $f'$. Without loss of generality, $h_B(f)=h_B(f')+2/3$, so when crossing $e$ from $f'$ to $f$, the left vertex of $e$ is white, implying that $f$ is obtained from $f'$ by translating $1$ unit in the negative $x_i$-direction for some $i\in\{1, 2, 3\}$. Let $p$ (resp.~$p'$) be the cell corresponding to $f$ (resp.~$f'$) such that $\mathfrak{B}$ lies at $p$ (resp.~$p'$). Then $p=p'-\mathbf{e}_i+(k, k, k)$ for some $k\in\mathbb{Z}$, and since $h_B(f)=h_B(f')+2/3$, $k=1$. Note that $p'-\mathbf{e}_i=p-(1, 1, 1)\in\mathfrak{B}$, so $p-(1, 1, 1)\in(\II\cup\III)\setminus B$ or $p'-\mathbf{e}_i$ has at least one negative coordinate. In the first case, $p-(1, 1, 1)\in\II\cup\III\subseteq[0, M-1]^3$, so $f$ is a face of $H(M)$ and, thus, of $H(N)$, contradicting the fact that $u$ is not in $H(N)$. In the second case, since $\mathfrak{B}$ lies at $p'$, $p'$ has no negative coordinates, so the $i$th coordinate of $p'$ must be $0$, while the other coordinates of $p'$ are nonnegative. It follows that the $i$th coordinate of $p=p'-\mathbf{e}_i+(1, 1, 1)$ is $0$, while the other coordinates of $p$ are positive, so $f$ is contained in sector $i$. Furthermore, since $f$ is obtained from $f'$ by translating in the negative $x_i$-direction, $e$ must be perpendicular to the $x_i$-axis. Any edge contained in sector $i$ and perpendicular to the $x_i$-axis is incident to vertices that are both in $H(N)$ or both not in $H(N)$, so since $u$ is not in $H(N)$, $v$ is not in $H(N)$. 
\end{proof}

The significance of this lemma is that, if $N\geq M$, $M_B$ can be truncated to a perfect matching $M_B(N)$ of $H(N)$. On the other hand, $M_A$ can be truncated to a partial matching $M_A(N)$ of $H(N)$. So, $D_{(A, B)}$ can be truncated to a double-dimer configuration with nodes, denoted by $D_{(A, B)}(N)$, on $H(N)$. In this case, the nodes are the vertices of $H(N)$ covered by dimers in $M_A$ that are not edges of $H(N)$. Such vertices must not only be on the outer face of $H(N)$, but they must be labelled by half integers, as in Figure~\ref{fig:sectors}, so they must be in sector $i^+$ or sector $i^-$ for some $i\in\{1, 2, 3\}$. 

Each double-dimer configuration with nodes is associated with a planar pairing of the nodes. On a finite graph, the notion that the paths are ``rainbow-like'' means that the pairing is {\em tripartite}.

\begin{defn}
A planar pairing $\sigma$ is {\em tripartite} if the nodes can be divided into three circularly contiguous sets $R$, $G$, and $B$ so that no node is paired with a node in the same set. We often color the nodes in the sets red, green, and blue, in which case $\sigma$ is the unique planar pairing in which like colors are not paired.
\end{defn}

\begin{figure}[htb]
\centering
\includegraphics[width=2in]{./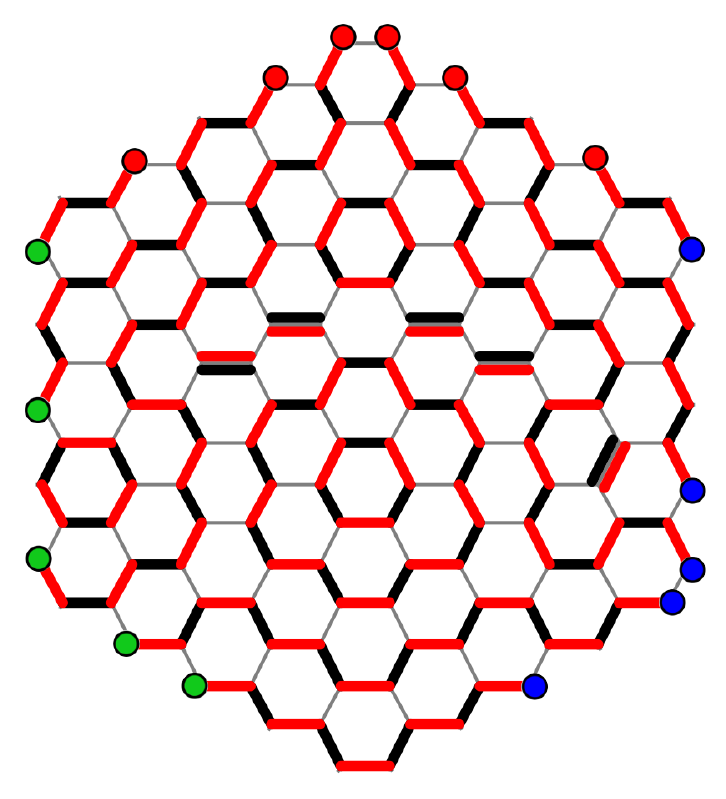}
\caption{A double-dimer configuration with nodes on $H(5)$, obtained by truncating the double-dimer configuration from Figure~\ref{fig:superposition}.}
\label{fig:tripartiteex1}
\end{figure}

\begin{example}
Truncating the double-dimer configuration from Figure~\ref{fig:superposition} to a double-dimer configuration on $H(5)$ produces the tripartite double-dimer configuration shown in Figure~\ref{fig:tripartiteex1}. 
\end{example}

We now show that if $(A, B)\in\sAB$ and $N\geq M$, then $D_{(A, B)}(N)$ is a tripartite double-dimer configuration. 

\begin{thm}
\label{thm:in sAB iff truncated path ends in single sector}
Suppose $(A, B)$ is an $AB$ configuration. Then $(A, B)\in\sAB$ if and only if, for all $N\geq M$, each path in $D_{(A, B)}(N)$ begins and ends in the same sector. 
\end{thm}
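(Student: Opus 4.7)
My plan is to prove this theorem by reducing it to Theorem~\ref{thm:in sAB iff path ends in single sector}, which already characterizes $(A,B) \in \sAB$ in terms of the sectors of the two infinite ends of each path in the full double-dimer configuration $D_{(A,B)}$. The key bridge between the finite-$N$ statement and the infinite one is the following observation, which I shall use repeatedly: for $N \geq M$, every maximal connected portion of $D_{(A,B)}$ lying outside $H(N)$ (call such a portion an \emph{external segment}; these include the two infinite tails of each infinite path) is contained in a single sector. This follows from Lemma~\ref{lemma:folklore for B}, which says no $M_B$-edge crosses the boundary of $H(N)$ when $N \geq M$, together with Lemma~\ref{lemma:dimers covering different sectors}, which places every sector-crossing dimer inside $H(M) \subseteq H(N)$. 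Hence every dimer lying strictly outside $H(N)$ has both endpoints in a single sector, and since adjacent external dimers share a vertex in that sector, the sector label propagates along the whole external segment.

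For the easier direction $(\Leftarrow)$, I would choose $N$ sufficiently large that each infinite path $\gamma$ of $D_{(A,B)}$ meets $H(N)$ in a single connected subpath $\sigma_\gamma$; such $N$ exists because by Corollary~\ref{corollary:paths in D_(A, B)} the path $\gamma$ transitions between sectors only finitely often, and each of its two infinite tails eventually leaves any bounded neighborhood of the origin. The two endpoints of $\sigma_\gamma$ are then joined to the two infinite ends of $\gamma$ by the infinite tails, which are external segments and hence each lie in a single sector; so the two endpoints of $\sigma_\gamma$ lie in the sectors of the respective ends of $\gamma$. The finite-$N$ hypothesis makes these two endpoints agree in sector, so the two ends of $\gamma$ agree in sector, and Theorem~\ref{thm:in sAB iff path ends in single sector} concludes that $(A,B) \in \sAB$.

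For the harder direction $(\Rightarrow)$, I would assume $(A,B) \in \sAB$ and argue by contradiction: suppose some path $\sigma$ of $D_{(A,B)}(N)$ has endpoints $u$ in sector $i$ and $v$ in sector $j$ with $i \neq j$. Let $C$ be the component of $D_{(A,B)}$ containing $\sigma$. I would trace through $C$ past $u$ and past $v$ via the alternating pattern of external segments and further subpaths of $D_{(A,B)}(N)$; at each external-segment step the sector label is preserved by the bridge observation above. When $C$ is an infinite path, this propagation should produce two infinite ends of $C$ lying in sectors $i$ and $j$ respectively, contradicting Theorem~\ref{thm:in sAB iff path ends in single sector}; when $C$ is a loop, the cyclic application of external-segment sector preservation around the loop should yield a similar inconsistency.

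The main obstacle is the case where $C$ contains several subpaths of $D_{(A,B)}(N)$ chained between $\sigma$ and either infinite end of $C$ (or all around $C$ in the loop case): the propagation argument then must pass through multiple internal subpaths whose sector behavior is not directly forced by the external-segment argument alone. I expect this is resolved by invoking the structural characterization from the proof of Theorem~\ref{thm:double-dimer labelling success}: when $(A,B) \in \sAB$, every connected component of $H^{\vee}_{(A,B)}$ is almost contained in a single sector. This additional rigidity on the dual graph is what should force the sector labels of the endpoints of each internal subpath along the chain to propagate consistently, so that the chain really does deliver the sector of $u$ to the appropriate infinite end of $C$ (respectively for $v$), closing the argument.
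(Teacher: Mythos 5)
Your reduction to Theorem~\ref{thm:in sAB iff path ends in single sector} and your ``bridge observation'' (that external segments lie in a single sector, via Lemmas~\ref{lemma:folklore for B} and~\ref{lemma:dimers covering different sectors}) are correct and are indeed part of the paper's argument. However, the obstacle you flag at the end of your $(\Rightarrow)$ sketch is a genuine gap, and the tool you propose to close it — almost-containment of dual-graph components from the proof of Theorem~\ref{thm:double-dimer labelling success} — is not what resolves it. What is actually needed, and what the paper proves as the heart of this theorem, is the stronger structural claim that for $N\geq M$, once a component of $D_{(A,B)}$ exits $H(N)$ at a node (necessarily via an $M_A$-dimer), the resulting excursion never re-enters $H(N)$ and stays within a single sector. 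This shows in one stroke that each node of $D_{(A,B)}(N)$ in sector $i$ sits on a genuine \emph{path} (never a loop) of $D_{(A,B)}$ with one end contained in sector $i$, so that every component of $D_{(A,B)}$ meets $H(N)$ in at most one connected subpath. That collapses your chain to length one and makes the loop case vacuous. Without it, your $(\Rightarrow)$ propagation cannot pass through intermediate internal subpaths (a cyclic chain around a loop, or a chain along an infinite path, could carry two mismatched subpaths that cancel), and your fallback to dual-graph rigidity is speculative: that theorem constrains level sets of the height function, and translating it into ``a single $D_{(A,B)}$-component cannot re-enter $H(N)$'' would itself be the bulk of the work. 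Your $(\Leftarrow)$ direction has a milder version of the same gap: ``each tail eventually leaves any bounded neighborhood'' shows only that $\gamma\cap H(N)$ is bounded, not that it is a single connected subpath.

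The paper's no-re-entry claim is not a soft consequence of the lemmas you cite; it needs a specific parity argument. Suppose the excursion from a node $u$ (leaving along an $M_A$-dimer $e=\{u,v\}$, with $v=v_0,v_1,\ldots$ the successive vertices) first returned to $H(N)$ at $v_r$. Lemma~\ref{lemma:folklore for B} forces the re-entry dimer $\{v_{r-1},v_r\}$ to be in $M_A$, so the $M_A/M_B$ alternation starting from $e$ forces $r$ to be even; hence $u$ and $v_r$ have opposite colors, which (both being on the boundary of $H(N)$ in the same sector $i$) places the projected $x_i$-axis strictly between them, so some excursion dimer must cross that axis. But only $M_B$-dimers of the excursion can be perpendicular to the $x_i$-axis (this is exactly what the proof of Lemma~\ref{lemma:folklore for B} shows, combined with the fact that consecutive dimers on the honeycomb cannot both be perpendicular to the same axis), and those occur at even index — which yields a color contradiction with $v=v_0$. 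I would reorganize your proof around establishing this claim first; both directions then become the short arguments you sketched, with no chaining required.
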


\begin{proof}
Suppose $N\geq M$. Consider a node $u$ of $D_{(A, B)}(N)$ in sector $i$, so $u$ is a vertex of $H(N)$ covered by a dimer $e\in M_A$ that is not an edge of $H(N)$. Then $e$ must be incident to another vertex $v$ that is not a vertex of $H(N)$. By Lemma~\ref{lemma:folklore for B}, $e\in M_A\setminus M_B\subseteq M_A\triangle M_B$. In other words, $e$ is a dimer in a loop or path $\gamma$ in $D_{(A, B)}$. 

Consider the sequence of vertices $u, v:=v_0, v_1, v_2, \ldots$ obtained by moving along $\gamma$, beginning at $u$, moving to $v$, and then continuing along $\gamma$. We claim that this sequence never returns to a vertex of $H(N)$ (i.e., $v_s$ is not a vertex of $H(N)$ for $s\geq 0$) and never leaves sector $i$. By Lemma~\ref{lemma:dimers covering different sectors}, $v$ is in sector $i$, and if the sequence leaves sector $i$ thereafter, it must first return to a vertex of $H(N)$, so it suffices to show that the sequence never returns to a vertex of $H(N)$. 

Suppose $v_s$ is a vertex of $H(N)$ for some $s\geq 0$. Let $r\geq 0$ be the least index such that $v_r$ is a vertex of $H(N)$. Note that $r>0$, since $v$ is not a vertex of $H(N)$. Also, $v_{r-1}$ is not a vertex of $H(N)$, so by Lemma~\ref{lemma:folklore for B}, $\{v_{r-1}, v_r\}\not\in M_B$, so $\{v_{r-1}, v_r\}\in M_A$. Since $e\in M_A$ and $\gamma$ must alternate between $M_A$ and $M_B$, we deduce that $r$ is even. Therefore, $r>1$, and if $u$ is white, then $v_r$ is black, and vice versa. As a result, the projection of the $x_i$-axis lies between $u$ and $v_r$. However, if $0\leq s<r-1$ is even, then $\{v_s, v_{s+1}\}\in M_B$, so by Lemma~\ref{lemma:folklore for B}, since neither $v_s$ nor $v_{s+1}$ is a vertex of $H(N)$, $v_s$ and $v_{s+1}$ must both be vertices of $H(N')$ and not vertices of $H(N'-1)$ for some $N'>N$. In fact, as discussed in the proof of Lemma~\ref{lemma:folklore for B}, each such dimer $\{v_s, v_{s+1}\}$ must be perpendicular to the $x_i$-axis. Since consecutive dimers in any loop or path in $D_{(A, B)}$ cannot both be perpendicular to the $x_i$-axis, this implies that the dimers $\{u, v\}$ and $\{v_s, v_{s+1}\}$, where $0<s<r$ is odd, cannot be perpendicular to the $x_i$-axis. Since the projection of the $x_i$-axis lies between $u$ and $v_r$, some dimer in $\gamma$ between $u$ and $v_r$ must cross the projection of the $x_i$-axis from the side on which $u$ lies to the side on which $v_r$ lies. Such a dimer must be perpendicular to the $x_i$-axis, so it must be of the form $\{v_t, v_{t+1}\}$, where $0\leq t<r-1$ is even. Then $v_t$ lies on the same side of the projection of the $x_i$-axis as $u$, so $u$ and $v_t$ are vertices of the same color. Since $t$ is even, this means that $u$ and $v_0=v$ are vertices of the same color, which is a contradiction. This completes the proof of the claim. We conclude that $\gamma$ is a path in $D_{(A, B)}$, and one end of $\gamma$ is contained in sector $i$. That is, if $N\geq M$, then each node of $D_{(A, B)}(N)$ in sector $i$ must be covered by a path in $D_{(A, B)}$, one of whose ends is contained in sector $i$. 

Suppose for some $N\geq M$, there is a path $\gamma'$ in $D_{(A, B)}(N)$ that begins and ends in two different sectors. Then the above discussion shows that there is a path $\gamma$ in $D_{(A, B)}$ whose ends are contained in two different sectors. By Theorem~\ref{thm:in sAB iff path ends in single sector}, $(A, B)\not\in\sAB$. Conversely, suppose $(A, B)\not\in\sAB$. By Theorem~\ref{thm:in sAB iff path ends in single sector}, there is a path $\gamma$ in $D_{(A, B)}$, one of whose ends is contained in sector $i$ and the other of whose ends is contained in sector $j$, where $i\neq j$. Then $\gamma$ consists of a sequence of dimers $\ldots, e_{-2}, e_{-1}, e_0, e_1, e_2, \ldots$, and there exist $r, t\in\mathbb{Z}$ such that $e_{-s}$ is contained in sector $i$ for $s>r$ and $e_s$ is contained in sector $j$ for $s>t$. Since consecutive dimers cannot be contained in different sectors, $-r\leq t$. Let $N'\in\mathbb{Z}_{\geq 0}$ be such that all of the dimers $e_{-r}, e_{-r+1}, \ldots, e_{t-1}, e_t$ are edges of $H(N')$, and let $N=\max\{N', M\}$. Then $N\geq M$ and all of the dimers $e_{-r}, e_{-r+1}, \ldots, e_{t-1}, e_t$ are edges of $H(N)$, so they form part of a path $\gamma'$ in $D_{(A, B)}(N)$. More precisely, $\gamma'$ must consist of the sequence of dimers $e_{-r'}, e_{-r'+1}, \ldots, e_{t'-1}, e_{t'}$ for some $r'\geq r$ and some $t'\geq t$. Let $u$ be the node covered by $e_{-r'}$ and let $v$ be the node covered by $e_{t'}$. Since $r'+1>r'\geq r$ and $t'+1>t'\geq t$, $e_{-r'-1}$ is contained in sector $i$ and $e_{t'+1}$ is contained in sector $j$. But $e_{-r'-1}$ also covers $u$ and $e_{t'+1}$ also covers $v$, so $u$ is contained in sector $i$ and $v$ is contained in sector $j$. Thus, there exists $N\geq M$ so that there is a path $\gamma'$ in $D_{(A, B)}(N)$ that begins and ends in two different sectors. 
\end{proof}

\begin{corollary}
\label{cor:in sAB iff truncated path ends in single sector}
Suppose $(A, B)$ is an $AB$ configuration. Then $(A, B)\in\sAB$ if and only if, for some $N\geq M$, each path in $D_{(A, B)}(N)$ begins and ends in the same sector. 
\end{corollary}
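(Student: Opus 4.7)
The forward direction is immediate from Theorem~\ref{thm:in sAB iff truncated path ends in single sector}: if $(A,B)\in\sAB$ then for every $N\geq M$ each path in $D_{(A,B)}(N)$ begins and ends in the same sector, so this certainly holds for some $N\geq M$.

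For the converse I argue by contrapositive. Suppose $(A,B)\not\in\sAB$; by Theorem~\ref{thm:in sAB iff path ends in single sector} there is a path $\gamma$ in $D_{(A,B)}$ with ends in distinct sectors $i$ and $j$. I will show that for every $N\geq M$, $D_{(A,B)}(N)$ contains a path whose two nodes lie in sectors $i$ and $j$; this contradicts the hypothesis that some $N\geq M$ is sector-consistent and hence proves $(A,B)\in\sAB$.

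Fix $N\geq M$. The first step is to observe that $\gamma$ meets $H(N)$. Consecutive dimers of $\gamma$ share a vertex and so cannot both be contained in different sectors, so any transition of $\gamma$ between dimers contained in different sectors must be mediated by a dimer covering vertices of two distinct sectors; by Lemma~\ref{lemma:dimers covering different sectors} every such mediating dimer is an edge of $H(M)\subseteq H(N)$, and since $i\neq j$ at least one such transition occurs along $\gamma$. The second step is to identify $\gamma\cap H(N)$ with a single path of $D_{(A,B)}(N)$. For this I invoke the analysis in the first paragraph of the proof of Theorem~\ref{thm:in sAB iff truncated path ends in single sector}: starting from any node of $D_{(A,B)}(N)$ and proceeding outward along the containing path of $D_{(A,B)}$, the sequence never returns to $H(N)$ and never leaves the sector of the node. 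Applied to $\gamma$, this rules out any ``dip'' of $\gamma$ out of and back into $H(N)$, and forces $\gamma$ to exit $H(N)$ at exactly two nodes of $D_{(A,B)}(N)$, one for each infinite end of $\gamma$, with the sector of each node agreeing with the sector of the corresponding end. Thus $\gamma\cap H(N)$ is a single path of $D_{(A,B)}(N)$ whose endpoints lie in sectors $i$ and $j$, giving the desired contradiction.

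The main potential obstacle is the possibility that $\gamma\cap H(N)$ could split into several sub-paths whose node-sectors conspire to all be individually consistent; this is ruled out by the combination of Lemma~\ref{lemma:dimers covering different sectors} (which forces every sector transition of $\gamma$ into $H(M)\subseteq H(N)$) and the no-return property from the first paragraph of the proof of Theorem~\ref{thm:in sAB iff truncated path ends in single sector} (which forbids multiple re-entries). The role of the corollary's proof is just to combine these two inputs to upgrade the theorem's ``for all $N\geq M$'' characterization to a ``for some $N\geq M$'' characterization.
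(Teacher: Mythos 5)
Your proof is correct, but it takes a genuinely different route from the paper's. The paper runs a two-step sandwich through $D_{(A,B)}(M)$: first it shows that any bad path in $D_{(A,B)}(M)$ extends outward to a bad path in $D_{(A,B)}(N)$ for every $N\geq M$ (so ``good for some $N$'' forces ``good for $M$''), and then it shows that ``good for $M$'' forces ``good for all $N'\geq M$'' by observing that any sector-changing path in $D_{(A,B)}(N')$ must pass through $H(M)$, where the goodness of $D_{(A,B)}(M)$ and the no-return property together pin it to a single sector; the finite-graph theorem then finishes the job. You instead argue the converse by contrapositive directly from the infinite model: starting from a bad infinite path supplied by Theorem~\ref{thm:in sAB iff path ends in single sector}, you truncate inward to every $H(N)$ at once, using Lemma~\ref{lemma:dimers covering different sectors} to guarantee the path meets $H(M)\subseteq H(N)$ and the same no-return claim to show the intersection with $H(N)$ is a single node-to-node path with endpoints in the two distinct sectors. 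Both arguments lean on the identical no-return/stay-in-sector observation from the theorem's proof (note that it is developed across the first three paragraphs there, not just the first, though this is only a bookkeeping slip); the difference is in the pivot object, $D_{(A,B)}(M)$ for the paper versus the infinite $D_{(A,B)}$ for you. Your version is arguably more streamlined, collapsing the paper's two propagation lemmas into a single truncation argument, at the mild cost of invoking the infinite-path theorem directly rather than only through Theorem~\ref{thm:in sAB iff truncated path ends in single sector}.
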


\begin{proof}
Suppose $N\geq M$ and there is a path $\gamma$ in $D_{(A, B)}(M)$ that begins in sector $i$ and ends in sector $j$, where $i\neq j$. Then, by the claim established in the first three paragraphs of the proof of the theorem, $\gamma$ must be a subpath of a path $\gamma'$ in $D_{(A, B)}(N)$ that begins in sector $i$ and ends in sector $j$. So, if there exists $N\geq M$ such that each path in $D_{(A, B)}(N)$ begins and ends in the same sector, then each path in $D_{(A, B)}(M)$ begins and ends in the same sector. 

Now suppose $N'\geq M$ and each path in $D_{(A, B)}(M)$ begins and ends in the same sector. Consider a path $\gamma$ in $D_{(A, B)}(N')$ that begins in sector $i$. If $\gamma$ leaves sector $i$, then by Lemma~\ref{lemma:dimers covering different sectors}, it must first enter $H(M)$. Since $\gamma$ enters $H(M)$ in sector $i$ and each path in $D_{(A, B)}(M)$ begins and ends in the same sector, $\gamma$ must exit $H(M)$ in sector $i$. In other words, if $\gamma$ leaves sector $i$, it must first enter $H(M)$ and must return to sector $i$ before exiting $H(M)$. As a result, $\gamma$ must end in sector $i$. So, by the theorem, $(A, B)\in\sAB$. This completes the proof. 
\end{proof}

We can be even more precise about the pairing of the nodes $\mathbf{N}$. Suppose there are $2r$ nodes in sector $i$. The nodes in sector $i$ are vertices on the outer face of $H(N)$, and we can number them consecutively in clockwise order. If $r>0$, we then refer to the pairing \[((1, 2r), (2, 2r-1), \ldots, (r, r+1))\] as the \emph{rainbow pairing of the nodes in sector $i$}. If $r=0$, we refer to the empty pairing as the rainbow pairing of the nodes in sector $i$. Furthermore, if the nodes in sector $i$ are paired according to the rainbow pairing in sector $i$, for each $i\in\{1, 2, 3\}$, then we call the resulting pairing of $\mathbf{N}$ the \emph{rainbow pairing of $\mathbf{N}$}. 

\begin{thm}
\label{thm:in sAB iff rainbow pairing}
Suppose $(A, B)$ is an $AB$ configuration. Then $(A, B)\in\sAB$ if and only if, for all $N\geq M$, the nodes of $D_{(A, B)}(N)$ are paired according to the rainbow pairing. 
\end{thm}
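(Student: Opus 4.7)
The plan is to prove both directions, relying on Corollary~\ref{cor:in sAB iff truncated path ends in single sector} as the main tool. The reverse direction is straightforward: if, for every $N \geq M$, the nodes of $D_{(A,B)}(N)$ are paired by the rainbow pairing, then each path in $D_{(A,B)}(N)$ has both endpoints in the same sector, and the corollary immediately yields $(A, B) \in \sAB$.

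For the forward direction, fix $N \geq M$ and assume $(A, B) \in \sAB$. By Theorem~\ref{thm:in sAB iff truncated path ends in single sector}, every path in $D_{(A,B)}(N)$ has both endpoints in the same sector, so the pairing on the full set of nodes decomposes as a disjoint union of pairings within each sector. Fix a sector $i$ with $2r$ nodes $u_1, \ldots, u_{2r}$ in clockwise order along the outer face of $H(N)$. First, since $H(N)$ is planar and the paths of $D_{(A,B)}(N)$ are pairwise vertex-disjoint simple arcs with endpoints on the outer face, the induced pairing of $u_1, \ldots, u_{2r}$ must be non-crossing. Second, the geometry of sector $i$ as shown in the right picture of Figure~\ref{fig:sectors} implies that in the clockwise listing, all nodes in sector $i^-$ occur as an initial block $u_1, \ldots, u_r$ and all nodes in sector $i^+$ occur as the terminal block $u_{r+1}, \ldots, u_{2r}$.

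The main step — and the principal obstacle — is to show that in fact every path in sector $i$ must connect an $i^-$ node with an $i^+$ node. Once this is established, a short combinatorial verification (one checks by induction on $r$ that the only non-crossing perfect matching between the first $r$ and last $r$ points on a line is the nested one) shows that the pairing must be precisely the rainbow pairing $((1,2r),(2,2r-1),\ldots,(r,r+1))$. To prove the $i^-$-to-$i^+$ claim, the plan is to equip each path of $D_{(A,B)}(N)$ with a canonical orientation coming from the alternation of $M_A$- and $M_B$-edges (which is well-defined because $H(N)$ is bipartite): traversing a path in the direction that uses each $M_B$-edge from its black endpoint to its white endpoint makes one end of the path a source and the other a sink. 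The key lemma to prove is that whether a node $u$ is a source or a sink is determined purely by whether $u$ lies in $i^-$ or in $i^+$. This determination relies on the bipartite color of $u$ combined with Lemma~\ref{lemma:folklore for B} (which ensures no $M_B$-edge leaves $H(N)$, so the dimer at $u$ which exits $H(N)$ is the unique $M_A$-edge at $u$) and on a careful examination — visible in the labelling conventions of Figure~\ref{fig:sectors} (right) — of the local structure at boundary vertices carrying positive versus negative half-integer labels. This reduction of the node-type to its sign is the delicate technical step.
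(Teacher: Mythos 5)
Your proposal is correct and follows the same basic outline as the paper's own proof: reduce to single-sector pairings via Theorem~\ref{thm:in sAB iff truncated path ends in single sector}, observe that the pairing within each sector is non-crossing, show each path joins the two ``halves'' of the sector, and deduce the unique nested (rainbow) pairing. However, you take a small detour in the middle step that the paper avoids. The paper works purely with the bipartite colouring: since $M_B(N)$ is a perfect matching of $H(N)$, every path begins and ends with an $M_B$-dimer, hence has odd length, hence joins a white node to a black node; and the structure of the boundary of $H(N)$ forces the white nodes to form an initial block in clockwise order. Your orientation-of-paths argument (source/sink via black-to-white traversal of $M_B$-edges) encodes exactly the same parity fact, but you then frame the block decomposition in terms of the sign $i^\pm$ rather than bipartite colour, which forces you to prove the extra claim that $i^\pm$ determines the bipartite colour --- your self-identified ``delicate technical step.'' That claim is true, but you can sidestep it entirely: once you know (a) the $i^-$ nodes form a contiguous arc of some length $s$, (b) $|\mathbf{N}_i^+| = |\mathbf{N}_i^-|$ so $s = r$, and (c) each path joins a white node to a black node while white nodes also form a contiguous arc, equality of the two block lengths forces the white block and the $i^-$ block to coincide, and the rainbow pairing follows from planarity with no further structural examination needed. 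So nothing in your plan is wrong, but the ``principal obstacle'' you flagged is not actually an obstacle you need to overcome.
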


\begin{proof}
By Theorem~\ref{thm:in sAB iff truncated path ends in single sector}, it suffices to show, for $N\geq M$, that each path in $D_{(A, B)}(N)$ begins and ends in the same sector if and only if the nodes of $D_{(A, B)}(N)$ are paired according to the rainbow pairing. So, assume $N\geq M$, and let $\sigma$ denote the pairing of the nodes $\mathbf{N}$ of $D_{(A, B)}(N)$. 

Suppose each path in $D_{(A, B)}(N)$ begins and ends in the same sector. Consider the nodes in sector $i$. Each must be paired with exactly one other node in sector $i$, so there are $2r$ such nodes, for some $r\in\mathbb{Z}_{\geq 0}$. Number them consecutively in clockwise order. Then, considering the structure of $H(N)$ and the fact that each node must be incident to an edge of $H$ that is not an edge of $H(N)$, we see that the white nodes precede the black nodes. That is, given a white node numbered $m_w$ and a black node numbered $m_b$, we have $m_w<m_b$. For $1\leq j\leq 2r$, let $\gamma_j$ be the path in $D_{(A, B)}(N)$ beginning at node $j$. To show that $\sigma$ is the rainbow pairing, we must show that $\gamma_j=\gamma_{2r-j+1}$. First, since each node in sector $i$ must be paired with exactly one other such node, $\gamma_j=\gamma_k$ for some $1\leq k\leq 2r$ such that $j\neq k$. Also, since $M_B(N)$ is a perfect matching of $H(N)$, each path in $D_{(A, B)}(N)$ must begin and end with dimers in $M_B(N)$, so $\gamma_j=\gamma_k$ consists of an odd number of dimers. Consequently, if node $j$ is white, then node $k$ must be black, and vice versa. This implies that there are equally many white and black nodes in sector $i$, so nodes $1$ through $r$ are white and nodes $r+1$ through $2r$ are black. Therefore, if $j\leq r$, then $k>r$, and if $j>r$, then $k\leq r$. Moreover, since $\sigma$ is planar, there can be no crossings, i.e., no four nodes $m_1<m_2<m_3<m_4$ such that $\gamma_{m_1}=\gamma_{m_3}$ and $\gamma_{m_2}=\gamma_{m_4}$. In particular, if $\gamma_1=\gamma_k$, where $k<2r$, then $k>r$ and $\gamma_{2r}=\gamma_l$, where $1<l\leq r$, so we have a crossing. So, $\gamma_1=\gamma_{2r}$. By similar arguments, we then find that $\gamma_2=\gamma_{2r-1}$, and so on, until we find that $\gamma_r=\gamma_{r+1}$. 

Conversely, suppose $\sigma$ is the rainbow pairing, and consider a path $\gamma$ in $D_{(A, B)}(N)$. Since the rainbow pairing only pairs nodes in the same sector, and $\gamma$ is a path between two nodes $u$ and $\sigma(u)$, $\gamma$ begins and ends in the same sector. 
\end{proof}

\begin{corollary}
\label{cor:in sAB iff rainbow pairing}
Suppose $(A, B)$ is an $AB$ configuration. Then $(A, B)\in\sAB$ if and only if, for some $N\geq M$, the nodes of $D_{(A, B)}(N)$ are paired according to the rainbow pairing. 
\end{corollary}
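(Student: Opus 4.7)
The plan is to derive this corollary by combining Theorem~\ref{thm:in sAB iff rainbow pairing} (which gives the ``for all $N\geq M$'' version) with Corollary~\ref{cor:in sAB iff truncated path ends in single sector} (which upgrades ``for all $N\geq M$'' to ``for some $N\geq M$'' in the sector-endpoint characterization). No new geometric work on surfaces, height functions, or the labelling algorithms is required; the corollary is essentially a formal consequence of what is already in hand.

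For the forward direction, suppose $(A,B)\in\sAB$. Then Theorem~\ref{thm:in sAB iff rainbow pairing} guarantees that, for every $N\geq M$, the nodes of $D_{(A,B)}(N)$ are paired according to the rainbow pairing. In particular this holds for some $N\geq M$, which is what we need. This direction is immediate and requires no argument beyond citing the theorem.

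For the converse, suppose there exists $N\geq M$ such that the nodes of $D_{(A,B)}(N)$ are paired according to the rainbow pairing. By the definition of the rainbow pairing, two nodes can only be matched if they lie in the same sector. Consequently every path in $D_{(A,B)}(N)$ begins and ends in the same sector, since each path joins a pair of nodes in the pairing. Now applying Corollary~\ref{cor:in sAB iff truncated path ends in single sector} with this same $N$, we conclude that $(A,B)\in\sAB$.

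There is no real obstacle here: the only subtlety is ensuring that ``rainbow pairing at $N$'' is manifestly stronger than ``each path at $N$ begins and ends in the same sector,'' which follows directly from the definition of the rainbow pairing (it pairs like-sector nodes only). Once that observation is made, the corollary is simply the composition of the forward implication of Theorem~\ref{thm:in sAB iff rainbow pairing} with the reverse implication of Corollary~\ref{cor:in sAB iff truncated path ends in single sector}.
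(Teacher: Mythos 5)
Your proof is correct and matches the paper's intent: the paper simply states the corollary is ``a direct consequence of Corollary~\ref{cor:in sAB iff truncated path ends in single sector},'' relying on the per-$N$ equivalence between the rainbow pairing and same-sector path endpoints already established inside the proof of Theorem~\ref{thm:in sAB iff rainbow pairing}. You make this explicit (citing Theorem~\ref{thm:in sAB iff rainbow pairing} for the forward direction and the easy implication ``rainbow pairing $\Rightarrow$ same-sector paths'' plus Corollary~\ref{cor:in sAB iff truncated path ends in single sector} for the converse), which is the same reasoning with the steps spelled out.
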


\begin{proof}
This is a direct consequence of Corollary~\ref{cor:in sAB iff truncated path ends in single sector}. 
\end{proof}

Finally, we can explicitly describe the nodes of $D_{(A, B)}(N)$. The set of nodes {\bf N} and the coloring of these nodes is determined by the partitions $\mu_1$, $\mu_2$, and $\mu_3$. Let $S_i$ be the Maya diagram of $\mu_i$. We refer to the labelling of the graph $H(N)$ shown in the right-hand side of Figure~\ref{fig:sectors}. Given $N\in\mathbb{Z}_{\geq 0}$, let $\mathbf{N}_i^+(N)$ (resp.~$\mathbf{N}_i^-(N)$) be the set of vertices on the outer face of $H(N)$, in sector $i^+$ (resp.~sector $i^-$), that are not labelled by any of the elements of $S_i^+$ (resp.~$S_i^-$). Then let $\mathbf{N}_{\mu}(N)=\bigcup\limits_{i=1}^3 \mathbf{N}_i^+(N)\cup\mathbf{N}_i^-(N)$. 

\begin{lemma}
\label{lemma:nodes for mu}
Suppose $(A, B)$ is an $AB$ configuration and $N\geq M$ is such that each box in $A\cup B$ corresponds to a face of $H(N)$. Then the set of nodes $\mathbf{N}$ of $D_{(A, B)}(N)$ is $\mathbf{N}_{\mu}(N)$. 
\end{lemma}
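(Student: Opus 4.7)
The plan is to reduce the lemma to a statement about the asymptotic structure of $M_A$ outside $H(N)$, showing that the set of nodes depends only on $\mu$ and not on the particular configuration $(A,B)$.

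First, by Lemma~\ref{lemma:folklore for B}, $M_B$ restricts to a perfect matching of $H(N)$, so every vertex of $H(N)$ is already covered by a dimer of $M_B(N)$. Consequently, the set $\mathbf{N}$ of nodes of $D_{(A,B)}(N)$ consists precisely of those vertices of $H(N)$ that are endpoints of dimers in $M_A$ which are \emph{not} edges of $H(N)$ -- that is, vertices on the outer face of $H(N)$ reached from outside by some dimer of $M_A$.

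Next, I would argue that the structure of $M_A$ outside $H(N)$ depends only on $\mu$. Because $A$ is finite and, by hypothesis, every box of $A\cup B$ corresponds to a face of $H(N)$, the surface $\mathfrak{A}=R_2\cup(\I^-\cup\III)\setminus A$ agrees with the ``reference'' surface $R_2\cup(\I^-\cup\III)$ outside a bounded region strictly inside $H(N)$. Since $\III\subseteq[0,M-1]^3\subseteq H(N)$ as well, in the portion of sector $i$ lying outside $H(N)$ the surface $\mathfrak{A}$ is literally the infinite leg $R_2\cup\Cyl_i^-$, whose shape is determined entirely by the partition $\mu_i$.

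The heart of the argument is then the standard bijection between lozenge tilings of this infinite leg and the Maya diagram of $\mu_i$. Fix a sector $i$: a dimer of $M_A$ incident to an outer-face vertex of $H(N)$ in sector $i$ and to a vertex outside $H(N)$ corresponds to a ``column'' of the leg $R_2\cup\Cyl_i^-$ protruding through the boundary of $H(N)$ at a specific height. The labelling by half-integers on the outer face of $H(N)$ in Figure~\ref{fig:sectors} (right) is set up precisely so that the vertex in sector $i^+$ labelled $k$ is the endpoint of such a protruding dimer if and only if $k\in S_i^+$, and similarly the vertex in sector $i^-$ labelled $k$ is such an endpoint if and only if $k\in S_i^-$. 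Therefore the outer-face vertices in sector $i$ which are \emph{not} endpoints of protruding dimers of $M_A$ are exactly those labelled by half-integers not in $S_i^+\cup S_i^-$, i.e.\ $\mathbf{N}_i^+(N)\cup\mathbf{N}_i^-(N)$. Taking the union over $i\in\{1,2,3\}$ yields $\mathbf{N}=\mathbf{N}_\mu(N)$.

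The main obstacle I expect is the careful bookkeeping in the last step: verifying the Maya-diagram-to-dimer correspondence using the PT conventions of this paper. One has to (a) distinguish the PT sector labelling in the right-hand side of Figure~\ref{fig:sectors} from the DT labelling in the left-hand side, (b) check that the face used to anchor the absolute height function is compatible with the zero-charge Maya diagram convention on each leg, and (c) handle the positive and negative halves $i^+$ and $i^-$ of each sector separately, confirming that the set of protruding positions coincides with $S_i^+$ and $S_i^-$ respectively. Once this is checked for one sector, the remaining sectors follow by the cyclic symmetry built into the definitions of $\Cyl_1$, $\Cyl_2$, $\Cyl_3$.
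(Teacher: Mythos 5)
Your overall structure matches the paper's: you correctly note that by Lemma~\ref{lemma:folklore for B} the nodes are exactly the vertices of $H(N)$ covered by $M_A$-dimers not in $H(N)$, and you correctly reduce the problem to showing that those boundary vertices are exactly the ones labelled by $S_i^+\cup S_i^-$. You also correctly observe that, because $A$, $\II$, and $\III$ all lie inside $H(N)$, the surface $\mathfrak{A}$ coincides with $R_2\cup\Cyl_i^-$ in sector~$i$ outside $H(N)$.

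However, the heart of the lemma --- the claim that the protruding $M_A$-dimers meet the outer face of $H(N)$ exactly at the positions labelled by $S_i^+\cup S_i^-$ --- is asserted rather than proved, and you explicitly flag it as ``careful bookkeeping'' you have not done. That bookkeeping is the substance of the proof. The paper carries it out by a concrete coordinate computation: for a boundary vertex $u$ with label $m(u)$, it finds the cell $w$ corresponding to the outer face $f$ adjacent to the relevant boundary edge with $\mathfrak{A}$ lying at $w$, shows $w\not\in\Cyl_i$ and examines $w-\mathbf{e}_j\in\mathfrak{A}$ (two cases: it has two negative coordinates, or it is in $(\I^-\cup\III)\setminus A$), extracts inequalities of the form $(\mu_i)_{w_j+1}<w_k+1\leq(\mu_i)_{w_j}$, and then compares $m(u)=\pm\tfrac12+w_k-w_j$ against the strictly decreasing sequence $(\mu_i)_t-t+\tfrac12$ to decide whether $m(u)$ lies in $S_i$. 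The converse direction (that a vertex \emph{not} labelled by $S_i^\pm$ really is a node, i.e.\ that the boundary edge is in $M_A$) requires a separate argument tracking the height function $h_A$ across that edge, which you do not address. Until you actually perform these two verifications in the PT coordinate and labelling conventions of Figure~\ref{fig:sectors} (right), the proposal is an outline, not a proof; the ``standard bijection'' you invoke is calibrated in a different sector labelling (the DT one, Figure~\ref{fig:sectors} left), and whether the half-integer labels line up with $S_i$ or with its complement is exactly what must be checked.
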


\begin{proof}
Consider a node $u$ of $D_{(A, B)}(N)$ in sector $i^+$ (resp.~sector $i^-$). Then $u$ is a vertex of $H(N)$ covered by a dimer $e\in M_A$ that is not an edge of $H(N)$. We must show that $u\in\mathbf{N}_i^+(N)$ (resp.~$u\in\mathbf{N}_i^-(N)$). That is, we must show that $u$ is not labelled by any of the elements of $S_i^+$ (resp.~$S_i^-$). Let $m(u)$ denote the label associated to $u$, and let $v$ be the vertex in sector $i$ labelled by $m(u)-1$ (resp.~$m(u)+1$). There is a unique face $f\in F$ such that $e$ and $v$ are both incident to $f$. Note that $f$ is contained in sector $i$. Also, since $e$ is not an edge of $H(N)$, $f$ is not a face of $H(N)$. Let $w$ be the cell corresponding to $f$ such that $\mathfrak{A}$ lies at $w$. Then the $i$th coordinate of $w$ is strictly less than the other coordinates of $w$, and by assumption, $f$ does not correspond to any box in $A\cup B$, so $w\not\in A\cup B$. Since $\mathfrak{A}$ lies at $w$, $w$ has at most one negative coordinate and $w\not\in(\I^-\cup\III)\setminus A$. It follows that $w\not\in\I^-\cup\III$. 

Suppose $w\in\Cyl_i$. Then, since $w\not\in\I^-\cup\III$, we have $w\in\Cyl_i^+$. Since the $i$th coordinate of $w$ is the least coordinate of $w$, we deduce that $w\in[0, M-1]^3$. Then $f$ must be a face of $H(M)\subseteq H(N)$. By contradiction, $w\not\in\Cyl_i$. 

Now consider the cell $w-\mathbf{e}_j$, where $j\in\{1, 2, 3\}$ and $j\equiv i-1\pmod 3$ (resp.~$j\equiv i+1\pmod 3$). Let $f'\in F$ be the face corresponding to $w-\mathbf{e}_j$. Observe that $f'$ is the other face of $H$ to which $e$ is incident, and when crossing $e$ from $f$ to $f'$, the left vertex of $e$ is white. Since $e\in M_A$, we see that $h_A$ increases by $2/3$ between $f$ and $f'$, i.e., $h_A(f')=h_A(f)+2/3$. Thus, $\mathfrak{A}$ must lie at $w-\mathbf{e}_j+(1, 1, 1)$. In particular, $w-\mathbf{e}_j\in\mathfrak{A}$, so $w-\mathbf{e}_j$ has at least two negative coordinates, or $w-\mathbf{e}_j\in(\I^-\cup\III)\setminus A$. In the first case, since the $i$th coordinate $w_i$ of $w$ is its least coordinate and $w$ has at most one negative coordinate, the other coordinates of $w$ must be nonnegative, so $w_i<0$ and the $j$th coordinate $w_j$ of $w$ must be $0$. In this case, we conclude that $(\mu_i')_{w_k+1}=0$, where $k\in\{1, 2, 3\}$ is such that $\{k\}=\{1, 2, 3\}\setminus\{i, j\}$, so $w_k+1>(\mu_i)_1=(\mu_i)_{w_j+1}$ (resp.~$(\mu_i)_{w_k+1}=0=w_j$). In the second case, $w-\mathbf{e}_j\in\I^-\cup\III$, and the $i$th coordinate of $w$ is strictly less than the other coordinates of $w$, so the $i$th coordinate of $w-\mathbf{e}_j$ is the least coordinate of $w-\mathbf{e}_j$. In this case, we conclude that $w-\mathbf{e}_j\in\Cyl_i$. Since $w\not\in\Cyl_i$, we once again determine that $(\mu_i')_{w_k+1}=w_j$, and $w_j>0$, so $(\mu_i)_{w_j}\geq w_k+1>(\mu_i)_{w_j+1}$ (resp.~$(\mu_i)_{w_k+1}=w_j$). 

As a consequence of our choices made in defining $w$, $u$ is labelled by $m(u)=1/2+w_k-w_j$ (resp.~$m(u)=-1/2+w_j-w_k$). Therefore, we have \[(\mu_i)_{w_j+1}-(w_j+1)+1/2=(\mu_i)_{w_j+1}-1-w_j+1/2<w_k-w_j+1/2=m(u)\] and (in the case that $w_j\neq 0$) \[m(u)=w_k-w_j+1/2\leq (\mu_i)_{w_j}-1-w_j+1/2<(\mu_i)_{w_j}-w_j+1/2\] (resp.~$m(u)=-1/2+w_j-w_k=-1/2+(\mu_i)_{w_k+1}-w_k=(\mu_i)_{w_k+1}-(w_k+1)+1/2$). Since the sequence $(\mu_i)_t-t+1/2$ is a strictly decreasing sequence, $m(u)\neq (\mu_i)_t-t+1/2$ for any $t>0$, i.e., $m(u)\not\in S_i$ (resp.~$m(u)\in S_i$). So, $m(u)\not\in S_i^+$ (resp.~$m(u)\not\in S_i^-$), as desired. 

Conversely, consider $u\in\mathbf{N}_i^+(N)$ (resp.~$u\in\mathbf{N}_i^-(N)$). Then $u$ is a vertex on the outer face of $H(N)$, in sector $i^+$ (resp.~sector $i^-$), and it is not labelled by any of the elements of $S_i^+$ (resp.~$S_i^-$). We must show that $u$ is a node of $D_{(A, B)}(N)$, i.e., that $u$ is covered by a dimer in $M_A$ that is not an edge of $H(N)$. As above, let $m(u)$ denote the label associated to $u$, and let $v$ be the vertex in sector $i$ labelled by $m(u)-1$ (resp.~$m(u)+1$). There is a unique edge $e$ of $H$ that covers $u$ and is not an edge of $H(N)$, and there is a unique face $f\in F$ such that $e$ and $v$ are both incident to $f$. Note that $f$ is contained in sector $i$ and is not a face of $H(N)$. We will show that $e\in M_A$. Let $w$ be the cell corresponding to $f$ such that $\mathfrak{A}$ lies at $w$. In addition, let $j\in\{1, 2, 3\}$ such that $j\equiv i-1\pmod 3$ (resp.~$j\equiv i+1\pmod 3$), let $k\in\{1, 2, 3\}$ such that $\{k\}=\{1, 2, 3\}\setminus\{i, j\}$, and let $f'\in F$ be the face corresponding to $w-\mathbf{e}_j$. Then the $i$th coordinate of $w$ is strictly less than the other coordinates of $w$, and by assumption, $f$ does not correspond to any box in $A\cup B$, so $w\not\in A\cup B$. Since $\mathfrak{A}$ lies at $w$, $w$ has at most one negative coordinate and $w\not\in(\I^-\cup\III)\setminus A$. It follows that $w\not\in\I^-\cup\III$ and $w_j, w_k\geq 0$. Furthermore, $w-(1, 1, 1)\in\mathfrak{A}$, so (i) $w-(1, 1, 1)$ has at least two negative coordinates or (ii) $w-(1, 1, 1)\in(\I^-\cup\III)\setminus A$. In case (i), $w_j$ and $w_k$ cannot both be positive, so $w_j=0$ or $w_k=0$. In case (ii), the $i$th coordinate of $w-(1, 1, 1)$ is strictly less than the other coordinates of $w-(1, 1, 1)$, so if $w-(1, 1, 1)\in\I^-$, then $w-(1, 1, 1)\in\Cyl_i^-\subseteq\Cyl_i$. Thus, $w-(1, 1, 1)\in\Cyl_i$, so $(\mu_i)_{w_j}\geq w_k$ (resp.~$(\mu_i)_{w_k}\geq w_j$). 

As discussed above, $m(u)=1/2+w_k-w_j$ (resp.~$m(u)=-1/2+w_j-w_k$). By assumption, $0<m(u)\not\in S_i^+$ (resp.~$0>m(u)\not\in S_i^-$). Consequently, $m(u)\not\in S_i$ (resp.~$m(u)\in S_i$), so $m(u)\neq(\mu_i)_t-t+1/2$ for any $t>0$ (resp.~$m(u)=(\mu_i)_{t_0}-t_0+1/2$ for some $t_0>0$). Then $0\leq w_k-w_j\neq(\mu_i)_t-t$ for any $t>0$ (resp.~$0>w_j-w_k-1=(\mu_i)_{t_0}-t_0$). 

Suppose $w_j\neq 0$ and $w-\mathbf{e}_j\not\in\Cyl_i$. Then $(\mu_i')_{w_k+1}<w_j$, implying that $(\mu_i)_{w_j}\leq w_k$ (resp.~$(\mu_i)_{w_k+1}<w_j$). We have $(\mu_i)_{w_j}-w_j\leq w_k-w_j\neq(\mu_i)_t-t$ for any $t>0$, so $(\mu_i)_{w_j}-w_j<w_k-w_j$, which means that $(\mu_i)_{w_j}<w_k$ (resp.~$(\mu_i)_{w_k+1}-(w_k+1)<w_j-(w_k+1)=(\mu_i)_{t_0}-t_0$, so $t_0<w_k+1$, since the sequence $(\mu_i)_t-t$ is strictly decreasing). In case (i), since $w_j\neq 0$, $w_k=0$, so $(\mu_i)_{w_j}<0$, which is a contradiction (resp.~$t_0<1$, so $t_0\leq 0$, which is a contradiction). In case (ii), we have $(\mu_i)_{w_j}\geq w_k>(\mu_i)_{w_j}$, a contradiction (resp.~$(\mu_i)_{w_k}-w_k>(\mu_i)_{w_k}-w_k-1\geq w_j-w_k-1=(\mu_i)_{t_0}-t_0$, so because the sequence $(\mu_i)_t-t$ is strictly decreasing, $w_k<t_0<w_k+1$, a contradiction). We conclude that $w_j=0$ or $w-\mathbf{e}_j\in\Cyl_i$. 

If $w_j=0$, then since $w_i<w_j$, $w-\mathbf{e}_j$ has at least two negative coordinates, so $w-\mathbf{e}_j\in\mathfrak{A}$. Otherwise, $w-\mathbf{e}_j\in\Cyl_i$. Observe that $f'$ is the other face of $H$ to which $e$ is incident and is not a face of $H(N)$. Suppose $w-\mathbf{e}_j\in\Cyl_i^+$. Since the $i$th coordinate of $w$ is less than the other coordinates of $w$, the $i$th coordinate of $w-\mathbf{e}_j$ is the least coordinate of $w-\mathbf{e}_j$. We deduce that $w-\mathbf{e}_j\in[0, M-1]^3$, so $f'$ must be a face of $H(M)\subseteq H(N)$. By contradiction, $w-\mathbf{e}_j\not\in\Cyl_i^+$, so $w-\mathbf{e}_j\in\Cyl_i^-\subseteq\I^-\cup\III$. Moreover, by assumption, $f'$ does not correspond to any box in $A\cup B$, so $w-\mathbf{e}_j\not\in A\cup B$. So, $w-\mathbf{e}_j\in(\I^-\cup\III)\setminus A$, showing that $w-\mathbf{e}_j\in\mathfrak{A}$. In either case, $w-\mathbf{e}_j\in\mathfrak{A}$, so $\mathfrak{A}$ lies at or above $w+\mathbf{e}_i+\mathbf{e}_k$, which corresponds to $f'$. Since $\mathfrak{A}$ lies at $w$, which corresponds to $f$, $h_A$ must increase by at least $2/3$ between $f$ and $f'$, i.e., $h_A(f')\geq h_A(f)+2/3$. According to the definition of $h_A$, since $e$ separates $f$ and $f'$, $e\in M_A$, as desired. 
\end{proof}

\begin{corollary}
The set of nodes of $D_{(A, B)}(N)$ in sector $i$ is $\mathbf{N}_i:=\mathbf{N}_i^+(N)\cup\mathbf{N}_i^-(N)$. 
\end{corollary}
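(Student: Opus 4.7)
The corollary is essentially an immediate consequence of Lemma~\ref{lemma:nodes for mu}, specialized to a particular sector. My plan is to simply unpack the definitions and observe that the decomposition $\mathbf{N}_\mu(N) = \bigcup_{i=1}^3 (\mathbf{N}_i^+(N) \cup \mathbf{N}_i^-(N))$ respects the partition of boundary vertices into sectors.

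First I would invoke Lemma~\ref{lemma:nodes for mu} to conclude that the complete set of nodes $\mathbf{N}$ of $D_{(A,B)}(N)$ equals $\mathbf{N}_\mu(N)$. (Here I am implicitly assuming that $N \geq M$ is taken large enough that every box in $A \cup B$ corresponds to a face of $H(N)$; the corollary is understood in this regime, matching the hypothesis of the lemma.) Next, I would note that from the very definition, $\mathbf{N}_i^+(N)$ is a subset of vertices lying in sector $i^+$, and $\mathbf{N}_i^-(N)$ lies in sector $i^-$; thus $\mathbf{N}_i^+(N) \cup \mathbf{N}_i^-(N)$ lies in sector $i$, which (by the labelling convention introduced after Figure~\ref{fig:sectors}) is the union of sectors $i^+$ and $i^-$.

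Finally, the sectors $1$, $2$, $3$ partition the outer vertices of $H(N)$ that can carry nonzero half-integer labels, so the unions $\mathbf{N}_j^+(N) \cup \mathbf{N}_j^-(N)$ for different values of $j$ are disjoint. Intersecting the equality $\mathbf{N} = \bigcup_{j=1}^3 (\mathbf{N}_j^+(N) \cup \mathbf{N}_j^-(N))$ with the set of vertices in sector $i$ therefore leaves exactly $\mathbf{N}_i^+(N) \cup \mathbf{N}_i^-(N)$ on the right-hand side and the nodes in sector $i$ on the left-hand side, giving the desired equality.

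There is no real obstacle here — this is a one-line bookkeeping corollary whose content is entirely carried by Lemma~\ref{lemma:nodes for mu}. The only thing to be careful about is making the implicit choice of $N$ explicit, and confirming that the three sectors genuinely partition the relevant boundary vertices so that restricting to sector $i$ picks out precisely the $i$-indexed terms in the union.
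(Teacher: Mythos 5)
Your proposal is correct and matches the paper's intent: the corollary is stated immediately after Lemma~\ref{lemma:nodes for mu} with no separate proof, precisely because it is the bookkeeping step you carry out — invoke the lemma, observe that $\mathbf{N}_\mu(N)$ decomposes by sector, and intersect with the vertices in sector $i$. Your explicit note that the hypothesis ``$N\geq M$ and every box of $A\cup B$ corresponds to a face of $H(N)$'' is inherited from the lemma is a reasonable and correct clarification of what the paper leaves implicit.
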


We color the nodes as follows. Recall that when given a Maya diagram, by holes, we mean elements of $\mathbb{Z}+\frac{1}{2}\setminus S$, and by beads, we mean elements of $S$. 
\begin{itemize}
\item In sector 1, the blue nodes are the holes of $S_1$ with positive coordinates and the red nodes are the beads of $S_1$ with negative coordinates.
\item In sector 2, the red nodes are the holes of $S_2$ with positive coordinates and the green nodes are the beads of $S_2$ with negative coordinates. 
\item In sector 3, the green nodes are the holes of $S_3$ with positive coordinates and the blue nodes are the beads of $S_3$ with negative coordinates. 
\end{itemize}

Since $\lvert S_i^+\rvert=\lvert S_i^-\rvert$ for $i\in\{1, 2, 3\}$, there are an equal number of nodes in sector $i$ with positive coordinates and negative coordinates. So, the rainbow pairing of $\mathbf{N}_{\mu}(N)$ pairs blue nodes in sector 1 with red nodes in sector 1, red nodes in sector 2 with green nodes in sector 2, and green nodes in sector 3 with blue nodes in sector 3. This shows that the rainbow pairing is tripartite. 

Let $D_{\sigma}(G, \mathbf{N})$ be the set of all double-dimer configurations on $G$ with nodes $\mathbf{N}$ paired according to $\sigma$, and let $Z^{DD}_{\sigma}(G, {\bf N})$ denote the weighted sum of the double-dimer configurations in $D_{\sigma}(G, \mathbf{N})$. We can now explain the relationship between $Z_{\sAB}$ and $Z^{DD}_{\sigma}(H(N), \mathbf{N}_{\mu}(N))$, where $\sigma$ is the rainbow pairing. Note that $\left\lvert\mathbf{N}_i^+(N)\right\rvert=\left\lvert\mathbf{N}_i^-(N)\right\rvert$, since $\left\lvert S_i^+\right\rvert=\left\lvert S_i^-\right\rvert$, so it makes sense to consider the rainbow pairing of $\mathbf{N}_{\mu}(N)$. 

Given a nonempty $AB$ configuration, removing a box whose back neighbors are not boxes produces another $AB$ configuration, so between any two $AB$ configurations $(A, B)$ and $(A', B')$, there always exists at least one sequence $(A, B):=(A_1, B_1), (A_2, B_2), \ldots, (A_r, B_r):=(A', B')$ of $AB$ configurations such that consecutive $AB$ configurations differ by the removal or addition of a single box. Furthermore, if $(A_{s+1}, B_{s+1})$ is obtained from $(A_s, B_s)$ by removing a box from $A_s$ or $B_s$, then $M_{A_{s+1}}$ or $M_{B_{s+1}}$ is obtained from $M_{A_s}$ or $M_{B_s}$, respectively, by performing a local move as shown in Figure~\ref{fig:removingabox}. 
\begin{figure}[htb]
\centering
\begin{minipage}{.45\textwidth}
\hfill
\includegraphics{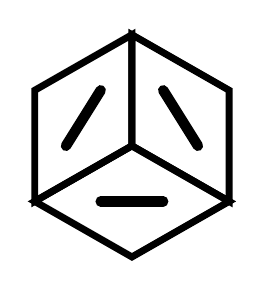}
\end{minipage}
$\longrightarrow$
\begin{minipage}{.45\textwidth}
\includegraphics{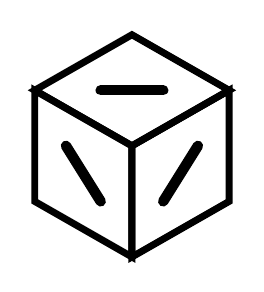}
\hfill
\end{minipage}
\caption{A local move corresponding to the removal of a box.}
\label{fig:removingabox}
\end{figure}
Similarly, if $(A_{s+1}, B_{s+1})$ is obtained from $(A_s, B_s)$ by adding a box to $A_s$ or $B_s$, then $M_{A_{s+1}}$ or $M_{B_{s+1}}$ is obtained from $M_{A_s}$ or $M_{B_s}$, respectively, by performing a local move as shown in Figure~\ref{fig:addingabox}. 
\begin{figure}[htb]
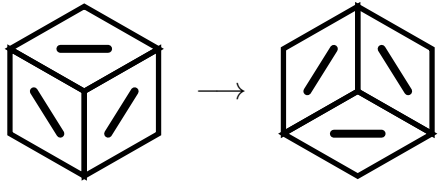

\centering
\begin{minipage}{.45\textwidth}
\hfill
\includegraphics{./ptfull_images/nobox}
\end{minipage}
$\longrightarrow$
\begin{minipage}{.45\textwidth}
\includegraphics{./ptfull_images/onebox}
\hfill
\end{minipage}
\caption{A local move corresponding to the addition of a box.}
\label{fig:addingabox}
\end{figure}

Recall the edge weights specified in Definition~\ref{def:kuoweighting}. Assuming that all of the boxes in $A\cup B$ and all of the boxes in $A'\cup B'$ correspond to faces of $H(N)$, the above discussion applies just as well to $M_{A_s}(N)$ and $M_{B_s}(N)$. Then, one consequence of the chosen edge weights is that removing a box increases the edge-weight by a factor of $q$, and adding a box decreases the edge-weight by a factor of $q$. Therefore, the edge-weight $q^{w_{(A, B)}(N)}$ of $D_{(A, B)}(N)$ is related to the edge-weight $q^{w_{(A', B')}(N)}$ of $D_{(A', B')}(N)$ by the following equation: \[q^{|A|+|B|+w_{(A, B)}(N)}=q^{|A'|+|B'|+w_{(A', B')}(N)}.\] In particular, if $(A', B')=(\III, \II\cup\III)$ and $N\geq M$, then we have \[|A|+|B|+w_{(A, B)}(N)=|\II|+2|\III|+w_{(\III, \II\cup\III)}(N).\] Observe that $(\III, \II\cup\III)\in\sAB(\pi)\subseteq\sAB$, where $\pi$ is the labelled box configuration consisting of the boxes $\II\cup\III$, all of which are unlabelled. So, by Theorem~\ref{thm:in sAB iff rainbow pairing} and Lemma~\ref{lemma:nodes for mu}, if $N\geq M$, $D_{(\III, \II\cup\III)}(N)\in D_{\sigma}(H(N), \mathbf{N}_{\mu}(N))$. 

\begin{defn}
\label{defn:baseDD}
The double-dimer configuration $D_{(\III, \II\cup\III)}(N)$ on $(H(N), {\bf N}_{\mu}(N))$ will be called the {\em base$_{\mu}$ double-dimer configuration} and its edge-weight will be denoted $q^{w_{base}(\mu)}$. 
\end{defn}

In other words, $w_{base}(\mu)=w_{(\III, \II\cup\III)}(N)$. Also, if $|A|+|B|\leq N-M$, and $w\in A\cup B$, then $w\in\Cyl_i^-$ for some $i\in\{1, 2, 3\}$ or $w\in\II\cup\III$. In the first case, $w\in A$, so by Conditions~\ref{conditions:ab box stacking}, $w+s\mathbf{e}_i\in A$ for $0\leq s<-w_i$. It follows that $-w_i\leq |A|\leq |A|+|B|\leq N-M$. Since the coordinates of $w$ other than the $i$th coordinate are in $[0, M-1]$, we must have $w-w_i(1, 1, 1)\in [0, M-1+N-M]^3=[0, N-1]^3$, so $w-w_i(1, 1, 1)$ corresponds to a face of $H(N)$ and, thus, so does $w$. In the second case, $w\in\II\cup\III\subseteq [0, M-1]^3\subseteq[0, N-1]^3$, so $w$ corresponds to a face of $H(N)$. If, in addition, $(A, B)\in\sAB$, then by Theorem~\ref{thm:in sAB iff rainbow pairing} and Lemma~\ref{lemma:nodes for mu}, $D_{(A, B)}(N)\in D_{\sigma}(H(N), \mathbf{N}_{\mu}(N))$. 

Consequently, assuming $N\geq M$, by Definition~\ref{defn:ZAB}, we have 
\begin{eqnarray*}
Z_{\sAB}(q^{-1}) 
&=& q^{|\II|+2|\III|}\sum_{(A, B) \in \sAB} q^{-|A|-|B|} \\
&=& q^{|\II|+2|\III|}\sum_{\substack{(A, B) \in \sAB \\ |A|+|B|\leq N-M}} q^{-|A|-|B|}+q^{|\II|+2|\III|}\sum_{\substack{(A, B) \in \sAB \\ |A|+|B|>N-M}} q^{-|A|-|B|} \\
&=& q^{-w_{base}(\mu)}\sum_{\substack{(A, B) \in \sAB \\ |A|+|B|\leq N-M}} q^{w_{(A, B)}(N)}+q^{|\II|+2|\III|}\sum_{\substack{(A, B) \in \sAB \\ |A|+|B|>N-M}} q^{-|A|-|B|}.
\end{eqnarray*}

Let $D\in D_{\sigma}(H(N), \mathbf{N}_{\mu}(N))$. Since the nodes $\mathbf{N}_{\mu}(N)$ are paired according to the rainbow pairing and $\left\lvert\mathbf{N}_i^+(N)\right\rvert=\left\lvert\mathbf{N}_i^-(N)\right\rvert$, each path in $D$ pairs a black node in $\mathbf{N}_i^+(N)$ with a white node in $\mathbf{N}_i^-(N)$, for some $i\in\{1, 2, 3\}$. Thus, each path has odd length. If there are $k(D)$ loops in $D$, this implies that there are $2^{k(D)}$ ways to decompose $D$ into a perfect matching $M_1$ of $H(N)\setminus\mathbf{N}_{\mu}(N)$ and a perfect matching $M_2$ of $H(N)$. These matchings then correspond to lozenge tilings, and since the nodes $\mathbf{N}_{\mu}(N)$ are paired according to the rainbow pairing, these tilings extend uniquely to tilings of the plane that can be interpreted as surfaces $\mathfrak{A}=R_2\cup(\I^-\cup\III)\setminus A$ and $\mathfrak{B}=R_1\cup(\II\cup\III)\setminus B$, respectively, for some $AB$ configuration $(A, B)$. Then $M_A(N)=M_1$ and $M_B(N)=M_2$, so $D_{(A, B)}(N)=D$. 

To be more precise, we must check that the penultimate statement from the previous paragraph holds for an $AB$ configuration $(A, B)$ associated with the partitions $\mu$ and not some other partitions. The fact that the nodes of $D$ are $\mathbf{N}_{\mu}(N)$ ensures that the tiling corresponding to $M_1$ can be extended so that $\mathfrak{A}=R_2\cup(\I^-(\nu)\cup\III(\nu))\setminus A$, where $A\subseteq\I^-(\nu)\cup\III(\nu)$, for any partitions $\nu$ such that $\mu_i\subseteq\nu_i$ for $i\in\{1, 2, 3\}$. It's not clear, though, that the tiling corresponding to $M_2$ can be extended so that $\mathfrak{B}=R_1\cup(\II(\mu)\cup\III(\mu))\setminus B$, where $B\subseteq\II(\mu)\cup\III(\mu)$. All we can say is that it can be extended so that $\mathfrak{B}=R_1\cup(\II(\nu)\cup\III(\nu))\setminus B$, where $B\subseteq\II(\nu)\cup\III(\nu)$, for some partitions $\nu$ such that $\mu_i\subseteq\nu_i$ for $i\in\{1, 2, 3\}$. Suppose this statement does not hold for $\nu=\mu$. Then there exists a cell $w\in\mathfrak{B}\setminus(R_1\cup\II(\mu)\cup\III(\mu))\subseteq(\II(\nu)\cup\III(\nu))\setminus (B\cup\II(\mu)\cup\III(\mu))$. 

If $w\in\II(\nu)$, then since $w\not\in\II(\mu)\cup\III(\mu)$, there exist $i, j\in\{1, 2, 3\}$ such that $i\neq j$ and $w\not\in\Cyl_j(\mu)\cup\Cyl_i(\nu)$. Since $w\in\mathfrak{B}$, by Lemma~\ref{lemma:AB surfaces}, $w-s\mathbf{e}_j\in\mathfrak{B}$ for $0\leq s\leq w_j+1$. On the other hand, since $w\not\in\Cyl_j(\mu)$, $w-s\mathbf{e}_j\not\in\Cyl_j(\mu)$ for $0\leq s\leq w_j+1$. Moreover, the $j$th coordinate of $w-(w_j+1)\mathbf{e}_j$ is $-1$, while the other coordinates are nonnegative, so $w-(w_j+1)\mathbf{e}_j\in\Cyl_j^-(\nu)\setminus\I^-(\mu)$. This implies that $w-s\mathbf{e}_j\not\in R_2\cup(\I^-(\mu)\cup\III(\mu))$ and, thus, $w-s\mathbf{e}_j\not\in\mathfrak{A}$ for $0\leq s\leq w_j+1$. Consequently, $\{w-s\mathbf{e}_j\mid 0\leq s\leq w_j+1\}\subseteq\mathcal{L}(A, B)$ is a connected set of cells containing $w\in\II_{\bar{i}}(\nu)$ and $w-(w_j+1)\mathbf{e}_j\in\Cyl_j^-(\nu)$. By Theorem~\ref{thm:labellable iff in sAB}, $(A, B)\not\in\sAB$, so by Corollary~\ref{cor:in sAB iff rainbow pairing}, the nodes of $D_{(A, B)}(N)$ are not paired according to the rainbow pairing. This contradicts the fact that $D_{(A, B)}(N)=D\in D_{\sigma}(H(N), \mathbf{N}_{\mu}(N))$. 

Otherwise, $w\in\III(\nu)$, and since $w\not\in\II(\mu)\cup\III(\mu)$, there exist $i, j\in\{1, 2, 3\}$ such that $i\neq j$ and $w\not\in\Cyl_i(\mu)\cup\Cyl_j(\mu)$. Since $w\in\mathfrak{B}$, by Lemma~\ref{lemma:AB surfaces}, $w-s\mathbf{e}_i\in\mathfrak{B}$ for $0\leq s\leq w_i+1$ and $w-t\mathbf{e}_j\in\mathfrak{B}$ for $0\leq t\leq w_j+1$. On the other hand, since $w\not\in\Cyl_i(\mu)$, $w-s\mathbf{e}_i\not\in\Cyl_i(\mu)$ for $0\leq s\leq w_i+1$. Similarly, since $w\not\in\Cyl_j(\mu)$, $w-t\mathbf{e}_j\not\in\Cyl_j(\mu)$ for $0\leq t\leq w_j+1$. Also, the $i$th coordinate of $w-(w_i+1)\mathbf{e}_i$ is $-1$, while the other coordinates are nonnegative, so $w-(w_i+1)\mathbf{e}_i\in\Cyl_i^-(\nu)\setminus\I^-(\mu)$. Similarly, the $j$th coordinate of $w-(w_j+1)\mathbf{e}_j$ is $-1$, while the other coordinates are nonnegative, so $w-(w_j+1)\mathbf{e}_j\in\Cyl_j^-(\nu)\setminus\I^-(\mu)$. Therefore, $w-s\mathbf{e}_i\not\in R_2\cup(\I^-(\mu)\cup\III(\mu))$ and, thus, $w-s\mathbf{e}_i\not\in\mathfrak{A}$ for $0\leq s\leq w_i+1$. Similarly, $w-t\mathbf{e}_j\not\in R_2\cup(\I^-(\mu)\cup\III(\mu))$ and, thus, $w-t\mathbf{e}_j\not\in\mathfrak{A}$ for $0\leq t\leq w_j+1$. Consequently, $\{w-s\mathbf{e}_i\mid 0\leq s\leq w_i+1\}\cup\{w-t\mathbf{e}_j\mid 0\leq t\leq w_j+1\}\subseteq\mathcal{L}(A, B)$ is a connected set of cells containing $w-(w_i+1)\mathbf{e}_i\in\Cyl_i^-(\nu)$ and $w-(w_j+1)\mathbf{e}_j\in\Cyl_j^-(\nu)$. By Theorem~\ref{thm:labellable iff in sAB}, $(A, B)\not\in\sAB$, so by Corollary~\ref{cor:in sAB iff rainbow pairing}, the nodes of $D_{(A, B)}(N)$ are not paired according to the rainbow pairing. Again, this contradicts the fact that $D_{(A, B)}(N)=D\in D_{\sigma}(H(N), \mathbf{N}_{\mu}(N))$. So we can, in fact, extend the tiling corresponding to $M_2$ so that $\mathfrak{B}=R_1\cup(\II(\mu)\cup\III(\mu))\setminus B$, where $B\subseteq\II(\mu)\cup\III(\mu)$. 

Now, if $D(H(N))$ denotes the set of all double-dimer configurations with nodes on $H(N)$, and $\tau: \sAB_{\text{all}}\to D(H(N))$ is the map $(A, B)\mapsto D_{(A, B)}(N)$, then $(A, B)\in\tau^{-1}(D)$. We conclude that $\lvert\tau^{-1}(D)\rvert=2^{k(D)}$. Finally, given any $(A, B)\in \tau^{-1}(D_{\sigma}(H(N), \mathbf{N}_{\mu}(N)))$, the nodes of $D_{(A, B)}(N)$ are $\mathbf{N}_{\mu}(N)$, so all boxes in $A\cup B$ must correspond to faces of $H(N)$, and we deduce that $|A|+|B|+w_{(A, B)}(N)=|\II|+2|\III|+w_{base}(\mu)$. Also, by Corollary~\ref{cor:in sAB iff rainbow pairing}, $\tau^{-1}(D_{\sigma}(H(N), \mathbf{N}_{\mu}(N)))\subseteq\sAB$. 

As a result, 
\begin{eqnarray*}
&&q^{-w_{base}(\mu)}Z^{DD}_{\sigma}(H(N), \mathbf{N}_{\mu}(N)) \\
&=&q^{-w_{base}(\mu)}\sum_{D\in D_{\sigma}(H(N), \mathbf{N}_{\mu}(N))}w(D) \\
&=&q^{-w_{base}(\mu)}\sum_{D\in D_{\sigma}(H(N), \mathbf{N}_{\mu}(N))}\sum_{(A, B)\in\tau^{-1}(D)}\frac{w(D)}{2^{k(D)}} \\
&=&q^{-w_{base}(\mu)}\sum_{D\in D_{\sigma}(H(N), \mathbf{N}_{\mu}(N))}\sum_{(A, B)\in\tau^{-1}(D)}q^{w_{(A, B)}(N)} \\
&=&q^{-w_{base}(\mu)}\sum_{(A, B)\in\tau^{-1}(D_{\sigma}(H(N), \mathbf{N}_{\mu}(N)))}q^{w_{(A, B)}(N)} \\
&=&q^{-w_{base}(\mu)}\sum_{\substack{(A, B)\in\sAB \\ |A|+|B|\leq N-M}}q^{w_{(A, B)}(N)}+q^{|\II|+2|\III|}\sum_{\substack{(A, B)\in\tau^{-1}(D_{\sigma}(H(N), \mathbf{N}_{\mu}(N))) \\ |A|+|B|>N-M}}q^{-|A|-|B|}.
\end{eqnarray*}
This discussion, along with Theorem~\ref{thm:ZABW}, leads to the following result.

\begin{thm}
\label{thm:ZDD convergence}
As $N\to\infty$, $\widetilde{Z}^{DD}_{\sigma}(H(N), \mathbf{N}_{\mu}(N)):=q^{-w_{base}(\mu)}Z^{DD}_{\sigma}(H(N), \mathbf{N}_{\mu}(N))$ converges to $Z_{\sAB}(q^{-1})=W(\mu_1, \mu_2, \mu_3; q^{-1})$. 
\end{thm}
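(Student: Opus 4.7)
The plan is to read off the theorem directly from the two parallel formulas derived in the displays immediately preceding the theorem statement. Those displays give
\[
Z_{\sAB}(q^{-1}) = q^{-w_{base}(\mu)}\sum_{\substack{(A,B)\in\sAB \\ |A|+|B|\leq N-M}} q^{w_{(A,B)}(N)} + q^{|\II|+2|\III|}\sum_{\substack{(A,B)\in\sAB \\ |A|+|B|>N-M}} q^{-|A|-|B|}
\]
and
\[
q^{-w_{base}(\mu)}Z^{DD}_{\sigma}(H(N),\mathbf{N}_\mu(N)) = q^{-w_{base}(\mu)}\!\!\!\sum_{\substack{(A,B)\in\sAB \\ |A|+|B|\leq N-M}}\!\! q^{w_{(A,B)}(N)} + q^{|\II|+2|\III|}\!\!\!\sum_{\substack{(A,B)\in\tau^{-1}(D_{\sigma}(H(N),\mathbf{N}_\mu(N))) \\ |A|+|B|>N-M}}\!\! q^{-|A|-|B|}.
\]
I would first observe that the ``bulk'' portions (the sums over $|A|+|B|\leq N-M$) are literally identical, so subtracting the two identities reduces the theorem to showing that each of the two tails becomes negligible as $N\to\infty$.

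Next, I would carry out the tail estimate: every summand in either tail carries a factor $q^{-k}$ with $k>N-M$, so after multiplication by the fixed prefactor $q^{|\II|+2|\III|}$, the tails contribute only to monomials $q^d$ with $d<|\II|+2|\III|-(N-M)$. Hence for each fixed integer $d$, the coefficient of $q^d$ in $\widetilde{Z}^{DD}_\sigma(H(N),\mathbf{N}_\mu(N))-Z_{\sAB}(q^{-1})$ vanishes once $N$ is large enough, which is precisely convergence in the formal Laurent series topology. The identification $Z_{\sAB}(q^{-1})=W(\mu_1,\mu_2,\mu_3;q^{-1})$ then follows by substituting $q\mapsto q^{-1}$ in Theorem~\ref{thm:ZABW}.

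The only genuinely non-clerical ingredient---and where I would expect the main obstacle to live, if anywhere---is the containment $\tau^{-1}(D_{\sigma}(H(N),\mathbf{N}_\mu(N)))\subseteq\sAB$, which ensures that the double-dimer tail is indexed by actual $AB$ configurations rather than by extraneous ones. This containment is furnished by Corollary~\ref{cor:in sAB iff rainbow pairing}, and combined with the $2^{k(D)}$-to-one fiber decomposition recorded in the excerpt (each loop of $D$ is resolved in two independent ways into a pair $(A,B)\in\tau^{-1}(D)$), it closes the gap. With these ingredients in hand, no further combinatorial surgery is required; the theorem is a clean formal-series convergence statement.
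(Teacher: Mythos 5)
Your proposal is correct and is exactly the argument the paper intends: the theorem is stated immediately after the two displayed decompositions, with the one-line remark that ``this discussion, along with Theorem~\ref{thm:ZABW}, leads to the following result,'' and your writeup simply makes that pointer explicit (matching bulk sums, a formal-series tail estimate using the degree bound from $|A|+|B|>N-M$, and the containment $\tau^{-1}(D_\sigma(H(N),\mathbf{N}_\mu(N)))\subseteq\sAB$ via Corollary~\ref{cor:in sAB iff rainbow pairing}). Nothing is missing and nothing is different in kind from what the authors do.
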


\subsection{The condensation recurrence in PT theory}
\label{sec:pt_condensation_identity}

In \cite{jenne}, the first author showed that when $\sigma$ is tripartite, $Z^{DD}_{\sigma}(G, {\bf N})$ satisfies the condensation recurrence. 

\begin{thm}\cite[Theorem 2.1.1]{jenne}
\label{thm:DDcond}
Let $G = (V_1, V_2, E)$ be a finite edge-weighted planar bipartite graph with a set of nodes {\bf N}. Divide the nodes into three circularly contiguous sets $R$, $G$, and $B$ such that $|R|$, $|G|$ and $|B|$ satisfy the triangle inequality and let $\sigma$ be the corresponding tripartite pairing.\footnote{If $|R|, |G|$, and $|B|$ do not satisfy the triangle inequality, there is no corresponding tripartite pairing $\sigma$.} Let $a, b, c, d$ be nodes appearing in a cyclic order such 
that $a, c \in V_1$ and $b, d \in V_2$.\footnote{Additionally, $\{a, b, c, d\}$ must contain at least one node of each RGB color. In our applications of this theorem, this assumption is always satisfied.} Then 
\begin{eqnarray}
Z^{DD}_{\sigma}(G, {\bf N}) Z^{DD}_{\sigma_{abcd}}(G, {\bf N} - \{a, b, c, d\}) &=& 
Z^{DD}_{\sigma_{ab}}(G, {\bf N} - \{a, b\}) Z^{DD}_{\sigma_{cd}}(G, {\bf N} - \{c, d\}) \label{eqn:ddcond} \\ 
&&{}+{}
Z^{DD}_{\sigma_{ad}}(G, {\bf N} - \{a, d\}) Z^{DD}_{\sigma_{bc}}(G, {\bf N} - \{b, c\}) \nonumber
\end{eqnarray}
where $\sigma_{abcd}$ is the unique planar pairing on ${\bf N} - \{a, b, c, d\}$ in which like RGB colors are not paired together, and for $i, j \in \{a, b, c, d\}$, $\sigma_{ij}$ is the unique planar pairing on ${\bf N} - \{i, j\}$ 
in which like RGB colors are not paired together.
\end{thm}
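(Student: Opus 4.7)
\bigskip

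\noindent \textbf{Proof plan.} The approach is a double-dimer generalization of the standard superposition-and-switching argument used to prove Kuo's graphical condensation (Theorem~\ref{thm:kuo}). The plan is to expand the LHS as a weighted sum over pairs $(D_1, D_2)$, where $D_1 \in D_\sigma(G,\mathbf{N})$ and $D_2 \in D_{\sigma_{abcd}}(G, \mathbf{N}-\{a,b,c,d\})$, form their multiset-union superposition $S = D_1 \sqcup D_2$, and classify such superpositions by the combinatorics of paths through $\{a,b,c,d\}$. In $S$, every internal vertex of $G$ has degree $4$, every node in $\mathbf{N}-\{a,b,c,d\}$ has degree $2$ (one edge from each configuration), and each of $a,b,c,d$ has degree $3$ (node in $D_1$, interior in $D_2$). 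Thus $S$ is Eulerian away from $\{a,b,c,d\}$, so it decomposes into doubled edges, loops, cycles alternating between $D_1$ and $D_2$, and exactly two open paths whose endpoints partition $\{a,b,c,d\}$.

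The next step is to eliminate one of the three possible pairings of $\{a,b,c,d\}$. Since $G$ is planar, $a,b,c,d$ appear in cyclic order on the outer face, and the two open paths are edge-disjoint subgraphs of $G$, the pairing $\{\{a,c\},\{b,d\}\}$ is impossible: any two such paths would have to cross, which cannot happen in a planar embedding. Hence every superposition falls into one of two classes, labeled by the pairings $\{\{a,b\},\{c,d\}\}$ and $\{\{a,d\},\{b,c\}\}$, which will correspond to the two terms on the RHS.

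I would then build a weight-preserving bijection in each class. Given a pair $(D_1, D_2)$ whose superposition realizes the pairing $\{\{a,b\},\{c,d\}\}$, reroute the two open paths so that the path from $a$ to $b$ together with one version of each alternating cycle forms a double-dimer configuration on $G$ with nodes $\mathbf{N}-\{a,b\}$, while the path from $c$ to $d$ plays the same role on $G$ with nodes $\mathbf{N}-\{c,d\}$; similarly for $\{\{a,d\},\{b,c\}\}$. Each of the $\ell$ alternating cycles in $S$ may be assigned to either side, contributing a factor $2^\ell$; this is precisely matched, via the standard accounting, by the product $2^{k(D_1)}\cdot 2^{k(D_2)}$ in $w(D_1)w(D_2)$ versus the loop factors on the RHS, since loops and alternating cycles are interchanged in a controlled way under the switching. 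Summing $w(D_1)w(D_2)$ over each class therefore equals $Z^{DD}_{\sigma_{ab}} Z^{DD}_{\sigma_{cd}} + Z^{DD}_{\sigma_{ad}} Z^{DD}_{\sigma_{bc}}$ provided the resulting pairings on the RHS are precisely the tripartite ones.

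The main obstacle is this last ``tripartite compatibility'' check. I need to show that tracing one of the two open paths in $S$ from $a$ to (say) $b$ induces a pairing on $\mathbf{N}-\{a,b\}$ that agrees with $\sigma_{ab}$, and likewise for the other pieces. This is where the RGB-color hypotheses on $\{a,b,c,d\}$ and on the triangle inequality enter: the paths of $D_1$ already respect $\sigma$, the paths of $D_2$ respect $\sigma_{abcd}$, and concatenating them around $\{a,b,c,d\}$ must yield a planar pairing of the remaining nodes in which like colors are unpaired. The heart of the argument is to verify that the concatenation color-balances correctly for both non-crossing pairings, so that each class of superpositions is counted by exactly one RHS term and the identity~(\ref{eqn:ddcond}) follows. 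The remaining bookkeeping---that the bijection preserves edge-weights and that the $2^{\text{(loops)}}$ factors match---is then a routine check.
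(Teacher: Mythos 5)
The paper does not prove this theorem itself; it cites it from \cite{jenne}, where (as the introduction explicitly records) the argument is \emph{algebraic}: one writes $Z^{DD}_{\sigma}(G,\mathbf{N})$ using the determinant formula of Kenyon--Wilson for the tripartite double-dimer partition function, applies the classical Desnanot--Jacobi identity to that determinant, and then reinterprets all six minors as double-dimer partition functions with various nodes removed. Your proposal instead tries to lift Kuo's combinatorial superposition-and-switching proof directly to the double-dimer setting, which is a genuinely different route.

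That route, as written, has a gap at the very first structural step. When you superimpose two ordinary dimer configurations, every internal vertex has degree $2$, so the superposition \emph{is} a disjoint union of doubled edges, even cycles, and paths, with no choices to make; Kuo's switching bijection is then well-defined. Here $D_1$ and $D_2$ are each double-dimer configurations, so in $S=D_1\sqcup D_2$ every internal vertex has degree $4$. A $4$-regular multigraph does not decompose canonically into ``doubled edges, loops, cycles alternating between $D_1$ and $D_2$, and two open paths'': at each degree-$4$ vertex there are two $D_1$-edges and two $D_2$-edges, and there are two inequivalent ways to pair them up into alternating transitions. Your sentence ``$S$ is Eulerian away from $\{a,b,c,d\}$, so it decomposes into\ldots'' asserts uniqueness that is simply false. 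Without a canonical decomposition, the notion of ``the two open paths'' is ill-defined, the planarity argument ruling out the pairing $\{\{a,c\},\{b,d\}\}$ does not apply as stated (different transition choices at vertices give different path systems), and the switching map is not a bijection. The $2^{k}$ loop-weight accounting compounds this, since loops of $D_1$, loops of $D_2$, and alternating cycles of $S$ interact with the transition choices.

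You also correctly identify the tripartite-compatibility check as the crux, but you do not carry it out; it is not a routine consequence of planarity, and in fact the hypotheses on $R$, $G$, $B$ satisfying the triangle inequality and $\{a,b,c,d\}$ containing all three colors are exactly what is needed for the six node sets on both sides to admit the required tripartite pairings. In short: you have sketched an interesting would-be bijective proof, but the key decomposition step does not go through as stated, and the argument is materially different from (and, as given, weaker than) the determinantal proof the paper relies on. If you want a combinatorial route, you would need an additional device (for example, the ``transition system'' or loop-erasure style bookkeeping from Kenyon--Wilson) to make the decomposition canonical before any switching can be attempted.
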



%

We apply this recurrence with $G=H(N)$, $\mathbf{N}=\mathbf{N}_{\mu_1^{rc}, \mu_2^{rc}, \mu_3}(N)$, and the RGB coloring defined in Section~\ref{sec:labelling_algorithm_proofs}. We choose the four nodes $a$, $b$, $c$, and $d$ as follows: Let $S_i$ be the Maya diagram of $\mu_i$, and let $a$ and $b$ be the nodes in sector 1 labelled by $\max S_1^-$ and $\min S_1^+$, respectively. Similarly, we let $c$ and $d$ be the nodes in sector 2 labelled by $\max S_2^-$ and $\min S_2^+$. Note that these nodes have the same coordinates as the vertices specified in Section~\ref{sec:DTcond} but the coordinate system is different (see Figure~\ref{fig:sectors}). We remark that $a$ is a red node in sector 1, $b$ is a blue node in sector 1, $c$ is a green node in sector 2, and $d$ is a red node in sector 2. So $a$, $b$, $c$, and $d$ appear in cyclic order, alternating black and white. 

As in DT theory (see Section~\ref{sec:DTcond}), 
\begin{itemize}
\item the set of nodes ${\bf N} - \{a, b, c, d\}$ corresponds to the partitions $\mu_1$, $\mu_2$, $\mu_3$, 
\item the set of nodes ${\bf N}$ corresponds to the partitions $\mu_1^{rc}$, $\mu_2^{rc}$, $\mu_3$, 
\item the set of nodes ${\bf N} - \{a, b\}$ corresponds to the partitions $\mu_1$, $\mu_2^{rc}$, $\mu_3$, 
\item the set of nodes ${\bf N} - \{c, d\}$ corresponds to the partitions $\mu_1^{rc}$, $\mu_2$, $\mu_3$, 
\item the set of nodes ${\bf N} - \{a, d\}$ corresponds to the partitions $\mu_1^r$, $\mu_2^c$, $\mu_3$, and 
\item the set of nodes ${\bf N} - \{b, c\}$ corresponds to the partitions $\mu_1^{c}$, $\mu_2^r$, $\mu_3$. 
\end{itemize}
In Lemma~\ref{cor:ptweight}, we compute the edge-weight of the base$_{\mu}$ double-dimer configuration on $(H(N), \mathbf{N}_{\mu}(N))=(H(N), \mathbf{N}_{\mu_1^{rc}, \mu_2^{rc}, \mu_3}(N)- \{a, b, c, d\})$. We can also apply Lemma~\ref{cor:ptweight} to obtain the edge-weights of the base double-dimer configurations on $(H(N), {\bf N}_{\mu_1^{rc}, \mu_2^{rc}, \mu_3}(N))$, $(H(N), {\bf N}_{\mu_1^{rc}, \mu_2^{rc}, \mu_3}(N)- \{a, b\})=(H(N), \mathbf{N}_{\mu_1, \mu_2^{rc}, \mu_3}(N))$, and $(H(N), {\bf N}_{\mu_1^{rc}, \mu_2^{rc}, \mu_3}(N) - \{c, d\})=(H(N), \mathbf{N}_{\mu_1^{rc}, \mu_2, \mu_3}(N))$. To do so, we simply modify the partitions in the lemma statement appropriately. 


For double-dimer configurations on $H(N)$ with nodes ${\bf N} - \{a, d\}$ or ${\bf N} - \{b, c\}$, more care is required. This is because $\mathbf{N} - \{a, d\}\neq\mathbf{N}_{\mu_1^r, \mu_2^c, \mu_3}(N)$ and $\mathbf{N} - \{b, c\}\neq\mathbf{N}_{\mu_1^c, \mu_2^r, \mu_3}(N)$. In the first case, ${\bf N} - \{a, d\}=\mathbf{N}_{\mu}(N)\cup\{b, c\}$, so we have added $b$ and $c$ (a blue node in sector 1 and a green node in sector 2) to the node set $\mathbf{N}_{\mu}(N)$. So, the unique planar pairing $\sigma_{ad}$ on ${\bf N} - \{a, d\}$ has one more blue-green path (going from a blue node in sector 1 to a green node in sector 2) than $\sigma_{abcd}$. We remark that it is no longer the case that all blue-green paths begin and end in sector 3. Similarly, the pairing $\sigma_{bc}$ has one more red-green and one more red-blue path than $\sigma_{abcd}$, and one fewer blue-green path. We illustrate this with an example. 


\begin{example}
Let $N = 5$ and let $\mu_1 = (3, 2)$, $\mu_2 = (2, 2)$, and $\mu_3=\emptyset$. Then the node sets $\mathbf{N}-\{a, b, c, d\}$, $\mathbf{N}-\{a, d\}$, and $\mathbf{N}-\{b, c\}$ are as shown below. 
\begin{center}
\includegraphics[width=0.3\textwidth]{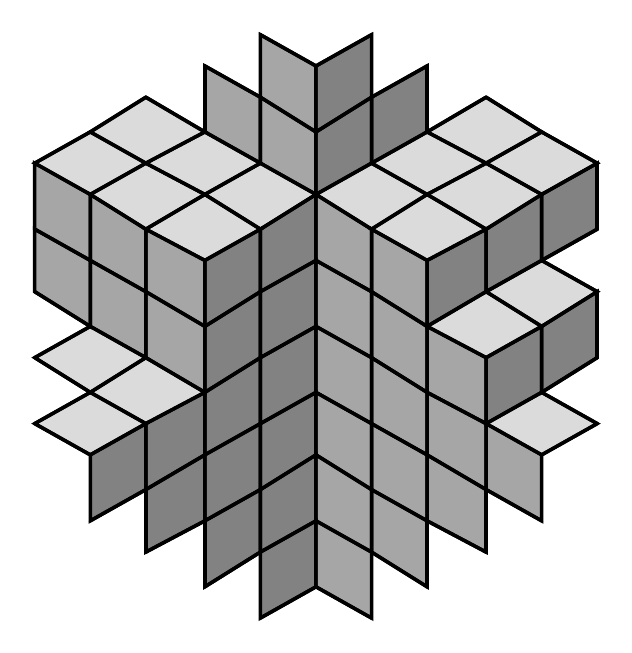}\hfill
\includegraphics[width=0.3\textwidth]{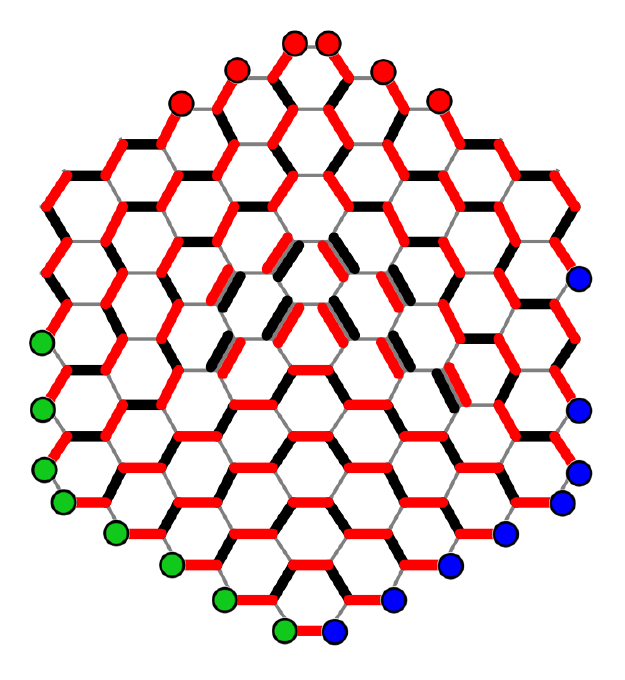}\hfill
\includegraphics[width=0.3\textwidth]{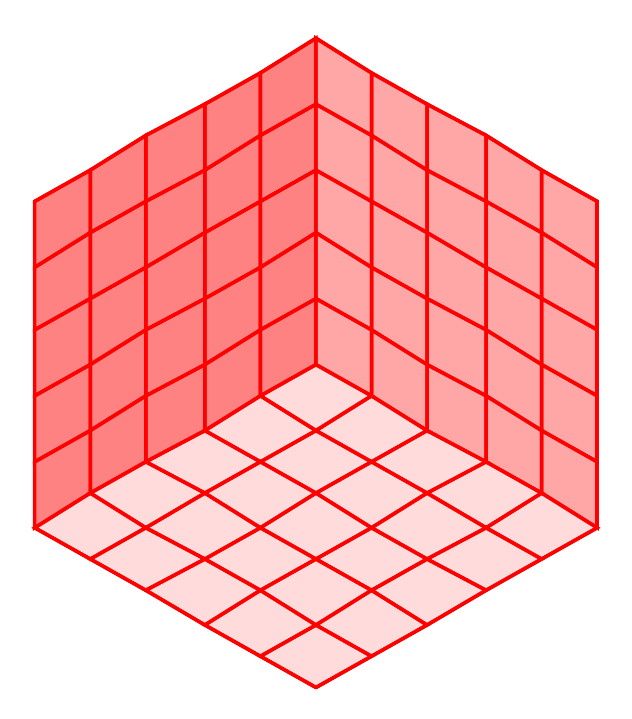}
\end{center}
When we add $b$ and $c$, we have $\mu_1^r = (4)$ and $\mu_2^c = (1, 1, 1)$, as shown below. Note that the double-dimer configuration shown has a blue-green path from sector 1 to sector 2. 
\begin{center}
\includegraphics[width=0.3\textwidth]{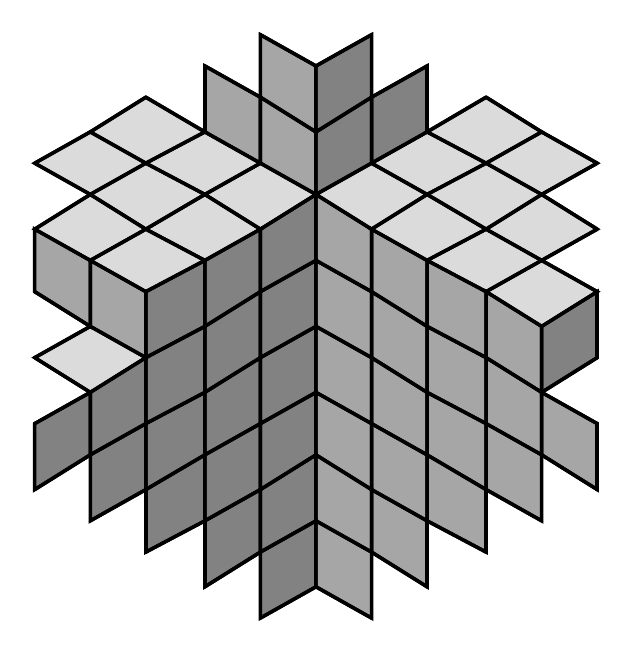}\hfill
\includegraphics[width=0.3\textwidth]{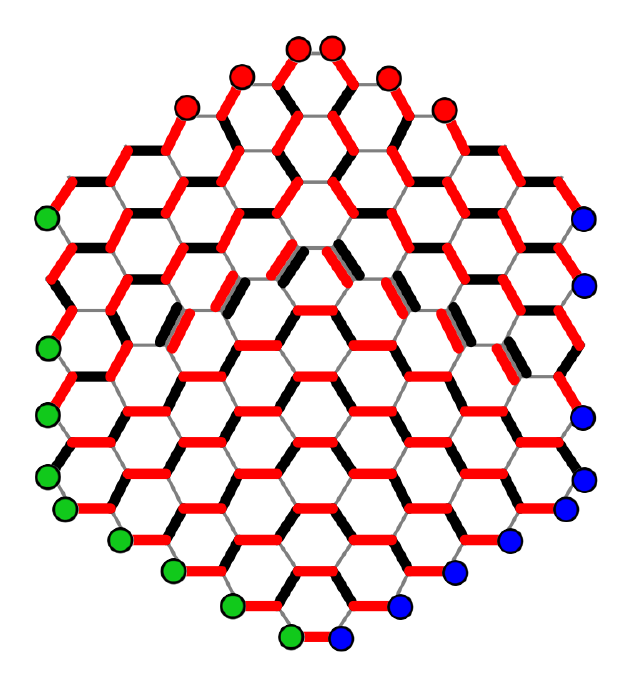}\hfill
\includegraphics[width=0.3\textwidth]{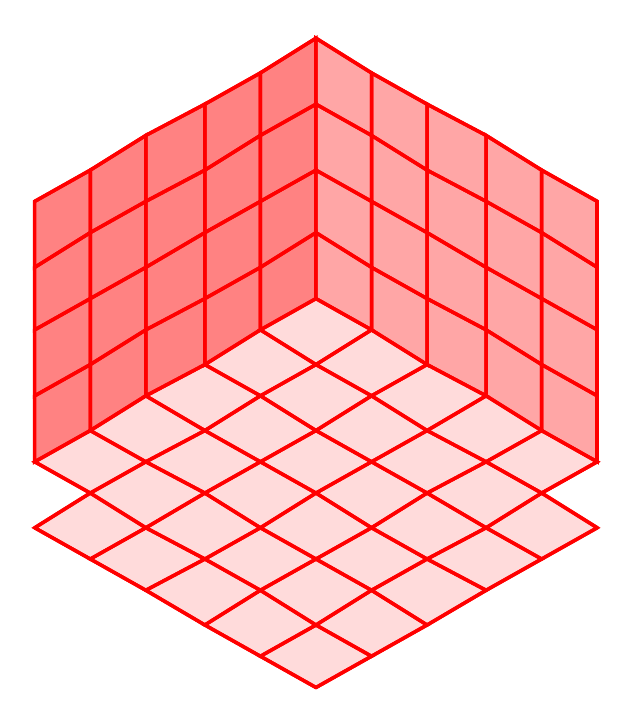}
\end{center}
When we add $a$ and $d$, we have $\mu_1^c = (2, 1, 1)$ and $\mu_2^r = (3)$, as shown below. 
\begin{center}
\includegraphics[width=0.3\textwidth]{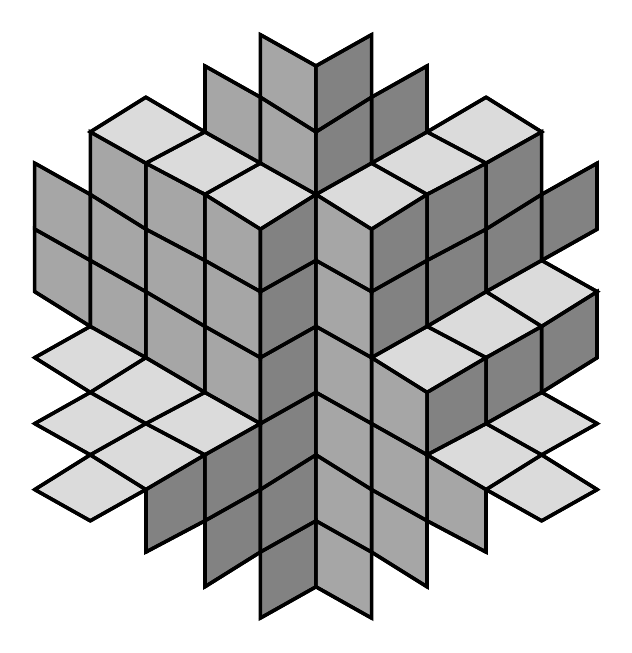}\hfill
\includegraphics[width=0.3\textwidth]{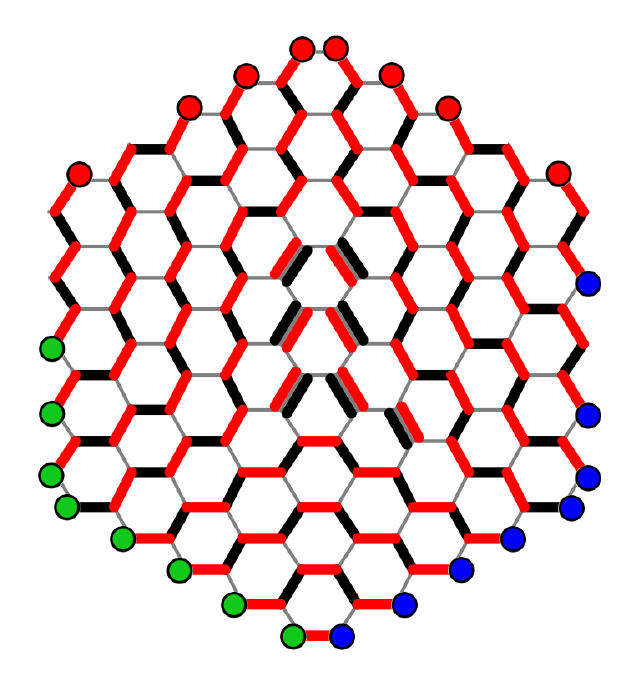}\hfill
\includegraphics[width=0.3\textwidth]{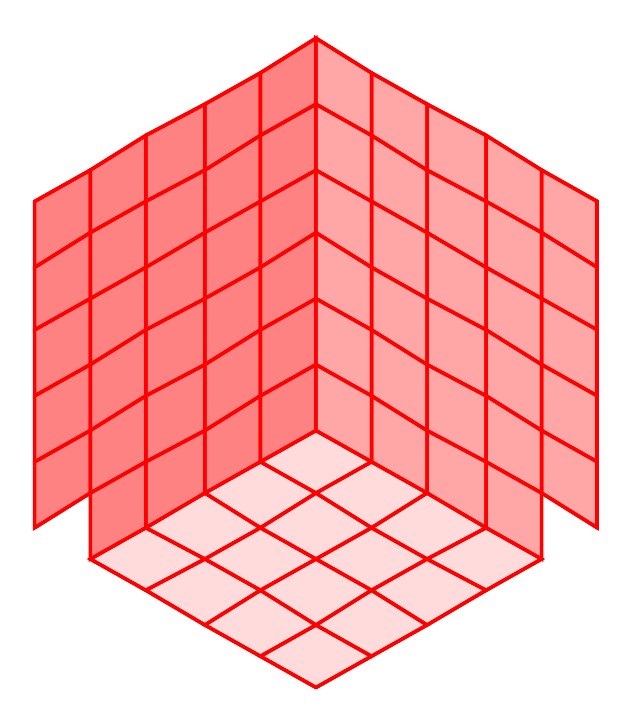}
\end{center}
\end{example}

As illustrated in the example, when the node set is ${\bf N} - \{a, d\}$, the base double-dimer configuration arises from an $AB$ configuration $(A, B)$ (associated with partitions $\mu_1^r$, $\mu_2^c$, and $\mu_3$). But, as we can see from the presence of a blue-green path from sector 1 to sector 2, this double-dimer configuration is not the result of the truncation procedure described in Section~\ref{sec:double_dimer_configs}, i.e., it is not equal to $D_{(A, B)}(N)$. Instead, the tilings and corresponding dimer configurations are shifted up by one unit prior to truncation. We will refer to this double-dimer configuration as the {\em base$_{up}$ double-dimer configuration}. We use the notation base$_{up}$ rather than  base$_{\mu_1^r, \mu_2^c, \mu_3}$, because base$_{\mu_1^r, \mu_2^c, \mu_3}$ refers to a double-dimer configuration described in Definition~\ref{defn:baseDD}, which is truncated in the usual way. 

Similarly, when the node set is ${\bf N} - \{b, c\}$, the base double-dimer configuration arises from an $AB$ configuration (associated with partitions $\mu_1^c, \mu_2^r,$ and $\mu_3$). However, the tilings and corresponding dimer configurations are shifted down by one unit prior to truncation. We will refer to this double-dimer configuration as the {\em base$_{down}$ double-dimer configuration}. 


Let $q^{w_{up}}$ be the edge-weight of the base$_{up}$ double-dimer configuration, and let $q^{w_{down}}$ be the edge-weight of the base$_{down}$ double-dimer configuration. We compute both of these quantities in Section~\ref{sec:baseweight} (see Lemmas~\ref{cor:ptweight_SU} and~\ref{cor:ptweight_SD}). Then let 
\begin{eqnarray*}
\widetilde{Z}^{DD}_{\sigma}(H(N), {\bf N}_{\mu_1^{rc}, \mu_2^{rc}, \mu_3}(N)) &=& q^{-w_{base}(\mu_1^{rc}, \mu_2^{rc}, \mu_3)}Z^{DD}_{\sigma}(H(N), {\bf N}_{\mu_1^{rc}, \mu_2^{rc}, \mu_3}(N)), \\
\widetilde{Z}^{DD}_{\sigma_{abcd}}(H(N), {\bf N}_{\mu_1, \mu_2, \mu_3}(N)) &=& q^{-w_{base}(\mu_1, \mu_2, \mu_3)}Z^{DD}_{\sigma_{abcd}}(H(N), {\bf N}_{\mu_1, \mu_2, \mu_3}(N)), \\
\widetilde{Z}^{DD}_{\sigma_{ab}}(H(N), {\bf N}_{\mu_1, \mu_2^{rc}, \mu_3}(N)) &=& q^{-w_{base}(\mu_1, \mu_2^{rc}, \mu_3)}Z^{DD}_{\sigma_{ab}}(H(N), {\bf N}_{\mu_1, \mu_2^{rc}, \mu_3}(N)), \\
\widetilde{Z}^{DD}_{\sigma_{cd}}(H(N), {\bf N}_{\mu_1^{rc}, \mu_2, \mu_3}(N)) &=& q^{-w_{base}(\mu_1^{rc}, \mu_2, \mu_3)}Z^{DD}_{\sigma_{cd}}(H(N), {\bf N}_{\mu_1^{rc}, \mu_2, \mu_3}(N)), \\
\widetilde{Z}^{DD}_{\sigma_{ad}}(H(N), {\bf N}_{\mu_1^{rc}, \mu_2^{rc}, \mu_3}(N)-\{a, d\}) &=& q^{-w_{up}}Z^{DD}_{\sigma_{ad}}(H(N), {\bf N}_{\mu_1^{rc}, \mu_2^{rc}, \mu_3}(N)-\{a, d\}), \text{ and} \\
\widetilde{Z}^{DD}_{\sigma_{bc}}(H(N), {\bf N}_{\mu_1^{rc}, \mu_2^{rc}, \mu_3}(N)-\{b, c\}) &=& q^{-w_{down}}Z^{DD}_{\sigma_{bc}}(H(N), {\bf N}_{\mu_1^{rc}, \mu_2^{rc}, \mu_3}(N)-\{b, c\}). 
\end{eqnarray*}
Let 
\begin{align*}
A & = w_{base}(\mu_1^{rc}, \mu_2^{rc}, \mu_3) + w_{base}(\mu_1, \mu_2, \mu_3), \\
B & = w_{base}(\mu_1, \mu_2^{rc}, \mu_3) + w_{base}(\mu_1^{rc}, \mu_2, \mu_3), \text{ and} \\
C & = w_{up} + w_{down}. 
\end{align*}

From the condensation recurrence (\ref{eqn:ddcond}) and the preceding remarks, we have 
\begin{eqnarray}
&&q^{A} \widetilde{Z}^{DD}_{\sigma}(H(N), {\bf N}_{\mu_1^{rc}, \mu_2^{rc}, \mu_3}(N)) \widetilde{Z}^{DD}_{\sigma_{abcd}}(H(N), {\bf N}_{\mu_1, \mu_2, \mu_3}(N)) \label{eqn:intermediatecond} \\
&=& 
q^{B} \widetilde{Z}^{DD}_{\sigma_{ab}}(H(N), {\bf N}_{\mu_1, \mu_2^{rc}, \mu_3}(N)) \widetilde{Z}^{DD}_{\sigma_{cd}}(H(N), {\bf N}_{\mu_1^{rc}, \mu_2, \mu_3}(N)) \nonumber \\
&&{}+{} q^{C} \widetilde{Z}^{DD}_{\sigma_{ad}}(H(N), {\bf N}_{\mu_1^{rc}, \mu_2^{rc}, \mu_3}(N)-\{a, d\}) \widetilde{Z}^{DD}_{\sigma_{bc}}(H(N), {\bf N}_{\mu_1^{rc}, \mu_2^{rc}, \mu_3}(N)-\{b, c\}). \nonumber
\end{eqnarray}
From Lemma~\ref{cor:ptweight}, we see that $A = B$, and we multiply equation (\ref{eqn:intermediatecond}) by $q^{-A}$. In Section~\ref{sec:PTalg}, we show that $C - A=K$, which does not depend on $N$. So, we can take $N \to \infty$, and each of the Laurent series $\widetilde{Z}^{DD}$ converges to an instance of $W$, with different partitions as parameters. The convergence of $\widetilde{Z}^{DD}_{\sigma}(H(N), {\bf N}_{\mu_1^{rc}, \mu_2^{rc}, \mu_3}(N))$ to $W(\mu_1^{rc}, \mu_2^{rc}, \mu_3; q^{-1})$ follows from Theorem~\ref{thm:ZDD convergence}. By the same theorem, $\widetilde{Z}^{DD}_{\sigma_{abcd}}(H(N), {\bf N}_{\mu_1, \mu_2, \mu_3}(N))$ converges to $W(\mu_1, \mu_2, \mu_3; q^{-1})$, $\widetilde{Z}^{DD}_{\sigma_{ab}}(H(N), {\bf N}_{\mu_1, \mu_2^{rc}, \mu_3}(N))$ converges to $W(\mu_1, \mu_2^{rc}, \mu_3; q^{-1})$, and $\widetilde{Z}^{DD}_{\sigma_{cd}}(H(N), {\bf N}_{\mu_1^{rc}, \mu_2, \mu_3}(N))$ converges to $W(\mu_1^{rc}, \mu_2, \mu_3; q^{-1})$. For the term $\widetilde{Z}^{DD}_{\sigma_{ad}}(H(N), {\bf N}_{\mu_1^{rc}, \mu_2^{rc}, \mu_3}(N)-\{a, d\})$, we remark that since we take $N \to \infty$, the fact that the tilings and corresponding dimer configurations are shifted before truncation does not matter, and we get convergence to $W(\mu_1^r, \mu_2^c, \mu_3; q^{-1})$. A similar argument implies convergence of $\widetilde{Z}^{DD}_{\sigma_{bc}}(H(N), {\bf N}_{\mu_1^{rc}, \mu_2^{rc}, \mu_3}(N)-\{b, c\})$ to $W(\mu_1^c, \mu_2^r, \mu_3; q^{-1})$. So, we get 
\begin{eqnarray*}
W(\mu_1^{rc}, \mu_2^{rc}, \mu_3; q^{-1}) W(\mu_1, \mu_2, \mu_3; q^{-1}) 
&=& 
W(\mu_1, \mu_2^{rc}, \mu_3; q^{-1}) W(\mu_1^{rc}, \mu_2, \mu_3; q^{-1}) \\
&&{}+{} q^{K} W(\mu_1^{r}, \mu_2^{c}, \mu_3; q^{-1}) W(\mu_1^{c}, \mu_2^{r}, \mu_3; q^{-1}). 
\end{eqnarray*}
Substituting $q$ for $q^{-1}$ and multiplying by $q^K$, we conclude that $W$ satisfies equation (\ref{eqn:vertex_condensation}), as desired. 




\subsection{Example}

We list all of the double-dimer configurations that correspond, via our maps, to the examples in Example~\ref{ex:labelledboxconfigsfrompt} (the same example as that in~\cite[Section 5.4]{PT2}, with the same numbering). The double-dimer configurations corresponding to these configurations appear in Figure~\ref{fig:pt-doubledimer-example}. 

\newlength{\ddwidth}
\setlength{\ddwidth}{0.8in}
\begin{figure}
\begin{tabular}{|r|cccccc|}
	\hline
	Length & (i) & (ii) & (iii) & (iv) & (v) & (vi) \\
	\hline
	\raisebox{0.4in}{0\rule{0in}{0.5in}} &
	\includegraphics[width=\ddwidth]{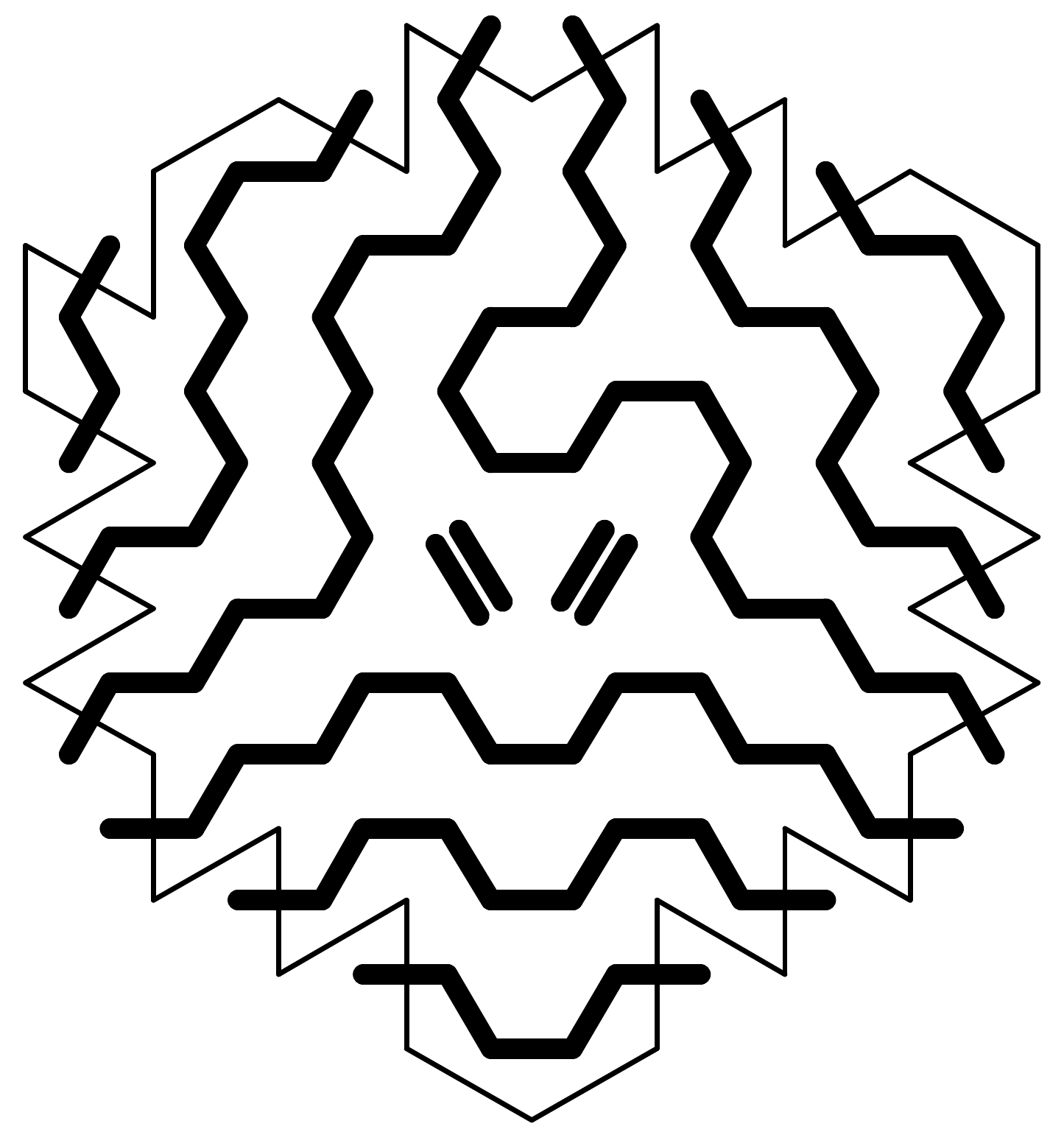} &&&&&\\
	\raisebox{0.4in}{1} &
\includegraphics[width=\ddwidth]{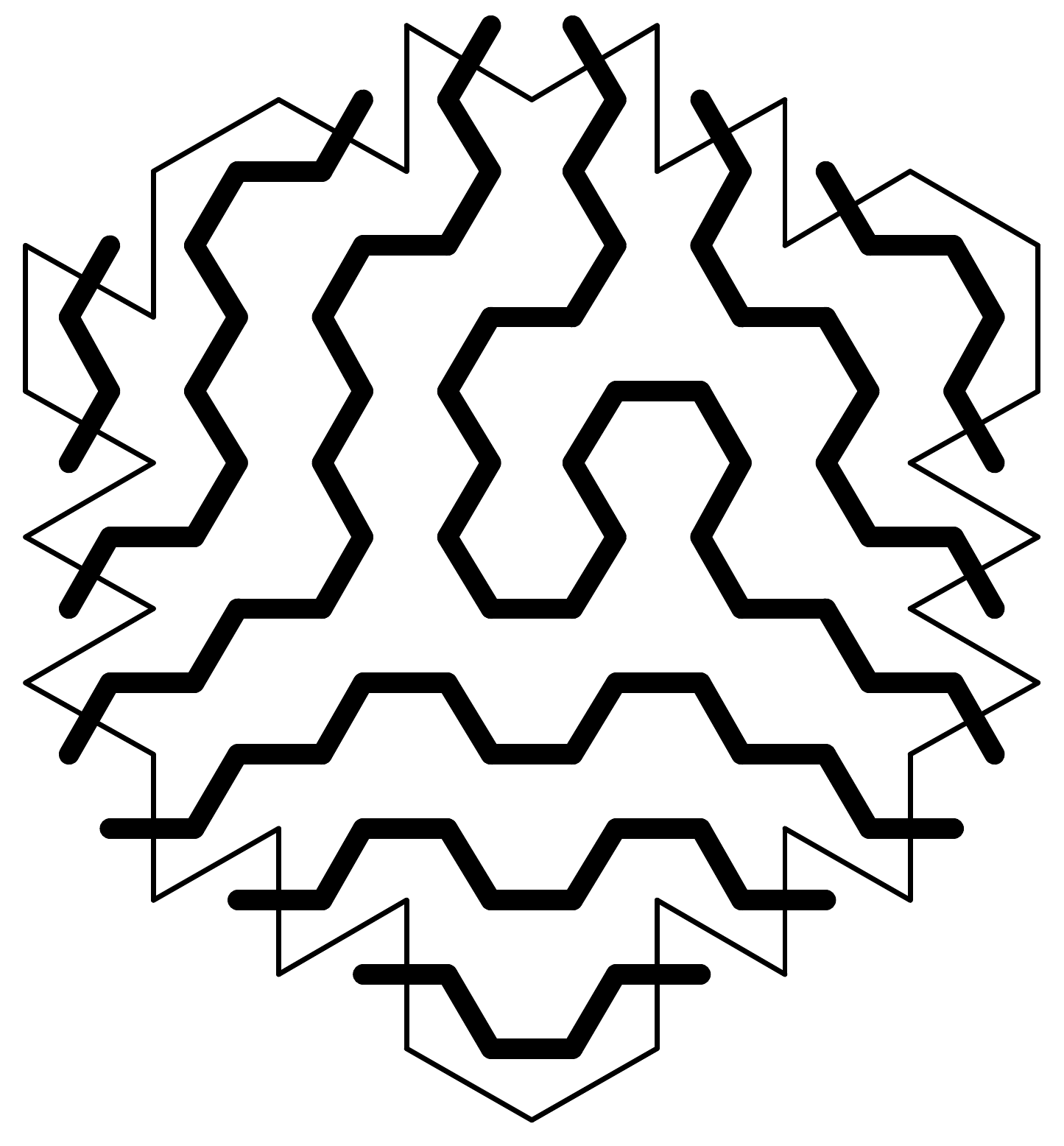} &
	\includegraphics[width=\ddwidth]{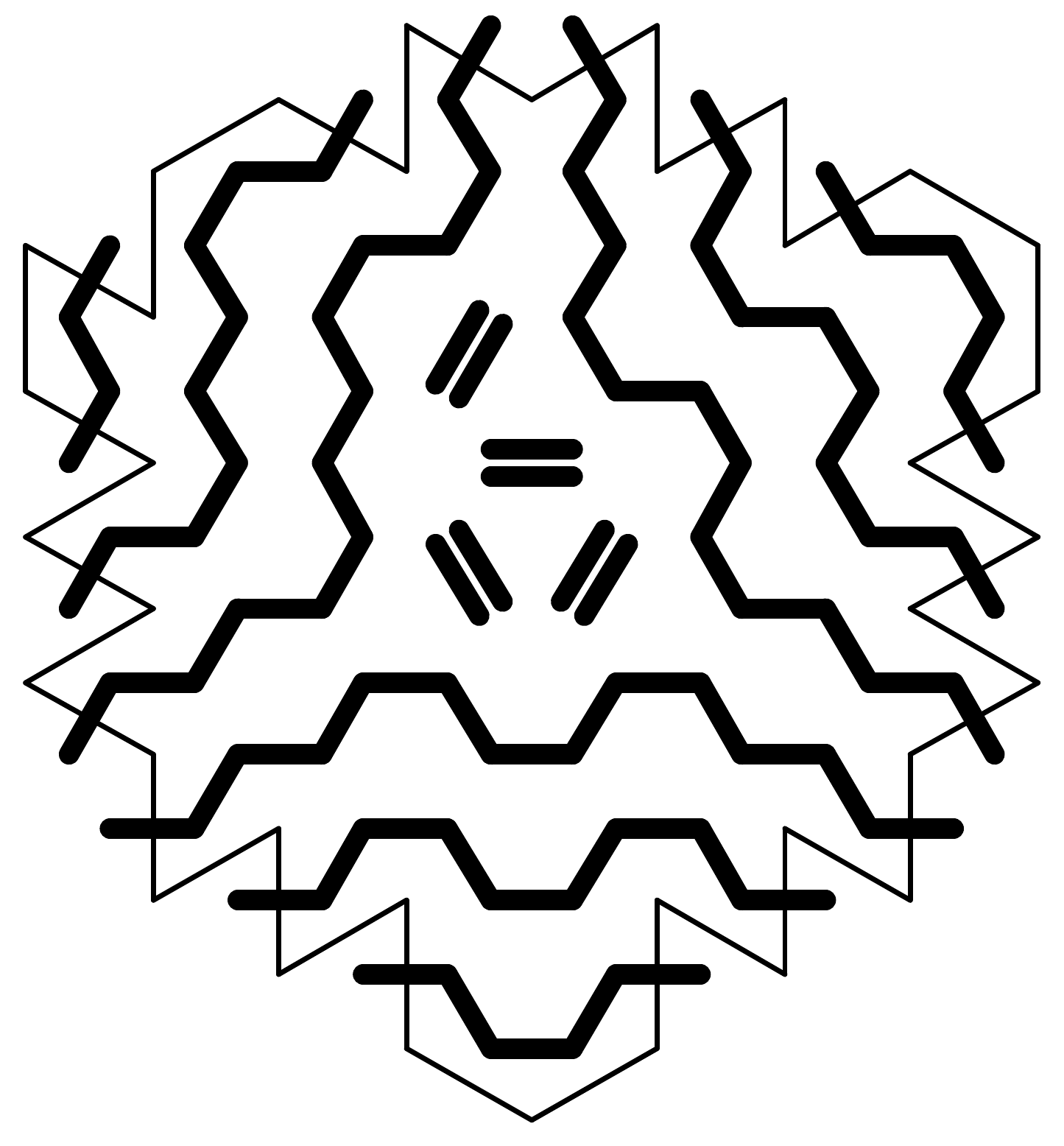} &&&&\\
	\raisebox{0.4in}{2} &
\includegraphics[width=\ddwidth]{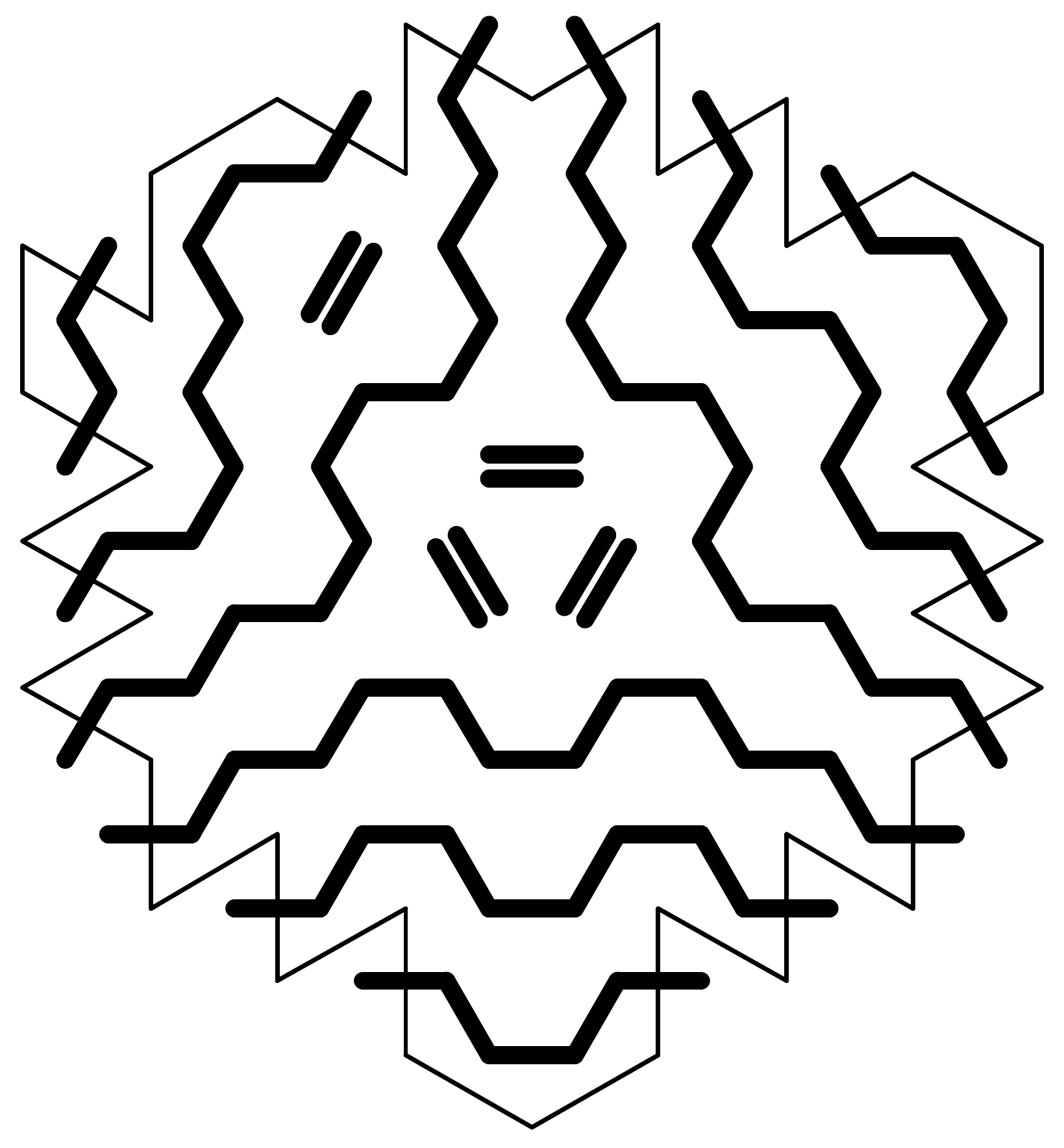} &
\includegraphics[width=\ddwidth]{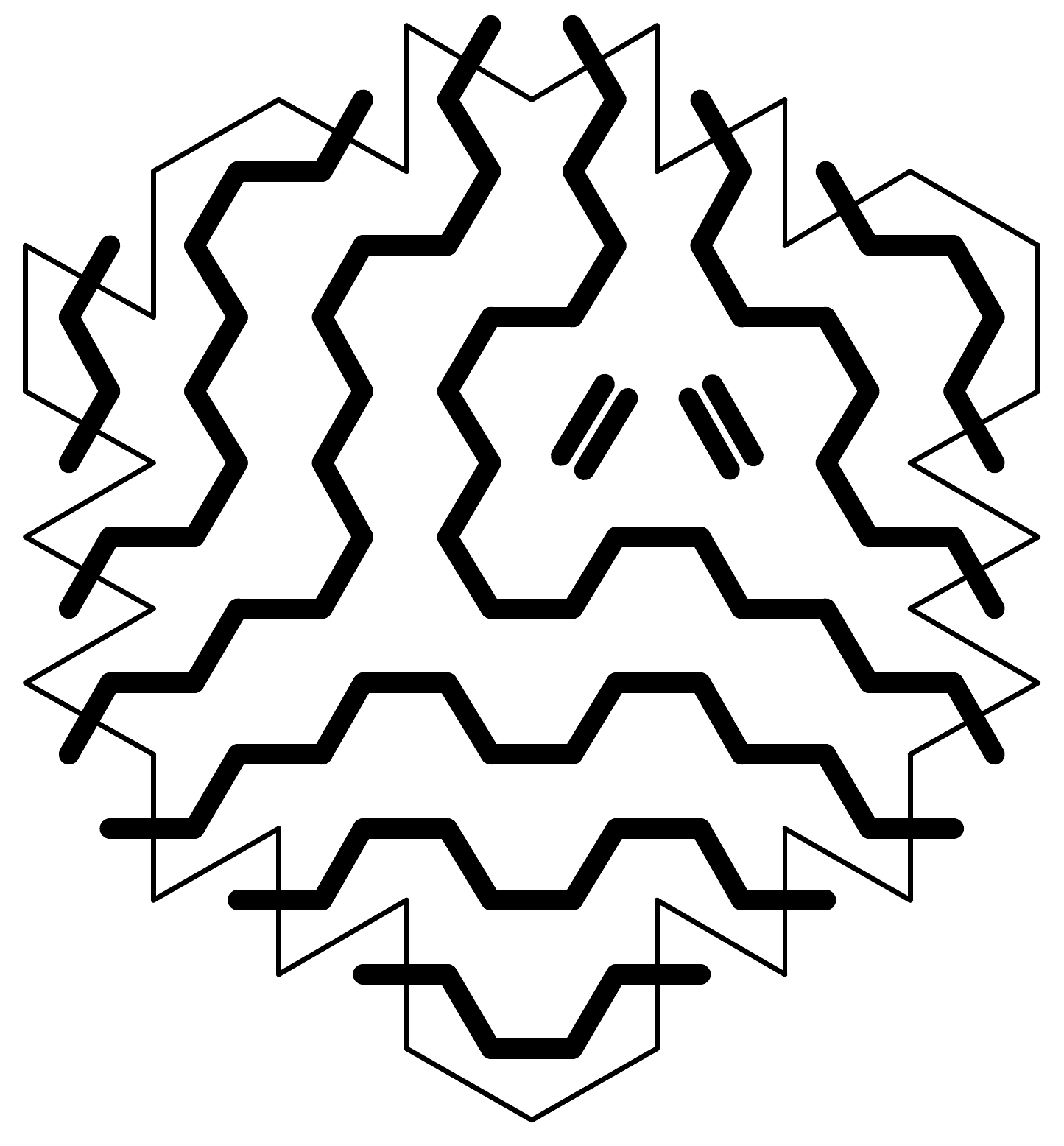} &
	\includegraphics[width=\ddwidth]{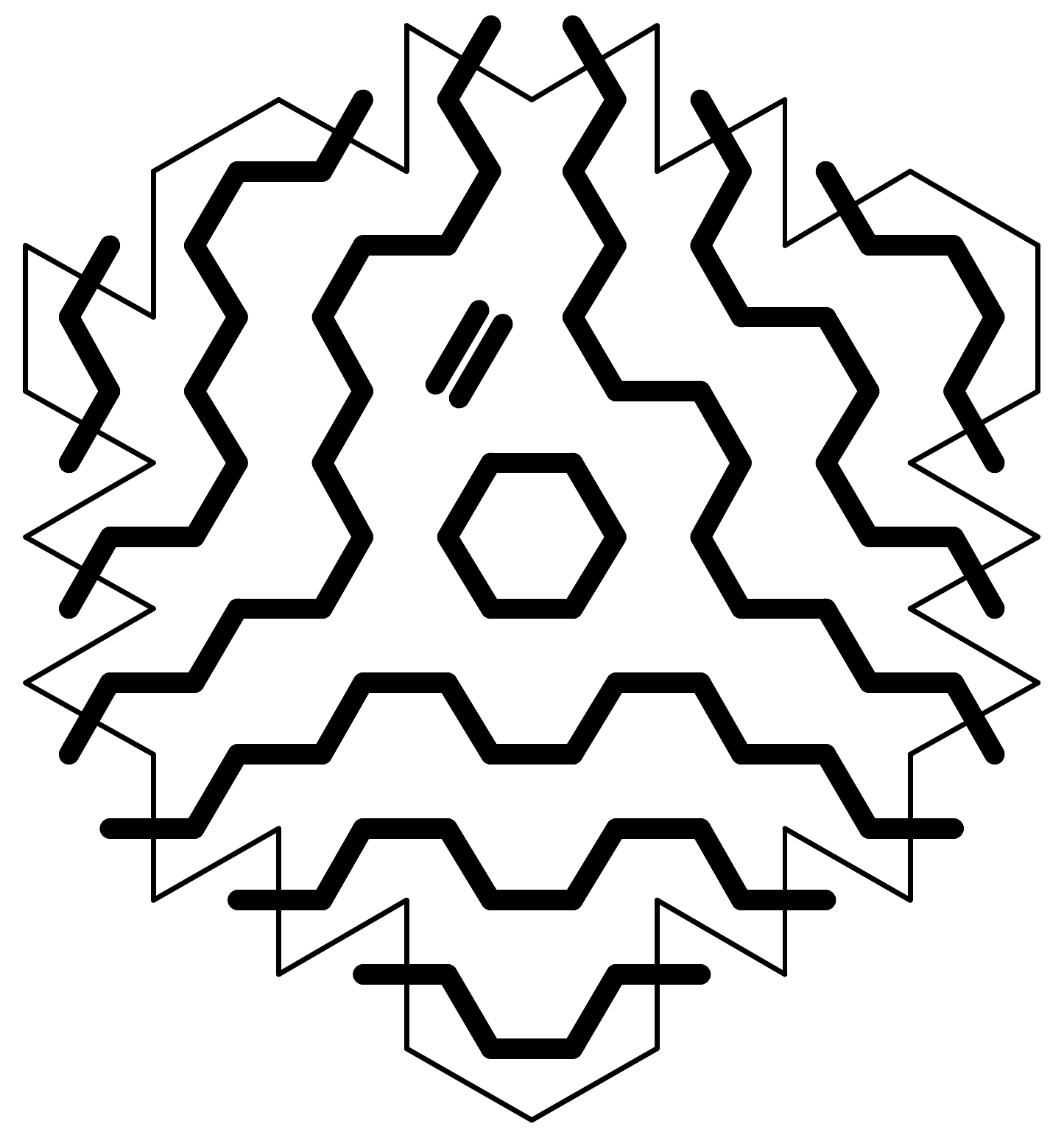} &&&\\
	\raisebox{0.4in}{3} &
\includegraphics[width=\ddwidth]{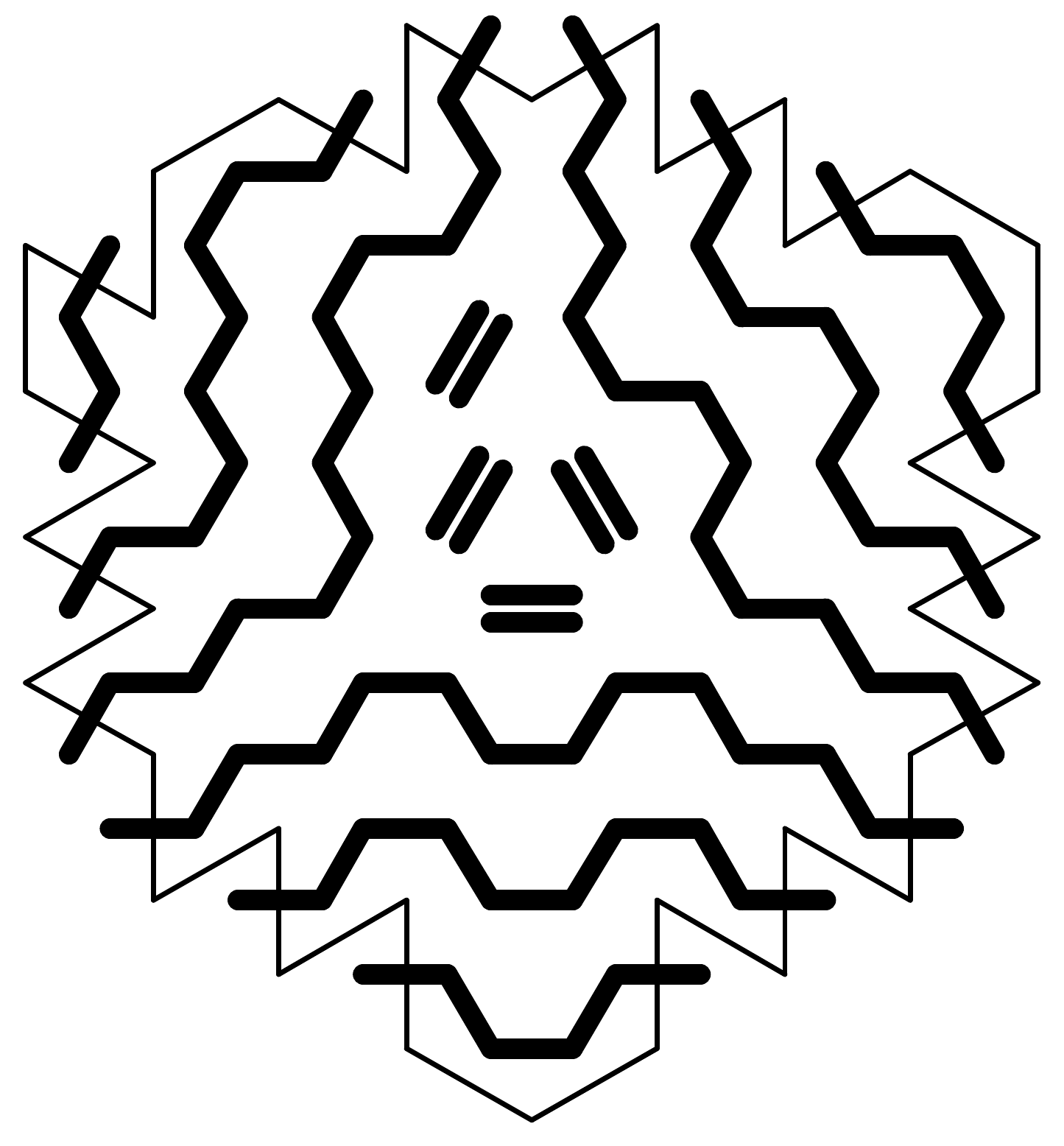} &
\includegraphics[width=\ddwidth]{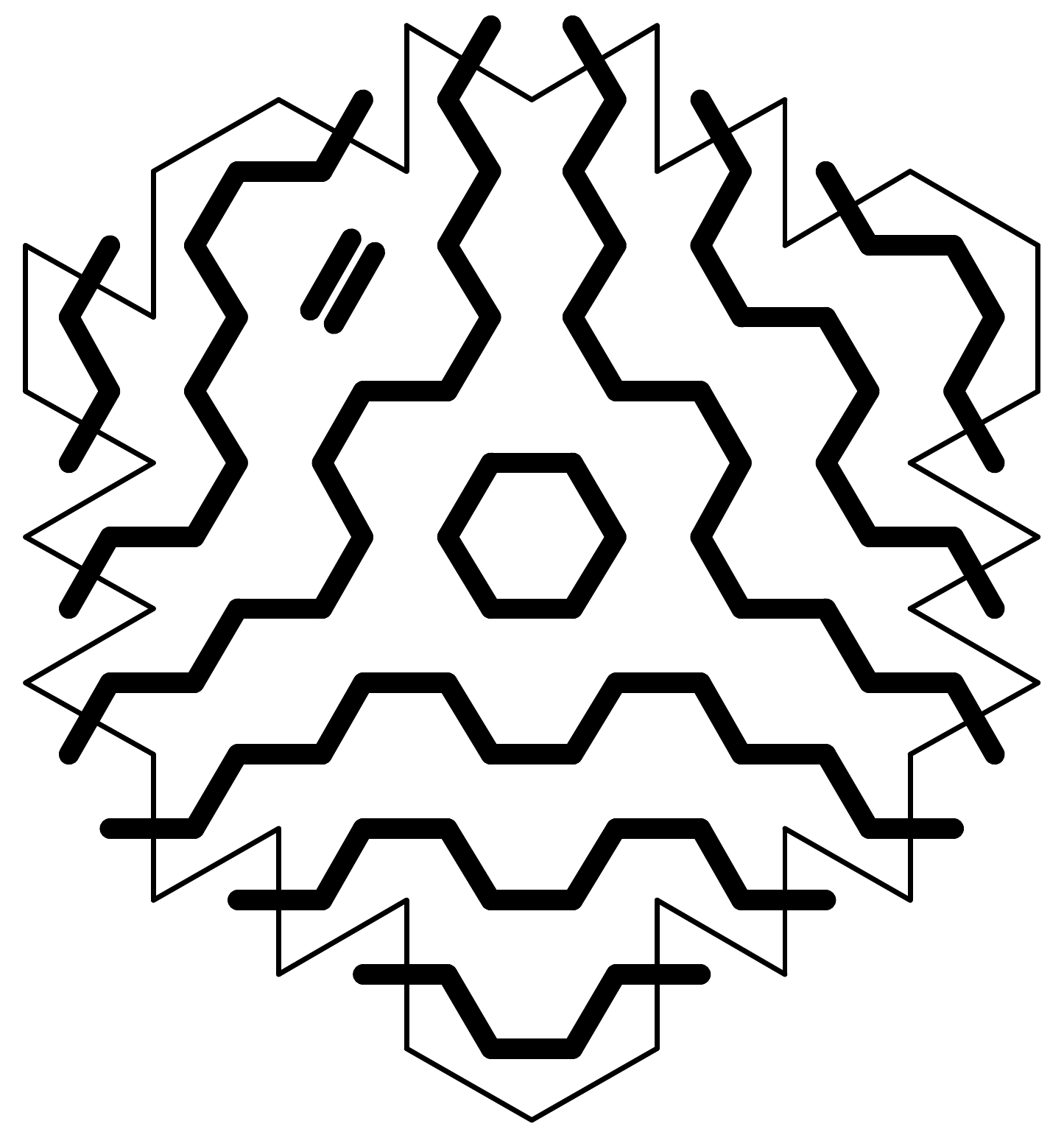} &
\includegraphics[width=\ddwidth]{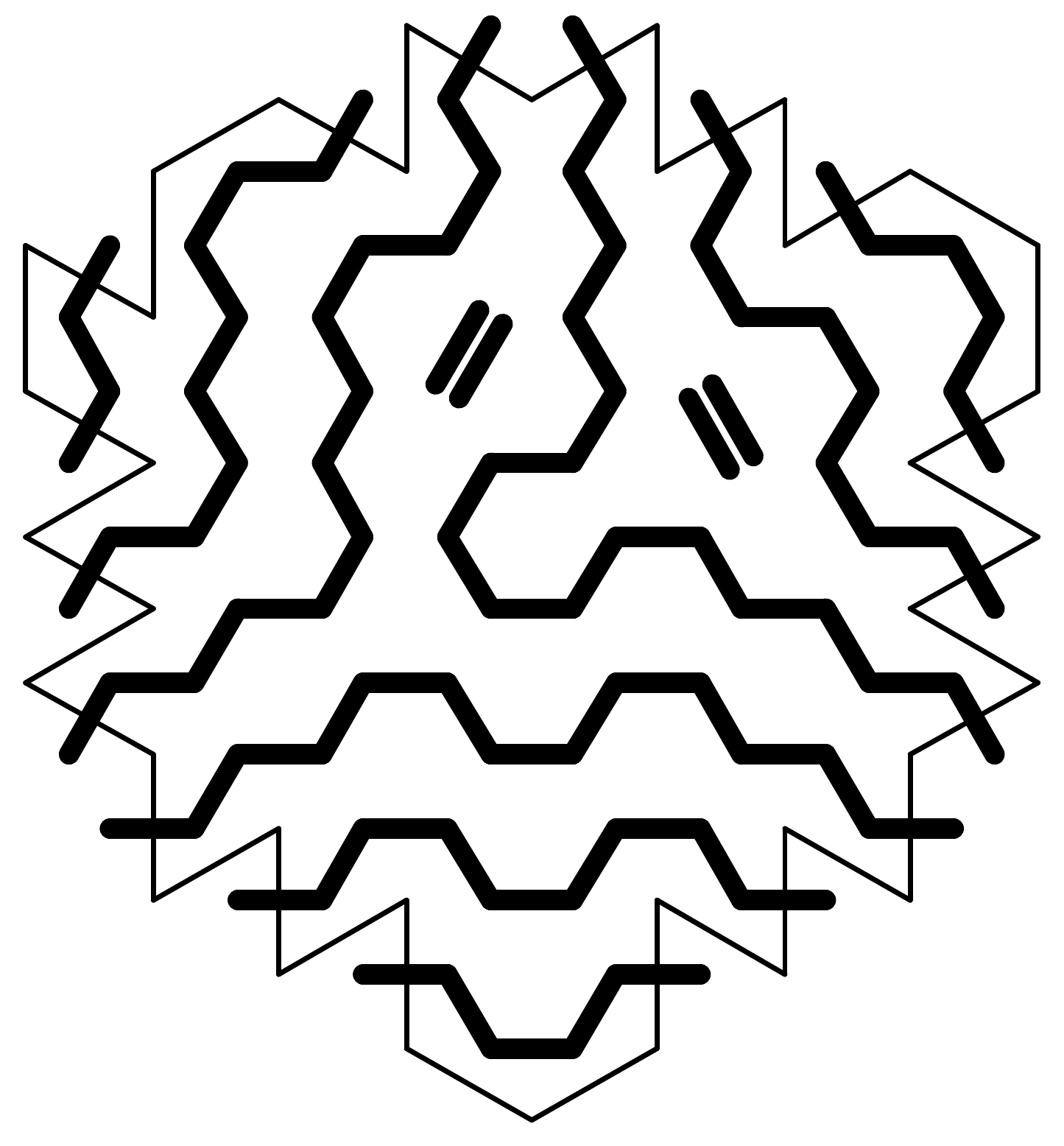} &
\includegraphics[width=\ddwidth]{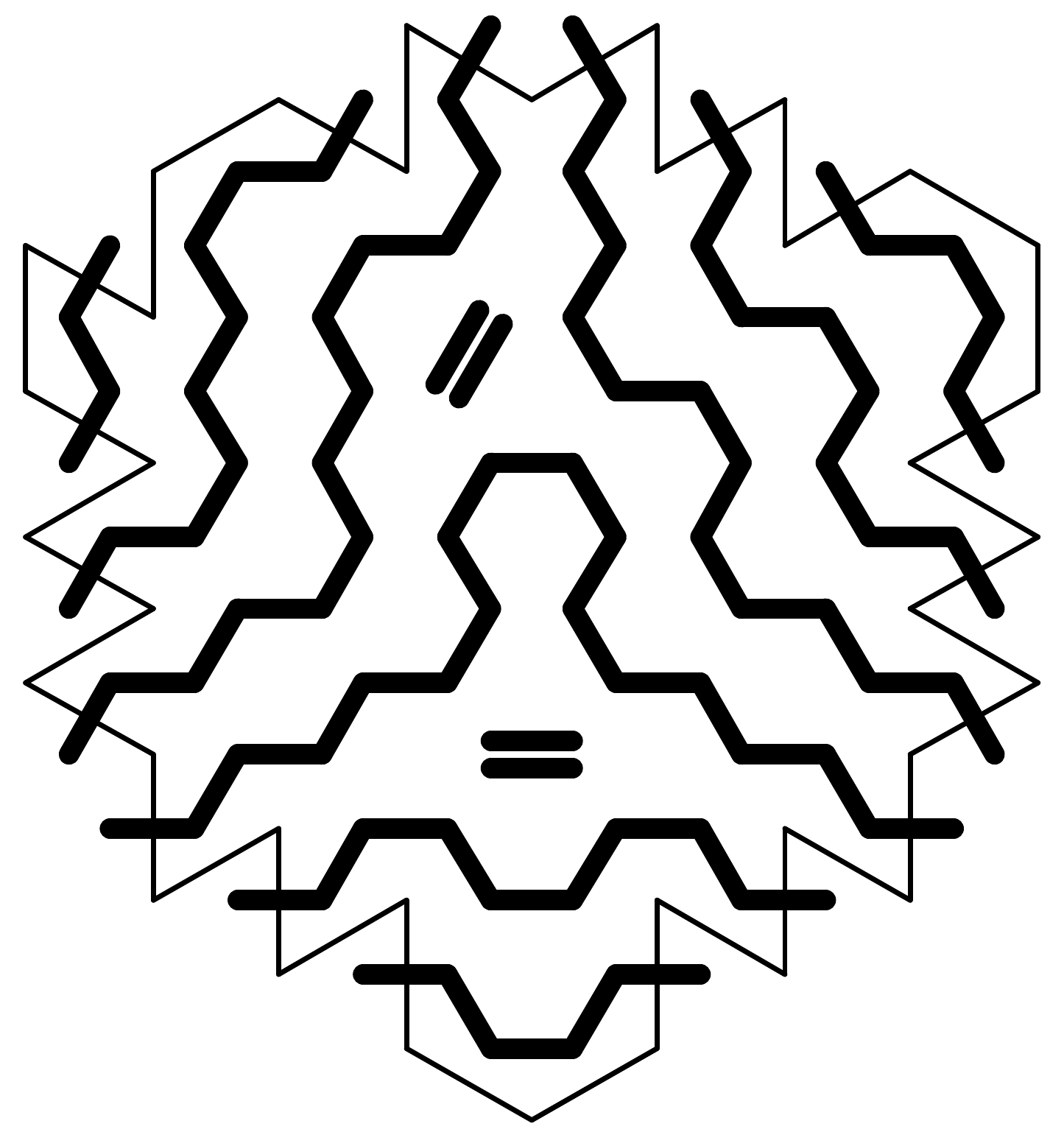} &
\includegraphics[width=\ddwidth]{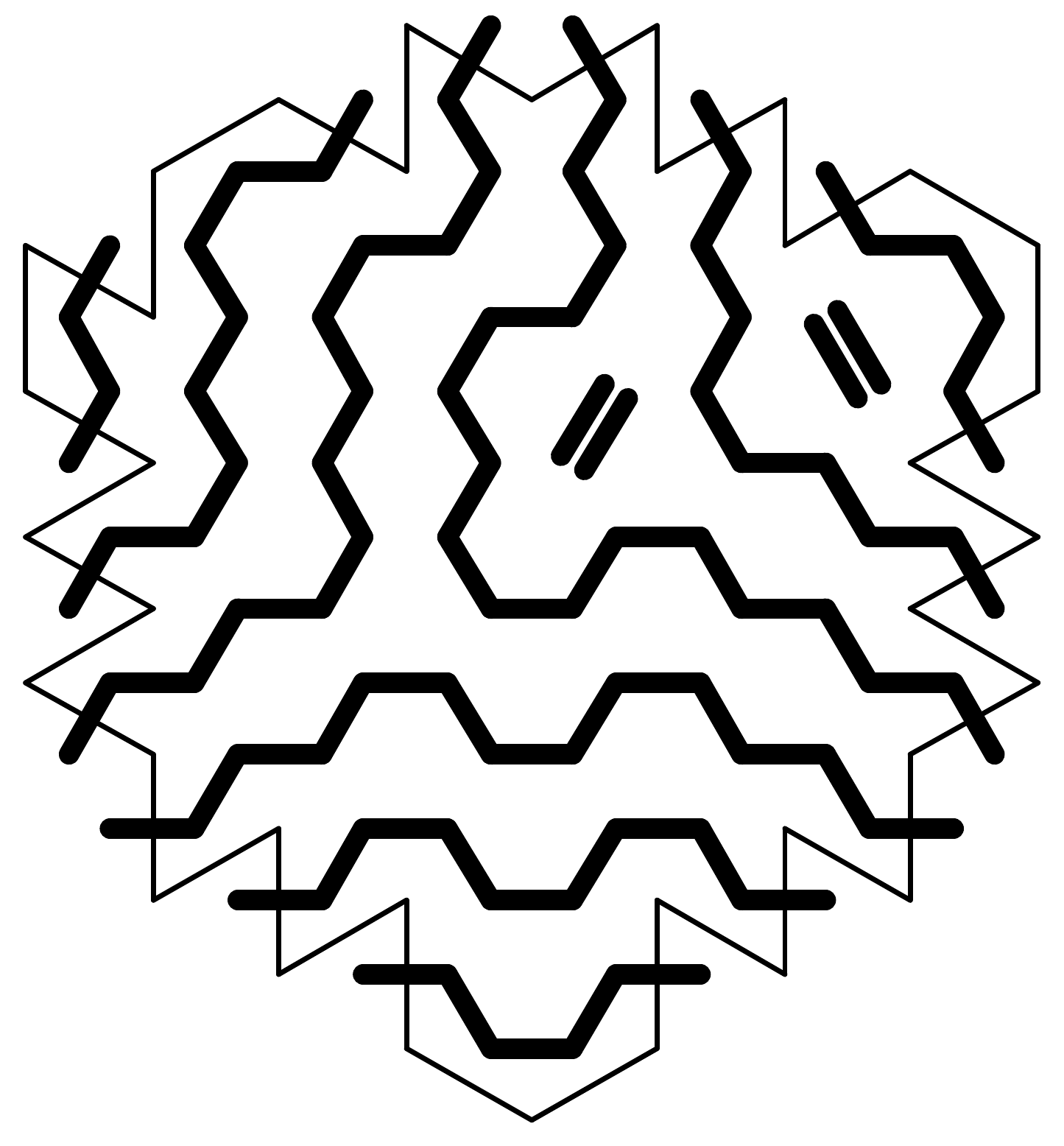} &
\includegraphics[width=\ddwidth]{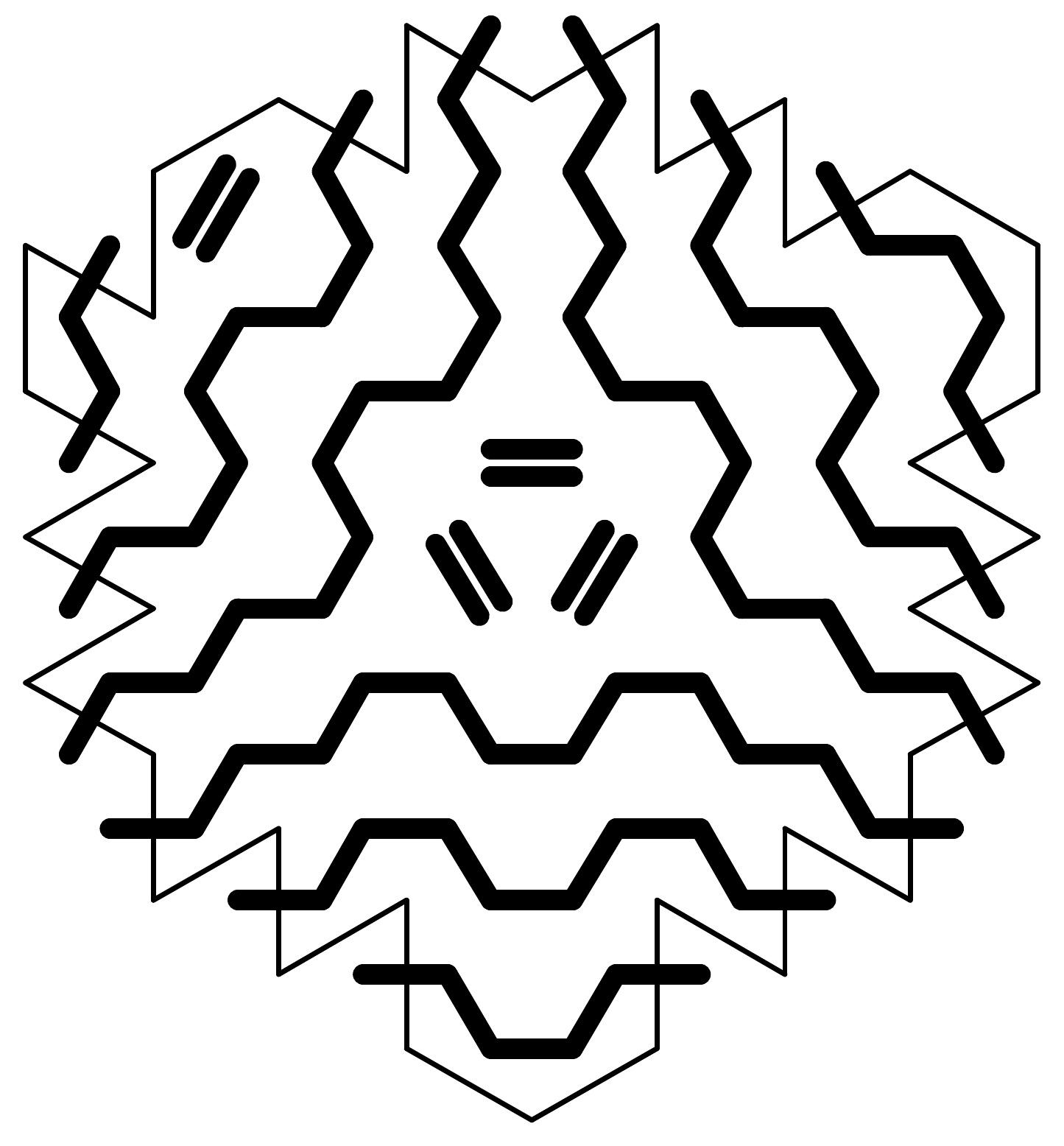} \\
\hline
\end{tabular}
\caption{Double-dimer configurations corresponding to the labelled box configurations in~\cite[Section 5.4]{PT2}.}
\label{fig:pt-doubledimer-example}
\end{figure}

\section{Weights}
\label{sec:weights}

\subsection{Modifying the partition \texorpdfstring{$\mu$}{mu}}
\label{sec:mumodify}

In this section, we collect facts about partitions that we will need to compute the DT and PT weights.

\subsubsection{The diagonal of \texorpdfstring{$\mu$}{mu}}

\begin{remark}
\label{lem:irdmu}
Let $d(\mu)$ denote the length of the diagonal of $\mu$. Then $d(\mu)$ is the largest integer $i$ such that $\mu_i \geq i$. This is immediate from the observation that $\mu_i \geq i$ if and only if $(i, i)$ is a cell in the Young diagram of $\mu$. 
\end{remark}

\begin{remark}
\label{rem:dmuplus1}
It is immediate from Remark~\ref{lem:irdmu} that $\mu_{d(\mu)+ 1} \leq d(\mu)$. For if $\mu_{d(\mu)+ 1} > d(\mu)$, then $\mu_{d(\mu)+ 1} \geq d(\mu) + 1$, contradicting that $d(\mu)$ is the length of the diagonal.
\end{remark}

In many of the computations we will make use of the fact that $d(\mu)$ is the largest integer $i$ with $\mu_i \geq i$. We will sometimes also need to know the largest integer $i$ with $\mu_i \geq i -1$. 

\begin{example}
\begin{itemize}
\item If $\mu = (4, 4, 4, 3, 1)$, then $d(\mu) = 3$ and the largest integer $i$ with $\mu_i \geq i-1$ is $4$, as $\mu_4 = 3 \geq 4-1$ and for $i>4$, $\mu_i\leq\mu_4=3<4\leq i-1$. 
\item If $\mu = (8, 8, 7, 5, 3, 2, 1, 1, 1)$, then $d(\mu) =4$ and $4$ is the largest integer $i$ with $\mu_i \geq i-1$, since $\mu_4 = 5\geq 4-1$ and for $i>4$, $\mu_i\leq\mu_5=3<4\leq i-1$.
\end{itemize}
\end{example}

The preceding example illustrates the following facts. 

\begin{lemma}
\label{rem:idie}
Let $d_{s}(\mu)$ be the largest integer $i$ such that $\mu_i \geq i-1$.
There are two possibilities: either $d_{s}(\mu) = d(\mu)$ or $d_{s}(\mu) = d(\mu) + 1$.  
\end{lemma}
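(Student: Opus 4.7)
The plan is to prove the two-sided bound $d(\mu) \leq d_s(\mu) \leq d(\mu) + 1$ directly from the definitions and from Remark~\ref{rem:dmuplus1}.

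First I would establish the lower bound $d_s(\mu) \geq d(\mu)$. This is immediate: by definition $\mu_{d(\mu)} \geq d(\mu) \geq d(\mu) - 1$, so $i = d(\mu)$ is one of the indices satisfying $\mu_i \geq i - 1$, hence $d_s(\mu)$, being the largest such index, is at least $d(\mu)$.

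Next I would establish the upper bound $d_s(\mu) \leq d(\mu) + 1$ by showing that $\mu_{d(\mu)+2} < (d(\mu)+2) - 1$. By Remark~\ref{rem:dmuplus1}, $\mu_{d(\mu)+1} \leq d(\mu)$. Since $\mu$ is weakly decreasing, $\mu_{d(\mu)+2} \leq \mu_{d(\mu)+1} \leq d(\mu) < d(\mu) + 1 = (d(\mu)+2) - 1$. Therefore no index $i \geq d(\mu)+2$ can satisfy $\mu_i \geq i - 1$ (by weak decreasingness of $\mu_i$ and weak increasingness of $i - 1$), so $d_s(\mu) \leq d(\mu) + 1$.

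Combining these bounds yields $d_s(\mu) \in \{d(\mu), d(\mu)+1\}$, as claimed. There is no real obstacle here; the whole argument is a two-line manipulation once Remark~\ref{rem:dmuplus1} is invoked, and the bulk of the write-up is simply unpacking the definitions of $d(\mu)$ and $d_s(\mu)$.
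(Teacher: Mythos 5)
Your proof is correct and follows essentially the same route as the paper's: the lower bound $d_s(\mu)\geq d(\mu)$ comes from $\mu_{d(\mu)}\geq d(\mu)\geq d(\mu)-1$, and the upper bound comes from Remark~\ref{rem:dmuplus1} together with the monotonicity of $\mu_i-(i-1)$, exactly as in the paper.
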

  
\begin{proof}
Since $\mu$ is a partition, $\mu_i-i+1$ is a strictly decreasing sequence, so $d_{s}(\mu)$ is equivalently the unique integer $i$ such that $\mu_i\geq i-1$ and $\mu_{i+1}<i$. If $ \mu_i \geq i$, then  $ \mu_i \geq i-1$, so $d_{s}(\mu) \geq d(\mu)$. And since 
$$d(\mu) + 1 > d(\mu) \geq \mu_{d(\mu) + 1} \geq \mu_{d(\mu)+ 2}=\mu_{d(\mu)+ 1+1},$$
so $d_{s}(\mu) \leq d(\mu) + 1$. 
\end{proof}

\begin{lemma}
\label{lem:d'eq}
Let $\mu$ be a partition. Then 
\[d_{s}(\mu) = d(\mu) + 1 \Leftrightarrow \mu_{d(\mu)+1} = d(\mu).
\]
\end{lemma}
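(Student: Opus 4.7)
The plan is to unpack both sides via the definitions and combine them with the upper bound from Remark~\ref{rem:dmuplus1}. By Lemma~\ref{rem:idie}, $d_s(\mu)\in\{d(\mu), d(\mu)+1\}$, so $d_s(\mu)=d(\mu)+1$ is equivalent to the statement that $d(\mu)+1$ itself satisfies the defining inequality for $d_s$, namely $\mu_{d(\mu)+1}\geq (d(\mu)+1)-1 = d(\mu)$.

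For the forward direction, I would argue: if $d_s(\mu)=d(\mu)+1$, then by definition of $d_s$ we have $\mu_{d(\mu)+1}\geq d(\mu)$. Combined with Remark~\ref{rem:dmuplus1}, which gives $\mu_{d(\mu)+1}\leq d(\mu)$, this forces $\mu_{d(\mu)+1}=d(\mu)$.

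For the reverse direction, if $\mu_{d(\mu)+1}=d(\mu)$, then $\mu_{d(\mu)+1}\geq (d(\mu)+1)-1$, so by the definition of $d_s(\mu)$ as the largest index $i$ with $\mu_i\geq i-1$, we conclude $d_s(\mu)\geq d(\mu)+1$. Combined with the upper bound from Lemma~\ref{rem:idie}, we get $d_s(\mu)=d(\mu)+1$.

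The proof is entirely mechanical once the three prior facts (the definitions of $d(\mu)$ and $d_s(\mu)$, Remark~\ref{rem:dmuplus1}, and Lemma~\ref{rem:idie}) are on the table; there is no real obstacle. The only thing to watch is to make sure the direction of each inequality is recorded correctly, since both $\mu_{d(\mu)+1}\leq d(\mu)$ and $\mu_{d(\mu)+1}\geq d(\mu)$ are used, and the whole point is their collision.
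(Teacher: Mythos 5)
Your proof is correct and follows the same strategy as the paper: both directions rest on the definition of $d_s$, with Remark~\ref{rem:dmuplus1} supplying the upper bound $\mu_{d(\mu)+1}\leq d(\mu)$ for the forward implication and Lemma~\ref{rem:idie} pinning down $d_s(\mu)$ in the reverse one. Your version spells out a couple of steps the paper leaves implicit, but it is the same argument.
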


\begin{proof}
If $\mu_{d(\mu)+1} = d(\mu) =  (d(\mu) + 1) - 1$, then $d_{s}(\mu) = d(\mu) + 1$. If $d_{s}(\mu) = d(\mu) + 1$, then $\mu_{d(\mu)+1} \geq d(\mu)$. Since $\mu_{d(\mu)+1} < d(\mu) + 1$, we are done. 
\end{proof}

\begin{remark}
\label{rem:d'ineq}
By Lemmas~\ref{rem:idie} and~\ref{lem:d'eq},
\[d_{s}(\mu) = d(\mu) \Leftrightarrow \mu_{d(\mu)+1} < d(\mu).
\]
\end{remark}

\subsubsection{The partitions \texorpdfstring{$\mu^{r}$ and $\mu^{c}$}{mur and muc}}

To compute the weights in DT and PT, we will find it useful to have explicit descriptions of $\mu^{r}$ and $\mu^{c}$, where 
\begin{itemize}
\item $\mu^r$ is the partition associated to the charge $-1$ Maya diagram $S(\mu) \setminus \{\min S^+(\mu)\}$, and 
\item $\mu^c$ is the partition associated to the charge $1$ Maya diagram $S(\mu) \cup \{ \max S^-(\mu)\}$.
\end{itemize}
Additionally, $\mu^{rc}$ denotes the partition associated to the Maya diagram $(S(\mu)\cup \{ \max S^-(\mu)\}) \setminus \{\min S^+(\mu)\}$. Note that none of the partitions $\mu^r$, $\mu^c$, $\mu^{rc}$ are defined if $\mu=\emptyset$, so in what follows, when we refer to any of these partitions, we implicitly assume that $\mu\neq\emptyset$. 

\begin{remark}
\label{rem:MDmurmuc}
We will use the following expressions for the charge 0 Maya diagrams of $\mu^r$ and $\mu^c$. 
\begin{itemize} 
\item $\mu^r$ has charge 0 Maya diagram $S(\mu^r) = \{s + 1: s \in S(\mu) \setminus \{ \min S^{+}(\mu) \} \}$
\item $\mu^c$ has charge 0 Maya diagram $S(\mu^c) = \{s - 1: s \in S(\mu) \cup \{ \max S^{-}(\mu) \} \}$
\end{itemize}
\end{remark}

\begin{example}
Let $\mu = (4, 4, 4, 3, 1)$. Then $S(\mu) = \{ \frac{7}{2}, \frac{5}{2}, \frac{3}{2}, -\frac{1}{2}, -\frac{7}{2}, -\frac{11}{2},- \frac{13}{2}, \ldots \}$.
\begin{itemize}
\item Since $\min S^{+}(\mu) = \frac{3}{2}$, $\mu^{r}$ has charge $-1$ Maya diagram $ \{ \frac{7}{2}, \frac{5}{2}, -\frac{1}{2}, -\frac{7}{2}, -\frac{11}{2}, -\frac{13}{2}, \ldots \}$ and charge 0 Maya diagram $ \{  \frac{9}{2}, \frac{7}{2}, \frac{1}{2}, - \frac{5}{2}  , -\frac{9}{2}, -\frac{11}{2}, \ldots \}$. 
\item Since $\max S^{-}(\mu) = -\frac{3}{2} $, $\mu^{c}$ has charge $1$ Maya diagram $ \{ \frac{7}{2}, \frac{5}{2}, \frac{3}{2},   -\frac{1}{2}, -\frac{3}{2}, -\frac{7}{2}, -\frac{11}{2},- \frac{13}{2}, \ldots \}$ and charge 0 Maya diagram $ \{ \frac{5}{2}, \frac{3}{2}, \frac{1}{2},   -\frac{3}{2}, -\frac{5}{2}, -\frac{9}{2}, -\frac{13}{2}, - \frac{15}{2}, \ldots \}$. 
\end{itemize}
\end{example}





We begin with an explicit description of $\mu^r$ and a few facts that follow from this description.

\begin{lemma}
\label{lem:mur}
Let $\mu$ be a partition. 
Then 
\[
\mu^r_i = \begin{cases} \mu_i + 1 & \text{if }i < d(\mu)  \\
\mu_{i+1} & \text{if }i \geq d(\mu).
\end{cases}
\]
That is, we obtain $\mu^r$ from $\mu$ by removing $\mu_{d(
\mu)}$ and adding 1 to the $j$th part of the partition for all $j < d(\mu)$. 
\end{lemma}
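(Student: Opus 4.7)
The plan is to compute $S(\mu^r)$ explicitly via Remark~\ref{rem:MDmurmuc} and then read off $\mu^r_i$ from the sorted listing of $S(\mu^r)$. The main step is identifying $\min S^+(\mu)$; everything else is bookkeeping.

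First I would show that $\min S^+(\mu) = \mu_{d(\mu)} - d(\mu) + \tfrac{1}{2}$. The sequence $a_t := \mu_t - t + \tfrac{1}{2}$ is strictly decreasing in $t$ (since $\mu$ is weakly decreasing), and by Remark~\ref{lem:irdmu}, $a_t > 0$ if and only if $t \leq d(\mu)$. Hence $S^+(\mu) = \{a_1, a_2, \ldots, a_{d(\mu)}\}$, and its minimum is $a_{d(\mu)} = \mu_{d(\mu)} - d(\mu) + \tfrac{1}{2}$.

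Next, using Remark~\ref{rem:MDmurmuc}, the charge $0$ Maya diagram of $\mu^r$ is
\[
S(\mu^r) = \{s + 1 : s \in S(\mu) \setminus \{\min S^+(\mu)\}\} = \{\mu_t - t + \tfrac{3}{2} : t \geq 1,\ t \neq d(\mu)\}.
\]
Since $t \mapsto \mu_t - t + \tfrac{3}{2}$ is also strictly decreasing, the elements of $S(\mu^r)$ listed in decreasing order are indexed precisely by the positive integers with $d(\mu)$ omitted. That is, the $i$-th largest element equals $\mu_t - t + \tfrac{3}{2}$ where $t = i$ if $i < d(\mu)$, and $t = i + 1$ if $i \geq d(\mu)$.

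Finally, since $S(\mu^r)$ is the charge $0$ Maya diagram of $\mu^r$, the $i$-th largest element of $S(\mu^r)$ equals $\mu^r_i - i + \tfrac{1}{2}$. Equating and solving gives $\mu^r_i = \mu_i + 1$ for $i < d(\mu)$ and $\mu^r_i = \mu_{i+1}$ for $i \geq d(\mu)$. The final sentence of the lemma (``remove $\mu_{d(\mu)}$ and add $1$ to earlier parts'') is just a verbal translation of this formula. I do not foresee any real obstacle; the only subtlety is ensuring that the omission of the single index $t = d(\mu)$ corresponds to the correct reindexing, which is immediate from the strict monotonicity of $t \mapsto \mu_t - t + \tfrac{3}{2}$.
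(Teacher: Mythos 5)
Your proof is correct and follows essentially the same route as the paper's: identify $\min S^+(\mu) = \mu_{d(\mu)} - d(\mu) + \tfrac{1}{2}$, apply the shift-by-one description of $S(\mu^r)$, and re-index the resulting strictly decreasing sequence with the index $d(\mu)$ omitted.
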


\begin{proof}
For convenience, we write $S := S(\mu)$. By definition, $\mu^r$ is the partition associated to the charge $-1$ Maya diagram $S \setminus \{\min S^+\}$, i.e., $S(\mu^r)=\{s+1: s\in S \setminus \{\min S^+\}\}$. Observe that $\min S^+$ is the least half integer $\mu_t-t+\frac{1}{2}$ such that $\mu_t-t+\frac{1}{2}>0$. Equivalently, it is the least half integer $\mu_t-t+\frac{1}{2}$ such that $\mu_t\geq t$, i.e., $\min S^+=\mu_{d(\mu)}-d(\mu)+\frac{1}{2}$. So, 
\begin{align*}
S \setminus \{\min S^+\}&=\left\{\mu_t-t+\frac{1}{2}: 1\leq t<d(\mu)\right\}\cup\left\{\mu_t-t+\frac{1}{2}: d(\mu)<t\right\} \\
&=\left\{\mu_t-t+\frac{1}{2}: 1\leq t<d(\mu)\right\}\cup\left\{\mu_{t+1}-t-1+\frac{1}{2}: d(\mu)\leq t\right\} 
\end{align*}
and 
\begin{align*}
S(\mu^r)&=\left\{\mu_t+1-t+\frac{1}{2}: 1\leq t<d(\mu)\right\}\cup\left\{\mu_{t+1}-t+\frac{1}{2}: d(\mu)\leq t\right\} \\
&=\left\{\mu^r_t-t+\frac{1}{2}: 1\leq t<d(\mu)\right\}\cup\left\{\mu^r_t-t+\frac{1}{2}: d(\mu)\leq t\right\}. 
\end{align*}
This shows that $\mu^r_t=\mu_t+1$ for $t<d(\mu)$ and $\mu^r_t=\mu_{t+1}$ for $t\geq d(\mu)$. 
\end{proof}

\begin{example}
\label{ex:mur}\leavevmode
\begin{itemize}
\item Let $\mu = (4, 4, 4, 3, 1)$. Then $d(\mu) = 3$ and $\mu_{d(\mu)} = 4$. So $\mu^r = (5, 5, 3, 1)$.
\item Let $\mu = (8, 8, 7, 5, 3, 2, 1, 1, 1)$. Then $d(\mu) = 4$ and $\mu_{d(\mu)} =5$. 
So $\mu^r = (9, 9, 8, 3, 2, 1, 1, 1)$. 
\end{itemize}
\end{example}


\begin{remark}
\label{rem:constructionofmur}
The following observations are immediate consequences of Lemma~\ref{lem:mur}.
\begin{itemize}
\item $| \mu^r |  = |\mu| +d(\mu) - 1 - \mu_{d(\mu)} \leq  |\mu| + d(\mu)  - 1 - d(\mu) =  |\mu| - 1$
\item $d(\mu)-1\leq d(\mu^r) \leq d(\mu)$, since by the construction of $\mu^r$, if $(i, i)$ is a cell in the Young diagram of $\mu$ and $i < d(\mu)$, it is a cell in the Young diagram of $\mu^r$. 
\item $\mu_{d(\mu) + 1} =\mu_{d(\mu)}^{r}$, and therefore $\mu_{d(\mu) + 1} = d(\mu)$ if and only if $\mu_{d(\mu)}^{r} = d(\mu)$. Also, $\mu_{d(\mu) + 1} <  d(\mu)$ if and only if $\mu_{d(\mu)}^{r} < d(\mu)$. 
\end{itemize}
\end{remark}

\begin{remark}
\label{lem:lengthmur}
$\ell(\mu^r) = \ell(\mu) -1$
\end{remark}

\begin{lemma}
\label{lem:dmur}
For any partition $\mu$, $d(\mu^r) =d(\mu)$ if and only if $\mu_{d(\mu) + 1} = d(\mu)$. 
\end{lemma}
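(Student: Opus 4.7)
The plan is to derive both directions from two facts already at hand: the identity $\mu^r_{d(\mu)} = \mu_{d(\mu)+1}$ from Lemma~\ref{lem:mur} (which is recorded as the third bullet of Remark~\ref{rem:constructionofmur}), and the sandwich $d(\mu)-1 \leq d(\mu^r) \leq d(\mu)$ from the second bullet of the same remark. Combined with the characterization of $d(\lambda)$ as the largest $i$ with $\lambda_i \geq i$ (Remark~\ref{lem:irdmu}), the equivalence becomes a matter of deciding whether $d(\mu^r)$ sits at the top or the bottom of this two-element range.

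For the forward direction, I would assume $d(\mu^r) = d(\mu)$. By Remark~\ref{lem:irdmu} applied to $\mu^r$, this yields $\mu^r_{d(\mu)} \geq d(\mu)$. Using $\mu^r_{d(\mu)} = \mu_{d(\mu)+1}$, this gives $\mu_{d(\mu)+1} \geq d(\mu)$. On the other hand, Remark~\ref{rem:dmuplus1} gives $\mu_{d(\mu)+1} \leq d(\mu)$, so equality must hold.

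For the reverse direction, I would assume $\mu_{d(\mu)+1} = d(\mu)$. Then $\mu^r_{d(\mu)} = \mu_{d(\mu)+1} = d(\mu)$, so $\mu^r_{d(\mu)} \geq d(\mu)$; by Remark~\ref{lem:irdmu} applied to $\mu^r$, this forces $d(\mu^r) \geq d(\mu)$. Combined with $d(\mu^r) \leq d(\mu)$ from Remark~\ref{rem:constructionofmur}, we conclude $d(\mu^r) = d(\mu)$.

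There is no real obstacle here: the statement is essentially a bookkeeping check that the diagonal of $\mu^r$ drops by $1$ from that of $\mu$ precisely when the $(d(\mu){+}1)$st part of $\mu$ is strictly less than $d(\mu)$. The only care needed is noting that $\mu \ne \emptyset$ whenever $\mu^r$ is referenced, which is an implicit assumption already made in the excerpt; the proof will be two short paragraphs mirroring the two directions described above.
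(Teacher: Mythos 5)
Your proof is correct and follows essentially the same route as the paper's: both directions reduce to the identity $\mu^r_{d(\mu)} = \mu_{d(\mu)+1}$ together with the sandwich $d(\mu)-1 \le d(\mu^r) \le d(\mu)$ and the characterization of the diagonal via $\lambda_i \ge i$. Nothing further is needed.
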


\begin{proof}
First assume that $d(\mu^r) =d(\mu)$. We see that
\[
\mu_{d(\mu) + 1} =\mu^r_{d(\mu)}=\mu^r_{d(\mu^r)} \geq d(\mu^r) = d(\mu) \]
and since  
$\mu_{d(\mu) + 1} \leq d(\mu)$, $\mu_{d(\mu) + 1} =  d(\mu)$.

Now suppose $\mu_{d(\mu) + 1} = d(\mu)$. This means that $\mu^r_{d(\mu)} = d(\mu)$, so $(d(\mu), d(\mu))$ is a cell in the Young diagram of $\mu_r$, so $d(\mu^r) \geq d(\mu)$. By Remark~\ref{rem:constructionofmur}, $d(\mu^r) = d(\mu)$. 
\end{proof}

\begin{lemma}
\label{lem:dmurneqdmu}
For any partition $\mu$, $d(\mu^r) =d(\mu) - 1$ if and only if $\mu_{d(\mu) + 1} < d(\mu)$. 
\end{lemma}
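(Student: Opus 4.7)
The plan is to derive this lemma as a direct consequence of the three facts already established in this subsection, namely Remark~\ref{rem:dmuplus1}, Remark~\ref{rem:constructionofmur}, and Lemma~\ref{lem:dmur}. The key observation is that the range of values for $d(\mu^r)$ is already pinned down to exactly two possibilities, and Lemma~\ref{lem:dmur} characterizes exactly when the larger of the two occurs. The present lemma should therefore be essentially the contrapositive of Lemma~\ref{lem:dmur}, once we confirm that ``$\neq d(\mu)$'' is the same as ``$< d(\mu)$'' on the relevant side.

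First, I would invoke Remark~\ref{rem:constructionofmur} to conclude that $d(\mu^r) \in \{d(\mu)-1, d(\mu)\}$. Consequently, $d(\mu^r) = d(\mu) - 1$ holds if and only if $d(\mu^r) \neq d(\mu)$.

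Next, applying Lemma~\ref{lem:dmur}, the condition $d(\mu^r) \neq d(\mu)$ is equivalent to $\mu_{d(\mu)+1} \neq d(\mu)$. Finally, by Remark~\ref{rem:dmuplus1}, we always have $\mu_{d(\mu)+1} \leq d(\mu)$, so the condition $\mu_{d(\mu)+1} \neq d(\mu)$ is equivalent to $\mu_{d(\mu)+1} < d(\mu)$. Chaining these equivalences together yields the claim.

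I do not anticipate any real obstacle: the proof is a three-line bookkeeping argument assembling results stated immediately above. The only mild subtlety is making explicit that $d(\mu^r)$ cannot fall below $d(\mu)-1$ (which is what pins the ``$\neq$'' to ``$<$'' on the $d(\mu^r)$ side), and that $\mu_{d(\mu)+1}$ cannot exceed $d(\mu)$ (which pins the ``$\neq$'' to ``$<$'' on the partition-entry side); both observations are already recorded in the excerpt.
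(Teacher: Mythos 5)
Your proof is correct. It takes a slightly different route from the paper's: you derive the lemma purely formally, as the contrapositive of Lemma~\ref{lem:dmur} combined with the two bounds $d(\mu)-1 \leq d(\mu^r) \leq d(\mu)$ (Remark~\ref{rem:constructionofmur}) and $\mu_{d(\mu)+1} \leq d(\mu)$ (Remark~\ref{rem:dmuplus1}). The paper instead proves both implications directly from the Young-diagram description, using the identity $\mu^r_{d(\mu)} = \mu_{d(\mu)+1}$ and the fact that $(d(\mu)-1, d(\mu)-1)$ is always a cell of $\mu^r$; it does not cite Lemma~\ref{lem:dmur} at all. Your version is more economical and makes the logical dependence on Lemma~\ref{lem:dmur} explicit; the paper's is self-contained and keeps the two lemmas' proofs parallel in style. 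Both are sound, and the content is the same three facts either way.
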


\begin{proof}

If $d(\mu^r) =d(\mu) - 1$, $(d(\mu), d(\mu))$ is not a cell in the Young diagram of $\mu^r$, so $\mu^r_{d(\mu)} < d(\mu)$. Since $\mu^r_{d(\mu)} = \mu_{d(\mu) + 1}$ by Remark~\ref{rem:constructionofmur}, this shows that $\mu_{d(\mu) + 1} < d(\mu)$. 

If $\mu_{d(\mu) + 1} < d(\mu)$, $(d(\mu), d(\mu))$ is not a cell in the Young diagram of $\mu^r$. However, $(d(\mu) -1, d(\mu)-1)$ is a cell in the Young diagram of $\mu^r$, by construction. 
So $d(\mu^r) =d(\mu) - 1$. 
\end{proof}

\begin{example}
Continuing Example~\ref{ex:mur}, when $\mu = (4, 4, 4, 3, 1)$, $d(\mu) = 3$, and $\mu_{d(\mu) + 1}  = d(\mu)$. As expected, $\mu^{r} = (5, 5, 3, 1)$ has $d(\mu^r) = 3$. 

When $\mu = (8, 8, 7, 5, 3, 2, 1, 1, 1)$, $d(\mu) = 4$ and $\mu_{d(\mu) + 1} < d(\mu)$. 
As expected, $\mu^r = (9, 9, 8, 3, 2, 1, 1, 1)$ has $d(\mu^r) = 3$. 
\end{example}

\begin{lemma}
\label{lem:indexset}
If there exists a positive integer $i$ such that $\mu^r_i > i+1$, then the largest such integer is $d(\mu) - 1$. In other words, the set of positive integers $i$ satisfying $\mu^r_i > i+1$ is equal to the set of positive integers $i$ satisfying $i\leq d(\mu)-1$. 
\end{lemma}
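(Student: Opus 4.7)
The plan is to compute $\mu^r_i$ explicitly via Lemma~\ref{lem:mur} in both regimes $i < d(\mu)$ and $i \geq d(\mu)$, and show that the inequality $\mu^r_i > i+1$ holds in the first regime and fails in the second. This will prove the stronger statement that the set of positive integers $i$ satisfying $\mu^r_i > i+1$ equals $\{1, 2, \ldots, d(\mu)-1\}$, from which the lemma follows immediately.

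First I would treat the easy direction $i \geq d(\mu)$. By Lemma~\ref{lem:mur}, $\mu^r_i = \mu_{i+1}$, and since $\mu$ is weakly decreasing, $\mu_{i+1} \leq \mu_{d(\mu)+1}$. Applying Remark~\ref{rem:dmuplus1} gives $\mu_{d(\mu)+1} \leq d(\mu) \leq i < i+1$, so $\mu^r_i \leq i < i+1$ and the inequality $\mu^r_i > i+1$ fails.

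Next I would handle the range $1 \leq i \leq d(\mu)-1$. Again by Lemma~\ref{lem:mur}, $\mu^r_i = \mu_i + 1$. Since $\mu$ is weakly decreasing and $\mu_{d(\mu)} \geq d(\mu)$ by Remark~\ref{lem:irdmu}, we have $\mu_i \geq \mu_{d(\mu)} \geq d(\mu) \geq i+1$, so $\mu^r_i = \mu_i + 1 \geq i+2 > i+1$, as required.

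Combining these two ranges shows that the set of positive integers $i$ with $\mu^r_i > i+1$ is precisely $\{1, \ldots, d(\mu)-1\}$ (which is empty when $d(\mu) \leq 1$, consistent with the hypothesis ``if there exists''). The conclusion of the lemma follows. There is no real obstacle here: the statement is essentially a direct unpacking of the formula for $\mu^r$ in Lemma~\ref{lem:mur} combined with the defining property of the diagonal length $d(\mu)$.
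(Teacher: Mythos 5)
Your proof is correct and takes essentially the same approach as the paper: both use the explicit formula in Lemma~\ref{lem:mur} together with the defining property $\mu_{d(\mu)} \geq d(\mu) \geq \mu_{d(\mu)+1}$. The only cosmetic difference is that the paper invokes the strict monotonicity of $\mu^r_i - i - 1$ to reduce to two endpoint checks at $i = d(\mu)-1$ and $i = d(\mu)$ (plus a separate argument that $d(\mu) > 1$), whereas you verify the inequality and its negation over the full ranges $i \leq d(\mu)-1$ and $i \geq d(\mu)$ directly, which handles the nonemptiness hypothesis automatically.
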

  
\begin{proof}
Since $\mu^r$ is a partition, the sequence $\mu^r_i-i-1$ is strictly decreasing, so it suffices to show that $d(\mu)-1>0$, $\mu^r_{d(\mu)-1}>d(\mu)$ and $\mu^r_{d(\mu)}\leq d(\mu)+1$. Assuming there exists a positive integer $i$ such that $\mu^r_i > i+1$, we must have $\mu^r_1>1+1=2$. Thus, by Lemma~\ref{lem:mur}, if $d(\mu)=1$, then $\mu_2>2$, so $d(\mu)\geq 2$. By contradiction, $d(\mu)>1$. Also, by Lemma~\ref{lem:mur}, $\mu^r_{d(\mu)-1} = \mu_{d(\mu)-1} + 1 \geq d(\mu) + 1 > d(\mu)$.  And $\mu^r_{d(\mu)} = \mu_{d(\mu) + 1} \leq d(\mu) < d(\mu)+1$. 
\end{proof}

Next we will give an explicit description of $\mu^c$. 

\begin{lemma}
\label{lem:muc}
Let $\mu$ be a partition. Let $i_d$ be the largest integer $i$ with $\mu_i \geq d(\mu)$. Then
\[ \mu_i^c = 
\begin{cases}
\mu_i -1 &\text{if } i \leq i_d \\
d(\mu) -1 &\text{if } i = i_d+1 \\
\mu_{i-1} &\text{if } i > i_d+1.
\end{cases} \]
That is, to construct $\mu^c$ we first add a part of size $d(\mu)-1$ to $\mu$ to obtain $\tilde{\mu}$. Then $\mu^c$ is the partition obtained from $ \tilde{\mu}$ by subtracting 1 from each part $\tilde{\mu}_j$ such that $\mu_j \geq d(\mu)$.

\end{lemma}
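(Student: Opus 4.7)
My plan is to compute $S(\mu^c)$ explicitly using Remark~\ref{rem:MDmurmuc} and then read off $\mu^c_i$ by matching the resulting set against $\{\mu^c_i - i + 1/2 : i \geq 1\}$. The key intermediate step is to find $\max S^{-}(\mu)$ in closed form in terms of $d(\mu)$ and $i_d$.

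First, I would show that $\max S^{-}(\mu) = d(\mu) - i_d - 1/2$. The half-integer $-n + 1/2$ (for $n \geq 1$) lies in $S(\mu)$ iff $t - \mu_t = n$ for some $t \geq 1$, so I must identify the smallest positive integer missed by the strictly increasing sequence $\{t - \mu_t\}_{t \geq 1}$. For $1 \leq t \leq i_d$, the inequality $\mu_t \geq d(\mu)$ gives $t - \mu_t \leq i_d - d(\mu)$; for $t \geq i_d + 1$, the inequality $\mu_t \leq \mu_{i_d + 1} \leq d(\mu) - 1$ gives $t - \mu_t \geq i_d - d(\mu) + 2$. Since $t - \mu_t$ is strictly increasing and takes exactly $i_d - d(\mu)$ positive values in the range $d(\mu) + 1 \leq t \leq i_d$, and these values are distinct integers in $\{1, \ldots, i_d - d(\mu)\}$, they must cover the entire set. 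Thus the first missing positive integer is $i_d - d(\mu) + 1$, so $\max S^{-}(\mu) = -(i_d - d(\mu) + 1) + 1/2 = d(\mu) - i_d - 1/2$.

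Second, by Remark~\ref{rem:MDmurmuc},
\[
S(\mu^c) = \{\mu_t - t - 1/2 : t \geq 1\} \cup \{d(\mu) - i_d - 3/2\}.
\]
The sequence $\mu_t - t - 1/2$ is strictly decreasing in $t$, and the extra bead $d(\mu) - i_d - 3/2$ satisfies
\[
\mu_{i_d} - i_d - 1/2 \;>\; d(\mu) - i_d - 3/2 \;>\; \mu_{i_d+1} - (i_d+1) - 1/2,
\]
since the two comparisons reduce respectively to $\mu_{i_d} \geq d(\mu)$ and $\mu_{i_d+1} < d(\mu)$, both of which hold by definition of $i_d$. Therefore, in strictly decreasing order, the beads of $S(\mu^c)$ are
\[
\mu_1 - 3/2,\; \mu_2 - 5/2,\; \ldots,\; \mu_{i_d} - i_d - 1/2,\; d(\mu) - i_d - 3/2,\; \mu_{i_d+1} - (i_d + 1) - 3/2,\; \mu_{i_d+2} - (i_d + 2) - 3/2,\; \ldots
\]

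Finally, equating the $i$-th bead with $\mu^c_i - i + 1/2$ yields $\mu^c_i = \mu_i - 1$ for $i \leq i_d$; $\mu^c_{i_d+1} = d(\mu) - 1$; and $\mu^c_i = \mu_{i-1}$ for $i > i_d + 1$. The ``insert-then-subtract'' description follows immediately: inserting a part of size $d(\mu) - 1$ at position $i_d + 1$ (valid since every part at index $\geq i_d + 1$ is at most $d(\mu) - 1$) produces $\tilde{\mu}$, and subtracting $1$ from each $\tilde{\mu}_j$ with $\mu_j \geq d(\mu)$ (equivalently, $j \leq i_d$) recovers $\mu^c$. The main obstacle I anticipate is the pigeonhole argument pinning down $\max S^{-}(\mu)$; once that value is in hand, the remainder is routine bookkeeping on the Maya diagram.
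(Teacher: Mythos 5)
Your proof is correct and follows the same overall strategy as the paper's: pin down $\max S^-(\mu) = d(\mu) - i_d - \tfrac12$, form $S(\mu^c)$ by Remark~\ref{rem:MDmurmuc}, and read off the parts from the sorted beads. The one place you diverge is in establishing $\max S^-(\mu)$: the paper does a case split on $i_d = d(\mu)$ versus $i_d > d(\mu)$ (in the latter case observing that $\mu_t = d(\mu)$ exactly for $d(\mu) < t \leq i_d$), whereas you give a uniform pigeonhole argument on the strictly increasing sequence $t \mapsto t - \mu_t$, noting it hits $\{1,\dots,i_d-d(\mu)\}$ exactly for $d(\mu)+1 \le t \le i_d$ and jumps to $\ge i_d - d(\mu) + 2$ thereafter. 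Your version avoids the case analysis and is arguably cleaner, but it is the same key computation and not a genuinely different route.
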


\begin{proof}
Let $S$ be the Maya diagram of $\mu$. By definition, $\mu^c$ is the partition associated to the charge $1$ Maya diagram $S \cup \{\max S^-\}$, i.e., $S(\mu^c)=\{s-1: s\in S \cup \{\max S^-\}\}$. Note that $\max S^-$ is the greatest half integer $h<0$ such that $h\neq\mu_t-t+\frac{1}{2}$ for all $t\geq 1$. We claim that \[\max S^-=d(\mu)-i_d-\frac{1}{2}.\] 

In the case that $i_d=d(\mu)$, suppose $-\frac{1}{2}=\mu_t-t+\frac{1}{2}$ for some $t\geq 1$. Then $\mu_t=t-1$, so $t>d(\mu)=i_d$, which means that $d(\mu)>\mu_t=t-1>i_d-1=d(\mu)-1$. This is a contradiction. Therefore, $\max S^-=-\frac{1}{2}=d(\mu)-i_d-\frac{1}{2}$, as claimed. 

Otherwise, $i_d>d(\mu)$. In this case, for all $d(\mu)<t\leq i_d$, we have $d(\mu)\leq\mu_{i_d}\leq\mu_t\leq\mu_{d(\mu)+1}<d(\mu)+1$, so $\mu_t=d(\mu)$. Then $\mu_t-t+\frac{1}{2}=d(\mu)-t+\frac{1}{2}$, so $-\frac{1}{2}, -\frac{3}{2}, \ldots, d(\mu)-i_d+\frac{1}{2}\in S$ and we deduce that $\max S^-\leq d(\mu)-i_d-\frac{1}{2}<0$. On the other hand, $\mu_{i_d+1}-(i_d+1)+\frac{1}{2}=\mu_{i_d+1}-i_d-\frac{1}{2}<d(\mu)-i_d-\frac{1}{2}$. Since the sequence $\mu_t-t+\frac{1}{2}$ is strictly decreasing, this implies that $d(\mu)-i_d-\frac{1}{2}\not\in S$, so $\max S^-\geq d(\mu)-i_d-\frac{1}{2}$. Then $\max S^-=d(\mu)-i_d-\frac{1}{2}$, proving the claim. 

So, 
\begin{eqnarray*}
S \cup \{\max S^-\}&=&\left\{\mu_t-t+\frac{1}{2}: 1\leq t\leq i_d\right\}\cup\left\{d(\mu)-(i_d+1)+\frac{1}{2}\right\} \\
&&{}\cup{}\left\{\mu_t-t+\frac{1}{2}: i_d<t\right\} \\
&=&\left\{\mu_t-t+\frac{1}{2}: 1\leq t\leq i_d\right\}\cup\left\{d(\mu)-(i_d+1)+\frac{1}{2}\right\} \\
&&{}\cup{}\left\{\mu_{t-1}-t+\frac{3}{2}: i_d+1<t\right\} 
\end{eqnarray*}
and 
\begin{eqnarray*}
S(\mu^c)&=&\left\{\mu_t-1-t+\frac{1}{2}: 1\leq t\leq i_d\right\}\cup\left\{d(\mu)-1-(i_d+1)+\frac{1}{2}\right\} \\
&&{}\cup{}\left\{\mu_{t-1}-t+\frac{1}{2}: i_d+1<t\right\} \\
&=&\left\{\mu^c_t-t+\frac{1}{2}: 1\leq t\leq i_d\right\}\cup\left\{\mu^c_{i_d+1}-(i_d+1)+\frac{1}{2}\right\} \\
&&{}\cup{}\left\{\mu^c_t-t+\frac{1}{2}: i_d+1<t\right\}. 
\end{eqnarray*}
This shows that $\mu^c_t=\mu_t-1$ for $t\leq i_d$, $\mu^c_{i_d+1}=d(\mu)-1$, and $\mu^c_t=\mu_{t-1}$ for $t>i_d+1$. 
\end{proof}



\begin{example}
\begin{itemize}
\item If $\mu = (1)$, then $d(\mu) =1$ and $i_d = 1$, so 
$\mu^{c} = \emptyset$. 
\item If $\mu = (2)$, $d(\mu) =1$ and $i_d = 1$, so 
$\mu^{c} = (1)$. 
\item If $\mu = (4, 4, 3, 2)$, $d(\mu) = 3$ and $i_d = 3$, so 
$\mu^{c} = (3, 3, 2, 2, 2)$.
\item If $\mu = (4, 4, 4, 3, 1)$, $d(\mu) = 3$ and $i_d = 4$. 
We get $\mu^{c} = (3,3,3, 2,2, 1)$. 
\item If $\mu = (7, 7, 6, 1)$, $d(\mu) =3$ and $i_d = 3$, 
so $\mu^{c} = (6, 6, 5, 2, 1)$.
\end{itemize} 
\end{example}

\begin{remark}
\label{lem:lengthmuc}\leavevmode
\begin{itemize}
\item If $d(\mu) > 1$, then $\ell(\mu^{c}) = \ell(\mu) +1$.
\item If $d(\mu) =1$ and $\mu_1 > 1$, $\ell(\mu^c) =1$.
\item If $d(\mu) = 1$ and $\mu_1 = 1$,  $\ell(\mu^c) =0$.
\end{itemize}
\end{remark}

\begin{remark}
\label{rem:idlambdat}
Let $i_d$ be the largest integer with $\mu_i \geq d(\mu)$. Then $i_d = \mu'_{d(\mu)}$. 
\end{remark}

\begin{remark}
\label{rem:mucdmuplus1}
By Lemma~\ref{lem:muc}, 
$\mu^c_{d(\mu) + 1} = d(\mu) - 1$. Because if $i_d = d(\mu)$, then $\mu^c_{d(\mu) + 1}= \mu^c_{i_d+ 1}  = d(\mu) - 1$. And if $i_d > d(\mu)$, then $\mu_{d(\mu) + 1} = d(\mu)$, so $\mu^c_{d(\mu) + 1} = d(\mu)-1$. 
\end{remark}

\begin{remark}
\label{rem:mucdiag}
We note that $\mu^{c}_{d(\mu)} = d(\mu) - 1$ if and only if $\mu_{d(\mu)} = d(\mu)$. Also, $d(\mu^c) = d(\mu) $ if and only if $\mu_{d(\mu)} > d(\mu)$, and $d(\mu^c)=d(\mu)-1$ if and only if $\mu_{d(\mu)}=d(\mu)$. 
\end{remark}

\begin{lemma}
\label{lem:dprime_c}
Let $d_{s}(\mu^c)$ be the maximum positive integer $i$ such that $\mu^{c}_i\geq i-1$. Then 
$d_{s}(\mu^c) = d(\mu)$. In other words, the set of positive integers $i$ satisfying $\mu^{c}_i\geq i-1$ is equal to the set of positive integers $i$ satisfying $i\leq d(\mu)$. 
\end{lemma}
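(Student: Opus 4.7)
The plan is to combine the explicit formula from Lemma~\ref{lem:muc} with Remark~\ref{rem:mucdmuplus1}. Since $\mu^c$ is a partition, the sequence $\mu^c_i - (i-1)$ is strictly decreasing in $i$, so $\{i : \mu^c_i \geq i-1\}$ is automatically an initial segment of $\mathbb{Z}_{>0}$. Thus it suffices to verify (a) $\mu^c_{d(\mu)} \geq d(\mu)-1$ and (b) $\mu^c_{d(\mu)+1} < d(\mu)$.

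For (a), let $i_d$ be the largest integer with $\mu_i \geq d(\mu)$, as in Lemma~\ref{lem:muc}. By definition of $d(\mu)$ we have $\mu_{d(\mu)} \geq d(\mu)$, so $d(\mu) \leq i_d$. Hence Lemma~\ref{lem:muc} gives $\mu^c_{d(\mu)} = \mu_{d(\mu)} - 1 \geq d(\mu) - 1$, as required.

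For (b), Remark~\ref{rem:mucdmuplus1} states precisely that $\mu^c_{d(\mu)+1} = d(\mu)-1$, and $d(\mu)-1 < d(\mu)$.

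Combining (a) and (b) with the strict monotonicity of $\mu^c_i - (i-1)$, the set of positive integers $i$ with $\mu^c_i \geq i-1$ is exactly $\{1,2,\ldots,d(\mu)\}$, so $d_s(\mu^c) = d(\mu)$. There is no real obstacle here; the only subtlety is remembering to invoke $d(\mu) \leq i_d$ so that the first case of the piecewise definition in Lemma~\ref{lem:muc} applies at index $d(\mu)$.
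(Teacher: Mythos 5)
Your proof is correct and follows essentially the same approach as the paper: both reduce to checking $\mu^c_{d(\mu)}\geq d(\mu)-1$ and $\mu^c_{d(\mu)+1}<d(\mu)$ using the strict decrease of $\mu^c_i-(i-1)$ together with the explicit formula from Lemma~\ref{lem:muc}. The only cosmetic difference is that you cite Remark~\ref{rem:mucdmuplus1} for the second inequality (which itself follows from Lemma~\ref{lem:muc}), and you make explicit the observation $d(\mu)\leq i_d$, which the paper leaves implicit.
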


\begin{proof}
Since $\mu^c$ is a partition, the sequence $\mu^c_i-i+1$ is strictly decreasing, so it suffices to show that $\mu^c_{d(\mu)}\geq d(\mu)-1$ and $\mu^c_{d(\mu)+1}<d(\mu)$. By Lemma~\ref{lem:muc}, $\mu^c_{d(\mu)}=\mu_{d(\mu)}-1 \geq d(\mu) -1$ and $\mu^c_{d(\mu) + 1} = d(\mu) - 1<d(\mu)$. 
\end{proof}

We will also take advantage of the following relationship between $\mu^c$ and $\mu^r$. 

\begin{lemma}
\label{lem:mucprime}
Let $\mu$ be a partition. 
\begin{enumerate}
\item[(1)] $(\mu^c)' = (\mu')^r$, and 
\item[(2)] $(\mu^r)' = (\mu')^c$.
\end{enumerate}
\end{lemma}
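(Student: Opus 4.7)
The plan is to prove (1) by comparing Maya diagrams on both sides, and then to deduce (2) from (1) by substituting $\mu \mapsto \mu'$ and conjugating (using that the conjugation map is an involution, so $(\mu')'=\mu$). The backbone is the standard identity that for any partition $\lambda$, the charge $0$ Maya diagram of the conjugate is
\[
S(\lambda') \;=\; -\bigl((\mathbb{Z}+\tfrac{1}{2}) \setminus S(\lambda)\bigr),
\]
i.e.\ conjugating corresponds to reflecting the Maya diagram through the origin and swapping beads with holes. I would verify this directly from $S(\lambda) = \{\lambda_t - t + \tfrac{1}{2} : t \geq 1\}$ together with the dual parametrization $(\mathbb{Z}+\tfrac{1}{2}) \setminus S(\lambda) = \{-\lambda'_t + t - \tfrac{1}{2} : t \geq 1\}$, which is a direct reading of the Young diagram's boundary path. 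An immediate consequence is $S^+(\lambda') = -S^-(\lambda)$, and in particular $\min S^+(\lambda') = -\max S^-(\lambda)$.

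Setting $m := \max S^-(\mu)$, Remark~\ref{rem:MDmurmuc} gives $S(\mu^c) = (S(\mu)-1)\cup\{m-1\}$. Applying the reflection identity,
\begin{align*}
S((\mu^c)')
&= -\bigl((\mathbb{Z}+\tfrac{1}{2}) \setminus S(\mu^c)\bigr) \\
&= -\Bigl(\bigl(((\mathbb{Z}+\tfrac{1}{2})\setminus S(\mu)) - 1\bigr) \setminus \{m-1\}\Bigr) \\
&= \bigl(S(\mu') + 1\bigr) \setminus \{-m+1\}.
\end{align*}
On the other hand, Remark~\ref{rem:MDmurmuc} applied to $\mu'$ yields
\[
S((\mu')^r) \;=\; \bigl(S(\mu')+1\bigr) \setminus \{\min S^+(\mu')+1\} \;=\; \bigl(S(\mu')+1\bigr) \setminus \{-m+1\},
\]
using $\min S^+(\mu') = -m$ from the previous paragraph. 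Hence $S((\mu^c)') = S((\mu')^r)$, proving (1). For (2), apply (1) to $\mu'$ to obtain $((\mu')^c)' = ((\mu')')^r = \mu^r$, and conjugate both sides to get $(\mu')^c = (\mu^r)'$.

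The main obstacle is just the bookkeeping: tracking how the $-1$ shift used to form $S(\mu^c)$ gets converted into a $+1$ shift after the reflection $T\mapsto -\bigl((\mathbb{Z}+\tfrac{1}{2})\setminus T\bigr)$, and verifying that the bead added in forming $S(\mu)\cup\{m\}$ on the $\mu$ side corresponds to the bead removed in forming $S(\mu')\setminus\{\min S^+(\mu')\}$ on the $\mu'$ side. An alternative approach I would keep in reserve is to compute $(\mu^c)'_j$ directly from Lemma~\ref{lem:muc} via a case split on $j < d(\mu)$ versus $j \geq d(\mu)$, and to match it term-by-term against the formula for $(\mu')^r_j$ coming from Lemma~\ref{lem:mur}; this avoids invoking the conjugation identity for Maya diagrams but is considerably heavier notationally.
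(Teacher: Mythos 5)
Your proof is correct, and it uses the same essential ingredient as the paper's proof: the Maya-diagram conjugation identity (which the paper records as $S^{+}(\mu')=-S^{-}(\mu)$ and $S^{-}(\mu')=-S^{+}(\mu)$ in Lemma~\ref{lem:muc'}). The difference is in how you package that identity and run the computation. The paper works with $S^{+}$ and $S^{-}$ separately, which forces a case split on whether $\tfrac{1}{2}\in S(\mu)$, because that determines whether the boundary element $-\tfrac{1}{2}$ lands in $S^{-}(\mu^c)$ or not. You instead use the global form $S(\lambda')=-\bigl((\mathbb{Z}+\tfrac{1}{2})\setminus S(\lambda)\bigr)$, which is equivalent (a charge-$0$ Maya diagram is determined by its $S^{\pm}$), and track the entire bead set through the shift, the added bead, and the complement-reflection. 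Because you never decompose into positive and negative parts until the very end (when you identify $\min S^{+}(\mu')=-\max S^{-}(\mu)$), the case analysis evaporates, and the deduction of (2) from (1) by substituting $\mu\mapsto\mu'$ and conjugating is exactly as in the paper. So: same idea, cleaner bookkeeping. The one thing worth spelling out if you write this up fully is the elementary set-theoretic step $(\mathbb{Z}+\tfrac{1}{2})\setminus\bigl((S(\mu)\cup\{m\})-1\bigr)=\bigl(((\mathbb{Z}+\tfrac{1}{2})\setminus S(\mu))\setminus\{m\}\bigr)-1$, which relies on $m\notin S(\mu)$ (true since $m\in S^{-}(\mu)$) and on the $-1$ shift being a bijection of $\mathbb{Z}+\tfrac{1}{2}$.
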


The expression (2) follows from (1) by substituting $\mu'$ for $\mu$. Before we proceed to the proof, we make a useful observation. 

\begin{lemma}
\label{lem:muc'}
Let $\mu$ be a partition. 
\begin{itemize}
\item[(1)] $S^{+}(\mu') = -S^{-}(\mu)$, and 
\item[(2)] $S^{-}(\mu') = -S^{+}(\mu)$.
\end{itemize}
\end{lemma}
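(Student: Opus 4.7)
The plan is to prove both parts of the lemma simultaneously by establishing the stronger single identity
\[
S(\mu') \;=\; -\bigl(\bigl(\mathbb{Z} + \tfrac{1}{2}\bigr) \setminus S(\mu)\bigr),
\qquad \text{equivalently, } s \in S(\mu') \iff -s \notin S(\mu),
\]
which expresses the classical fact that conjugation reverses and complements the Maya diagram. Granting this, both (1) and (2) fall out immediately by intersecting with the positive or negative half-integers: $s \in S^+(\mu')$ says $s>0$ and $-s\notin S(\mu)$, and since $-s<0$ this is exactly $-s\in S^-(\mu)$; the argument for (2) is symmetric, with $s<0$ and $-s>0$.

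To establish the reverse-complement identity, I will shift indices by $\tfrac{1}{2}$ and work with integer subsets of $\mathbb{Z}$. Writing $E_\mu := \{\mu_t - t : t \geq 1\}$, we have $S(\mu) = E_\mu + \tfrac{1}{2}$, and the routine identity $-(\mu'_u - u + \tfrac{1}{2}) = (u - 1 - \mu'_u) + \tfrac{1}{2}$ gives $-S(\mu') = \{u - 1 - \mu'_u : u \geq 1\} + \tfrac{1}{2}$. The desired identity is therefore equivalent to the disjoint-union decomposition
\[
\mathbb{Z} \;=\; E_\mu \;\sqcup\; \{u - 1 - \mu'_u : u \geq 1\}.
\]

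The plan for this decomposition is to exhibit an explicit bijection between the gaps of the strictly decreasing sequence $\mu_1 - 1 > \mu_2 - 2 > \cdots$ and the positive integers $u \geq 1$. Given $k \notin E_\mu$, there is a unique $t \geq 0$ (using the convention $\mu_0 = +\infty$) with $\mu_{t+1} - (t+1) < k < \mu_t - t$, which rearranges to $\mu_{t+1} \leq k+t < \mu_t$; by the conjugation identity $\mu_s \geq j \iff \mu'_j \geq s$, applied once with $s=t$ and once with $s=t+1$, this double inequality collapses to the single equation $\mu'_{k+t+1} = t$. Setting $u := k+t+1$ then gives $k = u - 1 - \mu'_u$, and the inverse assignment $u \mapsto k := u - 1 - \mu'_u$ places $k$ back in the correct gap (with $t = \mu'_u$) by the same conjugation identity. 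The main subtle point will be the $t=0$ boundary, where the infinite gap $k > \mu_1 - 1$ corresponds to values $u > \ell(\mu')$ with $\mu'_u = 0$; the convention $\mu_0 = +\infty$ handles this case uniformly, and no additional work is needed.
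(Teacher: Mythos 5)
Your proof is correct, and it takes a genuinely different route from the paper's. The paper disposes of the lemma in one sentence by appealing to the geometric picture of the contour $L_\mu$: reflecting the contour of $\mu$ across the vertical line $x=0$ produces the contour of $\mu'$, which swaps beads and holes at negated positions. You instead extract the underlying arithmetic fact as the sharper identity $S(\mu') = -\bigl((\mathbb{Z}+\tfrac12)\setminus S(\mu)\bigr)$ and prove it head-on: shifting indices reduces the claim to the partition $\mathbb{Z} = \{\mu_t - t : t\geq 1\} \sqcup \{u-1-\mu'_u : u\geq 1\}$, which you verify by locating each gap $k$ of the strictly decreasing sequence $\mu_t - t$ between consecutive terms $\mu_{t+1}-(t+1) < k < \mu_t - t$, rewriting as $\mu_{t+1}\leq k+t < \mu_t$, and invoking the standard equivalence $\mu_s\geq j \iff \mu'_j\geq s$ with $s=t,t+1$ and $j=k+t+1$ to pin down $\mu'_{k+t+1}=t$, whence $k = u-1-\mu'_u$ with $u=k+t+1$. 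The inverse assignment goes through the same equivalence in reverse, and the convention $\mu_0=+\infty$ handles the top gap cleanly: the check that $j=k+t+1\geq 1$ (needed for the conjugation identity to be meaningful) follows from $k > \mu_{t+1}-(t+1)\geq -(t+1)$. What the paper's approach buys is brevity and appeal to a standard geometric fact; what yours buys is a fully self-contained, picture-free verification that also isolates the reverse-complement identity explicitly, which is arguably the more reusable formulation. Both deductions of (1) and (2) from the stronger identity are clean and correct.
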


\begin{proof}
Let $L_{\mu}$ be the {\em contour of $\mu$}, which is obtained from the Maya diagram of $\mu$ by placing a line segment of slope $-1$ where there is a hole and a line segment of slope $1$ where there is a bead (this is standard, see for instance \cite{RZ}). Then the claim follows from the observation that we obtain $\mu'$ from $\mu$ by reflecting $L_{\mu}$ across the line $x = 0$. 
\end{proof}

\begin{example}
Let $\mu = (6, 6, 5, 5, 5, 3, 1)$. Then 
\begin{itemize}
\item $S(\mu) = \{ \frac{11}{2}, \frac{9}{2}, \frac{5}{2}, \frac{3}{2}, \frac{1}{2}, -\frac{5}{2}, -\frac{11}{2}, -\frac{15}{2}, -\frac{17}{2}, \ldots \}$, 
\item $S^{+}(\mu) = \{ \frac{11}{2}, \frac{9}{2}, \frac{5}{2},  \frac{3}{2}, \frac{1}{2} \}$, and
\item $S^{-}(\mu) = \{ - \frac{1}{2}, - \frac{3}{2}, - \frac{7}{2}, - \frac{9}{2}, - \frac{13}{2} \}$. 
\end{itemize}
We see that $\max S^{-}(\mu) = -  \frac{1}{2}$. Noting that $\mu^c = (5, 5, 4, 4, 4, 4, 3, 1)$, 
we see that 
\begin{eqnarray*}
S(\mu^c) &=& \left\{ \frac{9}{2}, \frac{7}{2}, \frac{3}{2}, \frac{1}{2}, -\frac{1}{2}, -\frac{3}{2}, -\frac{7}{2}, -\frac{13}{2},-\frac{17}{2},-\frac{19}{2}, \ldots \right\} \\
&=& \{s -1 : s \in S(\mu)  \cup \{\max S^{-}(\mu)\} \}.
\end{eqnarray*}
So 
\[ S^+(\mu^c) = \left\{ \frac{9}{2}, \frac{7}{2}, \frac{3}{2}, \frac{1}{2} \right\} = \left\{s-1: s \in S^+(\mu) \setminus \left\{ \frac{1}{2} \right\} \right\},\text{ and}
\]
\[
S^{-}(\mu^c) = \left\{ - \frac{5}{2}, - \frac{9}{2}, - \frac{11}{2}, - \frac{15}{2} \right\} =  \{s -1 : s \in S^-(\mu)  \setminus \{\max S^{-}(\mu)\} \}. \]
We next note that $(\mu^c)' = (8, 7, 7, 6, 2)$, 
\[S^{+}((\mu^c)') = \left\{ \frac{15}{2}, \frac{11}{2}, \frac{9}{2}, \frac{5}{2} \right\} = -S^{-}(\mu^c) = \{ - s + 1: s \in S^-(\mu)  \setminus \{\max S^{-}(\mu)\} \},\text{ and}
\]
\[S^{-}((\mu^c)') = \left\{-\frac{1}{2}, -\frac{3}{2}, -\frac{7}{2}, -\frac{9}{2} \right\} = -S^{+}(\mu^c) = \left\{ - s + 1: s \in S^+(\mu) \setminus \left\{ \frac{1}{2} \right\} \right\}.
\]
Since $\mu = (6, 6, 5, 5, 5, 3, 1)$, $\mu' = (7, 6, 6, 5, 5, 2)$. So
\[S^{+}(\mu') = \left\{\frac{13}{2}, \frac{9}{2}, \frac{7}{2}, \frac{3}{2}, \frac{1}{2} \right\} = - S^{-}(\mu)\text{ and}\]
\[S^{-}(\mu') = \left\{ - \frac{1}{2}, - \frac{3}{2}, - \frac{5}{2},  - \frac{9}{2},-  \frac{11}{2} \right\} = - S^{+}(\mu).\]
Then 
\begin{eqnarray*}
S^{+}((\mu')^r) = \left\{ \frac{15}{2}, \frac{11}{2}, \frac{9}{2}, \frac{5}{2}  \right\} &=& \{s +1: s \in S^{+}(\mu') \setminus \{\min S^{+}(\mu')\} \}  \\
&=& \{s +1: s \in - S^{-}(\mu) \setminus \{\min (- S^{-}(\mu))\} \} \\
& = & \{-s +1: s \in S^{-}(\mu) \setminus \{\max S^{-}(\mu)\} \}
\end{eqnarray*}
and 
\begin{eqnarray*}
S^{-}((\mu')^r) = \left\{ -\frac{1}{2}, -\frac{3}{2}, -\frac{7}{2}, -\frac{9}{2}  \right\} &=& \left\{s +1: s \in S^{-}(\mu') \setminus \left\{- \frac{1}{2} \right\} \right\} \\
&=& \left\{s +1: s \in - S^{+}(\mu) \setminus \left\{- \frac{1}{2} \right\} \right\} \\
& = & \left\{-s +1: s \in S^{+}(\mu)  \setminus \left \{ \frac{1}{2} \right\} \right\}.
\end{eqnarray*}
\end{example}

\begin{proof}[Proof of Lemma~\ref{lem:mucprime}]
We break into cases based on whether $\frac{1}{2} \in S$. 

First suppose $\frac{1}{2} \in S$. By Remark~\ref{rem:MDmurmuc}, 
\[S^+(\mu^c) = \left\{s-1: s \in  S^+(\mu) \setminus \left\{ \frac{1}{2} \right\} \right\},\text{ and}
\]
\[
S^{-}(\mu^c) =  \{s -1 : s \in S^-(\mu)  \setminus \{\max S^{-}(\mu)\} \}.
\]
Note that in the expression for $S^{-}(\mu^c)$ we used the fact that $\frac{1}{2} \in S$, so $- \frac{1}{2}$ is in $S(\mu^c)=\{s -1 : s \in S(\mu) \cup \{\max S^{-}(\mu)\} \}$ and therefore not in $S^{-}(\mu^c)$. 
Now we see that 
\begin{eqnarray*}
S^{+}((\mu^c)') = -S^{-}(\mu^c) 
&=& \{ - s + 1: s \in S^-(\mu) \setminus \{\max S^{-}(\mu)\} \} \\
& = & \{s +1: s \in - S^{-}(\mu) \setminus \{\min (- S^{-}(\mu))\} \} \\
& = & \{s +1: s \in S^{+}(\mu') \setminus \{\min S^{+}(\mu')\} \} = S^{+}((\mu')^r).
\end{eqnarray*}
Similarly, 
\begin{eqnarray*}
S^{-}((\mu^c)') = -S^{+}(\mu^c) 
&=& \left\{-s +1: s \in S^{+}(\mu) \setminus \left \{ \frac{1}{2} \right\} \right\} \\
&=&\left\{s +1: s \in - S^{+}(\mu) \setminus \left\{- \frac{1}{2} \right\}  \right\} \\
&=& \left\{s +1: s \in S^{-}(\mu') \setminus \left\{- \frac{1}{2} \right\} \right\} = S^{-}((\mu')^r). 
\end{eqnarray*}

Next we assume $\frac{1}{2} \notin S$. As in the first case, we start by noting that 
\[S^+(\mu^c) = \left\{s-1: s \in  S^+(\mu) \right\},\text{ and}
\]
\[
S^{-}(\mu^c) = \{s -1 : s \in S^-(\mu) \setminus \{\max S^{-}(\mu)\} \} \cup \left\{ - \frac{1}{2} \right\}.
\]
Note that in the expression for $S^{-}(\mu^c)$ we used the fact that $\frac{1}{2} \notin S$, so $- \frac{1}{2}$ is not in $S(\mu^c)=\{s -1 : s \in S(\mu) \cup \{\max S^{-}(\mu)\} \}$ and therefore is in $S^{-}(\mu^c)$. As in the first case, we proceed by observing that 
\begin{eqnarray*}
S^{+}((\mu^c)') = -S^{-}(\mu^c) 
&=& \{ - s + 1: s \in S^-(\mu) \setminus \{\max S^{-}(\mu)\} \} \cup \left\{ \frac{1}{2} \right\} \\
& = & \{s +1: s \in - S^{-}(\mu) \setminus \{\min (- S^{-}(\mu))\} \} \cup \left\{ \frac{1}{2} \right\} \\
& = & \{s +1: s \in S^{+}(\mu') \setminus \{\min S^{+}(\mu')\} \} \cup \left\{ \frac{1}{2} \right\} = S^{+}((\mu')^r).
\end{eqnarray*}
Similarly, 
\begin{eqnarray*}
S^{-}((\mu^c)') = -S^{+}(\mu^c) 
&=& \{-s +1: s \in S^{+}(\mu) \} \\
&=& \{s +1: s \in - S^{+}(\mu) \} = \{s +1: s \in S^{-}(\mu') \} = S^{-}((\mu')^r). 
\end{eqnarray*}
\end{proof}
 
\begin{remark}
\label{rem:sizeofmuc}
By Remark~\ref{rem:constructionofmur}, $|\mu^r| = |\mu| - \mu_{d(\mu)} + d(\mu) -1$. By Lemma~\ref{lem:mucprime}, 
\[
|\mu^c| = |( (\mu')^r)'| =| (\mu')^r| = |\mu'| - \mu'_{d(\mu')} + d(\mu') - 1 = |\mu| - \mu'_{d(\mu)} + d(\mu) - 1.\]
\end{remark}

\subsubsection{The partition \texorpdfstring{$\mu^{rc}$}{murc}}

\begin{remark}
\label{lem:murc}
Let $\mu$ be a partition. 
Then $\mu^{rc}$ is the partition obtained by removing the hook of $(d(\mu), d(\mu))$ from $\mu$. 
\end{remark}

\begin{lemma}
\label{lem:musize}
\begin{equation*}
|\mu^r| - |\mu | + |\mu^c| - |\mu^{rc} |  = -1
\end{equation*}
\end{lemma}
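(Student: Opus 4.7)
The plan is to prove the identity by directly substituting the explicit formulas for $|\mu^r|$, $|\mu^c|$, and $|\mu^{rc}|$ that have already been established in the excerpt, and verifying that everything cancels to leave $-1$.

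First, I would recall the three ingredients. From Remark~\ref{rem:constructionofmur}, we have
\[
|\mu^r| - |\mu| = d(\mu) - 1 - \mu_{d(\mu)}.
\]
From Remark~\ref{rem:sizeofmuc}, we have
\[
|\mu^c| - |\mu| = d(\mu) - 1 - \mu'_{d(\mu)}.
\]
The one piece not yet written down as a formula is $|\mu| - |\mu^{rc}|$. By Remark~\ref{lem:murc}, $\mu^{rc}$ is obtained from $\mu$ by deleting the hook of the cell $(d(\mu), d(\mu))$. The length of that hook is
\[
\mu_{d(\mu)} + \mu'_{d(\mu)} - 2d(\mu) + 1,
\]
so
\[
|\mu| - |\mu^{rc}| = \mu_{d(\mu)} + \mu'_{d(\mu)} - 2d(\mu) + 1.
\]

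With these three identities in hand, the proof is a single line: adding the first two and subtracting the third, the $\mu_{d(\mu)}$, $\mu'_{d(\mu)}$, and $d(\mu)$ terms all cancel, leaving
\[
(|\mu^r| - |\mu|) + (|\mu^c| - |\mu|) - (|\mu^{rc}| - |\mu|)
= (d(\mu) - 1 - \mu_{d(\mu)}) + (d(\mu) - 1 - \mu'_{d(\mu)}) + (-\mu_{d(\mu)} - \mu'_{d(\mu)} + 2d(\mu) - 1)^{\!-1}
\]
—more cleanly: the right-hand side of the identity is
\[
(d(\mu) - 1 - \mu_{d(\mu)}) + (d(\mu) - 1 - \mu'_{d(\mu)}) - (\mu_{d(\mu)} + \mu'_{d(\mu)} - 2d(\mu) + 1),
\]
wait, I want $|\mu^r| - |\mu| + |\mu^c| - |\mu^{rc}|$, which equals $(|\mu^r| - |\mu|) + (|\mu^c| - |\mu|) + (|\mu| - |\mu^{rc}|)$. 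Plugging in gives
\[
(d(\mu) - 1 - \mu_{d(\mu)}) + (d(\mu) - 1 - \mu'_{d(\mu)}) + (\mu_{d(\mu)} + \mu'_{d(\mu)} - 2d(\mu) + 1) = -1,
\]
as desired.

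The only step that is not purely a quotation of an earlier result is the hook-length computation for $|\mu| - |\mu^{rc}|$, and even that is not an obstacle: the arm of the cell $(d(\mu), d(\mu))$ has length $\mu_{d(\mu)} - d(\mu)$, the leg has length $\mu'_{d(\mu)} - d(\mu)$, and the hook length is arm + leg + 1. Alternatively, one can derive $|\mu^{rc}|$ by applying Remark~\ref{rem:constructionofmur} to $\mu^c$ (noting $d(\mu^c) \in \{d(\mu)-1, d(\mu)\}$ and using Lemma~\ref{lem:mucprime} together with the description of $(\mu^c)_{d(\mu^c)}$); this provides a sanity check but involves splitting into cases, so the hook-length approach is cleaner. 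There is no real obstacle here; the result is an algebraic consequence of the explicit descriptions of $\mu^r$, $\mu^c$, and $\mu^{rc}$ given in the previous subsections.
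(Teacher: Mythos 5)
Your proof is correct and follows essentially the same route as the paper's: both rely on Remark~\ref{rem:constructionofmur} for $|\mu^r|-|\mu|$, Remark~\ref{rem:sizeofmuc} for $|\mu^c|-|\mu|$, and the hook-length formula $h_\mu(d(\mu),d(\mu)) = \mu_{d(\mu)} + \mu'_{d(\mu)} - 2d(\mu) + 1$ for $|\mu|-|\mu^{rc}|$, after which everything cancels to $-1$. The paper merely groups the algebra a bit differently, computing $|\mu^c|-|\mu^{rc}| = \mu_{d(\mu)} - d(\mu)$ first and then adding $|\mu^r|-|\mu|$, but the ingredients and the computation are the same.
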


\begin{proof}
By Remark~\ref{rem:sizeofmuc}, 
\[ |\mu^c| - |\mu^{rc} | = 
|\mu| - \mu'_{d(\mu)} + d(\mu) - 1 - |\mu| + h_{\mu}(d(\mu), d(\mu))
= \mu_{d(\mu)} - d(\mu), 
\]
where the last equality follows from the fact that
\[ \mu_{d(\mu)} +  \mu'_{d(\mu)}  -1 = h_{\mu}(d(\mu), d(\mu)) + 2(d(\mu) - 1). \]

Combining this with Remark~\ref{rem:constructionofmur}, we have
\begin{equation*}
|\mu^r| - |\mu | + |\mu^c| - |\mu^{rc} |  = -1.
\end{equation*}
\end{proof}

\begin{remark}
Since the hook of $(d(\mu), d(\mu))$ in $\mu$ is the same as the hook of $(d(\mu'), d(\mu'))$ in $\mu'$, $(\mu')^{rc} = (\mu^{rc})'$. 
\end{remark}

\begin{remark}
Let $i_d$ be the largest integer $i$ with $\mu_i \geq d(\mu)$. Then it follows from Remark~\ref{lem:murc} that 
\[ \mu_i^{rc} = 
\begin{cases}
\mu_i &\text{if } i < d(\mu) \\
d(\mu) -1 & \text{if } d(\mu) \leq i \leq i_d \\
\mu_{i} & \text{if } i > i_d.
\end{cases} \]
\end{remark}

\begin{remark}
\label{rem:dmurc}
It is immediate from Remark~\ref{lem:murc} that $d(\mu^{rc}) = d(\mu) - 1$. Therefore by Remark~\ref{rem:mucdiag}, $d(\mu^{rc}) = d(\mu^c)$ if and only if $\mu_{d(\mu)} = d(\mu)$ and $d(\mu^{rc}) = d(\mu^c) -1$ if and only if $\mu_{d(\mu)} > d(\mu)$. 
\end{remark}

\begin{lemma}
\label{lem:mucandmurc}
\[ \mu_i^{rc} = 
\begin{cases}
\mu^c_i + 1 & \text{if } i \leq d(\mu^{rc}) \\
\mu_{i+1}^{c}& \text{if } i > d(\mu^{rc})
\end{cases} \]
\end{lemma}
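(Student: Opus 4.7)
The plan is to verify the lemma directly from the explicit formulas for $\mu^c$ (Lemma~\ref{lem:muc}) and $\mu^{rc}$ (the description just after Remark~\ref{lem:murc}), together with the identity $d(\mu^{rc}) = d(\mu) - 1$ from Remark~\ref{rem:dmurc}. Throughout, let $i_d$ denote the largest index $i$ with $\mu_i \geq d(\mu)$. Two facts about $i_d$ will be used repeatedly: first, $i_d \geq d(\mu)$ (since $\mu_{d(\mu)} \geq d(\mu)$); and second, for $d(\mu) < j \leq i_d$, the part $\mu_j$ is pinned to exactly $d(\mu)$, because $\mu_j \geq d(\mu)$ by the definition of $i_d$ and $\mu_j \leq \mu_{d(\mu)+1} \leq d(\mu)$ by Remark~\ref{rem:dmuplus1}.

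First I would handle the case $i \leq d(\mu^{rc}) = d(\mu) - 1$. Here the explicit description of $\mu^{rc}$ gives $\mu^{rc}_i = \mu_i$, and since $i \leq d(\mu) - 1 \leq i_d$, Lemma~\ref{lem:muc} gives $\mu^c_i = \mu_i - 1$. Adding $1$ yields $\mu^c_i + 1 = \mu_i = \mu^{rc}_i$, as claimed.

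Next, for $i > d(\mu^{rc})$, i.e.\ $i \geq d(\mu)$, I would split into three subcases. If $d(\mu) \leq i \leq i_d - 1$, then $\mu^{rc}_i = d(\mu) - 1$, while $i+1 \leq i_d$ gives $\mu^c_{i+1} = \mu_{i+1} - 1$, and by the pinning observation $\mu_{i+1} = d(\mu)$, so $\mu^c_{i+1} = d(\mu) - 1 = \mu^{rc}_i$. If $i = i_d$, then $\mu^{rc}_{i_d} = d(\mu) - 1$ and Lemma~\ref{lem:muc} gives $\mu^c_{i_d + 1} = d(\mu) - 1$ directly. Finally, if $i > i_d$, then $\mu^{rc}_i = \mu_i$, and since $i+1 > i_d + 1$, Lemma~\ref{lem:muc} gives $\mu^c_{i+1} = \mu_{(i+1)-1} = \mu_i$.

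The entire argument is a routine case check against the two explicit formulas, so no step is really an obstacle; the only small subtlety is the pinning $\mu_{d(\mu)+1} = \cdots = \mu_{i_d} = d(\mu)$, which is precisely what makes the middle subcase collapse to $d(\mu) - 1$ on both sides.
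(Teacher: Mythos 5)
Your proof is correct and follows essentially the same route as the paper's: a direct case check against the explicit formulas for $\mu^c$ and $\mu^{rc}$, using $d(\mu^{rc}) = d(\mu)-1$ and the fact that $\mu_j = d(\mu)$ for $d(\mu) < j \leq i_d$. The only difference is that you split the range $d(\mu) \leq i \leq i_d$ into two subcases ($i \leq i_d - 1$ and $i = i_d$) where the paper treats it in one line; your version makes the justification for that range slightly more explicit but the argument is the same.
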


\begin{proof}
If $i \leq d(\mu^{rc})= d(\mu) - 1$, then $\mu_i^{rc} = \mu_i$ and $\mu_i^c = \mu_i -1$, since $i \leq i_d$. 

If $i > d(\mu^{rc})= d(\mu) - 1$, then we consider two cases. If $d(\mu) \leq i \leq i_d$, then $\mu_i^{rc}  = d(\mu) -1 = \mu_{i+1}^{c}$. If $i > i_d$, then $i+1 > i_d +1$, so $\mu_i^{rc} = \mu_i = \mu_{i+1}^c$. 
\end{proof}

\begin{lemma}
\label{lem:lengthmurc}
\begin{itemize}
\item $d(\mu) > 1$ if and only if $\ell(\mu^{rc}) = \ell(\mu)$, and 
\item $d(\mu) = 1$ if and only if $\ell(\mu^{rc}) = 0$. 
\end{itemize}
\end{lemma}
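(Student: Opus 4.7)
The plan is to use the explicit row-by-row description of $\mu^{rc}$ from the unnumbered remark just before Remark~\ref{rem:dmurc}, together with the definition of $i_d$, and to handle the two possibilities for $d(\mu)$ separately. Since $\mu\neq\varnothing$ whenever $\mu^{rc}$ is defined, exactly one of $d(\mu)>1$ and $d(\mu)=1$ holds, so proving each of the forward implications ($d(\mu)>1\Rightarrow\ell(\mu^{rc})=\ell(\mu)$ and $d(\mu)=1\Rightarrow\ell(\mu^{rc})=0$) will immediately give both ``if and only if'' statements by exhaustion.

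First I would treat $d(\mu)>1$. Then $d(\mu)-1\geq 1$, so for $d(\mu)\leq i\leq i_d$ we have $\mu^{rc}_i=d(\mu)-1>0$, while for $i<d(\mu)$ we have $\mu^{rc}_i=\mu_i\geq \mu_{d(\mu)}\geq d(\mu)>0$. For $i>i_d$ (necessarily $i\geq d(\mu)$), the formula gives $\mu^{rc}_i=\mu_i$. Hence $\mu^{rc}_i=0$ precisely when $\mu_i=0$, i.e.~when $i>\ell(\mu)$, provided $\ell(\mu)\geq i_d$; and if $\ell(\mu)=i_d$, then the last nonzero row of $\mu^{rc}$ is at $i=i_d=\ell(\mu)$. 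In either subcase $\ell(\mu^{rc})=\ell(\mu)$.

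Next I would treat $d(\mu)=1$. The key observation is that $d(\mu)=1$ forces $\mu_2\leq 1$, hence $\mu_i\leq 1$ for all $i\geq 2$, which means $i_d=\mu'_1=\ell(\mu)$. Then the formula collapses: for $i<1$ there is nothing, for $1\leq i\leq i_d=\ell(\mu)$ we get $\mu^{rc}_i=d(\mu)-1=0$, and for $i>i_d=\ell(\mu)$ we get $\mu^{rc}_i=\mu_i=0$. Thus every part of $\mu^{rc}$ is zero, so $\mu^{rc}=\varnothing$ and $\ell(\mu^{rc})=0$.

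Finally I would note that the two implications just proved are logically mutually exclusive in their hypotheses and cover every case (since $\mu\neq\varnothing$ implies $d(\mu)\geq 1$), so each yields its converse: if $\ell(\mu^{rc})=\ell(\mu)$ then we cannot be in the $d(\mu)=1$ case (which forces $\ell(\mu^{rc})=0\neq\ell(\mu)$, as $\mu\neq\varnothing$), so $d(\mu)>1$; and if $\ell(\mu^{rc})=0$ then we cannot be in the $d(\mu)>1$ case (which forces $\ell(\mu^{rc})=\ell(\mu)\geq 1$), so $d(\mu)=1$. No step is a genuine obstacle here—the only mild subtlety is the case analysis on whether $i_d=\ell(\mu)$ or $i_d<\ell(\mu)$ in the first half, which is handled by the observation that in both subcases the formula for $\mu^{rc}_i$ agrees with $\mu_i$ for $i>i_d$.
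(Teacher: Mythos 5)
Your proof is correct and follows the same underlying idea as the paper, which disposes of the lemma with the one-line ``This is immediate by the construction of $\mu^{rc}$ from $\mu$.'' You have simply spelled out what ``immediate'' means by invoking the row-by-row formula for $\mu^{rc}$ and noting that $i_d\leq\ell(\mu)$, which is exactly the bookkeeping the paper leaves to the reader.
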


\begin{proof}
This is immediate by the construction of $\mu^{rc}$ from $\mu$. 
\end{proof}

\begin{corollary}
\label{cor:lengthmurclengthmuc}
If $d(\mu) > 1$ or $d(\mu) =1$ and $\mu_1 > 1$, then $\ell(\mu^{rc}) = \ell(\mu^{c}) -1$.
If $d(\mu) = 1$ and $\mu_1 = 1$, $\ell(\mu^{rc}) = \ell(\mu^c) = 0$.
\end{corollary}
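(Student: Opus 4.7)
The plan is to derive this corollary by directly combining the two preceding results that describe $\ell(\mu^{rc})$ and $\ell(\mu^c)$ case-by-case. Specifically, Lemma~\ref{lem:lengthmurc} expresses $\ell(\mu^{rc})$ in terms of whether $d(\mu)$ exceeds $1$, while Remark~\ref{lem:lengthmuc} splits into three cases giving $\ell(\mu^c)$. Both classifications depend only on whether $d(\mu) > 1$, and, in the case $d(\mu)=1$, on whether $\mu_1 > 1$. So the approach is simply to consider these three cases in parallel and read off the difference.

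First, suppose $d(\mu) > 1$. Then Lemma~\ref{lem:lengthmurc} gives $\ell(\mu^{rc}) = \ell(\mu)$, and Remark~\ref{lem:lengthmuc} gives $\ell(\mu^c) = \ell(\mu) + 1$. Subtracting yields $\ell(\mu^{rc}) = \ell(\mu^c) - 1$, as required. Next, suppose $d(\mu) = 1$ and $\mu_1 > 1$. Then Lemma~\ref{lem:lengthmurc} gives $\ell(\mu^{rc}) = 0$, while Remark~\ref{lem:lengthmuc} gives $\ell(\mu^c) = 1$, and again $\ell(\mu^{rc}) = \ell(\mu^c) - 1$. Finally, if $d(\mu) = 1$ and $\mu_1 = 1$, Lemma~\ref{lem:lengthmurc} gives $\ell(\mu^{rc}) = 0$ and Remark~\ref{lem:lengthmuc} gives $\ell(\mu^c) = 0$, so $\ell(\mu^{rc}) = \ell(\mu^c) = 0$.

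There is no real obstacle here; the only thing to notice is that the case split in Remark~\ref{lem:lengthmuc} matches cleanly with the case split in Lemma~\ref{lem:lengthmurc}. Since both facts are already established, the corollary is just bookkeeping: enumerate the three cases, substitute the formulas, and observe that the differences align.
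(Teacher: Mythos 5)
Your proof is correct and takes the same approach the paper intends: the corollary is an immediate consequence of Remark~\ref{lem:lengthmuc} and Lemma~\ref{lem:lengthmurc}, obtained by matching up their parallel three-way case splits. The paper indeed leaves the proof implicit for exactly the reason you give---it is pure bookkeeping.
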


\subsection{DT weights}

In this section we compute the constants $A$, $B$, and $C$ from equation (\ref{eqn:intermediateDTcond}) in Section~\ref{sec:DTcond}. To that end, in Section~\ref{sec:minconfigs} we compute the weights of the minimal dimer configurations of the graphs 
\begin{align*}
G &= H(N; \mu_1^{rc},\mu_2^{rc},\mu_3), &
G-\{a, b, c, d\} &= H(N; \mu_1,\mu_2,\mu_3), \\
G-\{a, b\} &= H(N; \mu_1, \mu_2^{rc}, \mu_3), & G-\{c, d\} &= H(N; \mu_1^{rc}, \mu_2, \mu_3), \\
G-\{a, d\} &= H(N; \mu_1^{rc},\mu_2^{rc},\mu_3)-\{a, d\},
& G-\{b, c\} &= H(N; \mu_1^{rc},\mu_2^{rc},\mu_3)-\{b, c\}. 
\end{align*}
As in previous sections, we assume $N\geq M$. The remaining work is to compute $C - A$; this is done in Section~\ref{sec:DTalg}.

\subsubsection{Weight of minimal configuration}
\label{sec:minconfigs}

We weight the edges of $H(N)$ so that the weight of the horizontal edges on a diagonal is $q$ times the weight of the horizontal edges on the previous diagonal, moving from right to left (see Definition~\ref{def:kuoweighting}). Recall the correspondence between dimer configurations of $H(N)$ and plane partitions described in Section~\ref{sec:DTtheoryAndDimers}. With the chosen edge weights, when a box is added to a plane partition, the weight of the corresponding dimer configuration increases by a factor of $q$. So, the minimal dimer configuration of $H(N)$ corresponds to the empty plane partition and has weight $q^{N^2(N-1)/2}$. This expression is simply the product of the weights of the $N^2$ horizontal dimers that make up the ``floor'' of the empty plane partition. 

Now observe that the minimal dimer configuration of $H(N; \mu_1, \mu_2, \mu_3)$ differs from the dimer configuration corresponding to a plane partition $\pi(\mu_1, \mu_2, \mu_3)$ (with $N(|\mu_1|+|\mu_2|+|\mu_3|)-|\II|-2|\III|$ boxes) only near the boundary of $H(N)$. The minimal dimer configuration of $H(N; \mu_1, \mu_2, \mu_3)$ has extra horizontal dimers in sector 1 and sector 2, and has fewer horizontal dimers in sector 3. 

Specifically, in sector 1, if $(\mu_1')_i \geq i$, the $i$th part of $\mu_1'$ contributes $i-1$ horizontal dimers of weight $q^{N + (\mu_1')_i - i}$. If $(\mu_1')_i < i$, the $i$th part of $\mu_1'$ contributes $(\mu_1')_i$ horizontal dimers of weight $q^{N + (\mu_1')_i - i}$. Therefore, in sector 1 the weight of the minimal dimer configuration of $H(N; \mu_1, \mu_2, \mu_3)$ differs from that of the dimer configuration corresponding to $\pi(\mu_1, \mu_2, \mu_3)$ by a factor of 
\[
\prod_{i: (\mu'_1)_i \geq i\geq 1} 
q^{(i-1)(N + (\mu'_1)_i - i) } \prod_{i: (\mu'_1)_i < i\leq\ell(\mu_1')} q^{(\mu'_1)_i (N + (\mu_1)_i' - i )}.
\]

In sector 2, if $(\mu_2)_i \geq i$, the $i$th part of $\mu_2$ contributes $i-1$ horizontal dimers with weights $q^{(\mu_2)_i -i + 1}, q^{(\mu_2)_i -i + 2}, \ldots, q^{(\mu_2)_i -1}$. The total weight of these dimers is 
\[ \prod\limits_{i: (\mu_2)_i \geq i\geq 1 } \prod\limits_{j=1}^{i-1} q^{(\mu_2)_i -i + j}
= \prod\limits_{i: (\mu_2)_i \geq i\geq 1 } q^{(i-1)((\mu_2)_i -i)} q^{ (i-1)i/2}
= \prod\limits_{i: (\mu_2)_i \geq i\geq 1 } q^{(i-1) ( (\mu_2)_i- i/2 ) }. \]
If $(\mu_2)_i < i$, the $i$th part of $\mu_2$ contributes $(\mu_2)_i$ horizontal dimers with weights $q^{0}, q^{1}, \ldots, q^{(\mu_2)_i -1}$. The total weight of these dimers is 
\[
\prod\limits_{i: (\mu_2)_i < i\leq\ell(\mu_2) } \prod\limits_{j=0}^{(\mu_2)_{i}-1 }
q^j = \prod_{i: (\mu_2)_i < i\leq\ell(\mu_2) } q^{((\mu_2)_i-1)(\mu_2)_i/2}.
\]

In sector 3, the dimers in the dimer configuration corresponding to $\pi(\mu_1, \mu_2, \mu_3)$ that are not in the minimal dimer configuration of $H(N; \mu_1, \mu_2, \mu_3)$ have weight 
\[
\prod\limits_{i=1}^{\ell(\mu_3)} q^{(2N-i) (\mu_3)_i}.
\]

Since the dimer configuration corresponding to the plane partition $\pi(\mu_1, \mu_2, \mu_3)$ has weight $q^{N^2(N-1)/2+N(|\mu_1|+|\mu_2|+|\mu_3|)-|\II|-2|\III|}$, we combine these remarks to arrive at the following. 

\begin{lemma}
\label{cor:DTweightGabcd}
The weight of the minimal dimer configuration of $H(N; \mu_1,\mu_2,\mu_3)$ is $q^{w_{\min}(\mu_1, \mu_2, \mu_3)}=q^{\widetilde{w}_{\min}(\mu_1, \mu_2, \mu_3)-|\II(\mu_1, \mu_2, \mu_3)|-2|\III(\mu_1, \mu_2, \mu_3)|}$, where 
\begin{eqnarray*}
\widetilde{w}_{\min}(\mu_1, \mu_2, \mu_3) &= & \dfrac{N^2(N-1)}{2} + N(|\mu_1|+|\mu_2|+|\mu_3|) 
+ \sum\limits_{i=1}^{\ell(\mu_3) } (-2N+i)(\mu_3)_i \\
&& {}+{} \sum\limits_{i :1 \leq i \leq (\mu'_1)_i } (i-1)(N + (\mu'_1)_i - i) + 
\sum\limits_{i: (\mu'_1)_i < i \leq \ell( (\mu'_1) ) } (\mu'_1)_i (N + (\mu'_1)_i - i) \\
&& {}+{} \sum\limits_{i :1 \leq i \leq (\mu_2)_i } (i-1)\left( (\mu_2)_i- \frac{i}{2} \right) +
\sum\limits_{i: (\mu_2)_i < i \leq \ell( \mu_2 ) }
((\mu_2)_i-1) \frac{(\mu_2)_i}{2}.
\end{eqnarray*}
\end{lemma}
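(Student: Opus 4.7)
The plan is to combine the plane partition/dimer correspondence of Section~\ref{sec:DTtheoryAndDimers} with the Kuo weighting of Definition~\ref{def:kuoweighting} and to express $w_{\min}(\mu)$ as a correction, sector by sector, away from a convenient reference dimer configuration. Because adding one box to a plane partition corresponds to a local flip that multiplies the dimer weight by $q$, it suffices to pick the reference, compute its weight exactly, and then enumerate the boundary dimers by which $M_{\min}(\mu)$ differs from it.

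I would take as reference the dimer configuration $D_{\pi_{\min}}$ associated with the minimal asymptotic plane partition $\pi_{\min}(\mu)$, whose boxes are exactly $\I^+\cup\II\cup\III$. Starting from the empty $N\times N\times N$ configuration (weight $q^{N^2(N-1)/2}$, coming from the $N^2$ floor dimers), and tracking the $N|\mu_i|$ cells of $\Cyl_i\cap[0,N-1]^3$ added by each leg together with the double- and triple-counted corrections $-|\II|$ and $-2|\III|$, one obtains the reference weight $q^{N^2(N-1)/2+N(|\mu_1|+|\mu_2|+|\mu_3|)-|\II|-2|\III|}$.

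I would then compute the sector-by-sector difference between $M_{\min}(\mu)$ and $D_{\pi_{\min}}$. In sector~$1$, $M_{\min}(\mu)$ carries, for each column $i\leq\ell(\mu_1')$, $\min(i-1,(\mu_1')_i)$ horizontal dimers of weight $q^{N+(\mu_1')_i-i}$ that are absent from $D_{\pi_{\min}}$; this accounts for the two sums over $\mu_1'$ in the statement. In sector~$2$, an analogous enumeration carried out directly on $\mu_2$ (rather than on its conjugate) produces the triangular sum $\sum(i-1)((\mu_2)_i-i/2)$ and the rectangular sum $\sum((\mu_2)_i-1)(\mu_2)_i/2$. In sector~$3$ the asymmetry flips sign: $M_{\min}(\mu)$ is \emph{missing} the $(\mu_3)_i$ horizontal dimers of row~$i$ of weight $q^{2N-i}$ each, contributing the negative summand $\sum(-2N+i)(\mu_3)_i$.

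The main obstacle is the asymmetry of the Kuo weighting across the three sectors: horizontal dimers on the bottom-right diagonal have weight $1$ while those on the top-left diagonal have weight $q^{3N-3}$, so the precise exponent of $q$ on each boundary dimer must be tracked carefully and is not invariant under cyclic permutation of $(\mu_1,\mu_2,\mu_3)$. I would sanity-check the per-sector formulas on the two extreme one-part shapes $\mu_i=(k)$ and $\mu_i=(1^k)$, then use linearity over rows/columns and the relation $(\mu')'=\mu$ to convert sector-$1$ sums into sector-$2$-style sums. Once these three boundary corrections are in hand, adding them to the reference exponent yields the claimed formula for $\widetilde{w}_{\min}(\mu_1,\mu_2,\mu_3)$, and $w_{\min}=\widetilde{w}_{\min}-|\II|-2|\III|$ by definition.
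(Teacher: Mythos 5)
Your plan mirrors the paper's own proof: take the dimer configuration of $\pi_{\min}(\mu)$ (weight $q^{N^2(N-1)/2 + N(|\mu_1|+|\mu_2|+|\mu_3|)-|\II|-2|\III|}$) as reference and tally, sector by sector, the boundary dimers by which $M_{\min}(\mu)$ differs, arriving at the same three per-sector sums. The one slip is incidental: the top-left diagonal of $H(N)$ has horizontal-edge weight $q^{2N-1}$, not $q^{3N-3}$, as is consistent with your own sector-3 count $q^{2N-i}$ at $i=1$.
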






Lemma~\ref{cor:DTweightGabcd} is sufficient to analyze the first four factors in the condensation recurrence (\ref{eqn:DTcond}). For the remaining two factors, more work is needed, since they are associated with Maya diagrams of nonzero charge. However, we omit the proofs of the necessary lemmas, because they are very similar to that of Lemma~\ref{cor:DTweightGabcd}. 



\begin{lemma}
\label{lem:wminGad}
The weight of the minimal dimer configuration of $H(N; \mu_1^{rc},\mu_2^{rc},\mu_3)-\{a, d\}$ is $q^{w_{\min}^u}=q^{\tilde{w}_{\min}^{u}-|\II(\mu_1^r, \mu_2^c, \mu_3)|-2|\III(\mu_1^r, \mu_2^c, \mu_3)|}$, where 
\begin{eqnarray*}
\tilde{w}_{\min}^{u} &=&
\frac{N(N^2+2N-1)}{2}+(N+1)\left(\left|\mu_1^r\right|+\left|\mu_2^c\right|\right)+N+(N-1)\left|\mu_3\right|
+\sum_{i=1}^{\ell(\mu_3)}(-2N+i)(\mu_3)_i \\
&&{}+{}\sum_{i: 1\leq i\leq(\mu_1^r)'_i+1}(i-2)(N+(\mu_1^r)'_i-(i-1)) \\
&&{}+{}\sum_{i: (\mu_1^r)'_i+1<i\leq\ell((\mu_1^r)')}(\mu_1^r)'_i(N+(\mu_1^r)'_i-(i-1)) \\
&&{}+{}\sum_{i: 1\leq i\leq(\mu_2^c)_i+1}(i-2)\left((\mu_2^c)_i - \frac{i-1}{2}\right) +\sum_{i: (\mu_2^c)_i+1<i\leq\ell(\mu_2^c)}\frac{(\mu_2^c)_i((\mu_2^c)_i-1)}{2}.
\end{eqnarray*}
\end{lemma}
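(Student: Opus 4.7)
The proof will closely parallel the argument sketched for Lemma~\ref{cor:DTweightGabcd}, adapted to account for two new features: (i) the deleted vertices $a$ and $d$ force us to work with Maya diagrams of charge $-1$ (for sector 1, giving $\mu_1^r$) and charge $+1$ (for sector 2, giving $\mu_2^c$); and (ii) as noted in Example~\ref{ex:dtfloorex} and the surrounding discussion, plane partitions asymptotic to $(\mu_1^r, \mu_2^c, \mu_3)$ correspond to dimer configurations on $G-\{a,d\}$ in which the origin in $\mathbb{Z}^3$ sits at the face \emph{directly above} the central face of $H(N)$. So the plan is to first identify the minimal dimer configuration $M^u_{\min}$ explicitly, and then compute its weight by comparing it to the dimer configuration $D_{\pi}$ associated to the plane partition $\pi:=\pi_{\min}(\mu_1^r, \mu_2^c, \mu_3)$ asymptotic to $(\mu_1^r,\mu_2^c,\mu_3)$.

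The first step is to write the weight of $D_{\pi}$ explicitly. Since the origin has been shifted up by one face, the ``floor'' of $\pi$ is now an $N\times N\times N$ hexagon one row higher than in the unshifted case; equivalently, each horizontal dimer in this floor contributes an extra factor of $q$, and each box of $\pi$ in the three legs contributes $q^{N+1}$ rather than $q^N$. This yields a baseline contribution of $\tfrac12 N(N^2+2N-1)$ from the floor (including the extra row of $N$ horizontal dimers that crosses the center) plus $(N+1)(|\mu_1^r|+|\mu_2^c|) + (N-1)|\mu_3|$ from the three legs (sector 3 is shifted in the opposite direction so the exponent decreases). The extra $+N$ summand accounts for the single additional horizontal dimer introduced by the shift in the center face region.

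The second step is to compute the weight difference between $D_\pi$ and $M^u_{\min}$, one sector at a time, exactly as in the proof of Lemma~\ref{cor:DTweightGabcd}. In sector 1, the charge $-1$ Maya diagram for $\mu_1^r$ causes each index $i$ in the formula to be replaced by $i-1$: if $(\mu_1^r)'_i\geq i-1$ (equivalently $(\mu_1^r)'_i + 1 \geq i$), the $i$th column of $\mu_1^r$ contributes $i-2$ horizontal dimers of weight $q^{N+(\mu_1^r)'_i-(i-1)}$; if $(\mu_1^r)'_i < i-1$, it contributes $(\mu_1^r)'_i$ horizontal dimers of the same weight. Sector 2 is handled identically with the charge $+1$ shift producing the $(i-2)\bigl((\mu_2^c)_i - (i-1)/2\bigr)$ and $(\mu_2^c)_i((\mu_2^c)_i-1)/2$ terms. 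Sector 3 is unshifted (the origin's $x_3$-coordinate remains aligned with $H(N)$), so its contribution $\sum_{i=1}^{\ell(\mu_3)}(-2N+i)(\mu_3)_i$ is identical to the corresponding term in Lemma~\ref{cor:DTweightGabcd}.

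Adding all three sector contributions to the baseline gives $w_{\min}^u$; subtracting $|\II(\mu_1^r,\mu_2^c,\mu_3)|+2|\III(\mu_1^r,\mu_2^c,\mu_3)|$ from $D_\pi$'s weight, or equivalently adding it to $w_{\min}^u$, gives $\tilde{w}_{\min}^u$. The main bookkeeping obstacle is getting the index shifts right: one must carefully track the fact that the charge $\pm 1$ Maya diagrams are read off of diagonals of $H(N)$ offset by one vertex compared with the charge $0$ case, and check that the endpoint ``column $(\mu_1^r)'_i + 1$'' in the sector 1 sum (rather than $(\mu_1^r)'_i$) is the correct boundary between the two regimes of horizontal-dimer counts. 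Once this indexing is set up, the rest is a routine summation that factors sector-by-sector into the displayed formula.
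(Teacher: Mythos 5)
Your overall strategy — compare $M^u_{\min}$ to the dimer configuration of a finite box-filling plane partition, then compute sector-by-sector weight corrections exactly as in the derivation of Lemma~\ref{cor:DTweightGabcd}, accounting for the shifted origin and the charge-$\pm 1$ Maya diagrams — is precisely what the paper has in mind; it omits this proof with the remark that it is ``very similar to that of Lemma~\ref{cor:DTweightGabcd}.'' Your treatment of the sector~1 and sector~2 corrections (dimer counts becoming $i-2$ rather than $i-1$, weight exponent $N+(\mu_1^r)'_i-(i-1)$ rather than $N+(\mu_1')_i-i$, and the boundary between the two regimes at $(\mu_1^r)'_i+1$) is consistent with the claimed formula.

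The one place where your argument is not yet rigorous is the baseline term. First, a notational slip: you compare against $\pi:=\pi_{\min}(\mu_1^r,\mu_2^c,\mu_3)$, but that plane partition \emph{is} the one corresponding to $M^u_{\min}$, so the comparison would be vacuous; you want the finite cut-off plane partition $\pi(\mu_1^r,\mu_2^c,\mu_3)$ on the full $H(N)$ (with legs filled to the boundary), analogous to the object of the same name used before Lemma~\ref{cor:DTweightGabcd}. Second and more substantively, your accounting for the shifted floor weight is internally inconsistent: you say both that ``each horizontal dimer in this floor contributes an extra factor of $q$'' (which, over $N^2$ dimers, would add $N^2$ to the exponent and give $q^{N^2(N+1)/2}$), and that the floor has ``the extra row of $N$ horizontal dimers that crosses the center,'' plus a ``single additional horizontal dimer'' contributing $q^N$. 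None of these three descriptions alone produces the required constant $\tfrac{N(N^2+2N-1)}{2}+N=\tfrac{N(N+1)^2}{2}$, and you do not reconcile them. To close this gap you should directly determine the set of horizontal dimers in the floor dimer configuration on $H(N)$ when the box origin sits at the face directly above the central face, read off their weights from the Kuo weighting, and sum; the leg count $(N+1)(|\mu_1^r|+|\mu_2^c|)+(N-1)|\mu_3|$ then falls out by counting how many cells of each asymptotic cylinder lie in the shifted $[0,N-1]$-like range, just as $N(|\mu_1|+|\mu_2|+|\mu_3|)$ did in the unshifted case.
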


\begin{lemma}
\label{lem:wminGbc}
The weight of the minimal dimer configuration of $H(N; \mu_1^{rc},\mu_2^{rc},\mu_3)-\{b, c\}$ is $q^{w_{\min}^d}=q^{\tilde{w}_{\min}^{d}-|\II(\mu_1^c, \mu_2^r, \mu_3)|-2|\III(\mu_1^c, \mu_2^r, \mu_3)|}$, where 
\begin{eqnarray*}
\tilde{w}_{\min}^{d}
&=&\frac{(N-1)^2(N-2)}{2}+(N-1)\left(\left|\mu_1^c\right|+\left|\mu_2^r\right|\right)+(N+1)\left|\mu_3\right| +\sum_{i=1}^{\ell(\mu_3)}(-2N+i)(\mu_3)_i\\
&&{}+{}\sum_{i: 1\leq i\leq(\mu_1^c)'_i}i(N+(\mu_1^c)'_i-i-1)+\sum_{i: (\mu_1^c)'_i<i\leq\ell((\mu_1^c)')}(\mu_1^c)'_i(N+(\mu_1^c)'_i-i-1)\\
&&{}+{}\sum_{i: 1\leq i\leq(\mu_2^r)_i}i\left((\mu_2^r)_i - \frac{i+1}{2}\right) +\sum_{i: (\mu_2^r)_i<i\leq\ell(\mu_2^r)}\frac{(\mu_2^r)_i((\mu_2^r)_i-1)}{2}.
\end{eqnarray*}
\end{lemma}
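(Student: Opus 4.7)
The plan is to mirror the proof of Lemma~\ref{cor:DTweightGabcd} essentially verbatim, modifying each of the three sector contributions to account for the fact that, in the correspondence between dimer configurations on $H(N;\mu_1^{rc},\mu_2^{rc},\mu_3)-\{b,c\}$ and plane partitions asymptotic to $(\mu_1^c,\mu_2^r,\mu_3)$, the origin of $\mathbb{Z}^3$ sits at the face directly \emph{below} the central face of $H(N)$. As explained in Section~\ref{sec:DTcond}, this is exactly the ``shifted-floor'' situation illustrated in the bottom two rows of Figure~\ref{fig:dtfloor2}. The minimal dimer configuration $M_{\min}^d$ corresponds, under this bijection, to the unique plane partition $\pi_{\min}(\mu_1^c,\mu_2^r,\mu_3)$ containing only the cells of $\I^+ \cup \II \cup \III$.

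I would decompose the weight $q^{\tilde{w}_{\min}^d}$ of the dimer configuration associated to $\pi_{\min}(\mu_1^c,\mu_2^r,\mu_3)$ into three contributions. First, the ``shifted empty floor'' weight replaces $q^{N^2(N-1)/2}$ in Lemma~\ref{cor:DTweightGabcd} with $q^{(N-1)^2(N-2)/2}$: once the origin is dropped by one unit, the effective hexagonal region supporting a plane partition asymptotic to $(\emptyset,\emptyset,\emptyset)$ is $H(N-1)$, not $H(N)$, and one quickly checks (using Definition~\ref{def:kuoweighting}) that this is the correct base term. Second, the contribution from placing asymptotic boxes is
\[
q^{(N-1)|\mu_1^c|+(N-1)|\mu_2^r|+(N+1)|\mu_3|},
\]
since in the downshifted frame each box in $\Cyl_1 \cap \mathbb{Z}^3_{\geq 0}$ or $\Cyl_2 \cap \mathbb{Z}^3_{\geq 0}$ climbs $N-1$ diagonals of horizontal edges (one fewer than in the unshifted case) while each box in $\Cyl_3 \cap \mathbb{Z}^3_{\geq 0}$ climbs $N+1$ (one more). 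Third, the boundary correction in sector 3, exactly as in Lemma~\ref{cor:DTweightGabcd}, removes the ``roof'' dimers for the parts of $\mu_3$ and contributes the factor $\prod_{i=1}^{\ell(\mu_3)} q^{(-2N+i)(\mu_3)_i}$.

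The remaining pieces are the sector-1 and sector-2 corrections. Here the downshift forces the index at which the row/column of the partition meets the ``bottom'' of the hexagonal region to move by one, so that the threshold between the two cases $(\mu_1^c)'_i \geq i$ vs.\ $(\mu_1^c)'_i < i$ in Lemma~\ref{cor:DTweightGabcd} becomes $(\mu_1^c)'_i \geq i-1$ vs.\ $(\mu_1^c)'_i < i-1$ here (and similarly with $\mu_2^r$ in sector 2). Repeating the horizontal-edge/geometric-series computation from Lemma~\ref{cor:DTweightGabcd} with these shifted thresholds produces the four sums in the statement, in which each $(i-1)$ becomes $i$, each $N$ inside a summand becomes $N-1$, and each $i$ inside a summand becomes $i+1$, exactly matching the formula. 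Finally, the passage from $\tilde{w}_{\min}^d$ to $w_{\min}^d$ is achieved by subtracting $|\II(\mu_1^c,\mu_2^r,\mu_3)|+2|\III(\mu_1^c,\mu_2^r,\mu_3)|$, as in the normalization of Theorem~\ref{thm:ZD convergence}.

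The main obstacle is bookkeeping: one must carefully verify, either via a picture argument (tracking exactly which horizontal dimers appear in $M_{\min}^d$ versus the dimer configuration corresponding to $\pi_{\min}(\mu_1^c,\mu_2^r,\mu_3)$ in each of the three sectors) or by induction on the number of added boxes using the local-move description from Figures~\ref{fig:removingabox}--\ref{fig:addingabox}, that the ``downward shift'' truly produces the arithmetic changes claimed above (coefficient $N-1$ in sectors 1 and 2, coefficient $N+1$ in sector 3, and the threshold shift in the sector sums). Everything else is then a mechanical geometric-series manipulation, completely parallel to Lemma~\ref{cor:DTweightGabcd} and Lemma~\ref{lem:wminGad}.
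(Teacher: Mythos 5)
Your overall strategy is exactly what the paper intends: the authors explicitly omit the proof of Lemma~\ref{lem:wminGbc}, stating only that it is ``very similar to that of Lemma~\ref{cor:DTweightGabcd},'' so deriving the formula by running the same sector-by-sector dimer count under the downward floor shift is the right plan, and your identification of the $\frac{(N-1)^2(N-2)}{2}$ base term, the $(N-1)$ and $(N+1)$ coefficients on $|\mu_1^c|+|\mu_2^r|$ and $|\mu_3|$, and the unchanged $\mu_3$ correction are all correct.

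However, your description of how the sector-$1$ and sector-$2$ sums change contains a direction error that would lead to a wrong formula. You claim the threshold between the two cases becomes $(\mu_1^c)'_i \geq i-1$ versus $(\mu_1^c)'_i < i-1$. That is the threshold shift for the \emph{up} case, Lemma~\ref{lem:wminGad}, where the index range of the first sum is $1\leq i\leq(\mu_1^r)'_i+1$. For the \emph{down} case as stated in Lemma~\ref{lem:wminGbc}, the index range is $1\leq i\leq(\mu_1^c)'_i$, i.e., the threshold is $(\mu_1^c)'_i \geq i$ (which is equivalent to $\geq i+1$, the ``symmetric'' choice, because the boundary term $(\mu_1^c)'_i=i$ contributes $i(N-1)$ to either sum). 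The thresholds $\geq i-1$ and $\geq i$ are \emph{not} equivalent for this summand: with $\mu_1=(3,2,1)$ one has $\mu_1^c=(2,1,1,1)$, $(\mu_1^c)'=(4,1)$, and your threshold gives the sector-$1$ contribution $1\cdot(N+2)+2\cdot(N-2)=3N-2$, whereas the stated formula gives $1\cdot(N+2)+1\cdot(N-2)=2N$. So your claimed ``threshold shift'' would produce a different formula than the one you are trying to prove; the shift goes the opposite way from the up case (or, in the normalized form the paper uses, does not move at all). The substitution rule ``$(i-1)\to i$, $N\to N-1$, and $i\to i+1$'' is also over-determined — applying the coefficient and $N$ replacements already yields the formula, so additionally replacing $i$ by $i+1$ would double-count — but this is cosmetic; the threshold direction is the substantive gap you would need to fix when carrying out the picture or induction argument you outline.
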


\subsubsection{Algebraic simplification}
\label{sec:DTalg}

Since
$A = \widetilde{w}_{\min}(\mu_1, \mu_2, \mu_3) + \widetilde{w}_{\min}(\mu_1^{rc}, \mu_2^{rc}, \mu_3)$ and \\
$B = \widetilde{w}_{\min}(\mu_1^{rc}, \mu_2, \mu_3) + \widetilde{w}_{\min}(\mu_1, \mu_2^{rc}, \mu_3)$, we see that $A = B$. In addition, $C = \tilde{w}_{\min}^{u} + \tilde{w}_{\min}^{d}$. 

To compute $C- A$, we split the algebra into two pieces: we first simplify the summands involving $N$, and next simplify the summands that do not involve $N$. 

By Lemma~\ref{cor:DTweightGabcd}, the terms in $A$ that involve $N$ are 
\begin{eqnarray*}
& & N^2(N-1) + N(|\mu_1| + |\mu_1^{rc}| +|\mu_2| + |\mu_2^{rc}| +2 |\mu_3|) 
+ 2 \sum\limits_{i=1}^{\ell(\mu_3) } (-2N+i)(\mu_3)_i \\
& & {}+{} \sum\limits_{i :1 \leq i \leq (\mu_1)_i' } N(i-1) + 
\sum\limits_{i: (\mu_1)_i' < i \leq \ell( (\mu_1)' ) } N (\mu_1)_i'
+ \sum\limits_{i :1 \leq i \leq (\mu_1^{rc})_i' } N(i-1) \\
& & {}+{} \sum\limits_{i: (\mu_1^{rc})_i' < i \leq \ell( (\mu_1^{rc})' ) } N (\mu_1^{rc})_i'.
\end{eqnarray*}
Since $\lambda_i \geq i$ precisely when $i \leq d(\lambda)$, we can write 
\[ \sum\limits_{i :1 \leq  i \leq \lambda_i } N(i-1) 
= \dfrac{ Nd( \lambda) (d(\lambda) -1) }{2} .\]
So, the above can be written as 
\begin{eqnarray}
&& N^2(N-1) + N(|\mu_1| + |\mu_1^{rc}| +|\mu_2| + |\mu_2^{rc}| +2 |\mu_3|) 
+ 2 \sum\limits_{i=1}^{\ell(\mu_3) } (-2N+i)(\mu_3)_i \label{eqn:A} \\
&& {}+{} \dfrac{ Nd( \mu_1') (d( \mu_1') -1) }{2}
+ N\sum\limits_{i: d( \mu_1') + 1 \leq i \leq \ell( \mu'_1 ) } (\mu'_1)_i
+ \dfrac{ Nd( (\mu_1^{rc})') (d( (\mu_1^{rc})') -1) }{2} \nonumber \\
&& {}+{} N \sum\limits_{ i: d( (\mu_1^{rc})') + 1 \leq i \leq \ell( (\mu_1^{rc})' ) } (\mu_1^{rc})_i'. \nonumber 
\end{eqnarray}

Now we consider the terms in $C$ that involve $N$. By Lemmas~\ref{lem:wminGad} and~\ref{lem:wminGbc}, those terms are 
\begin{eqnarray*}
& & \frac{N(N^2+2N-1)}{2}+N \left(\left|\mu_1^r\right|+\left|\mu_2^c\right| + \left|\mu_3\right| \right)+N+ 2\sum_{i=1}^{\ell(\mu_3)}(-2N+i)(\mu_3)_i \\
&&{}+{}\sum_{i: 1\leq i\leq(\mu_1^r)'_i+1} N (i-2)
+ \sum_{i: (\mu_1^r)'_i+1<i\leq\ell((\mu_1^r)')} N (\mu_1^r)'_i + \frac{(N-1)^2(N-2)}{2} \\
&&{}+{}N\left(\left|\mu_1^c\right|+\left|\mu_2^r\right| +\left|\mu_3\right| \right)
+\sum_{i: 1\leq i\leq(\mu_1^c)'_i} Ni +
\sum_{i: (\mu_1^c)'_i<i\leq\ell((\mu_1^c)')} N(\mu_1^c)'_i.
\end{eqnarray*}
As above, we can write 
\[\sum_{i: 1\leq i\leq(\mu_1^c)'_i} Ni 
= N \sum\limits_{i=1}^{ d((\mu_1^c)')} i = \dfrac{ Nd( (\mu_1^{c})') (d( (\mu_1^{c})') +1) }{2}. 
\]

Recall from Lemma~\ref{rem:idie} that $d_{s}((\mu_1^{r})')$ denotes the largest integer $i$ such that $i \leq (\mu_1^{r})'_i + 1$. There are two possibilities, either $d_{s} := d_{s}((\mu_1^{r})')$ is equal to $d := d((\mu_1^{r})')$, or $d_{s} = d+1$. 
First assume that $d_{s} = d$. 
Then we have 
\begin{itemize}
\item $N \sum\limits_{i: 1\leq i\leq(\mu_1^r)'_i+1} (i-2)
= N \sum\limits_{i: 1\leq i\leq d((\mu_1^r)')} (i-2) = 
N \left( \frac{ (d((\mu_1^r)')-2) (d((\mu_1^r)')-1) }{2} -1 \right)$, and 
\item $N\sum\limits_{i: (\mu_1^r)'_i+1<i\leq\ell((\mu_1^r)')} (\mu_1^r)'_i 
=N\sum\limits_{i: d((\mu_1^r)') <i\leq\ell((\mu_1^r)')} (\mu_1^r)'_i$. 
\end{itemize}
If instead $d_{s} = d+1$, then 
$$ N \sum\limits_{1\leq i\leq(\mu_1^r)'_i+1} (i-2)
= N \left( \sum\limits_{1\leq i\leq d((\mu_1^r)')} (i-2) + d((\mu_1^r)') -1 \right).$$ 
Since $(\mu^r_1)'_{d(\mu^r_1) + 1} = d((\mu^r_1)')$ by Lemma~\ref{lem:d'eq}, 
\begin{eqnarray*}
& & N \left( \sum\limits_{i: 1\leq i\leq(\mu_1^r)'_i+1} (i-2) + \sum\limits_{i: (\mu_1^r)'_i+1<i\leq\ell((\mu_1^r)')} (\mu_1^r)'_i \right)\\
&=& N \left( \sum\limits_{i: 1\leq i\leq d((\mu_1^r)')} (i-2) + d((\mu_1^r)') -1 + \sum\limits_{i: d((\mu_1^r)')+1<i\leq\ell((\mu_1^r)') } (\mu_1^r)'_i \right) \\
&=&
N\left( \sum\limits_{i: 1\leq i\leq d((\mu_1^r)')} (i-2) -1 + \sum\limits_{ i: d((\mu_1^r)') <i\leq\ell((\mu_1^r)')} (\mu_1^r)'_i \right). 
\end{eqnarray*}
So the terms in $C$ that involve $N$ can be written as 
\begin{align}
& N^2(N-1) + 3N - 1 +N \left(\left|\mu_1^r\right| + \left|\mu_1^c\right| + \left|\mu_2^r\right| +\left| \mu_2^c\right| +2 \left|\mu_3\right| \right) + 2\sum_{i=1}^{\ell(\mu_3)}(-2N+i)(\mu_3)_i \label{eqn:C} \\ 
&+ N \left(-2 + \mathbbm{1}_{d_{s} = d} + \dfrac{ (d((\mu_1^r)')-2) (d((\mu_1^r)')-1) }{2} +
\sum_{i: d((\mu_1^r)') <i\leq\ell((\mu_1^r)')} (\mu_1^r)'_i \right) \nonumber \\
&+ \dfrac{ Nd( (\mu_1^{c})') (d( (\mu_1^{c})') +1) }{2} +
N \sum_{i: (\mu_1^c)'_i<i\leq\ell((\mu_1^c)')} (\mu_1^c)'_i. \nonumber 
\end{align}


Before we subtract the terms in $A$ that involve $N$ from the terms in $C$ that involve $N$, we make some remarks which will help us simplify the following sums: 
\begin{align*}
N \sum_{i: (\mu_1^c)'_i<i\leq\ell((\mu_1^c)')} (\mu_1^c)'_i, 
\qquad\qquad\qquad & N \sum_{i: d((\mu_1^r)') <i\leq\ell((\mu_1^r)')} (\mu_1^r)'_i, \\
N\sum\limits_{i: d( \mu_1') + 1 \leq i \leq \ell( \mu'_1 ) } (\mu'_1)_i,\qquad\text{ and }
\qquad & N \sum\limits_{ i: d( (\mu_1^{rc})') + 1 \leq i \leq \ell( (\mu_1^{rc})' ) } (\mu_1^{rc})_i'.
\end{align*}

\begin{remark}
\label{rem:er}
Let 
\[
e^{r}(\mu) = \sum\limits_{i: d(\mu) < i \leq \ell(\mu)} \mu_i - 
\sum\limits_{i: d(\mu^r) < i \leq \ell(\mu^r)} \mu_i^r. 
\]
There are two cases to consider. If $d(\mu) = d(\mu^r)$, then by Lemma~\ref{lem:dmur}, $\mu_{d(\mu) +1} = d(\mu)$. So, applying Lemma~\ref{lem:mur}, 
\begin{eqnarray*}
e^{r}(\mu) &=& \sum\limits_{i: d(\mu) < i \leq \ell(\mu)} \mu_i - 
\sum\limits_{i: d(\mu) < i \leq \ell(\mu^r)} \mu_{i+1} \\
&=&
\sum\limits_{i: d(\mu) < i \leq \ell(\mu)} \mu_i - 
\sum\limits_{i: d(\mu) +1 < i \leq \ell(\mu^r) + 1} \mu_{i}  = \mu_{d(\mu)+1} = d(\mu).
\end{eqnarray*}
If instead $d(\mu^r) =d(\mu) - 1$, then 
\[
e^{r}(\mu) = \sum\limits_{i: d(\mu) < i \leq \ell(\mu)} \mu_i - 
\sum\limits_{i: d(\mu) -1 < i \leq \ell(\mu^r)} \mu_{i+1} = 0.
\]

We have shown 
\[
e^{r}(\mu) =
\begin{cases}
d(\mu) & \mbox{if } d(\mu ) = d(\mu^r) \\
0 &\mbox{otherwise}.
\end{cases}
\]
\end{remark}

\begin{remark}
\label{rem:erc}
Let 
\[
e^{rc}(\mu) = \sum\limits_{i: d(\mu^c) < i \leq \ell(\mu^c)} \mu_i^c - 
\sum\limits_{i: d(\mu^{rc}) < i \leq \ell(\mu^{rc})} \mu_i^{rc}. 
\]
As in the previous remark, we split into cases based on whether $d(\mu^c) = d(\mu^{rc})$ or $d(\mu^c) = d(\mu^{rc}) + 1$. Applying Lemma~\ref{lem:mucandmurc}, we get 
\[
e^{rc}(\mu) = 
\begin{cases}
\mu^c_{d(\mu^c)+1} & \mbox{if } d(\mu^c) = d(\mu^{rc}) \\
0 &\mbox{otherwise}.
\end{cases} 
\]
By Remarks~\ref{rem:dmurc} and~\ref{rem:mucdiag}, if $d(\mu^c) = d(\mu^{rc})$, then $\mu^c_{d(\mu^c)+1} = d(\mu) -1$, so 
\[
e^{rc}(\mu) = 
\begin{cases}
d(\mu) -1 & \mbox{if } d(\mu^c) = d(\mu^{rc}) \\
0 &\mbox{otherwise}.
\end{cases} 
\]
%
\end{remark}


\begin{remark}
\label{rem:erplus}
We note that 
\[
\dfrac{ d( (\mu_1^{c})') (d( (\mu_1^{c})') +1) }{2}
- \dfrac{ d( \mu_1') (d( \mu_1') -1) }{2} = 
\begin{cases}
d( \mu'_1) & \mbox{if } d((\mu_1^c)')= d(\mu'_1) \\
0 &\mbox{otherwise}.
\end{cases} 
\]
So, applying Remark~\ref{rem:er} and using the fact that $(\mu_1^{c})' = (\mu'_1)^{r}$, we have 
\[
\dfrac{ d( (\mu_1^{c})') (d( (\mu_1^{c})') +1) }{2}
- \dfrac{ d( \mu_1') (d( \mu_1') -1) }{2}
- e^{r}(\mu_1') = 0.
\]
\end{remark}

\begin{remark}
\label{rem:ercplus}
Note that 
\begin{eqnarray*}
&& \dfrac{ (d((\mu_1^r)')-2) (d((\mu_1^r)')-1) }{2} - \dfrac{ d( (\mu_1^{rc})') (d((\mu_1^{rc})') -1) }{2} \\
& = &
\begin{cases}
- (d(( \mu^r_1)') -1) & \mbox{if } d((\mu_1^r)')= d((\mu_1^{rc})') \\
0 &\mbox{otherwise}.
\end{cases}
\end{eqnarray*}
When $d((\mu_1^r)')= d((\mu_1^{rc})')$, $-(d(( \mu^r_1)') -1) = - (d( \mu_1') -2)$. Also, the condition $d((\mu_1^r)')= d((\mu_1^{rc})')$ is equivalent to $d_{s}((\mu_1^{r})')= d((\mu_1^{r})')+1$. This is because $d_{s}((\mu_1^{r})')= d((\mu_1^{r})')+1$ if and only if $d(\mu_1')= d((\mu_1^{r})')+1$ (by Lemma~\ref{lem:dprime_c}) which holds if and only if $d(\mu_1')-1= d((\mu_1^{r})')$, which is equivalent to $d((\mu_1')^{rc}) = d((\mu_1^{r})')$.
So, by Remark~\ref{rem:erc}, if $d_s:=d_{s}((\mu_1^{r})')$ and $d:=d((\mu_1^{r})')$, then 
\[ \dfrac{ (d((\mu_1^r)')-2) (d((\mu_1^r)')-1) }{2} - \dfrac{ d( (\mu_1^{rc})') (d((\mu_1^{rc})') -1) }{2} + e^{rc}(\mu_1') = \mathbbm{1}_{d_{s} \neq d}.
\]
\end{remark}



Now we subtract the terms in $A$ that involve $N$ (see equation (\ref{eqn:A})) from the terms in $C$ that involve $N$ (see equation (\ref{eqn:C})). Each term that cancels with another term is marked with $c$. Each term that is modified between one side of an equation and the other is underlined and the relevant lemma or remark is indicated.

\begin{eqnarray*}
&& \underbrace{N^2(N-1)}_{c} + 3N - 1 +N \bigg{(}\left|\mu_1^r\right| + \left|\mu_1^c\right| + \left|\mu_2^r\right| +\left| \mu_2^c\right| + \underbrace{2 \left|\mu_3\right| }_{c} \bigg{)} + \underbrace{ 2\sum_{i=1}^{\ell(\mu_3)}(-2N+i)(\mu_3)_i }_{c} \\
&&{}+{} N \bigg{(} -1 - \mathbbm{1}_{d_{s} \neq d} + \dfrac{ (d((\mu_1^r)')-2) (d((\mu_1^r)')-1) }{2} +
\underbrace{\sum_{d((\mu_1^r)') <i\leq\ell((\mu_1^r)')} (\mu_1^r)'_i }_{\text{Lemma }\ref{lem:mucprime}} 
\\ 
&&\phantom{{}+{} N \bigg{(}}{}+{} \dfrac{ d( (\mu_1^{c})') (d( (\mu_1^{c})') +1) }{2} +
\underbrace{ \sum_{(\mu_1^c)'_i<i\leq\ell((\mu_1^c)')} (\mu_1^c)'_i }_{ \text{Lemma}~\ref{lem:mucprime}} \bigg{)} \\
&&{}-{} \Bigg{(} \underbrace{N^2(N-1)}_{c} + N\bigg{(}|\mu_1| + |\mu_1^{rc}| +|\mu_2| + |\mu_2^{rc}| + \underbrace{2 |\mu_3| }_{c}\bigg{)} 
+ \underbrace{ 2 \sum\limits_{i=1}^{\ell(\mu_3) } (-2N+i)(\mu_3)_i }_{c} \\
&&\phantom{{}-{} \Bigg{(}}{}+{} N \bigg{(} \dfrac{ d( \mu_1') (d( \mu_1') -1) }{2}
+ \sum\limits_{i : d( \mu_1') + 1 \leq i \leq \ell( \mu_1' ) } (\mu_1)_i'
+ \dfrac{ d( (\mu_1^{rc})') (d( (\mu_1^{rc})') -1) }{2} \\
&&\phantom{\phantom{{}-{} \Bigg{(}}{}+{} N \bigg{(}}{}+{} \sum\limits_{i: d( (\mu_1^{rc})') + 1 \leq i \leq \ell( (\mu_1^{rc})' ) } (\mu_1^{rc})_i' \bigg{)} \Bigg{)} \\
&=& 3N - 1 +N \bigg{(} \underbrace{ \left|\mu_1^r\right| -|\mu_1| + \left|\mu_1^c\right| - |\mu_1^{rc}| + \left|\mu_2^r\right| - |\mu_2| +\left| \mu_2^c\right| - |\mu_2^{rc}| }_{\text{Lemma}~\ref{lem:musize} } \bigg{)} \\
&&{}+{} N \Bigg{(} -1 - \mathbbm{1}_{d_{s} \neq d} + \dfrac{ (d((\mu_1^r)')-2) (d((\mu_1^r)')-1) }{2} +
\underbrace{ \sum_{d((\mu'_1)^c ) <i\leq\ell((\mu'_1)^c )} (\mu'_1)^c_i }_{\text{Remark}~\ref{rem:erc}} \\
&&\phantom{{}+{} N \Bigg{(}}{}+{}
\dfrac{ d( (\mu_1^{c})') (d( (\mu_1^{c})') +1) }{2} 
+ \underbrace{ \sum_{ d((\mu'_1)^r) <i\leq\ell((\mu'_1)^r)} (\mu'_1)^r_i}_{\text{Remark}~\ref{rem:er}} - \dfrac{ d( \mu_1') (d( \mu_1') -1) }{2} \\
&&\phantom{{}+{} N \Bigg{(}}{}-{} \underbrace{\sum\limits_{i : d( \mu_1') < i \leq \ell( \mu_1' ) } (\mu'_1)_i }_{\text{Remark}~\ref{rem:er}} 
- \dfrac{ d( (\mu_1^{rc})') (d( (\mu_1^{rc})') -1) }{2} - \underbrace{ \sum\limits_{i : d( (\mu'_1)^{rc}) < i \leq \ell( (\mu_1')^{rc} ) } (\mu'_1)^{rc}_i }_{\text{Remark}~\ref{rem:erc}} \Bigg{)} \\
&=& 3N - 1 - 2 N -N -N \cdot \mathbbm{1}_{d \neq d_{s}}\\
&&{}+{} N \bigg{(}
\underbrace{ \dfrac{ d( (\mu_1^{c})') (d( (\mu_1^{c})') +1) }{2} - \dfrac{ d( \mu_1') (d( \mu_1') -1) }{2} -
e^{r}(\mu_1') }_{\text{Remark}~\ref{rem:erplus}} \\
&&\phantom{{}+{} N \bigg{(}}{}+{} \underbrace{e^{rc}(\mu'_1 ) 
+ \dfrac{ (d((\mu_1^r)')-2) (d((\mu_1^r)')-1) }{2} - \dfrac{ d( (\mu_1^{rc})') (d( (\mu_1^{rc})') -1) }{2} }_{\text{Remark}~\ref{rem:ercplus}} \bigg{)} = -1.
\end{eqnarray*}
We have thus shown that the terms involving $N$ simplify to $-1$. 

Now we consider the terms that do not involve $N$. In $A$, we have 
\begin{eqnarray*}
& & \sum\limits_{1 \leq i \leq d(\mu_1) } (i-1)((\mu'_1)_i - i) + 
\sum\limits_{ d(\mu_1) < i \leq \ell( \mu'_1) } (\mu'_1)_i ((\mu'_1)_i - i) + \sum\limits_{1 \leq i \leq d(\mu_2) } (i-1)\left( (\mu_2)_i- \frac{i}{2} \right) \\
& &{}+{} \sum\limits_{ d(\mu_2) < i \leq \ell( \mu_2 ) }
((\mu_2)_i-1) \frac{(\mu_2)_i}{2} + 
\sum\limits_{1 \leq i \leq d(\mu_1^{rc}) } (i-1)( (\mu_1^{rc})_i' - i) \\
&&{}+{} \sum\limits_{d(\mu_1^{rc}) < i \leq \ell( (\mu_1^{rc})' ) } (\mu_1^{rc})_i' ( (\mu_1^{rc})_i' - i) 
+ \sum\limits_{1 \leq i \leq d(\mu_2^{rc}) } (i-1)\left( (\mu_2^{rc})_i- \frac{i}{2} \right) \\
&&{}+{} \sum\limits_{d(\mu_2^{rc}) < i \leq \ell( \mu_2^{rc} ) }
((\mu_2^{rc})_i-1) \frac{(\mu_2^{rc})_i}{2}.
\end{eqnarray*}
We remark that in Lemma~\ref{cor:DTweightGabcd}, the first sum is over $i$ such that $1 \leq i \leq (\mu'_1)_i $, but this is equivalent to writing $1 \leq  i \leq d(\mu_1)$. We have made similar replacements in the other sums.

In $C$, we have 
\begin{eqnarray}
& & \left|\mu_1^r\right|+\left|\mu_2^c\right| - \left|\mu_3\right| - \left|\mu_1^c\right|- \left|\mu_2^r\right| + \left|\mu_3\right| +\sum_{i: 1\leq i\leq(\mu_1^r)'_i+1}(i-2)((\mu_1^r)'_i-(i-1)) \label{eqn:CnoN} \\
&& {}+{}\sum_{i: (\mu_1^r)'_i+1<i\leq\ell((\mu_1^r)')}(\mu_1^r)'_i((\mu_1^r)'_i-(i-1))+\sum_{i: 1\leq i\leq(\mu_2^c)_i+1}(i-2)\left((\mu_2^c)_i - \frac{i-1}{2}\right) \nonumber \\
&& {}+{}\sum_{i: (\mu_2^c)_i+1<i\leq\ell(\mu_2^c)}\frac{(\mu_2^c)_i((\mu_2^c)_i-1)}{2} +
\sum_{1\leq i\leq d(\mu_1^c)}i((\mu_1^c)'_i-i-1) \nonumber \\
&& {}+{}\sum_{d(\mu_1^c)<i\leq\ell((\mu_1^c)')}(\mu_1^c)'_i((\mu_1^c)'_i-i-1) +\sum_{1\leq i\leq d(\mu_2^r)}i\left((\mu_2^r)_i - \frac{i+1}{2}\right) \nonumber \\
&&{}+{}\sum_{d(\mu_2^r)<i\leq\ell(\mu_2^r)}\frac{(\mu_2^r)_i((\mu_2^r)_i-1)}{2}. \nonumber
\end{eqnarray}
Like we did for $A$, we replaced $i: 1\leq i\leq(\mu_1^c)'_i$ in the fifth sum with $1\leq i\leq d(\mu_1^c)$, and similarly for the sixth, seventh, and eighth sums. 

\begin{remark}
As in Remark~\ref{rem:idie}, we let $d_{s}(\mu)$ be the maximum positive integer $i$ such that $i \leq \mu_i + 1$. Then we can write the first four sums in~equation (\ref{eqn:CnoN}) as 
\begin{eqnarray*}
& & \sum_{1\leq i\leq d_{s}((\mu_1^r)')}(i-2)((\mu_1^r)'_i-(i-1)) +\sum_{d_{s}((\mu_1^r)')<i\leq\ell((\mu_1^r)')}(\mu_1^r)'_i((\mu_1^r)'_i-(i-1)) \\
& & {}+{}\sum_{1\leq i \leq d_{s}(\mu_2^c) }(i-2)\left((\mu_2^c)_i - \frac{i-1}{2}\right)
+\sum_{ d_{s}(\mu_2^c)<i\leq\ell(\mu_2^c)}\frac{(\mu_2^c)_i((\mu_2^c)_i-1)}{2}.
\end{eqnarray*}
Recall from Lemma~\ref{rem:idie} that for any partition $\mu$, either $d_{s}(\mu) = d(\mu)$ or $d_{s}(\mu) = d(\mu) + 1$. If $d_{s}((\mu_1^r)') = d((\mu_1^r)')$ (resp.~$d_{s}(\mu_2^c) = d(\mu_2^c)$), then we can replace every instance of $d_{s}((\mu_1^r)')$ (resp.~$d_{s}(\mu_2^c)$) in the sums above with $d(\mu_1^r)$ (resp.~$d(\mu_2^c)$). Otherwise, we can use the fact that by Lemma~\ref{lem:d'eq}, $d_{s}(\mu) = d(\mu) + 1$ if and only if $\mu_{d(\mu) + 1} = d(\mu)$ to see that when $d_{s}((\mu_1^r)') = d((\mu_1^r)')+1$, 
\begin{eqnarray*}
& & \sum_{1\leq i\leq d_{s}((\mu_1^r)')}(i-2)((\mu_1^r)'_i-(i-1)) \\
&=&
\sum_{1\leq i\leq d((\mu_1^r)') + 1}(i-2)((\mu_1^r)'_i-(i-1)) \\
&=&
\sum_{1\leq i\leq d((\mu_1^r)')}(i-2)((\mu_1^r)'_i-(i-1)) +
(d((\mu_1^r)') - 1) \left( d((\mu_1^r)' )- d((\mu_1^r)') \right) \\
&=&
\sum_{1\leq i\leq d((\mu_1^r)')}(i-2)((\mu_1^r)'_i-(i-1)) +
(\mu_1^r)'_{d((\mu_1^r)')+1}((\mu_1^r)'_{d((\mu_1^r)')+1}-(d((\mu_1^r)')+1-1))
\end{eqnarray*}
and when $d_{s}(\mu_2^c) = d(\mu_2^c)+1$, 
\begin{eqnarray*}
&&\sum_{1\leq i \leq d_{s}(\mu_2^c) }(i-2)\left((\mu_2^c)_i - \frac{i-1}{2}\right) \\
&=&\sum_{1\leq i \leq d(\mu_2^c)+1 }(i-2)\left((\mu_2^c)_i - \frac{i-1}{2}\right) \\
&=& 
\sum_{1\leq i \leq d(\mu_2^c) }(i-2)\left((\mu_2^c)_i - \frac{i-1}{2}\right) + (d(\mu_2^c) - 1) \left(\frac{d(\mu_2^c) }{2} \right) \\
&=& 
\sum_{1\leq i \leq d(\mu_2^c) }(i-2)\left((\mu_2^c)_i - \frac{i-1}{2}\right) + \frac{(\mu_2^c)_{d(\mu_2^c)+1}((\mu_2^c)_{d(\mu_2^c)+1}-1)}{2}.
\end{eqnarray*}
Therefore, we can write 
\begin{eqnarray*}
& & \sum_{1\leq i\leq d_{s}((\mu_1^r)')}(i-2)((\mu_1^r)'_i-(i-1)) +\sum_{d_{s}((\mu_1^r)')<i\leq\ell((\mu_1^r)')}(\mu_1^r)'_i((\mu_1^r)'_i-(i-1)) \\
& &{}+{}\sum_{1\leq i \leq d_{s}(\mu_2^c) }(i-2)\left((\mu_2^c)_i - \frac{i-1}{2}\right)
+\sum_{ d_{s}(\mu_2^c)<i\leq\ell(\mu_2^c)}\frac{(\mu_2^c)_i((\mu_2^c)_i-1)}{2} \\
& = & \sum_{1\leq i\leq d((\mu_1^r)')}(i-2)((\mu_1^r)'_i-(i-1)) +\sum_{d((\mu_1^r)')<i\leq\ell((\mu_1^r)')}(\mu_1^r)'_i((\mu_1^r)'_i-(i-1)) \\
& &{}+{}\sum_{1\leq i \leq d(\mu_2^c) }(i-2)\left((\mu_2^c)_i - \frac{i-1}{2}\right)
+\sum_{ d(\mu_2^c)<i\leq\ell(\mu_2^c)}\frac{(\mu_2^c)_i((\mu_2^c)_i-1)}{2}.
\end{eqnarray*}
\end{remark}

When we subtract the sums in $A$ from the sums in $C$, we will pair each sum in $A$ with a sum in $C$. Many terms cancel, but this is not obvious and requires the following lemmas. 

\begin{lemma} 
\label{lem:redlemma}
\begin{eqnarray*}
\sum_{d((\mu_1^r)')<i\leq\ell((\mu_1^r)')}(\mu_1^r)'_i((\mu_1^r)'_i-(i-1)) 
- \sum\limits_{d(\mu_1^{rc}) < i \leq \ell( (\mu_1^{rc})' ) } (\mu_1^{rc})_i' ((\mu_1^{rc})_i' - i) 
= 0
\end{eqnarray*}
\end{lemma}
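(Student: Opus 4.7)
The plan is to reduce both sums to sums of the same expression over comparable ranges, and then show the range difference contributes zero.

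First, I would set $\nu = \mu_1'$ and use Lemma~\ref{lem:mucprime} to rewrite $(\mu_1^r)' = \nu^c$ and $(\mu_1^{rc})' = \nu^{rc}$, together with the transpose-invariance $d(\mu_1^{rc}) = d((\mu_1^{rc})') = d(\nu^{rc})$. Next, I would apply Lemma~\ref{lem:mucandmurc} to $\nu$, which gives $\nu^{rc}_i = \nu^c_{i+1}$ for every $i > d(\nu^{rc})$. Substituting into the second sum and reindexing $j = i+1$ converts
\[
\sum_{d(\mu_1^{rc}) < i \leq \ell((\mu_1^{rc})')} (\mu_1^{rc})'_i \bigl((\mu_1^{rc})'_i - i\bigr)
\;=\; \sum_{d(\nu^{rc}) + 1 < j \leq \ell(\nu^{rc}) + 1} \nu^c_j \bigl(\nu^c_j - (j-1)\bigr),
\]
so that both sums in the lemma involve the same summand $\nu^c_j(\nu^c_j - (j-1))$.

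Since this summand vanishes whenever $\nu^c_j = 0$, I would extend the upper limit of each sum to infinity without changing the values. The difference $S_1 - S_2$ then collapses to $\sum_{d(\nu^c) < i \leq d(\nu^{rc}) + 1} \nu^c_i(\nu^c_i - (i-1))$. By Remark~\ref{rem:dmurc}, only two cases arise: if $\nu_{d(\nu)} > d(\nu)$, then $d(\nu^{rc}) = d(\nu^c) - 1$ and the range is empty, forcing $S_1 = S_2$. If instead $\nu_{d(\nu)} = d(\nu)$, then $d(\nu^{rc}) = d(\nu^c)$ and only the single index $i = d(\nu^c) + 1$ contributes.

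The one remaining step, which is the main (though short) verification, is to show that this single boundary term vanishes. In this case Remark~\ref{rem:mucdiag} gives $d(\nu^c) = d(\nu) - 1$, so $d(\nu^c) + 1 = d(\nu)$; and since $d(\nu) \leq i_d$, Lemma~\ref{lem:muc} yields $\nu^c_{d(\nu)} = \nu_{d(\nu)} - 1 = d(\nu) - 1 = d(\nu^c)$. Thus the term equals $d(\nu^c)\bigl(d(\nu^c) - d(\nu^c)\bigr) = 0$, finishing the proof. The hardest part is really just organizing the case split so that the transpose identity from Lemma~\ref{lem:mucprime} and the shift identity from Lemma~\ref{lem:mucandmurc} line up cleanly; after that everything reduces to a short boundary computation.
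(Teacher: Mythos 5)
Your proof is correct and follows essentially the same route as the paper's: both pass to $\nu = \mu_1'$ via Lemma~\ref{lem:mucprime}, apply the shift identity $\nu^{rc}_i = \nu^c_{i+1}$ from Lemma~\ref{lem:mucandmurc}, split on $d(\nu^{rc}) = d(\nu^c)$ versus $d(\nu^{rc}) = d(\nu^c)-1$ (Remark~\ref{rem:dmurc}), and kill the lone boundary term using Remark~\ref{rem:mucdiag} and Lemma~\ref{lem:muc}. Your extension-to-infinity device is a minor notational tidying that lets you skip the paper's appeal to Corollary~\ref{cor:lengthmurclengthmuc} when aligning the upper summation limits, but the mathematical content is the same.
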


\begin{proof}
Recall from Lemma~\ref{lem:mucprime} that $(\mu_1^r)' = (\mu'_1)^c$. Then we can rewrite the difference of sums as 
\[
\sum_{d((\mu'_1)^c)<i\leq\ell((\mu'_1)^c)}(\mu'_1)^c_i((\mu'_1)^c_i-(i-1)) 
- \sum\limits_{d((\mu'_1)^{rc})< i \leq \ell( (\mu'_1)^{rc} ) } (\mu'_1)^{rc}_i ((\mu'_1)^{rc}_i - i). 
\]
There are two cases to consider. For readability, we put $\lambda := \mu_1'$. First assume that $d(\lambda^{rc}) = d(\lambda^c)$. 
Then by Lemma~\ref{lem:mucandmurc}, we have 
\begin{eqnarray*}
& & \sum_{d(\lambda^c)<i\leq\ell(\lambda^c)}\lambda^c_i(\lambda^c_i-(i-1)) 
- \sum\limits_{d(\lambda^{rc})< i \leq \ell( \lambda^{rc} ) } \lambda^{rc}_i (\lambda^{rc}_i - i) \\
& =& 
\sum_{d(\lambda^c)<i\leq\ell(\lambda^c)}\lambda^c_i(\lambda^c_i-(i-1)) 
- \sum\limits_{d(\lambda^c)< i \leq \ell( \lambda^{rc} ) } \lambda^{c}_{i+1} (\lambda^{c}_{i+1}- i) \\
& =& \sum_{d(\lambda^c)<i\leq\ell(\lambda^c)}\lambda^c_i(\lambda^c_i-(i-1)) 
- \sum\limits_{d(\lambda^c) + 1< i \leq \ell( \lambda^{rc}) + 1 } \lambda^{c}_{i} (\lambda^{c}_{i}- (i-1)) =0.
\end{eqnarray*}
In the final step we used Corollary~\ref{cor:lengthmurclengthmuc} and the fact that 
$$\lambda^c_{d(\lambda^c) + 1} =\lambda^c_{d(\lambda^{rc}) + 1}= \lambda^c_{d(\lambda)} =\lambda_{d(\lambda)}-1= d(\lambda)-1 =d(\lambda^{rc})= d(\lambda^c),$$ which follows from Lemma~\ref{lem:muc} and Remark~\ref{rem:dmurc}. Next assume that $d(\lambda^{rc}) =d(\lambda^c) -1$. Then 
\begin{eqnarray*}
& & \sum_{d(\lambda^c)<i\leq\ell(\lambda^c)}\lambda^c_i(\lambda^c_i-(i-1)) 
- \sum\limits_{d(\lambda^{rc})< i \leq \ell( \lambda^{rc} ) } \lambda^{rc}_i (\lambda^{rc}_i - i) \\
&=& \sum_{d(\lambda^c)<i\leq\ell(\lambda^c)}\lambda^c_i(\lambda^c_i-(i-1)) 
- \sum\limits_{d(\lambda^c)-1< i \leq \ell( \lambda^{rc} ) } \lambda^c_{i+1} (\lambda^c_{i+1} - i) \\
&=& \sum_{d(\lambda^c)<i\leq\ell(\lambda^c)}\lambda^c_i(\lambda^c_i-(i-1)) 
- \sum\limits_{d(\lambda^c) < i \leq \ell( \lambda^{rc} )+ 1 } \lambda^{c}_{i} (\lambda^{c}_{i}- (i-1)) =0.
\end{eqnarray*} 
\end{proof}


\begin{lemma} 
\label{lem:greenlemma}
\begin{eqnarray*}
& & \sum_{1\leq i\leq d((\mu_1^r)')}(i-2)((\mu_1^r)'_i-(i-1)) 
- \sum\limits_{1 \leq  i \leq d(\mu_1^{rc}) } (i-1)( (\mu_1^{rc})_i' - i)\\
&=& 
\begin{cases}
- \sum\limits_{1 \leq i \leq d(\mu_1^{rc}) } (\mu_1^{r})'_i +1 - i & \mbox{if }(\mu'_1)_{d(\mu_1)} = d(\mu_1) \\
(d(\mu_1^r) - 2)((\mu_1^r)'_{d(\mu_1^r)}+ 1- d(\mu_1^r)) - \sum\limits_{1 \leq  i \leq d(\mu_1^{rc}) } (\mu_1^{r})'_i +1 - i & \mbox{otherwise}
\end{cases}
\end{eqnarray*}
\end{lemma}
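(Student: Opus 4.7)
The plan is to translate everything to the conjugate $\lambda:=\mu_1'$ and reduce the identity to a short algebraic calculation. By Lemma~\ref{lem:mucprime}, $(\mu_1^r)'=\lambda^c$ and $(\mu_1^{rc})'=\lambda^{rc}$, and since conjugation preserves the diagonal length, $d(\mu_1)=d(\lambda)$, $d(\mu_1^r)=d(\lambda^c)$, and $d(\mu_1^{rc})=d(\lambda^{rc})$; in particular, $d(\lambda^{rc})=d(\lambda)-1$ by Remark~\ref{rem:dmurc}. The hypothesis $(\mu_1')_{d(\mu_1)}=d(\mu_1)$ becomes $\lambda_{d(\lambda)}=d(\lambda)$, which by Remark~\ref{rem:mucdiag} is exactly the statement $d(\lambda^c)=d(\lambda)-1=d(\lambda^{rc})$. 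The ``otherwise'' branch is therefore $d(\lambda^c)=d(\lambda^{rc})+1$.

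Next, I would use Lemma~\ref{lem:mucandmurc}: for $1\le i\le d(\lambda^{rc})$, $\lambda^{rc}_i=\lambda^c_i+1$, and hence $\lambda^{rc}_i-i=\lambda^c_i-(i-1)$. The key algebraic observation is that the coefficients $(i-2)$ and $(i-1)$ appearing in the two sums differ by exactly $1$, so on the common range the $i$-th contribution collapses:
\[
(i-2)\bigl(\lambda^c_i-(i-1)\bigr)-(i-1)\bigl(\lambda^c_i-(i-1)\bigr)=-\bigl(\lambda^c_i+1-i\bigr).
\]

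In Case 1 ($d(\lambda^c)=d(\lambda^{rc})$) the two sums share the same range of summation, and termwise subtraction produces $-\sum_{1\le i\le d(\mu_1^{rc})}((\mu_1^r)'_i+1-i)$, matching the first formula in the lemma. In Case 2 ($d(\lambda^c)=d(\lambda^{rc})+1$) the first sum has one additional term, at $i=d(\lambda^c)=d(\mu_1^r)$, equal to $(d(\mu_1^r)-2)\bigl((\mu_1^r)'_{d(\mu_1^r)}+1-d(\mu_1^r)\bigr)$; adding this to the Case~1 expression gives the second formula. The main obstacle is purely bookkeeping: correctly identifying the two case hypotheses (stated in terms of $\mu_1$) with the two possible values of $d(\lambda^c)-d(\lambda^{rc})$ used in the computation. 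Once this correspondence is pinned down via Remarks~\ref{rem:mucdiag} and~\ref{rem:dmurc}, the algebra collapses immediately and no further inputs are needed.
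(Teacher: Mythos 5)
Your proposal is correct and takes essentially the same approach as the paper: both translate via $(\mu_1^r)'=(\mu_1')^c$ and $(\mu_1^{rc})'=(\mu_1')^{rc}$, apply Lemma~\ref{lem:mucandmurc} to identify the two summands over the common range, and split into cases according to whether $d((\mu_1')^c)$ equals $d((\mu_1')^{rc})$ or exceeds it by one, with Remarks~\ref{rem:mucdiag} and~\ref{rem:dmurc} supplying the translation of the case hypotheses. The only cosmetic difference is that the paper's computation writes $(i-2)=(i-1)-1$ and cancels sums, whereas you subtract coefficients directly; the resulting algebra is identical.
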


\begin{proof}
To prove the claim we begin similarly to Lemma~\ref{lem:redlemma}, using the fact that $(\mu_1^r)' = (\mu_1')^c$. Letting $\lambda = \mu'_1$, 
we split into cases based on whether $d(\lambda^{rc}) = d(\lambda^c)$, or $d(\lambda^{rc}) = d(\lambda^c) - 1$, and apply Lemma~\ref{lem:mucandmurc}. In the case where $d(\lambda^{rc}) = d(\lambda^c)$, 
\begin{eqnarray*}
& & 
\sum_{1\leq i\leq d(\lambda^c)}(i-2)(\lambda^c_i-(i-1)) 
- \sum\limits_{1 \leq i \leq d(\lambda^{rc}) } (i-1)( \lambda^{rc}_i - i) \\
&=& 
\sum_{1\leq i\leq d(\lambda^c)}(i-1)(\lambda^c_i-(i-1)) 
- \sum\limits_{1 \leq i \leq d(\lambda^{c}) } (i-1)( \lambda^{c}_i+1 - i) - \sum_{1\leq i\leq d(\lambda^c)}(\lambda^c_i-(i-1)) \\
&=& - \sum_{1\leq i\leq d(\lambda^c)}(\lambda^c_i-(i-1)).
\end{eqnarray*}
The case where $d(\lambda^{rc}) = d(\lambda^c) - 1$ is similar. Finally, we note that $d(\lambda^{rc}) = d(\lambda^c) - 1$ if and only if $\lambda_{d(\lambda)} > d(\lambda)$ by Remark~\ref{rem:dmurc}. 
\end{proof}

\begin{lemma} 
\label{lem:teal}
\begin{eqnarray*}
& & \sum_{1\leq i\leq d(\mu_1^c)}i((\mu_1^c)'_i-i-1) - \sum\limits_{1 \leq i \leq d(\mu_1) } (i-1)((\mu'_1)_i - i)
\\
& =& \begin{cases}
\sum\limits_{1\leq i < d(\mu_1)} ((\mu'_1)_i-i) - (d(\mu_1')-1)((\mu'_1)_{d(\mu_1)}- d(\mu_1')) -d(\mu_1) & \mbox{if } (\mu'_1)_{ d(\mu_1) + 1} = d(\mu_1) \\
\sum\limits_{1\leq i < d(\mu_1)} ((\mu'_1)_i-i) - (d(\mu_1')-1)((\mu'_1)_{d(\mu_1)}- d(\mu_1'))
& \mbox{otherwise}
\end{cases}
\end{eqnarray*}
\end{lemma}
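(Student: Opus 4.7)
The plan is to mirror the strategy used in Lemmas~\ref{lem:redlemma} and~\ref{lem:greenlemma}, but now using the companion identity $(\mu_1^c)' = (\mu_1')^r$ from Lemma~\ref{lem:mucprime} (instead of $(\mu_1^r)' = (\mu_1')^c$). Writing $\lambda := \mu_1$, this rewrites the first sum on the left-hand side in terms of $(\lambda')^r$, which is given by the explicit piecewise formula in Lemma~\ref{lem:mur}. Since transposition preserves the diagonal, $d(\lambda^c) = d((\lambda^c)') = d((\lambda')^r)$, and by Lemmas~\ref{lem:dmur} and~\ref{lem:dmurneqdmu} applied to $\lambda'$, the two branches in the claim correspond precisely to $d(\lambda^c) = d(\lambda')$ versus $d(\lambda^c) = d(\lambda') - 1$, i.e.\ to $(\mu_1')_{d(\mu_1)+1} = d(\mu_1)$ versus $(\mu_1')_{d(\mu_1)+1} < d(\mu_1)$, using $d(\lambda) = d(\lambda')$.

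In the first case, both sums run over $1 \le i \le d(\lambda)$. For $i < d(\lambda) = d(\lambda')$, Lemma~\ref{lem:mur} gives $(\lambda')^r_i = (\lambda')_i + 1$, while at the boundary $i = d(\lambda)$ it gives $(\lambda')^r_{d(\lambda)} = (\lambda')_{d(\lambda)+1} = d(\lambda)$ (using the case hypothesis). The boundary term contributes $d(\lambda)\bigl((\lambda')^r_{d(\lambda)} - d(\lambda) - 1\bigr) = -d(\lambda)$, and on the remaining range $1 \le i < d(\lambda)$ the integrand telescopes via $i\bigl((\lambda')_i - i\bigr) - (i-1)\bigl((\lambda')_i - i\bigr) = (\lambda')_i - i$. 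Pulling the $i = d(\lambda)$ term out of the second sum gives the extra $-(d(\lambda')-1)((\lambda')_{d(\lambda)} - d(\lambda'))$, matching the first branch exactly.

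In the second case, $d(\lambda^c) = d(\lambda) - 1$, so the first sum runs only over $1 \le i \le d(\lambda) - 1 < d(\lambda')$, where Lemma~\ref{lem:mur} uniformly gives $(\lambda')^r_i = (\lambda')_i + 1$ and the boundary contribution of the previous case simply does not appear. The second sum, as before, produces a leftover boundary term $-(d(\lambda')-1)((\lambda')_{d(\lambda)} - d(\lambda'))$, and the same telescoping on $1 \le i < d(\lambda)$ gives $\sum_{1 \le i < d(\lambda)}((\lambda')_i - i)$, yielding the second branch.

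The only real obstacle is careful bookkeeping: tracking which index serves as the boundary of each summation, invoking Lemma~\ref{lem:mur} at the critical index $i = d(\lambda')$ with the correct case of that lemma, and remembering throughout that $d(\mu_1) = d(\mu_1')$ so the two notations used in the claim refer to the same integer. No new combinatorial input beyond Lemmas~\ref{lem:mur},~\ref{lem:dmur},~\ref{lem:dmurneqdmu}, and~\ref{lem:mucprime} is required.
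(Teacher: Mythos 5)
Your proof is correct and follows essentially the same route as the paper's: rewrite the first sum via $(\mu_1^c)'=(\mu_1')^r$, branch on $d((\mu_1')^r)=d(\mu_1')$ versus $d((\mu_1')^r)=d(\mu_1')-1$ (Lemmas~\ref{lem:dmur} and~\ref{lem:dmurneqdmu}), then substitute the explicit piecewise form of Lemma~\ref{lem:mur} and combine term by term. The only stylistic quibble is that ``telescopes'' overstates what is just a cancellation of like terms $i\bigl((\lambda')_i-i\bigr)-(i-1)\bigl((\lambda')_i-i\bigr)=(\lambda')_i-i$; the arithmetic is fine.
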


\begin{proof}
We use the fact that $(\mu_1^c)' = (\mu_1')^r$, and then we split into cases based on whether $d((\mu_1')^r) = d(\mu'_1)$ or  $d((\mu_1')^r) = d(\mu'_1)-1$. If $d((\mu_1')^r) = d(\mu'_1)$, then by Lemma~\ref{lem:mur}, we have 
\begin{eqnarray*}
&& \sum_{1\leq i\leq d(\mu_1^c)}i((\mu_1^c)'_i-i-1) - \sum\limits_{1 \leq i \leq d(\mu_1) } (i-1)((\mu'_1)_i - i) \\
&=& \sum_{1\leq i\leq d(\mu_1)}i((\mu'_1)^r_i-i-1) - \sum\limits_{1 \leq i \leq d(\mu_1) } (i-1)((\mu'_1)_i - i) \\
& = &
\sum_{1\leq i < d(\mu_1)} i ((\mu'_1)_i-i) 
+ d(\mu_1)((\mu'_1)_{ d(\mu_1) + 1} - d(\mu_1) - 1) - \sum\limits_{1 \leq i \leq d(\mu_1) } (i-1)((\mu'_1)_i - i) \\
& = & 
\sum_{1\leq i < d(\mu_1)} ((\mu'_1)_i-i) + d(\mu_1)((\mu'_1)_{ d(\mu_1) + 1} - d(\mu_1) - 1) - (d(\mu_1')-1)((\mu'_1)_{d(\mu_1)}- d(\mu_1')).
\end{eqnarray*}
By Lemma~\ref{lem:dmur}, since $d((\mu_1')^r) = d(\mu'_1)$, $(\mu'_1)_{d(\mu_1)+ 1} = d(\mu'_1)$, so 
$$d(\mu_1)((\mu'_1)_{  d(\mu_1) + 1} - d(\mu_1) - 1) = -d(\mu_1).$$
The computation in the case where $d((\mu_1')^r) = d(\mu'_1)-1$ is very similar. 
\end{proof}

\begin{lemma} 
\label{lem:brown}
\begin{eqnarray*}
&&\sum_{d(\mu_1^c) <i\leq\ell((\mu_1^c)')}(\mu_1^c)'_i((\mu_1^c)'_i-i-1)-
\sum\limits_{ d(\mu_1) < i \leq \ell( \mu'_1) } (\mu'_1)_i ((\mu'_1)_i - i) \\
&=& 
\begin{cases}
d(\mu_1) & \mbox{if } (\mu'_1)_{d(\mu_1) + 1} = d(\mu_1) \\
0 & \mbox{otherwise}
\end{cases}
\end{eqnarray*}
\end{lemma}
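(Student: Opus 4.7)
The proof will follow the same template as Lemmas~\ref{lem:redlemma} through~\ref{lem:teal}: rewrite the first sum in terms of $\mu_1'$ using the identity $(\mu_1^c)' = (\mu_1')^r$ from Lemma~\ref{lem:mucprime}, then split into two cases according to whether $d((\mu_1')^r) = d(\mu_1')$ or $d((\mu_1')^r) = d(\mu_1') - 1$, and apply the explicit description of $\lambda^r$ from Lemma~\ref{lem:mur}. Set $\lambda = \mu_1'$ for readability, and note that $d(\mu_1^c) = d((\mu_1^c)') = d(\lambda^r)$ since conjugation preserves the diagonal, and $d(\mu_1) = d(\lambda)$ for the same reason.

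The first sum then becomes $\sum_{d(\lambda^r) < i \leq \ell(\lambda^r)} \lambda^r_i(\lambda^r_i - i - 1)$ and the second is $\sum_{d(\lambda) < i \leq \ell(\lambda)} \lambda_i(\lambda_i - i)$. By Lemma~\ref{lem:mur}, for all $i \geq d(\lambda)$ we have $\lambda^r_i = \lambda_{i+1}$, and by Remark~\ref{lem:lengthmur}, $\ell(\lambda^r) = \ell(\lambda) - 1$. In the case $d(\lambda^r) = d(\lambda)$, reindexing by $j = i+1$ gives
\begin{equation*}
\sum_{d(\lambda^r) < i \leq \ell(\lambda^r)} \lambda^r_i(\lambda^r_i - i - 1) = \sum_{d(\lambda)+1 < j \leq \ell(\lambda)} \lambda_j(\lambda_j - j),
\end{equation*}
so the difference collapses to the single missing term $-\lambda_{d(\lambda)+1}(\lambda_{d(\lambda)+1} - d(\lambda) - 1)$. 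By Lemma~\ref{lem:dmur}, the hypothesis $d(\lambda^r) = d(\lambda)$ is equivalent to $\lambda_{d(\lambda)+1} = d(\lambda)$, i.e., $(\mu'_1)_{d(\mu_1)+1} = d(\mu_1)$, so this missing term evaluates to $-d(\mu_1)(-1) = d(\mu_1)$, matching the first branch of the stated answer.

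In the other case $d(\lambda^r) = d(\lambda) - 1$, the same substitution $j = i+1$ yields
\begin{equation*}
\sum_{d(\lambda^r) < i \leq \ell(\lambda^r)} \lambda^r_i(\lambda^r_i - i - 1) = \sum_{d(\lambda) < j \leq \ell(\lambda)} \lambda_j(\lambda_j - j),
\end{equation*}
which is identical to the second sum, so the difference vanishes. By Lemma~\ref{lem:dmurneqdmu}, this case corresponds precisely to $\lambda_{d(\lambda)+1} < d(\lambda)$, i.e., the negation of $(\mu'_1)_{d(\mu_1)+1} = d(\mu_1)$, so the result is $0$, matching the second branch.

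There is no real obstacle here: the entire argument is bookkeeping with the formulas for $\lambda^r$ and the equivalences between $d(\lambda^r)$, $d(\lambda)$, and the value of $\lambda_{d(\lambda)+1}$. The only minor point to be careful about is that the index shift $j = i+1$ leaves exactly one leftover term in Case 1 but none in Case 2, which is exactly the asymmetry producing the case split in the statement.
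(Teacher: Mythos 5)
Your proof is correct, and it follows exactly the route the paper sketches: rewrite via $(\mu_1^c)' = (\mu_1')^r$, split on whether $d((\mu_1')^r)$ equals $d(\mu_1')$ or $d(\mu_1')-1$, reindex with $j=i+1$, and invoke Lemmas~\ref{lem:dmur} and~\ref{lem:dmurneqdmu} to translate the case condition into the one appearing in the statement. The bookkeeping with the index ranges, the leftover $j=d(\lambda)+1$ term in the first case, and the evaluation to $d(\mu_1)$ are all accurate; you have simply filled in the details the paper leaves implicit.
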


\begin{proof}
We begin by using the fact that $(\mu^c_1)' = (\mu'_1)^r$. Then we split into cases based on whether $d((\mu_1')^r) = d(\mu_1')$ or $d((\mu_1')^r) = d(\mu_1') -1$. To get the final expression we make use of the fact that $d((\mu_1')^r) = d(\mu_1')$ if and only if $(\mu'_1)_{d(\mu_1) + 1} = d(\mu_1)$.
\end{proof}

\begin{remark}
\label{rem:browntealgreen}
By combining Lemmas~\ref{lem:teal} and~\ref{lem:brown}, we get that the sums involved result in 
\[
\sum\limits_{1\leq i < d(\mu_1)} ((\mu'_1)_i-i) - (d(\mu_1')-1)((\mu'_1)_{d(\mu_1)}- d(\mu_1'))\]
in all cases. If we then include the sums from Lemma~\ref{lem:greenlemma}, we split into two cases. If $(\mu'_1)_{d(\mu_1)} = d(\mu_1')$, then by Lemma~\ref{lem:muc}, we get
\[ - (d(\mu_1')-1)((\mu'_1)_{d(\mu_1)}- d(\mu_1')) = 0.\]
If $(\mu'_1)_{d(\mu_1)} > d(\mu_1')$, then by Remark~\ref{rem:mucdiag}, $d(\mu_1^r)=d(\mu_1)$, and by Lemma~\ref{lem:muc}, we get 
\[- (d(\mu_1')-1)((\mu'_1)_{d(\mu_1)}- d(\mu_1')) + 
(d(\mu_1^r) - 2)((\mu_1^r)'_{d(\mu_1^r)}+ 1- d(\mu_1^r)) = 
d(\mu_1) - (\mu_1')_{d(\mu_1) }. \]
So in all cases, the sums from Lemmas~\ref{lem:redlemma},~\ref{lem:greenlemma},~\ref{lem:teal}, and~\ref{lem:brown} combine to produce 
\[ d(\mu_1) - (\mu_1')_{d(\mu_1) }.\]
\end{remark}


\begin{lemma} 
\label{lem:cyan}
\begin{eqnarray*}
& &\sum_{d(\mu_2^r)<i\leq\ell(\mu_2^r)}\frac{(\mu_2^r)_i((\mu_2^r)_i-1)}{2} 
-
\sum\limits_{ d(\mu_2) < i \leq \ell( \mu_2 ) } 
\frac{(\mu_2)_i((\mu_2)_i-1)}{2} \\
&=& \begin{cases}
- \dfrac{ d(\mu_2 )
(d(\mu_2 )-1)}{2} & \mbox{if }(\mu_2)_{d(\mu_2) + 1} = d(\mu_2) \\
0 & \mbox{otherwise} 
\end{cases}
\end{eqnarray*}
\end{lemma}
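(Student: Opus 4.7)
The plan is to attack this purely combinatorially using the explicit description of $\mu^r$ from Lemma~\ref{lem:mur}, the length identity $\ell(\mu^r) = \ell(\mu) - 1$ from Remark~\ref{lem:lengthmur}, and the two cases for $d(\mu^r)$ given by Lemmas~\ref{lem:dmur} and~\ref{lem:dmurneqdmu}. Writing $\mu = \mu_2$ for brevity, the whole lemma reduces to a careful index shift.

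First I would split into the two cases dictated by the statement. In Case 1, assume $\mu_{d(\mu)+1} = d(\mu)$, so that $d(\mu^r) = d(\mu)$ by Lemma~\ref{lem:dmur}. The range of summation on the left becomes $d(\mu) < i \leq \ell(\mu)-1$, and in this range $i \geq d(\mu)$, so Lemma~\ref{lem:mur} gives $\mu^r_i = \mu_{i+1}$. Substituting $j = i+1$ rewrites the left-hand sum as
\begin{equation*}
\sum_{d(\mu)+1 < j \leq \ell(\mu)} \frac{\mu_j(\mu_j - 1)}{2},
\end{equation*}
so the difference with the right-hand sum is exactly the missing $j = d(\mu)+1$ term, namely $-\frac{\mu_{d(\mu)+1}(\mu_{d(\mu)+1}-1)}{2} = -\frac{d(\mu)(d(\mu)-1)}{2}$, as claimed.

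In Case 2, assume $\mu_{d(\mu)+1} < d(\mu)$, so that $d(\mu^r) = d(\mu) - 1$ by Lemma~\ref{lem:dmurneqdmu}. Now the range of summation on the left is $d(\mu)-1 < i \leq \ell(\mu)-1$, i.e.\ $d(\mu) \leq i \leq \ell(\mu)-1$, which again lies in the regime $i \geq d(\mu)$ where $\mu^r_i = \mu_{i+1}$. The same substitution $j = i+1$ rewrites the left sum as $\sum_{d(\mu) < j \leq \ell(\mu)} \frac{\mu_j(\mu_j - 1)}{2}$, which cancels the right-hand sum exactly, yielding $0$.

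There is no real obstacle here — the only subtlety is making sure the boundary index in Case 2 is handled correctly, i.e.\ verifying that $i = d(\mu)$ is actually in the summation range after the diagonal shrinks by $1$, and that Lemma~\ref{lem:mur} applies at this index (it does, since the ``$i \geq d(\mu)$'' clause is closed on the left). The rest is bookkeeping.
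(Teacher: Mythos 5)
Your proof is correct and takes essentially the same route as the paper's (very terse) argument: split into the two cases $d(\mu_2^r) = d(\mu_2)$ versus $d(\mu_2^r) = d(\mu_2) - 1$, apply $\mu^r_i = \mu_{i+1}$ for $i \geq d(\mu)$, and shift the index. Your write-up simply makes explicit the bookkeeping that the paper leaves to the reader.
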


\begin{proof}
We split into cases based on whether $d(\mu_2^{r}) = d(\mu_2)$ or $d(\mu_2^{r}) = d(\mu_2)-1$, and then we use the fact that $d(\mu_2^{r}) = d(\mu_2)$ if and only if $(\mu_2)_{d(\mu_2)+ 1} = d(\mu_2)$.
\end{proof}

\begin{lemma} 
\label{lem:blue}
\begin{eqnarray*}
& & 
\sum_{1\leq i\leq d(\mu_2^r)}i\left((\mu_2^r)_i - \frac{i+1}{2}\right) - \sum\limits_{1 \leq i \leq d(\mu_2) } (i-1)\left( (\mu_2)_i- \frac{i}{2} \right) \\
&=& \sum\limits_{1\leq i \leq d(\mu_2)-1}( \mu_2)_{i}
- (d(\mu_2 )-1) \left( (\mu_2 )_{ d(\mu_2 )} - \dfrac{ d(\mu_2 ) }{2} \right) \\
& &{}+{}
\begin{cases}
\dfrac{ d(\mu_2 ) (d(\mu_2) - 1)}{2} & \mbox{if } (\mu_2)_{d(\mu_2)+1} = d(\mu_2) \\
0 & \mbox{otherwise} 
\end{cases}
\end{eqnarray*}
\end{lemma}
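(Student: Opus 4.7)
The plan is to prove Lemma~\ref{lem:blue} by a direct computation that splits into two cases based on the diagonal length of $\mu_2^r$. By Lemmas~\ref{lem:dmur} and~\ref{lem:dmurneqdmu}, we have $d(\mu_2^r) = d(\mu_2)$ precisely when $(\mu_2)_{d(\mu_2)+1} = d(\mu_2)$, and otherwise $d(\mu_2^r) = d(\mu_2)-1$. This is exactly the case split appearing in the statement, so the two cases correspond to cleanly different ranges of summation on the left-hand side.

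In both cases, I would use Lemma~\ref{lem:mur} to rewrite $(\mu_2^r)_i = (\mu_2)_i + 1$ for $1 \leq i < d(\mu_2)$, which is the range where the first sum on the left agrees with the translated second sum. The key routine step is the term-by-term computation: for $1 \leq i \leq d(\mu_2)-1$,
\[
i\left((\mu_2)_i + 1 - \tfrac{i+1}{2}\right) - (i-1)\left((\mu_2)_i - \tfrac{i}{2}\right) = (\mu_2)_i,
\]
since the terms involving $(\mu_2)_i$ give $(\mu_2)_i$ and the remaining polynomial part $i - \tfrac{i(i+1)}{2} + \tfrac{(i-1)i}{2} = 0$. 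Summing over $i$ yields the first sum $\sum_{1\leq i\leq d(\mu_2)-1}(\mu_2)_i$ on the right.

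What remains is to account for the endpoint $i = d(\mu_2)$ in the second sum on the left, which always contributes $-(d(\mu_2)-1)\bigl((\mu_2)_{d(\mu_2)} - \tfrac{d(\mu_2)}{2}\bigr)$, matching the middle summand on the right in both cases. In Case 1, there is the additional endpoint $i = d(\mu_2)$ in the first sum, where $(\mu_2^r)_{d(\mu_2)} = (\mu_2)_{d(\mu_2)+1} = d(\mu_2)$ by Lemma~\ref{lem:mur} and the case hypothesis; this contributes
\[
d(\mu_2)\left(d(\mu_2) - \tfrac{d(\mu_2)+1}{2}\right) = \tfrac{d(\mu_2)(d(\mu_2)-1)}{2},
\]
which is precisely the case-dependent term. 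In Case 2, the first sum has no such endpoint, so nothing extra appears.

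I expect no real obstacles here: once the case split is made, the identity reduces to an arithmetic check that the polynomial in $i$ telescopes to $(\mu_2)_i$. The main thing to be careful about is handling the boundary term $i = d(\mu_2)$ correctly in the two cases, and invoking Lemma~\ref{lem:mur} to identify $(\mu_2^r)_{d(\mu_2)}$ with $(\mu_2)_{d(\mu_2)+1} = d(\mu_2)$ in Case 1.
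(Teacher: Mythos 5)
Your proof is correct and takes essentially the same approach as the paper: the paper's (terse) proof also splits on whether $d(\mu_2^r) = d(\mu_2)$ or $d(\mu_2^r) = d(\mu_2)-1$, invoking the equivalence with the condition on $(\mu_2)_{d(\mu_2)+1}$ and Lemma~\ref{lem:mur}, and omits the remaining arithmetic, which you have correctly supplied.
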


\begin{proof}
As in the case of Lemma~\ref{lem:cyan}, we split into cases based on whether $d(\mu_2^{r}) = d(\mu_2)$ or $d(\mu_2^{r}) = d(\mu_2)-1$. We omit the details as they are similar to the details of other proofs in this section.
\end{proof}


\begin{lemma} 
\label{lem:pink}
\begin{eqnarray*}
&& \sum_{1\leq i \leq d(\mu_2^c) }(i-2)\left((\mu_2^c)_i - \frac{i-1}{2}\right)- \sum\limits_{1 \leq i \leq d(\mu_2^{rc}) } (i-1)\left( (\mu_2^{rc})_i- \frac{i}{2} \right) \\
&=& 
\begin{cases}
-\sum\limits_{1\leq i \leq d(\mu_2^{rc})}( \mu_2^{c})_{i}
& \mbox{if } (\mu_2)_{d(\mu_2)} = d(\mu_2) \\
-\sum\limits_{1\leq i \leq d(\mu_2^{rc})}( \mu_2^{c})_{i}
+
(d(\mu_2)-2) \left( (\mu_2)_{ d(\mu_2)} - \dfrac{ d(\mu_2) + 1}{2} \right) & \mbox{otherwise}
\end{cases}
\end{eqnarray*}
\end{lemma}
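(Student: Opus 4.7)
The proof will follow the same template as Lemmas~\ref{lem:redlemma} and~\ref{lem:greenlemma}: I combine the two sums termwise using Lemma~\ref{lem:mucandmurc}, which says that $(\mu_2^{rc})_i = (\mu_2^c)_i + 1$ for $i \leq d(\mu_2^{rc})$, and then split into two cases according to Remark~\ref{rem:dmurc}. Writing $\lambda := \mu_2$ for readability, that remark gives two possibilities: either $d(\lambda^{rc}) = d(\lambda^c)$ (equivalent to $\lambda_{d(\lambda)} = d(\lambda)$) or $d(\lambda^{rc}) = d(\lambda^c) - 1$ (equivalent to $\lambda_{d(\lambda)} > d(\lambda)$, in which case $d(\lambda^c) = d(\lambda)$).

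The key algebraic identity is the elementary computation
\[
(i-2)\!\left(x - \tfrac{i-1}{2}\right) - (i-1)\!\left((x+1) - \tfrac{i}{2}\right) = -x,
\]
valid for every $i$ and $x$; one checks this by expanding both sides. Setting $x = \lambda^c_i$ and applying Lemma~\ref{lem:mucandmurc} inside the second sum, each summand over the common range $1 \leq i \leq d(\lambda^{rc})$ collapses to $-\lambda^c_i$.

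In Case 1, where $d(\lambda^{rc}) = d(\lambda^c)$, the two sums have identical index sets, so the total is simply $-\sum_{1\leq i\leq d(\lambda^{rc})}\lambda^c_i$, matching the first branch of the claimed formula. In Case 2, where $d(\lambda^{rc}) = d(\lambda^c) - 1$, the first sum has one additional term at $i = d(\lambda^c) = d(\lambda)$; after the pairwise cancellation produces the same $-\sum_{1 \leq i \leq d(\lambda^{rc})}\lambda^c_i$, this leftover term equals $(d(\lambda) - 2)\bigl(\lambda^c_{d(\lambda)} - \tfrac{d(\lambda) - 1}{2}\bigr)$. Since $\lambda_{d(\lambda)} > d(\lambda)$ forces $d(\lambda) \leq i_d$, Lemma~\ref{lem:muc} gives $\lambda^c_{d(\lambda)} = \lambda_{d(\lambda)} - 1$, and the leftover simplifies to $(d(\lambda) - 2)\bigl(\lambda_{d(\lambda)} - \tfrac{d(\lambda) + 1}{2}\bigr)$, recovering the second branch. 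The only mildly subtle step is this index bookkeeping and the rewriting of $\lambda^c_{d(\lambda)}$; everything else is the mechanical two-line identity above.
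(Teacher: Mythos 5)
Your proof is correct and follows essentially the same route as the paper: both invoke Lemma~\ref{lem:mucandmurc} to rewrite $(\mu_2^{rc})_i$ as $(\mu_2^c)_i + 1$ on the common index range, and both branch on whether $d(\mu_2^{rc}) = d(\mu_2^c)$ or $d(\mu_2^{rc}) = d(\mu_2^c) - 1$. The one improvement you make is packaging the termwise cancellation into the single identity $(i-2)\left(x - \tfrac{i-1}{2}\right) - (i-1)\left((x+1) - \tfrac{i}{2}\right) = -x$, which collapses the algebra more cleanly than the paper's decomposition of $(i-1)$ as $(i-2)+1$ followed by a rearrangement into several separate sums; the net computation is the same.

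One small remark: the justification that $\lambda_{d(\lambda)} > d(\lambda)$ forces $d(\lambda) \leq i_d$ is slightly off-target — in fact $d(\lambda) \leq i_d$ holds for any nonempty $\lambda$, since $\lambda_{d(\lambda)} \geq d(\lambda)$ by definition. The needed fact in Case 2 is rather that $d(\lambda^c) = d(\lambda)$ (so the leftover index is $i = d(\lambda)$), which follows from Remark~\ref{rem:mucdiag} under the Case 2 hypothesis. The conclusion $\lambda^c_{d(\lambda)} = \lambda_{d(\lambda)} - 1$ is correct and does follow from Lemma~\ref{lem:muc}.
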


\begin{proof}
If $d(\mu_2^{rc}) = d(\mu_2^c) - 1$, the difference of sums becomes 
\[
\sum_{1\leq i \leq d(\mu_2^c) }(i-2)\left((\mu_2^c)_i - \frac{i-1}{2}\right)- \sum\limits_{1 \leq i \leq d(\mu_2^{c} )-1 } (i-2)\left( (\mu_2^{rc})_i- \frac{i}{2} \right) -
\sum\limits_{1 \leq i \leq d(\mu_2^{c}) - 1 } \left( (\mu_2^{rc})_i- \frac{i}{2} \right).
\]
Then, using Lemma~\ref{lem:mucandmurc}, we get 
\begin{eqnarray*}
&&\sum_{1\leq i \leq d(\mu_2^c) }(i-2)\left((\mu_2^c)_i - \frac{i-1}{2}\right)- \sum\limits_{1 \leq i \leq d(\mu_2^{c}) -1 } (i-2)\left( (\mu_2^{c})_i + 1- \frac{i}{2} \right) \\
&&{}-{}
\sum\limits_{1 \leq i \leq d(\mu_2^{c}) - 1 } \left( (\mu_2^{c})_i +1 - \frac{i}{2} \right).
\end{eqnarray*}
We can write this as 
\begin{eqnarray*}
& & \sum_{1\leq i \leq d(\mu_2^c) }(i-2)\left((\mu_2^c)_i - \frac{i-1}{2}\right)- 
\sum\limits_{1 \leq i \leq d(\mu_2^{c}) -1 } (i-2)\left( (\mu_2^{c})_i - \frac{i-1}{2} \right) - \sum\limits_{1 \leq i \leq d(\mu_2^{c}) -1 } \frac{ i-2}{2} \\
& & {}-{}
\sum\limits_{1 \leq i \leq d(\mu_2^{c}) - 1 } \left( (\mu_2^{c})_i - \frac{i-2}{2} \right).
\end{eqnarray*}
So, by Remark~\ref{rem:mucdiag} and Lemma~\ref{lem:muc}, the final result is 
\begin{eqnarray*}
&&-\sum_{1\leq i \leq d(\mu_2^{rc})}( \mu_2^{c})_{i}
+
(d(\mu_2^c)-2) \left( (\mu_2^c)_{ d(\mu_2^c)} - \dfrac{ d(\mu_2^c) - 1}{2} \right) \\
&=&-\sum_{1\leq i \leq d(\mu_2^{rc})}( \mu_2^{c})_{i}
+
(d(\mu_2)-2) \left( (\mu_2)_{ d(\mu_2)} -1- \dfrac{ d(\mu_2) - 1}{2} \right) \\
&=&-\sum_{1\leq i \leq d(\mu_2^{rc})}( \mu_2^{c})_{i}
+
(d(\mu_2)-2) \left( (\mu_2)_{ d(\mu_2)} - \dfrac{ d(\mu_2) + 1}{2} \right).
\end{eqnarray*}
If instead $d(\mu_2^{rc}) = d(\mu_2^c)$, the proof is similar.
\end{proof}

\begin{lemma} 
\label{lem:graylemma}
\begin{eqnarray*}
& & \sum_{ d(\mu_2^c)<i\leq\ell(\mu_2^c)}\frac{(\mu_2^c)_i((\mu_2^c)_i-1)}{2}
- \sum\limits_{d(\mu_2^{rc}) < i \leq \ell( \mu_2^{rc} ) }
\frac{(\mu_2^{rc})_i((\mu_2^{rc})_i-1)}{2} \\
&=&
\begin{cases}
\dfrac{ (d(\mu_2) -1) (d(\mu_2) -2) }{2} & \mbox{if }(\mu_2)_{d(\mu_2)} = d(\mu_2) \\
0 & \mbox{otherwise}
\end{cases}
\end{eqnarray*}
\end{lemma}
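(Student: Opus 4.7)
The proof will follow the same pattern as Lemma~\ref{lem:redlemma}: apply Lemma~\ref{lem:mucandmurc} to rewrite $\mu_2^{rc}$ in terms of $\mu_2^c$, reindex the second sum so that its summand matches $\frac{(\mu_2^c)_j((\mu_2^c)_j-1)}{2}$, and split into two cases depending on whether $d(\mu_2^{rc})=d(\mu_2^c)$ or $d(\mu_2^{rc})=d(\mu_2^c)-1$. Remark~\ref{rem:dmurc} tells us which case applies based on the hypothesis on $(\mu_2)_{d(\mu_2)}$, and Corollary~\ref{cor:lengthmurclengthmuc} handles the top endpoint of the reindexed sum.

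First I will use Lemma~\ref{lem:mucandmurc} to write, for $i > d(\mu_2^{rc})$, $(\mu_2^{rc})_i = (\mu_2^c)_{i+1}$, giving
\[
\sum_{d(\mu_2^{rc}) < i \leq \ell(\mu_2^{rc})} \frac{(\mu_2^{rc})_i((\mu_2^{rc})_i-1)}{2}
= \sum_{d(\mu_2^{rc})+1 < j \leq \ell(\mu_2^{rc})+1} \frac{(\mu_2^c)_j((\mu_2^c)_j-1)}{2}
\]
after the substitution $j=i+1$. By Corollary~\ref{cor:lengthmurclengthmuc}, $\ell(\mu_2^{rc})+1=\ell(\mu_2^c)$ except in the degenerate case $d(\mu_2)=1$, $(\mu_2)_1=1$, which I will handle separately (both sums are empty there, and the formula gives $0$).

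In the case where $d(\mu_2^{rc})=d(\mu_2^c)$, i.e. $(\mu_2)_{d(\mu_2)} > d(\mu_2)$ by Remark~\ref{rem:dmurc}, the reindexed sum starts at $j=d(\mu_2^c)+2$ rather than $j=d(\mu_2^c)+1$. However, since $d(\mu_2^c)=d(\mu_2)$ in this case, Remark~\ref{rem:mucdmuplus1} gives $(\mu_2^c)_{d(\mu_2^c)+1}=(\mu_2^c)_{d(\mu_2)+1}=d(\mu_2)-1$, so this is off by one term (not the leftover term we want). Let me reconsider: when $d(\mu_2^{rc})=d(\mu_2^c)$, the second sum's index range $d(\mu_2^c)+1 < j \leq \ell(\mu_2^c)$ equals the first sum's range $d(\mu_2^c)< j \leq \ell(\mu_2^c)$ minus the single term $j=d(\mu_2^c)+1$; so the difference is indeed $\frac{(\mu_2^c)_{d(\mu_2^c)+1}((\mu_2^c)_{d(\mu_2^c)+1}-1)}{2}$. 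But Remark~\ref{rem:dmurc} says this case corresponds to $(\mu_2)_{d(\mu_2)}=d(\mu_2)$, i.e. the first case of the lemma statement; then $d(\mu_2^c)=d(\mu_2)-1$, and by Lemma~\ref{lem:muc} we get $(\mu_2^c)_{d(\mu_2^c)+1}=(\mu_2^c)_{d(\mu_2)}=(\mu_2)_{d(\mu_2)}-1=d(\mu_2)-1$. Substituting in yields $\frac{(d(\mu_2)-1)(d(\mu_2)-2)}{2}$, matching the claim.

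In the other case, $d(\mu_2^{rc})=d(\mu_2^c)-1$, i.e. $(\mu_2)_{d(\mu_2)} > d(\mu_2)$ by Remark~\ref{rem:dmurc}, the reindexed sum has range $d(\mu_2^c) < j \leq \ell(\mu_2^c)$, which is identical to the first sum's range, so the difference is $0$. This matches the ``otherwise'' branch. The main care required is correctly matching the two statements from Remark~\ref{rem:dmurc} to the corresponding cases here and tracking the index shift by one; the remaining algebra is routine bookkeeping of the form already carried out in Lemmas~\ref{lem:redlemma} and~\ref{lem:cyan}. No essential obstacle is anticipated.
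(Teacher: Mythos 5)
Your proof is correct and takes essentially the same approach as the paper: the paper's proof consists of a single sentence referring to Lemma~\ref{lem:greenlemma} (i.e., apply Lemma~\ref{lem:mucandmurc}, reindex, and split on whether $d(\mu_2^{rc})=d(\mu_2^c)$ or $d(\mu_2^{rc})=d(\mu_2^c)-1$), which is exactly what you do. Note that your middle paragraph opens with a momentary mis-application of Remark~\ref{rem:dmurc} (attributing $(\mu_2)_{d(\mu_2)}>d(\mu_2)$ to the case $d(\mu_2^{rc})=d(\mu_2^c)$, which is backwards) before you catch and fix it; in a final write-up you should excise the false start and present only the corrected reasoning.
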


\begin{proof}
The proof is similar to the proof of Lemma~\ref{lem:greenlemma}. We split into cases based on whether $d(\mu_2^{rc}) = d(\mu_2^c)$ or $d(\mu_2^{rc}) = d(\mu_2^c)-1$. 
\end{proof}

\begin{remark}
\label{rem:CBPG}
By combining Lemmas~\ref{lem:cyan} and \ref{lem:blue}, we find that the sums involved result in 
\[
\sum\limits_{1\leq i \leq d(\mu_2)-1}( \mu_2)_{i}
- (d(\mu_2 )-1) \left( (\mu_2 )_{ d(\mu_2 )} - \dfrac{ d(\mu_2 ) }{2} \right)
\]
in all cases. By Lemma~\ref{lem:muc}, 
\[ \sum\limits_{1\leq i \leq d(\mu_2)-1}( \mu_2)_{i} - 
\sum\limits_{1\leq i \leq d(\mu^{rc}_2)}( \mu^c_2)_{i} = d(\mu_2^{rc}). \]
So, when we combine the sums from Lemmas~\ref{lem:cyan},~\ref{lem:blue}, and~\ref{lem:pink}, there are two cases. If $(\mu_2 )_{ d(\mu_2 )} = d(\mu_2 )$, then we get 
\[
d(\mu_2^{rc}) - (d(\mu_2 )-1) \left( \dfrac{ d(\mu_2 ) }{2}\right)
= 
\dfrac{ (d(\mu_2 )-1) (2 - d(\mu_2) ) }{2}.
\]
Otherwise, we get 
\begin{eqnarray*}
&&d(\mu_2^{rc})- (d(\mu_2 )-1) \left( (\mu_2 )_{ d(\mu_2 )} - \dfrac{ d(\mu_2 ) }{2} \right)+(d(\mu_2)-2) \left( (\mu_2)_{ d(\mu_2)} - \dfrac{ d(\mu_2) + 1}{2} \right) \\
&=& d(\mu_2^{rc}) -(\mu_2)_{d(\mu_2)}+ 1 =  d(\mu_2)-(\mu_2)_{d(\mu_2)}.
\end{eqnarray*}
So when we include the sums from Lemma~\ref{lem:graylemma}, we get 
\[
\begin{cases}
0 & \mbox{if } (\mu_2)_{d(\mu_2)} = d(\mu_2) \\
d(\mu_2)-(\mu_2)_{d(\mu_2)} & \mbox{otherwise}.
\end{cases}
\]
So in all cases, the sums from Lemmas~\ref{lem:cyan},~\ref{lem:blue},~\ref{lem:pink}, and~\ref{lem:graylemma} combine to produce 
$$d(\mu_2)-(\mu_2)_{d(\mu_2)}.$$
\end{remark}






Now we subtract the terms in $A$ that do not involve $N$ from the terms in $C$ that do not involve $N$, and include the difference $-1$ of the terms involving $N$: 
\begin{eqnarray*}
C-A&=& -1 + \left|\mu_1^r\right|+\left|\mu_2^c\right| - \left|\mu_3\right| - \left|\mu_1^c\right|- \left|\mu_2^r\right| + \left|\mu_3\right| \\
& & 
{}+{} \sum_{1\leq i\leq d((\mu_1^r)')}(i-2)((\mu_1^r)'_i-(i-1)) 
+\sum_{d((\mu_1^r)')<i\leq\ell((\mu_1^r)')}(\mu_1^r)'_i((\mu_1^r)'_i-(i-1)) \\
& & {}+{} \sum_{1\leq i \leq d(\mu_2^c) }(i-2)\left((\mu_2^c)_i - \frac{i-1}{2}\right)
+ \sum_{ d(\mu_2^c)<i\leq\ell(\mu_2^c)}\frac{(\mu_2^c)_i((\mu_2^c)_i-1)}{2} \\
& & {}+{}\sum_{1\leq i\leq d(\mu_1^c)}i((\mu_1^c)'_i-i-1) 
+ \sum_{d(\mu_1^c) <i\leq\ell((\mu_1^c)')}(\mu_1^c)'_i((\mu_1^c)'_i-i-1) \\
& & {}+{}\sum_{1\leq i\leq d(\mu_2^r)}i\left((\mu_2^r)_i - \frac{i+1}{2}\right) 
+ \sum_{d(\mu_2^r)<i\leq\ell(\mu_2^r)}\frac{(\mu_2^r)_i((\mu_2^r)_i-1)}{2} \\
&&{}-{} \sum\limits_{1 \leq i \leq d(\mu_1) } (i-1)((\mu'_1)_i - i)
-
\sum\limits_{ d(\mu_1) < i \leq \ell( \mu'_1) } (\mu'_1)_i ((\mu'_1)_i - i) \\
&&{}-{} \sum\limits_{1 \leq i \leq d(\mu_2) } (i-1)\left( (\mu_2)_i- \frac{i}{2} \right) 
- \sum\limits_{ d(\mu_2) < i \leq \ell( \mu_2 ) }
\frac{(\mu_2)_i((\mu_2)_i-1)}{2} \\
&&{}-{} \sum\limits_{1 \leq i \leq d(\mu_1^{rc}) } (i-1)( (\mu_1^{rc})_i' - i)
- 
\sum\limits_{d(\mu_1^{rc}) < i \leq \ell( (\mu_1^{rc})' ) } (\mu_1^{rc})_i' ( (\mu_1^{rc})_i' - i) \\
& & {}-{} \sum\limits_{1 \leq i \leq d(\mu_2^{rc}) } (i-1)\left( (\mu_2^{rc})_i- \frac{i}{2} \right) -
\sum\limits_{d(\mu_2^{rc}) < i \leq \ell( \mu_2^{rc} ) }
\frac{(\mu_2^{rc})_i((\mu_2^{rc})_i-1)}{2} \\
& = & -1 + \left|\mu_1^r\right|+\left|\mu_2^c\right| - \left|\mu_3\right| - \left|\mu_1^c\right|- \left|\mu_2^r\right| + \left|\mu_3\right|\\
&& {}+{} 
\underbrace{ \sum_{d((\mu_1^r)')<i\leq\ell((\mu_1^r)')}(\mu_1^r)'_i((\mu_1^r)'_i-(i-1)) - 
\sum\limits_{d(\mu_1^{rc}) < i \leq \ell( (\mu_1^{rc})' ) } (\mu_1^{rc})_i' ( (\mu_1^{rc})_i' - i) }_{\text{Remark}~\ref{rem:browntealgreen}} \\
& & {}+{} \underbrace{ \sum_{1\leq i\leq d((\mu_1^r)')}(i-2)((\mu_1^r)'_i-(i-1)) - \sum\limits_{1 \leq i \leq d(\mu_1^{rc}) } (i-1)( (\mu_1^{rc})_i' - i)}_{\text{Remark}~\ref{rem:browntealgreen}} \\
& & 
{}+{} \underbrace{\sum_{1\leq i\leq d(\mu_1^c)}i((\mu_1^c)'_i-i-1) 
- \sum\limits_{1 \leq i \leq d(\mu_1) } (i-1)((\mu'_1)_i - i)}_{\text{Remark}~\ref{rem:browntealgreen}}
\\
&& {}+{} \underbrace{ \sum_{d(\mu_1^c) <i\leq\ell((\mu_1^c)')}(\mu_1^c)'_i((\mu_1^c)'_i-i-1) 
-
\sum\limits_{ d(\mu_1) < i \leq \ell( \mu'_1) } (\mu'_1)_i ((\mu'_1)_i - i)}_{\text{Remark}~\ref{rem:browntealgreen}} \\
& & {}+{} \underbrace{ \sum_{d(\mu_2^r)<i\leq\ell(\mu_2^r)}\frac{(\mu_2^r)_i((\mu_2^r)_i-1)}{2} - \sum\limits_{ d(\mu_2) < i \leq \ell( \mu_2 ) } 
\frac{(\mu_2)_i((\mu_2)_i-1)}{2} }_{\text{Remark}~\ref{rem:CBPG} } \\
& & {}+{} \underbrace{\sum_{1\leq i\leq d(\mu_2^r)}i\left((\mu_2^r)_i - \frac{i+1}{2}\right) 
- \sum\limits_{1 \leq i \leq d(\mu_2) } (i-1)\left( (\mu_2)_i- \frac{i}{2} \right) }_{\text{Remark}~\ref{rem:CBPG} }\\
& &{}+{}\underbrace{\sum_{1\leq i \leq d(\mu_2^c) }(i-2)\left((\mu_2^c)_i - \frac{i-1}{2}\right) - \sum\limits_{1 \leq i \leq d(\mu_2^{rc}) }  (i-1)\left( (\mu_2^{rc})_i- \frac{i}{2} \right) }_{\text{Remark}~\ref{rem:CBPG} }\\
& & {}+{} \underbrace{ \sum_{ d(\mu_2^c)<i\leq\ell(\mu_2^c)}\frac{(\mu_2^c)_i((\mu_2^c)_i-1)}{2} - 
\sum\limits_{d(\mu_2^{rc}) < i \leq \ell( \mu_2^{rc} ) }
\frac{(\mu_2^{rc})_i((\mu_2^{rc})_i-1)}{2} }_{\text{Remark}~\ref{rem:CBPG} }\\
& = & -1 + \left|\mu_1^r\right|+\left|\mu_2^c\right| - \left|\mu_3\right| - \left|\mu_1^c\right|- \left|\mu_2^r\right| + \left|\mu_3\right| \\
&&{}+{} d(\mu_1) - (\mu_1')_{d(\mu_1) } + 
d(\mu_2)-(\mu_2)_{d(\mu_2)} \\
&= & -1 + d(\mu_1) - (\mu_1)_{d(\mu_1) } + 
d(\mu_2)-(\mu'_2)_{d(\mu_2)} = -K.
\end{eqnarray*}
In the last step we used the fact that $ \left|\mu^r\right|- |\mu^c| = \mu'_{d(\mu)} - \mu_{d(\mu)}$ (see Remark~\ref{rem:sizeofmuc}).

\subsection{PT weights}

In this section we compute the constants $A$, $B$, and $C$ from equation (\ref{eqn:intermediatecond}) in Section~\ref{sec:pt_condensation_identity}. To that end, in Section~\ref{sec:baseweight} we compute the edge-weights of the base$_{\mu}$, base$_{up}$, and base$_{down}$ double-dimer configurations. As in previous sections, we assume $N\geq M$. The remaining work is to compute $C - A$; this is done in Section~\ref{sec:PTalg}.

\subsubsection{Edge-weight of base double-dimer configuration}
\label{sec:baseweight}

In this section we compute the edge-weights of the base$_{\mu}$ double-dimer configuration and the base$_{up}$ and base$_{down}$ configurations. We prove our formula for the base$_{\mu}$ configuration, but omit the proofs for the base$_{up}$ and base$_{down}$ configurations because they are essentially the same, as the base$_{up}$ and base$_{down}$ configurations only differ from base$_{\mu}$ configurations by shifts. 

The edge-weight of the base$_{\mu}$ double-dimer configuration is given by the following lemma. 

\begin{lemma}
\label{cor:ptweight}
The edge-weight of the base$_{\mu}$ double-dimer configuration is $q^{w_{base}(\mu)}$, where 
\small
\begin{eqnarray*}
w_{base}(\mu) &=& \frac{N^2(N-1)}{2} + \sum\limits_{i=1}^{N - \ell(\mu_1') -1} \frac{(N - \ell(\mu_1') - 1)(N - \ell(\mu_1'))}{2} - \frac{(i-1)i}{2} \\
& & {}+{} \sum\limits_{i: (\mu_1')_i \geq i\geq 1 }
(N-(\mu_1')_i)(N + (\mu_1')_i - i) +
\sum\limits_{i: (\mu_1')_i < i\leq\ell(\mu_1') }
(N-i)(N + (\mu_1')_i -i ) \\ 
& & {}+{} 
\sum\limits_{i=1}^{N-1 - \ell(\mu_2) } (N+i-1)(N-i - \ell(\mu_2)) \\
& & {}+{} \sum\limits_{i: (\mu_2)_i \geq i\geq 1 }
((\mu_2)_i + N)(-(\mu_2)_i + N) + \frac{(N - (\mu_2)_i -1)(N - (\mu_2)_i)}{2} \\
&& {}+{} 
\sum\limits_{i: (\mu_2)_i < i\leq\ell(\mu_2) } 
(N - i)((\mu_2)_i + N) + \frac{(N - i-1)(N - i)}{2} + \sum\limits_{i=1}^{\ell(\mu_3) }
(N-i)(\mu_3)_i.
\end{eqnarray*}
\normalsize
\end{lemma}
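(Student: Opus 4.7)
The plan is to compute the edge-weight of $D_{(\III,\II\cup\III)}(N)$ as the sum of the $q$-exponents of the two underlying dimer configurations $M_A(N)$ and $M_B(N)$ with $A=\III$ and $B=\II\cup\III$. Since $\mathfrak{A}=R_2\cup(\I^-\cup\III)\setminus\III=R_2\cup\I^-$ and $\mathfrak{B}=R_1\cup(\II\cup\III)\setminus(\II\cup\III)=R_1$, both surfaces are explicit and I would extract their matchings directly from the corresponding lozenge tilings. By Definition~\ref{def:kuoweighting}, all weight is carried by horizontal edges, so the computation reduces to enumerating the horizontal dimers of each matching and summing the appropriate powers of $q$.

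First I would compute the weight of $M_A(N)$. Since $\mathfrak{A}=R_2\cup\I^-$, this surface agrees with the floor of the empty plane partition away from the three infinite cylinders $\Cyl_i^-$. The baseline floor already on $H(N)$ gives $N^2$ horizontal dimers of weights $1,q,\ldots,q^{N-1}$ along the diagonals, contributing exponent $\tfrac{N^2(N-1)}{2}$. On top of this, in sector $1$ the projection of $\Cyl_1^-$ produces horizontal dimers indexed by the conjugate partition $\mu_1'$: the $i$th column contributes $i-1$ horizontal dimers when $(\mu_1')_i\ge i$ (a cell on or below the diagonal of $\mu_1$) and $(\mu_1')_i$ horizontal dimers when $(\mu_1')_i<i$ (above the diagonal), each of weight $q^{N+(\mu_1')_i-i}$, together with the complementary "above the partition" triangular region that accounts for the $\sum_{i=1}^{N-\ell(\mu_1')-1}$ term. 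Analogous expressions arise in sector $2$ from $\mu_2$ (with rows instead of columns), while sector $3$ is handled symmetrically and produces only one sum because of its orientation.

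Next I would compute the weight of $M_B(N)$. Since $\mathfrak{B}=R_1$ is even simpler, the matching $M_B(N)$ is obtained from the empty-floor matching by flipping dimers along the projection of $\II\cup\III$ into each of the three sectors; the node set $\mathbf N_\mu(N)$ determined in Lemma~\ref{lemma:nodes for mu} pins down exactly which outer vertices are affected. The resulting change in exponent is again sector-additive and encoded by the parts of $\mu_i$, splitting into the same two cases according to whether $(\mu_i)_j\gtreqless j$ — one case describing the hook-shape contribution and the other the triangular contribution.

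I would then add the sector-by-sector exponents of $M_A(N)$ and $M_B(N)$: the $\tfrac{N^2(N-1)}{2}$ baseline, the $\ell(\mu_1')$ and $\ell(\mu_2)$ triangular sums, the paired diagonal-split sums for $\mu_1'$ and $\mu_2$, and the $\mu_3$ sum, yielding the stated formula. The main obstacle is purely combinatorial bookkeeping: every horizontal dimer in $M_A(N)$ and $M_B(N)$ must be identified with its exact diagonal (and hence its exact $q$-exponent), and the asymmetry between $A=\III$ (which removes only corner boxes) and $B=\II\cup\III$ (which removes the middle boxes as well) is responsible for the asymmetry between the sector-1 and sector-2 expressions in the formula, so these contributions have to be tracked independently. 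The proof proceeds in the same style as Lemma~\ref{cor:DTweightGabcd}, with Lemmas~\ref{lem:wminGad} and~\ref{lem:wminGbc} following from trivial shifts of the same argument.
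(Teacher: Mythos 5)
Your high-level plan (compute the $q$-exponents of $M_A(N)$ and $M_B(N)$ separately, organize the count sector by sector, split by whether parts sit above or below the diagonal) is the right plan and is what the paper does, but there are several genuine misidentifications that would derail the bookkeeping.

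\textbf{The $\frac{N^2(N-1)}{2}$ term comes from $M_B$, not $M_A$.} Since $B=\II\cup\III$, one has $\mathfrak{B}=R_1$, which is \emph{independent} of $\mu$ — it is the ``room corner'' surface of the empty plane partition. Its matching $M_B(N)$ is therefore a single fixed matching whose horizontal dimers all lie in sector~3, contributing exactly $q^{N^2(N-1)/2}$. There is no ``flipping along the projection of $\II\cup\III$'': the whole point of the base configuration $(\III,\II\cup\III)$ is that $\mathfrak{B}$ collapses to $R_1$ and $M_B$ is universal. Your proposal would therefore have $M_B$ depend on $\mu$ in a way that it does not, and would misplace the $\frac{N^2(N-1)}{2}$ baseline as coming from $M_A$. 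All the $\mu$-dependence, including the $\mu_3$ term $\sum_{i=1}^{\ell(\mu_3)}(N-i)(\mu_3)_i$, lives in $M_A$ (via $\mathfrak{A}=R_2\cup\I^-$), not $M_B$.

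\textbf{You have the wrong per-column dimer counts for sector~1 (and by symmetry sector~2).} You state that, for $(\mu_1')_i\geq i$, column $i$ contributes $i-1$ dimers of weight $q^{N+(\mu_1')_i-i}$, and $(\mu_1')_i$ dimers otherwise. These are the counts for the DT partition function, as in Lemma~\ref{cor:DTweightGabcd}; the PT formula in Lemma~\ref{cor:ptweight} has $N-(\mu_1')_i$ and $N-i$ respectively. The reason is structurally different: in PT one starts from the fixed empty-case matching inside $H(N)$, groups its sector-1 horizontal dimers into $N-1$ ``groups'' of decreasing size, and then shifts group $i$ up by $(\mu_1')_i$ units, losing some dimers off the top of $H(N)$. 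What survives in $H(N)$ has count $N-(\mu_1')_i$ or $N-i$, not $i-1$ or $(\mu_1')_i$; you have imported the DT count, where boundary vertices are removed rather than dimers shifted.

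\textbf{The sector-1/sector-2 asymmetry is a property of the edge weighting, not of $A$ versus $B$.} Both sector-1 and sector-2 horizontal dimers belong to $M_A$; they differ because the weight gradient of Definition~\ref{def:kuoweighting} aligns with one sector's shift direction (so all dimers in a shifted group get the same factor $q^{(\mu_1')_i}$) and cuts across the other (so the dimers in a shifted group carry weights $q^N,q^{N+1},\dots$). Attributing the asymmetry to ``$A=\III$ removes only corner boxes while $B=\II\cup\III$ removes the middle boxes'' is a red herring.

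The remedy is to follow the paper's route: first establish Lemma~\ref{lem:allempty} in the empty case to pin down which matching contributes in which sector, then perturb only $M_A$ according to $\mu_1,\mu_2,\mu_3$, taking care that the PT shift procedure yields the $N-(\mu_1')_i$ / $N-i$ counts, and only then compare with the stated formula.
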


We start by showing that the formula holds for $\mu_1 = \mu_2 = \mu_3 = \emptyset$. While this is not necessary to prove Lemma~\ref{cor:ptweight}, this special case will make the proof of Lemma~\ref{cor:ptweight} easier to understand.

\begin{lemma}
\label{lem:allempty}
The edge-weight of the base$_{\emptyset, \emptyset, \emptyset}$ double-dimer configuration is $q^{w_{base}(\emptyset, \emptyset, \emptyset)}$, where
\[
w_{base}(\emptyset, \emptyset, \emptyset) = \frac{N^2(N-1)}{2} + \sum\limits_{i=1}^{N-1} ( N+i-1)(N-i) + \sum\limits_{i=1}^{N-1} \frac{(N-1)N}{2}- \frac{(i-1)i}{2}. \]
\end{lemma}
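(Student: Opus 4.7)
The plan is to exploit the fact that the base$_{\emptyset,\emptyset,\emptyset}$ double-dimer configuration is the union of two easily-described dimer configurations $M_A(N)$ and $M_B(N)$, and to sum the weights of the horizontal dimers in each contribution separately, using the edge-weighting of Definition~\ref{def:kuoweighting}.

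First, when $\mu_1=\mu_2=\mu_3=\emptyset$, we have $A=B=\varnothing$, so $\mathfrak{B}=R_1$ and $\mathfrak{A}=R_2$. The surface $\mathfrak{B}=R_1$ is the complement of $\mathbb{Z}^3_{\geq 0}$, which corresponds to the empty plane partition via the standard bijection between plane partitions and lozenge tilings. Hence $M_B(N)$ is the minimal dimer configuration of $H(N)$. Using exactly the computation carried out at the beginning of Section~\ref{sec:minconfigs}, this configuration has weight $q^{N^2(N-1)/2}$, which is the first term of the claimed formula.

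Next, I would analyze the partial matching $M_A(N)$. The surface $\mathfrak{A}=R_2$ consists of all cells with at least two negative coordinates; equivalently, viewed from the $(1,1,1)$ direction, it has a semi-infinite ``cylindrical groove'' running along each of the three positive coordinate axes. These three grooves give rise to three triangular arrays of horizontal dimers in $M_A(N)$, one in each of the three sectors of Figure~\ref{fig:sectors} (right). In each sector the dimers sit along the $(2N{-}1)$ outermost diagonals of horizontal edges; their rows are indexed naturally by $i=1,\dots,N-1$, with row $i$ having length depending on the sector and with weight determined by which diagonal of the Kuo weighting the row belongs to. Summing row-by-row in the two sectors corresponding to the $x$- and $y$-axes yields
\[
\sum_{i=1}^{N-1}(N{+}i{-}1)(N{-}i) \quad\text{and}\quad \sum_{i=1}^{N-1}\left[\tfrac{(N-1)N}{2}-\tfrac{(i-1)i}{2}\right],
\]
matching the remaining two terms in the statement. (The third sector's contribution is already incorporated in the weight of $M_B(N)$ since those horizontal dimers are shared; cf.\ the fact that $R_2\subseteq R_1$, so on one of the three positive axes the two surfaces agree below the relevant height.)

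The main obstacle is bookkeeping: identifying precisely which horizontal edges of $H(N)$ lie in $M_A(N)$ versus $M_B(N)$, and matching the resulting row-by-row sums of $q$-exponents to the two non-symmetric sums in the formula. The asymmetry comes from the fact that the Kuo edge-weighting is not rotationally symmetric on $H(N)$ (one diagonal direction has weight $1$ while the other two pick up increasing powers of $q$), so sectors $1$ and $2$ yield the two visibly different sums on the right-hand side, and sector $3$ has been absorbed into the $M_B$ contribution. Once these identifications are made, the result follows by direct summation.
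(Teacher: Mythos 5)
Your overall plan is the same as the paper's proof: decompose $D_{(\varnothing,\varnothing)}(N)$ into $M_A(N)$ and $M_B(N)$, identify $M_B(N)$'s horizontal dimers as contributing the first term $q^{N^2(N-1)/2}$, and obtain the two remaining sums from $M_A(N)$'s horizontal dimers in two of the three sectors. The row-by-row sums you write down for those two sectors are the correct ones.

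However, your account of what $M_A(N)$ looks like in the third sector is wrong, and the parenthetical you offer to explain away the missing sector would, if taken literally, falsify your own formula. You claim that the three grooves of $R_2$ give ``three triangular arrays of horizontal dimers in $M_A(N)$, one in each of the three sectors,'' and that the third array is ``shared'' with $M_B(N)$ and hence already counted. But a shared (doubled) horizontal edge must be counted \emph{twice} in the edge-weight of a double-dimer configuration --- once for each copy --- so if such shared horizontal dimers existed, your product of three factors would undercount them. The correct observation, which the paper makes, is that $M_A(N)$ simply has \emph{no} horizontal dimers in sector 3 at all: the groove of $R_2$ running along the axis associated to that sector presents only the two non-horizontal facet types to the viewer (its walls), never a horizontal top facet, because the groove extends to infinity in that direction. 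Similarly, $R_1$ and $R_2$ do not in fact agree on any of the coordinate planes away from the origin, so there are no shared horizontal dimers to speak of. The final formula is nonetheless correct, but your justification for discarding sector 3 needs to be replaced by the statement that $M_A(N)$ has no horizontal dimers there.
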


\begin{proof}
By Definition~\ref{defn:baseDD}, the base$_{\emptyset, \emptyset, \emptyset}$ double-dimer configuration is $D_{(\III, \II\cup\III)}(N)=D_{(\varnothing, \varnothing)}(N)$, i.e., it corresponds to the $AB$ configuration $(\varnothing, \varnothing)$. So, we have the tilings and double-dimer configuration shown in Figure~\ref{fig:DDweightexample} for $N=5$: 

\begin{figure}[htb]
\centering
\includegraphics[width=0.3\textwidth]{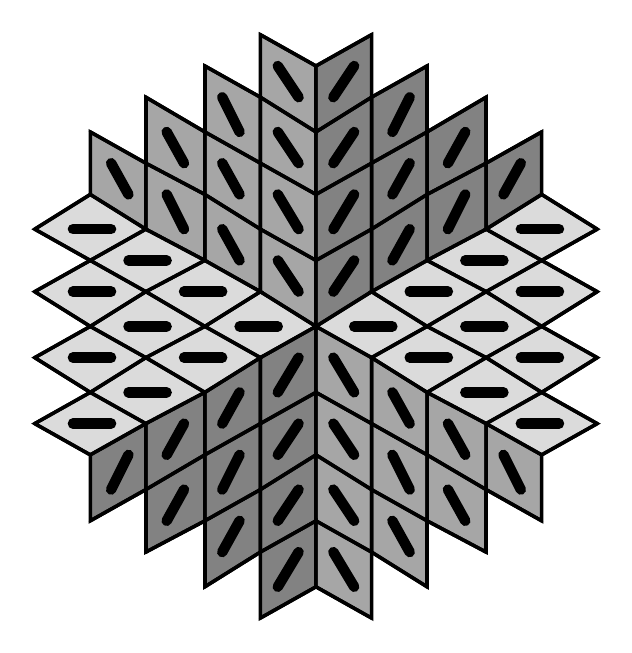}\hfill
\includegraphics[width=0.3\textwidth]{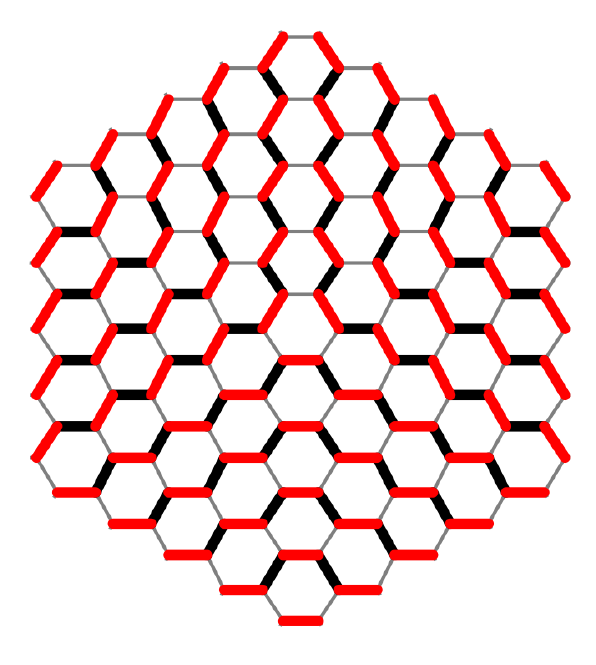}\hfill
\includegraphics[width=0.3\textwidth]{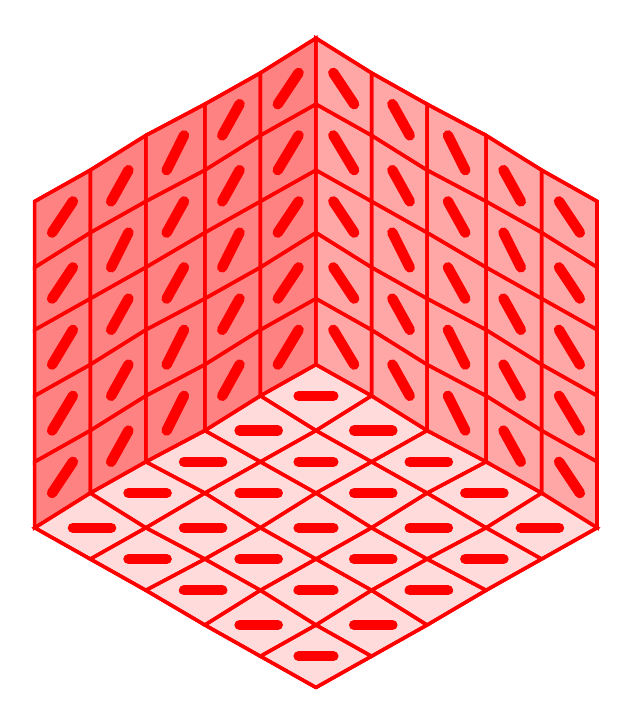}
\caption{The $AB$ configuration $(\varnothing, \varnothing)$, in the case where $\mu_1 = \mu_2 = \mu_3 = \emptyset$. Left: The tiling corresponding to $A$. Right: The tiling corresponding to $B$. Center: The corresponding double-dimer configuration.}
\label{fig:DDweightexample}
\end{figure}

Referring to Figure~\ref{fig:DDweightexample}, we see that the only horizontal dimers from the $B$ configuration are in sector 3, and these horizontal dimers contribute weight 
$$(q^{0})^{N} (q^{1})^{N} \cdots (q^{N-1})^{N} = q^{N^2 (N-1)/2}.$$ 
The horizontal dimers from the $A$ configuration in sector 2 contribute weight 
$$(q^{N})^{N-1} (q^{N+1})^{N-2} \cdots (q^{2N-2})^{1} = \prod\limits_{i=1}^{N-1} (q^{N+i-1})^{N-i}.$$ 
The horizontal dimers from the $A$ configuration in sector 1 contribute weight 
\[
(q \cdot q^2 \cdots q^{N-1} ) 
(q^2 \cdots q^{N-1} ) 
(q^3 \cdots q^{N-1} ) \cdots
q^{N-1} = \prod\limits_{i=1}^{N-1} q^{ \frac{(N-1)N}{2}- \frac{(i-1)i}{2} }.\]
There are no horizontal dimers from the $A$ configuration in sector 3. 
\end{proof}

\begin{proof}[Proof of Lemma~\ref{cor:ptweight}]
This proof has three parts. We first show that the horizontal dimers in sector 1 have weight 
\begin{eqnarray*}
& & \prod\limits_{i=1}^{N - \ell(\mu_1') -1} q^{ \frac{(N- \ell(\mu_1') - 1)(N - \ell(\mu_1'))}{2}- \frac{(i-1)i}{2} } \prod\limits_{i: (\mu_1')_i \geq i\geq 1 }
(q^{(\mu_1')_i } q^{N - i} )^{N -(\mu_1')_i } \\
& & {}\cdot{}\prod\limits_{i: (\mu_1')_i < i\leq\ell(\mu_1') }
(q^{(\mu_1')_i } q^{N-i} )^{N - i} \\
& = & 
\prod\limits_{i=1}^{N - \ell(\mu_1') -1} q^{ \frac{(N- \ell(\mu_1') - 1)(N - \ell(\mu_1'))}{2}- \frac{(i-1)i}{2} } \prod\limits_{i: (\mu_1')_i \geq i\geq 1 }
q^{ (N-(\mu_1')_i)(N + (\mu_1')_i - i) } \\
& & {}\cdot{}\prod\limits_{i: (\mu_1')_i < i\leq\ell(\mu_1') } q^{ (N-i)( N + (\mu_1')_i -i ) }.
\end{eqnarray*}
Note that when $\ell(\mu_1') = 0$, this formula agrees with the third term in Lemma~\ref{lem:allempty}. We will next show that the horizontal dimers in sector 2 have weight 
\begin{eqnarray*}
& & \prod\limits_{i=1}^{N-1 - \ell(\mu_2) } (q^{N+i-1})^{N-i - \ell(\mu_2)} \prod\limits_{i: (\mu_2)_i \geq i\geq 1 } \left(
(q^{(\mu_2)_i })^{N - (\mu_2)_i }
\prod\limits_{j=1}^{N - (\mu_2)_i } q^{N+j -1 } \right) \\
& & {}\cdot{} 
\prod\limits_{i: (\mu_2)_i < i\leq\ell(\mu_2) } \left(
(q^{(\mu_2)_i })^{N - i }
\prod\limits_{j=1}^{N - i } q^{N+j-1 } \right) \\
& = & \prod\limits_{i=1}^{N-1 - \ell(\mu_2) } (q^{N+i-1})^{N-i - \ell(\mu_2)} 
\prod\limits_{i: (\mu_2)_i \geq i\geq 1 } 
q^{ ((\mu_2)_i + N )(N - (\mu_2)_i )} 
q^{\frac{(N - (\mu_2)_i -1)(N - (\mu_2)_i)}{2} } \\
& & {}\cdot{} \prod\limits_{i: (\mu_2)_i < i\leq\ell(\mu_2) }
q^{((\mu_2)_i + N )(N - i) } q^{\frac{(N - i-1)(N - i)}{2}}.
\end{eqnarray*}
Again, when $\ell(\mu_2) = 0$, this formula agrees with the second term in Lemma~\ref{lem:allempty}. Finally, we will show that the horizontal dimers in sector 3 have weight 
\[ q^{N^2(N-1)/2}
\prod_{i=1}^{\ell(\mu_3) }
(q^{N-i})^{(\mu_3)_i}.
\]

We remark that since the base$_{\mu}$ double-dimer configuration is $D_{(\III, \II\cup\III)}(N)$, the horizontal dimers from $M_A(N)$ in sector $i$ can be completely explained by the partition $\mu_i$. Also, as in the proof of Lemma~\ref{lem:allempty}, the only horizontal dimers from $M_B(N)$ are in sector 3. \\

\noindent{\bf Sector 1.} In the case where $\mu_1=\emptyset$, we can partition the horizontal dimers from $M_A(N)$ in sector 1 (see Figure~\ref{fig:DDweightexample}) into $N-1$ groups: 
\begin{enumerate}
\item[(1)] The group of horizontal dimers that consists of the topmost horizontal dimer in each column of hexagons in sector 1. This is a group of $N-1$ dimers that each have weight $q^{N-1}$. 
\item[(2)] The group of horizontal dimers that consists of the horizontal dimers directly below the dimers in group 1. Since the leftmost dimer in group 1 does not have a horizontal dimer directly below it, this is a group of $N-2$ dimers that each have weight $q^{N-2}$. 
\item[(3)] Etc. 
\end{enumerate}
In general, group $i$ consists of the $N-i$ horizontal dimers directly below the dimers in group $i-1$ (with the exception of the leftmost dimer in group $i-1$, which does not have a horizontal dimer directly below it). The dimers in group $i$ all have weight $q^{N-i}$. 
 
Now that we have partitioned the dimers in this way, we are ready to discuss the case where $\mu_1 \neq \emptyset$. Consider $(\mu'_1)_1$. When $(\mu'_1)_1 > 0$ (compared to $(\mu'_1)_1 = 0$), the dimers in group 1 (i.e. the $N-1$ dimers with weight $q^{N-1}$) shift up $(\mu'_1)_1$ units. However, some of the dimers in group 1 shift outside $H(N)$. Specifically, the $i$th dimer from the leftmost dimer (so the leftmost dimer corresponds to $i = 0$) is still in $H(N)$ if and only if $i \leq N- (\mu'_1)_1 -1$. In total, there are $N - (\mu'_1)_1$ dimers inside $H(N)$ after this shift and these dimers each have weight $q^{(\mu'_1)_1}q^{N-1}$. 

In general, the $i$th part of $\mu_1'$ affects the weight of group $i$. For $i > \ell(\mu_1')$, the weight of group $i$ is unaffected, and the product of all such weights is $\prod\limits_{i=1}^{N - \ell(\mu_1') -1} q^{ \frac{(N- \ell(\mu_1') - 1)(N - \ell(\mu_1'))}{2}- \frac{(i-1)i}{2} }$. To determine the effect of the $i$th part of $\mu_1'$ on the weight of group $i$, we break into cases. If $(\mu_1')_i \geq i$, then as in the case where $i =1$, after the dimers in group $i$ shift up, there are $N- (\mu_1')_i$ dimers still in $H(N)$, each with weight $q^{(\mu'_1)_i}q^{N-i}$. If $(\mu_1')_i < i$, then after the dimers in group $i$ shift up, there are $N- i$ dimers still in $H(N)$, each with weight $q^{(\mu'_1)_i}q^{N-i}$. Therefore, the total weight of the dimers in sector 1 is 
\[ \prod\limits_{i=1}^{N - \ell(\mu_1') -1} q^{ \frac{(N- \ell(\mu_1') - 1)(N - \ell(\mu_1'))}{2}- \frac{(i-1)i}{2} } \prod\limits_{i: (\mu_1')_i \geq i\geq 1 } (q^{(\mu_1')_i } q^{N - i} )^{N -(\mu_1')_i }
\prod\limits_{i: (\mu_1')_i < i\leq\ell(\mu_1') }
(q^{(\mu_1')_i } q^{N-i} )^{N - i }.\]

\noindent{\bf Sector 2.} As we did in sector 1, we partition the horizontal dimers from $M_A(N)$ in sector 2 into $N-1$ groups: 
\begin{enumerate}
\item[(1)] The group of horizontal dimers that consists of the topmost horizontal dimer in each column. This is a group of $N-1$ dimers with weights $q^{N}, q^{N+1}, \ldots, q^{2N-2}$. 
\item[(2)] The group of horizontal dimers that consists of the horizontal dimers directly below the dimers in group 1. Since the rightmost dimer in group 1 does not have a horizontal dimer directly below it, this is a group of $N-2$ dimers with weights $q^N, q^{N+1}, \ldots, q^{2N-3}$. 
\item[(3)] Etc. 
\end{enumerate}
In general, group $i$ consists of the $N-i$ horizontal dimers directly below the dimers in group $i-1$ and these dimers have weights $q^N, q^{N+1}, \ldots, q^{2N-1-i}$. 

As in sector 1, the $i$th part of $\mu_2$ affects the weight of group $i$, because the dimers in group $i$ shift up $(\mu_2)_i$ units. For $i > \ell(\mu_2)$, the weight of group $i$ is unaffected, and the product of all such weights is $\prod\limits_{i=1}^{N-1 - \ell(\mu_2) } (q^{N+i-1})^{N-i - \ell(\mu_2)}$. To determine the effect of the $i$th part of $\mu_2$ on the weight of group $i$, we break into cases. If $(\mu_2)_i \geq i$, then the dimer in group $i$ with weight $q^{N+j}$ is still in $H(N)$ after being shifted if and only if $N+j + (\mu_2)_i \leq 2N-1$, that is, if and only if $j \leq N - (\mu_2)_i - 1$. So after the dimers in group $i$ are shifted, there are $N- (\mu_2)_i$ dimers still in $H(N)$, and these dimers have weights $q^{(\mu_2)_i} q^{N}, q^{(\mu_2)_i} q^{N+1}, \ldots, q^{(\mu_2)_i} q^{2N-1-(\mu_2)_i}$. If $(\mu_2)_i < i$, then after the dimers in group $i$ are shifted, there are $N- i$ dimers still in $H(N)$, and these dimers also have weights $q^{(\mu_2)_i} q^{N}, q^{(\mu_2)_i} q^{N+1}, \ldots, q^{(\mu_2)_i}  q^{2N-1-i}$. 
Therefore, the total weight of the dimers in sector 2 is 
\begin{eqnarray*}
&&\prod\limits_{i=1}^{N-1 - \ell(\mu_2) } (q^{N+i-1})^{N-i - \ell(\mu_2)}
\prod\limits_{i: (\mu_2)_i \geq i\geq 1 } \left(
(q^{(\mu_2)_i })^{N - (\mu_2)_i }
\prod\limits_{j=1}^{N - (\mu_2)_i } q^{N+j -1 } \right) \\
&&{}\cdot{} 
\prod\limits_{i: (\mu_2)_i < i\leq\ell(\mu_2) } \left(
(q^{(\mu_2)_i })^{N - i }
\prod\limits_{j =1}^{N - i } q^{N+j -1 } \right). 
\end{eqnarray*}
 
\noindent{\bf Sector 3.} Recall from the proof of Lemma~\ref{lem:allempty} that the horizontal dimers from $M_B(N)$ in sector 3 have weight $q^{\frac{N^2(N-1)}{2}}$. In the case where $\mu_3 = \emptyset$, there are no horizontal dimers from $M_A(N)$ in sector 3. When $\mu_3 \neq \emptyset$, there are $(\mu_3)_i$ horizontal dimers from $M_A(N)$ in sector 3, each of weight $q^{N-i}$. This gives us the desired formula. 
\end{proof}

We conclude this section with expressions for the edge-weights of the base$_{up}$ and base$_{down}$ configurations. 

\begin{lemma}
\label{cor:ptweight_SU}
The edge-weight of the base$_{up}$ double-dimer configuration is $q^{w_{up}}$, where 
\small
\begin{eqnarray*}
w_{up} &=& \frac{(N+1)N(N-1)}{2} + N^2 + 
\sum\limits_{i=1}^{N - \ell((\mu_1^r)') -1} \frac{(N- \ell((\mu_1^r)') + 1)(N - \ell((\mu_1^r)'))}{2}- \frac{i(i+1)}{2} \\
& & {}+{} 
\begin{cases}
0 &\text{if }(\mu_1^r)'=\emptyset \\
\sum\limits_{i: (\mu_1^r)'_i \geq i-1\geq 0 }(N- (\mu_1^{r})'_i - 1)( (\mu_1^{r})'_i + N-i+1) &\text{otherwise}
\end{cases}\\
& & {}+{}
\sum\limits_{i: ((\mu_1^r)')_i < i-1\leq\ell((\mu_1^r)')-1 }
(N-i)( (\mu_1^{r})'_i + N-i+1 ) \\
& & {}+{} 
\sum\limits_{i=1}^{N-1 - \ell(\mu_2^c) } (N+i)(N-i - \ell(\mu_2^c)) \\
& & {}+{} 
\begin{cases}
0 &\text{if }\mu_2^c=\emptyset \\
\sum\limits_{i: (\mu_2^c)_i \geq i -1\geq 0}((\mu_2^c)_i + N)(-(\mu_2^c)_i + N -1) +
\frac{(N - (\mu_2^c)_i -1)(N - (\mu_2^c)_i)}{2} &\text{otherwise}
\end{cases}\\
& & {}+{}
\sum\limits_{i: (\mu_2^c)_i < i-1\leq\ell(\mu_2^c)-1 }
(N - i) ((\mu_2^c)_i + N ) + \frac{(N - i+1)(N - i)}{2} \\
& & {}+{} \sum\limits_{i=1}^{\ell(\mu_3) }
(N+1-i)(\mu_3)_i.
\end{eqnarray*}
\normalsize
\end{lemma}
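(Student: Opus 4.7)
My plan is to prove Lemma~\ref{cor:ptweight_SU} by mimicking the structure of the proof of Lemma~\ref{cor:ptweight}, modified to account for the upward shift of one unit. Recall that the base$_{up}$ double-dimer configuration corresponds to the $AB$ configuration $(\III(\mu_1^r, \mu_2^c, \mu_3),\II(\mu_1^r, \mu_2^c, \mu_3)\cup\III(\mu_1^r, \mu_2^c, \mu_3))$, but with both tilings (and hence both of $M_A$, $M_B$) translated up by one unit along the $(1,1,1)$-direction before being truncated to $H(N)$. Just as in the base$_{\mu}$ case, the only horizontal dimers from $M_B(N)$ sit in sector~3; everything else comes from $M_A(N)$. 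Because an upward shift of one unit sends a horizontal edge on the diagonal of weight $q^k$ to one of weight $q^{k+1}$, each horizontal dimer that remains inside $H(N)$ after the shift has its weight multiplied by $q$ relative to the unshifted analogue, but some dimers near the boundary leave $H(N)$ (and others enter from outside), and these boundary effects are what account for the apparent index shift of $-1$ appearing in the sum ranges (for example, ``$(\mu_1^r)'_i\geq i$'' becomes ``$(\mu_1^r)'_i\geq i-1$'').

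For the sector~3 contribution, the base floor of $M_B(N)$ has weight $q^{N^2(N-1)/2}$ and, after shifting up one unit, picks up an additional $N^2$ in the exponent along with a contribution of $\tfrac{N(N-1)}{2}$ from the new top row; together with the floor dimers from $M_A(N)$ coming from $\mu_3$, this produces the leading $\tfrac{(N+1)N(N-1)}{2}+N^2$ together with the sum $\sum_{i=1}^{\ell(\mu_3)}(N+1-i)(\mu_3)_i$, where the $(N+1-i)$ reflects the shift of the $i$th row of $\mu_3$ dimers up by one. For sectors~1 and~2, I partition the horizontal dimers of $M_A(N)$ into $N-1$ groups as in the proof of Lemma~\ref{cor:ptweight}, then shift each group vertically by the amount dictated by $\mu_1^r$ or $\mu_2^c$ together with the extra uniform one-unit shift from base$_{up}$. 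The extra shift means the topmost group now starts at weight $q^{N-i+1}$ rather than $q^{N-i}$ in sector~1 (and analogously $q^{N+i}$ rather than $q^{N+i-1}$ in sector~2), which produces the $(N-i+1)$ and $(N+i)$ factors visible in the statement.

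The sum ranges then split according to whether the shifted-up dimers in the $i$th group still fit inside $H(N)$. In sector~1, the extra unit of upward shift means the $i$th group has weight $q^{(\mu_1^r)'_i}q^{N-i+1}$, and the condition for all $N-i$ dimers in that group to stay inside $H(N)$ becomes $(\mu_1^r)'_i+1\le i$, i.e. $(\mu_1^r)'_i<i-1$; otherwise, only $N-(\mu_1^r)'_i-1$ of them survive. These two cases give the two cases $(\mu_1^r)'_i\ge i-1$ and $(\mu_1^r)'_i<i-1$ in the lemma. The same analysis in sector~2, using the fact that the group of dimers of weights $q^N,\dots,q^{2N-1-i}$ gets shifted up by $(\mu_2^c)_i+1$ and gains a factor of $q$ per surviving dimer, yields the corresponding pair of cases for $\mu_2^c$ with threshold $i-1$.

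The main obstacle will be handling the degenerate cases $(\mu_1^r)'=\emptyset$ and $\mu_2^c=\emptyset$, which is why the statement has case splits in two of its terms: if $\mu_1^r$ or $\mu_2^c$ is empty, then there is no $i$th part to speak of for $i=1$, but the extra unit of upward shift still produces boundary effects that must be accounted for separately (rather than being absorbed into a $\mu$-sum as in the nonempty case). I will verify these degenerate cases by direct inspection of the shifted configuration, as in the proof of Lemma~\ref{lem:allempty}, and then combine the three sector contributions to obtain the displayed formula. The analogous argument with the tilings shifted down by one unit yields the corresponding formula for $w_{down}$ in Lemma~\ref{cor:ptweight_SD}.
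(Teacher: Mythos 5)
Your approach is the one the paper indicates: the authors explicitly say they omit the proofs of Lemmas~\ref{cor:ptweight_SU} and~\ref{cor:ptweight_SD} ``because they are essentially the same'' as the proof of Lemma~\ref{cor:ptweight}, differing only by the uniform shift. Your plan correctly captures all the modifications needed: the extra factor of $q$ per surviving dimer (so exponents go from $(\mu_1')_i + N - i$ to $(\mu_1^r)'_i + N - i + 1$ and from $N+i-1$ to $N+i$ in sector~2), the sector~3 contribution gaining $N^2 + \tfrac{N(N-1)}{2}$ from the shifted floor, the threshold for surviving dimers moving from ``$(\mu_1')_i\ge i$'' to ``$(\mu_1^r)'_i\ge i-1$'', and the need to split off the degenerate cases $(\mu_1^r)'=\emptyset$, $\mu_2^c=\emptyset$ because the inequality $(\mu_1^r)'_i\ge i-1\ge 0$ is still satisfied at $i=1$ when the partition is empty, which would incorrectly produce a spurious term.

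One small slip: you write that ``the condition for all $N-i$ dimers to stay inside $H(N)$ becomes $(\mu_1^r)'_i+1\le i$, i.e.\ $(\mu_1^r)'_i<i-1$.'' In fact $(\mu_1^r)'_i+1\le i$ is $(\mu_1^r)'_i\le i-1$, not $(\mu_1^r)'_i<i-1$. This is harmless here since at the boundary $(\mu_1^r)'_i=i-1$ both counts $N-(\mu_1^r)'_i-1$ and $N-i$ agree, and you correctly land on the case split $(\mu_1^r)'_i\ge i-1$ versus $(\mu_1^r)'_i<i-1$ appearing in the statement; but be sure when you write out the full argument to verify that the boundary term is assigned to the case matching the lemma's formula. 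Beyond that, what remains is carrying out the three-sector accounting explicitly (as in the proof of Lemma~\ref{cor:ptweight}) and the direct verification of the degenerate cases, which you correctly flag as the remaining work.
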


\begin{lemma}
\label{cor:ptweight_SD}
The edge-weight of the base$_{down}$ double-dimer configuration is $q^{w_{down}}$, where 
\small
\begin{eqnarray*}
w_{down} &=& \frac{(N-1)^2(N-2)}{2} 
+ \dfrac{(N - \ell((\mu_1^c)') -2)(N - \ell((\mu_1^c)') -1) }{2} \\
& & {}+{} \sum\limits_{i=1}^{N - \ell((\mu_1^c)') -2} \frac{(N- \ell((\mu_1^c)')- 1)(N - \ell((\mu_1^c)')-2)}{2}- \frac{(i-1)i}{2} \\
& & {}+{} \sum\limits_{i: (\mu_1^c)'_i > i+1>1 }
(N- (\mu_1^c)'_i +1) ((\mu_1^c)'_i + N - i - 1) \\
&& {}+{} \sum\limits_{i: (\mu_1^c)'_i \leq i+1\leq\ell((\mu_1^c)')+1 }
(N-i) ((\mu_1^c)'_i + N-i -1) \\
& & {}+{} 
\sum\limits_{i=1}^{N-1 - \ell(\mu_2^r) } (N+i-2)(N-i - \ell(\mu_2^r)) \\
& & {}+{} \sum\limits_{i: (\mu_2^r)_i > i +1>1}
((\mu_2^r)_i + N -1)(-(\mu_2^r)_i +N + 1) + 
\frac{(N - (\mu_2^r)_i + 1)(N - (\mu_2^r)_i)}{2} \\
& & {}+{} 
\sum\limits_{i: (\mu_2^r)_i \leq i+1\leq\ell(\mu_2^r)+1 }
(N - i) ( (\mu_2^r)_i + N - 1) + \frac{(N - i-1)(N - i)}{2} \\
& & {}+{} \sum\limits_{i=1}^{\ell(\mu_3) }
(N-1-i)(\mu_3)_i.
\end{eqnarray*}
\normalsize
\end{lemma}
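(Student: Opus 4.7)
The plan is to carry out the proof in complete parallel to that of Lemma~\ref{cor:ptweight}, which was given in full in Section~\ref{sec:baseweight}. By construction, the base$_{down}$ configuration is the truncation to $H(N)$ of the double-dimer configuration associated with the $AB$ configuration $(\III,\II\cup\III)$ for the partition triple $(\mu_1^c,\mu_2^r,\mu_3)$, except that the two surfaces $\mathfrak{A}$ and $\mathfrak{B}$ are first translated by $(-1,-1,-1)$ before being intersected with $H(N)$, as described at the end of Section~\ref{sec:pt_condensation_identity}. The task is to redo the horizontal-dimer enumeration of Lemma~\ref{cor:ptweight} on these shifted tilings, sector by sector.

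First I will dispose of the baseline case $(\mu_1^c,\mu_2^r,\mu_3)=(\emptyset,\emptyset,\emptyset)$, which is the analogue of Lemma~\ref{lem:allempty}. Shifting the surfaces by $(-1,-1,-1)$ pushes one row of horizontal $M_B$ floor-dimers out of $H(N)$ at the bottom of sector 3 and pulls one new row in at the top, while dropping every surviving row one level in the Kuo weighting. The resulting tally replaces the $q^{N^2(N-1)/2}$ of the unshifted empty case by $q^{(N-1)^2(N-2)/2}$, giving the leading term of $w_{down}$, and the residual sector-3 contribution from $\mu_3$ becomes $\sum_{i=1}^{\ell(\mu_3)}(N-1-i)(\mu_3)_i$ since every sector-3 dimer slides one row down.

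For sectors 1 and 2 I will reuse the ``group-by-column'' partition of horizontal $M_A$ dimers from the proof of Lemma~\ref{cor:ptweight}, now applied with $(\mu_1^c)'$ and $\mu_2^r$ and with each group translated one row lower. For the empty configuration, group $i$ in sector 1 consists of $N-i-1$ dimers of weight $q^{N-i-1}$, producing the isolated term $\tfrac{(N-\ell((\mu_1^c)')-2)(N-\ell((\mu_1^c)')-1)}{2}$ together with the sum $\sum_{i=1}^{N-\ell((\mu_1^c)')-2}$ that collects the groups of index exceeding $\ell((\mu_1^c)')$. Inserting the $i$th part of $(\mu_1^c)'$ shifts group $i$ up by $(\mu_1^c)'_i$ units, and the downward shift changes the pivotal inequality governing how many dimers survive inside $H(N)$: the case split is $(\mu_1^c)'_i>i+1$ versus $(\mu_1^c)'_i\leq i+1$, yielding the two $(\mu_1^c)'$-sums in $w_{down}$ with the correct exponent $(\mu_1^c)'_i+N-i-1$ in place of the $(\mu_1')_i+N-i$ of Lemma~\ref{cor:ptweight}. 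Sector 2 is handled symmetrically with $M_A$-group weights $q^{N-1},q^N,\ldots,q^{2N-3-i}$ (one less than in the unshifted case) and the corresponding case split $(\mu_2^r)_i>i+1$ versus $(\mu_2^r)_i\leq i+1$.

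The main obstacle, and the step that requires the most care, is handling the boundary of $H(N)$ correctly under the shift: exactly one row of $M_A$ dimers exits along the bottoms of sectors 1 and 2, one row of $M_B$ floor-dimers exits at the bottom of sector 3 while a new row enters at its top, and the new case-split inequalities ``$(\mu_1^c)'_i>i+1$'' and ``$(\mu_2^r)_i>i+1$'' must be reverified directly against these shifted boundaries rather than inherited from Lemma~\ref{cor:ptweight}. Once these boundary corrections are tabulated and combined with the sector-by-sector tallies above, the sum matches the stated expression for $w_{down}$. The analogous argument with the opposite shift $(+1,+1,+1)$, which requires a symmetric but distinct bookkeeping, produces Lemma~\ref{cor:ptweight_SU}.
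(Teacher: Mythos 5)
Your overall strategy is the same as the paper's: the paper explicitly omits this proof, saying it is ``essentially the same'' as the proof of Lemma~\ref{cor:ptweight} because the base$_{down}$ configuration differs from the base$_{\mu_1^c,\mu_2^r,\mu_3}$ configuration only by a shift, and you carry out exactly that sector-by-sector adaptation. The sketch of tracking which rows of horizontal dimers enter or leave $H(N)$, how each surviving dimer's Kuo weight changes under the shift, and how the case splits become $(\mu_1^c)'_i > i+1$ vs.\ $\leq i+1$ and $(\mu_2^r)_i > i+1$ vs.\ $\leq i+1$, is the right plan and matches the pattern in the stated formula (including the isolated $\tfrac{(N-\ell((\mu_1^c)')-2)(N-\ell((\mu_1^c)')-1)}{2}$ term and the $\sum(N-1-i)(\mu_3)_i$ sector-3 term).

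One conceptual imprecision worth correcting: you describe the shift as translating the surfaces $\mathfrak{A}$, $\mathfrak{B}$ by $(-1,-1,-1)$ in $\mathbb{Z}^3$, but a translation by a multiple of $(1,1,1)$ projects to the identity on the plane $x_1+x_2+x_3=0$, so it leaves the 2D tiling (and the double-dimer configuration on the infinite graph $H$) unchanged. What actually happens, per Section~\ref{sec:pt_condensation_identity}, is that for the node set ${\bf N}-\{b,c\}$ the origin of $\mathbb{Z}^3$ is identified with the face directly below the central face of $H(N)$; equivalently, the tiling is shifted by one unit in the 2D picture before truncating to $H(N)$. This is what moves dimers between Kuo diagonals and pushes rows in or out of $H(N)$. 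Your subsequent bookkeeping already uses this 2D picture, so the rest of the argument stands once the premise is restated correctly.
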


\subsubsection{Algebraic simplification}
\label{sec:PTalg}

Since
$A = w_{base}(\mu_1, \mu_2, \mu_3) + w_{base}(\mu_1^{rc}, \mu_2^{rc}, \mu_3)$ and
$B = w_{base}(\mu_1^{rc}, \mu_2, \mu_3) + w_{base}(\mu_1, \mu_2^{rc}, \mu_3)$, we see that $A = B$. In addition, $C = w_{up}+w_{down}$. 

To compute $C-A$, we split the algebra into two pieces: we first simplify the sums that have index set going from $1$ to a fixed ending point that does not depend on $\mu$, and then we simplify the remaining summands.
 

\begin{remark}
Since
\[
\sum\limits_{i=1}^{\ell(\mu_3) }
(N+1 -i)(\mu_3)_i + 
\sum\limits_{i=1}^{\ell(\mu_3) }
(N-1 -i)(\mu_3)_i -
2 \sum\limits_{i=1}^{\ell(\mu_3) }
(N-i)(\mu_3)_i = 0,\]
the terms involving $\mu_3$ cancel. 
\end{remark}
 
 
 
\begin{lemma}
\label{lem:ptfirsttwoterms}
\begin{eqnarray*}
&&\sum\limits_{i=1}^{N - \ell((\mu_1^r)') -1} \dfrac{(N- \ell((\mu_1^r)') + 1)(N - \ell((\mu_1^r)'))}{2}
- \dfrac{i(i+1)}{2} \\
&&{}-{}\sum\limits_{i=1}^{N - \ell((\mu_1^{rc})') -1} \dfrac{(N- \ell((\mu_1^{rc})') - 1)(N - \ell((\mu_1^{rc})'))}{2} - \dfrac{(i-1)i}{2} \\
&=&
\begin{cases}
- \dfrac{(N-\ell((\mu_1')^c) )(N - \ell((\mu_1')^c) +1 )}{2} & \mbox{if } d(\mu_1') > 1 \text{ or } (d(\mu_1') = 1 \text{ and } (\mu_1')_1 > 1) \\
\dfrac{N(N-1)}{2} & \mbox{if } d(\mu_1') = 1 \text{ and } (\mu_1')_1 = 1
\end{cases}
\end{eqnarray*}
\end{lemma}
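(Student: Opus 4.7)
The plan is to reduce the identity to the previously established combinatorics of $\mu^r$, $\mu^{rc}$, and $\mu^c$, and then do a single careful telescoping of the arithmetic sums. First I would rewrite the conjugated partitions appearing in the sums using Lemma~\ref{lem:mucprime}, so that $(\mu_1^r)'=(\mu_1')^c$ and $(\mu_1^{rc})'=(\mu_1')^{rc}$, and set $L:=\ell((\mu_1')^c)$, $L':=\ell((\mu_1')^{rc})$. Corollary~\ref{cor:lengthmurclengthmuc} then dictates exactly the case split in the statement: $L'=L-1$ when $d(\mu_1')>1$, or when $d(\mu_1')=1$ and $(\mu_1')_1>1$, and $L'=L=0$ otherwise.

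Next, I would rewrite the left-hand side after substitution. The first sum contributes
$(N-L-1)\cdot\tfrac{(N-L+1)(N-L)}{2}-\sum_{i=1}^{N-L-1}\tfrac{i(i+1)}{2}$,
and the second sum contributes
$(N-L'-1)\cdot\tfrac{(N-L'-1)(N-L')}{2}-\sum_{i=1}^{N-L'-1}\tfrac{(i-1)i}{2}$.
The single algebraic trick needed is the reindexing identity $\sum_{i=1}^{M}\tfrac{(i-1)i}{2}=\sum_{i=1}^{M-1}\tfrac{i(i+1)}{2}$ (send $i\mapsto i+1$ and note the $i=1$ term is zero), which makes the big triangular-number sums cancel cleanly.

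In the generic case $L'=L-1$, after substitution the coefficient block becomes
$\tfrac{(N-L+1)(N-L)}{2}\bigl[(N-L-1)-(N-L)\bigr]=-\tfrac{(N-L+1)(N-L)}{2}$,
while the reindexing identity applied with $M=N-L$ forces the two triangular sums to coincide term-for-term, leaving exactly $-\tfrac{(N-L)(N-L+1)}{2}$, which is the first claimed value. In the degenerate case $L=L'=0$, direct substitution yields $(N-1)\cdot\tfrac{N}{2}\bigl[(N+1)-(N-1)\bigr]-\sum_{i=1}^{N-1} i = N(N-1)-\tfrac{(N-1)N}{2}=\tfrac{N(N-1)}{2}$, which is the second claimed value.

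No step here is a real obstacle; the only thing to be careful about is ensuring the index ranges are correctly aligned after the $L'=L-1$ substitution, since the two outer sums run up to $N-L-1$ and $N-L$ respectively after the shift. The reindexing identity above is precisely what absorbs this off-by-one, so once the case split from Corollary~\ref{cor:lengthmurclengthmuc} is made explicit the proof reduces to the short calculation outlined.
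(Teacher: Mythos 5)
Your proof is correct and follows essentially the same route as the paper's: substitute $(\mu_1^r)'=(\mu_1')^c$ and $(\mu_1^{rc})'=(\mu_1')^{rc}$, apply Corollary~\ref{cor:lengthmurclengthmuc} for the length relations and case split, then let the triangular sums cancel via the reindexing $\sum_{i=1}^{M}\tfrac{(i-1)i}{2}=\sum_{i=1}^{M-1}\tfrac{i(i+1)}{2}$ while the constant coefficients supply the answer. The only cosmetic difference is that you pull out the constant coefficient blocks $(N-L-1)\cdot C$ explicitly rather than leaving them inside the summation signs as the paper does.
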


The terms in the lemma are from $w_{up}$ and $w_{base}(\mu_1^{rc}, \mu_2^{rc}, \mu_3)$, respectively. 
 
\begin{proof}
Recall that $(\mu_1^{rc})' = (\mu_1')^{rc}$, and $(\mu_1^r)' = (\mu_1')^c$. For convenience, we write $\lambda := \mu_1'$. There are two cases to consider. The first is when $\ell(\lambda^{rc}) = \ell(\lambda^c) -1$. This occurs precisely when $d(\lambda) > 1$ or $d(\lambda) = 1$ and $\lambda_1 > 1$. In this case, we can write the second sum as 
\[\sum\limits_{i=1}^{N - \ell(\lambda^c) }  \dfrac{(N-\ell(\lambda^c) )(N - \ell(\lambda^c) +1  )}{2} - \dfrac{(i-1)i}{2}. \]
Now we see that 
\begin{eqnarray*}
& & \sum\limits_{i=1}^{N - \ell(\lambda^c) -1} \dfrac{(N- \ell(\lambda^c) + 1)(N - \ell(\lambda^c) )}{2}
- 
\sum\limits_{i=1}^{N - \ell(\lambda^c) } \dfrac{(N-\ell(\lambda^c) )(N - \ell(\lambda^c) +1 )}{2} \\
&= & - \dfrac{(N-\ell(\lambda^c) )(N - \ell(\lambda^c) +1 )}{2}. 
\end{eqnarray*}
We have
\[ \sum\limits_{i=1}^{N - \ell(\lambda^c) -1} - \dfrac{i(i+1)}{2} - \sum\limits_{i=1}^{N - \ell(\lambda^c)} 
- \dfrac{(i-1)i}{2} =
\sum\limits_{i=1}^{N - \ell(\lambda^c) -1} - \dfrac{i(i+1)}{2} - \sum\limits_{i=0}^{N - \ell(\lambda^c)-1} 
- \dfrac{i(i+1)}{2} =0.
\]

So, if $\ell(\lambda^{rc}) = \ell(\lambda^c) -1$, we have $-\frac{(N-\ell(\lambda^c) )(N - \ell(\lambda^c) +1 )}{2} $. Otherwise, $\ell(\lambda^{rc}) = \ell(\lambda^c) = 0$, and we are left with 
\[ \sum\limits_{i=1}^{N -1} \dfrac{(N+ 1)N }{2}
- \dfrac{i(i+1)}{2} - \sum\limits_{i=1}^{N -1} \dfrac{(N- 1)N}{2} - \dfrac{(i-1)i}{2} = \dfrac{N(N-1)}{2}.\]
\end{proof}

\begin{lemma}
\label{lem:ptfirstfour}
\begin{eqnarray*}
& & \dfrac{(N - \ell((\mu_1^c)') -2)(N - \ell((\mu_1^c)') -1) }{2} \\
&&{}+{} \sum\limits_{i=1}^{N - \ell((\mu_1^c)') -2} \dfrac{(N- \ell((\mu_1^c)')- 1)(N - \ell((\mu_1^c)')-2)}{2} - \dfrac{(i-1)i}{2} \\
&&{}+{} \sum\limits_{i=1}^{N - \ell((\mu_1^r)') -1} \dfrac{(N- \ell((\mu_1^r)') + 1)(N - \ell((\mu_1^r)'))}{2}
- \dfrac{i(i+1)}{2} \\
&&{}-{} \sum\limits_{i=1}^{N - \ell(\mu_1') -1} \dfrac{(N- \ell(\mu_1') - 1)(N - \ell(\mu_1'))}{2} - \dfrac{(i-1)i}{2} \\
&&{}-{} \sum\limits_{i=1}^{N - \ell((\mu_1^{rc})') -1} \dfrac{(N- \ell((\mu_1^{rc})') - 1)(N - \ell((\mu_1^{rc})'))}{2} - \dfrac{(i-1)i}{2} \\
&=& 
\begin{cases}
0 & \mbox{if } d(\mu_1') > 1 \\
\dfrac{(N - \ell(\mu_1') -1)(N - \ell(\mu_1')) }{2} - \dfrac{(N-1)N}{2} &\mbox{if } d(\mu_1') = 1 \text{ and } (\mu_1')_1 > 1 \\
\dfrac{(N - \ell(\mu_1') -1)(N - \ell(\mu_1')) }{2} + \dfrac{(N-1)N}{2} & \mbox{otherwise }
\end{cases}
\end{eqnarray*}
\end{lemma}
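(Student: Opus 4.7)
The plan is to first invoke Lemma~\ref{lem:ptfirsttwoterms} to absorb the third and fifth sums in the statement, then to observe that the second and fourth sums cancel identically once the length relation $\ell((\mu_1^c)') = \ell(\mu_1') - 1$ is substituted, leaving only the isolated leading term to combine against the output of Lemma~\ref{lem:ptfirsttwoterms}.

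Set $\lambda := \mu_1'$ and $b := \ell(\lambda)$; note $b \geq 1$ since $\mu_1 \neq \emptyset$ (otherwise $\mu_1^r, \mu_1^c, \mu_1^{rc}$ are undefined). By Lemma~\ref{lem:mucprime}, $(\mu_1^c)' = \lambda^r$, so Remark~\ref{lem:lengthmur} gives $\ell((\mu_1^c)') = b - 1$ in every case of the lemma, including the degenerate subcase $\lambda = (1)$ where $b = 1$. With $a := \ell((\mu_1^c)') = b - 1$, the summation range $1 \leq i \leq N - a - 2$ of the second sum coincides with the range $1 \leq i \leq N - b - 1$ of the fourth sum, and the constant parts of their summands, $\frac{(N-a-1)(N-a-2)}{2}$ and $\frac{(N-b-1)(N-b)}{2}$, are equal. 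Thus the second and fourth sums cancel outright, so the net contribution of the first, second, and fourth sums reduces to the isolated first term $\frac{(N-b-1)(N-b)}{2}$.

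It then remains to combine this residual with the value of $S_3 - S_5$ supplied by Lemma~\ref{lem:ptfirsttwoterms}, using the length identities for $\lambda^c$ in Remark~\ref{lem:lengthmuc}. When $d(\mu_1') > 1$, $\ell(\lambda^c) = b + 1$ gives $S_3 - S_5 = -\frac{(N-b-1)(N-b)}{2}$, which cancels the residual and produces $0$. When $d(\mu_1') = 1$ and $(\mu_1')_1 > 1$, $\ell(\lambda^c) = 1$ gives $S_3 - S_5 = -\frac{(N-1)N}{2}$, which combines with $\frac{(N-b-1)(N-b)}{2}$ to give the stated Case B expression. When $\lambda = (1)$ (so $b = 1$), Lemma~\ref{lem:ptfirsttwoterms} outputs $\frac{N(N-1)}{2}$ directly, and the total $\frac{(N-2)(N-1)}{2} + \frac{N(N-1)}{2}$ matches the stated Case C expression.

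The only genuine obstacle is the bookkeeping: recognizing the uniform relation $\ell((\mu_1^c)') = \ell(\mu_1') - 1$ by way of the transpose identity $(\mu_1^c)' = (\mu_1')^r$, and noticing that under this substitution the second and fourth sums are not merely telescopic but literally equal term-for-term. Without this observation one would expect to do substantial algebra, as in Section~\ref{sec:DTalg}; with it, everything collapses to a single leading term and a short three-way case check against Lemma~\ref{lem:ptfirsttwoterms}.
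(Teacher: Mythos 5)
Your proof is correct and follows essentially the same route as the paper's: apply $\ell((\mu_1^c)') = \ell((\mu_1')^r) = \ell(\mu_1')-1$ to see that the second and fourth sums are negatives of each other (the paper phrases this as "rewriting the first two lines and subtracting the fourth" rather than "second and fourth cancel," but it is the same algebra), leaving only the isolated leading term $\frac{(N-\ell(\mu_1')-1)(N-\ell(\mu_1'))}{2}$, and then invoke Lemma~\ref{lem:ptfirsttwoterms} together with Remark~\ref{lem:lengthmuc} for the three-way case split on $d(\mu_1')$ and $(\mu_1')_1$.
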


The terms in the lemma are from $w_{down}$, $w_{up}$, $w_{base}(\mu_1, \mu_2, \mu_3)$ and $w_{base}(\mu_1^{rc}, \mu_2^{rc}, \mu_3)$, respectively. 
 
\begin{proof}
We use the fact that $\ell((\mu_1^c)') = \ell((\mu_1')^r)$ and then we apply Lemma~\ref{lem:lengthmur} to write the first two lines as
\[ \dfrac{(N - \ell(\mu_1') -1)(N - \ell(\mu_1')) }{2} + \sum\limits_{i=1}^{N - \ell((\mu_1') -1} \frac{(N- \ell(\mu_1'))(N - \ell(\mu_1')-1)}{2} - \frac{(i-1)i}{2}.
\]
So, when we subtract the sum from the fourth line of the lemma statement, we are left with 
\[\dfrac{(N - \ell(\mu_1') -1)(N - \ell(\mu_1')) }{2}.\]

Applying Lemma~\ref{lem:ptfirsttwoterms}, if $d(\mu_1') > 1$, then $\ell((\mu_1')^c) = \ell(\mu_1') +1$ (see Remark~\ref{lem:lengthmuc}), and so the contributions from all of the terms cancel. If $d(\mu_1') = 1$ and $(\mu_1')_1 > 1$, then $\ell((\mu_1')^c) =1$. So, we get 
\[\dfrac{(N - \ell(\mu_1') -1)(N - \ell(\mu_1')) }{2} - \dfrac{(N-1)N}{2}. \] Finally, if $d(\mu_1') = 1$ and $(\mu_1')_1 = 1$, we get
\[\dfrac{(N - \ell(\mu_1') -1)(N - \ell(\mu_1')) }{2} + \dfrac{(N-1)N}{2}. \]
\end{proof}

\begin{lemma}
\label{lem:ptsecondfour}
\begin{eqnarray*}
& & \sum\limits_{i=1}^{N-1 - \ell(\mu_2^r) } (N+i-2)(N-i - \ell(\mu_2^r)) + 
\sum\limits_{i=1}^{N-1 - \ell(\mu_2^c) } (N+i)(N-i - \ell(\mu_2^c)) \\
&&{}-{} \sum\limits_{i=1}^{N-1 - \ell(\mu_2) } (N+i-1)(N-i - \ell(\mu_2)) - 
\sum\limits_{i=1}^{N-1 - \ell(\mu_2^{rc}) } (N+i-1)(N-i - \ell(\mu_2^{rc}))
\\
&=& \begin{cases}
\ell(\mu_2)
& \mbox{if } d(\mu_2) > 1 \\
- \ell(\mu_2) N + \ell(\mu_2) & \mbox{if } d(\mu_2) = 1 \text{ and } (\mu_2)_1 > 1 \\
(N-1)(N - \ell(\mu_2)) + \dfrac{N(N-1)}{2} & \mbox{otherwise }
\end{cases}
\end{eqnarray*}
\end{lemma}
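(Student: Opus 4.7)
The plan is to treat this as a direct, mechanical computation, broken into three cases based on $d(\mu_2)$ and $(\mu_2)_1$, together with a small amount of algebraic bookkeeping to make the four sums comparable.

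First, I would introduce the shorthand
\[
F(\ell,\alpha) \;:=\; \sum_{i=1}^{N-1-\ell}(N+i+\alpha)(N-i-\ell),
\]
so that the left-hand side of the lemma becomes
\[
S \;=\; F\bigl(\ell(\mu_2^r),-2\bigr) + F\bigl(\ell(\mu_2^c),0\bigr) - F\bigl(\ell(\mu_2),-1\bigr) - F\bigl(\ell(\mu_2^{rc}),-1\bigr).
\]
Setting $n=N-1-\ell$ and expanding $(N+i+\alpha)(N-i-\ell) = (N+\alpha)(N-\ell) - (\ell+\alpha)i - i^2$, one obtains the closed form
\[
F(\ell,\alpha) \;=\; n(N+\alpha)(N-\ell) \;-\; (\ell+\alpha)\,\tfrac{n(n+1)}{2} \;-\; \tfrac{n(n+1)(2n+1)}{6},
\]
which I would use as a black box in what follows.

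Next, I would invoke the length relations already established in Section~\ref{sec:mumodify}. By Remark~\ref{lem:lengthmur}, $\ell(\mu_2^r)=\ell(\mu_2)-1$ (whenever $\mu_2\neq\emptyset$, which holds in all three cases). By Remark~\ref{lem:lengthmuc}, $\ell(\mu_2^c) = \ell(\mu_2)+1$ if $d(\mu_2)>1$, while $\ell(\mu_2^c)=1$ if $d(\mu_2)=1$ and $(\mu_2)_1>1$, and $\ell(\mu_2^c)=0$ if $d(\mu_2)=1$ and $(\mu_2)_1=1$. By Lemma~\ref{lem:lengthmurc}, $\ell(\mu_2^{rc})=\ell(\mu_2)$ if $d(\mu_2)>1$ and $\ell(\mu_2^{rc})=0$ if $d(\mu_2)=1$.

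With these substitutions in hand, each of the three cases reduces to an identity among polynomials in $N$ and $\ell(\mu_2)$ of total degree at most $3$. In the first case, $d(\mu_2)>1$, the four length parameters are $L-1,L+1,L,L$ (where $L:=\ell(\mu_2)$), and after expanding $F$ in each slot, the cubic and most of the quadratic terms cancel in tidy telescoping pairs, leaving the claimed value $L$. In the second case, $d(\mu_2)=1$ and $(\mu_2)_1>1$, the lengths are $L-1,1,L,0$; here the first and third sums still pair off cleanly, while the second and fourth simplify separately using $F(1,0)$ and $F(0,-1)$. In the third case, $d(\mu_2)=1$ and $(\mu_2)_1=1$, the partition $\mu_2=(1)$ forces $L=1$, and one has $\ell(\mu_2^r)=0$, $\ell(\mu_2^c)=0$, $\ell(\mu_2^{rc})=0$, so three of the four sums have the same upper limit $N-1$ and can be combined directly.

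The main obstacle is nothing conceptual but purely bookkeeping: in the degenerate third case one must be careful that the sum $\sum_{i=1}^{N-1-\ell(\mu_2^r)}$ with $\ell(\mu_2^r)=0$ is correctly interpreted (since $\mu_2^r=\emptyset$), and one has to verify that the asserted extra contribution $(N-1)(N-\ell(\mu_2))+\tfrac{N(N-1)}{2}$ matches what comes out of $F(0,-2)+F(0,0)-F(1,-1)-F(0,-1)$. Once this is checked, all three cases follow from the closed-form expression for $F$ by direct substitution and simplification.
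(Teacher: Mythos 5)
Your plan is correct and would work, but it takes a noticeably more computational route than the paper. The paper never introduces a closed form for the generic sum; instead, it exploits the length relation $\ell(\mu_2^r)=\ell(\mu_2)-1$ to reindex the first sum so that it differs from the third by exactly one boundary term, $(N-1)(N-\ell(\mu_2))$, and similarly (when $\ell(\mu_2^{rc})=\ell(\mu_2^c)-1$) reindexes the fourth sum to cancel against the second up to a single boundary term $-N(N-\ell(\mu_2^c))$. The case split then consists only of substituting the value of $\ell(\mu_2^c)$ from Remark~\ref{lem:lengthmuc} into the two-term expression, with the degenerate case $\ell(\mu_2^{rc})=\ell(\mu_2^c)=0$ handled separately. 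What this buys is that one never has to expand the quadratic-in-$i$ summand: all the cubic-in-$N$ contributions disappear by construction because the paired sums are the same sum shifted by one index. Your approach via $F(\ell,\alpha)$ with the standard power-sum formulas must instead manage those cubic contributions and watch them cancel after substitution. That is a perfectly legitimate proof strategy and your closed form for $F$ is correct, but it is heavier bookkeeping than the telescoping identity, and your phrase ``the cubic and most of the quadratic terms cancel in tidy telescoping pairs'' is precisely the nontrivial algebra you still need to carry out (and which the paper's method sidesteps). Your case breakdown, the length relations you invoke, and the observation that $\mu_2=(1)$ in the third case forcing $\ell(\mu_2^r)=\ell(\mu_2^c)=\ell(\mu_2^{rc})=0$ are all correct; one minor point is that in the second case the second and fourth sums also pair off by the same shift argument since $\ell(\mu_2^{rc})=0=\ell(\mu_2^c)-1$, so it is not really necessary to treat them ``separately'' as you suggest.
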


The terms in the lemma are from $w_{down}$, $w_{up}$, $w_{base}(\mu_1, \mu_2, \mu_3)$ and $w_{base}(\mu_1^{rc}, \mu_2^{rc}, \mu_3)$, respectively. 

\begin{proof}
Using the fact that $\ell(\mu_2^{r}) = \ell(\mu_2) -1 $, we write 
\begin{align*}
\sum\limits_{i=1}^{N-1 - \ell(\mu_2^r) } (N+i-2)(N-i - \ell(\mu_2^r))&=\sum\limits_{i=1}^{N - \ell(\mu_2) } (N+i-2)(N-i - \ell(\mu_2)  + 1) \\
&=\sum\limits_{i=0}^{N - \ell(\mu_2)-1 } (N+i-1)(N-i - \ell(\mu_2) ). 
\end{align*}
So when we subtract the third sum from the lemma statement, we get $$(N-1)(N - \ell(\mu_2)).$$

In the case where $\ell(\mu_2^{rc}) = \ell(\mu_2^{c}) -1$, we can write 
\begin{align*}
\sum\limits_{i=1}^{N-1 - \ell(\mu_2^{rc}) } (N+i-1)(N-i - \ell(\mu_2^{rc}))&=\sum\limits_{i=1}^{N- \ell(\mu_2^{c}) } (N+i-1)(N-i - \ell(\mu_2^{c})+1)\\
&= \sum\limits_{i=0}^{N- \ell(\mu_2^{c})-1 } (N+i)(N-i - \ell(\mu_2^{c})).
\end{align*}
So, in this case when we subtract this from the second sum in the lemma statement we have $$-N(N-\ell(\mu_2^{c})).$$ So, if $\ell(\mu_2^{rc}) = \ell(\mu_2^{c}) -1$, the contribution from all four terms is 
\[ -N - \ell(\mu_2) N + \ell(\mu_2) + \ell(\mu_2^c) N.
\]

There are two ways for $\ell(\mu_2^{rc}) = \ell(\mu_2^{c}) -1$. We could have $d(\mu_2) > 1$, in which case $\ell(\mu_2^{c}) = \ell(\mu_2) + 1$. Or we could have $d(\mu_2) = 1$ and $(\mu_2)_1 > 1$, in which case $\ell(\mu_2^{c}) = 1$. Therefore we have 
\[
\begin{cases}
\ell(\mu_2)
& \mbox{if } d(\mu_2) > 1 \\
- \ell(\mu_2) N + \ell(\mu_2) & \mbox{if } d(\mu_2) = 1 \text{ and } (\mu_2)_1 > 1,
\end{cases}
\]
as desired.

In the case where $\ell(\mu_2^{rc}) = \ell(\mu_2^{c}) = 0$, when we subtract the fourth sum in the lemma statement from the second sum we have
\[ \sum\limits_{i=1}^{N-1 - \ell(\mu_2^c) } (N+i)(N-i )- 
\sum\limits_{i=1}^{N-1 - \ell(\mu_2^{rc}) } (N+i-1)(N-i) = 
\dfrac{N(N-1)}{2}.
\]
So, if $\ell(\mu_2^{rc}) = \ell(\mu_2^{c}) = 0$, the contribution from all four terms is
\[ (N-1)(N - \ell(\mu_2)) + \dfrac{N(N-1)}{2}. \]
\end{proof}


\begin{remark}
\label{rem:2n}
Note that 
\[ \frac{(N-1)^2(N-2)}{2} + \frac{(N+1)N(N-1)}{2} + N^2 -2 \cdot \frac{N^2(N-1)}{2} = 2N -1. \] 
These terms are from $w_{down}$, $w_{up}$, $w_{base}(\mu_1, \mu_2, \mu_3)$ and $w_{base}(\mu_1^{rc}, \mu_2^{rc}, \mu_3)$, respectively. 
\end{remark}

We now proceed to simplifying the terms whose index sets depend on $\mu$. As in Section~\ref{sec:DTalg}, our strategy is to pair summands that contribute to the constant $C$ with summands that contribute to the constant $A$. 


  
  

\begin{lemma}
\label{lem:lemmaB}
\begin{eqnarray*}
& & \sum\limits_{i: (\mu_1^c)'_i > i+1>1 }
(N- (\mu_1^c)'_i +1) (\mu_1^c)'_i + N - i - 1)-\sum\limits_{i: (\mu_1')_i \geq i \geq 1}
(N-(\mu_1')_i)(N + (\mu_1')_i - i) \\
&=& - (N- (\mu_1')_{d(\mu_1)})
(N + (\mu_1')_{d(\mu_1)} - d(\mu_1) )
\end{eqnarray*}
\end{lemma}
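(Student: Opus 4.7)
The plan is to recognize that this identity is essentially a telescoping statement once the two sums are written in a common language. First I would set $\lambda := \mu_1'$ and apply Lemma~\ref{lem:mucprime} to rewrite $(\mu_1^c)' = (\mu_1')^r = \lambda^r$, so the lemma becomes
\[
\sum_{i:\, \lambda^r_i > i+1 > 1} (N - \lambda^r_i + 1)(\lambda^r_i + N - i - 1) \;-\; \sum_{i:\, \lambda_i \geq i \geq 1} (N-\lambda_i)(N+\lambda_i - i).
\]
Next I would use Lemma~\ref{lem:indexset} to identify the first index set as $\{1,2,\ldots,d(\lambda)-1\}$, and the standard fact that the diagonal is preserved under conjugation (so $d(\lambda) = d(\mu_1') = d(\mu_1)$) to identify the second index set as $\{1,2,\ldots,d(\mu_1)\}$.

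Then I would apply Lemma~\ref{lem:mur}, which says $\lambda^r_i = \lambda_i + 1$ for $i < d(\lambda)$. This gives the two substitutions
\[
N - \lambda^r_i + 1 = N - \lambda_i \qquad \text{and} \qquad \lambda^r_i + N - i - 1 = \lambda_i + N - i,
\]
valid on the entire index set $1 \leq i \leq d(\mu_1)-1$ of the first sum. Consequently the first sum is
\[
\sum_{i=1}^{d(\mu_1)-1} (N-\lambda_i)(N+\lambda_i - i),
\]
which has exactly the same summand as the second sum, but runs one index short.

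The final step is simply to take the difference: all terms with $1 \leq i \leq d(\mu_1)-1$ cancel, leaving only the $i = d(\mu_1)$ term of the second sum with a negative sign, namely
\[
-(N-\lambda_{d(\mu_1)})(N+\lambda_{d(\mu_1)} - d(\mu_1)) = -(N-(\mu_1')_{d(\mu_1)})(N+(\mu_1')_{d(\mu_1)} - d(\mu_1)),
\]
as claimed. There is no real obstacle here: the computation reduces to invoking the explicit descriptions of $\lambda^r$ and its index set from Section~\ref{sec:mumodify}, and then observing a one-term telescoping. The only care needed is to confirm that the condition $i+1 > 1$ in the first index set (which excludes $i=0$) is automatic and does not produce a boundary contribution beyond what Lemma~\ref{lem:indexset} already accounts for.
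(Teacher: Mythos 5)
Your proposal is correct and takes essentially the same route as the paper: rewrite $(\mu_1^c)'=(\mu_1')^r$ via Lemma~\ref{lem:mucprime}, identify the index sets as $\{1,\ldots,d(\mu_1')-1\}$ and $\{1,\ldots,d(\mu_1')\}$ via Lemma~\ref{lem:indexset}, substitute $\lambda^r_i=\lambda_i+1$ via Lemma~\ref{lem:mur}, and telescope to leave the lone $i=d(\mu_1)$ term. Your remarks about $d(\mu_1)=d(\mu_1')$ and the harmlessness of the condition $i+1>1$ just make explicit what the paper leaves implicit.
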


The terms in the lemma are from $w_{down}$ and $w_{base}(\mu_1, \mu_2, \mu_3)$, respectively. 

\begin{proof}
 
We use the fact that $(\mu_1^c)'= (\mu_1')^{r}$ and Lemma~\ref{lem:indexset} to write 
\begin{eqnarray*}
& & \sum\limits_{i: (\mu_1^c)'_i > i+1>1 }
(N- (\mu_1^c)'_i +1) (\mu_1^c)'_i + N - i - 1) - \sum\limits_{i: (\mu_1')_i \geq i\geq 1 }
(N-(\mu_1')_i)(N + (\mu_1')_i - i) \\
&=& 
\sum\limits_{1 \leq i < d(\mu_1') }
(N- (\mu_1')^{r}_i +1) (\mu_1')^{r}_i + N - i - 1) - 
\sum\limits_{1 \leq i \leq d(\mu_1') }
(N-(\mu_1')_i)(N + (\mu_1')_i - i) \\
&=& 
\sum\limits_{1 \leq i < d(\mu_1') }
(N- (\mu_1')_i) ((\mu_1')_i + N - i ) - 
\sum\limits_{1 \leq i \leq d(\mu_1') }
(N-(\mu_1')_i)(N + (\mu_1')_i - i) \\
&=& - (N- (\mu_1')_{d(\mu_1)})
(N + (\mu_1')_{d(\mu_1)} - d(\mu_1) ).
\end{eqnarray*}
\end{proof}

\begin{lemma}
\label{lem:lemmaD}
\begin{eqnarray*}
& & \sum\limits_{i: (\mu_2^r)_i > i +1>1}
((\mu_2^r)_i + N -1)(-(\mu_2^r)_i +N + 1) + 
\frac{(N - (\mu_2^r)_i + 1)(N - (\mu_2^r)_i)}{2}\\
&&{}-{} 
\sum\limits_{i: (\mu_2)_i \geq i\geq 1}
((\mu_2)_i + N ) (-(\mu_2)_i + N ) + \frac{(N - (\mu_2)_i -1)(N - (\mu_2)_i)}{2} \\
&=& ((\mu_2)_{d(\mu_2) })^2 - N^2 - \frac{(N - (\mu_2)_{d(\mu_2) } -1)(N - (\mu_2)_{d(\mu_2) })}{2}
\end{eqnarray*}
\end{lemma}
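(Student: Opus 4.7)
The plan is to proceed in direct parallel with the proof of Lemma~\ref{lem:lemmaB}, simply swapping the role of $\mu_1'$ for $\mu_2$. The two key tools are Lemma~\ref{lem:indexset} (to identify the index set of the first sum) and Lemma~\ref{lem:mur} (to convert $(\mu_2^r)_i$ back to $(\mu_2)_i$).

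First I would translate the two index sets into a common form. By Lemma~\ref{lem:indexset} applied to $\mu_2$, the condition $(\mu_2^r)_i > i+1 > 1$ is equivalent to $1 \leq i \leq d(\mu_2) - 1$. By Remark~\ref{lem:irdmu}, the condition $(\mu_2)_i \geq i \geq 1$ is equivalent to $1 \leq i \leq d(\mu_2)$. So the two sums have nearly identical index sets, differing only by the single term $i = d(\mu_2)$.

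Next I would substitute $(\mu_2^r)_i = (\mu_2)_i + 1$ (valid for $i < d(\mu_2)$ by Lemma~\ref{lem:mur}) into the first sum. A short calculation gives the replacements
\[
(\mu_2^r)_i + N - 1 \;=\; (\mu_2)_i + N, \qquad -(\mu_2^r)_i + N + 1 \;=\; N - (\mu_2)_i,
\]
\[
N - (\mu_2^r)_i + 1 \;=\; N - (\mu_2)_i, \qquad N - (\mu_2^r)_i \;=\; N - (\mu_2)_i - 1,
\]
so that term by term, the summand of the first sum (after substitution) matches the summand of the second sum exactly. Subtracting therefore leaves only the single missing $i = d(\mu_2)$ term from the second sum, namely
\[
-\,\bigl((\mu_2)_{d(\mu_2)} + N\bigr)\bigl(N - (\mu_2)_{d(\mu_2)}\bigr) - \frac{(N - (\mu_2)_{d(\mu_2)} - 1)(N - (\mu_2)_{d(\mu_2)})}{2},
\]
and the first piece simplifies as $-(N^2 - (\mu_2)_{d(\mu_2)}^2) = (\mu_2)_{d(\mu_2)}^2 - N^2$, giving exactly the claimed right-hand side.

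There is no real obstacle here, since the lemma reduces to a bookkeeping exercise once the two index sets are aligned and the back-substitution $(\mu_2^r)_i = (\mu_2)_i + 1$ is applied. The only point that requires a moment of care is verifying that the four pairwise cancellations listed above really do render every term with $i < d(\mu_2)$ identical in the two sums; this is what makes the single leftover term at $i = d(\mu_2)$ produce the compact right-hand side.
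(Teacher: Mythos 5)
Your proof is correct and follows the same route the paper indicates (the paper merely cites Lemma~\ref{lem:lemmaB} and states the two key ingredients — Lemma~\ref{lem:indexset} for the index set and $(\mu_2^r)_i = (\mu_2)_i + 1$ for $i < d(\mu_2)$ — without writing out the cancellation). Your verification of the four term-by-term identities and the simplification of the leftover $i = d(\mu_2)$ term are accurate; the one point you might make explicit is that when $d(\mu_2) = 1$ both index sets are empty (as Lemma~\ref{lem:indexset} guarantees), so the identity degenerates correctly to the single $i = d(\mu_2)$ term of the second sum.
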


The terms in the lemma are from $w_{down}$ and $w_{base}(\mu_1, \mu_2, \mu_3)$, respectively. 

\begin{proof}
The details of the proof are omitted as it is similar to the proof of Lemma~\ref{lem:lemmaB}. We use Lemma~\ref{lem:indexset} and the fact that when $i < d(\mu_2)$, $(\mu_2^r)_i =( \mu_2)_i+1$. Then all terms cancel except the $i = d(\mu_2)$ term of the second sum. 
\end{proof}

\begin{lemma}
\label{lem:lemmaA}
\begin{eqnarray*}
& & \sum\limits_{i: (\mu_1^c)'_i \leq i+1 \leq\ell((\mu_1^c)')+1}
(N-i) ((\mu_1^c)'_i + N-i -1)
- \sum\limits_{i: (\mu_1')_i < i \leq\ell(\mu_1')}
(N-i)( (\mu_1')_i + N-i ) \\
&=&
\begin{cases}
0 &\mbox{if } \ell(\mu_1') = d(\mu_1') \\
\sum\limits_{d(\mu_1') + 1 \leq i\leq\ell(\mu_1')} (\mu_1')_i + N - i & \mbox{otherwise}
\end{cases}
\end{eqnarray*}
\end{lemma}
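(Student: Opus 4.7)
The plan is to rewrite both sums in terms of $\lambda:=\mu_1'$ and the single partition $\lambda^{r}$, then match them up by an index shift.

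First, using $(\mu_1^c)' = (\mu_1')^r = \lambda^r$ from Lemma~\ref{lem:mucprime}, I would rewrite the first sum. The condition $(\mu_1^c)'_i \leq i+1$ is, by Lemma~\ref{lem:indexset}, equivalent to $i \geq d(\lambda)$ (the lemma tells us the set of $i$ with $\lambda^r_i > i+1$ is exactly $\{1,\dots,d(\lambda)-1\}$, so its complement among the positive integers $\leq \ell(\lambda^r)$ is $\{d(\lambda),\dots,\ell(\lambda^r)\}$). Combined with $i+1 \leq \ell(\lambda^r)+1$, the first sum has index set $d(\lambda) \leq i \leq \ell(\lambda^r)$, which by Remark~\ref{lem:lengthmur} equals $d(\lambda) \leq i \leq \ell(\lambda)-1$. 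For these $i$, Lemma~\ref{lem:mur} gives $\lambda^r_i = \lambda_{i+1}$. So the first sum is
\[
\sum_{d(\lambda) \leq i \leq \ell(\lambda)-1} (N-i)\bigl(\lambda_{i+1} + N - i - 1\bigr).
\]
Substituting $j = i+1$, this becomes
\[
\sum_{d(\lambda)+1 \leq j \leq \ell(\lambda)} (N-j+1)\bigl(\lambda_{j} + N - j\bigr).
\]

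Second, the condition $\lambda_i < i$ is equivalent to $i > d(\lambda)$ by Remark~\ref{lem:irdmu}, so the second sum has index set $d(\lambda)+1 \leq i \leq \ell(\lambda)$. After aligning the index names, the difference becomes
\[
\sum_{d(\lambda)+1 \leq i \leq \ell(\lambda)} \bigl[(N-i+1)-(N-i)\bigr]\bigl(\lambda_{i} + N - i\bigr)
\;=\; \sum_{d(\lambda)+1 \leq i \leq \ell(\lambda)} \bigl(\lambda_{i} + N - i\bigr).
\]

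If $\ell(\lambda) = d(\lambda)$, this index set is empty, so the difference is $0$; otherwise it is $\sum_{d(\mu_1')+1 \leq i \leq \ell(\mu_1')} \bigl((\mu_1')_i + N - i\bigr)$, matching the stated right-hand side. The main (and essentially only) point of care is the correct translation of the index set of the first sum via Lemma~\ref{lem:indexset} and Remark~\ref{lem:lengthmur}; once that is done, the computation is a one-step telescoping after the shift $j=i+1$.
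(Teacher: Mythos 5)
Your proof is correct and follows essentially the same route as the paper: both translate via $(\mu_1^c)' = (\mu_1')^r$, use Lemma~\ref{lem:indexset} and Remark~\ref{lem:lengthmur} to identify the first index set as $\{d(\mu_1'),\dots,\ell(\mu_1')-1\}$, apply $(\mu_1')^r_i=(\mu_1')_{i+1}$ for $i\geq d(\mu_1')$ from Lemma~\ref{lem:mur}, and then shift the index by one so the two sums telescope.
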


The terms in the lemma are from $w_{down}$ and $w_{base}(\mu_1, \mu_2, \mu_3)$, respectively. 

\begin{proof}
We use the fact that $(\mu_1^c)'= (\mu_1')^{r}$ and Lemma~\ref{lem:indexset}. We get 
\begin{eqnarray*}
& & \sum\limits_{i: (\mu_1^c)'_i \leq i+1\leq\ell((\mu_1^c)')+1 }
(N-i) (\mu_1^c)'_i + N-i -1)
- \sum\limits_{i: (\mu_1')_i < i\leq\ell(\mu_1') }
(N-i)( (\mu_1')_i + N-i )\\
&=& 
\sum\limits_{d(\mu_1') \leq i\leq\ell((\mu_1')^r) }
(N-i) (\mu_1')^{r}_i + N-i -1)
- \sum\limits_{d(\mu_1' )< i \leq\ell(\mu_1')}
(N-i)( (\mu_1')_i + N-i )\\
&=& \sum\limits_{d(\mu_1') \leq i \leq\ell(\mu_1')-1}
(N-i) ((\mu_1')_{i+1} + N-i -1)
- \sum\limits_{d(\mu_1' )< i\leq\ell(\mu_1') }
(N-i)( (\mu_1')_i + N-i )\\
&=& \sum\limits_{d(\mu_1') < i\leq\ell(\mu_1') }
(N-i + 1) ((\mu_1')_{i} + N-i)
- \sum\limits_{d(\mu_1' )< i\leq\ell(\mu_1') }
(N-i)( (\mu_1')_i + N-i )\\
&=& \sum\limits_{d(\mu_1') + 1 \leq i\leq\ell(\mu_1')} (\mu_1')_i + N - i. 
\end{eqnarray*}
Note that we have used the fact that since $i \geq d(\mu_1')$, $(\mu_1')^{r}_i = (\mu_1')_{i+1}$. In the case where $\ell(\mu_1') = d(\mu_1')$, both sums are empty. 
\end{proof}

\begin{lemma}
\label{lem:lemmaC}
\begin{eqnarray*}
& & \sum\limits_{i: (\mu_2^r)_i \leq i+1\leq\ell(\mu_2^r)+1 }
(N - i) ( (\mu_2^r)_i + N - 1) + \frac{(N - i-1)(N - i)}{2} \\
&&{}-{}
\sum\limits_{i: (\mu_2)_i < i\leq\ell(\mu_2) }
(N - i) ((\mu_2)_i + N ) + \frac{(N - i-1)(N - i)}{2}\\
&=&
\begin{cases}
0 & \mbox{if } \ell(\mu_2) = d(\mu_2) \\
\sum\limits_{d(\mu_2) + 1 \leq i\leq\ell(\mu_2) } ((\mu_2)_i + N -1) & \mbox{otherwise}
\end{cases}
\end{eqnarray*}
\end{lemma}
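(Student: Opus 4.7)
The plan is to mirror the proof of Lemma~\ref{lem:lemmaA}, using Lemma~\ref{lem:mur} in place of the identification $(\mu_1^c)' = (\mu_1')^r$ used there. The first step is to identify the actual index sets. By Lemma~\ref{lem:mur}, if $i < d(\mu_2)$ then $(\mu_2^r)_i = (\mu_2)_i + 1 \geq d(\mu_2) + 1 > i+1$, while if $i \geq d(\mu_2)$ then $(\mu_2^r)_i = (\mu_2)_{i+1}$, and the strict decrease of the sequence $(\mu_2)_t - t$ together with $(\mu_2)_{d(\mu_2)+1} \leq d(\mu_2)$ (Remark~\ref{rem:dmuplus1}) gives $(\mu_2^r)_i \leq i < i+1$. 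Hence the condition $(\mu_2^r)_i \leq i+1$ is equivalent to $i \geq d(\mu_2)$. A similar elementary argument shows that $(\mu_2)_i < i$ is equivalent to $i > d(\mu_2)$. Combined with $\ell(\mu_2^r) = \ell(\mu_2) - 1$ from Remark~\ref{lem:lengthmur}, the first sum runs over $d(\mu_2) \leq i \leq \ell(\mu_2) - 1$ and the second over $d(\mu_2) + 1 \leq i \leq \ell(\mu_2)$.

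The second step is to reindex the first sum by setting $j = i+1$, so that both sums run over the common range $d(\mu_2) + 1 \leq j \leq \ell(\mu_2)$. Using $(\mu_2^r)_{j-1} = (\mu_2)_j$ for $j-1 \geq d(\mu_2)$ (again from Lemma~\ref{lem:mur}), the reindexed first sum has summand $(N - j + 1)((\mu_2)_j + N - 1) + \tfrac{(N-j)(N-j+1)}{2}$. Subtracting the corresponding summand $(N - j)((\mu_2)_j + N) + \tfrac{(N-j-1)(N-j)}{2}$ of the second sum, one checks by direct expansion that the linear part contributes $(\mu_2)_j + j - 1$ and the quadratic part contributes $N - j$, so each index $j$ contributes exactly $(\mu_2)_j + N - 1$. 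Summing over $d(\mu_2) + 1 \leq j \leq \ell(\mu_2)$ gives the stated answer in the case $\ell(\mu_2) > d(\mu_2)$.

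Finally, in the case $\ell(\mu_2) = d(\mu_2)$, the range $d(\mu_2) \leq i \leq \ell(\mu_2) - 1 = d(\mu_2) - 1$ of the first sum is empty, and the range $d(\mu_2) + 1 \leq i \leq \ell(\mu_2) = d(\mu_2)$ of the second sum is also empty, so the difference is $0$, again matching the claim. There is no real obstacle here; the proof is essentially a bookkeeping exercise parallel to Lemma~\ref{lem:lemmaA}, with the only subtlety being the careful identification of the index sets via Lemma~\ref{lem:mur}.
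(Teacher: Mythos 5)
Your proof is correct and follows essentially the same route as the paper's: rewrite both index sets in terms of $d(\mu_2)$ and $\ell(\mu_2)$, reindex the first sum by a unit shift using $(\mu_2^r)_{i} = (\mu_2)_{i+1}$ for $i \geq d(\mu_2)$, and compute the term-by-term difference. The only cosmetic differences are that you spell out the index-set identification (which the paper just asserts) and treat the degenerate case $\ell(\mu_2)=d(\mu_2)$ explicitly rather than leaving it as an empty sum.
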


The terms in the lemma are from $w_{down}$ and $w_{base}(\mu_1, \mu_2, \mu_3)$, respectively. 

\begin{proof} Similar to the proof of Lemma~\ref{lem:lemmaA}, we see that
\begin{eqnarray*}
& & \sum\limits_{i: (\mu_2^r)_i \leq i+1\leq\ell(\mu_2^r)+1 }
(N - i) ( (\mu_2^r)_i + N - 1) + \frac{(N - i-1)(N - i)}{2} \\
&&{}-{}
\sum\limits_{i: (\mu_2)_i < i\leq\ell(\mu_2) } 
(N - i) ( (\mu_2)_i +N ) + \frac{(N - i-1)(N - i)}{2} \\
&=& 
\sum\limits_{d(\mu_2) \leq i \leq \ell(\mu_2^r) }
(N - i) ( (\mu_2)_{i+1} + N - 1) + \frac{(N - i-1)(N - i)}{2} \\
&&{}-{}
\sum\limits_{d(\mu_2) < i \leq \ell(\mu_2) } 
(N - i) ( (\mu_2)_i +N ) + \frac{(N - i-1)(N - i)}{2} \\
&=& 
\sum\limits_{d(\mu_2) < i \leq \ell(\mu_2) }
(N - i + 1) ( (\mu_2)_{i} + N - 1) + \frac{(N - i+1)(N - i)}{2} \\
&&{}-{}
\sum\limits_{d(\mu_2) < i \leq \ell(\mu_2) } 
(N - i) ( (\mu_2)_i +N ) + \frac{(N - i-1)(N - i)}{2} \\
&=& 
\sum\limits_{d(\mu_2) < i \leq \ell(\mu_2) }
(N - i + 1) \left( (\mu_2)_{i} + N - 1 + \frac{N-i}{2} \right)\\
&&{}-{}
\sum\limits_{d(\mu_2) < i \leq \ell(\mu_2) } 
(N - i) \left( (\mu_2)_i +N + \frac{N - i-1}{2}\right) = \sum\limits_{d(\mu_2) + 1 \leq i\leq\ell(\mu_2) } ((\mu_2)_i + N -1).
\end{eqnarray*}
\end{proof}



 
 

\begin{lemma}
\label{lem:lemE}
\begin{eqnarray*}
& & 
\begin{cases}
0 &\text{if }(\mu_1^r)'=\emptyset \\
\sum\limits_{i: (\mu_1^r)'_i \geq i-1\geq 0 }(N- (\mu_1^{r})'_i - 1)( (\mu_1^{r})'_i + N-i+1) &\text{otherwise}
\end{cases}
\\
&&{}-{}
\sum\limits_{i: (\mu_1^{rc})'_i \geq i\geq 1 }
(N-(\mu_1^{rc})'_i)(N + (\mu_1^{rc})'_i - i) \\
&=& 
\begin{cases}
0 &\text{if }(\mu_1^r)'=\emptyset \\
(N- (\mu_1^r)'_{d(\mu_1')} - 1)( (\mu_1^r)'_{d(\mu_1')}+ N- d(\mu_1')+1) &\text{otherwise}
\end{cases}
\end{eqnarray*}
\end{lemma}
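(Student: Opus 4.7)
The plan is to reduce both sums to sums over simple index ranges and observe that their summands agree term-by-term except at one boundary index. Put $\lambda := \mu_1'$. By Lemma~\ref{lem:mucprime}, $(\mu_1^r)' = \lambda^c$ and $(\mu_1^{rc})' = \lambda^{rc}$, so after this substitution the two sums are expressed purely in terms of $\lambda^c$ and $\lambda^{rc}$.

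First I would simplify the index sets. By Lemma~\ref{lem:dprime_c}, the set of positive integers $i$ with $\lambda^c_i \geq i-1$ is exactly $\{1,\ldots,d(\lambda)\}$, so in the nonempty case the first sum is
\[
\sum_{i=1}^{d(\lambda)}(N-\lambda^c_i-1)(N+\lambda^c_i-i+1).
\]
By Remark~\ref{rem:dmurc}, $d(\lambda^{rc}) = d(\lambda)-1$, so the second sum is
\[
\sum_{i=1}^{d(\lambda)-1}(N-\lambda^{rc}_i)(N+\lambda^{rc}_i-i).
\]

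Next I would apply Lemma~\ref{lem:mucandmurc} to rewrite $\lambda^{rc}_i = \lambda^c_i+1$ for all $i$ in the range $1\leq i \leq d(\lambda)-1$. Substituting this into the summand of the second sum gives $(N-\lambda^c_i-1)(N+\lambda^c_i+1-i)$, which matches the summand of the first sum exactly. Consequently the first sum minus the second sum collapses to the single boundary term
\[
(N-\lambda^c_{d(\lambda)}-1)\bigl(\lambda^c_{d(\lambda)}+N-d(\lambda)+1\bigr),
\]
which is the claimed expression after translating $\lambda^c_{d(\lambda)}=(\mu_1^r)'_{d(\mu_1')}$.

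For the edge case $(\mu_1^r)' = \emptyset$: by Remark~\ref{lem:lengthmuc}, $\lambda^c = \emptyset$ forces $d(\lambda)=1$ and $\lambda_1=1$, hence $\lambda=(1)$ and $d(\lambda^{rc})=0$, so $(\mu_1^{rc})' = \lambda^{rc} = \emptyset$ as well; the second sum's index range $1\leq i \leq 0$ is empty and the first sum is $0$ by the convention declared in the statement, giving $0$. No significant obstacle is anticipated: the whole argument is really just index-chasing once Lemma~\ref{lem:mucandmurc} is invoked, the only mild subtlety being to confirm that the conventions on empty partitions line up with the convention used on the right-hand side.
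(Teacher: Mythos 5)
Your proof is correct and matches the paper's argument essentially line for line: both reduce the index sets via Lemma~\ref{lem:dprime_c} (recalling $(\mu_1^r)'=(\mu_1')^c$), use Lemma~\ref{lem:mucandmurc} to align the two summands on the overlap $1\leq i\leq d(\mu_1')-1$, and extract the remaining $i=d(\mu_1')$ boundary term. The only cosmetic difference is that you introduce the shorthand $\lambda:=\mu_1'$ and handle the $(\mu_1^r)'=\emptyset$ edge case via Remark~\ref{lem:lengthmuc} (applied to $\lambda^c$) rather than Remark~\ref{lem:lengthmur}, which are equivalent routes to the same conclusion.
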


The terms in the lemma are from $w_{up}$ and $w_{base}(\mu_1^{rc}, \mu_2^{rc}, \mu_3)$, respectively. 

\begin{proof}
If $(\mu_1^r)'=\emptyset$, then $\ell(\mu_1^r)=0$, so by Remark~\ref{lem:lengthmur}, $\ell(\mu_1)=1$, in which case $d(\mu_1)=1$ and we get \[-\sum_{1\leq i<d(\mu_1)}(N-(\mu_1^{rc})'_i)(N+(\mu_1^{rc})'_i-i)=0.\]

Otherwise, using the fact that $(\mu_1^r)' = (\mu_1')^{c}$, we write the first sum as
\[ \sum\limits_{i: (\mu_1')^{c}_{i} \geq i-1\geq 0 } (N- (\mu_1')^{c}_i - 1)( (\mu_1')^{c}_i + N-i+1). \]
Applying Lemma~\ref{lem:dprime_c}, we can write this sum as 



 
 
\[ \sum\limits_{i: 1\leq i \leq d(\mu_1') } (N- (\mu_1')^{c}_i - 1)( (\mu_1')^{c}_i + N-i+1). \]
Noting that $(\mu_1^{rc})' = (\mu_1')^{rc}$ and applying Lemma~\ref{lem:mucandmurc} to the second sum, we get 
\[\sum\limits_{i: (\mu_1^{rc})'_i \geq i\geq 1 }
(N-(\mu_1')^{c}_i -1 )(N + (\mu_1')^{c}_i +1 - i).\]
Since the second sum runs over $i$ such that $1\leq i \leq d(\mu_1^{rc})$, and $d(\mu_1^{rc}) = d(\mu_1) - 1$, this completes the proof. 
\end{proof}

\begin{lemma}
\label{lem:lemmaF}
\begin{eqnarray*}
& & \sum\limits_{i: (\mu_1^r)'_i < i-1\leq\ell((\mu_1^r)')-1 }
(N-i)( (\mu_1^{r})'_i + N-i+1 )
- \sum\limits_{i: (\mu_1^{rc})'_i < i\leq\ell((\mu_1')^{rc}) }
(N-i) ( (\mu_1')^{rc}_i + N-i ) \\
&=& 
\begin{cases}
0 &\mbox{ if } \ell ((\mu_1^r)') \leq d(\mu_1) \\
\sum\limits_{i: d(\mu_1)<i\leq\ell((\mu_1')^c)} - \left( (\mu_1')^{c}_i + N - i + 1 \right) & \mbox{ otherwise}
\end{cases}
\end{eqnarray*}
\end{lemma}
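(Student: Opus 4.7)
My plan is to mirror the template established in Lemma~\ref{lem:lemmaA}: rewrite everything in terms of the single partition $\lambda := \mu_1'$ and then exploit an index shift that makes most of the two sums telescope against each other.

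First I would apply Lemma~\ref{lem:mucprime} to replace $(\mu_1^r)' = \lambda^c$ and $(\mu_1^{rc})' = \lambda^{rc}$, and recall $d(\mu_1) = d(\lambda)$. Then I would rewrite the two index sets. For the first sum, the inequality $\lambda^c_i < i-1$ combined with Lemma~\ref{lem:dprime_c} (which says $d_s(\lambda^c) = d(\lambda)$) converts the range to $d(\mu_1) < i \le \ell(\lambda^c)$. For the second sum, $\lambda^{rc}_i < i$ is equivalent to $i > d(\lambda^{rc}) = d(\mu_1) - 1$ by Remark~\ref{rem:dmurc}, so the range becomes $d(\mu_1) \le i \le \ell(\lambda^{rc})$.

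Next I would invoke Lemma~\ref{lem:mucandmurc}, which gives $\lambda^{rc}_i = \lambda^c_{i+1}$ for all $i > d(\lambda^{rc})$, i.e.\ for all $i \ge d(\mu_1)$. Reindexing the second sum by $j = i+1$ turns it into
\[
\sum_{d(\mu_1) < j \le \ell(\lambda^{rc})+1} (N - j + 1)\bigl(\lambda^c_j + N - j + 1\bigr),
\]
and in the generic case ($d(\mu_1) > 1$, or $d(\mu_1)=1$ with $\lambda_1 > 1$), Corollary~\ref{cor:lengthmurclengthmuc} gives $\ell(\lambda^{rc})+1 = \ell(\lambda^c)$, so the two sums are over identical ranges. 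Subtracting term by term produces
\[
\sum_{d(\mu_1) < i \le \ell(\lambda^c)} \bigl[(N-i) - (N-i+1)\bigr]\bigl(\lambda^c_i + N - i + 1\bigr) = -\!\!\!\sum_{d(\mu_1) < i \le \ell(\lambda^c)} \bigl(\lambda^c_i + N - i + 1\bigr),
\]
which is the claimed formula.

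For the trivial branch $\ell((\mu_1^r)') = \ell(\lambda^c) \le d(\mu_1)$, the first sum is empty by definition of its range. For the second sum, a quick case analysis using Remark~\ref{lem:lengthmuc}, Lemma~\ref{lem:lengthmurc}, and Corollary~\ref{cor:lengthmurclengthmuc} confirms that $\ell(\lambda^c) \le d(\mu_1)$ forces $d(\mu_1) = 1$ with $\ell(\lambda^{rc}) = 0$, so the shifted second sum is also empty. The only delicate point will be the bookkeeping for these degenerate cases when $d(\mu_1) = 1$, particularly distinguishing $\lambda_1 = 1$ from $\lambda_1 > 1$; this is the same edge-case juggling handled in the preceding lemmas, and I would dispatch it by citing Corollary~\ref{cor:lengthmurclengthmuc} verbatim.
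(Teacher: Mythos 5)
Your proof is correct and takes essentially the same route as the paper: rewrite everything in terms of $\lambda = \mu_1'$ via Lemma~\ref{lem:mucprime}, convert both index sets using Lemma~\ref{lem:dprime_c} and Remark~\ref{rem:dmurc}, apply Lemma~\ref{lem:mucandmurc} to get $\lambda^{rc}_i = \lambda^c_{i+1}$ on the relevant range, then shift the index so the two sums run over the identical range and subtract term by term. You are in fact slightly more thorough than the published proof, which never explicitly dispatches the degenerate branch $\ell((\mu_1^r)')\le d(\mu_1)$; your observation that this forces $d(\mu_1)=1$, hence $\ell(\lambda^{rc})=0$ by Lemma~\ref{lem:lengthmurc}, correctly shows both sums are vacuous there. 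The one cosmetic wrinkle is that you list ``$d(\mu_1)=1$ with $\lambda_1>1$'' under the generic branch when invoking Corollary~\ref{cor:lengthmurclengthmuc}, but that sub-case actually satisfies $\ell(\lambda^c)=1=d(\mu_1)$ and so belongs to the trivial branch of the lemma; since both of your treatments yield $0$ there (the shifted sums are empty either way), this does not affect correctness, but the exposition would be cleaner if you noted that the non-degenerate case is exactly $d(\mu_1)>1$.
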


The terms in the lemma are from $w_{up}$ and $w_{base}(\mu_1^{rc}, \mu_2^{rc}, \mu_3)$, respectively. 

\begin{proof}
We apply many of the same arguments as in Lemma~\ref{lem:lemE}. Namely, we use the facts that $(\mu_1^r)' = (\mu_1')^{c}$ and Lemma~\ref{lem:dprime_c} to write the first sum as 
\[ \sum\limits_{i: d(\mu_1)<i\leq\ell((\mu_1')^c) }
(N-i)( (\mu_1')^{c}_i + N-i+1 ).\]
Now, applying Lemma~\ref{lem:mucandmurc}, we have 
\begin{eqnarray*}
& & \sum\limits_{i: d( \mu_1^{rc})<i\leq\ell((\mu_1')^{rc}) }
(N-i) ( (\mu_1')^{rc}_i + N-i ) = \sum\limits_{i: d(\mu_1^{rc})<i\leq\ell((\mu_1')^c)-1 }
(N-i) ( (\mu_1')^{c}_{i+1}+ N-i ) \\
&=& \sum\limits_{i: d(\mu_1^{rc} ) + 1<i\leq\ell((\mu_1')^c) }
(N-i + 1) ( (\mu_1')^{c}_{i}+ N-i + 1 ).
\end{eqnarray*}
So, subtracting this from the first sum, we get 
\[
\sum\limits_{i: d(\mu_1)<i\leq\ell((\mu_1')^c)} - \left( (\mu_1')^{c}_i + N - i + 1 \right).
\]
\end{proof}

\begin{lemma}
\label{lem:lemmaG}
\begin{eqnarray*}
&&\begin{cases}
0 &\text{if }\mu_2^c=\emptyset \\
\sum\limits_{i: (\mu_2^c)_i \geq i -1\geq 0}((\mu_2^c)_i + N)(-(\mu_2^c)_i + N -1) +
\frac{(N - (\mu_2^c)_i -1)(N - (\mu_2^c)_i)}{2} &\text{otherwise}
\end{cases}\\
&&{}-{} \sum\limits_{i: (\mu_2^{rc})_i \geq i \geq 1}
((\mu_2^{rc})_i + N ) (-(\mu_2^{rc})_i + N ) + \frac{(N - (\mu_2^{rc})_i -1)(N - (\mu_2^{rc})_i)}{2} \\
&=& 
\begin{cases}
0 & \text{if } \mu_2^c=\emptyset \\
(N -( \mu_{2}^{c})_{d(\mu_2)} - 1) 
\left( \dfrac{ ( \mu_{2}^{c})_{d(\mu_2) } }{2} + \dfrac{3N}{2} \right) & \text{otherwise}
\end{cases}
\end{eqnarray*}
\end{lemma}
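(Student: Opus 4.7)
The plan is to proceed in the same fashion as Lemmas~\ref{lem:lemE} and~\ref{lem:lemmaF}: first rewrite the index sets of both sums in terms of $d(\mu_2)$, then use Lemma~\ref{lem:mucandmurc} to compare the summands one-to-one, and finally observe that all but one term cancel.

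First I would dispose of the case $\mu_2^c = \emptyset$. By Remark~\ref{lem:lengthmuc}, $\mu_2^c = \emptyset$ forces $d(\mu_2) = 1$ and $(\mu_2)_1 = 1$, i.e., $\mu_2 = (1)$. Then $\mu_2^{rc} = \emptyset$ as well, so the second sum is also empty and both sides equal $0$.

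For the generic case, the key rewriting is: by Lemma~\ref{lem:dprime_c}, the condition $(\mu_2^c)_i \geq i-1$ is equivalent to $1 \leq i \leq d(\mu_2)$, so the first sum runs over $1 \leq i \leq d(\mu_2)$. On the other hand, by Remark~\ref{rem:dmurc}, $d(\mu_2^{rc}) = d(\mu_2) - 1$, so the second sum runs over $1 \leq i \leq d(\mu_2) - 1$. For each such $i$, Lemma~\ref{lem:mucandmurc} gives $(\mu_2^{rc})_i = (\mu_2^c)_i + 1$, so the $i$th summand of the second sum becomes
\[
((\mu_2^c)_i + N + 1)(N - 1 - (\mu_2^c)_i) + \tfrac{(N - (\mu_2^c)_i - 2)(N - (\mu_2^c)_i - 1)}{2}.
\]
Subtracting this from the $i$th summand of the first sum yields $-(N - 1 - (\mu_2^c)_i) + (N - (\mu_2^c)_i - 1) = 0$, so the $i = 1, \dots, d(\mu_2)-1$ terms cancel in pairs.

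This leaves only the $i = d(\mu_2)$ term of the first sum, which, writing $x := (\mu_2^c)_{d(\mu_2)}$, reads
\[
(x + N)(N - 1 - x) + \tfrac{(N - x - 1)(N - x)}{2}
= (N - x - 1)\left(\tfrac{x}{2} + \tfrac{3N}{2}\right),
\]
matching the claimed right-hand side. The only mildly delicate point is to verify that the pairwise cancellation uses precisely the range $1 \leq i \leq d(\mu_2) - 1$ on both sides (rather than getting an off-by-one shift of indices, as occurred in Lemmas~\ref{lem:lemmaA} and~\ref{lem:lemmaF}); here that is automatic because Lemma~\ref{lem:mucandmurc} identifies $(\mu_2^{rc})_i$ with $(\mu_2^c)_i + 1$ (not $(\mu_2^c)_{i+1}$) on this range, so no re-indexing is needed.
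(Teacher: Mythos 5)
Your proposal is correct and follows essentially the same route as the paper's proof: translate both index sets to ranges in terms of $d(\mu_2)$ (via Lemma~\ref{lem:dprime_c} and Remark~\ref{rem:dmurc}), use Lemma~\ref{lem:mucandmurc} to write $(\mu_2^{rc})_i = (\mu_2^c)_i + 1$ for $i \leq d(\mu_2^{rc})$, show the paired summands cancel, and simplify the leftover $i = d(\mu_2)$ term; the empty-partition case is also handled identically via Remark~\ref{lem:lengthmuc}. Your observation that no index shift is needed here (unlike in Lemmas~\ref{lem:lemmaA} and~\ref{lem:lemmaF}, where the relevant range falls in the $i > d(\mu^{rc})$ branch of Lemma~\ref{lem:mucandmurc}) is exactly right and worth keeping.
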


The terms in the lemma are from $w_{up}$ and $w_{base}(\mu_1^{rc}, \mu_2^{rc}, \mu_3)$, respectively. 


\begin{proof}
If $\mu_2^c=\emptyset$, then $\ell(\mu_2^c)=0$, so by Remark~\ref{lem:lengthmuc}, $d(\mu_2)=1$ and $(\mu_2)_1=1$, in which case $\mu_2^{rc}=\emptyset$ and we get \[-\sum\limits_{i: (\mu_2^{rc})_i \geq i \geq 1}
((\mu_2^{rc})_i + N)(-(\mu_2^{rc})_i + N) + \frac{(N - (\mu_2^{rc})_i - 1)(N - (\mu_2^{rc})_i)}{2}=0.\]

Otherwise, using Lemma~\ref{lem:mucandmurc}, we see that 
\begin{eqnarray*}
& & \sum\limits_{i: 1\leq i \leq d(\mu_2^{rc}) }
(-(\mu_2^{rc})_i + N ) ((\mu_2^{rc})_i + N ) + \frac{(N - (\mu_2^{rc})_i -1)(N - (\mu_2^{rc})_i)}{2} 
\\
& = & \sum\limits_{i: 1\leq i \leq d(\mu_2^{rc}) }
(-(\mu_2^{c})_i -1 + N ) ((\mu_2^{c})_i + 1 + N ) + \frac{(N - (\mu_2^{c})_i -2)(N - (\mu_2^{c})_i -1)}{2}. 
\end{eqnarray*}
We see that 
\begin{eqnarray*}
& & \sum\limits_{i: 1\leq i \leq d(\mu_2^{rc}) } 
((\mu_2^c)_i + N)(-(\mu_2^c)_i + N -1) + \frac{(N - (\mu_2^c)_i -1)(N - (\mu_2^c)_i)}{2} \\
&&{}-{} \sum\limits_{i: 1\leq i \leq d(\mu_2^{rc}) }
(-(\mu_2^{c})_i -1 + N ) ((\mu_2^{c})_i + 1 + N ) + \frac{(N - (\mu_2^{c})_i -2)(N - (\mu_2^{c})_i -1)}{2} \\
&=& \sum\limits_{i: 1\leq i \leq d(\mu_2^{rc}) } 
(-(\mu_2^c)_i + N -1) (( \mu_2^c)_i + N - (\mu_2^{c})_i - 1 - N ) \\
& &\phantom{\sum\limits_{i: 1\leq i \leq d(\mu_2^{rc}) }} + \frac{(N - (\mu_2^{c})_i -1)}{2} ( N - (\mu_2^{c})_i - (N - (\mu_2^{c})_i -2) ) 
= 0.
\end{eqnarray*}
So, all that remains is the $i = d(\mu_2)$ term of the first sum:
\begin{eqnarray*}
& & ((\mu_2^c)_{d(\mu_2)} + N)(-(\mu_2^c)_{d(\mu_2)} + N -1) +
\frac{(N - (\mu_2^c)_{d(\mu_2)} -1)(N - (\mu_2^c)_{d(\mu_2)})}{2} \\
& = & (N -( \mu_{2}^{c})_{d(\mu_2)} - 1) 
\left( \dfrac{ ( \mu_{2}^{c})_{d(\mu_2) } }{2} + \dfrac{3N}{2} \right).
\end{eqnarray*}
\end{proof}

\begin{lemma}
\label{lem:lemmaH}
\begin{eqnarray*}
& & \sum\limits_{i: (\mu_2^c)_i < i-1\leq\ell(\mu_2^c)-1 }
(N - i) ((\mu_2^c)_i + N ) + \frac{(N - i+1)(N - i)}{2} \\
&&{}-{} \sum\limits_{i: (\mu_2^{rc})_i < i\leq\ell(\mu_2^{rc}) } 
(N - i) ((\mu_2^{rc})_i + N ) + \frac{(N - i-1)(N - i)}{2} \\
& = & 
\begin{cases}
0 &\mbox{if } \ell(\mu_2^{c} ) < d(\mu_2) \\
\sum\limits_{i: d(\mu_2)<i\leq\ell(\mu_2^c) } - ((\mu_2^c)_i + N) & \mbox{otherwise }
\end{cases}
\end{eqnarray*}
\end{lemma}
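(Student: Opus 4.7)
The plan is to mimic the strategy used in Lemma~\ref{lem:lemmaC}, since the structure is essentially the same, but with $\mu_2^c$ and $\mu_2^{rc}$ in place of $\mu_2^r$ and $\mu_2$, and with Lemma~\ref{lem:mucandmurc} playing the role of Lemma~\ref{lem:mur} in relating the two partitions.

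First I would rewrite the two index sets in a more useful form. By Lemma~\ref{lem:dprime_c}, $(\mu_2^c)_i \geq i-1$ iff $i \leq d(\mu_2)$, so the first index set $\{i : (\mu_2^c)_i < i-1 \leq \ell(\mu_2^c)-1\}$ equals $\{i : d(\mu_2) < i \leq \ell(\mu_2^c)\}$. Similarly, by Remark~\ref{rem:dmurc}, $d(\mu_2^{rc}) = d(\mu_2) - 1$, so $(\mu_2^{rc})_i < i$ iff $i \geq d(\mu_2)$, and the second index set equals $\{i : d(\mu_2) \leq i \leq \ell(\mu_2^{rc})\}$. If $\ell(\mu_2^c) < d(\mu_2)$, a quick check using Remark~\ref{lem:lengthmuc} shows this forces $\mu_2 = (1)$, in which case $\mu_2^{rc} = \emptyset$ as well, so both sums are empty and the claim holds trivially.

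Next, in the nontrivial case $\ell(\mu_2^c) \geq d(\mu_2)$, I would apply Lemma~\ref{lem:mucandmurc}: for $i > d(\mu_2^{rc}) = d(\mu_2)-1$, that is for $i \geq d(\mu_2)$, we have $(\mu_2^{rc})_i = (\mu_2^c)_{i+1}$. Substituting and reindexing by $j = i+1$, and using Corollary~\ref{cor:lengthmurclengthmuc} to identify $\ell(\mu_2^{rc})+1 = \ell(\mu_2^c)$, the second sum becomes
\begin{equation*}
\sum_{d(\mu_2)+1 \leq j \leq \ell(\mu_2^c)} (N-j+1)\bigl((\mu_2^c)_j + N\bigr) + \frac{(N-j)(N-j+1)}{2}.
\end{equation*}
The first sum, reindexed over the same range $d(\mu_2)+1 \leq i \leq \ell(\mu_2^c)$, is
\begin{equation*}
\sum_{d(\mu_2)+1 \leq i \leq \ell(\mu_2^c)} (N-i)\bigl((\mu_2^c)_i + N\bigr) + \frac{(N-i+1)(N-i)}{2}.
\end{equation*}
The quadratic pieces cancel termwise, and the linear pieces collapse to $\sum_{d(\mu_2) < i \leq \ell(\mu_2^c)} -((\mu_2^c)_i + N)$, which is the claimed result.

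I do not expect any serious obstacle, since the argument is entirely parallel to Lemma~\ref{lem:lemmaC}. The only subtlety is the edge case $\mu_2 = (1)$, where the relation $\ell(\mu_2^{rc}) = \ell(\mu_2^c)-1$ from Corollary~\ref{cor:lengthmurclengthmuc} degenerates, but there both sums are empty and both summation index sets in the claim are also empty, so the equality is automatic.
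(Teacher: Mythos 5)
Your proof is correct and follows essentially the same route as the paper: both identify the index sets via Lemma~\ref{lem:dprime_c} and Remark~\ref{rem:dmurc}, apply Lemma~\ref{lem:mucandmurc} to rewrite $(\mu_2^{rc})_i$ as $(\mu_2^c)_{i+1}$, reindex, observe that the quadratic terms cancel, and collapse the linear terms. One small inaccuracy worth fixing: the condition $\ell(\mu_2^c)<d(\mu_2)$ does not force $\mu_2=(1)$; by Remark~\ref{lem:lengthmuc} it forces $d(\mu_2)=1$ and $(\mu_2)_1=1$, which only says $\mu_2=(1^k)$ is a single column for some $k\geq 1$. In every such case $\mu_2^c=\mu_2^{rc}=\emptyset$, so both sums are indeed empty and your conclusion stands — the slip is in the characterization, not in the argument.
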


The terms in the lemma are from $w_{up}$ and $w_{base}(\mu_1^{rc}, \mu_2^{rc}, \mu_3)$, respectively. 

\begin{proof}
Applying Lemma~\ref{lem:mucandmurc}, we have 
\begin{eqnarray*}
& & \sum\limits_{i: d(\mu_2^{rc})<i\leq\ell(\mu_2^{rc}) }
(N - i) ((\mu_2^{rc})_i + N ) + \frac{(N - i-1)(N - i)}{2} \\
& = & \sum\limits_{i: d(\mu_2^{rc})<i\leq\ell(\mu_2^c)-1 }
(N - i) ((\mu_2^{c})_{i+1} + N ) + \frac{(N - i-1)(N - i)}{2} \\
& = & \sum\limits_{i: d(\mu_2^{rc}) + 1<i\leq\ell(\mu_2^c) }
(N - i + 1) ((\mu_2^{c})_{i} + N ) + \frac{(N - i)(N - i+1)}{2}. \\
\end{eqnarray*}
So, subtracting this from the first sum, we're left with 
\[ \sum\limits_{i: d(\mu_2)<i\leq\ell(\mu_2^c) } - ((\mu_2^c)_i + N).\]
If $\ell(\mu_2^{c} ) < d(\mu_2)$, both sums are empty. 
\end{proof}

Now that we have paired all of the sums, we simplify the results from Lemmas~\ref{lem:lemmaB} through \ref{lem:lemmaH}.
  
\begin{lemma} 
\label{lem:BandE}
The terms from Lemmas~\ref{lem:lemmaB} and \ref{lem:lemE} cancel, unless $(\mu_1^r)'=\emptyset$, in which case we are left with $-N(N-1)$. 
\end{lemma}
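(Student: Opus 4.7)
The plan is to use the identity $(\mu_1^r)' = (\mu_1')^c$ from Lemma~\ref{lem:mucprime} to rewrite the expression from Lemma~\ref{lem:lemE} in terms of $\mu_1'$, and then observe that it cancels exactly with the expression from Lemma~\ref{lem:lemmaB}. The slight awkwardness is that Lemma~\ref{lem:lemE} is stated in cases; we must check both cases separately.

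First assume $(\mu_1^r)' \neq \emptyset$. Lemma~\ref{lem:lemE} then contributes
\[
(N-(\mu_1^r)'_{d(\mu_1')}-1)\bigl((\mu_1^r)'_{d(\mu_1')}+N-d(\mu_1')+1\bigr).
\]
Using $(\mu_1^r)' = (\mu_1')^c$, I would evaluate $(\mu_1')^c_{d(\mu_1')}$ via Lemma~\ref{lem:muc}. Since $d(\mu_1') = d(\mu_1)$ and $(\mu_1')_{d(\mu_1')} \geq d(\mu_1')$ (the definition of the diagonal), we have $d(\mu_1') \leq i_d$, so Lemma~\ref{lem:muc} gives $(\mu_1')^c_{d(\mu_1')} = (\mu_1')_{d(\mu_1')} - 1 = (\mu_1')_{d(\mu_1)} - 1$. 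Substituting, Lemma~\ref{lem:lemE} becomes
\[
\bigl(N - (\mu_1')_{d(\mu_1)}\bigr)\bigl(N + (\mu_1')_{d(\mu_1)} - d(\mu_1)\bigr),
\]
which is exactly the negative of the contribution from Lemma~\ref{lem:lemmaB}. Hence they cancel.

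Next suppose $(\mu_1^r)' = \emptyset$, which is equivalent to $\mu_1^r = \emptyset$ and, by Remark~\ref{lem:lengthmur}, to $\ell(\mu_1) = 1$. Write $\mu_1 = (k)$ for some $k \geq 1$; then $\mu_1' = (1^k)$, so $d(\mu_1) = d(\mu_1') = 1$ and $(\mu_1')_{d(\mu_1)} = 1$. Lemma~\ref{lem:lemmaB} contributes $-(N-1)\cdot N = -N(N-1)$, while Lemma~\ref{lem:lemE} contributes $0$, giving the claimed total $-N(N-1)$.

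I do not expect any real obstacle here; the proof is an easy bookkeeping verification once one recognizes that Lemma~\ref{lem:mucprime} is the right tool to translate between $(\mu_1^r)'$ and $(\mu_1')^c$, and that the two residual ``diagonal terms'' produced by Lemmas~\ref{lem:lemmaB} and~\ref{lem:lemE} are designed to match.
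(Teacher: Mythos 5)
Your proof is correct and follows essentially the same approach as the paper: both proofs use Lemma~\ref{lem:mucprime} together with the evaluation $(\mu_1')^c_{d(\mu_1')} = (\mu_1')_{d(\mu_1')} - 1$ from Lemma~\ref{lem:muc} to match the term from Lemma~\ref{lem:lemE} against the negative of the term from Lemma~\ref{lem:lemmaB}, and both handle the degenerate case $(\mu_1^r)' = \emptyset$ by noting it forces $\ell(\mu_1) = 1$, hence $d(\mu_1) = (\mu_1')_1 = 1$.
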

  
\begin{proof}
Lemma~\ref{lem:muc} states that $(\mu_1')^{c}_{d(\mu_1')} = (\mu_1')_{d(\mu_1')} -1$. Applying this to Lemma~\ref{lem:lemE} completes the proof in the case that $(\mu_1^r)'\neq\emptyset$. If $(\mu_1^r)'=\emptyset$, then $\mu_1^r=\emptyset$, so $\ell(\mu_1^r)=0$ and by Remark~\ref{lem:lengthmur}, $\ell(\mu_1)=1$, implying that $d(\mu_1)=1$ and $(\mu_1')_1=1$. Then the term from Lemma~\ref{lem:lemmaB} is \[-(N-(\mu_1')_{d(\mu_1)})(N+(\mu_1')_{d(\mu_1)}-d(\mu_1))=-(N-1)(N+1-1)=-N(N-1).\]
\end{proof}

\begin{lemma} 
\label{lem:GandD}
The terms from Lemmas~\ref{lem:lemmaG} and~\ref{lem:lemmaD} sum to 
\[
\begin{cases}
0 & \mbox{if } \mu_2^c \neq \emptyset \\
1 - N^2 - \dfrac{ (N-2)(N-1) }{2} & \mbox{if } \mu_2^c = \emptyset.
\end{cases}
\]
\end{lemma}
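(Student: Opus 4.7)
The plan is a straightforward case split on whether $\mu_2^c=\emptyset$, followed by direct algebraic simplification. The main tool is Lemma~\ref{lem:muc}, which tells us that whenever $d(\mu_2)\geq 1$ and $d(\mu_2)\leq i_d$, we have $(\mu_2^c)_{d(\mu_2)}=(\mu_2)_{d(\mu_2)}-1$. There is no serious obstacle here, just bookkeeping; the key identity is that $(N-a)(a+N)=N^2-a^2$, which will cause the two contributions to cancel exactly in the generic case.

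\textbf{Case 1: $\mu_2^c\neq\emptyset$.} Set $a:=(\mu_2)_{d(\mu_2)}$ and $c:=(\mu_2^c)_{d(\mu_2)}$. By Lemma~\ref{lem:muc} (using that $d(\mu_2)\leq i_d$ since $(\mu_2)_{d(\mu_2)}\geq d(\mu_2)$), we have $c=a-1$. Lemma~\ref{lem:lemmaG} then contributes
\[
(N-c-1)\!\left(\tfrac{c}{2}+\tfrac{3N}{2}\right) \;=\; \tfrac{N-a}{2}\,(a+3N-1),
\]
while Lemma~\ref{lem:lemmaD} contributes $a^2-N^2-\tfrac{(N-a-1)(N-a)}{2}$. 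Adding these,
\[
\tfrac{N-a}{2}\bigl[(a+3N-1)-(N-a-1)\bigr]+a^2-N^2 \;=\; (N-a)(a+N)+a^2-N^2 \;=\; 0.
\]

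\textbf{Case 2: $\mu_2^c=\emptyset$.} By Remark~\ref{lem:lengthmuc}, this forces $d(\mu_2)=1$ and $(\mu_2)_1=1$, i.e.\ $\mu_2=(1)$. Lemma~\ref{lem:lemmaG} contributes $0$ by definition, and Lemma~\ref{lem:lemmaD} with $a=(\mu_2)_{d(\mu_2)}=1$ contributes
\[
1-N^2-\tfrac{(N-2)(N-1)}{2},
\]
which is exactly the claimed value. This completes the proof.
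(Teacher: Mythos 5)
Your proof is correct and takes essentially the same route as the paper, which also reduces the $\mu_2^c=\emptyset$ case to the observation that $(\mu_2)_1=1$ and then lets the $i=d(\mu_2)$ term of Lemma~\ref{lem:lemmaD} deliver the value; the $\mu_2^c\neq\emptyset$ cancellation is exactly the expected bookkeeping. One small slip: $\mu_2^c=\emptyset$ forces $d(\mu_2)=1$ and $(\mu_2)_1=1$, but that does not mean $\mu_2=(1)$ — any $\mu_2=(1^k)$ works — though this does not affect the computation, since only $(\mu_2)_{d(\mu_2)}=1$ is used.
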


\begin{proof}
To get the expression when $\mu_2^c = \emptyset$, we use the fact that if $\mu_2^c = \emptyset$, then it must be the case that $(\mu_2)_{1} = 1$. 
\end{proof}

\begin{lemma} 
\label{lem:AandF}
When we add the terms from Lemmas~\ref{lem:lemmaA} and~\ref{lem:lemmaF}, we get 
\[
\begin{cases}
\ell(\mu_1') + N \ell(\mu_1') - N - \dfrac{\ell(\mu_1') (\ell(\mu_1') +1)}{2} &\mbox {if } d(\mu_1') = 1 \\
- (d(\mu_1') + N -( (\mu_1)_{d(\mu_{1} )} +1) ) &\mbox {otherwise}.
\end{cases}
\]
\end{lemma}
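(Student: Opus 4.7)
The plan is to substitute the evaluations from Lemmas~\ref{lem:lemmaA} and~\ref{lem:lemmaF} and branch on the value of $d(\mu_1')$ (equivalently $d(\mu_1)$, since transposition preserves diagonal length). The essential tool is Lemma~\ref{lem:mucprime}, which identifies $(\mu_1^r)' = (\mu_1')^c$ and $(\mu_1^{rc})' = (\mu_1')^{rc}$, allowing everything to be rewritten in terms of $\mu_1'$ via the explicit formula for $(\mu_1')^c$ from Lemma~\ref{lem:muc}. The case split is governed by Remark~\ref{rem:dmurc} and Remark~\ref{lem:lengthmuc}, which tell me when $\mu_1^{rc}$ degenerates and which branch of each of the two input lemmas is active.

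In the main case $d(\mu_1') > 1$, one has $\ell((\mu_1^r)') = \ell((\mu_1')^c) = \ell(\mu_1') + 1 > d(\mu_1)$, so the ``otherwise'' branch of Lemma~\ref{lem:lemmaF} contributes $-\sum_{i=d+1}^{\ell+1}((\mu_1')^c_i + N - i + 1)$, while Lemma~\ref{lem:lemmaA} contributes $\sum_{i=d+1}^{\ell}((\mu_1')_i + N - i)$, where $d := d(\mu_1')$ and $\ell := \ell(\mu_1')$, with the second sum understood as empty when $\ell = d$. Split the first sum according to the three-piece formula in Lemma~\ref{lem:muc} at $i_d := (\mu_1')'_{d(\mu_1')} = (\mu_1)_{d(\mu_1)}$ (using Remark~\ref{rem:idlambdat}), and reindex the range $i_d + 2 \leq i \leq \ell + 1$ by $j = i - 1$. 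The $(\mu_1')_i$ contributions telescope against the first sum of Lemma~\ref{lem:lemmaA}, and the remaining arithmetic in $N$, $d$, and $i_d$ collapses to $i_d - d - N + 1 = -(d(\mu_1') + N - (\mu_1)_{d(\mu_1)} - 1)$, matching the claim.

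In the boundary case $d(\mu_1') = 1$, Remark~\ref{rem:dmurc} gives $\mu_1^{rc} = \emptyset$ and Remark~\ref{lem:lengthmuc} gives $\ell((\mu_1^r)') = \ell((\mu_1')^c) \leq 1 = d(\mu_1)$, so the ``if'' branch of Lemma~\ref{lem:lemmaF} applies and contributes $0$. Only Lemma~\ref{lem:lemmaA} survives: when $(\mu_1')_1 = 1$ it is trivially $0$, and otherwise $\mu_1'$ has the shape $((\mu_1')_1, 1, 1, \ldots, 1)$, so $(\mu_1')_i = 1$ for $2 \leq i \leq \ell(\mu_1')$ and the sum reduces to $\sum_{i=2}^{\ell(\mu_1')}(1 + N - i)$. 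A routine arithmetic identification then matches $\ell(\mu_1') + N\ell(\mu_1') - N - \ell(\mu_1')(\ell(\mu_1')+1)/2$.

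The main obstacle is purely organizational: matching the one-unit range shift between the two sums with the anomalous index $i = i_d + 1$ at which Lemma~\ref{lem:muc} forces $(\mu_1')^c_i$ to equal $d - 1$ rather than either $(\mu_1')_i - 1$ or $(\mu_1')_{i-1}$. This single non-generic term is what survives the otherwise clean cancellation and produces the $(\mu_1)_{d(\mu_1)}$ appearing in the stated formula.
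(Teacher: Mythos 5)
Your main case $d(\mu_1') > 1$ is correct and matches the paper's own argument precisely: both split the right-hand side of Lemma~\ref{lem:lemmaF} using the three-piece description of $(\mu_1')^c$ from Lemma~\ref{lem:muc} at $i_d = (\mu_1')'_{d(\mu_1')} = (\mu_1)_{d(\mu_1)}$, reindex the tail by $j = i - 1$, and telescope against Lemma~\ref{lem:lemmaA}, leaving only the anomalous $i = i_d + 1$ term $-(d(\mu_1') + N - (i_d + 1))$.

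The boundary case $d(\mu_1') = 1$ contains a genuine error in your subcase analysis. You write that ``when $(\mu_1')_1 = 1$ it is trivially $0$,'' but this is false. The ``if'' branch of Lemma~\ref{lem:lemmaA} gives $0$ precisely when $\ell(\mu_1') = d(\mu_1') = 1$, which is equivalent to $(\mu_1)_1 = 1$, not $(\mu_1')_1 = 1$. For instance, take $\mu_1 = (2)$, so $\mu_1' = (1,1)$ and $(\mu_1')_1 = 1$ but $\ell(\mu_1') = 2$. Then Lemma~\ref{lem:lemmaA} contributes $\sum_{i=2}^{2}(1 + N - i) = N - 1 \neq 0$, which is indeed what the claimed formula $\ell + N\ell - N - \ell(\ell+1)/2$ gives at $\ell = 2$. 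The subcase split you introduce on $(\mu_1')_1$ is both unnecessary and, in the case you call ``trivially $0$,'' incorrect: the sum $\sum_{i=2}^{\ell(\mu_1')}(1+N-i)$ is the right expression for every $\mu_1'$ with $d(\mu_1') = 1$, because $d(\mu_1')=1$ forces $(\mu_1')_i = 1$ for all $2 \leq i \leq \ell(\mu_1')$ regardless of the value of $(\mu_1')_1$. The paper simply evaluates this sum directly with no subcase distinction, and you should do the same.
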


\begin{proof}
First we deal with the case that $d(\mu_1') = 1$. In this case, $(\mu_1')_i \leq 1$ for all $i \geq 2$ and $(\mu_1')^{c}_i = 0$ for all $i \geq 2$. So, the contribution from Lemma~\ref{lem:lemmaF} is $0$ and the sum from Lemma~\ref{lem:lemmaA} becomes 
\begin{eqnarray*}
\sum\limits_{d(\mu_1') + 1 \leq i\leq\ell(\mu_1')} (\mu_1')_i + N - i
= \sum\limits_{i = 2}^{\ell(\mu_1') } 1+ N - i
& = & ( \ell(\mu_1') - 1)(1+N) - \dfrac{\ell(\mu_1') (\ell(\mu_1') +1) - 2}{2} \\
& = &
\ell(\mu_1') + N \ell(\mu_1') - N - \dfrac{\ell(\mu_1') (\ell(\mu_1') +1)}{2}.
\end{eqnarray*}

In the case where $d(\mu_1') > 1$, let $i_d$ be the largest integer $i$ with $(\mu_1')_i \geq d(\mu_1')$. Then applying Lemma \ref{lem:muc}, we see that the sum from Lemma~\ref{lem:lemmaF} becomes 
\begin{eqnarray*}
& & - \sum\limits_{i: d(\mu_1)<i\leq\ell((\mu_1')^c)} \left( (\mu_1')^{c}_i + N - i + 1 \right) \\
& = & - \sum\limits_{i: d(\mu_1) + 1 \leq i \leq i_d } \left( (\mu_1')_i + N - i \right) \\
&&{}-{} (d(\mu_1') + N -(i_d+1) ) - \sum\limits_{i: i_d + 1 < i \leq \ell((\mu_1')^c) }  \left( (\mu_1')_{i-1} + N - i + 1 \right) \\
& = & - \sum\limits_{i: d(\mu_1) + 1 \leq i \leq i_d } \left( (\mu_1')_i + N - i \right) \\
&&{}-{} \sum\limits_{i: i_d < i \leq \ell((\mu_1')^c) -1 } \left( (\mu_1')_{i} + N - i \right) - (d(\mu_1') + N -(i_d+1) ). 
\end{eqnarray*}
Since the first two sums cancel with the sum from Lemma~\ref{lem:lemmaA}, we are left with \[- (d(\mu_1') + N -(i_d+1) ) = - (d(\mu_1') + N -((\mu_1)_{d(\mu_{1} )}+1) )\] by Remark~\ref{rem:idlambdat}. 
\end{proof}

\begin{lemma}
\label{lem:CandH}
When we combine the terms from Lemmas~\ref{lem:lemmaC} and~\ref{lem:lemmaH}, we get 
\[
\begin{cases}
( \ell(\mu_2)-1) N &\mbox {if } d(\mu_2) = 1 \\
- d(\mu_2) - N + 1 - \ell(\mu_2) + (\mu'_2)_{d(\mu_2)} &\mbox {otherwise}. 
\end{cases}
\]
\end{lemma}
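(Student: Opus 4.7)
My plan is to substitute the conclusions of Lemmas~\ref{lem:lemmaC} and~\ref{lem:lemmaH}, and then apply Lemma~\ref{lem:muc} to rewrite $(\mu_2^c)_i$ in terms of $(\mu_2)_i$ so that the bulk of the two contributions cancel. I would split on the two cases $d(\mu_2)=1$ and $d(\mu_2)>1$, since Remark~\ref{lem:lengthmuc} behaves quite differently in each.

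In the case $d(\mu_2)=1$, Remark~\ref{lem:lengthmuc} gives $\ell(\mu_2^c)\le 1=d(\mu_2)$, so the index range in Lemma~\ref{lem:lemmaH} is either excluded by the $\ell(\mu_2^c)<d(\mu_2)$ condition or is the empty range $\{i: 1<i\le 1\}$; either way Lemma~\ref{lem:lemmaH} contributes $0$. Also, $d(\mu_2)=1$ forces $(\mu_2)_i\le 1$ for $i\ge 2$, so in the sum from Lemma~\ref{lem:lemmaC} each summand is just $N$, giving $(\ell(\mu_2)-1)N$. That matches the first case of the claim, including the degenerate subcase $\ell(\mu_2)=d(\mu_2)=1$ where both contributions vanish.

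In the case $d(\mu_2)>1$, Remark~\ref{lem:lengthmuc} gives $\ell(\mu_2^c)=\ell(\mu_2)+1>d(\mu_2)$, so Lemma~\ref{lem:lemmaH} contributes the full sum
\[
-\sum_{d(\mu_2)<i\le \ell(\mu_2)+1}\bigl((\mu_2^c)_i+N\bigr).
\]
Let $i_d=(\mu_2')_{d(\mu_2)}$ be the largest $i$ with $(\mu_2)_i\ge d(\mu_2)$ (Remark~\ref{rem:idlambdat}). By Lemma~\ref{lem:muc}, $(\mu_2^c)_i=(\mu_2)_i-1$ for $d(\mu_2)<i\le i_d$, $(\mu_2^c)_{i_d+1}=d(\mu_2)-1$, and $(\mu_2^c)_i=(\mu_2)_{i-1}$ for $i>i_d+1$. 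I would split the sum at $i_d$ and $i_d+1$, substitute these expressions, and reindex $i\mapsto i-1$ on the tail piece. The range $d(\mu_2)<i\le i_d$ exactly cancels the corresponding piece of Lemma~\ref{lem:lemmaC}, and the reindexed tail range $i_d+1\le i\le \ell(\mu_2)$ cancels the $(\mu_2)_i+N$ portion of the remaining piece of Lemma~\ref{lem:lemmaC}, leaving only the constant $-(d(\mu_2)-1+N)$ from the $i=i_d+1$ term together with a telescoping $\sum_{i_d<i\le\ell(\mu_2)}(-1)=-(\ell(\mu_2)-i_d)$. Adding these gives $-d(\mu_2)-N+1-\ell(\mu_2)+i_d$, which is the second case of the claim.

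This is essentially a careful bookkeeping argument, and the main (minor) obstacle is ensuring that the case analysis covers the boundary subcases cleanly: the subcase $\ell(\mu_2)=d(\mu_2)$ (where Lemma~\ref{lem:lemmaC} gives $0$ and only the $i=i_d+1$ term survives in Lemma~\ref{lem:lemmaH}), and the subcase $i_d=d(\mu_2)$ (where the middle range $d(\mu_2)<i\le i_d$ is empty, so no cancellation happens there, but the formula still comes out the same). Checking these explicitly is the only place where the pattern could superficially break, and one must verify that in each the final answer agrees with $-d(\mu_2)-N+1-\ell(\mu_2)+(\mu_2')_{d(\mu_2)}$.
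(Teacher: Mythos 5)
Your proposal is correct and mirrors the paper's own argument essentially step for step: in both, the $d(\mu_2)=1$ case is handled by observing that Lemma~\ref{lem:lemmaH} vanishes and each summand of Lemma~\ref{lem:lemmaC} reduces to $N$, while in the $d(\mu_2)>1$ case one applies Lemma~\ref{lem:muc} to split at $i_d$ and $i_d+1$, reindexes the tail, and cancels, leaving $-(\ell(\mu_2)-i_d)-(d(\mu_2)-1+N)$ with $i_d=(\mu_2')_{d(\mu_2)}$ via Remark~\ref{rem:idlambdat}. The boundary subcases you flag ($\ell(\mu_2)=d(\mu_2)$ and $i_d=d(\mu_2)$) do check out as you anticipate, so there is no gap.
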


\begin{proof}
As in the previous lemma, we begin with the case where $d(\mu_2) = 1$. In this case, the sum from Lemma \ref{lem:lemmaH} is empty and the sum from Lemma~\ref{lem:lemmaC} becomes 
\[
\sum\limits_{i: d(\mu_2) + 1 \leq i\leq\ell(\mu_2) } ((\mu_2)_i + N -1) = \sum\limits_{i: 2 \leq i \leq \ell(\mu_2) } N = (\ell(\mu_2)-1)N.
\]

In the case where $d(\mu_2) > 1$, let $i_d$ be the largest integer with $(\mu_2)_i \geq d(\mu_2)$. Then we can write the sum from Lemma~\ref{lem:lemmaH} as 
\begin{eqnarray*}
& & - \sum\limits_{i: d(\mu_2)<i\leq\ell(\mu_2^c) } ((\mu_2^{c})_i + N) \\
& = & - \sum\limits_{i: d(\mu_2) + 1 \leq i \leq i_d } ((\mu_2)_i -1 + N) -
(d(\mu_2) -1 + N )
- \sum\limits_{i: i_d+1 < i\leq\ell(\mu_2)+1 } ((\mu_2)_{i-1} + N) \\
& = & - \sum\limits_{i: d(\mu_2) + 1 \leq i \leq i_d } ((\mu_2)_i + N -1) 
- \sum\limits_{i: i_d < i\leq\ell(\mu_2) } ((\mu_2)_{i} + N) -
(d(\mu_2) -1 + N ).
\end{eqnarray*}
Writing the sum from Lemma~\ref{lem:lemmaC} as 
\[
\sum\limits_{i: d(\mu_2) + 1 \leq i\leq\ell(\mu_2) } ((\mu_2)_i + N -1) = 
\sum\limits_{i: d(\mu_2) + 1 \leq i \leq i_d } ((\mu_2)_i + N -1) + 
\sum\limits_{i: i_d <  i\leq\ell(\mu_2) } ((\mu_2)_i + N -1),
\]
we see that the first sum cancels with the first sum from Lemma \ref{lem:lemmaH}. Combining the second sum with the second sum from Lemma \ref{lem:lemmaH} we get $-(\ell(\mu_2) - i_d) = -(\ell(\mu_2) - (\mu'_2)_{d(\mu_2)})$. 
\end{proof}


We conclude the computation of $C-A$ by adding the results from Lemmas \ref{lem:ptfirstfour} and \ref{lem:ptsecondfour}, Remark~\ref{rem:2n}, and Lemmas~\ref{lem:BandE} through \ref{lem:CandH}. \\


\noindent{\bf{Terms involving $\mu_1$} } 
From Lemma~\ref{lem:ptfirstfour} we have
\[
\begin{cases}
0 &\mbox{if } d(\mu_1') > 1 \\
\dfrac{(N - \ell(\mu_1') -1)(N - \ell(\mu_1')) }{2} - \dfrac{(N-1)N}{2} 
&\mbox{if } d(\mu_1') = 1 \text{ and } (\mu_1')_1 > 1 \\
\dfrac{(N - \ell(\mu_1') -1)(N - \ell(\mu_1')) }{2} + \dfrac{(N-1)N}{2} &\mbox{otherwise}.
\end{cases}
\]
From Lemmas~\ref{lem:BandE} and~\ref{lem:AandF} we have 
\begin{eqnarray*}
&&\begin{cases}
- (d(\mu_1') + N -( (\mu_1)_{d(\mu_1)}+1) ) &\mbox {if } d(\mu_1') > 1 \\
\ell(\mu_1') + N \ell(\mu_1') - N - \dfrac{\ell(\mu_1') (\ell(\mu_1') +1)}{2} &\mbox {if } d(\mu_1') = 1 
\end{cases}\\
&&{}+{}\begin{cases}
0 &\text{if }(\mu_1^r)'\neq\emptyset \\
-N(N-1) &\text{otherwise}.
\end{cases}
\end{eqnarray*}
Note that, by Remark~\ref{lem:lengthmur}, $(\mu_1^r)'=\emptyset$ if and only if $\mu_1^r=\emptyset$ if and only if $\ell(\mu_1^r)=0$ if and only if $\ell(\mu_1)=1$ if and only if $d(\mu_1')=1$ and $(\mu_1')_1=1$. 

So there are three cases to consider. If $d(\mu_1') > 1$, we have $$(\mu_1)_{d(\mu_1)} -d(\mu_1) -N + 1.$$ When $d(\mu_1') = 1$ and $(\mu_1')_1 > 1$, we get $$\ell(\mu_1') - N = (\mu_1)_{d(\mu_1)} -N -d(\mu_1) + 1.$$ When $d(\mu_1') = 1$ and $(\mu_1')_1 = 1$, we get $$\ell(\mu_1')-N+(N-1)N-N(N-1)=\ell(\mu_1')-N=(\mu_1)_{d(\mu_1)}-N-d(\mu_1)+1.$$

\noindent{\bf{Terms involving $\mu_2$} }
Recall from Lemma~\ref{lem:ptsecondfour} that the terms involving $\mu_2$ are 
\[
\begin{cases}
\ell(\mu_2)
& \mbox{if } d(\mu_2) > 1 \\
- \ell(\mu_2) N + \ell(\mu_2) & \mbox{if } d(\mu_2) = 1 \text{ and } (\mu_2)_1 > 1 \\
(N-1)(N - \ell(\mu_2)) + \dfrac{N(N-1)}{2} & \mbox{otherwise}.
\end{cases}
\]
The terms involving $\mu_2$ from Lemmas~\ref{lem:GandD} and~\ref{lem:CandH} are 
\begin{eqnarray*}
& & \begin{cases}
- d(\mu_2) - N + 1 - \ell(\mu_2) + (\mu_2')_{d(\mu_2)} &\mbox {if }
d(\mu_2) > 1 \\
( \ell(\mu_2)-1) N &\mbox {if } d(\mu_2) = 1
\end{cases}\\
&&{}+{}
\begin{cases}
0 &\mbox { if } d(\mu_2) > 1 \text{ or } (d(\mu_2) = 1 \text{ and } (\mu_2)_{1} > 1 ) \\
1 - N^2 - \dfrac{ (N-2)(N-1) }{2} &\mbox { if } d(\mu_2) = 1 \text{ and } (\mu_2)_{1} = 1. 
\end{cases}
\end{eqnarray*}

So there are three cases. If $d(\mu_2) > 1$, then we are left with 
\[ - d(\mu_2) - N + 1 + (\mu_2')_{d(\mu_2)}. \]
If $d(\mu_2) = 1$ and $(\mu_2)_1 > 1$ then we have 
$$ \ell(\mu_2) - N = - d(\mu_2) + 1 +
(\mu_2')_{d(\mu_2)} -N.$$
Finally, in the case where $d(\mu_2) = 1$ and $(\mu_2)_1 = 1$, we have 
$$ \ell(\mu_2) - N = - d(\mu_2) + 1 +
(\mu_2')_{d(\mu_2)} -N.$$

\noindent{\bf{Combining all terms} }
In all cases, we have 
\[
(\mu_1)_{d(\mu_1)} -2N -d(\mu_1) + 2 - d(\mu_2) + (\mu_2')_{d(\mu_2)}. \]
By Remark~\ref{rem:2n}, we must add $2N-1$ to this sum, so we conclude that $$C-A = (\mu_1)_{d(\mu_1)} -d(\mu_1) + (\mu_2')_{d(\mu_2)} -d(\mu_2) + 1=K.$$

\section{Acknowledgements}

We would like to thank Jim Bryan, Rick Kenyon, Rahul Pandharipande, Richard Thomas, Jim Propp, Karel Faber, Kurt Johansson, and frankly countless other geometers, probabilists and combinatorialists for helpful conversations. 

Helen Jenne has received funding from the European Research Council (ERC) through the European Union's Horizon 2020 research and innovation programme under the Grant Agreement No 759702. 
Gautam Webb was partially supported by the National Science Foundation, DMS-2039316. 
Benjamin Young was partially supported by the Knut and Alice Wallenberg Foundation Grant KAW:2010.0063. This work was supported by a grant from the Simons Foundation (637746, BY). 

\bibliographystyle{plain}
\bibliography{ptdt}

\end{document}